\setlist[enumerate]{label=(\roman*),font=\normalfont}
\definecolor{codegreen}{rgb}{0,0.6,0}
\definecolor{codegray}{rgb}{0.5,0.5,0.5}
\definecolor{codepurple}{rgb}{0.58,0,0.82}
\definecolor{backcolour}{rgb}{0.95,0.95,0.92}
\lstdefinestyle{mystyle}{
	basicstyle=\small\ttfamily,
	breakatwhitespace=false,
	breaklines=true,
	captionpos=b,
	keepspaces=true,
	numbersep=5pt,
	showspaces=false,
	showstringspaces=false,
	showtabs=false,
	tabsize=2
}
\newcolumntype{M}[1]{>{\centering\arraybackslash}m{#1}}
\newcommand\addvmargin[1]{\node[fit=(current bounding box),inner ysep=#1,inner xsep=0]{};}
\let\hom\relax
\DeclareMathOperator{\im}{Im}
\DeclareMathOperator{\hom}{Hom}
\DeclareMathOperator{\aut}{Aut}
\DeclareMathOperator{\Endo}{End}
\DeclareMathOperator{\out}{Out}
\DeclareMathOperator{\PSL}{PSL}
\DeclareMathOperator{\PSU}{PSU}
\DeclareMathOperator{\SL}{SL}
\DeclareMathOperator{\GL}{GL}
\DeclareMathOperator{\PGL}{PGL}
\DeclareMathOperator{\PsL}{P\Sigma L}
\DeclareMathOperator{\Sp}{Sp}
\DeclareMathOperator{\Ru}{Ru}
\DeclareMathOperator{\HS}{HS}
\newcommand{\A}{\mathrm{A}}
\newcommand{\B}{\mathrm{B}}
\newcommand{\C}{\mathrm{C}}
\newcommand{\D}{\mathrm{D}}
\newcommand{\E}{\mathrm{E}}
\newcommand{\F}{\mathrm{F}}
\newcommand{\G}{\mathrm{G}}
\newcommand{\T}{\mathrm{T}}
\newcommand{\X}{\mathrm{X}}
\newcommand{\ad}{\mathrm{ad}}
\newcommand{\Ad}{\mathrm{Ad}}
\renewcommand{\sc}{\mathrm{sc}}
\DeclareMathOperator{\rk}{rk}
\DeclareMathOperator{\height}{ht}
\DeclareMathOperator{\tr}{tr}
\DeclareMathOperator{\alt}{Alt}
\DeclareMathOperator{\sym}{Sym}
\DeclareMathOperator{\Der}{Der}
\DeclareMathOperator{\ch}{char}
\DeclareMathOperator{\sgn}{sgn}
\newcommand{\restr}[1]{\!\!\downarrow_{#1}}
\newcommand{\restri}[1]{\!\downarrow_{#1}}
\newcommand{\m}[1]{\mathcal{#1}}
\newcommand{\dimn}[1]{#1\text{-dimensional}}
\newcommand{\conj}[1]{#1\text{-conjugate}}
\DeclarePairedDelimiter\size{\lvert}{\rvert}
\DeclarePairedDelimiter\gen{\langle}{\rangle}
\DeclarePairedDelimiterXPP\expp[2]{\mathrm{exp}_{#1}}{(}{)}{}{#2}
\newcommand{\eqtext}[1]{\mathrel{\overset{\makebox[0pt]{\mbox{\normalfont\tiny\sffamily #1}}}{=}}}
\newcommand{\magma}{\textsc{Magma}}
\newcommand{\bb}[1]{\boldsymbol{#1}}
\title{Embeddings of \(\PSL_2(\lowercase{q})\) in exceptional groups of Lie type over a field of characteristic \(\ne2,3\)}
\author{Andrea Pachera}
\date{August 2020}
\crefname{appendixchapter}{Appendix}{Appendices}
\theoremstyle{plain}
\newtheorem{theorem}{Theorem}[section]
\newtheorem{corollary}[theorem]{Corollary}
\newtheorem{lemma}[theorem]{Lemma}
\newtheorem{proposition}[theorem]{Proposition}
\newtheorem{construction}[theorem]{Construction}
\theoremstyle{definition}
\newtheorem{definition}[theorem]{Definition}
\newtheorem{example}[theorem]{Example}
\newtheorem{verification}[theorem]{Verification}
\newtheorem*{presentation}{Presentation}
\newtheorem*{remark}{Remark}
\newcommand{\thistheoremname}{}
\newtheorem*{genericthm}{\thistheoremname}
\begin{document}
\maketitle
\begin{abstract}
Let \(\bb{\G}\) be an algebraic group of exceptional Lie type in characteristic \(p\), \(\G=\bb{\G}^{\sigma}\) its fixed-point subgroup under the action of a Steinberg endomorphism \(\sigma\), and \(\overline{\G}\) an almost simple group with socle \(\G\).

The possible classes of maximal subgroups of \(\overline{\G}\) are known, and one of them is made of almost simple subgroups \(M<\overline{\G}\) such that the socle \(H\) of \(M\) is not isomorphic to a group of Lie type in characteristic \(p\); such subgroups are called \emph{non-generic}.

A finite subgroup \(H\) of \(\bb{\G}\) is \emph{Lie primitive} if it does not lie in any proper closed positive-dimensional subgroup of \(\bb{\G}\); it is \emph{Lie imprimitive} if it lies in a positive-dimensional subgroup \(\bb{\X}\) of \(\bb{\G}\); it is \emph{strongly imprimitive} if \(\bb{\X}\) can be chosen to be stable under the action of \(N_{\aut\bb{\G}}(H)\), where \(\aut\bb{\G}\) is the group generated by inner, diagonal, graph, and field automorphisms of \(\bb{\G}\).

The list of non-generic simple subgroups that may give rise to a maximal subgroup of \(\overline{\G}\) is known, and include Lie primitive subgroups.	

We study the possible embeddings of a non-generic primitive simple group \(H\) in the adjoint algebraic group \(\bb{\G}\) in characteristic coprime to \(\size{H}\) when \((\bb{\G},H)\) is one of \((\bb{\F}_4,\PSL_2(25))\), \((\bb{\F}_4,\PSL_2(27))\), \((\bb{\E}_7,\PSL_2(29))\), \((\bb{\E}_7,\PSL_2(37))\). In particular, we construct copies of \(H\) in \(\G\) over a suitable finite field \(k\), and use them to deduce information about the number of conjugacy classes of \(H\) in \(\G\) and \(\bb{\G}\), and about whether \(N_{\overline{\G}}(H)\) is a maximal subgroup of \(\overline{\G}\).
	
We also study the case of \(H\simeq\alt_6\simeq\PSL_2(9)\) when \(\bb{\G}\) is one of \(\bb{\F}_4\) and \(\bb{\E}_6\) in characteristic coprime to \(\size{H}\), and show that in such cases \(H\) is a strongly imprimitive subgroup of \(\bb{\G}\); in particular, \(N_{\overline{\G}}(H)\) is not a maximal subgroup of \(\overline{\G}\).
\end{abstract}

\tableofcontents

\begin{chapter}{Introduction}
	One of the main areas of research in group theory is the classification of maximal subgroups of a finite group.
	
	As shown in \cite{aschbacherscott}, the maximal subgroups of any finite group can be deduced if we know the maximal subgroups and 1-cohomology of all finite almost simple groups; this thesis is a contribution to the former of these goals.
	
	Going through the list of simple groups, we have that the maximal subgroups of the sporadic simple groups are known except for some cases of the Monster, while for the alternating groups they are classified by the O'Nan-Scott theorem \cite{onanscott}. For the classical groups of Lie type, they are classified by Aschbacher's theorem \cite{aschbachermaximal} (see also \cite{kleidmanliebeck}), in particular they are divided into two categories: the class \(\mathcal{C}\) of maximal subgroups that arise naturally in a geometrical way, stabilising a certain structure of the underlying module, and the class \(\mathcal{S}\) which consists of all the other cases, that is almost simple groups acting irreducibly on the natural module. In particular, for groups of small rank we have a complete list in \cite{lowdimensional}.
	
	The remaining case is the maximal subgroups of exceptional groups of Lie type. Let \(\bb{\G}\) be an algebraic group of exceptional Lie type in characteristic \(p\), \(\G=\bb{\G}^{\sigma}\) its fixed-point subgroup under the action of a Steinberg endomorphism \(\sigma\), and \(\overline{\G}\) an almost simple group with socle \(\G\).

	When \(\G=\G(q)\) is not of type \(\F_4\), \(\E_6\), \(\!\prescript{2}{}\E_6\), \(\E_7\), \(\E_8\), the maximal subgroups are classified (see for example \cite{wilson}), so we focus on the remaining cases.
	
	The literature on the topic is explored in \cref{maximalsbg}; broadly speaking, all maximal subgroups of \(\overline{\G}\) are known except for a list of almost simple subgroups \(M\). Let \(H\) be the socle of such \(M\), then we have the following classification of \(H\):
	
	\begin{definition}\label{primitivedef}
		If \(H\) is isomorphic to a group of Lie type over a field of characteristic \(p\), then \(H\) is called a \emph{generic} subgroup; if \(H\) is any other type of finite simple group, it is called a \emph{non-generic} subgroup.
		
		If \(H\) is contained in a positive-dimensional subgroup \(\bb{\X}\) of \(\bb{\G}\), then \(H\) is called \emph{Lie imprimitive}. Let \(\aut\bb{\G}\) be the group generated by inner, diagonal, graph, and field automorphisms of \(\bb{\G}\); if \(\bb{\X}\) can be taken to be stable under the action of \(N_{\aut\bb{\G}}(H)\), then \(H\) is called \emph{strongly imprimitive} in \(\bb{\G}\). If \(H\) is not contained in any positive-dimensional subgroup of \(\bb{\G}\), then it is called \emph{Lie primitive}.
	\end{definition}
	
	Generic subgroups are generally well-understood \cite{liebeckseitz3}, so we focus on the non-generic subgroups. Thanks to the work of several authors, in \cite{table} we have a complete (finite) list of non-generic simple groups \(H\) that admit an embedding into an exceptional algebraic group, and for which characteristics such embeddings exist.
	
	Further work by Litterick \cite{litterick} studies the primitivity of the possible embeddings of such groups \(H\) in \(\bb{\G}\) in characteristic \(p\), and divide the triples \((\bb{\G},H,p)\) in three classes:
	\begin{enumerate}
		\item Any subgroup \(S\) of \(\bb{\G}\) isomorphic to \(H\) is strongly imprimitive.
		\item Any subgroup \(S\) of \(\bb{\G}\) isomorphic to \(H\) is primitive.
		\item A subgroup \(S\) of \(\bb{\G}\) may be primitive, depending on the embedding.
	\end{enumerate}
	The three lists of cases are contained in \cite[Table 1.1-1.3]{litterick}.
	
	In particular, he shows that if \(H\) is strongly imprimitive in \(\bb{\G}\) then \(N_{\overline{\G}}(H)\) is not a maximal subgroup of \(\overline{\G}\).
	
	Therefore one problem is completing the study of \(H\) in the remaining cases, in particular we would like to know how many conjugacy classes of \(H\) there are in \(\G\) and \(\bb{\G}\), what is \(N_{\overline{G}}(H)\) and whether it is a maximal subgroup of \(\overline{\G}\).
	
	We will see that if \(H\) is Lie primitive, then \(N_{\G}(H)\) is a maximal subgroup of \(\G\) unless it is contained in a subgroup of the same type as \(\G\) (e.g. \(\E_6(p)<\E_6(p^2)\)) or in \(N_{\G}(K)\) for some other Lie primitive subgroup \(K\).
	
	Furthermore, if \(N_{\G}(H)\) is a maximal subgroup of \(\G\) then \(N_{\overline{\G}}(H)\) is a maximal subgroup of \(\overline{\G}\); otherwise, \(N_{\overline{\G}}(H)\) may still be a maximal subgroup of \(\overline{\G}\), in which case it is called a \emph{novelty} maximal subgroup.
	
	Observe that most of the candidate subgroups \(H\) are ``small'' groups, like alternating groups or groups of type \(\A_1\). Some of these cases have been studied, for example most of the alternating groups have been dealt with by Craven \cite{craven}. At the time of writing, his ongoing work completed the study of the remaining cases for \(\F_4\), and almost completed the cases for groups of type \(\E_6\) and \(\E_7\). Other groups of this class have been studied for example in \cite{griess_ryba_algorithm}, where they study the embeddings of \(\PSL_2(41)\) and \(\PSL_2(49)\) in \(\bb{\E}_8(\mathbb{C})\), and in a similar fashion in other works like \cite{31and32,61,cohenwales}.
	
	We decided to see whether the approach used in \cite{griess_ryba_algorithm} could be used also for other cases; we will see that for technical reasons its scope is limited, but nonetheless we could verify some of the existing results. The new results we obtained using this technique can be summarised by the following:
	
	\begin{theorem}\label{mainth1}
		Let \(\bb{\G}\) be an adjoint algebraic group of exceptional Lie type in characteristic \(p\), \(\G=\bb{\G}^{\sigma}\) its fixed-point subgroup under the action of a Steinberg endomorphism \(\sigma\), i.e. \(\G=\G(q)\) for some power \(q\) of \(p\), and \(\overline{\G}\) an almost simple group with socle \(\G\). 	
		Let \((\bb{\G},H)\) be one of \((\bb{\F}_4,\PSL_2(25))\), \((\bb{\F}_4,\PSL_2(27))\), \((\bb{\E}_7,\PSL_2(29))\), \((\bb{\E}_7,\PSL_2(37))\).
		
		If \(\G\), \(N_{\G}(H)\), \(p\), \(q\) appear in \cref{tableresults}, then \(N_{\G}(H)\) is a maximal subgroup of \(\G\). Furthermore, the number of \(\G\text{-conjugacy}\) classes of \(N_{\G}(H)\) in \(\G\) is given by \(n(\G)\), while \emph{Stab} denotes which outer automorphisms of \(\G\) stabilise \(N_{\G}(H)\).
		
		Furthermore, the number of \(\bb{\G}\text{-conjugacy}\) classes \(n(\bb{\G})\) is \(n(\G)\) when \(\G\) is \(\F_4\), and \(n(\G)/2\) when \(\G\) is \(\E_7\).
		
		The number of \(\G\text{-}\) and \(\bb{\G}\text{-conjugacy}\) classes of embeddings of \(N_{\G}(H)\) in \(\G\) and \(\bb{\G}\) is \(2\cdot n(\G)\) and \(2\cdot n(\bb{\G})\), respectively.
		
		Since we are considering adjoint groups, when taking the simple group \(\E_7\) \(n(\G)\) must be doubled, as the diagonal morphism fuses classes in pairs.
	\end{theorem}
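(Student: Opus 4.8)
The plan is to treat the four triples \((\bb{\G},H)\) uniformly by the explicit computational method of \cite{griess_ryba_algorithm}: realise a copy of \(H=\PSL_2(q)\) inside \(\G\) over a suitable finite field, extract \(C_{\G}(H)\) and \(N_{\G}(H)\) by direct computation, and then pass between the finite and algebraic pictures by a Lang--Steinberg argument driven by the centraliser \(C_{\bb{\G}}(H)\).

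First I would fix the representation that governs the embedding. Since \(p\nmid\size{H}\) the restriction to \(H\) of the minimal module of \(\bb{\G}\) --- the \(26\)-dimensional module for \(\F_4\), and the \(56\)-dimensional module of the simply connected cover of \(\E_7\) --- is semisimple, and its composition factors are prescribed by the feasible characters classified in \cite{litterick}. I would construct the corresponding representation over a number field, reduce it modulo \(p\), and realise it over a finite field \(k=\mathbb{F}_{p^a}\) large enough to carry the relevant character values, obtaining explicit generating matrices \(x_1,x_2\) of a group isomorphic to \(H\). Equipping the module with the \(\bb{\G}\)-invariant structure (the symmetric cubic form for \(\F_4\); the alternating form and the quartic invariant for \(\E_7\)) and checking that the \(x_i\) preserve it places \(\gen{x_1,x_2}\) inside \(\G=\bb{\G}^{\sigma}\); a presentation of \(\PSL_2(q)\), or an order check together with irreducibility, then certifies \(\gen{x_1,x_2}\cong H\). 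I expect this to be the main obstacle: the field \(k\) may have to be enlarged before the representation is realisable and before the structure constants become available, and --- crucially for \(\E_7\) --- one must certify that the constructed group descends to the adjoint group \(\bb{\G}\) rather than merely to an isogenous form, since the \(56\)-dimensional module is faithful for the simply connected cover.

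With a concrete copy of \(H\) in hand, I would compute \(C_{\G}(H)\) and \(N_{\G}(H)\) and identify the image of \(N_{\G}(H)\) in \(\out H\). I expect \(N_{\G}(H)/C_{\G}(H)\cong\PsL_2(q)\), so that the field automorphisms of \(H\) are induced by genuine elements of \(\G\) but the diagonal (\(\PGL_2(q)\)) automorphism is not; since \([\aut H:\PsL_2(q)]=2\), each class of subgroups then supports exactly two classes of embeddings, giving the counts \(2\cdot n(\G)\) and \(2\cdot n(\bb{\G})\). The \emph{Stab} entry is read off as the subgroup of \(\out\G\) --- field automorphisms, together with the diagonal automorphism in the \(\E_7\) case --- fixing the \(\G\)-class of \(N_{\G}(H)\).

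Finally I would count classes. Since \(H\) is Lie primitive, \(C_{\bb{\G}}(H)^{\circ}=1\) (otherwise \(H\) would centralise a nontrivial connected subgroup and hence lie in a proper positive-dimensional centraliser), so \(C_{\bb{\G}}(H)\) is finite; the \(\G\)-classes lying inside a fixed \(\bb{\G}\)-class are then parametrised by \(H^1(\sigma,C_{\bb{\G}}(H))\) modulo the action of \(N_{\bb{\G}}(H)/C_{\bb{\G}}(H)\). For \(\F_4\) I expect \(C_{\bb{\G}}(H)=1\), giving one \(\G\)-class per \(\bb{\G}\)-class and hence \(n(\bb{\G})=n(\G)\); for adjoint \(\E_7\) I expect \(C_{\bb{\G}}(H)\cong C_2\), which I would trace to the \(\mu_2\) centre of the simply connected form, so that \(H^1(\sigma,C_2)\) contributes a factor of two and each algebraic class splits, giving \(n(\G)=2\,n(\bb{\G})\); and on restricting from the adjoint group to its simple socle, the diagonal automorphism --- inner in \(\G\) but outer on the socle --- fuses the socle-classes in pairs, so the socle count is \(2\,n(\G)\). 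Maximality of \(N_{\G}(H)\) in the tabulated cases I would establish last: invoking the classification recalled in \cref{maximalsbg}, a Lie primitive \(N_{\G}(H)\) is maximal in \(\G\) unless it lies in a subfield subgroup or in the normaliser of another Lie primitive subgroup, and I would exclude both alternatives using the explicit module structure and order arithmetic.
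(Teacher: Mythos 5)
Your outline reproduces the shape of the final count, but the construction route is not the paper's and, more importantly, it omits the step that actually carries the content of the theorem: proving that the enumeration of conjugacy classes is \emph{complete}. The paper does not build the representation of \(H\) on the minimal module and check invariant forms; it works throughout in the adjoint representation and builds \(H\) generator-by-generator from the Guralnick et al.\ presentation of \(\PSL_2(q)\): a Borel subgroup \(B=\gen{u,s}\) is located inside \(N_{\G}(\T)\) (via Borel--Serre and the classification of elementary abelian subgroups), shown to be unique up to \(\bb{\G}\)-conjugacy (regularity of \(B'\), finiteness of \(C_{\bb{\T}}(s)\), Lang--Steinberg), and then \emph{every} involution \(t\) completing the presentation is found by writing \(t=ce\) with \(e\) a Chevalley involution inverting \(s\) and \(c\in C_{\bb{\G}}(s)\), and solving the linear system coming from \((tu)^3=1\) on the linear span of \(C_{\bb{\G}}(s)\). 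It is this exhaustive solution, combined with the fusion analysis of \(N_{\G}(B)/B\), that yields the exact values of \(n(\bb{\G})\); your proposal produces \emph{a} copy of \(H\) but gives no upper bound on the number of classes, so the entries \(n(\G)\), \(n(\bb{\G})\) in \cref{tableresults} are not established. Relatedly, you never invoke Larsen's lifting theorem (\cref{liftingtheorem}), which is what lets a computation over one specific finite field govern all characteristics coprime to \(\size{H}\) and characteristic zero; reduction mod \(p\) of a number-field representation does not by itself transfer the class count.

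A second concrete error: your expectation \(N_{\G}(H)/C_{\G}(H)\cong\PsL_2(q)\) fails in both \(\F_4\) cases. For \(\PSL_2(25)\) the paper shows (\cref{aut25}) that neither \(\PGL_2(25)\) nor \(\PsL_2(25)\) embeds in \(\bb{\F}_4\), and the realised extension \(H.2\) is by the \emph{diagonal-field} automorphism; for \(\PSL_2(27)\) no outer automorphism is realised at all, \(N_{\bb{\G}}(H)=H\) (\cref{27extensions}). The ``two embeddings per subgroup class'' comes in every case from the diagonal (\(\PGL_2\)) automorphism \emph{not} being induced by \(\bb{\G}\) --- so that \((u,s)\) and \((uu^s,s)\) (resp.\ \((u,s)\) and \((u^2,s)\)) are not \(\bb{\G}\)-conjugate --- not from the index of \(\PsL_2(q)\) in \(\aut H\); your mechanism happens to give the number \(2\) but for the wrong reason, and it would misidentify the Stab column and \(N_{\G}(H)\) itself. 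Your treatment of the \(\E_7\) splitting via \(\size{C_{\bb{\G}}(H)}=2\) and of the socle doubling is consistent with the paper's \cref{finiteconjugates} and \cref{37Bfinite}/\cref{29Bfinite}, and your maximality argument matches the paper's order-arithmetic exclusion against the Litterick candidate list, but these correct endpoints rest on the uncompleted enumeration above.
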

	
	\begin{table}
		\captionsetup{font=footnotesize}
		\centering
		\begin{tabular}{cccccc}
			\toprule
			\(\G\)&\(N_{\G}(H)\)&\(p\)&\(q\)&\(n(\G)\)&Stab\\
			\midrule
			\(\F_4\)&\(\PSL_2(25).2\)&\(p\ne2,3,5\)&\(p\)&1&1\\[0.4em]
			\(\F_4\)&\(\PSL_2(27)\)&\makecell[c]{\(p\ne2,3,7\)\\\(p\equiv\pm1\bmod7\)}&\(p\)&1&1\\[1em]
			\(\F_4\)&\(\PSL_2(27)\)&\makecell[c]{\(p\ne2,3,7\)\\\(p\equiv\pm2\bmod7\)}&\(p^3\)&1&\(\gen{\phi}\)\\[1em]
			\(\E_7\)&\(\PSL_2(29)\)&\makecell[c]{\(p\ne2,3,5,7,29\)\\\(p\equiv\pm1\bmod5\)\\\(p\equiv\pm1,\pm4,\pm5,\pm6,\pm7,\pm9,\pm13\bmod29\)}&\(p\)&4&1\\[1.7em]
			\(\E_7\)&\(\PSL_2(29)\)&\makecell[c]{\(p\ne2,3,5,7,29\)\\\(p\equiv\pm1\bmod5\)\\\(p\equiv\pm2,\pm3,\pm8,\pm10,\pm11,\pm12,\pm14\bmod29\)}&\(p^2\)&4&\(\gen{\phi}\)\\[1.7em]
			\(\E_7\)&\(\PSL_2(29)\)&\makecell[c]{\(p\ne2,3,5,7,29\)\\\(p\equiv\pm2\bmod5\)}&\(p^2\)&4&1\\[1em]
			\(\E_7\)&\(\PSL_2(37)\)&\makecell[c]{\(p\ne2,3,19,37\)\\\(p\equiv\pm1,\pm3,\pm4,\pm7,\pm9,\pm10,\pm11,\pm12,\pm16\bmod37\)}&\(p\)&2&1\\[1em]
			\(\E_7\)&\(\PSL_2(37)\)&\makecell[c]{\(p\ne2,3,19,37\)\\\(p\equiv\pm2,\pm5,\pm6,\pm8,\pm13,\pm14,\pm15,\pm17,\pm18\bmod37\)}&\(p^2\)&2&\(\gen{\phi}\)\\
			\bottomrule
		\end{tabular}
		\caption{Conjugacy classes of some primitive subgroups of \(\G(q)\), \(q\) power of \(p\); \(\phi\) is a generator of the field automorphism. The group \(\PSL_2(25).2\) is the extension by the field-diagonal automorphism.}\label{tableresults}
	\end{table}

	We emphasise that the restriction on \(p\) generally means that the we could not prove anything in other cases, and not, say, that the embedding does not exist or is not primitive.

	We also tackled the case of the embedding of \(\alt_6\) in \(\bb{\G}\) when \(\bb{\G}\) is one of \(\bb{\F}_4\) and \(\bb{\E}_6\), for which only partial results are known, see for example \cite{craven}. Despite \(\alt_6\) being a group of type \(\A_1\) via the isomorphism \(\alt_6\simeq\PSL_2(9)\), the technique we used to obtain the results of \cref{mainth1} could not be applied. Therefore we used a completely different approach that exploits the geometry of \(\bb{\E}_6\) and the Dickson 3-form on its natural module, and the fact that \(\bb{\F}_4<\bb{\E}_6\). From \cite{litterick} we know that most possible embeddings of \(\alt_6\) in \(\bb{\G}\) are strongly imprimitive, and we know the action of such \(\alt_6\) on the minimal and adjoint module of \(\bb{\G}\). It is known that there is an embedding \(\alt_6<\bb{\X}<\bb{\G}\), where \(\bb{\X}\) is a positive-dimensional subgroup, such that \(\alt_6\) has the required action (see for example \cite{cohenwales}); we show that any embedding with the required action makes \(\alt_6\) always lie in a proper positive-dimensional subgroup \(\bb{\X}\) of \(\bb{\G}\), making \(\alt_6\) imprimitive. We then prove that \(\alt_6\) is strongly imprimitive:
	\begin{theorem}\label{mainth2}
		\(\alt_6\) is strongly imprimitive in \(\bb{\F}_4\) in characteristic \(\ne2,3\), and in \(\bb{\E}_6\) in characteristic \(\ne2,3,5\).
	\end{theorem}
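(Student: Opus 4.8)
The plan is to establish Lie imprimitivity first and then promote it to strong imprimitivity, treating \(\bb{\E}_6\) as the main case and feeding the result back to \(\bb{\F}_4\) through the inclusion \(\bb{\F}_4<\bb{\E}_6\). Because the characteristic is coprime to \(\size{\alt_6}\) (and, as it turns out, also when \(p=5\) in the \(\bb{\F}_4\) case), every \(\alt_6\)-module is semisimple, so I may describe each embedding by the multiset of ordinary irreducible constituents, of dimensions \(1,5,5,8,8,9,10\), appearing in the minimal module. First I would extract from \cite{litterick} the finitely many feasible decompositions of the \(27\)-dimensional module \(V\) of \(\bb{\E}_6\) (and of the \(26\)-dimensional module of \(\bb{\F}_4\)), together with the adjoint-module data, and record that in each feasible case \(\alt_6\) either lies in a proper parabolic subgroup, and is then immediately imprimitive, or fixes a nonzero vector of the minimal module.

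The geometric core is to turn a fixed vector into a positive-dimensional overgroup. I would identify \(V\) with the Albert algebra and the Dickson \(3\)-form \(\m{C}\) with its cubic norm, and recall that \(\bb{\F}_4\) is precisely the stabiliser in \(\bb{\E}_6\) of the identity element, equivalently of any vector with \(\m{C}\ne0\). Restricting \(\m{C}\) to the fixed space of \(\alt_6\), I would argue that either \(\alt_6\) fixes a nonsingular vector, whence \(\alt_6\le\bb{\F}_4<\bb{\E}_6\), or it fixes only singular vectors and hence lies in a proper parabolic-type subgroup; either way \(\alt_6\) is Lie imprimitive in \(\bb{\E}_6\). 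It then remains to produce a proper positive-dimensional overgroup inside \(\bb{\F}_4\): running the same fixed-vector analysis on the \(26\)-dimensional module, a nonsingular fixed vector places \(\alt_6\) in \(\bb{B}_4=\mathrm{Spin}_9\) (whose \(26\)-restriction is \(1+9+16\)) and otherwise in a proper parabolic, giving imprimitivity in \(\bb{\F}_4\) and, a fortiori, a proper positive-dimensional \(\bb{\X}\) in \(\bb{\E}_6\).

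To upgrade to strong imprimitivity I must select \(\bb{\X}\) canonically from the \(\alt_6\)-module structure so that \(N_{\aut\bb{\G}}(\alt_6)\) normalises it. The clean situation is when the relevant trivial constituent has multiplicity one: the fixed line \(\gen{v}\) is then intrinsic to \(\alt_6\), every element of \(N_{\aut\bb{\G}}(\alt_6)\) permutes the \(\alt_6\)-isotypic components and hence fixes \(\gen{v}\), and since it preserves \(\m{C}\) up to a scalar it normalises \(\stab_{\bb{\G}}(\gen{v})\), yielding an \(N_{\aut\bb{\G}}(\alt_6)\)-stable \(\bb{\X}\). For \(\bb{\F}_4\) in characteristic \(\ne2,3\) this essentially finishes the argument, since there \(\aut\bb{\G}\) is generated only by inner and field automorphisms. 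For \(\bb{\E}_6\) I must additionally control the diagonal automorphisms (from the centre of order \(3\)), the field automorphisms, and above all the graph automorphism \(\tau\), which interchanges \(V\) and \(V^\ast\); here I would use that \(\tau\) still preserves the cubic structure and stabilises \(\bb{\F}_4\), so that the chosen \(\bb{\X}\) remains normaliser-stable.

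I expect the main obstacle to be exactly this canonicity step when a trivial (or otherwise repeated) constituent occurs with multiplicity greater than one, since then no single fixed line is intrinsic and I must instead distinguish a canonical point or subspace inside the fixed space by restricting \(\m{C}\) (and, in \(\bb{\F}_4\), the invariant quadratic form) to it and selecting a distinguished rank stratum, all while simultaneously tracking the graph and field automorphisms of \(\bb{\E}_6\). Once \(\bb{\X}\) is verified to be \(N_{\aut\bb{\G}}(\alt_6)\)-stable in every feasible case, strong imprimitivity holds by definition, and the concluding assertion that \(N_{\overline{\G}}(\alt_6)\) is not maximal follows at once from Litterick's theorem quoted in the introduction.
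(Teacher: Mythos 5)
Your strategy has a fatal gap at its very first geometric step. The whole plan rests on the dichotomy ``$\alt_6$ either lies in a proper parabolic or fixes a nonzero vector of the minimal module'', after which you feed a fixed vector into the cubic form to land in $\bb{\F}_4$, a parabolic, or $\bb{\B}_4$. But the only embeddings not already settled by Litterick --- the ones this theorem actually has to address --- are precisely those with \emph{no} trivial constituent on the minimal module: the feasible actions are $8\oplus9^2$ on the $26$-dimensional module of $\bb{\F}_4$ and $5\oplus5'\oplus8\oplus9$ on the $27$-dimensional module of $\bb{\E}_6$ (see \cite[Tables 6.5, 6.6, 6.50]{litterick}). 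There is no fixed vector, singular or not, so your trichotomy never engages, and your strong-imprimitivity argument (canonicity of the fixed line when the trivial constituent has multiplicity one) has nothing to act on. Relatedly, the parenthetical claim that semisimplicity ``also'' holds when $p=5$ for $\bb{\F}_4$ is false, since $5$ divides $\size{\alt_6}$; that case is handled in the literature by a separate argument, not by ordinary character theory.

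The paper's proof has to work considerably harder precisely because no vector is fixed. For $\bb{\F}_4$ it passes to $\bb{\E}_6$ (where the action becomes $1\oplus8\oplus9^2$, but the fixed line only recovers the inclusion in $\bb{\F}_4$ you already have) and instead shows, via a delicate comparison of $\hom_{kH}(M,\sym^2(N^*))$ spaces with their restrictions to the unique class of $\alt_5$ subgroups, that any such $\alt_6$ stabilises a $3$-decomposition $9\oplus9\oplus9$ of the $27$-space with respect to the Dickson form; the stabiliser of a $3$-decomposition is positive-dimensional of type $\bb{\A}_2\bb{\A}_2\bb{\A}_2$, and strong imprimitivity comes from the stabiliser of the canonical $8$-dimensional $\alt_6$-submodule of $M(\bb{\F}_4)$, not of a fixed line. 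For $\bb{\E}_6$ the relevant overgroup is $\bb{\C}_4$, which acts \emph{irreducibly} on the $27$-space --- so no subspace-stabiliser argument of the kind you propose is available at all; the paper instead parametrises all $\alt_6$ overgroups of a fixed $\alt_5$ by solving explicit polynomial conditions coming from the restriction of the cubic form, shows every solution lies in $\bb{\C}_4$, and gets strong imprimitivity from the $\aut\bb{\E}_6$-stability of the maximal subgroup $\bb{\C}_4$. You would need to replace your fixed-vector step with an invariant of the module structure that actually exists in these representations --- a distinguished subspace or decomposition, not a distinguished vector --- before the rest of your outline can proceed.
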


	\cref{algebraicgroups,reductivegroups,rootsystem,liesection,isogeny} outline some general knowledge about groups of Lie type, the main objects we work with, and their structure.
	
	\cref{centraliser,PSLpresentation,findborel,liftin} focus on \cref{mainth1} and explore in more details some topics required to understand how to find a \(\PSL_2(q)\) subgroup of a group of Lie type.
	
	In \cref{e6form} we talk about forms over a vector space, with particular focus on the unique symmetric trilinear \(\E_6\text{-invariant}\) form defined over the minimal module of \(\E_6\). These results will be used to prove \cref{mainth2}.
	
	As previously mentioned, in \cref{maximalsbg} we present some of the literature about maximal subgroups of exceptional groups of Lie type, finite and algebraic, in order to contextualise the cases we are studying.
	
	\cref{method1} is dedicated to the proof of \cref{mainth1}, where the results for each choice of \((\bb{\G},H)\) are contained in a dedicated section; \cref{generalstrat} outlines the general steps we will follow for each of the cases. The algorithms used in the computations are described in \cref{ghn,membership,ngt,t}.
	
	\cref{alt6} is dedicated to the proof of \cref{mainth2}, where each choice of \(\bb{\G}\) has its own tailored approach. The algorithms used in the computations are described in \cref{LDU,LDUwords,3formconstruction,compareahom}.

	The supplementary materials include the files required to reproduce the computations described in \cref{algorithm,method1,alt6}; a complete list is given in \cref{supplements}. Since all our computations are performed in \magma, in \cref{magmastructure} we included some information on the Lie algebras and groups constructed using built-in \magma\ functions, for those unfamiliar with the software.
\end{chapter}

\begin{chapter}{Background material}\label{background}

\begin{section}{Linear algebraic groups}\label{algebraicgroups}
The contents of this section can be found in \cite{geck} and \cite{malletesterman}, unless stated otherwise.

Let \(k\) be a field and consider the affine space \(k^n\).
\begin{definition}
	Let \(S\) be any subset of the polynomial ring \(k[X_1,\ldots,X_n]\). The \emph{algebraic set} defined by \(S\) is the set \[\bb{V}(S)\coloneqq\set{(x_1,\ldots,x_n)\in k^n|f(x_1,\ldots,x_n)=0\;\forall f\in S}.\] Conversely, a subset of \(k^n\) is called \emph{algebraic} if it is of the form \(\bb{V}(S)\) for some \(S\subseteq k[X_1,\ldots,X_n]\).
\end{definition}
Using algebraic sets we can define a topology on \(k^n\).
\begin{definition}
	The open sets of the \emph{Zariski topology} on \(k^n\) are the subsets \(X\subseteq k^n\) such that \(k^n\backslash X\) is an algebraic set.
\end{definition}
In particular, the algebraic sets of \(k^n\) are the closed sets of the topology, and every algebraic set is a topological space with the induced topology.

We can define direct products of algebraic sets in a natural way: let \(V=\bb{V}(S)\), \(S\subseteq k[X_1,\ldots,X_n]\), and \(W=\bb{V}(T)\), \(T\subseteq k[Y_1,\ldots,Y_m]\); if we identify \(k^n\times k^m\) with \(k^{n+m}\), and \(S,T\) with subrings of \(k[X_1,\ldots,X_n,Y_1,\ldots,Y_m]\), then we can define \(V\times W\coloneqq\bb{V}(S\cup T)\), which makes it an algebraic set of \(k^{n+m}\).

\begin{definition}
	We say that a map \(\varphi:V\rightarrow W\) between non-empty algebraic sets \(V,W\subseteq k^n\) is \emph{regular} if there exist \(f_1,\ldots,f_n\in k[X_1,\ldots,X_n]\) such that, for all \(x\in V\), \(\varphi(x)=(f_1(x),\ldots,f_n(x))\). Observe that such a map is continuous in the Zariski topology.
\end{definition}

We can now define algebraic groups, which are the objects we work with.

\begin{definition}
	Take the space \(M_n(k)\) of \(n\times n\) matrices over \(k\), which can be identified with \(k^{n^2}\), and let \(\mu:M_n(k)\times M_n(k)\rightarrow M_n(k)\) be the usual matrix multiplication, which is a regular map.
	
	\(M_n(k)\) equipped with \(\mu\) is called the \emph{general linear algebraic monoid} of degree \(n\), with identity element the identity matrix \(\mathbb{1}_n\).
	
	A \emph{linear algebraic monoid} is an algebraic subset \(G\subseteq M_n(k)\) that is closed under \(\mu\) and such that \(\mathbb{1}_n\in G\).
	
	If every element \(A\in G\) has an inverse under \(\mu\) and the map \(\iota:G\rightarrow G\), \(A\mapsto A^{-1}\), is regular, then \(G\) is called a \emph{linear algebraic group}.
\end{definition}

\begin{example}\leavevmode
\begin{enumerate}
	\item The multiplicative group \(\bb{\G}_m\coloneqq(k^{\times},\cdot)\) is an algebraic group, as it can be identified with the set \[\Set{(x,y)\in k^2|xy=1},\] with component-wise multiplication, and it is an algebraic set defined by \(XY-1\in k[X,Y]\).

	\item The special linear group \(\SL_n(k)\) is a linear algebraic group, being the algebraic set \[\bb{V}(\det-1)\subseteq M_n(k),\] which is closed under multiplication; one can check that the inverse map is regular.

	\item The general linear group \(\GL_n(k)\) is also a linear algebraic group, as it can be identified with the closed subset \[\Set{(A,x)\in M_n(k)\times k|\det A\cdot x=1},\] via the map \(A\mapsto(A,\det A^{-1})\), with component-wise multiplication.
	
	\item Any closed subgroup of \(\GL_n(k)\) is itself a linear algebraic group, and the converse is also true: any linear algebraic group can be embedded as a closed subgroup into \(\GL_n(k)\) for some \(n,k\).
\end{enumerate}
\end{example}

\begin{definition}
	An algebraic group is called a \emph{torus} if it is isomorphic to a direct product of copies of the multiplicative group \(k^{\times}\).
\end{definition}

We also introduce the notion of character:
\begin{definition}
	Let \(G\) be a linear algebraic group. A \emph{character} of \(G\) is a morphism of algebraic groups \(\chi:G\rightarrow\bb{\G}_m\); the set of characters of \(G\) is denoted by \(X(G)\).
	
	Similarly, a \emph{cocharacter} of \(G\) is a morphism of algebraic groups \(\gamma:\bb{\G}_m\rightarrow G\); the set of cocharacters of \(G\) is denoted by \(Y(G)\).
\end{definition}

We have that \(X(G)\) is an abelian group with respect to \[(\chi_1+\chi_2)(g)\coloneqq\chi_1(g)\chi_2(g)\qquad\chi_1,\chi_2\in X(G),g\in G.\]

Similarly, if \(G\) is abelian then \(Y(G)\) is also an abelian group with respect to 
\[(\gamma_1+\gamma_2)(x)\coloneqq\gamma_1(x)\gamma_2(x)\qquad\gamma_1,\gamma_2\in Y(G),x\in\bb{\G}_m.\]

With a topology in place, we can define the following notions:
\begin{definition}
	Let \(U\) be a topological space. Then:
	\begin{enumerate}
		\item \(U\) is \emph{irreducible} if it cannot be decomposed as \(U=U_1\cup U_2\), where \(U_1,U_2\) are non-empty proper closed subsets of \(U\).
		\item \(U\) is \emph{connected} if it cannot be decomposed as \(U=U_1\sqcup U_2\) (disjoint union), where \(U_1,U_2\) are non-empty closed subsets of \(U\).
	\end{enumerate}
\end{definition}
Any irreducible set is also connected, while the converse is in general not true. However, in the case of linear algebraic group the two properties are equivalent (see for example \cite[Proposition 1.13]{malletesterman}).

Let \(V\) be a finite-dimensional vector space over an algebraically closed field \(k\).
\begin{definition}
	An endomorphism \(a\in\Endo V\) is called \emph{semisimple} if it is diagonalisable, \emph{nilpotent} if \(a^m=0\) for some \(m\in\mathbb{N}\), and \emph{unipotent} if \(a-1\) is nilpotent.
\end{definition}

Let \(\bb{\G}\) be a linear algebraic group over an algebraically closed field \(k\). Then:
\begin{theorem}[Jordan decomposition]\quad
	\begin{enumerate}
		\item For any embedding \(\rho:\bb{\G}\rightarrow\GL(V)\) and for any \(g\in\bb{\G}\), there exist \(g_s,g_u\in \bb{\G}\) such that \(g=g_sg_u=g_ug_s\), where \(\rho(g_s)\) is semisimple (diagonalisable), and \(\rho(g_u)\) is unipotent (\(g_u-1\) is nilpotent).
		\item The above decomposition is independent of the choice of \(\rho\).
	\end{enumerate}
\end{theorem}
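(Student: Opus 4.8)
The plan is to reduce the statement to pure linear algebra and then bootstrap to an arbitrary closed subgroup $\bb{\G}\le\GL(V)$ via the action of $\bb{\G}$ on its coordinate ring. First I would settle the case $\bb{\G}=\GL(V)$. For a single $a\in\Endo V$ over the algebraically closed field $k$, the additive Jordan decomposition gives a unique expression $a=a_s+a_n$ with $a_s$ semisimple, $a_n$ nilpotent, and $a_sa_n=a_na_s$, where moreover $a_s$ and $a_n$ are polynomials in $a$. Specialising to $a\in\GL(V)$ and setting $a_u\coloneqq1+a_s^{-1}a_n$ yields the multiplicative decomposition $a=a_sa_u=a_ua_s$ with $a_s$ semisimple and $a_u$ unipotent, again unique and again polynomial in $a$. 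This proves both parts of the theorem for $\GL(V)$ and records three facts I will use repeatedly: the Jordan parts commute with every endomorphism commuting with $a$, they are compatible with passing to $a$-invariant subspaces, and they behave well under tensor products (so that $(a\otimes b)_s=a_s\otimes b_s$, etc.).

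The crux is to show that for $g$ in a closed subgroup $\bb{\G}\le\GL(V)$ the parts $g_s,g_u$, computed in $\GL(V)$, already lie in $\bb{\G}$. I would do this through the right regular representation of $\bb{\G}$ on its coordinate ring $A\coloneqq k[\bb{\G}]$, given by $(\rho_g f)(x)=f(xg)$. Dualising the multiplication map furnishes a comultiplication $\Delta\colon A\to A\otimes A$, from which one deduces that this representation is \emph{locally finite}: every $f\in A$ lies in a finite-dimensional $\bb{\G}$-stable subspace $W$. On each such $W$ the restriction $\rho_g|_W$ has a Jordan decomposition, and the polynomial description from the previous step guarantees these are compatible as $W$ grows, so they glue to well-defined operators $s,u$ on all of $A$ with $\rho_g=su$.

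The main obstacle, and the heart of the argument, is to recognise $s$ and $u$ as translations by honest group elements, i.e. to produce $g_s,g_u\in\bb{\G}$ with $s=\rho_{g_s}$ and $u=\rho_{g_u}$. The idea is to characterise the point-translations among all locally finite operators on $A$ as exactly the $k$-algebra automorphisms compatible with $\Delta$, and then to check that $s$ and $u$ inherit this compatibility from $\rho_g$; this is where the tensor-product behaviour of the Jordan parts is indispensable, since compatibility with $\Delta$ is a condition living on $A\otimes A$. Once $g_s,g_u$ are obtained, evaluating through the faithful embedding $V$ identifies them with the linear-algebra parts of $g$, giving $g=g_sg_u=g_ug_s$ inside $\bb{\G}$ and establishing part (i).

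For the independence asserted in part (ii), I would appeal to functoriality: any morphism of algebraic groups $\varphi\colon\bb{\G}\to\GL(V')$ is dual to an algebra homomorphism intertwining the translation actions, and hence commutes with the formation of Jordan parts, so that $\varphi(g)_s=\varphi(g_s)$ and $\varphi(g)_u=\varphi(g_u)$. Applying this to a second embedding $\rho'\colon\bb{\G}\to\GL(V')$ shows that the intrinsically defined $g_s,g_u$ map to the semisimple and unipotent parts of $\rho'(g)$; by uniqueness of the multiplicative Jordan decomposition in $\GL(V')$ these coincide with the parts computed directly from $\rho'$, so the decomposition does not depend on the chosen embedding.
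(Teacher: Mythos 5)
Your proposal is correct and is essentially the argument the paper points to: the paper gives no proof of its own but cites \cite[\S15.3]{humpreys_linear}, and the proof there is precisely this one --- multiplicative Jordan decomposition in \(\GL(V)\) from the additive one, transfer to a closed subgroup via the locally finite right regular representation on \(k[\bb{\G}]\) and the characterisation of translations as algebra automorphisms compatible with comultiplication, and functoriality for independence of the embedding. No gaps; all the ingredients you flag (polynomiality of the Jordan parts, compatibility with invariant subspaces and tensor products) are exactly what makes the gluing and the \(\Delta\)-compatibility steps work.
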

\begin{proof}
	See for example \cite[\S15.3]{humpreys_linear}.
\end{proof}

We denote by \(\bb{\G}_u\) (resp. \(\bb{\G}_s\)) the subset of unipotent (resp. semisimple) elements of \(\bb{\G}\).
\begin{definition}
	The \emph{radical} \(R(\bb{\G})\) of the linear algebraic group \(\bb{\G}\) is the maximal closed connected soluble normal subgroup of \(\bb{\G}\).
	
	\(R(\bb{\G})_u\), or \(R_u(\bb{\G})\), the \emph{unipotent radical} of \(\bb{\G}\), is the maximal closed connected normal unipotent subgroup of \(\bb{\G}\).
	
	\(\bb{\G}\) is called \emph{reductive} if \(R_u(\bb{\G})=1\), and \emph{semisimple} if it is connected and \(R(\bb{\G})=1\).
	
	A non-trivial semisimple algebraic group \(\bb{\G}\) is called \emph{simple} if it has no non-trivial proper closed connected normal subgroup.
\end{definition}
\end{section}

\begin{section}{BN-pairs and Steinberg endomorphisms}\label{reductivegroups}
The contents of this section can be found in \cite{geck} and \cite{malletesterman}, unless specified otherwise.

The groups with a BN-pair are an important class, which also includes the groups of Lie type we are interested in.

\begin{definition}\label{BNpairdef}
	An abstract group \(G\) is said to possess a \emph{BN-pair} if there are \(B,N\le G\) such that:
	\begin{enumerate}
		\item \(\gen{B,N}=G\).
		\item \(H\coloneqq B\cap N\unlhd N\), and \(W\coloneqq N/H\) is a finite group generated by a set \(S\) of involutions.
		\item \(n_sBn_s\ne B\), where \(n_s\in N\) maps to \(s\in S\) under the natural homomorphism \(N\rightarrow W\).
		\item \(n_sBn\subseteq Bn_snB\cup BnB\) for all \(s\in S\) and \(n\in N\).
		\item \(\displaystyle\bigcap_{n\in N}nBn^{-1}=H\).
	\end{enumerate}
	\(W\) is called the \emph{Weyl group} of \(G\).
\end{definition}

We have not only that \(G=BNB\), but also the following holds:
\begin{theorem}[Bruhat decomposition]
	For any \(w\in W\), let \(n_w\) be a representative of \(w\) in \(N\). Then we have a double-coset decomposition
	\[G=\bigsqcup_{w\in W}Bn_wB.\]
\end{theorem}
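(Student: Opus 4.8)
The plan is to prove the two halves of the statement in turn: that the double cosets \(Bn_wB\) exhaust \(G\), and that they are pairwise disjoint as \(w\) ranges over \(W\). For the covering I would put \(Y\coloneqq\bigcup_{w\in W}Bn_wB\) and show that \(Y\) is a subgroup of \(G\); since \(Y\) contains \(B\) (take \(w=1\)) and contains \(N\) (because \(H\le B\), so each coset \(n_wH\) lies in \(Bn_wB\)), axiom (i) then forces \(Y=\gen{B,N}=G\). Closure under inversion is immediate from \((Bn_wB)^{-1}=Bn_{w^{-1}}B\), so the only real content is closure under multiplication. I would reduce this to the generators: axiom (iv) applied with \(n=n_w\) gives
\[
n_sBn_w\subseteq Bn_sn_wB\cup Bn_wB=Bn_{sw}B\cup Bn_wB,
\]
whence \(Bn_sB\cdot Bn_wB\subseteq Y\) for every \(s\in S\) and \(w\in W\). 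As \(W=\gen{S}\), an induction on the length \(\ell(v)\) (the least number of generators from \(S\) needed to express \(v\)) upgrades this to \(Bn_vB\cdot Y\subseteq Y\) for all \(v\), i.e. \(Y\cdot Y\subseteq Y\).

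For disjointness I would use the elementary fact that two \((B,B)\)-double cosets are either equal or disjoint, and show that \(Bn_vB=Bn_wB\) forces \(v=w\) by induction on \(\min(\ell(v),\ell(w))\). The base case is axiom (ii): \(B=Bn_wB\) gives \(n_w\in B\cap N=H\), so \(w=1\). In the inductive step, assuming \(\ell(v)\le\ell(w)\) and \(\ell(w)\ge1\), I would pick \(s\in S\) with \(\ell(sw)<\ell(w)\); since \(n_w\in Bn_vB\), axiom (iv) puts \(n_sn_w\) into \(Bn_{sv}B\cup Bn_wB\), while \(n_sn_w\) also represents \(sw\) and so lies in \(Bn_{sw}B\). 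As double cosets coincide or are disjoint, \(Bn_{sw}B\) equals \(Bn_{sv}B\) or \(Bn_wB\). The second possibility is exactly what must be excluded — it would identify double cosets of Weyl elements of different lengths — and ruling it out is the crux of the whole argument; once excluded, \(Bn_{sw}B=Bn_{sv}B\) is a strictly shorter instance, so the inductive hypothesis gives \(sw=sv\) and hence \(w=v\).

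I expect the main difficulty to be precisely this non-collapsing statement, which is best packaged as the length-additive refinement of the multiplication rule: if \(\ell(sw)>\ell(w)\) then
\[
Bn_sB\cdot Bn_wB=Bn_{sw}B
\]
with no superfluous \(Bn_wB\) term. I would prove it by induction on \(\ell(w)\), using a reduced expression for \(w\), peeling off a generator, and combining axiom (iv) with the exchange behaviour of reduced words in the Coxeter group \(W\). The indispensable input is axiom (iii): it says \(n_sBn_s\ne B\), hence \(n_s\notin B\) and \(Bn_sB\ne B\), and this is exactly what forbids the degenerate collapse \(Bn_{sw}B=Bn_wB\) and seeds the induction. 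By contrast the covering half is purely formal, so essentially all the work — and all the interaction with the combinatorics of \(W\) and with axiom (iii) — is concentrated in establishing this refinement.
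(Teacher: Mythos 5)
The paper itself gives no proof of this theorem (the whole section is cited from Geck and Malle--Testerman), so there is nothing internal to compare against; your outline is the classical BN-pair argument, and the covering half of it is correct and complete as sketched: $Y=\bigcup_w Bn_wB$ contains $B$ and $N$, is closed under inversion because representatives of $w^{-1}$ differ from $n_w^{-1}$ by elements of $H\le B$, and closure under multiplication reduces via axiom (iv) and induction on length to the generator case. No complaints there.

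The disjointness half, however, has a genuine gap in the way you set up the induction. Having assumed $\ell(v)\le\ell(w)$, you shorten the \emph{longer} element, choosing $s$ with $\ell(sw)<\ell(w)$. This causes two problems. First, the surviving case $Bn_{sw}B=Bn_{sv}B$ is \emph{not} in general a shorter instance of the induction: if $\ell(sv)=\ell(v)+1$ and $\ell(sw)=\ell(w)-1\ge\ell(v)$, then $\min(\ell(sv),\ell(sw))\ge\ell(v)$, so the inductive parameter does not decrease (e.g.\ $\ell(v)=1$, $\ell(w)=3$ can give minima $2$ and $2$). Second, you are forced to exclude the collapse $Bn_{sw}B=Bn_wB$ by a separate, genuinely harder length-additivity lemma, and you never actually show how that lemma rules the collapse out; appealing to ``different lengths'' is circular, since the statement that equal double cosets have representatives of equal length is part of what disjointness asserts. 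Both problems disappear if you instead shorten the \emph{shorter} element: pick $s$ with $\ell(sv)<\ell(v)$, so that $n_sn_v\in Bn_{sv}B$ and, via axiom (iv) applied to $n_v\in Bn_wB$, also $n_sn_v\in Bn_{sw}B\cup Bn_wB$. If $Bn_{sv}B=Bn_wB$, then $\min(\ell(sv),\ell(w))=\ell(sv)<\ell(v)$, so the inductive hypothesis forces $sv=w$ and hence $\ell(w)<\ell(v)$, contradicting $\ell(v)\le\ell(w)$; otherwise $Bn_{sv}B=Bn_{sw}B$ with $\min(\ell(sv),\ell(sw))\le\ell(sv)<\ell(v)$, and induction gives $sv=sw$, so $v=w$. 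With this orientation the induction descends, the ``crux'' case is disposed of by the inductive hypothesis itself, and the length-additive refinement of the multiplication rule is not needed at all. Axiom (iii) is still what makes $W$ genuinely visible in $G$ (it guarantees $Bn_sB\ne B$), but it enters through axiom (ii) in the base case rather than through a separate non-collapsing lemma.
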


A consequence of this result is the following:
\begin{corollary}[{\cite[Corollary 1.6.4]{geck}}]\label{Bselfnormal}
	\(B\) is self-normalising in \(G\).
\end{corollary}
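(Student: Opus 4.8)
The plan is to establish $N_G(B)=B$ through the Bruhat decomposition; since $B\subseteq N_G(B)$ is immediate, only the reverse inclusion needs work. First I would take an arbitrary $g\in N_G(B)$ and apply the Bruhat decomposition to write $g=b_1n_wb_2$ for a unique $w\in W$, with $b_1,b_2\in B$ and $n_w\in N$ a representative of $w$. Because $N_G(B)$ is a subgroup containing $B$, it follows that $n_w=b_1^{-1}gb_2^{-1}\in N_G(B)$; moreover any two representatives of $w$ differ by an element of $H\subseteq B$, so every representative of $w$ normalises $B$. This reduces the whole problem to showing $w=1$, for then $g\in Bn_1B=B$ (as $n_1\in H\subseteq B$).

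Next I would argue by contradiction. Suppose $w\neq1$, and let $\ell$ denote the length function on $W$ relative to the generating set $S$. Writing $w=s_1\cdots s_r$ reducedly, the involution $s\coloneqq s_1$ satisfies $\ell(sw)=\ell(w)-1<\ell(w)$. The key input is the product rule for Bruhat cells, which is a consequence of axiom (iv) of \cref{BNpairdef}: when $\ell(sw)<\ell(w)$ one has
\[Bn_sB\cdot Bn_wB=Bn_{sw}B\;\cup\;Bn_wB,\]
and the summand $Bn_wB$ genuinely occurs in this product.

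The contradiction then comes from exploiting that $n_w$ normalises $B$, i.e.\ $Bn_w=n_wB$. Substituting this identity into the left-hand product gives
\[Bn_sBn_wB=Bn_s(Bn_w)B=Bn_s(n_wB)B=Bn_sn_wB=Bn_{sw}B,\]
since $n_sn_w$ is a representative of $sw$. Comparing with the product rule yields $Bn_wB\subseteq Bn_{sw}B$, which is impossible: $w\neq sw$, and by the Bruhat decomposition distinct elements of $W$ index disjoint double cosets. Hence $w=1$, so $g\in B$, as required.

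I expect the main obstacle to be establishing the product rule for Bruhat cells in the length-decreasing case, and in particular verifying that the extra summand $Bn_wB$ is really present rather than merely $Bn_sBn_wB\subseteq Bn_{sw}B\cup Bn_wB$. This is the only step that genuinely uses axiom (iv), and I would obtain it by first proving the single-reflection relation $Bn_sB\cdot Bn_sB=B\cup Bn_sB$ and then bootstrapping via the factorisation $w=s\cdot(sw)$ with $\ell(w)=\ell(sw)+1$. Once that formula is available, the normalising substitution above is a one-line computation and the corollary follows.
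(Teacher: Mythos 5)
Your proof is correct and is essentially the argument behind the result the paper cites without proof ([Geck, Corollary 1.6.4]): Bruhat decomposition to reduce to $n_w\in N_G(B)$, then the double-coset multiplication rule $Bn_sB\cdot Bn_wB=Bn_{sw}B\cup Bn_wB$ for $\ell(sw)<\ell(w)$ against disjointness of Bruhat cells. The only point worth making explicit in the step you flag as the main obstacle is that the extra coset in $Bn_sB\cdot Bn_sB=B\cup Bn_sB$ really does occur because of axiom (iii) of \cref{BNpairdef}: if $n_sBn_s\subseteq B$, then since $n_s^2\in H\subseteq B$ one gets $n_sBn_s^{-1}\subseteq B$ and, conjugating again, $B=n_s^2Bn_s^{-2}\subseteq n_sBn_s^{-1}$, forcing $n_sBn_s=B$, a contradiction.
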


\begin{definition}\label{splitBN}
	A BN-pair as in the above definition is called \emph{split} if there exists \(U\unlhd B\) such that:
	\begin{enumerate}
		\item \(B=U\rtimes H\).
		\item For any \(n\in N\), \(U^n\cap B\subseteq U\).
	\end{enumerate}
\end{definition}

For the rest of the section, let \(\bb{\G}\) be an algebraic group over an algebraically closed field \(k\).

\begin{definition}\label{reductiveBN}
	\(\bb{\G}\) has a \emph{reductive} BN-pair if it has a split BN-pair such that:
	\begin{enumerate}
		\item \(H\) is a torus and it is self-centralising in \(\bb{\G}\).
		\item \(U\) is closed connected and nilpotent.
	\end{enumerate}
\end{definition}
Note that the exceptional groups of Lie type, which we are interested in, have a reductive BN-pair.

An important class of subgroups is the following:
\begin{definition}
	A subgroup \(B\) of \(\bb{\G}\) is called a \emph{Borel subgroup} if it is a maximal closed connected soluble subgroup.
\end{definition}

The following result describes an important relation between Borel subgroups and tori:
\begin{theorem}[{\cite{boreltits}}]\label{boreltits}
	Let \(\bb{\G}\) be a connected reductive algebraic group, \(B\le\bb{\G}\) a Borel subgroup, \(\bb{\T}\le B\) a maximal torus. Let \(N\coloneqq N_{\bb{\G}}(\bb{\T})\), then \(B, N\) is a BN-pair in \(\bb{\G}\).
\end{theorem}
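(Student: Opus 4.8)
The plan is to verify the five BN-pair axioms directly from the root-space structure of a connected reductive group, in which the chosen pair \((B,\bb{\T})\) determines a system of positive roots; the Bruhat decomposition will then be a formal consequence (it is stated as the next theorem, so I must not assume it here). Write \(\Phi=\Phi(\bb{\G},\bb{\T})\) for the root system relative to the maximal torus \(\bb{\T}\), let \(\Phi^+\) be the positive roots cut out by \(B\), let \(\Delta\subseteq\Phi^+\) be the base of simple roots, and let \(U_\alpha\) be the root subgroup attached to \(\alpha\in\Phi\). I would rely on the standard structure theory, namely that \(\bb{\G}=\langle\bb{\T},U_\alpha\ (\alpha\in\Phi)\rangle\), that \(B=U\rtimes\bb{\T}\) with \(U=\prod_{\alpha\in\Phi^+}U_\alpha\), and that \(W=N_{\bb{\G}}(\bb{\T})/\bb{\T}\) acts faithfully on \(\Phi\) and is generated by the simple reflections \(s_\alpha\) (\(\alpha\in\Delta\)); set \(S=\{s_\alpha:\alpha\in\Delta\}\).

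First I would pin down \(H\) and \(W\). As \(\bb{\T}\le B\) and \(\bb{\T}\le N\) we have \(\bb{\T}\le B\cap N\), and conversely any element of \(B\cap N\) normalises \(\bb{\T}\) and lies in \(U\rtimes\bb{\T}\); since the only element of \(U\) normalising \(\bb{\T}\) is the identity, \(B\cap N=\bb{\T}\). Thus \(H=\bb{\T}\unlhd N\) and \(W=N/H\) is the finite Weyl group generated by the involutions of \(S\), which is axiom (ii). Axiom (i) follows because \(\langle B,N\rangle\) contains \(\bb{\T}\) and every positive root subgroup, and conjugating by suitable coset representatives \(n_{s_\alpha}\in N\) turns these into the negative root subgroups, so \(\langle B,N\rangle=\langle\bb{\T},U_\alpha\ (\alpha\in\Phi)\rangle=\bb{\G}\).

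Axioms (iii) and (v) are governed by the action of \(W\) on \(\Phi^+\). For (iii), a simple reflection \(s_\alpha\) sends \(\alpha\mapsto-\alpha\) while permuting \(\Phi^+\setminus\{\alpha\}\), hence does not preserve the positive system, so \(n_{s_\alpha}Bn_{s_\alpha}^{-1}\ne B\). For (v), let \(w_0\in W\) be the longest element, so that \(w_0(\Phi^+)=-\Phi^+\) and \(n_{w_0}Bn_{w_0}^{-1}=B^-\) is the opposite Borel with \(B\cap B^-=\bb{\T}\); then \(\bigcap_{n\in N}nBn^{-1}\subseteq B\cap B^-=\bb{\T}\), while \(\bb{\T}\) is normalised by every \(n\in N\), giving \(\bigcap_{n\in N}nBn^{-1}=\bb{\T}=H\).

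The \emph{main obstacle} is axiom (iv), \(n_sBn\subseteq Bn_snB\cup BnB\), equivalently the cell-multiplication rule \((Bn_sB)(BnB)\subseteq Bn_snB\cup BnB\) for \(s\in S\). Its essential special case is \((Bn_sB)(Bn_sB)\subseteq B\cup Bn_sB\), which I would prove by a rank-one computation inside \(\langle U_\alpha,U_{-\alpha},\bb{\T}\rangle\), whose derived group is a quotient of \(\SL_2\): there one checks explicitly that for \(u\in U_\alpha\) the element \(n_s u\,n_s\) lies in \(B\cup Bn_sB\) (it equals \(n_s^2\in\bb{\T}\le B\) when \(u=1\), and lies in the open cell \(Bn_sB\) otherwise). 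The general case is then obtained by distinguishing whether the image of \(n\) in \(W\) sends \(\alpha\) to a positive or a negative root, the two alternatives producing exactly the two terms \(Bn_snB\) and \(BnB\) on the right. The delicate part is the explicit \(\SL_2\) identity together with the bookkeeping of which root subgroups are absorbed into \(B\) in each case; once this is in place, and given the generation statement of the first paragraph, the remaining verification is formal.
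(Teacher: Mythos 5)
The paper does not prove this statement at all: it is quoted as background with a bare citation to the literature (Borel--Tits), so there is no in-paper argument to compare against. Your sketch is the standard textbook proof (as in, e.g., Springer or Malle--Testerman, Theorem 11.16), and its outline is correct: the identifications \(B\cap N=\bb{\T}\) (via \([u,\bb{\T}]\subseteq U\cap\bb{\T}=1\) and self-centralisation of maximal tori), the generation of \(\bb{\G}\) by \(\bb{\T}\) and all root subgroups, axiom (iii) from \(n_sU_\alpha n_s^{-1}=U_{-\alpha}\not\le B\), and axiom (v) from \(B\cap B^-=\bb{\T}\) are all as in the references. You correctly isolate axiom (iv) as the only point of substance. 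Be aware, though, that two nontrivial inputs remain unexecuted there: first, the factorisation \(U=U_s'\,U_\alpha\) with \(U_s'=\prod_{\beta\in\Phi^+\setminus\{\alpha\}}U_\beta\) normalised by \(n_s\), which rests on the commutator relations and the fact that \(U\) is directly spanned by its root subgroups in any order; and second, the explicit rank-one identity \(U_{-\alpha}\setminus\{1\}\subseteq U_\alpha\bb{\T} n_sU_\alpha\) inside the \(\langle U_{\pm\alpha}\rangle\) subgroup, valid because its image in the rank-one quotient is \(\SL_2\) or \(\PGL_2\). With those two facts supplied, your case split on the sign of \(w^{-1}(\alpha)\) does yield exactly the two cosets \(Bn_snB\) and \(BnB\), so the plan is sound; it is a proof outline rather than a complete proof, but it is the right one and is more informative than the paper's citation.
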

\begin{remark}
	Observe that by \cref{BNpairdef} this implies that if \(\bb{\T}\) is a maximal torus of \(\bb{\G}\) and \(N=N_{\bb{\G}}(\bb{\T})\), then \(N=\bb{\T}.W\), where \(W\) is the Weyl group of \(\bb{\G}\).
\end{remark}

Let \(\bb{\G}\) be a connected reductive algebraic group, \(B\) a Borel subgroup of \(\bb{\G}\), and \(S\) a set of involutions generating the Weyl group \(W\) of \(\bb{\G}\).

\begin{definition}
	A \emph{standard parabolic subgroup} of \(\bb{\G}\) is a subgroup \(P_I\coloneqq BN_IB\), where \(I\subseteq S\), \(W_I\coloneqq\gen{s|s\in I}\le W\), and \(N_I\) is a set of representatives in \(N\) of \(W_I\) under the natural homomorphism \(N\rightarrow W\).
	
	A \emph{parabolic subgroup} of \(\bb{\G}\) is any conjugate of a standard parabolic subgroup.
\end{definition}

Under the same conditions, we have the following results:

\begin{proposition}\leavevmode
\begin{enumerate}
	\item For any \(I\subseteq S\), \(P_I\) is a closed connected self-normalising subgroup of \(\bb{\G}\) that contains \(B\).
	\item The subgroups \(P_I\) are mutually non-conjugate, and for any \(I,J\subseteq S\) we have \(P_I\cap P_J=P_{I\cap J}\).
	\item Any overgroup of \(B\) in \(\bb{\G}\) arises in this way.
\end{enumerate}
\end{proposition}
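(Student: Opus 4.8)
The plan is to reduce every assertion to the Bruhat decomposition, which lets me write $P_I=\bigsqcup_{w\in W_I}Bn_wB$, and then to separate the purely group-theoretic content, coming from the BN-pair axioms, from the two genuinely geometric claims, namely closedness and connectedness. First I would record the basic multiplication rule: for $s\in S$ and $w\in W$, axiom (iv) of \cref{BNpairdef} gives $Bn_sB\cdot Bn_wB\subseteq Bn_{sw}B\cup Bn_wB$, and an easy induction on the Coxeter length $\ell(w')$ of $w'\in W_I$ extends this to $Bn_{w'}B\cdot Bn_wB\subseteq P_I$ whenever $w,w'\in W_I$. Since $W_I$ is a subgroup of $W$ and $(Bn_wB)^{-1}=Bn_{w^{-1}}B$, this already shows that $P_I$ is an abstract subgroup of $\bb{\G}$, and it contains $B=Bn_1B$ because $1\in W_I$.

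For the geometric half of (i), closedness is the easier part: each cell $Bn_wB$ is the image of the morphism $B\times B\to\bb{\G}$, $(b_1,b_2)\mapsto b_1n_wb_2$, hence constructible, so $P_I$ is a constructible subgroup and therefore closed. Connectedness is where the abstract axioms no longer suffice, and it is the step I expect to be the \emph{main obstacle}, since nothing in \cref{BNpairdef} sees the topology. I would treat each minimal parabolic $P_{\{s\}}=B\sqcup Bn_sB$ first: using the description of the closure of a Bruhat cell (the closure of $Bn_sB$ is $B\sqcup Bn_sB$), the set $P_{\{s\}}$ equals $\overline{Bn_sB}$ and is thus irreducible, hence connected. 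Then $P_I=\gen{P_{\{s\}}\mid s\in I}$, and a subgroup generated by closed connected subgroups each containing the identity is closed and connected. Equivalently, one may invoke the root-subgroup description $P_{\{s_\alpha\}}=\gen{B,U_{-\alpha}}$ with $U_{-\alpha}$ the negative root subgroup, which is manifestly connected.

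Self-normalising then follows from a conjugacy argument that I will reuse for (ii). If $g\in N_{\bb{\G}}(P_I)$ then $gBg^{-1}\subseteq P_I$ is a Borel subgroup of the connected group $P_I$; since all Borel subgroups of $P_I$ are $P_I$-conjugate, there is $p\in P_I$ with $pgBg^{-1}p^{-1}=B$, so $pg$ normalises $B$ and hence lies in $B\subseteq P_I$ by \cref{Bselfnormal}, giving $g\in P_I$. For (ii), disjointness of the Bruhat cells gives $P_I\cap P_J=\bigsqcup_{w\in W_I\cap W_J}Bn_wB$, and the standard Coxeter identity $W_I\cap W_J=W_{I\cap J}$ yields $P_I\cap P_J=P_{I\cap J}$. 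Mutual non-conjugacy is the same argument: if $P_I=gP_Jg^{-1}$ then $gBg^{-1}\subseteq P_I$ forces $g\in P_I$ exactly as above, whence $P_J=g^{-1}P_Ig=P_I$; comparing Bruhat decompositions gives $W_I=W_J$, so $I=W_I\cap S=W_J\cap S=J$.

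Finally, for (iii), let $P$ be any closed overgroup of $B$. Since $P\supseteq B$ it is a union of $(B,B)$-double cosets, so $P=\bigsqcup_{w\in W'}Bn_wB$ for some $W'\subseteq W$ with $1\in W'$, and the multiplication and inverse formulas show as before that $W'$ is a subgroup of $W$. Setting $I\coloneqq W'\cap S$, the inclusion $W_I\subseteq W'$ is immediate. For the reverse I would argue by induction on $\ell(w)$ that every $w\in W'$ lies in $W_I$: writing $w=s_1\cdots s_r$ reduced with $r\ge1$, the right-handed multiplication rule gives $Bn_wB\cdot Bn_{s_r}B=Bn_{ws_r}B\cup Bn_wB\subseteq P$ because $\ell(ws_r)<\ell(w)$, so $ws_r\in W'$; as $W'$ is a subgroup this forces $s_r=(ws_r)^{-1}w\in W'\cap S=I$ and, by induction, $ws_r\in W_I$, whence $w\in W_I$. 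Therefore $W'=W_I$ and $P=P_I$, completing the proof.
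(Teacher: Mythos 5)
Your parts (i) and (ii) are correct, and since the paper disposes of this proposition by citing \cite[Proposition 12.2]{malletesterman} there is no in-paper argument to compare against: the Bruhat-cell description of \(P_I\), constructibility for closedness, the reduction of connectedness to the minimal parabolics \(\overline{Bn_sB}\) (or to \(\langle B,U_{-\alpha}\rangle\)), and the Borel-conjugacy argument for self-normalisation and mutual non-conjugacy are all the standard route and are correctly executed.

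Part (iii), however, contains a circular step. You deduce \(ws_r\in W'\) from the inclusion \(Bn_wB\cdot Bn_{s_r}B=Bn_{ws_r}B\cup Bn_wB\subseteq P\), but for that product to lie in \(P\) you already need \(Bn_{s_r}B\subseteq P\), i.e.\ \(s_r\in W'\) --- which is precisely what the induction is supposed to produce; knowing only \(w\in W'\) does not let you multiply by the double coset of \(s_r\) and stay inside \(P\). The standard repair multiplies only cosets already known to lie in \(P\). Since \(s_1\cdots s_r\) is reduced, \(w=vs_r\) with \(\ell(w)=\ell(v)+1\), so the multiplication rule in the length-additive case gives \(Bn_wB=Bn_vB\cdot Bn_{s_r}B\), and therefore
\[
P\;\supseteq\;Bn_{w^{-1}}B\cdot Bn_wB\;=\;Bn_{s_r}B\,\bigl(Bn_{v^{-1}}B\cdot Bn_vB\bigr)\,Bn_{s_r}B\;\supseteq\;Bn_{s_r}B\cdot B\cdot Bn_{s_r}B\;\supseteq\;Bn_{s_r}B,
\]
where the middle inclusion uses \(B\subseteq Bn_{v^{-1}}B\cdot Bn_vB\) (the product is a union of \((B,B)\)-double cosets containing the representative \(n_{v^{-1}}n_v\in H\)), and the last uses axioms (iii) and (iv) of \cref{BNpairdef}, which force \(Bn_{s_r}Bn_{s_r}B\) to equal \(B\cup Bn_{s_r}B\) rather than \(B\). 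This yields \(s_r\in W'\cap S=I\) directly; then \(ws_r\in W'\) because \(W'\) is a subgroup, and your induction closes exactly as you wrote it. The remaining ingredients of (iii) --- that \(W'\) is a subgroup and that \(W'=W_I\) forces \(P=P_I\) --- are fine, and note that your argument never actually uses the hypothesis that \(P\) is closed, which is as it should be since the statement concerns arbitrary overgroups of \(B\).
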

\begin{proof}
	See for example \cite[Proposition 12.2]{malletesterman}.
\end{proof}

\begin{proposition}\label{borel}\leavevmode
	\begin{enumerate}
		\item The subgroup \(B\) of \(\bb{\G}\) defining a BN-pair is a Borel subgroup of \(\bb{\G}\).
		\item All Borel subgroups of \(\bb{\G}\) are conjugate.
	\end{enumerate}
	In particular, we have that a subgroup of \(\bb{\G}\) is parabolic if and only if it contains a Borel subgroup.	
\end{proposition}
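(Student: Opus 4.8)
The plan is to handle the three assertions in turn, deriving the first two from the structure theory already assembled and then obtaining the final equivalence formally from them together with the preceding (unlabelled) proposition on the \(P_I\). First I would show that the subgroup \(B\) of a reductive BN-pair is closed, connected and soluble, directly from the splitting \(B=U\rtimes H\) furnished by \cref{splitBN,reductiveBN}: the product \(B=UH\) is the image of the multiplication morphism \(U\times H\to\bb{\G}\), hence closed; it is connected because \(U\) and \(H\) are; and it is soluble because \(U\) is nilpotent while \(B/U\cong H\) is a torus. The substance of (i) is maximality among closed connected soluble subgroups, which I would prove by contradiction: if \(B\subsetneq B'\) with \(B'\) closed connected soluble, then by the preceding proposition \(B'\) equals some standard parabolic \(P_I\) with \(I\ne\emptyset\); picking \(s\in I\) gives the minimal parabolic \(P_{\{s\}}=B\sqcup Bn_sB\le B'\) via the Bruhat decomposition, and I would show \(P_{\{s\}}\) is non-soluble since its semisimple rank is one, so modulo its unipotent radical it has a section of type \(\A_1\) (a copy of \(\SL_2\) or \(\PGL_2\)). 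This contradicts solubility of \(B'\), forcing \(B'=B\).

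For (ii) the engine is the Borel fixed point theorem applied to the flag variety \(\bb{\G}/B\). Granting that \(\bb{\G}/B\) is complete, any Borel subgroup \(B'\), being closed connected and soluble, acts on this non-empty complete variety with a fixed point \(gB\), whence \(B'\le gBg^{-1}\). Since \(gBg^{-1}\) is again a Borel subgroup by (i), and \(B'\) is maximal closed connected soluble by hypothesis, the inclusion is forced to be an equality, so \(B'=gBg^{-1}\) is conjugate to \(B\).

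For the concluding equivalence I would argue purely formally from (i), (ii) and the preceding proposition. Every standard parabolic \(P_I\) contains \(B\), hence contains a Borel subgroup; conjugating, every parabolic subgroup contains a Borel subgroup. Conversely, if a closed subgroup \(Q\) contains a Borel subgroup \(B''\), then by (ii) one has \(B''=gBg^{-1}\), so \(g^{-1}Qg\supseteq B\) is an overgroup of \(B\) and therefore equals some \(P_I\); thus \(Q=gP_Ig^{-1}\) is parabolic.

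The hard part is the geometric input feeding (ii): the completeness of \(\bb{\G}/B\) and the Borel fixed point theorem are the deep ingredients and are \emph{not} formal consequences of the BN-pair axioms of \cref{BNpairdef}, so I would import them from the references surrounding \cref{boreltits}. A secondary technical point is the non-solubility of the minimal parabolics \(P_{\{s\}}\) used in (i), which rests on identifying their rank-one semisimple structure rather than on the abstract axioms alone; everything else reduces to bookkeeping with the Bruhat decomposition and the classification of overgroups of \(B\).
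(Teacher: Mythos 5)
Your proposal is correct and is essentially the standard argument; the paper offers no proof of its own here, deferring entirely to \cite[Theorems 3.4.3 and 3.4.6]{geck}, and what you reconstruct is the proof found in such references: solubility and connectedness of \(B=U\rtimes H\) from the reductive split BN-pair axioms, maximality via the classification of overgroups of \(B\) together with the non-solubility of the rank-one parabolics \(P_{\{s\}}\) (which indeed contain the image of \(\phi_{\alpha}\colon\SL_2(k)\to\gen{X_{\alpha},X_{-\alpha}}\), since \(X_{-\alpha}=n_sX_{\alpha}n_s^{-1}\le P_{\{s\}}\)), conjugacy via completeness of \(\bb{\G}/B\) and the Borel fixed point theorem, and the parabolic characterisation as a formal consequence. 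You are right to flag the completeness of the flag variety and the fixed point theorem as genuine external inputs not derivable from the BN-pair axioms. The only step stated a little too quickly is the closedness of \(B=UH\): the image of the multiplication map \(U\times H\to\bb{\G}\) is in general only constructible, and one should invoke the standard fact that the product of two closed connected subgroups, one normalising the other, is a closed connected subgroup; this is routine and does not affect the argument.
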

\begin{proof}
	See for example \cite[Theorem 3.4.3]{geck} and \cite[Theorem 3.4.6]{geck}.
\end{proof}

This implies a similar result for tori.

\begin{definition}
	Let \(\bb{\G}\) be a linear algebraic group. A torus \(\bb{\T}\) of \(\bb{\G}\) is a \emph{maximal torus} if it is maximal among the tori of \(\bb{\G}\) with respect to inclusion.
\end{definition}

\begin{corollary}\label{maximaltori}
	Let \(\bb{\G}\) be a linear algebraic group, then all maximal tori of \(\bb{\G}\) are conjugate.
\end{corollary}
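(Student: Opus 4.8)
The plan is to deduce the statement from two facts already in place: the conjugacy of Borel subgroups (\cref{borel}) and the fact, standard for connected soluble groups, that their maximal tori form a single conjugacy class. The strategy is to show that every maximal torus sits inside a Borel subgroup, so that conjugacy of Borels lets us assume two given maximal tori lie in a common Borel \(B\); the problem then collapses to conjugacy of maximal tori inside the soluble group \(B\).

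First I would reduce to the connected case. A torus is by definition a product of copies of \(\bb{\G}_m\), hence connected, so every torus of \(\bb{\G}\) is contained in the identity component \(\bb{\G}^{\circ}\); thus the maximal tori of \(\bb{\G}\) coincide with those of \(\bb{\G}^{\circ}\), and I may assume \(\bb{\G}\) connected. Next, given a maximal torus \(\bb{\T}\), note that it is closed, connected, and abelian, hence soluble, so it is contained in some maximal closed connected soluble subgroup, i.e.\ in a Borel subgroup \(B\). Now take two maximal tori \(\bb{\T}_1\le B_1\) and \(\bb{\T}_2\le B_2\); by \cref{borel} the Borel subgroups \(B_1,B_2\) are conjugate, so after conjugating we may assume \(B_1=B_2=B\). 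Since any torus of \(B\) is a torus of \(\bb{\G}\), each \(\bb{\T}_i\) is in particular a maximal torus of \(B\), and it remains only to prove that any two maximal tori of the connected soluble group \(B\) are conjugate in \(B\) (hence in \(\bb{\G}\)).

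The main obstacle is exactly this last step. Here I would argue by induction on \(\dim B\). Writing \(U=R_u(B)\) for the unipotent radical, the quotient \(B/U\) is a torus onto which every maximal torus of \(B\) maps isomorphically. Choosing a minimal nontrivial closed connected normal subgroup \(A\) of \(B\) contained in \(U\) (necessarily abelian and unipotent, so a power of \(\bb{\G}_a\)), the inductive hypothesis applied to \(B/A\) conjugates \(\bb{\T}_1 A/A\) to \(\bb{\T}_2 A/A\), reducing us to the case \(\bb{\T}_1 A=\bb{\T}_2 A\); within this subgroup the two tori are complements to \(A\) and their conjugacy by an element of \(A\) follows from the vanishing of the relevant cohomology, which is where the argument genuinely uses that \(A\) is unipotent and connected while the torus acts on it without nonzero fixed weights. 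This soluble-case conjugacy is standard and may alternatively be invoked directly from \cite{geck} or \cite{malletesterman}, after which the corollary follows at once from the two reductions above.
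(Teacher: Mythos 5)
Your proposal is correct and follows essentially the same route as the paper, which likewise observes that a maximal torus is connected and soluble, hence lies in a Borel subgroup, and then invokes conjugacy of Borel subgroups (deferring the rest to \cite[Corollary 6.5]{malletesterman}). The only difference is that you additionally spell out the remaining step --- conjugacy of maximal tori inside a single connected soluble group --- which the paper leaves to the cited reference.
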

\begin{proof}
	See \cite[Corollary 6.5]{malletesterman}; this follows from the fact that they are connected soluble subgroups hence contained in some Borel subgroup, which are all conjugate by \cref{borel}.
\end{proof}

\begin{definition}
	The \emph{rank} of \(\bb{\G}\), denoted by \(\rk\bb{\G}\), is the dimension of a maximal torus \(\bb{\T}\) of \(\bb{\G}\), that is \(\bb{\T}\simeq {(k^{\times})}^{\rk\bb{\G}}\).
\end{definition}

Now let \(\phi:G\rightarrow G\) be an isomorphism of abstract groups such that:
\begin{enumerate}
	\item \(\phi(U)=U\), \(\phi(H)=H\), \(\phi(N)=N\).
	\item If \(\phi(Hn)\subseteq Hn\) then \(Hn\) contains a fixed point of \(\phi\).
\end{enumerate}

Then the fixed-point set \(G^{\phi}\) has a split BN-pair given by \(B^{\phi}\) and \(N^{\phi}\), in particular we have that \(B^{\phi}=U^{\phi}\rtimes H^{\phi}\) and \(H^{\phi}=B^{\phi}\cap N^{\phi}\), and the Weyl group of \(G^{\phi}\) is \(W^{\overline{\phi}}\), where \(\overline{\phi}:W\rightarrow W\) is the homomorphism induced from \(\phi\) using the fact that \(N\) and \(H\) are invariant under \(\phi\).

We are interested about this behaviour when \(\phi\) belongs to the following class of morphisms:

\begin{definition}
	Let \(k=\overline{\mathbb{F}}_q\), and \(q=p^r\). The standard Frobenius map with respect to \(\mathbb{F}_q\) is defined as
	\[F_q\colon k^n\rightarrow k^n,\qquad (x_1,\ldots,x_n)\mapsto(x_1^q,\ldots,x_n^q).\]
	
	This notion can be extended to algebraic groups defined over \(\overline{k}\), via the corresponding affine variety.
	
	A homomorphism \(\sigma:\bb{\G}\rightarrow\bb{\G}\) of algebraic groups is called a \emph{Steinberg endomorphism} if some power of \(\sigma\) is a Frobenius map.
\end{definition}

\begin{example}
	Let \(\bb{\G}=\GL_n(k)\) for \(k=\overline{F}_q\), \(q=p^r\). A reductive BN-pair of \(\bb{\G}\) can be obtained by taking for \(B\) the upper triangular matrices, and for \(N\) the monomial matrices. Then, \(H\) is made of diagonal matrices, \(U\) of upper unitriangular matrices, while \(W\) can be identified with of monomial matrices with entries only 0 or 1, hence it is isomorphic to \(\sym(n)\).
	
	The standard Frobenius endomorphism of \(\bb{\G}\) with respect to \(\mathbb{F}_q\) is the map \[F_q\colon\bb{\G}\rightarrow\bb{\G},\qquad(a_{ij})\mapsto(a_{ij}^q),\]
	induced by letting the standard Frobenius map act on the matrix entries of an element of \(\bb{\G}\).
	
	The fixed point space of \(F_q\) is the finite general linear group \(\bb{\G}^{F_q}=\GL_n(q)\), for which \(B^{F_q}\) and \(N^{F_q}\) form a BN-pair.
\end{example}

We have the following dichotomy for endomorphisms of simple algebraic groups:
\begin{proposition}[{\cite[Corollary 10.13]{steinberg}}]\label{steinberg}
	Let \(\bb{\G}\) be a simple linear algebraic group, \(\sigma:\bb{\G}\rightarrow\bb{\G}\) an endomorphism. Then exactly one of the following holds:
	\begin{enumerate}
		\item \(\sigma\) is an automorphism of algebraic groups.
		\item The group of fixed points \({\bb{\G}}^{\sigma}\) is finite.
	\end{enumerate}
	The second case happens if and only if \(\sigma\) is a Steinberg endomorphism.
\end{proposition}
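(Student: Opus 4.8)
The plan is to reduce everything to the behaviour of the differential \(d\sigma\colon\mathfrak{g}\to\mathfrak{g}\) at the identity, using two structural inputs: Steinberg's criterion that \(\bb{\G}^{\sigma}\) is finite exactly when \(1\) is not an eigenvalue of \(d\sigma\), and the factorisation of an inseparable isogeny through the Frobenius map. First I would record that any nontrivial endomorphism of a simple algebraic group is surjective: the image \(\im\sigma\) is a closed connected subgroup and the kernel a proper closed normal subgroup, so by simplicity the kernel is finite and \(\im\sigma\) has full dimension, hence equals \(\bb{\G}\). Thus we may assume throughout that \(\sigma\) is surjective, and the whole statement will be decided by whether \(d\sigma\) is invertible.

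Next I would dispose of mutual exclusivity together with the easy implication. If \(\sigma\) is a Steinberg endomorphism, say \(\sigma^{m}=F_{q}\), then \((d\sigma)^{m}=dF_{q}=0\), so \(d\sigma\) is nilpotent; in particular \(1\) is not an eigenvalue of \(d\sigma\), whence \(\bb{\G}^{\sigma}\) is finite. This is case (ii), and one also sees it concretely from \(\bb{\G}^{\sigma}\subseteq\bb{\G}^{\sigma^{m}}=\bb{\G}^{F_{q}}\), the finite group of \(\mathbb{F}_{q}\)-points. Conversely, if \(\sigma\) is an automorphism of algebraic groups then \(d\sigma\) is invertible, with inverse \(d(\sigma^{-1})\), so it cannot be nilpotent; moreover any automorphism of algebraic groups of \(\bb{\G}\) has a positive-dimensional fixed-point subgroup (for an inner automorphism this is a centraliser, of dimension at least \(\rk\bb{\G}\), and the general case follows from Steinberg's analysis of fixed points, see \cite{steinberg}), so \(1\) is an eigenvalue of \(d\sigma\) and \(\bb{\G}^{\sigma}\) is infinite. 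Hence (i) and (ii) can never hold simultaneously.

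The substance is the structural dichotomy: a surjective \(\sigma\) that fails to be an automorphism must be a Steinberg endomorphism. I would split on separability. If \(d\sigma\) is bijective then \(\sigma\) is separable, hence a separable self-isogeny whose kernel is a finite central subgroup \(Z_{0}\) with \(\bb{\G}/Z_{0}\cong\bb{\G}\); since the isomorphism type of a simple group determines its fundamental group, this forces \(Z_{0}=1\), so \(\sigma\) is a bijective separable homomorphism, i.e.\ an automorphism of algebraic groups, placing us in case (i). If instead \(d\sigma\) is not injective then \(\sigma\) is inseparable, and here I would invoke the factorisation \(\sigma=\psi\circ F_{p}\) of an inseparable isogeny through the standard \(p\)-power Frobenius. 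Iterating this and controlling the inseparable degree, one shows along the lines of \cite{steinberg} that a suitable power of \(\sigma\) is a standard Frobenius map, so that \(\sigma\) is a Steinberg endomorphism and, by the previous paragraph, \(\bb{\G}^{\sigma}\) is finite — case (ii). Since \(d\sigma\) is either invertible or not, exactly one of the two cases occurs.

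The main obstacle will be this last step — the classification of inseparable endomorphisms — and in particular the \emph{special} isogenies occurring in characteristic \(2\) for types \(\B_{2}\) and \(\F_{4}\) and in characteristic \(3\) for type \(\G_{2}\): these are inseparable yet not literally powers of the Frobenius, and one must verify that a suitable power of such a map is nonetheless a genuine Frobenius, so that they too are Steinberg endomorphisms with finite fixed points. This is precisely the content of Steinberg's classification of isogenies of semisimple groups, which I would cite rather than reprove.
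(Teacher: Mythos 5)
The paper offers no proof of this proposition: it is quoted directly from Steinberg's memoir, and the only argument on record is the citation itself. Your outline is therefore not competing with anything in the text, and as a road map of Steinberg's actual proof it is broadly faithful. The reduction to surjective \(\sigma\) via simplicity is correct, and your treatment of the separable case --- the kernel is finite central, and a simple group and a proper central quotient of it have fundamental groups of different orders, so the kernel is trivial and \(\sigma\) is an automorphism --- is a clean, complete argument.

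Two caveats. First, the engine you propose for the inseparable case, that every inseparable isogeny factors as \(\psi\circ F_p\), is false in exactly the cases you later single out as the main obstacle: for the special isogenies in types \(\B_2\) and \(\F_4\) (\(p=2\)), \(\G_2\) (\(p=3\)), and \(\B_n\leftrightarrow\C_n\) (\(p=2\)), the scheme-theoretic kernel is strictly contained in the Frobenius kernel, so \(\sigma\) does not factor through \(F_p\) --- rather \(F_p\) factors through \(\sigma\). The correct dichotomy is: either \(d\sigma=0\), in which case the factorisation holds and one iterates, or \(\ker d\sigma\) is a nonzero proper \(p\)-ideal of \(\mathfrak{g}\), which for \(\mathfrak{g}\) simple forces one of the special cases. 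Since you then defer precisely this classification to Steinberg, your proposal ultimately does what the paper does --- cite the source for the decisive step --- with a largely correct commentary wrapped around it. Second, two minor points: the criterion ``\(\bb{\G}^{\sigma}\) is finite iff \(1\) is not an eigenvalue of \(d\sigma\)'' should only be invoked in the easy direction, via \(\mathrm{Lie}(\bb{\G}^{\sigma})\subseteq\ker(d\sigma-1)\), since the converse is essentially part of what is being proved; and the statement as reproduced in the paper silently requires \(\sigma\) to be nontrivial (equivalently surjective), because the constant endomorphism has finite fixed-point group without being a Steinberg endomorphism --- your opening observation supplies exactly the right fix.
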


In regard to Steinberg endomorphisms, we also mention the Lang-Steinberg theorem, which is often used to transfer statements about an algebraic group \(\bb{\G}\) to finite groups of the form \(\bb{\G}^{\sigma}\) for a Steinberg endomorphism \(\sigma\) of \(\bb{\G}\).
\begin{theorem}[{\cite[Theorem 10.1]{steinberg}}]\label{langsteinberg}
	Let \(\bb{\G}\) be a connected linear algebraic group over \(k=\overline{\mathbb{F}}_p\) with a Steinberg endomorphism \(\sigma:\bb{\G}\rightarrow\bb{\G}\). Then the morphism \(F\colon\bb{\G}\rightarrow\bb{\G}\) defined by \(g\mapsto\sigma(g)g^{-1}\) is surjective.
\end{theorem}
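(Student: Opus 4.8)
The plan is to realise $\im F$ as a single orbit of an algebraic group action and then force that orbit to be the whole group. First I would record the one infinitesimal input that drives everything: the differential $d\sigma_e$ of the Steinberg endomorphism at the identity is nilpotent. Indeed, by definition some power $\sigma^m$ equals a Frobenius map $F_q$, and in characteristic $p$ the Frobenius raises each affine coordinate to its $q$-th power, so its Jacobian vanishes identically (as $q\cdot x^{q-1}=0$); hence $(d\sigma_e)^m=d(\sigma^m)_e=0$. As a sanity check this already explains why $F$ ought to be dominant: writing $F(g)=\sigma(g)g^{-1}$ and using that the differential of inversion at $e$ is $-\mathrm{id}$, one gets $dF_e=d\sigma_e-\mathrm{id}$, which is invertible precisely because $d\sigma_e$ is nilpotent (its eigenvalues are all $-1$).

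Next I would introduce the $\sigma$-twisted conjugation action of $\bb{\G}$ on itself, $g\cdot x\coloneqq\sigma(g)\,x\,g^{-1}$; a direct check, using that $\sigma$ is a homomorphism, shows this is a left action. Its decisive feature is that the orbit of the identity is exactly $\{\sigma(g)g^{-1}:g\in\bb{\G}\}=\im F$. Thus the theorem is equivalent to the assertion that this action is transitive, and it suffices to prove that $\bb{\G}$ consists of a single orbit.

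The heart of the argument is to show that every stabiliser is finite. For $x\in\bb{\G}$ the stabiliser $\stab(x)=\{g:\sigma(g)xg^{-1}=x\}$ is precisely the fixed-point set $\bb{\G}^{\tau}$ of the endomorphism $\tau\coloneqq\mathrm{int}(x^{-1})\circ\sigma$, where $\mathrm{int}(x^{-1})$ denotes conjugation by $x^{-1}$. Its differential at the identity is $d\tau_e=\Ad(x^{-1})\circ d\sigma_e$, a composite of the invertible map $\Ad(x^{-1})$ with the nilpotent, hence singular, map $d\sigma_e$; therefore $d\tau_e$ is singular, so $\tau$ cannot be an automorphism of algebraic groups. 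By the dichotomy of \cref{steinberg} this forces $\bb{\G}^{\tau}=\stab(x)$ to be finite.

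Finally I would run a dimension-and-irreducibility count. Since every stabiliser is finite, every orbit has dimension $\dim\bb{\G}$; being the image of the connected, hence irreducible, group $\bb{\G}$ under the orbit map, each orbit is irreducible and locally closed, and having full dimension inside the irreducible variety $\bb{\G}$ its closure must be all of $\bb{\G}$, so the orbit is dense and therefore open in its closure, i.e. open and dense. Distinct orbits are disjoint, yet two dense open subsets of an irreducible space must meet; hence there is exactly one orbit, the action is transitive, and $F$ is surjective. The main obstacle is exactly this passage from \emph{dominant} to \emph{surjective}: the clean way through it is the twisted-conjugation action together with \cref{steinberg}, which converts the nilpotence of $d\sigma_e$ into finiteness of all stabilisers, and thence into a single full-dimensional orbit.
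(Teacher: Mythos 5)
The paper offers no proof of this statement (it is quoted directly from Steinberg), so your argument must be judged on its own. Its architecture is a legitimate and well-known route: realise \(\im F\) as the orbit of the identity under the twisted conjugation \(g\cdot x=\sigma(g)xg^{-1}\), show all stabilisers are finite so that every orbit is full-dimensional, hence open and dense in the irreducible variety \(\bb{\G}\), and conclude from disjointness that there is a single orbit. Your identification of the stabiliser as \(\bb{\G}^{\tau}\) with \(\tau=\mathrm{int}(x^{-1})\circ\sigma\), the nilpotence of \(d\sigma_e\), and the final orbit-counting step are all correct.

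The gap is in how you obtain finiteness of \(\bb{\G}^{\tau}\). \cref{steinberg} is stated only for \emph{simple} algebraic groups, whereas \cref{langsteinberg} concerns an arbitrary connected group --- and the paper applies it to a torus in the proof of \cref{Tconjugacy}, a case your argument would not cover. For non-simple connected groups the implication ``not an automorphism of algebraic groups \(\Rightarrow\) finite fixed-point group'' is simply false: on \(\bb{\G}_a\times\bb{\G}_a\) the endomorphism \((x,y)\mapsto(x^p,y)\) has singular differential and is not an automorphism, yet its fixed-point set \(\mathbb{F}_p\times\bb{\G}_a\) is infinite. So ``\(d\tau_e\) singular, hence \(\tau\) not an automorphism, hence \(\bb{\G}^{\tau}\) finite'' does not work. (There is also a circularity worry: in Steinberg's notes the dichotomy is Corollary 10.13, derived after, and using, Theorem 10.1.) The step is salvageable by a sharper computation: from \(\sigma\circ\mathrm{int}(y)=\mathrm{int}(\sigma(y))\circ\sigma\) one gets
\[(d\tau_e)^m=\Ad\bigl(x^{-1}\sigma(x)^{-1}\cdots\sigma^{m-1}(x)^{-1}\bigr)\circ(d\sigma_e)^m=0,\]
so \(d\tau_e\) is genuinely \emph{nilpotent}, not merely singular. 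Since \(\tau\) restricts to the identity on \(\bb{\G}^{\tau}\), we get \(\mathrm{Lie}(\bb{\G}^{\tau})\subseteq\ker(d\tau_e-\mathrm{id})=0\), and a zero-dimensional algebraic group is finite. With that substitution your proof goes through.
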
	
\end{section}

\begin{section}{Root systems}\label{rootsystem}
	
The results in this section can be found in \cite{carter_lectures,borel,humpreys_lie,humpreys_linear}, unless stated otherwise.

\begin{definition}\label{rootsystemdef}
	Let \(E\) be a finite-dimensional real vector space. A subset \(\Phi\subset E\) is called a \emph{root system} if:
	\begin{enumerate}
		\item \(\Phi\) is finite, \(0\not\in\Phi\), \(\gen{\Phi}=E\).
		\item If \(\alpha,c\alpha\in\Phi\), then \(c=\pm1\).
		\item For any \(\alpha\in\Phi\) there is a reflection \(w_{\alpha}\in\GL(E)\) along the hyperplane orthogonal to \(\alpha\) that stabilises \(\Phi\).
		\item For any \(\alpha,\beta\in\Phi\), \(w_{\alpha}(\beta)-\beta\) is an integral multiple of \(\alpha\).
	\end{enumerate}
	
	The group \(W=W(\Phi)\coloneqq\gen{w_{\alpha}\mid\alpha\in\Phi}\) is the Weyl group of \(\Phi\), while \(\dim E\) is the rank of \(\Phi\).
\end{definition}

\begin{definition}
	Let \(\Phi\) be a root system. A subset \(\Delta\subset\Phi\) is called a \emph{base}, and its elements are called \emph{fundamental roots}, if it is a basis for \(E\) and any root \(\beta\in\Phi\) can be written as \(\beta=\sum_{\alpha\in\Delta}c_{\alpha}\alpha\) such that the \(c_{\alpha}\) are either all non-negative or all non-positive. The roots with non-negative coefficients form a positive system \(\Phi^+\subset\Phi\), and \(\Phi=\Phi^+\sqcup\Phi^-\), where \(\Phi^-\coloneqq-\Phi^+\) is the set of roots with non-positive coefficients. We also define the height of a root \(\beta\) as \(\height(\beta)=\sum_{\alpha\in\Delta}c_{\alpha}\).
\end{definition}

\begin{definition}\label{subrootsystem}
	A subset \(\Phi'\subset\Phi\) is a \emph{closed subroot system} if it is a root system that is closed under addition with respect to \(\Phi\), i.e. for all \(\alpha,\beta\in\Phi'\), if \(\alpha+\beta\in\Phi\) then \(\alpha+\beta\in\Phi'\).
\end{definition}

\begin{definition}\label{rootorder}
	We can introduce an order relation between the roots in the following way. Let \(E^+\) be the subset of \(E\) satisfying the following conditions:
	\begin{enumerate}
		\item If \(v\in E^+\) and \(\lambda>0\), then \(\lambda v\in E^+\).
		\item If \(v_1,v_2\in E^+\), then \(v_1+v_2\in E^+\).
		\item For all \(v\in E\), exactly one of \(v\in E^+\), \(-v\in E^+\), \(v=0\), holds.
	\end{enumerate}
	We then say that \(v_2\prec v_1\) when \(v_1-v_2\in E^+\), which induces a total ordering on \(E\).
\end{definition}
In particular, we have that a positive system of roots \(\Phi^+\) of \(\Phi\) has the form \(\Phi^+=\Phi\cap E^+\) for some total ordering of \(E\).

\begin{lemma}\label{highestrootunique}
	With the ordering on \(\Phi\) induced from \(E\), there exists a unique \(\alpha_0\in\Phi\) such that \(\alpha\prec\alpha_0\) for all \(\alpha_0\ne\alpha\in\Phi\).
\end{lemma}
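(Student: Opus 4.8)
The plan is to deduce the statement directly from the fact, recorded in \cref{rootorder}, that $\prec$ is a \emph{total} ordering on $E$. Once totality is in hand the lemma becomes purely order-theoretic: a finite, non-empty, totally ordered set has a unique greatest element, and that element is exactly the $\alpha_0$ we seek.

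First I would confirm that $\prec$ restricts to a strict total order on $\Phi$, reading this off the defining properties of $E^+$. For any $v_1\ne v_2$ we have $v_1-v_2\ne0$, so property (iii) of \cref{rootorder} forces exactly one of $v_1-v_2\in E^+$ or $v_2-v_1\in E^+$; this gives comparability together with antisymmetry. Transitivity follows from property (ii): if $v_3\prec v_2\prec v_1$ then $(v_1-v_2)+(v_2-v_3)=v_1-v_3\in E^+$, so $v_3\prec v_1$. Finally $\prec$ is irreflexive, since $0\notin E^+$ by property (iii). Hence any two distinct roots are strictly comparable.

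Next I would invoke finiteness. By axiom (i) of \cref{rootsystemdef}, $\Phi$ is finite, and it is non-empty (otherwise $E=\gen{\Phi}=0$). Passing once through the elements of $\Phi$ and retaining the $\prec$-larger element at each step — formally, an easy induction on $\size{\Phi}$ using comparability — yields an $\alpha_0\in\Phi$ with $\alpha\prec\alpha_0$ for every $\alpha_0\ne\alpha\in\Phi$; this gives existence. For uniqueness, if $\alpha_0'$ were a second such element then applying the strict comparability above to the pair $(\alpha_0,\alpha_0')$ would force both $\alpha_0\prec\alpha_0'$ and $\alpha_0'\prec\alpha_0$, contradicting the trichotomy of property (iii).

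There is essentially no obstacle here: the whole content of the lemma is that $\prec$ is total, which is supplied by \cref{rootorder}, together with the finiteness of $\Phi$. What I would be careful to flag is what the lemma does \emph{not} assert. The maximal root $\alpha_0$ produced depends on the chosen ordering $\prec$, and the sharper classical statements — that $\alpha_0$ is the unique root of maximal height with respect to a base, or is well-defined up to the $W$-action — are separate facts that require irreducibility of $\Phi$ and are not needed for this result.
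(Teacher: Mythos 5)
Your proof is correct, and it is genuinely self-contained where the paper simply defers to a citation (\cite{kane}, p.~120). The statement as written is purely about the total order \(\prec\) of \cref{rootorder}: once you verify from the axioms of \(E^+\) that \(\prec\) is a strict total order (comparability and antisymmetry from property (iii) applied to \(v_1-v_2\ne0\), transitivity from property (ii), irreflexivity from \(0\notin E^+\)), the existence and uniqueness of a \(\prec\)-greatest element of the finite non-empty set \(\Phi\) is immediate. Each step checks out, and your closing caveat is well judged: what you prove is exactly the literal statement, namely that the \emph{total} order has a unique maximum on \(\Phi\), whereas the classical highest-root theorem — that for an indecomposable \(\Phi\) with a base \(\Delta\) there is a unique root dominating all others coefficientwise, which is what the cited source actually establishes and what \cref{highestrootf4} computes for \(\F_4\) via heights — is a strictly stronger fact requiring indecomposability. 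The only thing I would add if this proof were to replace the citation is a one-line reconciliation for the paper's later use: for indecomposable \(\Phi\), the coefficientwise-maximal root \(\beta\) satisfies \(\beta-\alpha\in E^+\) for every other root \(\alpha\) (a non-negative, not-all-zero integer combination of fundamental roots lies in \(E^+\) by properties (i) and (ii)), so the \(\prec\)-maximum you construct coincides with the classical highest root; without that remark, \cref{highestroot} and \cref{highestrootf4} are not obviously talking about the same object.
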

\begin{proof}
	See for example \cite[120]{kane}.
\end{proof}

\begin{definition}\label{highestroot}
	The root \(\alpha_0\in\Phi\) described by \cref{highestrootunique} is the \emph{highest root} of \(\Phi\).
\end{definition}

Note that bases and positive root systems are in bijection, and any two bases (or positive systems) of a root system are conjugate under its Weyl group.

Since \(\Phi\) is finite, generates \(E\), and is stabilised by \(W\), then \(W\) is always finite, and stabilises a positive-definite \(W\text{-invariant}\) symmetric bilinear form \((-,-)\) on \(E\), and we may assume it makes \(E\) Euclidean \cite[Appenix A.1]{malletesterman}.

\begin{definition}
	A non-empty root system \(\Phi\) with base \(\Delta\) is \emph{decomposable} if there exists a non-trivial partition of \(\Delta\) into two subsets orthogonal with respect to \((-,-)\). Otherwise, it is \emph{indecomposable}.
\end{definition}

We can define the dual root system \(\Phi^{\vee}=\Set{\alpha^{\vee}|\alpha\in\Phi}\), where the coroot \(\alpha^{\vee}\) is defined by \(\alpha^{\vee}\coloneqq 2\alpha/(\alpha,\alpha)\), and write explicitly the action of \(W\) on \(E\): for \(v\in E\), \(w_{\alpha}\in W\) we have
\begin{equation}\label{reflection}
w_{\alpha}(v)=v-2\frac{(v,\alpha)}{(\alpha,\alpha)}\alpha=v-(v,\alpha^{\vee})\alpha=v-(v,\alpha)\alpha^{\vee}.
\end{equation}

An important role is played by the map \(n:\Phi\times\Phi\rightarrow\mathbb{Z}\) defined by \(n(\alpha,\beta)\coloneqq 2(\alpha,\beta)/(\beta,\beta)\). In particular, we have \(n(\alpha,\beta)\cdot n(\beta,\alpha)=4\cos^2\angle(\alpha,\beta)\), where \(\angle(\alpha,\beta)\) denotes the angle between \(\alpha,\beta\in E\) with the metric induced by \((-,-)\). This means that for \(\beta\ne\pm\alpha\) the product can be at most 3. We can summarise the relation between the two roots as follows:

\begin{center}
	\begin{tabular}{cccc}
		\(n(\alpha,\beta)\) & \(n(\beta,\alpha)\) & \(\angle(\alpha,\beta)\) & \(\lVert\beta\rVert^2/\lVert\alpha\rVert^2\) \\\hline
		0 & 0 & \(\pi/2\) & any \\
		1 & 1 & \(\pi/3\) & 1 \\
		\(-1\) & \(-1\) & \(2\pi/3\) & 1 \\
		1 & 2 & \(\pi/4\) & 2 \\
		\(-1\) & \(-2\) & \(3\pi/4\) & 2 \\
		1 & 3 & \(\pi/6\) & 3 \\
		\(-1\) & \(-3\) & \(5\pi/6\) & 3 \\
	\end{tabular}
\end{center}

In particular, we can classify the 2-dimensional root systems, since the two roots of the base cannot form an acute angle.

We can also deduce the following:
\begin{proposition}
	Let \(\Phi\) be an indecomposable root system. Then:
	\begin{enumerate}
		\item \(\Phi\) can have at most two different root lengths.
		\item  All roots of \(\Phi\) of the same length are conjugate under the Weyl group \(W(\Phi)\).
	\end{enumerate}
\end{proposition}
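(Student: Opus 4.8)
The plan is to derive both statements from a single structural fact: for indecomposable \(\Phi\) the Weyl group \(W\) acts irreducibly on \(E\), so that the orbit \(W\alpha\) of every root spans \(E\). I would prove this first. Suppose \(E=E_1\perp E_2\) were a non-trivial decomposition into \(W\)-invariant subspaces (the orthogonal complement of an invariant subspace is again invariant, since \((-,-)\) is \(W\)-invariant). For a root \(\alpha\), invariance of \(E_1\) under the reflection \(w_\alpha(v)=v-(v,\alpha^\vee)\alpha\) forces, for every \(v\in E_1\), that \((v,\alpha^\vee)\alpha\in E_1\); hence either \(\alpha\in E_1\) or \((v,\alpha^\vee)=0\) for all \(v\in E_1\), i.e. \(\alpha\perp E_1\) and so \(\alpha\in E_2\). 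Thus every root lies in \(E_1\) or in \(E_2\), so \(\Phi=(\Phi\cap E_1)\sqcup(\Phi\cap E_2)\) splits into two mutually orthogonal non-empty sub-root-systems; consequently the base \(\Delta\) partitions into two orthogonal non-empty pieces, contradicting indecomposability. Therefore \(W\) is irreducible on \(E\), and the span of any \(W\alpha\), being a non-zero \(W\)-invariant subspace, is all of \(E\).

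For (i) I would read off from the angle/ratio table that any two roots \(\mu,\nu\) which are neither orthogonal nor proportional satisfy \(\lVert\mu\rVert^2/\lVert\nu\rVert^2\in\{1,2,3\}\) up to reciprocal. Suppose, for contradiction, that three distinct lengths \(\ell_1<\ell_2<\ell_3\) occurred. By the orbit-spanning fact a root of length \(\ell_j\) cannot be orthogonal to every root of length \(\ell_i\) (the latter span \(E\)), so for each pair of lengths there exist non-orthogonal, hence non-proportional, roots realising them. Comparing squared lengths gives \(\ell_2^2/\ell_1^2\ge 2\) and \(\ell_3^2/\ell_2^2\ge 2\), whence \(\ell_3^2/\ell_1^2\ge 4\); but the same table applied to a non-orthogonal pair of lengths \(\ell_1,\ell_3\) forces \(\ell_3^2/\ell_1^2\le 3\), a contradiction. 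Hence at most two root lengths occur.

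For (ii) let \(\alpha,\beta\) have equal length; I want \(w\in W\) with \(w(\alpha)=\beta\). If \(\beta=\pm\alpha\) this is clear (\(w_\alpha\) handles the sign), so assume \(\beta\ne\pm\alpha\). Since \(W\beta\) spans \(E\), some conjugate \(\beta'\in W\beta\) is not orthogonal to \(\alpha\), and \(\beta'\) still has the length of \(\alpha\). If \(\beta'=\pm\alpha\) we are done as above; otherwise \(\alpha,\beta'\) are non-orthogonal, non-proportional and of equal length, so by the classification of rank-\(2\) systems they generate a subsystem of type \(\A_2\), all of whose roots have a single length and on which \(\langle w_\alpha,w_{\beta'}\rangle\) acts transitively. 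Thus \(\alpha\) and \(\beta'\) are conjugate, and therefore \(\alpha\sim\beta'\sim\beta\).

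The main obstacle is the irreducibility lemma, and specifically translating the hypothesis — phrased as indecomposability of the base \(\Delta\) — into the statement that \(\Phi\) admits no orthogonal decomposition into proper sub-root-systems; once each root orbit is known to span \(E\), both parts follow quickly from the length/angle table by the arithmetic above. A secondary point needing care is the explicit conjugation inside the \(\A_2\) subsystem in (ii): one must verify that \(\alpha,\beta'\) genuinely generate an \(\A_2\) (equivalently that \(w_\alpha(\beta')\) is again a root of the expected length) and then exhibit, or invoke transitivity to supply, an element of \(\langle w_\alpha,w_{\beta'}\rangle\) carrying \(\alpha\) to \(\beta'\).
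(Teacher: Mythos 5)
Your proof is correct. Note that the paper does not prove this proposition at all --- it simply cites \cite[Corollary A.18]{malletesterman} --- so there is no argument in the text to compare against; what you have written is the standard self-contained proof (essentially the one in Humphreys or Bourbaki), built on the correct key lemma that indecomposability forces \(W\) to act irreducibly on \(E\), so that every orbit \(W\alpha\) spans \(E\). Your irreducibility argument, the length-ratio arithmetic in (i), and the reduction in (ii) to a non-orthogonal conjugate \(\beta'\) are all sound. The one point you rightly flag as needing care can be tightened as follows: it is not quite accurate to say that \(\alpha,\beta'\) ``generate a subsystem of type \(\A_2\)'' if by that you mean \(\Phi\cap(\mathbb{R}\alpha+\mathbb{R}\beta')\), since for \(\G_2\) two equal-length non-orthogonal roots span the whole plane and that intersection is all of \(\G_2\). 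But you do not need the subsystem at all: from the table \(n(\alpha,\beta'^{\vee})=n(\beta',\alpha^{\vee})=\pm1\), and a direct computation gives \(w_{\alpha}w_{\beta'}w_{\alpha}(\alpha)=\beta'\) in the case \(n=+1\) and \(w_{\beta'}w_{\alpha}w_{\beta'}w_{\alpha}(\alpha)=\beta'\) in the case \(n=-1\), so an explicit element of \(\gen{w_{\alpha},w_{\beta'}}\) carries \(\alpha\) to \(\beta'\) and the appeal to rank-two classification can be dispensed with entirely.
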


\begin{proof}
	See for example \cite[Corollary A.18]{malletesterman}.	
\end{proof}

We can define the \emph{Cartan matrix} and \emph{Dynkin diagram} associated to a root system \(\Phi\) in the following way. Take a base \(\Delta=\Set{\alpha_1,\ldots,\alpha_l}\) of \(\Phi\), and define \(n_{ij}\coloneqq n(\alpha_i,\alpha_j)\). The Cartan matrix is defined as the \(l\times l\) matrix with entries \((n_{ij})_{i,j}\), while the Dynkin diagram is an undirected graph with \(\alpha_1,\ldots,\alpha_l\) as vertices, and two distinct vertices \(\alpha_i\), \(\alpha_j\) are joined by exactly \(n_{ij}n_{ji}\) edges.

Note that a Dynkin diagram is connected if and only if the corresponding root system is indecomposable, and it is \emph{simply laced}, i.e. no pair of adjacent vertices is connected by multiple edges, if and only if all roots have the same length.

\begin{theorem}\label{dynkindiagram}
	The only possible connected Dynkin diagrams are the following:
	\begin{longtable}{l @{\hspace{10\tabcolsep}} l}
		\centering
		\begin{tikzpicture}[scale=.4]
		\draw (-1,0) node[anchor=east]  {\(\A_n\)};
		\foreach \x in {0,...,5}
		\draw[xshift=\x cm,thick] (\x cm,0) circle (.3cm);
		\draw[dotted,thick] (0.3 cm,0) -- +(1.4 cm,0);
		\foreach \y in {1.15,...,4.15}
		\draw[xshift=\y cm,thick] (\y cm,0) -- +(1.4 cm,0);
		\addvmargin{2mm}
		\end{tikzpicture}
		&\begin{tikzpicture}[scale=.4]
		\draw (-1,0) node[anchor=east]  {\(\B_n\)};
		\foreach \x in {0,...,4}
		\draw[xshift=\x cm,thick,fill=black] (\x cm,0) circle (.3cm);
		\draw[xshift=5 cm,thick] (5 cm, 0) circle (.3 cm);
		\draw[dotted,thick] (0.3 cm,0) -- +(1.4 cm,0);
		\foreach \y in {1.15,...,3.15}
		\draw[xshift=\y cm,thick] (\y cm,0) -- +(1.4 cm,0);
		\draw[thick] (8.3 cm, .1 cm) -- +(1.4 cm,0);
		\draw[thick] (8.3 cm, -.1 cm) -- +(1.4 cm,0);
		\addvmargin{2mm}
		\end{tikzpicture}
		\\
		\begin{tikzpicture}[scale=.4]
		\draw (-1,0) node[anchor=east]  {\(\C_n\)};
		\foreach \x in {0,...,4}
		\draw[xshift=\x cm,thick] (\x cm,0) circle (.3cm);
		\draw[xshift=5 cm,thick,fill=black] (5 cm, 0) circle (.3 cm);
		\draw[dotted,thick] (0.3 cm,0) -- +(1.4 cm,0);
		\foreach \y in {1.15,...,3.15}
		\draw[xshift=\y cm,thick] (\y cm,0) -- +(1.4 cm,0);
		\draw[thick] (8.3 cm, .1 cm) -- +(1.4 cm,0);
		\draw[thick] (8.3 cm, -.1 cm) -- +(1.4 cm,0);
		\addvmargin{2mm}
		\end{tikzpicture}
		&
		\begin{tikzpicture}[scale=.4]
		\draw (-1,0) node[anchor=east]  {\(D_n\)};
		\foreach \x in {0,...,4}
		\draw[xshift=\x cm,thick] (\x cm,0) circle (.3cm);
		\draw[xshift=8 cm,thick] (30: 17 mm) circle (.3cm);
		\draw[xshift=8 cm,thick] (-30: 17 mm) circle (.3cm);
		\draw[dotted,thick] (0.3 cm,0) -- +(1.4 cm,0);
		\foreach \y in {1.15,...,3.15}
		\draw[xshift=\y cm,thick] (\y cm,0) -- +(1.4 cm,0);
		\draw[xshift=8 cm,thick] (30: 3 mm) -- (30: 14 mm);
		\draw[xshift=8 cm,thick] (-30: 3 mm) -- (-30: 14 mm);
		\addvmargin{2mm}
		\end{tikzpicture}
		\\
		\begin{tikzpicture}[scale=.4]
		\draw (-1,1) node[anchor=east]  {\(\E_6\)};
		\foreach \x in {0,...,4}
		\draw[thick,xshift=\x cm] (\x cm,0) circle (3 mm);
		\foreach \y in {0,...,3}
		\draw[thick,xshift=\y cm] (\y cm,0) ++(.3 cm, 0) -- +(14 mm,0);
		\draw[thick] (4 cm,2 cm) circle (3 mm);
		\draw[thick] (4 cm, 3mm) -- +(0, 1.4 cm);
		\addvmargin{2mm}
		\end{tikzpicture}
		&
		\begin{tikzpicture}[scale=.4]
		\draw (-1,1) node[anchor=east]  {\(\E_7\)};
		\foreach \x in {0,...,5}
		\draw[thick,xshift=\x cm] (\x cm,0) circle (3 mm);
		\foreach \y in {0,...,4}
		\draw[thick,xshift=\y cm] (\y cm,0) ++(.3 cm, 0) -- +(14 mm,0);
		\draw[thick] (4 cm,2 cm) circle (3 mm);
		\draw[thick] (4 cm, 3mm) -- +(0, 1.4 cm);
		\addvmargin{2mm}
		\end{tikzpicture}
		\\
		\begin{tikzpicture}[scale=.4]
		\draw (-1,1) node[anchor=east]  {\(\E_8\)};
		\foreach \x in {0,...,6}
		\draw[thick,xshift=\x cm] (\x cm,0) circle (3 mm);
		\foreach \y in {0,...,5}
		\draw[thick,xshift=\y cm] (\y cm,0) ++(.3 cm, 0) -- +(14 mm,0);
		\draw[thick] (4 cm,2 cm) circle (3 mm);
		\draw[thick] (4 cm, 3mm) -- +(0, 1.4 cm);
		\addvmargin{4mm}
		\end{tikzpicture}
		&
		\begin{tikzpicture}[scale=.4]
		\draw (-3,0) node[anchor=east]  {\(\F_4\)};
		\draw[thick,fill=black] (-2 cm ,0) circle (.3 cm);
		\draw[thick,fill=black] (0 ,0) circle (.3 cm);
		\draw[thick] (2 cm,0) circle (.3 cm);
		\draw[thick] (4 cm,0) circle (.3 cm);
		\draw[thick] (15: 3mm) -- +(1.5 cm, 0);
		\draw[xshift=-2 cm,thick] (0: 3 mm) -- +(1.4 cm, 0);
		\draw[thick] (-15: 3 mm) -- +(1.5 cm, 0);
		\draw[xshift=2 cm,thick] (0: 3 mm) -- +(1.4 cm, 0);
		\addvmargin{6.5mm}
		\end{tikzpicture}
		\\
		\begin{tikzpicture}[scale=.4]
		\draw (-1,0) node[anchor=east]  {\(\G_2\)};
		\draw[thick,fill=black] (0 ,0) circle (.3 cm);
		\draw[thick] (2 cm,0) circle (.3 cm);
		\draw[thick] (30: 3mm) -- +(1.5 cm, 0);
		\draw[thick] (0: 3 mm) -- +(1.4 cm, 0);
		\draw[thick] (-30: 3 mm) -- +(1.5 cm, 0);
		\end{tikzpicture}
		&
		\\
	\end{longtable}
	where the filled nodes denote the long roots.
\end{theorem}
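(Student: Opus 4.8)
The statement is the classification of connected Dynkin diagrams, which by the remark following \cref{dynkindiagram} is equivalent to the classification of indecomposable root systems; moreover, since any two bases of \(\Phi\) are \(W\)-conjugate, the diagram is a well-defined invariant of \(\Phi\), so the plan is to translate the problem into a purely combinatorial one about configurations of unit vectors and then eliminate all but the listed diagrams. First I would fix a base \(\Delta=\Set{\alpha_1,\ldots,\alpha_l}\) of an indecomposable \(\Phi\) and pass to the normalised vectors \(\varepsilon_i\coloneqq\alpha_i/\lVert\alpha_i\rVert\). Since distinct fundamental roots never form an acute angle, the angle table gives \((\varepsilon_i,\varepsilon_j)\le0\) for \(i\ne j\), while \(4(\varepsilon_i,\varepsilon_j)^2=n_{ij}n_{ji}\in\Set{0,1,2,3}\) is exactly the number of edges joining \(\alpha_i\) and \(\alpha_j\) in the diagram. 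Thus a connected Dynkin diagram is encoded by an \emph{admissible set}: a set of linearly independent unit vectors with pairwise non-positive inner products whose squared cosines lie in \(\Set{0,\tfrac14,\tfrac12,\tfrac34}\). The entire classification then rests on exploiting positive-definiteness of \((-,-)\) on the span of the \(\varepsilon_i\).

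Next I would establish the standard combinatorial lemmas, each proved by evaluating \((-,-)\) on a suitable vector. Setting \(\varepsilon\coloneqq\sum_i\varepsilon_i\) and expanding \((\varepsilon,\varepsilon)>0\), together with \(2(\varepsilon_i,\varepsilon_j)\le-1\) for each connected pair, shows that the number of connected pairs of nodes is at most \(l-1\); since the diagram is connected this forces the underlying graph to be a tree. An analogous computation localised at one vertex shows that at most three edges issue from any node, so the only local possibilities are a node joined to three others by single edges (a branch node), a node carrying a double edge, or a node carrying a triple edge — and a triple edge immediately fills the budget of three, forcing the diagram to be exactly \(\G_2\). The crucial tool is a \emph{shrinking lemma}: a simple chain of single-edge nodes may be contracted to a single node, and the resulting set is again admissible. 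Applying shrinking repeatedly rules out any diagram containing two branch nodes, a branch node together with a double edge, or two double edges; hence at most one multiple edge and at most one branch node occur, and the two cannot coexist.

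What remains is a short case analysis on the surviving configurations. With neither a multiple edge nor a branch node I obtain the chain \(\A_n\); with a single double edge and no branch node I obtain \(\B_n\), \(\C_n\), or \(\F_4\); with a single branch node and no multiple edge I must bound the three arms meeting at the branch node. The hard part is precisely this last step: I would form the three weighted partial sums running out along the arms and force positivity of \((-,-)\) on their combination, obtaining \(\tfrac1p+\tfrac1q+\tfrac1r>1\), where \(p,q,r\) count the nodes on the three arms (the branch node included in each). Its only integer solutions with \(p,q,r\ge2\) are \((p,q,r)=(n,2,2)\), \((3,3,2)\), \((4,3,2)\), \((5,3,2)\), giving \(\D_n\), \(\E_6\), \(\E_7\), \(\E_8\) respectively.

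Finally I would recover the directed data. The argument so far classifies the undirected (Coxeter) graphs; the arrows, marked here by filling the nodes carrying the long roots, are read off from the length ratios \(\lVert\beta\rVert^2/\lVert\alpha\rVert^2\in\Set{2,3}\) recorded in the angle table, consistent with the Proposition that an indecomposable \(\Phi\) has at most two root lengths. I emphasise that this establishes only that the displayed list is \emph{exhaustive}; the converse assertion, that each diagram is genuinely realised by a root system, is the existence direction, which I would dispatch by exhibiting the classical root systems in coordinates and the exceptional ones explicitly, or simply by citing the standard references \cite{humpreys_lie,carter_lectures}.
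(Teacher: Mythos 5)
The paper gives no proof of \cref{dynkindiagram}: it is quoted as background, with \cref{rootsystem} deferring to the standard references \cite{carter_lectures,humpreys_lie}. Your argument is precisely the classical admissible-sets proof found in those references (tree structure from positive-definiteness, at most three edges per node, the shrinking lemma, and the \(\tfrac1p+\tfrac1q+\tfrac1r>1\) inequality for the branch-node case), it is correct, and you rightly separate exhaustiveness from the existence direction; so this matches the intended source and nothing is missing.
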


\begin{subsection}{The root system associated to a Dynkin diagram}\label{roots_of_dynkin}
The Dynkin diagram gives complete information about the root system, by giving the geometric relations between a set of fundamental roots.

In particular, it is possible to obtain all the roots using the following algorithm.

If the Dynkin diagram is simply laced:
\begin{enumerate}
	\item Choose a node and give it a label of 1, give every other node a label of 0; this corresponds to choosing a fundamental root.
	\item\label{label_alg} Find any node such that its label \(l\) is \(<1/2\) the sum \(s\) of the labels of its neighbours, then replace the label with \(s-l\). This new labelling corresponds to a root of the system.
	\item\label{label_alg_stop} Repeat the previous step until no label can be incremented; for Dynkin diagrams, the process terminates. This corresponds to the longest root of the system.
	\item Return to the first step and repeat the process until all the possible combinations of labellings have been explored.	
\end{enumerate}
This gives all the positive roots, so by taking the same labellings with the opposite sign we obtain all the negative roots.

If the Dynkin diagram is not simply laced, step \ref{label_alg} needs to be tweaked, and neighbouring roots that are longer than the selected root have a higher weight when computing \(s\) (they count twice for doubly linked diagrams, that is \(\B_n, \C_n, \F_4\), and thrice for triply linked diagrams, that is \(\G_2\)). In particular, at step \ref{label_alg_stop} one obtains the highest root or the highest short root depending on whether the algorithm started with a long or short root, respectively.

\begin{example}\label{highestrootf4}
	We give an example of this algorithm an compute the highest root of \(\F_4\). Consider the Dynkin diagram of \(\F_4\) depicted in \cref{dynkindiagram}, an label the nodes from left to right as \(\alpha_1,\ldots,\alpha_4\).
	
	Since there are two root lengths, we start wit a long root, say \(\alpha_1\), and label its node with 1, and the other nodes with 0. The following table show consecutive iterations of step (ii) of the algorithm; each line shows the label \(l\) of each node followed by the weighted sum \(s\) of it neighbours, and we highlight a node for which \(l<s/2\) holds.
	
	Observe that since \(\alpha_3\) is a short root and \(\alpha_2\) is a long root, the sum \(s\) for \(\alpha_3\) is the label of \(\alpha_4\) plus twice the label of \(\alpha_2\).
	\begin{center}
		\begin{tabular}{cccc}
			\(\alpha_1\) & \(\alpha_2\) & \(\alpha_3\) & \(\alpha_4\) \\\hline
			1 (0) & \textbf{0 (1)} & 0 (0) & 0 (0)\\
			1 (1) & 1 (1) & \textbf{0 (2)} & 0 (0)\\
			1 (1) & 1 (3) & 2 (2) & \textbf{0 (2)}\\
			1 (1) & \textbf{1 (3)} & 2 (4) & 2 (2)\\
			1 (2) & 2 (3) & \textbf{2 (6)} & 2 (2)\\
			1 (2) & \textbf{2 (5)} & 4 (6) & 2 (4)\\
			\textbf{1 (3)} & 3 (5) & 4 (8) & 2 (4)\\
			2 (3) & 3 (6) & 4 (8) & 2 (4)\\
		\end{tabular}
	\end{center}
	Since \(l\ge s/2\) for each node, the process stops, and we find the highest root of \(\F_4\): \(\alpha_0=2\alpha_1+3\alpha_2+4\alpha_3+2\alpha_4\).
\end{example}

\begin{lemma}\label{Aroots}
All the positive roots of \(\A_n\) are those of the form \(\sum_{i=0}^j\alpha_{k+i}\) where \(\alpha_1,\ldots,\alpha_n\) are a set of fundamental roots, \(1\le k\le n\), and \(0\le j\le n-k\).
\end{lemma}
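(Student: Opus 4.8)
The plan is to prove the two inclusions separately: first that each \emph{interval sum} \(\gamma_{k,j}\coloneqq\sum_{i=0}^{j}\alpha_{k+i}\) really is a positive root, and then that \(\Phi^+\) contains nothing else. For the first inclusion I would induct on the length \(j+1\) of the interval \(\{k,k+1,\ldots,k+j\}\). The base case \(j=0\) is immediate, since \(\alpha_k\) is a fundamental root. For the inductive step, suppose \(\gamma=\gamma_{k,j-1}\) is a root; I want \(\gamma+\alpha_{k+j}\) to be one as well. Because the Dynkin diagram of \(\A_n\) in \cref{dynkindiagram} is a simple unbranched chain, among the summands of \(\gamma\) only \(\alpha_{k+j-1}\) is adjacent to \(\alpha_{k+j}\), and non-adjacent fundamental roots are orthogonal; hence \((\gamma,\alpha_{k+j})=(\alpha_{k+j-1},\alpha_{k+j})\). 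Since \(\A_n\) is simply laced, adjacent fundamental roots subtend an angle \(2\pi/3\) (the table relating \(n(\alpha,\beta)\) to angles), so this inner product is negative.

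The key input is then the standard root-string fact that if \(\alpha,\beta\in\Phi\) with \(\alpha\ne\pm\beta\) and \((\alpha,\beta)<0\), then \(\alpha+\beta\in\Phi\): writing \(n(\alpha,\beta)=p-q\) for the unbroken \(\beta\)-string \(\alpha-p\beta,\ldots,\alpha+q\beta\) through \(\alpha\), negativity of \((\alpha,\beta)\) forces \(q\ge1\). This is a routine consequence of \eqref{reflection} and the integrality built into \cref{rootsystemdef}. Applying it to \(\gamma\) and \(\alpha_{k+j}\) gives \(\gamma+\alpha_{k+j}\in\Phi\), and it is positive because all its coefficients on \(\Delta\) are non-negative. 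This completes the first inclusion.

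For completeness I would use a counting argument. The interval sums are pairwise distinct, being determined by their two endpoints, and there are \(\sum_{k=1}^{n}(n-k+1)=n(n+1)/2\) of them; on the other hand \(\A_n\) has exactly \(n(n+1)/2\) positive roots, for instance because its Weyl group is \(\sym(n+1)\) and the positive roots biject with the transpositions \(\{i,j\}\mapsto e_i-e_j\). Combined with the first part, the interval sums must exhaust \(\Phi^+\). I expect the only real obstacle to be this second direction, and it is essentially one of bookkeeping: the cheap route imports the count \(\size{\Phi^+}=n(n+1)/2\) (or, equivalently, the explicit model \(\Phi=\Set{e_i-e_j}\) with \(\alpha_i=e_i-e_{i+1}\), in which \(e_i-e_j=\sum_{l=i}^{j-1}\alpha_l\) makes the whole statement transparent), whereas a fully self-contained argument would instead induct on \(\height(\beta)\), peeling off a fundamental root from a positive root of height \(>1\) and invoking the connectedness of the support of a root in the Dynkin diagram together with the fact that no root of \(\A_n\) has a coefficient \(\ge2\); establishing these last two facts cleanly within the framework set up so far is where the care is needed.
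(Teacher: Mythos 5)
Your proof is correct, but it takes a genuinely different route from the paper's. The paper proves the lemma by running the Dynkin-diagram labelling algorithm of \cref{roots_of_dynkin}: it checks by induction that, starting from any node \(k\), each admissible increment of a label turns one interval sum into another, so the algorithm only ever visits interval sums; completeness is then inherited from the (unproved, standing) claim that the algorithm enumerates all of \(\Phi^+\). You instead argue intrinsically: the root-string identity \(n(s,r)=p-q\) from \cref{structureconstants} together with \((\gamma_{k,j-1},\alpha_{k+j})=(\alpha_{k+j-1},\alpha_{k+j})<0\) shows each interval sum is a root (you should note in passing that \(\gamma_{k,j-1}\ne\pm\alpha_{k+j}\) by linear independence of the base, so the string lemma applies), and you then close the argument by counting against \(\size{\Phi^+}=n(n+1)/2\). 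Each approach leans on one imported fact --- the paper on the correctness of the labelling algorithm, you on the cardinality of \(\Phi^+(\A_n)\) (equivalently the \(e_i-e_j\) model, which as you say makes the lemma transparent) --- so neither is fully self-contained, and your honest flagging of this, together with the sketch of the height-induction alternative, is exactly the right caveat. What your version buys is independence from the algorithmic machinery of \cref{roots_of_dynkin} and a cleaner conceptual reason why consecutive intervals of simple roots sum to roots; what the paper's buys is that the same computation doubles as a worked example of the algorithm it uses elsewhere (e.g.\ \cref{highestrootf4}).
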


\begin{proof}
This follows directly from the algorithm described above. We start by labelling the node \(k\) with 1, and every other node with 0, thus selecting the root \(\alpha_k\). Now the sum of the neighbouring labels is 0 for every node except for \(k-1\) and \(k+1\) (if \(k>1\) and \(k<n\), respectively), for which the sum is 1. Since they have a label of \(0<1/2\), we can give either of them a label of 1. This corresponds to the roots \(\alpha_{k-1}+\alpha_k\), and \(\alpha_k+\alpha_{k+1}\).

Now assume that after repeating step (ii) in the algorithm described above \(j\) times, \(1\le j<n-1\), we have a root that is of the required form, i.e. \(\sum_{i=0}^j\alpha_{k+i}\), \(1\le k\le n-j\). Then the labels \(l\) and the sums of neighbours \(s\) are as follows:
\begin{enumerate}
	\item For \(i=0,\ldots,k-2\) and \(i=k+j+2,\ldots,n\), \(l=0\) and \(s=0\), assuming \(k\ge2\) and \(k\le n-k-2\), respectively.
	\item For \(i=k-1\) and \(i=k+j+1\), \(l=0\) and \(s=1\), assuming \(k\ge1\) and \(k\le n-j-1\), respectively.
	\item For \(i=k\) and \(i=k+j\), \(l=1\) and \(s=1\).
	\item For \(i=k+1,\ldots,k+j-1\), \(l=1\) and \(s=2\), assuming \(j>1\).
\end{enumerate}
The only cases where \(l<s/2\) are \(i=k-1\) and \(i=k+j+1\), if they are defined, which leads to the roots \(\sum_{i=0}^{j+1}\alpha_{k-1+i}\) and \(\sum_{i=0}^{j+1}\alpha_{k+i}\) respectively, showing that they also have the required form.

When step (ii) is repeated \(n-1\) times, we have the root \(\alpha_1+\ldots+\alpha_n\), and all the labels on the graph are 1. The sum of neighbours for each node is either 1 or 2, thus \(l\ge s/2\) for all nodes, and we cannot proceed any further to find new roots.

As this argument is independent on which starting node \(k\) we choose, we have that all the roots must have the required form.
\end{proof}

Let \(\alpha_1,\ldots,\alpha_4\) be a base of \(\F_4\) as in the Dynkin diagram in \cref{dynkindiagram}, and let \(\alpha_0\) be the highest root of \(\F_4\) described in \cref{highestrootf4}.

\begin{lemma}\label{subf4}
	The maximal closed subroot systems of \(\F_4\) have type \(\A_1\C_3\), \(\A_2\A_2\), and \(\B_4\). Furthermore, up to conjugacy under the action of the Weyl group, they have base \(\left\{-\alpha_0,\alpha_2,\alpha_3,\alpha_4\right\}\), \(\left\{-\alpha_0,\alpha_1,\alpha_3,\alpha_4\right\}\), and \(\left\{-\alpha_0,\alpha_1,\alpha_2,\alpha_3\right\}\), respectively.
\end{lemma}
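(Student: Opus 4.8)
The plan is to run the Borel–de Siebenthal procedure on the \emph{extended} Dynkin diagram of \(\F_4\), which produces the maximal-rank maximal closed subsystems. First I would adjoin to the base \(\{\alpha_1,\alpha_2,\alpha_3,\alpha_4\}\) the node \(-\alpha_0\), where \(\alpha_0=2\alpha_1+3\alpha_2+4\alpha_3+2\alpha_4\) is the highest root found in \cref{highestrootf4}. To draw the extended diagram I need to know to which fundamental roots \(-\alpha_0\) is joined: computing the Cartan integers \(2(\alpha_0,\alpha_i)/(\alpha_i,\alpha_i)\) from the \(\F_4\) Cartan matrix shows that \(\alpha_0\) is orthogonal to \(\alpha_2,\alpha_3,\alpha_4\) and pairs non-trivially only with \(\alpha_1\). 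Since \(-\alpha_0\) is long, the extended diagram is the chain \(-\alpha_0-\alpha_1-\alpha_2\Rightarrow\alpha_3-\alpha_4\), with the double bond sitting between \(\alpha_2\) and \(\alpha_3\).

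Next I would delete each fundamental node in turn and read off the type of the remaining rank-\(4\) diagram. Deleting \(\alpha_1\) isolates \(-\alpha_0\) and leaves \(\alpha_2\Rightarrow\alpha_3-\alpha_4\), i.e.\ one long and two short nodes with the long root at the double-bond end, giving \(\A_1\C_3\); deleting \(\alpha_2\) splits the diagram into \(\{-\alpha_0,\alpha_1\}\) and \(\{\alpha_3,\alpha_4\}\), giving \(\A_2\A_2\); deleting \(\alpha_4\) leaves \(-\alpha_0-\alpha_1-\alpha_2\Rightarrow\alpha_3\), giving \(\B_4\); and deleting \(\alpha_3\) leaves \(\A_3\A_1\). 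This produces exactly the three stated bases \(\{-\alpha_0,\alpha_2,\alpha_3,\alpha_4\}\), \(\{-\alpha_0,\alpha_1,\alpha_3,\alpha_4\}\), \(\{-\alpha_0,\alpha_1,\alpha_2,\alpha_3\}\), together with the extra candidate \(\A_3\A_1\).

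To decide which of the four candidates are genuinely maximal I would invoke the prime-mark criterion: removing \(\alpha_i\) gives a maximal closed subsystem exactly when its mark \(a_i\) (the coefficient of \(\alpha_i\) in \(\alpha_0\)) is prime. Here \(a_1=2,\ a_2=3,\ a_3=4,\ a_4=2\), so the deletions of \(\alpha_1,\alpha_2,\alpha_4\) are maximal while the deletion of \(\alpha_3\) is not; indeed the composite mark \(4=2^2\) signals that \(\A_3\A_1\) is properly contained in a larger closed subsystem (one obtained from a prime divisor of \(4\), here \(\B_4\)). Finally, since \(\alpha_1\) is long and \(\alpha_4\) short they lie in distinct Weyl orbits, so the three survivors are pairwise non-conjugate, matching the cardinalities \(48>32,20,12\) of \(\Phi\) and of \(\B_4,\A_1\C_3,\A_2\A_2\).

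The delicate point, which I expect to be the main obstacle, is completeness: the node-deletion argument only classifies \emph{maximal-rank} maximal closed subsystems, so I must separately rule out maximal closed subsystems of rank \(<4\). A candidate of rank \(<4\) spans a proper subspace \(V\subsetneq E\); choosing \(0\ne v\perp V\) gives \(\Phi'\subseteq\Phi\cap v^{\perp}\subsetneq\Phi\), a proper closed subsystem, so I would need to verify that every such \(\Phi\cap v^{\perp}\) already lies in one of the three subsystems above, forcing any maximal subsystem to have full rank. Rather than grind this out by hand I would close the gap by appealing to the full Borel–de Siebenthal classification of maximal closed subsystems, noting that for \(\F_4\) (in contrast to type \(\A\), where marks are all \(1\) and genuinely lower-rank maximal subsystems occur) the presence of prime marks forces every maximal closed subsystem to be of maximal rank.
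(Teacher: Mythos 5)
The paper does not prove this lemma at all --- its ``proof'' is the single citation \cite[136]{kane} --- so your Borel--de Siebenthal computation is a genuine self-contained argument rather than a paraphrase, and it is essentially the argument the cited reference would supply. Your details check out: with \(\alpha_0=2\alpha_1+3\alpha_2+4\alpha_3+2\alpha_4\) one computes \((\alpha_0,\alpha_2)=(\alpha_0,\alpha_3)=(\alpha_0,\alpha_4)=0\) and \(n(\alpha_0,\alpha_1)=n(\alpha_1,\alpha_0)=1\), so the extended diagram is the chain you describe; the four deletions give \(\A_1\C_3\), \(\A_2\A_2\), \(\A_3\A_1\), \(\B_4\) with the stated bases; and \(\A_3\A_1\) is discarded because \(a_3=4\) is composite. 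What you get that the paper doesn't is an explicit derivation of the three bases; what the citation buys the paper is not having to justify the prime-mark criterion or the completeness step.

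The one point I would push back on is the criterion in your final paragraph. The statement ``the presence of prime marks forces every maximal closed subsystem to be of maximal rank'' is not the right dichotomy: \(\B_n\) has marks \((1,2,\dots,2)\), so primes are present, yet it possesses the lower-rank maximal closed subsystem \(\B_{n-1}\). The correct form of the Borel--de Siebenthal classification is that the maximal closed subsystems of non-maximal rank are exactly those obtained by deleting a node of mark \(1\) from the \emph{ordinary} diagram (this is why type \(\A_n\), all of whose marks equal \(1\), has only rank-\((n-1)\) maximal subsystems). For \(\F_4\) the marks are \(2,3,4,2\), so no mark equals \(1\), this case is empty, and every maximal closed subsystem has full rank --- which closes your gap cleanly. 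With that correction the argument is complete; your ad hoc reduction to the hyperplane sections \(\Phi\cap v^{\perp}\) is not needed.
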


\begin{proof}
	See for example \cite[136]{kane}.
\end{proof}

In particular, we can immediately deduce the following:
\begin{corollary}\label{f4a2a2}
	Let \(\Phi_i=\Phi_i^+\cup-\Phi_i^+\) for \(i=1,2\), where \(\Phi_1^+=\left\{-\alpha_0,\alpha_1,\alpha_1-\alpha_0\right\}\) and \(\Phi_2^+=\left\{\alpha_3,\alpha_4,\alpha_3+\alpha_4\right\}\). Then each \(\Phi_i\) is a subroot system of \(\F_4\) of type \(\A_2\), and \(\Phi_1\cup\Phi_2\) is a subroot system of \(\F_4\) of type \(\A_2\A_2\).
\end{corollary}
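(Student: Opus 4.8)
The plan is to build everything on \cref{subf4}, which already hands us a subroot system $\Psi$ of $\F_4$ of type $\A_2\A_2$ with base $\{-\alpha_0,\alpha_1,\alpha_3,\alpha_4\}$; the remaining work is purely to identify the two irreducible factors of $\Psi$ and to write out their roots explicitly, matching the sets $\Phi_1,\Phi_2$ in the statement.

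First I would use that a root system of type $\A_2\A_2$ is decomposable, so by definition its base partitions into two mutually orthogonal pairs, each a base of an $\A_2$ factor. To pin down the partition I would compute the relevant inner products from the $\F_4$ Cartan data read off the Dynkin diagram in \cref{dynkindiagram}, normalising the long roots to square length $2$ and the short roots to square length $1$. The node $\alpha_1$ is joined only to $\alpha_2$, so $\alpha_1\perp\alpha_3$ and $\alpha_1\perp\alpha_4$; and a direct computation with $\alpha_0=2\alpha_1+3\alpha_2+4\alpha_3+2\alpha_4$ (from \cref{highestrootf4}) gives $(\alpha_0,\alpha_3)=(\alpha_0,\alpha_4)=0$, whence $-\alpha_0\perp\alpha_3,\alpha_4$. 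Since $\alpha_3,\alpha_4$ are adjacent (hence non-orthogonal) and $-\alpha_0,\alpha_1$ are non-orthogonal, the only admissible partition is $\{-\alpha_0,\alpha_1\}\sqcup\{\alpha_3,\alpha_4\}$, so these are the bases of the two $\A_2$ factors.

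It then remains to list the roots of each factor. For an $\A_2$ system with base $\{\beta,\gamma\}$ the positive roots are exactly $\beta,\gamma,\beta+\gamma$; taking $\beta=-\alpha_0,\ \gamma=\alpha_1$ gives $\Phi_1^+=\{-\alpha_0,\alpha_1,\alpha_1-\alpha_0\}$, and taking $\beta=\alpha_3,\ \gamma=\alpha_4$ gives $\Phi_2^+=\{\alpha_3,\alpha_4,\alpha_3+\alpha_4\}$, exactly as stated. That these six vectors really are roots of $\F_4$ is automatic from $\Phi_i\subseteq\Psi$, but I would also record it concretely: $\alpha_1-\alpha_0=-(\alpha_1+3\alpha_2+4\alpha_3+2\alpha_4)$ is the negative of the next-to-highest root of $\F_4$, and $\alpha_3+\alpha_4$ is a short root. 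Because the two factors are orthogonal and each of type $\A_2$, their union $\Phi_1\cup\Phi_2=\Psi$ is of type $\A_2\A_2$.

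I do not expect a genuine obstacle. The only non-formal step is the orthogonality $(\alpha_0,\alpha_3)=(\alpha_0,\alpha_4)=0$, a one-line computation once the Cartan matrix of $\F_4$ is fixed; the single point that needs care is keeping the short/long normalisation consistent so that the mixed inner products $(\alpha_2,\alpha_3)$ and $(\alpha_3,\alpha_4)$ come out correctly.
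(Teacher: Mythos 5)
Your proposal is correct and follows exactly the route the paper intends: the paper states this corollary as an immediate consequence of \cref{subf4} without writing out a proof, and your argument simply fills in the orthogonality computations and the enumeration of the roots of each \(\A_2\) factor. All the numerical checks (in particular \((\alpha_0,\alpha_3)=(\alpha_0,\alpha_4)=0\) with the long/short normalisation) are accurate.
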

\end{subsection}
\end{section}

\begin{section}{Lie algebras and Chevalley groups}\label{liesection}
Unless stated otherwise, the contents of \cref{liealgebras,cartandecomposition,structureconstants,chevalleygroups} can be found in \cite{carter}, \cref{isogeny} can be found in \cite[\S 9.2]{malletesterman}.

\begin{subsection}{Lie algebras}\label{liealgebras}
\begin{definition}\label{liealgebradef}
	A \emph{Lie algebra} is a vector space \(\mathcal{L}\) on which a product operation \([\cdot,\cdot]\) is defined by the following properties:
	\begin{enumerate}
		\item \([\cdot,\cdot]\) is bilinear.
		\item \([x,x]=0\) for all \(x\in\mathcal{L}\).
		\item \([[x,y],z]+[[y,z],x]+[[z,x],y]=0\) for all \(x,y,z\in\mathcal{L}\).
	\end{enumerate}
\end{definition}
It can be easily checked that Lie multiplication is anticommutative. (iii) is known as the \emph{Jacobi identity}. We sometimes refer to \([x,y]\) as the commutator of \(x\) and \(y\).

\begin{definition}\label{subalgebradef}
	If \(\mathcal{U},\mathcal{V}\) are subspaces of \(\mathcal{L}\), let \([\mathcal{U},\mathcal{V}]\) be the subspace of \(\mathcal{L}\) defined by \(\gen{[u,v]\colon u\in\mathcal{U},v\in\mathcal{V}}\). A subspace \(\mathcal{M}\) of \(\mathcal{L}\) is called:
	\begin{enumerate}
		\item A \emph{subalgebra} if \([\mathcal{M},\mathcal{M}]\subseteq\mathcal{M}\).
		\item An \emph{ideal} if \([\mathcal{M},\mathcal{L}]\subseteq\mathcal{M}\).
	\end{enumerate}
\end{definition}
Note that anticommutativity implies that multiplication of subspaces is commutative, therefore every ideal is two-sided. A Lie algebra is called \emph{simple} if it does not have any non-trivial proper ideal.

\begin{definition}
	A linear map \(\delta:\mathcal{L}\rightarrow\mathcal{L}\) is called a \emph{derivation} of \(\mathcal{L}\) if \(\delta[x,y]=[\delta x,y]+[x,\delta y]\) for all \(z,y\in\mathcal{L}\).
	
	The set of derivations of \(\mathcal{L}\) is denoted by \(\Der\mathcal{L}\).
\end{definition}

We can define the adjoint representation map \(\ad\colon\mathcal{L}\rightarrow\Der\mathcal{L}\) by \(\ad x.y=[x,y]\) for \(y\in\mathcal{L}\); we can verify that \(\ad\) is a derivation using the Jacobi identity:
\begin{align*}
	\ad x.[y,z]&=[x,[y,z]]=[[x,y],z]+[[z,x],y]=\\
	&=[[x,y],z]+[y,[x,z]]=[\ad x.y,z]+[y,\ad x.z].
\end{align*}

We can use the map \(\ad\) to define a symmetric bilinear form on \(\mathcal{L}\), called Killing form, by \((x,y)=\tr(\ad x.\ad y)\).

\begin{definition}
	A subalgebra \(\mathcal{H}\) of \(\mathcal{L}\) is called a \emph{Cartan subalgebra} if it satisfies the following:
	\begin{enumerate}
		\item \(\overbrace{[[[\m{H},\m{H}],\m{H}]\ldots]}^{r\text{ commutators}}=0\) for some \(r\), that is \(\m{H}\) is nilpotent.
		\item If \([x,h]\in\m{H}\) for all \(h\in\m{H}\), then \(x\in\m{H}\), i.e. there is no subalgebra \(\m{H}\subset\m{M}\subset\m{L}\) that contains \(\m{H}\) as an ideal.
	\end{enumerate}
\end{definition}
Any two Cartan subalgebras of a given Lie algebra \(\m{L}\) are isomorphic via some automorphism of \(\m{L}\). Their dimension is called the rank of \(\m{L}\). If \(\m{L}\) is simple, then \([\m{H},\m{H}]=0\).
\end{subsection}

\begin{subsection}{Cartan decomposition}\label{cartandecomposition}
It can be proved that if a Lie algebra \(\m{L}\) is simple over \(\mathbb{C}\) then there exists a decomposition of \(\m{L}\), called a \emph{Cartan decomposition}, that has the form
\[\m{L}=\m{H}\oplus\m{L}_{r_1}\oplus\cdots\oplus\m{L}_{r_k},\]
where \(\m{H}\) is a Cartan subalgebra of \(\m{L}\), and for each \(i\) we have \(\dim\m{L}_{r_i}=1\) and \([\m{H},\m{L}_{r_i}]=\m{L}_{r_i}\).

Take now \(0\ne e_r\in\m{L}_r\) and \(h\in\m{H}\), then we can write
\[[h,e_r]=r(h)e_r,\]
where \(r:\m{H}\rightarrow k\). Since \(r\) is linear, then \(r\in\mathcal{H}^*\coloneqq\hom(\m{H},k)\), and the maps \(r_1,\ldots,r_k\) are called roots, since they indeed form a root system, as described in \cref{rootsystem}. The subspaces \(\m{L}_{r_i}\) are then called root spaces of \(\m{L}\) relative to \(\m{H}\).

It can be shown that the Killing form of \(\m{L}\) is non-singular when restricted to \(\m{H}\), therefore for any \(r\in\m{H}^*\) there is a unique \(x\in\m{H}\) such that \(r\) can be written as \(r(h)=(x,h)\), and we may identify the root \(r\colon h\mapsto r(h)\) with the (unique) associated element \(x\) of \(\m{H}\).

Thus, the Cartan decomposition can be written as
\[\m{L}=\m{H}\oplus\bigoplus_{r\in\Phi}\m{L}_r,\]
where \(\Phi\) is the root system associated to \(\m{L}\). Then, for any \(r,s\in\Phi\) we have:
\begin{enumerate}
	\item \([\m{L}_r,\m{L}_s]=\m{L}_{r+s}\), if \(r+s\in\Phi\).
	\item \([\m{L}_r,\m{L}_s]=0\), if \(0\ne r+s\not\in\Phi\).
	\item \([\m{L}_r,\m{L}_{-r}]=kr\subseteq\m{H}\).
	\item \([\m{H},\m{L}_r]=\m{L}_r\).
\end{enumerate}
Note that in (iii) we are identifying \(r\) with the corresponding element \(x\) of \(\m{H}\). It is common practice to not take \(r\) as the generating vector, but its scalar multiple \[h_r\coloneqq\frac{2r}{(r,r)}.\]
\end{subsection}

\begin{subsection}{Structure constants}\label{structureconstants}
Given a Cartan decomposition of \(\m{L}\), using the above notation, let \(\Pi\subset\Phi\) be a set of fundamental roots, and for each \(r\in\Phi^+\) fix \(0\ne e_r\in\m{L}_r\), then there exists a unique \(e_{-r}\in\m{L}_{-r}\) such that \([e_r,e_{-r}]=h_r\).

\begin{definition}\label{chevalleyrelations}
	The set
	\[\left\{e_r,h_s\colon r\in\Phi,s\in\Pi\right\}\]
	is a basis for \(\m{L}\), called a \emph{Chevalley basis}. Its elements multiply as follows:
	\begin{enumerate}
		\item \([h_r,h_s]=0\) for \(r,s\in\Pi\), since \([\m{H},\m{H}]=0\).
		\item \([e_r,e_{-r}]=h_r\) for \(r\in\Phi\), by construction.
		\item \([e_r,e_s]=0\) for \(r,s\in\Phi\), \(0\ne r+s\not\in\Phi\).
		\item \([h_r,e_s]=A_{r,s}e_s\) for \(r\in\Pi,s\in\Phi\).
		\item \([e_r,e_s]=N_{r,s}e_{r+s}\) for \(r,s\in\Phi,r+s\in\Phi\).
	\end{enumerate}
	\(A_{r,s}\) and \(N_{r,s}\) are field elements that will be defined below.
\end{definition}

\begin{definition}\label{rootchain}
	Let \(\Phi\) be a finite root system, and \(r,s\in\Phi\) with \(s\ne\pm r\). Then there exist \(p,q\ge 0\) such that \(ir+s\in\Phi\) for \(-p\le i\le q\), but \(s-(p+1)r,s+(q+1)r\not\in\Phi\). The sequence of roots \[s-pr,\ldots,s,\ldots,s+qr\] is called the \emph{\(r\text{-chain}\) of roots through \(s\)}.
\end{definition}

Going back to \cref{chevalleyrelations}, we have that \(A_{r,s}=n(s,r)\) with \(n\) defined in \cref{rootsystem}, so if \(s\ne\pm r\) then (iv) is equivalent to
\[[h_r,e_s]=\frac{2(r,s)}{(r,r)}e_s=(p-q)e_s,\]
where \(p,q\) are integers defining an \(r\text{-chain}\) of roots through \(s\).

In particular, the \(A_{r,s}\) for \(r,s\in\Pi\) are the entries of the Cartan matrix of the root system.

\begin{lemma}\label{arses}
	Let \(\Phi\) be a finite root system, \(r,s\in\Phi\), then \(A_{r,-s}=-A_{r,s}\).
\end{lemma}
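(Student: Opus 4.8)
The plan is to unwind the definition of $A_{r,s}$ and reduce the claim to the bilinearity of the form $(-,-)$. Recall from the discussion following \cref{chevalleyrelations} that $A_{r,s}=n(s,r)$, where $n(\alpha,\beta)\coloneqq 2(\alpha,\beta)/(\beta,\beta)$ is the integer introduced in \cref{rootsystem}. Thus
\[
A_{r,s}=n(s,r)=\frac{2(s,r)}{(r,r)}.
\]
First I would note that $-s\in\Phi$ (applying $w_s$ to $s$ sends it to $-s$ by \cref{reflection}), so the quantity $A_{r,-s}=n(-s,r)$ is meaningful.

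The computation itself is then immediate: since $(-,-)$ is bilinear, it is in particular linear in its first argument, so $(-s,r)=-(s,r)$. Substituting into the formula above gives
\[
A_{r,-s}=n(-s,r)=\frac{2(-s,r)}{(r,r)}=-\frac{2(s,r)}{(r,r)}=-n(s,r)=-A_{r,s},
\]
which is exactly the claim. Note that the denominator $(r,r)$ is unaffected, since it depends only on $r$ and not on $s$.

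I do not expect any genuine obstacle here: the statement is a one-line consequence of linearity in the first slot of the form, and the only point worth a remark is that $-s$ indeed lies in $\Phi$ so that the symbol $A_{r,-s}$ is defined in the first place. The result is the kind of sign bookkeeping that will presumably be used later to reduce statements about structure constants for negative roots to the positive-root case.
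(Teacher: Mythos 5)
Your proof is correct and is essentially the paper's own argument: the paper likewise derives the identity from \(A_{r,s}=n(s,r)\) together with the bilinearity of \((-,-)\). (The paper additionally records, as an aside, that for \(s\ne\pm r\) one can instead argue via \(r\)-chains, since the \(r\)-chain through \(-s\) is the negative of the one through \(s\) with \(p\) and \(q\) swapped, but this is not needed.)
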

\begin{proof}
	This follows from \(A_{r,s}=n(s,r)\) and the definition of \(n(s,r)\), since \((-,-)\) is bilinear.
	
	For \(s\ne\pm r\), one could also deduce it from \(A_{r,s}=p-q\), with \(p,q\) defining a \(r\text{-chain}\) through \(s\). Indeed, this means that \(s-pr,\ldots,s+qr\in\Phi\) and \(s-(p+1)r,s+(q+1)r\notin\Phi\), so by taking the corresponding negative roots we also have \(-s-qr,\ldots,-s+pr\in\Phi\) and \(-s-(q+1)r,-s+(p+1)r\notin\Phi\). Therefore, \(q,p\) define an \(r\text{-chain}\) through \(-s\), so \(A_{r,-s}=q-p=-A_{r,s}\).
\end{proof}

The \(N_{r,s}\) are called the \emph{structure constants} of \(\m{L}\), and depend on the choice of the \(e_r\). They satisfy the following properties:
\begin{enumerate}
	\item \(N_{s,r}=-N_{r,s}\), for \(r,s\in\Phi\).
	\item \(\frac{N_{r_1,r_2}}{(r_3,r_3)}=\frac{N_{r_2,r_3}}{(r_1,r_1)}=\frac{N_{r_3,r_1}}{(r_2,r_2)}\), for all \(r_1,r_2,r_3\in\Phi\) such that \(r_1+r_2+r_3=0\).
	\item \(N_{r,s}N_{-r,-s}=-(p+1)^2\), for \(r,s\in\Phi\).
	\item \(\frac{N_{r_1,r_2}N_{r_3,r_4}}{(r_1+r_2,r_1+r_2)}+\frac{N_{r_2,r_3}N_{r_1,r_4}}{(r_2+r_3,r_2+r_3)}+\frac{N_{r_3,r_1}N_{r_2,r_4}}{(r_3+r_1,r_3+r_1)}=0\), for all \(r_1,r_2,r_3,r_4\in\Phi\) such that \(r_1+r_2+r_3+r_4=0\) and no pair of them are opposite.
\end{enumerate}

Note that in (iii), \(p\) is as described in \cref{rootchain}.

There is a choice of the \(e_r\) such that \(N_{r,s}=\pm(p+1)\): the sign can be chosen only for a set of ordered pairs \((r,s)\), called extraspecial pairs, and it is then uniquely determined for all the other pairs of roots. Note that such a choice would imply that \(N_{-r,-s}=\mp(p+1)\), by (iii).

\begin{definition}
	An ordered pair \((r,s)\) of roots is a \emph{special pair} if \(r+s\in\Phi\) and \(0\prec r\prec s\); it is an \emph{extraspecial pair} if it is a special pair and for any other special pair \((r_1,s_1)\) such that \(r_1+s_1=r+s\) we have \(r\preceq r_1\).
	
	Here, \(\prec\) denotes the order relation introduced in \cref{rootorder}.
\end{definition}

\end{subsection}

\begin{subsection}{Chevalley groups}\label{chevalleygroups}

Observe that the map \(\ad e_r\) is nilpotent, since using the relations shown in \cref{cartandecomposition} we have:
\begin{gather*}
\ad e_r.\m{L}_r=0,\\
(\ad e_r)^2.\m{H}=\ad e_r.\m{L}_r=0,\\
(\ad e_r)^3.\m{L}_{-r}\subseteq(\ad e_r)^2.\m{H}=0,\\
(\ad e_r)^{q+1}.\m{L}_s=\m{L}_{(q+1)r+s}=0,
\end{gather*}
where \(q\) is as described in \cref{rootchain}.

Consider the following:
\begin{theorem}
	Let \(\m{L}\) be a Lie algebra over a field of characteristic 0 and \(\delta\in\Der\m{L}\) be nilpotent, i.e. \(\delta^n=0\) for some \(n\). Then the map \(\exp\delta\), defined as
	\begin{equation}\label{exp}
	\exp\delta\coloneqq1+\delta+\frac{\delta^2}{2!}+\cdots+\frac{\delta^{n-1}}{(n-1)!},
	\end{equation}
	is an automorphism of \(\m{L}\).
\end{theorem}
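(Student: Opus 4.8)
The plan is to verify separately the two defining properties of an automorphism: that $\exp\delta$ is a bijective linear map, and that it preserves the Lie bracket. Linearity and well-definedness are immediate, since $\exp\delta$ is a polynomial in $\delta$ with scalar coefficients and the series in \eqref{exp} is a \emph{finite} sum because $\delta^n=0$; here the hypothesis $\ch k=0$ is exactly what guarantees that each coefficient $1/m!$ is a legitimate scalar.

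The heart of the argument is a higher-order Leibniz rule. First I would prove by induction on $m$ that, for all $x,y\in\m{L}$,
\[
\delta^m[x,y]=\sum_{k=0}^m\binom{m}{k}[\delta^k x,\delta^{m-k}y].
\]
The base case $m=0$ is trivial; the inductive step applies $\delta$ once more to the right-hand side, uses the derivation property $\delta[u,v]=[\delta u,v]+[u,\delta v]$ termwise, and collects the two resulting sums via Pascal's rule $\binom{m}{k-1}+\binom{m}{k}=\binom{m+1}{k}$.

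With this identity in hand I would compute both sides directly. On one side, $[\exp\delta(x),\exp\delta(y)]=\sum_{i,j}\frac{1}{i!\,j!}[\delta^i x,\delta^j y]$, a finite double sum over $0\le i,j\le n-1$. On the other side, dividing the Leibniz identity by $m!$ gives $\frac{1}{m!}\delta^m[x,y]=\sum_{i+j=m}\frac{1}{i!\,j!}[\delta^i x,\delta^j y]$, so summing over $m$ reassembles precisely the same double sum, now grouped by the value of $i+j$. The one point that needs care is the truncation: for $m\ge n$ the left-hand side vanishes, and one must check that the grouped double sum drops no nonzero term, i.e. that every pair with $i\ge n$ or $j\ge n$ already contributes $0$ by nilpotency. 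Once this is confirmed the two expressions agree term by term, giving $\exp\delta([x,y])=[\exp\delta(x),\exp\delta(y)]$.

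Finally, for bijectivity I would note that $\delta$ and $-\delta$ commute and are both nilpotent, so the usual binomial manipulation (again the binomial expansion together with Pascal's rule) yields $\exp(\delta)\exp(-\delta)=\exp(0)=\mathrm{id}$; hence $\exp(-\delta)$, a map of the same form, is a two-sided inverse. Alternatively one observes that $\exp\delta=\mathrm{id}+\nu$ with $\nu$ nilpotent, which is automatically invertible. I expect the main obstacle to be the bookkeeping in the multiplicativity step: keeping the reindexing $(i,j)\leftrightarrow(m,k)$ honest and verifying that nilpotency makes the truncated sums match exactly, rather than only as formal power series.
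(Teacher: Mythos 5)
Your proof is correct. The paper itself does not prove this statement (it is quoted from Carter's book, to which the section defers), and your argument --- linearity from the finite polynomial expression, multiplicativity via the higher-order Leibniz rule \(\delta^m[x,y]=\sum_k\binom{m}{k}[\delta^kx,\delta^{m-k}y]\) together with the observation that the groups with \(m\ge n\) in the double sum vanish, and invertibility via \(\exp(-\delta)\) or via \(\mathrm{id}+\nu\) with \(\nu\) nilpotent --- is exactly the standard one. Your handling of the truncation issue, which is the only place such proofs usually go wrong, is sound.
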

Let \(k=\mathbb{C}\) first, and take \(\zeta\in\mathbb{C}\). Since \(\ad(\zeta e_r)=\zeta\ad e_r\) is a nilpotent derivation,
\begin{equation}\label{xrz}
x_r(\zeta)\coloneqq\exp(\zeta\ad e_r)
\end{equation}
is an automorphism of \(\mathcal{L}\). It can be shown that \(x_r(\zeta)\) transforms each element of the Chevalley basis into a linear combination of basis
elements, where the coefficients are non-negative integral powers of \(\zeta\) with rational integer coefficients. This fact allows us to define automorphisms of the same type as \eqref{xrz} over any field, by using elements of the corresponding field, instead of \(\mathbb{C}\); this is described for example in \cite[\S4.4]{carter}.

\begin{definition}\label{xhn_definition}
	The \emph{Chevalley group} \(\m{L}(k)\) of type \(\m{L}\) over the field \(k\) is defined as the group of automorphisms of \(\mathcal{L}\) generated by the elements \(x_r(t)\) for \(r\in\Phi\), \(t\in k\), that act on the Chevalley basis for \(\m{L}\) as follows:
	\begin{equation}\label{xrt}
	\begin{aligned}
	x_r(t).e_r&=e_r,\\
	x_r(t).e_{-r}&=e_{-r}+th_r-t^2e_r,\\
	x_r(t).h_s&=h_s-A_{s,r}te_r,\quad s\in\Pi,\\
	x_r(t).e_s&=\sum_{i=0}^qM_{r,s,i}t^ie_{ir+s},\quad s\ne\pm r,
	\end{aligned}
	\end{equation}
	where \(p,q\) are taken to form an \(r\text{-chain}\) of roots through \(s\), and 
	\[M_{r,s,i}\coloneqq\frac{1}{i!}N_{r,s}N_{r,r+s}\ldots N_{r,(i-1)r+s}=\pm\frac{(p+1)(p+2)\ldots(p+i)}{i!}=\pm\binom{p+i}{i},\]
	where the sign depends on the extraspecial pairs, as before.
\end{definition}

Note that the last two relations of \eqref{xrt} can be obtained directly from a Chevalley basis, i.e. without prior knowledge of the structure constants, using only \eqref{exp} and \eqref{xrz} by:
\begin{equation}\label{xrths}
x_r(t).h_s=h_s-A_{s,r}te_r=h_s-t[h_s,e_r]
\end{equation}
and
\begin{equation}\label{xrtes}
x_r(t).e_s=e_s+t[e_r,e_s]+\frac{1}{2!}[e_r,[e_r,e_s]]+\cdots+\frac{1}{q!}\overbrace{[e_r,[e_r,\ldots[e_r,e_s]\ldots]]}^{q\text{ commutators}}.
\end{equation}

Observe also that we do not need all the roots to generate \(\aut\m{L}\), only \(\Pi\cup-\Pi\), for \(\Pi\) a set of fundamental roots. Indeed, say \(r\) is not a fundamental root and has extraspecial pair \(r=r_1+r_2\), then we have
\[\ad e_r=\ad[e_{r_1},e_{r_2}]=\ad e_{r_1}.\ad e_{r_2}-\ad e_{r_2}.\ad e_{r_1}.\]

Out of the several properties that these generators have, we are interested in the relation between generators corresponding to opposite roots, that is of the form \(x_r(\cdot)\) and \(x_{-r}(\cdot)\). It can be shown that there is a homomorphism \(\phi_r\colon\SL_2(k)\rightarrow\gen{X_r,X_{-r}}\le\m{L}(k)\)\label{hom_SL2} for which we have
\[\begin{pmatrix}1&t\\0&1\end{pmatrix}\mapsto x_r(t),\qquad\begin{pmatrix}1&0\\t&1\end{pmatrix}\mapsto x_{-r}(t);\]
see \cite[\S6.2]{carter} for \(k=\mathbb{C}\), and \cite[\S6.3]{carter} for an arbitrary field \(k\).

An important role is played by the elements
\[h_r(\lambda)=\phi_r\begin{pmatrix}\lambda&0\\0&\lambda^{-1}\end{pmatrix},\qquad n_r=\phi_r\begin{pmatrix}0&1\\-1&0\end{pmatrix},\]
where \(\lambda\in k^{\times}\). They are a semisimple element and an involution of \(\PSL_2(k)\), respectively.

In particular, we have that \(h_r(\lambda)\) acts on the Chevalley basis as
\begin{equation}\label{hrl}
\begin{aligned}
h_r(\lambda).h_s&=h_s, &s\in\Pi,\\
h_r(\lambda).e_s&=\lambda^{A_{r,s}}e_s, &s\in\Phi,
\end{aligned}
\end{equation}
while  \(n_r\) acts as
\begin{equation}\label{nr}
\begin{aligned}
n_r.h_s&=h_{w_r(s)},\quad s\in\Pi,\\
n_r.e_s&=\eta_{r,s}e_{w_r(s)},
\end{aligned}
\end{equation}
where \(w_r(s)\) is given by \eqref{reflection}, and \(\eta_{r,s}=\pm1\), again depending on a choice of extraspecial pairs.

Define now \(T\coloneqq\gen{h_r(\lambda)\colon r\in\Phi,\lambda\in k^{\times}}\le\m{L}(k)\), and \(N\coloneqq\gen{T,n_r\colon r\in\Phi}\), then we have that \(T\) is a maximal torus of \(\m{L}(k)\), and \(N\) normalises it (\cite[Theorem 7.2.2]{carter}).

As before, we only need smaller sets to generate \(T\) and \(N\), in particular we can take \(r\in\Pi\), and \(\lambda\) a primitive element of \(k^{\times}\).

Observe that by \cref{boreltits} we have the following:
\begin{lemma}
	Using the above notation, \(N=T\rtimes W\), where \(W\) is the Weyl group of \(\Phi\).
\end{lemma}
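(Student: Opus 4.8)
The plan is to recognise \(N\) as the full normaliser \(N_{\m{L}(k)}(T)\) and then read off its structure from the remark following \cref{boreltits}. We are already given that \(T\) is a maximal torus of \(\m{L}(k)\) and that \(N=\gen{T,n_r}\) normalises it (\cite[Theorem 7.2.2]{carter}); since moreover \(T\le N\), this already says \(T\unlhd N\) and \(N\le N_{\m{L}(k)}(T)\). It then remains to compute the quotient \(N/T\), to identify it with \(W\), and to pin down the way \(W\) acts.

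First I would build a projection \(N\to W\) from the action on root spaces. By \eqref{nr} the generator \(n_r\) sends \(\m{L}_s\) to \(\m{L}_{w_r(s)}\) (up to the scalar \(\eta_{r,s}\)), so it permutes the set \(\Set{\m{L}_s|s\in\Phi}\) exactly as the reflection \(w_r\) permutes \(\Phi\); by \eqref{hrl} every element of \(T\) fixes each \(\m{L}_s\), acting on it by the scalar \(\lambda^{A_{r,s}}\). Recording how an element of \(N\) permutes the root spaces therefore defines a homomorphism \(\rho\colon N\to W\) with \(\rho(n_r)=w_r\) and \(T\le\ker\rho\). Since \(W=\gen{w_r\mid r\in\Phi}\), the map \(\rho\) is surjective.

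The technical heart is the reverse inclusion \(\ker\rho\subseteq T\). An element \(n\in\ker\rho\) stabilises every (one-dimensional) root space \(\m{L}_s\), hence acts as a scalar on each, i.e. diagonally with respect to a Chevalley basis; since every element of \(T\) also acts diagonally on the same decomposition (by \eqref{hrl}), \(n\) commutes with all of \(T\) as a linear operator, and as \(\m{L}(k)\le\GL(\m{L})\) is faithful this gives \(n\in C_{\m{L}(k)}(T)\). A maximal torus is self-centralising (\cref{reductiveBN}), so \(C_{\m{L}(k)}(T)=T\) and thus \(n\in T\). Hence \(\ker\rho=T\) and \(N/T\cong W\); comparing with the remark after \cref{boreltits}, which gives \(N_{\m{L}(k)}(T)=T.W\), and using that \(\rho\) is already onto, forces \(N=N_{\m{L}(k)}(T)\).

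Finally, the conjugation action of \(N\) on \(T\) factors through \(\rho\) and realises \(W=N/T\) as the Weyl group acting on the torus (again via \eqref{hrl} and \eqref{nr}), which is what the decomposition \(N=T\rtimes W\) records. I expect the genuine splitting — producing an honest complement to \(T\), rather than merely the extension \(T.W\) — to be the main obstacle, since the natural lifts satisfy \(n_r^2=h_r(-1)\) and the braid relations only up to elements of \(T\); this is the point where one must argue carefully, using the explicit structure of the (adjoint) group supplied by \cref{boreltits}.
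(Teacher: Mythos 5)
Your argument takes a genuinely different, more hands-on route than the paper, which simply reads the lemma off from \cref{boreltits}: there \(B\) and \(N_{\bb{\G}}(\bb{\T})\) form a BN-pair with \(H=B\cap N=\bb{\T}\), so \(N/\bb{\T}\simeq W\) holds by the very definition of the Weyl group of a BN-pair, and the paper offers no further proof. Your construction of \(\rho\colon N\rightarrow W\) via the permutation of the root spaces, using \eqref{hrl} and \eqref{nr}, together with the identification \(\ker\rho=C(T)=T\) by self-centralisation of the torus, is a correct and self-contained derivation of \(N/T\simeq W\). One small point to add: an element of \(\ker\rho\) also acts on the Cartan subalgebra \(\m{H}\), not only on \(\bigoplus_{s\in\Phi}\m{L}_s\); since \(T\) acts trivially on \(\m{H}\) the commutation argument still goes through, but as written you only treat the root spaces.

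The genuine gap is the one you flag yourself: everything you prove (and everything the paper's reference provides) is \(N=T.W\), not \(N=T\rtimes W\). The extension \(1\rightarrow T\rightarrow N\rightarrow W\rightarrow1\) does not split in general — in \(\SL_2(k)\) every preimage of the nontrivial Weyl element has order 4, because \(n_r^2=h_r(-1)\ne1\) — so the splitting cannot follow from the BN-pair axioms or from the root-space action alone. It is in fact true for the adjoint group \(\m{L}(k)\) considered here, but that is a theorem about adjoint groups rather than a formality, and proving it requires exhibiting an honest complement, e.g. adjusting the lifts to \(n_rt_r\) with \(t_r\in T\) chosen so that the order-two and braid relations hold exactly rather than modulo \(T\). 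Note that the paper does not close this gap either: the remark following \cref{boreltits} only yields \(N=\bb{\T}.W\), and nothing later in the text uses more than the extension structure, so the honest fix is either to supply the adjoint-case splitting argument or to weaken the conclusion to \(N=T.W\).
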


We will denote by \(h(\lambda_1,\ldots,\lambda_l)\) the product \(h_1(\lambda_1)\ldots h_l(\lambda_l)\), where \(\lambda_i\in k^{\times}\) and \(1\le i\le l\) indexes a set of fundamental roots \(\Pi\).
\end{subsection}

\begin{subsection}{Chevalley involutions}\label{chevalleyinvolution}
An important role in our work is played by Chevalley involutions:
\begin{definition}
	Let \(\bb{\G}\) a reductive algebraic group over an algebraically closed field \(k\), and let \(\bb{\T}\le\bb{\G}\) be a maximal torus. A \emph{Chevalley involution} is an involutive automorphism of \(\bb{\G}\) that stabilises \(\bb{\T}\) and acts as \(t\mapsto t^{-1}\) on \(\bb{\T}\).
	
	Let \(\sigma\) be a Steinberg endomorphism of \(\bb{\G}\), a Chevalley involution of \(\bb{\G}^{\sigma}\) is the restriction of a Chevalley involution of \(\bb{\G}\) that is defined over \(k^{\sigma}\).
\end{definition}

Let \(\m{L}\) be a Lie algebra over a field \(k\), finite or algebraic, and let \(G\) be the Chevalley group of \(\m{L}\) as in \cref{xhn_definition}; then a Chevalley involution of \(G\) is also an automorphism of \(\m{L}\).

Let \(\varepsilon\coloneqq\left\{e_r,h_s\colon r\in\Phi,s\in\Pi\right\}\) be a Chevalley basis of \(\m{L}\), then the map \(\varphi\) defined by:
\begin{equation}\label{involution}
	\begin{aligned}
	\iota.e_r&=-e_{-r},\\
	\iota.h_s&=-h_s,
	\end{aligned}
\end{equation}
is a Chevalley involution.

It is clear that \(\iota\) is an involution and has the required action on \(H=\gen{h_r(\lambda):r\in\Phi,\lambda\in k^{\times}}\), therefore is a Chevalley involution.
	
Assume for simplicity that \(\varepsilon\) has structure constants \(N_{r,s}=\pm(p+1)\), then we can check that \(\iota\) preserves the relations described in \cref{structureconstants}, so that \(\iota\) is an automorphism of \(\m{L}\) hence lies in \(G\):
\begin{enumerate}
	\item \([\iota.h_r,\iota.h_s]=[-h_r,-h_s]=[h_r,h_s]=0\), for \(r,s\in\Pi\).
	\item \([\iota.e_r,\iota.e_{-r}]=[-e_{-r},-e_r]=-[e_r,e_{-r}]=-h_r=\iota.h_r\), for \(r\in\Phi\).
	\item \([\iota.e_r,\iota.e_s]=[-e_{-r},-e_{-s}]=[e_{-r},e_{-s}]=0\), for \(r,s\in\Phi\), \(0\ne r+s\not\in\Phi\).
	\item \([\iota.h_r,\iota.e_s]=[-h_r,-e_{-s}]=[h_r,e_{-s}]=A_{r,-s}e_{-s}=(-A_{r,s})e_{-s}=A_{r,s}\iota.e_s\), for \(r\in\Pi,s\in\Phi\).
	\item \([\iota.e_r,\iota.e_s]=[-e_{-r},-e_{-s}]=[e_{-r},e_{-s}]=N_{-r,-s}e_{-r-s}=-N_{r,s}e_{-r-s}=N_{r,s}\iota.e_{r+s}\), for \(r,s\in\Phi\), \(r+s\in\Phi\).
\end{enumerate}
In (iii) we used \(r+s\in\Phi\Leftrightarrow-r-s\in\Phi\), in (iv) \cref{arses}, and in (v) \(N_{r,s}N_{-r,-s}=-(p+1)^2, N_{r,s}=\pm(p+1)\Rightarrow N_{-r,-s}=\mp(p+1)=-N_{r,s}\).

Note that a similar automorphism could be constructed for other choices of structure constants, but it would need a different coefficient on \(\iota.e_r\) for each \(r\), in order to satisfy (v).
\end{subsection}
\end{section}

\begin{section}{Root data and isogeny}\label{isogeny}

Let \(\bb{\G}\) be a reductive algebraic group. Since all maximal tori are conjugate by \cref{maximaltori}, let \(\bb{\T}\) be any maximal torus of \(\bb{\G}\), and let \(X\coloneqq X(\bb{\T})\) and \(Y\coloneqq Y(\bb{\T})\) be the groups of characters and cocharacters of \(\bb{\T}\), respectively.

\begin{theorem}\label{perfectpairing}
	Let \(\bb{\T}\) be a torus with character group \(X\) and cocharacter group \(Y\). The map \(\gen{-,-}:X\times Y\rightarrow\mathbb{Z}\), defined by \(\chi(\gamma(t))=\gen{\chi,\gamma}(t)\) for all \(t\in\bb{\T}\), is a perfect pairing between \(X\) and \(Y\) that is linear in each variable; that is, any homomorphism \(X\rightarrow\mathbb{Z}\) is of the form \(\chi\mapsto\gen{\chi,\gamma}\) for some \(\gamma\in Y\), and similarly for \(Y\). Therefore there are group isomorphisms \(Y\simeq\hom (X,\mathbb{Z})\) and \(X\simeq\hom(Y,\mathbb{Z})\).
\end{theorem}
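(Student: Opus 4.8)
The plan is to reduce the statement to the split torus $\bb{\T}\simeq(k^{\times})^n$, where $n=\rk\bb{\T}$, and then to verify everything by an explicit computation in coordinates. First I would fix an isomorphism $\bb{\T}\simeq\bb{\G}_m^n$, which exists by the definition of a torus, and record the standard coordinate characters $\chi_i\colon(t_1,\ldots,t_n)\mapsto t_i$ and the standard coordinate cocharacters $\gamma_j\colon s\mapsto(1,\ldots,s,\ldots,1)$ (with $s$ in the $j$-th slot). The aim of this step is to identify $X\simeq\mathbb{Z}^n$ and $Y\simeq\mathbb{Z}^n$ as free abelian groups with the $\chi_i$ and $\gamma_j$ as respective bases; this in turn amounts to classifying all morphisms of algebraic groups $\bb{\G}_m^n\to\bb{\G}_m$ and $\bb{\G}_m\to\bb{\G}_m^n$, which rests on the key lemma below.

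The key lemma, and the only place where genuine algebraic geometry enters, is the computation $\Endo(\bb{\G}_m)\simeq\mathbb{Z}$: every endomorphism of algebraic groups of $\bb{\G}_m$ is a power map $t\mapsto t^m$ for a unique $m\in\mathbb{Z}$. I would prove it by passing to coordinate rings: the coordinate ring of $\bb{\G}_m$ is the ring of Laurent polynomials $k[T,T^{-1}]$, a morphism corresponds to a $k$-algebra map sending $T$ to a unit, and the units of $k[T,T^{-1}]$ are exactly the monomials $cT^m$ with $c\in k^{\times}$; the condition that the identity map to the identity forces $c=1$, leaving only the power maps. Restricting a character to each factor, and reading a cocharacter coordinatewise, this lemma also yields $X\simeq\mathbb{Z}^n$ and $Y\simeq\mathbb{Z}^n$. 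Moreover it makes the pairing well-defined and integer-valued: for $\chi\in X$ and $\gamma\in Y$ the composite $\chi\circ\gamma$ lies in $\Endo(\bb{\G}_m)$, hence equals $t\mapsto t^{\gen{\chi,\gamma}}$ for a unique integer $\gen{\chi,\gamma}$, which is precisely the defining relation $\chi(\gamma(t))=\gen{\chi,\gamma}(t)$.

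Next I would check bilinearity directly from the group laws on $X$ and $Y$. Linearity in the first variable follows from $(\chi_1+\chi_2)(\gamma(t))=\chi_1(\gamma(t))\,\chi_2(\gamma(t))=t^{\gen{\chi_1,\gamma}+\gen{\chi_2,\gamma}}$, and linearity in the second from the fact that $\bb{\T}$ is abelian together with $\chi$ being a homomorphism, giving $\chi((\gamma_1+\gamma_2)(t))=\chi(\gamma_1(t))\,\chi(\gamma_2(t))=t^{\gen{\chi,\gamma_1}+\gen{\chi,\gamma_2}}$.

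Finally, perfectness reduces to a unimodularity check on the Gram matrix. In the bases above, $\chi_i\circ\gamma_j$ sends $s$ to $s^{\delta_{ij}}$, so $\gen{\chi_i,\gamma_j}=\delta_{ij}$ and the pairing matrix is the identity $I_n$. Since $X$ and $Y$ are free of finite rank $n$ and this matrix is unimodular, the induced maps $Y\to\hom(X,\mathbb{Z})$ and $X\to\hom(Y,\mathbb{Z})$ carry the chosen bases to the corresponding dual bases, hence are isomorphisms; this simultaneously gives the surjectivity assertions (every homomorphism $X\to\mathbb{Z}$ has the form $\chi\mapsto\gen{\chi,\gamma}$, and symmetrically) and the claimed isomorphisms $Y\simeq\hom(X,\mathbb{Z})$ and $X\simeq\hom(Y,\mathbb{Z})$. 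The main obstacle is the lemma $\Endo(\bb{\G}_m)\simeq\mathbb{Z}$; once it is in hand, the remainder is linear algebra over $\mathbb{Z}$.
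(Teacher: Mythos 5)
Your proof is correct. The paper gives no proof of this statement --- it is quoted as background from \cite[\S 9.2]{malletesterman} --- and your argument (reduce to \(\bb{\T}\simeq\bb{\G}_m^n\), establish \(\Endo(\bb{\G}_m)\simeq\mathbb{Z}\) via the units of the Laurent polynomial ring \(k[T,T^{-1}]\), check bilinearity from the group laws on \(X\) and \(Y\), and verify \(\gen{\chi_i,\gamma_j}=\delta_{ij}\) so that the Gram matrix is the identity) is exactly the standard proof found in that reference, so it agrees with the intended one in both substance and method.
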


Let \(\mathfrak{g}\) be the Lie algebra of \(\bb{\G}\), and let \(\ad\) be the adjoint representation described in \cref{liesection}. For \(\chi\in X(\bb{\T})\), define \[\mathfrak{g}_{\chi}\coloneqq\Set{v\in\mathfrak{g}|(\ad t).v=\chi(t)v\;\;\forall t\in \bb{\T}}.\]

\begin{definition}\label{torusrootsystem}
	The set \[\Phi(\bb{\G})\coloneqq\Set{\chi\in X(\bb{\T})|\chi\ne0,\mathfrak{g}_{\chi}\ne0}\] is called the \emph{set of roots} of \(\bb{\G}\) with respect to \(\bb{\T}\), and the group \(W\coloneqq N_{\bb{\G}}(\bb{\T})/C_{\bb{\G}}(\bb{\T})\) is the Weyl group of \(\bb{\G}\).	
\end{definition}

\(W\) acts on \(X\) and \(Y\) via \((w.\chi)(t)\coloneqq \chi(t^w)\) for all \(w\in W, \chi\in X, t\in\bb{\T}\), and \((w.\gamma)(c)\coloneqq\gamma(c)^{w^{-1}}\) for all \(w\in W, \gamma\in Y, c\in\bb{\G}_m\), respectively. These actions are faithful, and compatible with \(\gen{-,-}\) (\cite[Lemma 8.3]{malletesterman}).

We can also define a set of coroots \(\Phi^{\vee}\) by associating to \(\alpha\in\Phi\) an element \(\alpha^{\vee}\in Y\) such that \(w_{\alpha}.\chi=\chi-\gen{\chi,\alpha^{\vee}}\alpha\) for all \(\chi\in X\). In particular, there is a unique choice for \(\alpha^{\vee}\), and \(\gen{\alpha,\alpha^{\vee}}=2\) (\cite[Lemma 8.19]{malletesterman}).

Therefore, we can associate to \(\bb{\T}\) a finite set of roots \(\Phi\subset X\) and a set of coroots \(\Phi^{\vee}\subset Y\). We may identify \(X\) and \(Y\) with subgroups of \(E\coloneqq X\otimes_{\mathbb{Z}}\mathbb{R}\) and \(E^{\vee}\coloneqq Y\otimes_{\mathbb{Z}}\mathbb{R}\), and extend the action of \(W\) on these vector spaces.

We can also define the following structure:
\begin{definition}\label{rootdatum}
	A quadruple \((X,\Phi,Y,\Phi^{\vee})\) is called a \emph{root datum} if
	\begin{enumerate}
		\item \(X\simeq Y\simeq\mathbb{Z}^n\), with a perfect pairing \(\gen{-,-}\colon X\times Y\rightarrow\mathbb{Z}\) as in \cref{perfectpairing}.
		\item \(\Phi\subseteq X\), \(\Phi^{\vee}\subseteq Y\) are root systems in \(\mathbb{Z}\Phi\otimes_{\mathbb{Z}}\mathbb{R}\) and \(\mathbb{Z}\Phi^{\vee}\otimes_{\mathbb{Z}}\mathbb{R}\), respectively.
		\item There exists a bijection \(\alpha\mapsto\alpha^{\vee}\) between \(\Phi\) and \(\Phi^{\vee}\) such that \(\gen{\alpha,\alpha^{\vee}}=2\).
		\item The reflections \(w_{\alpha}\) of \(\Phi\) are defined as:
		\begin{align*}
		w_{\alpha}.\chi&=\chi-\gen{\chi,\alpha^{\vee}}\alpha\qquad\forall\chi\in X,\\
		w_{\alpha^{\vee}}.\gamma&=\gamma-\gen{\alpha,\gamma}\alpha^{\vee}\qquad\forall\gamma\in Y.
		\end{align*}
	\end{enumerate}
\end{definition}
Observe that given a root datum as above, then the Weyl groups of \(\Phi\) and \(\Phi^{\vee}\) are isomorphic via the map \(w_{\alpha}\mapsto w_{\alpha^{\vee}}\).

As we can associate a Lie algebra \(\mathcal{L}\) to a root system \(\Phi\), we can associate the group related to \(\mathcal{L}\) to a root datum:
\begin{proposition}
	Let \(\Phi\) be the root system of a connected reductive group \(\bb{\G}\) with respect to the maximal torus \(\bb{\T}\), and set \(\Phi^{\vee}\coloneqq\set{\alpha^{\vee}|\alpha\in\Phi}\). Then \((X(\bb{\T}),\Phi,Y(\bb{\T}),\Phi^{\vee})\) is a root datum, where \(X,Y\) denote the set of characters and cocharacters of \(\bb{\T}\), respectively.
\end{proposition}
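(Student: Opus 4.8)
The plan is to verify, one at a time, the four defining conditions of \cref{rootdatum}, since the bulk of the structural input has already been assembled in this section. Condition (i) is immediate: as \(\bb{\T}\) is a torus of dimension \(\rk\bb{\G}=n\), both \(X=X(\bb{\T})\) and \(Y=Y(\bb{\T})\) are free abelian of rank \(n\), and the perfect pairing \(\gen{-,-}\colon X\times Y\to\mathbb{Z}\) is exactly the content of \cref{perfectpairing}. Condition (iii) is the assignment \(\alpha\mapsto\alpha^\vee\) introduced above the definition: by \cite[Lemma 8.19]{malletesterman} this is well defined and satisfies \(\gen{\alpha,\alpha^\vee}=2\), while \(\Phi^\vee=\set{\alpha^\vee\mid\alpha\in\Phi}\) makes it surjective. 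It is injective because \(\alpha^\vee=\beta^\vee\) forces \(\gen{\chi,\alpha^\vee}\alpha=\gen{\chi,\alpha^\vee}\beta\) for all \(\chi\in X\), and picking \(\chi\) with \(\gen{\chi,\alpha^\vee}\ne0\) (possible since the pairing is non-degenerate) gives \(\alpha=\beta\).

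For condition (iv), the first formula \(w_\alpha.\chi=\chi-\gen{\chi,\alpha^\vee}\alpha\) is precisely the defining property of \(\alpha^\vee\). I would obtain the cocharacter-side formula from the compatibility of the \(W\)-action with the pairing (\cite[Lemma 8.3]{malletesterman}): since \(w_\alpha\) is an involution, for every \(\chi\in X\) we have
\begin{equation*}
\gen{\chi,w_\alpha.\gamma}=\gen{w_\alpha.\chi,\gamma}=\gen{\chi,\gamma}-\gen{\chi,\alpha^\vee}\gen{\alpha,\gamma}=\gen{\chi,\gamma-\gen{\alpha,\gamma}\alpha^\vee}.
\end{equation*}
Non-degeneracy of \(\gen{-,-}\) then yields \(w_\alpha.\gamma=\gamma-\gen{\alpha,\gamma}\alpha^\vee\), which is the stated reflection under the identification \(w_\alpha\leftrightarrow w_{\alpha^\vee}\).

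The main obstacle is condition (ii), namely that \(\Phi\subset X\) and \(\Phi^\vee\subset Y\) are genuine root systems in \(\mathbb{Z}\Phi\otimes_{\mathbb{Z}}\mathbb{R}\) and \(\mathbb{Z}\Phi^\vee\otimes_{\mathbb{Z}}\mathbb{R}\) in the sense of \cref{rootsystemdef}. For \(\Phi\), finiteness and \(0\notin\Phi\) come straight from \cref{torusrootsystem}, and \(\Phi\) spans its ambient space by construction; the reflections \(w_\alpha\) lie in \(W=N_{\bb{\G}}(\bb{\T})/C_{\bb{\G}}(\bb{\T})\), which permutes \(\Phi\), giving the stabilisation axiom, and the integrality axiom holds because \(w_\alpha.\beta-\beta=-\gen{\beta,\alpha^\vee}\alpha\) with \(\gen{\beta,\alpha^\vee}\in\mathbb{Z}\). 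The one point genuinely requiring the structure theory of reductive groups is reducedness (that \(\alpha,c\alpha\in\Phi\) forces \(c=\pm1\)), which reflects the one-dimensionality of the root subspaces \(\mathfrak{g}_\alpha\). For \(\Phi^\vee\) the argument is dual: one uses the cocharacter-side reflection formula established above, and the self-dual shape of the axioms of \cref{rootdatum} makes the verification symmetric to that for \(\Phi\).
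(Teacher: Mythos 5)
The paper does not prove this proposition at all: it is quoted as background from the literature (the section opens by attributing its contents to \cite[\S 9.2]{malletesterman}), so there is no in-paper argument to compare against. Your sketch is the standard verification and is essentially sound: conditions (i), (iii) and (iv) of \cref{rootdatum} do reduce to \cref{perfectpairing}, the defining property of \(\alpha^{\vee}\), and the \(W\)-equivariance of the pairing exactly as you say, and your derivation of the cocharacter-side reflection formula from \(\gen{w_{\alpha}.\chi,w_{\alpha}.\gamma}=\gen{\chi,\gamma}\) plus \(w_{\alpha}^2=1\) is correct. Two spots are glossed over. First, your injectivity argument for \(\alpha\mapsto\alpha^{\vee}\) passes from \(\alpha^{\vee}=\beta^{\vee}\) to \(\gen{\chi,\alpha^{\vee}}\alpha=\gen{\chi,\alpha^{\vee}}\beta\), which tacitly assumes \(w_{\alpha}=w_{\beta}\); that needs a word of justification (e.g.\ both reflections fix the hyperplane \(\ker\gen{-,\alpha^{\vee}}\) and are orthogonal for a \(W\)-invariant form, hence coincide). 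Second, reducedness of \(\Phi\) does not follow from \(\dim\mathfrak{g}_{\alpha}=1\) alone --- both \(\mathfrak{g}_{\alpha}\) and \(\mathfrak{g}_{2\alpha}\) could a priori be one-dimensional; the usual argument passes to the semisimple-rank-one subgroup \(Z_{\bb{\G}}((\ker\alpha)^{\circ})\) and uses the classification of such groups to see that only \(\pm\alpha\) occur among the rational multiples of \(\alpha\). Neither point is a flaw in strategy, but as written these are the only steps where the proof is not yet complete.
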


We then have the following:
\begin{theorem}[Chevalley classification]
	Two semisimple linear algebraic groups are isomorphic if and only if they have isomorphic root data. For each root datum there exists a semisimple algebraic group which realises it. Such a group is simple if and only if its root system is indecomposable.
\end{theorem}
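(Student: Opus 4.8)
The plan is to separate the statement into three assertions and prove each in turn: the isomorphism theorem (isomorphic groups have isomorphic root data, and conversely), the existence theorem, and the simplicity criterion. For the easy half of the isomorphism theorem, suppose \(\phi\colon\bb{\G}\to\bb{\G}'\) is an isomorphism of algebraic groups. It carries a maximal torus \(\bb{\T}\) to a maximal torus of \(\bb{\G}'\), and since all maximal tori are conjugate by \cref{maximaltori}, after composing with an inner automorphism I may assume \(\phi(\bb{\T})=\bb{\T}'\). The induced maps on character and cocharacter groups then match \(X\) with \(X'\) and \(Y\) with \(Y'\) compatibly with the pairing of \cref{perfectpairing}; since \(\phi\) sends each root space \(\mathfrak{g}_\alpha\) to a root space of \(\bb{\G}'\), it carries \(\Phi\) to \(\Phi'\) and \(\Phi^\vee\) to \((\Phi')^\vee\), giving an isomorphism of root data in the sense of \cref{rootdatum}.

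The converse is the hard direction: a given isomorphism of root data \((X,\Phi,Y,\Phi^\vee)\to(X',\Phi',Y',(\Phi')^\vee)\) must be lifted to a group isomorphism. Here I would use that a connected reductive group is generated by a maximal torus \(\bb{\T}\) together with its one-parameter root subgroups \(\bb{U}_\alpha\) (consisting of the elements \(x_\alpha(t)\), \(t\in k\)), and that the group is governed by the Chevalley commutator relations among the \(x_\alpha(t)\), encoded in the structure constants \(N_{r,s}\) of \cref{structureconstants}. After fixing a pinning — a choice of isomorphisms \(\bb{\G}_a\to\bb{U}_\alpha\) of the additive group onto the simple root subgroups — the isomorphism of root data prescribes a candidate map on these generators, and the task is to check that it respects every defining relation. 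This is the main obstacle: the structure constants are determined only up to the sign ambiguities recorded in \cref{structureconstants}, so one must show that the residual discrepancy between the two pinnings is realised by conjugation by a suitable element of \(\bb{\T}\), after which the candidate extends to a genuine isomorphism independent of all choices. This is essentially the full isogeny theorem, of which the isomorphism statement is the special case where the map \(X\to X'\) is an isomorphism rather than merely a finite-index inclusion.

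For existence I would invoke Chevalley's construction from \cref{chevalleygroups}. Starting from the root system \(\Phi\) of the datum, pass to the complex semisimple Lie algebra \(\m{L}\) it determines and fix a Chevalley basis as in \cref{structureconstants}; the lattice \(X\), which lies between the root lattice \(\mathbb{Z}\Phi\) and the full weight lattice, selects a faithful \(\m{L}\)-module \(V\) whose torus weights generate exactly \(X\). Letting the automorphisms \(x_r(t)\) of \eqref{xrt} act on an admissible \(\mathbb{Z}\)-form of \(V\), base-changed to \(k\), produces a semisimple algebraic group over \(k\); its character lattice is \(X\) by construction and its cocharacter lattice is the dual \(Y\) via the perfect pairing of \cref{perfectpairing}, so it realises the prescribed root datum. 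A decomposable root system is handled by taking the direct product of the groups attached to its indecomposable summands.

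Finally, the simplicity criterion follows from the structure theory encoded in the previous steps. Writing \(\Phi\) as a disjoint union of its indecomposable components, the subgroups \(\bb{\G}_i\) generated by the root subgroups \(\bb{U}_\alpha\) with \(\alpha\) in the \(i\)-th component are closed, connected, normal, pairwise commuting, and generate \(\bb{\G}\). A proper non-trivial closed connected normal subgroup, if one existed, would have to be a sub-product of these factors, since it is normalised by the Weyl group and hence absorbs or omits a whole component's worth of root subgroups at once; consequently \(\bb{\G}\) admits such a subgroup precisely when there is more than one component. Thus \(\bb{\G}\) is simple in the sense of the excerpt exactly when \(\Phi\) is indecomposable. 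Of the four parts, I expect the lifting step of the isomorphism theorem to be the principal difficulty, as it is the one that genuinely requires the isogeny machinery rather than a direct structural argument.
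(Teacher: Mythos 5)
The paper offers no proof of this statement: it is quoted as background (the section header attributes its contents to \cite[\S 9.2]{malletesterman}), so there is nothing internal to compare against. Your outline follows the standard classical route --- the easy direction via transport of a maximal torus, the hard direction via the isogeny/isomorphism theorem for pinned groups, existence via the Chevalley construction from an admissible lattice whose weights generate \(X\), and the simplicity criterion via the decomposition of \(\bb{\G}\) into the subgroups generated by the root subgroups of each indecomposable component. As a map of where the difficulties lie, this is accurate, and your identification of the lifting step as the essential content is correct.

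Two places remain genuinely unproved rather than merely unexpanded. First, deferring the converse of the isomorphism statement to ``the full isogeny theorem'' is not a reduction to something easier: that theorem \emph{is} the classification, so as written the central claim is assumed rather than established; a self-contained argument would still have to verify that the candidate map on generators respects all Chevalley and torus relations and that the sign discrepancies between structure constants are absorbed by a torus conjugation (which requires surjectivity of the relevant map \(\bb{\T}_{\ad}\to(k^{\times})^{l}\), i.e.\ working in the adjoint torus). Second, in the simplicity criterion the assertion that a proper non-trivial closed connected normal subgroup ``would have to be a sub-product of these factors'' needs the lemma that such a subgroup of a semisimple group is generated by the root subgroups it contains, and that the set of roots so obtained is \(W\)-stable and closed, hence a union of components; the phrase ``normalised by the Weyl group and hence absorbs or omits a whole component's worth of root subgroups'' gestures at this but does not supply the step showing the subgroup contains \emph{any} root subgroup in the first place (which uses that a positive-dimensional connected normal subgroup meets some Borel non-trivially and is normalised by \(\bb{\T}\), so its Lie algebra contains a root space). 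With those two points filled in, the outline matches the standard proof.
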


Recall from \cref{rootsystem} that \(X\simeq\hom(Y,\mathbb{Z})\), and that \(\mathbb{Z}\Phi\subseteq X\), \(\mathbb{Z}\Phi^{\vee}\subseteq Y\). So by taking the (injective) natural homomorphism \(\hom(Y,\mathbb{Z})\rightarrow\hom(\mathbb{Z}\Phi^{\vee},\mathbb{Z})\eqqcolon\Omega\) induced by restriction, we may identify \(\mathbb{Z}\Phi\) as a subgroup of \(\Omega\) via \(\mathbb{Z}\Phi\subseteq X\subseteq\Omega\). 
\begin{definition}
	The \emph{fundamental group} of \(\Phi\) is \(\Lambda=\Lambda(\Phi)\coloneqq\Omega/\mathbb{Z}\Phi\); observe that it does not depend on \(X\).
\end{definition}

\begin{definition}
	Let \(\bb{\G}\) be a semisimple algebraic group with root datum \((X,\Phi,Y,\Phi^{\vee})\). Then \(\Lambda(\bb{\G})\coloneqq\Omega/X\), with \(\Omega\) defined as above, is called the \emph{fundamental group} of \(\bb{\G}\).
	
	\(\bb{\G}\) is \emph{simply connected} if \(X=\Omega\), i.e. \(\Lambda(\bb{\G})=1\).
	
	\(\bb{\G}\) is of \emph{adjoint type} if \(X=\mathbb{Z}\Phi\).

	We denote the algebraic group \(\bb{\G}\) with root system \(\Phi\) by \(\bb{\G}_{\ad}\) or \(\bb{\G}_{\sc}\) when \(\Phi\) is of adjoint or simply connected type, respectively.
\end{definition}

\begin{definition}
	A homomorphism \(\varphi\colon\bb{\G}\rightarrow\bb{\G}_1\) of algebraic groups is called an \emph{isogeny} if it is surjective and has finite kernel. If such a morphism exists, \(\bb{\G}\) and \(\bb{\G}_1\) are \emph{isogenous}.
\end{definition}

Simply connected and adjoint groups are related by isogenies due to the following result:
\begin{proposition}
	Let \(\bb{\G}\) be a semisimple algebraic group over an algebraically closed field \(k\) of characteristic \(p\), with root system \(\Phi\). Then there exist natural isogenies \(\bb{\G}_{\sc}\stackrel{\pi_1}{\rightarrow}\bb{\G}\stackrel{\pi_2}{\rightarrow}\bb{\G}_{\ad}\), where \(\bb{\G}_{\sc}\) is a simply connected group and \(\bb{\G}_{\ad}\) an adjoint group both with root system \(\Phi\), \(\ker\pi_1\simeq\Lambda(\bb{\G})_{p'}\), \(\ker\pi_2\simeq(\Lambda(\bb{\G}_{\ad})/\Lambda(\bb{\G}))_{p'}\), and both \(d\pi_1\) and \(d\pi_2\) are isomorphisms.
\end{proposition}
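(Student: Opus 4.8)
The plan is to reduce the statement to arithmetic of character lattices and to invoke the Chevalley classification recalled above. By that classification \(\bb{\G}\) is determined by its root datum \((X,\Phi,Y,\Phi^{\vee})\), in which \(\mathbb{Z}\Phi\subseteq X\subseteq\Omega=\hom(\mathbb{Z}\Phi^{\vee},\mathbb{Z})\). First I would let \(\bb{\G}_{\sc}\) and \(\bb{\G}_{\ad}\) be the semisimple groups sharing the root system \(\Phi\) (hence the same \(\Phi^{\vee}\) and the same \(\Omega\)) but with character lattices \(\Omega\) and \(\mathbb{Z}\Phi\) respectively. These exist and are unique up to isomorphism by the Chevalley classification, and they are simply connected and of adjoint type by the definitions given above; all three groups then differ only in the choice of lattice squeezed between \(\mathbb{Z}\Phi\) and \(\Omega\).

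Next I would construct the isogenies using the dictionary between central isogenies and maps of root data: an inclusion of the target's character lattice into the source's that is compatible with \(\Phi\) and \(\Phi^{\vee}\) corresponds, contravariantly, to a central isogeny. Applying this to \(\mathbb{Z}\Phi\subseteq X\) and to \(X\subseteq\Omega\) produces central isogenies \(\pi_2\colon\bb{\G}\rightarrow\bb{\G}_{\ad}\) and \(\pi_1\colon\bb{\G}_{\sc}\rightarrow\bb{\G}\), which on a fixed maximal torus \(\bb{\T}\) induce exactly these lattice inclusions on characters.

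To read off the kernels I would use that the kernel of a central isogeny of semisimple groups lies in every maximal torus, so \(\ker\pi_1\) coincides with the kernel of the torus isogeny \(\bb{\T}_{\sc}\rightarrow\bb{\T}\). That kernel is the locus killed by all characters in \(X\), so as a group of \(k\)-points it is \(\hom(\Omega/X,k^{\times})\); since \(k\) is algebraically closed of characteristic \(p\) we have \(\hom(A,k^{\times})\cong A_{p'}\) for every finite abelian \(A\), whence \(\ker\pi_1\cong(\Omega/X)_{p'}=\Lambda(\bb{\G})_{p'}\). The identical computation for \(\pi_2\) gives \(\ker\pi_2\cong(X/\mathbb{Z}\Phi)_{p'}\), and the third isomorphism theorem applied to \(\mathbb{Z}\Phi\subseteq X\subseteq\Omega\) rewrites \(X/\mathbb{Z}\Phi\) as \((\Omega/\mathbb{Z}\Phi)/(\Omega/X)=\Lambda(\bb{\G}_{\ad})/\Lambda(\bb{\G})\), as claimed.

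The remaining assertion that \(d\pi_1\) and \(d\pi_2\) are isomorphisms is where the characteristic-\(p\) subtlety sits, and I expect it to be the main obstacle. An isogeny between groups of equal dimension has an invertible differential exactly when it is \emph{separable}, i.e.\ when its scheme-theoretic kernel is \'etale, and a diagonalisable kernel dual to a finite abelian group \(A\) is \'etale precisely when \(p\nmid\size{A}\). The delicate point is therefore to keep the group of \(k\)-points (which only records the \(p'\)-part computed above) distinct from the kernel group scheme (whose possible \(p\)-part is infinitesimal): once one checks that the relevant kernel is \'etale, \(d\pi_i\) is injective and, by comparing dimensions, an isomorphism. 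Making this separability argument precise, rather than the lattice bookkeeping, is the heart of the proof.
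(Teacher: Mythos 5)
The paper does not actually prove this proposition: it is quoted from \cite[\S 9.2]{malletesterman} without proof, so there is no argument of the paper's to compare yours against and I can only assess the proposal on its own terms. Your lattice bookkeeping is correct and is the standard argument: realising \(\bb{\G}_{\sc}\) and \(\bb{\G}_{\ad}\) as the groups with character lattices \(\Omega\) and \(\mathbb{Z}\Phi\), obtaining \(\pi_1,\pi_2\) contravariantly from \(\mathbb{Z}\Phi\subseteq X\subseteq\Omega\), placing the central kernels inside a maximal torus, and using \(\hom(A,k^{\times})\simeq A_{p'}\) gives \(\ker\pi_1\simeq(\Omega/X)_{p'}=\Lambda(\bb{\G})_{p'}\) and \(\ker\pi_2\simeq(X/\mathbb{Z}\Phi)_{p'}\); the latter is the kernel of the natural surjection \(\Lambda(\bb{\G}_{\ad})\rightarrow\Lambda(\bb{\G})\), which is what \(\Lambda(\bb{\G}_{\ad})/\Lambda(\bb{\G})\) must mean here (your expression \((\Omega/\mathbb{Z}\Phi)/(\Omega/X)\) is not literally a quotient of groups, but the identification via the exact sequence \(0\rightarrow X/\mathbb{Z}\Phi\rightarrow\Omega/\mathbb{Z}\Phi\rightarrow\Omega/X\rightarrow0\) is the intended one).

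The genuine gap is the step you yourself flag as the heart of the matter. You propose to get \(d\pi_i\) invertible by ``checking that the relevant kernel is \'etale'', but that is exactly what fails whenever \(p\) divides the order of the corresponding lattice quotient: the scheme-theoretic kernel then has a non-trivial infinitesimal part that the group of \(k\)-points (which only records the \(p'\)-part you computed) does not see. Concretely, for \(\bb{\G}=\SL_p\rightarrow\PGL_p=\bb{\G}_{\ad}\) in characteristic \(p\) the kernel is \(\mu_p\); the scalar matrices lie in \(\mathfrak{sl}_p\) and map to zero in \(\mathrm{Lie}(\PGL_p)\), so \(d\pi_2\) has a one-dimensional kernel and is not an isomorphism — and no other isogeny between these two groups can do better, since an isogeny with \'etale (hence trivial) kernel would be an isomorphism of algebraic groups, which does not exist. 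So your strategy proves the differential claim precisely when \(p\nmid\size{\Lambda(\bb{\G}_{\ad})}\) — which does cover every case the paper uses (\(\F_4\) and \(\E_8\) have trivial fundamental group, and \(\E_6\), \(\E_7\) are only considered in characteristics prime to \(3\) and \(2\) respectively) — but it cannot be completed in general, and the assertion about \(d\pi_1,d\pi_2\) should either be restricted accordingly or checked against the precise formulation in \cite{malletesterman} before being relied upon.
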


Here, \(\bb{\G}_p'\) denotes the \(p'\text{-part}\) of \(\bb{\G}\), that is the subgroup of \(\bb{\G}\) obtained by taking the elements of \(\bb{\G}\) of order coprime with \(p\).

\begin{example}\leavevmode
\begin{enumerate}
	\item If \(\Phi\) is the root system \(A_{n-1}\), it has fundamental group the cyclic group of order \(n\); \(\bb{\G}_{\sc}\) and \(\bb{\G}_{\ad}\) are the groups \(\SL_n\) and \(\PGL_n\), respectively. The corresponding finite Chevalley groups obtained by taking the points over \(\mathbb{F}_q\) are \(\SL_n(q)\) and \(\PGL_n(q)\), respectively; furthermore there is a third group, the simple group \(\PSL_n(q)\), which is the image of the natural map from \(\SL_n(q)\le\GL_n(q)\) to \(\PGL_{n}(q)\).

	\item The group \(\bb{\E}_7\), denoted by its root system \(\E_7\), has fundamental group the cyclic group of order 2, with two isogeny types, \(\bb{\E}_{7,\sc}\) and \(\bb{\E}_{7,\ad}\). Similarly to the case of \(\A_n\), there are three different finite Chevalley groups over \(\mathbb{F}_q\) associated to it: the simple group \(\E_7(q)\), its Schur cover \(\E_{7,\sc}(q)\), and a subgroup \(\E_{7,\ad}(q)\) of its automorphism group. If \(q\) is even, they all coincide, while if \(q\) is odd we have that \(\E_7(q)\) has index 2 in \(\E_{7,\ad}(q)\), and \(\E_7(q)\) is the quotient of \(\E_{7,\sc}(q)\) over its centre (of order 2).

	\item Other groups, like those with root system \(\F_4\) or \(\E_8\), have a trivial fundamental group, resulting in only one isogeny type, and one corresponding finite Chevalley group.
\end{enumerate}
\end{example}
\end{section}

\begin{section}{Centralisers of semisimple elements}\label{centraliser}
We will need to construct the centraliser of a given semisimple element of \(\bb{\G}\), using the following result:

\begin{theorem}[{\cite[Theorem 3.5.3]{carter_ss}}]\label{centralisertheorem}
	Let \(\bb{\G}\) be a connected reductive group, \(s\) a semisimple element of \(\bb{\G}\), and \(\bb{\T}\) a maximal torus of \(\bb{\G}\) containing \(s\). Then
	\begin{enumerate}
		\item \(C_{\bb{\G}}(s)^{\circ}=\gen{\bb{\T},\bb{\X}_{\alpha}\colon\alpha(s)=1}\).
		\item \(C_{\bb{\G}}(s)=\gen{\bb{\T},\bb{\X}_{\alpha},n_w\colon\alpha(s)=1,s^w=s}\).
	\end{enumerate}
\end{theorem}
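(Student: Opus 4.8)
The plan is to identify \(C_{\bb{\G}}(s)^{\circ}\) as a connected reductive group having \(\bb{\T}\) as a maximal torus and root system \(\Phi_s\coloneqq\set{\alpha\in\Phi|\alpha(s)=1}\), and then to recover the full centraliser by controlling its component group through the Weyl group. First I would record the conjugation action of \(s\). Since \(\bb{\T}\) is abelian and \(s\in\bb{\T}\), certainly \(\bb{\T}\le C_{\bb{\G}}(s)\). For a root subgroup \(\bb{\X}_{\alpha}=\set{x_{\alpha}(t)|t\in k}\) the relation \(t'x_{\alpha}(t)t'^{-1}=x_{\alpha}(\alpha(t')t)\) for \(t'\in\bb{\T}\) gives, with \(t'=s\),
\[
s\,x_{\alpha}(t)\,s^{-1}=x_{\alpha}(\alpha(s)t),
\]
so \(\bb{\X}_{\alpha}\) centralises \(s\) exactly when \(\alpha(s)=1\). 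Hence \(H\coloneqq\gen{\bb{\T},\bb{\X}_{\alpha}:\alpha\in\Phi_s}\), being generated by closed connected subgroups, is itself closed and connected and satisfies \(H\le C_{\bb{\G}}(s)^{\circ}\).

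To obtain the reverse inclusion in (i) I would pass to Lie algebras. Writing \(\mathfrak{g}=\operatorname{Lie}(\bb{\T})\oplus\bigoplus_{\alpha\in\Phi}\mathfrak{g}_{\alpha}\), semisimplicity of \(s\) makes \(\Ad(s)\) diagonalisable, acting on \(\mathfrak{g}_{\alpha}\) by the scalar \(\alpha(s)\) and trivially on \(\operatorname{Lie}(\bb{\T})\); thus its fixed space is \(\mathfrak{g}^{\Ad(s)}=\operatorname{Lie}(\bb{\T})\oplus\bigoplus_{\alpha\in\Phi_s}\mathfrak{g}_{\alpha}=\operatorname{Lie}(H)\). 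One always has \(\operatorname{Lie}C_{\bb{\G}}(s)\subseteq\mathfrak{g}^{\Ad(s)}\), and the essential point is that the conjugacy class of a semisimple element is separable, so that this inclusion is an equality. Then \(\dim C_{\bb{\G}}(s)^{\circ}=\dim\mathfrak{g}^{\Ad(s)}=\dim H\); since \(H\) is closed, connected and contained in \(C_{\bb{\G}}(s)^{\circ}\), equality of dimensions forces \(H=C_{\bb{\G}}(s)^{\circ}\), proving (i). Equivalently, one may invoke that \(C_{\bb{\G}}(s)^{\circ}\) is reductive with maximal torus \(\bb{\T}\), that its roots read off \(\mathfrak{g}^{\Ad(s)}\) are exactly \(\Phi_s\), and that a connected reductive group is generated by a maximal torus and its root subgroups.

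For (ii) I would analyse \(C_{\bb{\G}}(s)/C_{\bb{\G}}(s)^{\circ}\) via conjugacy of maximal tori. Given \(g\in C_{\bb{\G}}(s)\), the torus \(g\bb{\T}g^{-1}\) again centralises \(s\) (using that \(g\) and every element of \(\bb{\T}\) fix \(s\)), hence is a maximal torus of the connected group \(C_{\bb{\G}}(s)^{\circ}\), as is \(\bb{\T}\); by \cref{maximaltori} there is \(c\in C_{\bb{\G}}(s)^{\circ}\) with \(c(g\bb{\T}g^{-1})c^{-1}=\bb{\T}\), so \(cg\in N_{\bb{\G}}(\bb{\T})\cap C_{\bb{\G}}(s)\). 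Therefore \(C_{\bb{\G}}(s)=C_{\bb{\G}}(s)^{\circ}\cdot\bigl(N_{\bb{\G}}(\bb{\T})\cap C_{\bb{\G}}(s)\bigr)\). Writing \(N_{\bb{\G}}(\bb{\T})=\bb{\T}.W\) with representatives \(n_w\) acting on \(\bb{\T}\) as \(w\), the element \(n_w\) centralises \(s\) if and only if \(s^w=s\); combining this with (i) yields \(C_{\bb{\G}}(s)=\gen{\bb{\T},\bb{\X}_{\alpha},n_w:\alpha(s)=1,\ s^w=s}\).

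The main obstacle is the separability step \(\operatorname{Lie}C_{\bb{\G}}(s)=\mathfrak{g}^{\Ad(s)}\): one needs the scheme-theoretic centraliser of \(s\) to be smooth, which is precisely where semisimplicity of \(s\) is indispensable, and which simultaneously underlies the reductivity of \(C_{\bb{\G}}(s)^{\circ}\) and the identification of its root system with \(\Phi_s\). Everything else is then a dimension count together with the standard torus-conjugacy argument for the component group.
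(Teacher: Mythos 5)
The paper gives no proof of this statement: it is quoted verbatim from Carter, where the argument runs through the Bruhat decomposition --- one writes \(g\in C_{\bb{\G}}(s)\) uniquely as a product of a unipotent part, a torus part, a Weyl representative \(n_w\) and a second unipotent part, conjugates by \(s\), and uses uniqueness of that expression (and of the expression of a unipotent element as a product of root elements) to force \(\alpha(s)=1\) on every root that occurs and \(s^w=s\); parts (i) and (ii) drop out simultaneously. Your route is genuinely different and is correct as far as it goes. For (i) you trade the combinatorial uniqueness argument for a Lie-algebra dimension count: the inclusion \(\gen{\bb{\T},\bb{\X}_{\alpha}\colon\alpha(s)=1}\le C_{\bb{\G}}(s)^{\circ}\) is elementary, and equality follows once one knows \(\operatorname{Lie}C_{\bb{\G}}(s)=\mathfrak{g}^{\Ad(s)}\). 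You rightly isolate this separability statement as the crux, but you invoke it rather than prove it, so your argument is only as self-contained as that input: it is the smoothness of the fixed-point scheme of a diagonalisable group acting on \(\bb{\G}\), a nontrivial fact in positive characteristic and precisely the point Carter's Bruhat argument is engineered to avoid. Your derivation of (ii) from (i) --- conjugating \(g\bb{\T}g^{-1}\) back to \(\bb{\T}\) inside the connected group \(C_{\bb{\G}}(s)^{\circ}\) via \cref{maximaltori}, then observing that an element \(tn_w\) of \(N_{\bb{\G}}(\bb{\T})\) centralises \(s\) exactly when \(s^w=s\) --- is clean and complete. In exchange for the extra input, your approach is shorter, explains conceptually why \(C_{\bb{\G}}(s)^{\circ}\) is reductive with root system \(\set{\alpha\in\Phi|\alpha(s)=1}\), and generalises verbatim to centralisers of arbitrary subgroups of \(\bb{\T}\), which is in fact the form in which the theorem is used later (e.g.\ in \cref{counting}); Carter's proof, by contrast, is entirely elementary and uniform over any field.
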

Here, \(n_w\) denotes a representative in \(N\) of the element \(w\) of the Weyl group \(W=N/H\), as in \cref{BNpairdef}; \(\bb{\G}^{\circ}\) denotes the connected component of \(\bb{\G}\) containing the identity element of \(\bb{\G}\); \(\bb{\X}_{\alpha}\) denotes the root subgroup of \(\alpha\) with respect to the torus \(\bb{\T}\), as described in \cref{liesection}; \(\alpha(s)=1\) means that the root space \(\mathcal{L}_{\alpha}\) is fixed by \(s\); \(s^w=s\) means that \(w\in W(\bb{\T})\) stabilises the set of roots whose root space is fixed by \(s\).

In particular, (i) means that we can construct a connected component of the centraliser of \(s\) by taking a torus and the root subgroups corresponding to the root spaces that are eigenspaces under the action of \(s\). Note that we can obtain them by using the relations \eqref{hrl} and \eqref{xrt}, respectively.

\begin{theorem}[{\cite[Theorem 3.5.6]{carter_ss}}]\label{centralisertheorem2}
	Let \(\bb{\G}\) be a connected reductive group whose derived group \(\bb{\G}'\) is simply-connected. Let \(s\) be a semisimple element of \(\bb{\G}\). Then \(C_{\bb{\G}}(s)\) is connected.
\end{theorem}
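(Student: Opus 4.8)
The statement is Steinberg's connectedness theorem, and its entire content reduces to a combinatorial fact about the Weyl group. The plan is first to use \cref{centralisertheorem} to pin down the component group of \(C_{\bb{\G}}(s)\). Fix a maximal torus \(\bb{\T}\) containing \(s\) and set \(\Psi\coloneqq\Set{\alpha\in\Phi|\alpha(s)=1}\), a closed subsystem. By \cref{centralisertheorem}(i), \(C_{\bb{\G}}(s)^{\circ}=\gen{\bb{\T},\bb{\X}_{\alpha}:\alpha\in\Psi}\) is connected reductive with maximal torus \(\bb{\T}\) and Weyl group \(W_{\Psi}\coloneqq\gen{w_{\alpha}:\alpha\in\Psi}\), while by \cref{centralisertheorem}(ii) the full centraliser is generated by \(C_{\bb{\G}}(s)^{\circ}\) together with the representatives \(n_w\) for \(w\) in \(\stab_W(s)=\Set{w\in W|s^w=s}\). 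Since two representatives of the same \(w\) differ by an element of \(\bb{\T}\le C_{\bb{\G}}(s)^{\circ}\), and since \(N_{C_{\bb{\G}}(s)^{\circ}}(\bb{\T})\) maps onto \(W_{\Psi}\), an element \(n_w\) lies in \(C_{\bb{\G}}(s)^{\circ}\) precisely when \(w\in W_{\Psi}\); taking images in \(W\) I would conclude that \(C_{\bb{\G}}(s)/C_{\bb{\G}}(s)^{\circ}\cong\stab_W(s)/W_{\Psi}\). Thus connectedness is \emph{equivalent} to the identity \(\stab_W(s)=W_{\Psi}\).

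Second, I would reduce the hypothesis that \(\bb{\G}'\) is simply connected to the case where \(\bb{\G}\) itself is semisimple and simply connected. Writing \(\bb{\G}=Z(\bb{\G})^{\circ}\cdot\bb{\G}'\) as a central product, the central torus \(Z(\bb{\G})^{\circ}\) lies in every centraliser and every root is trivial on it, so both \(\Psi\) and the \(W\)-action relevant to \(\stab_W(s)\) depend only on the image of \(s\) in \(\bb{\T}\cap\bb{\G}'\). Hence it suffices to prove \(\stab_W(s)=W_{\Psi}\) for \(s\) in a maximal torus of the simply connected semisimple group \(\bb{\G}'\).

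The remaining combinatorial statement is the heart of the matter, and where simple-connectedness is genuinely used. Since \(k=\overline{\mathbb{F}}_p\), every element of \(k^{\times}\) is a root of unity, so \(s\) is a torsion point of \(\bb{\T}\cong Y\otimes_{\mathbb{Z}}k^{\times}\) and corresponds to a class \(\bar v\) in \((Y\otimes\mathbb{Q})/Y\). The plan is to pass to the group \(W\ltimes Y\) acting on \(V\coloneqq Y\otimes\mathbb{R}\): an element \(w\in W\) fixes \(s\) exactly when \(wv-v\in Y\) for a lift \(v\) of \(\bar v\), i.e. when some \((w,\mu)\in W\ltimes Y\) fixes \(v\); and a root \(\alpha\) satisfies \(\alpha(s)=1\) precisely when an affine reflection hyperplane \(\Set{u\in V|\gen{\alpha,u}=n}\), \(n\in\mathbb{Z}\), passes through \(v\), the linear part of that affine reflection being \(w_{\alpha}\). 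Because \(\bb{\G}'\) is simply connected, \(Y=\mathbb{Z}\Phi^{\vee}\) is exactly the coroot lattice, so \(W\ltimes Y\) is the affine Weyl group---an affine reflection group---rather than the larger extended affine Weyl group. Invoking the fixed-point theorem for reflection groups, that the stabiliser of any point is generated by the reflections fixing it, I would read off that the linear parts of \(\stab_{W\ltimes Y}(v)\) are generated by the \(w_{\alpha}\) with \(\alpha(s)=1\); that is, \(\stab_W(s)=W_{\Psi}\), which completes the proof.

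The main obstacle is exactly this last step: the inclusion \(\stab_W(s)\subseteq W_{\Psi}\) fails without simple-connectedness, since for non-simply-connected \(\bb{\G}'\) the lattice \(Y\) strictly contains \(\mathbb{Z}\Phi^{\vee}\), the group \(W\ltimes Y\) is no longer a reflection group, and the extra translations contribute point-stabilising elements whose linear parts are not reflections in \(\Psi\). The smallest illustration is \(\PGL_2\), where the non-trivial Weyl element fixes the image of \(\mathrm{diag}(1,-1)\) although the single root is not trivial on it, producing a disconnected centraliser---precisely the phenomenon that simple-connectedness rules out.
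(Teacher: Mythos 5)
Your argument is correct, and since the paper offers no proof of its own here (it simply cites \cite[Theorem 3.5.6]{carter_ss}), the right comparison is with that source: your route --- identifying \(C_{\bb{\G}}(s)/C_{\bb{\G}}(s)^{\circ}\) with \(\stab_W(s)/W_{\Psi}\) via \cref{centralisertheorem}, reducing to \(\bb{\G}'\) simply connected semisimple, and then applying the fixed-point theorem for the affine Weyl group \(W\ltimes\mathbb{Z}\Phi^{\vee}\) to a torsion point of \(\bb{\T}\) --- is precisely Steinberg's proof as presented there. The only caveat worth recording is that your final step uses \(k=\overline{\mathbb{F}}_p\) so that \(s\) is torsion, which matches the paper's (and Carter's) setting but would need a minor extra argument over a general algebraically closed field.
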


By combining \cref{centralisertheorem} and \cref{centralisertheorem2}, we obtain the following:
\begin{corollary}\label{centralisertheoremfinal}
	Let \(\bb{\G}\) be a connected reductive group whose derived group \(\bb{\G}'\) is simply-connected, \(s\) a semisimple element of \(\bb{\G}\), and \(\bb{\T}\) a maximal torus of \(\bb{\G}\) containing \(s\). Then \(C_{\bb{\G}}(s)=\gen{\bb{\T},\bb{\X}_{\alpha}\colon\alpha(s)=1}\).
\end{corollary}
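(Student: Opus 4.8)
The plan is to combine the two preceding theorems directly, since all of their hypotheses are met by assumption. First I would invoke \cref{centralisertheorem2}: because \(\bb{\G}\) is connected reductive and its derived group \(\bb{\G}'\) is simply-connected, the centraliser \(C_{\bb{\G}}(s)\) of the semisimple element \(s\) is connected. Consequently \(C_{\bb{\G}}(s)=C_{\bb{\G}}(s)^{\circ}\), where \(C_{\bb{\G}}(s)^{\circ}\) denotes the identity component.

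Next I would apply \cref{centralisertheorem}(i), whose hypotheses are exactly those in our statement (a connected reductive group \(\bb{\G}\), a semisimple element \(s\), and a maximal torus \(\bb{\T}\) containing \(s\)). This yields \(C_{\bb{\G}}(s)^{\circ}=\gen{\bb{\T},\bb{\X}_{\alpha}\colon\alpha(s)=1}\). Chaining the two identities then gives \(C_{\bb{\G}}(s)=\gen{\bb{\T},\bb{\X}_{\alpha}\colon\alpha(s)=1}\), as required.

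There is essentially no obstacle here: the substance lies entirely in the two theorems being combined, and the only thing to verify is the compatibility of their hypotheses, which is immediate. The one conceptual point worth highlighting is the role played by the simply-connected hypothesis. In general, \cref{centralisertheorem}(ii) augments the connected component with the extra generators \(n_w\) indexed by Weyl-group elements \(w\) satisfying \(s^w=s\); the purpose of \cref{centralisertheorem2} is precisely to guarantee, under the assumption that \(\bb{\G}'\) is simply-connected, that these contribute nothing beyond \(C_{\bb{\G}}(s)^{\circ}\), so that the description in part (i) already captures the whole centraliser.
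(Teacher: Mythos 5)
Your proof is correct and matches the paper exactly: the corollary is obtained by combining \cref{centralisertheorem2} (connectedness of \(C_{\bb{\G}}(s)\) under the simply-connected hypothesis) with \cref{centralisertheorem}(i) (the description of \(C_{\bb{\G}}(s)^{\circ}\)). Your closing remark on why the \(n_w\) generators become redundant is a helpful observation, but the argument itself is the same as the paper's.
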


Another takeaway from \cref{centralisertheorem} is that the structure of \(C_{\bb{\G}}(s)\) for a semisimple element \(s\) of \(\bb{\G}\) depends entirely on the fixed-point space of the action of \(s\) on the Lie algebra associated to \(\bb{\G}\).

The fact that the smallest centraliser of \(s\) is a maximal torus is connected to the following:
\begin{lemma}\label{fixs}
	Let \(\bb{\G}\) be a connected reductive group, \(s\) a semisimple element of \(\bb{\G}\), and \(\m{L}\) the Lie algebra associated to \(\bb{\G}\). Then the fixed point space \(\m{L}^s\) contains a Cartan subalgebra of \(\m{L}\).
\end{lemma}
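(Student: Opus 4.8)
The plan is to produce an \emph{explicit} Cartan subalgebra sitting inside \(\m{L}^s\), namely the Lie algebra of a maximal torus containing \(s\). First I would invoke the standard structural fact that in a connected reductive group every semisimple element lies in some maximal torus, and fix such a torus \(\bb{\T}\) with \(s\in\bb{\T}\). (This is the same input underlying \cref{centralisertheorem}, whose hypotheses assume \(s\) lies in a maximal torus \(\bb{\T}\).) Everything else will follow by decomposing \(\m{L}\) under the adjoint action of \(\bb{\T}\) and reading off how \(s\) acts.

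Next I would decompose \(\m{L}\) into weight spaces for \(\Ad(\bb{\T})\), exactly as in \cref{torusrootsystem}, writing \(\m{L}=\m{H}\oplus\bigoplus_{\alpha\in\Phi}\m{L}_{\alpha}\), where \(\m{H}\coloneqq\mathrm{Lie}(\bb{\T})\) is the zero-weight space. For a connected reductive group this zero-weight space is precisely a Cartan subalgebra of \(\m{L}\), i.e.\ the toral subalgebra \(\m{H}\) appearing in the Cartan decomposition of \cref{cartandecomposition}; this identifies the candidate Cartan subalgebra I want to place inside \(\m{L}^s\).

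Finally I would compute the action of \(s\) on this decomposition. Since \(s\in\bb{\T}\), the element \(\Ad(s)\) scales each root space \(\m{L}_{\alpha}\) by \(\alpha(s)\), while it fixes \(\m{H}\) pointwise: conjugation by any \(t\in\bb{\T}\) is the identity on the abelian group \(\bb{\T}\), so its differential \(\Ad(t)\) is the identity on \(\mathrm{Lie}(\bb{\T})=\m{H}\). Hence \(\m{H}\subseteq\m{L}^s\), which proves the claim. The only nonroutine ingredients are the two standard facts used above — that a semisimple element lies in a maximal torus, and that \(\mathrm{Lie}(\bb{\T})\) is a Cartan subalgebra of \(\m{L}\) — so the main point to be careful about is simply that ``\(\Ad(t)\) acts trivially on \(\mathrm{Lie}(\bb{\T})\)'' is valid in every characteristic, which I would justify as above from the abelianness of \(\bb{\T}\); once that is granted, the containment \(\m{H}\subseteq\m{L}^s\) is immediate.
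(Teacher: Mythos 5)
Your proof is correct and follows essentially the same route as the paper: place \(s\) in a maximal torus \(\bb{\T}\), decompose \(\m{L}\) under \(\Ad(\bb{\T})\) into the Cartan subalgebra \(\m{H}\) plus root spaces, and observe that \(s\) acts trivially on \(\m{H}\) (the paper justifies this last step by citing the explicit torus action \eqref{hrl}, while you derive it from the differential of conjugation on the abelian group \(\bb{\T}\) — the same fact in different clothing).
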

\begin{proof}
	Let \(\bb{\T}\) be a maximal torus of \(\bb{\G}\) containing \(s\), then \(\bb{\T}\) is simultaneously diagonalisable in the adjoint representation of \(\bb{\G}\) and stabilises a Cartan decomposition \(\m{H}\oplus\bigoplus_{\alpha\in\Phi}\m{L}_{\alpha}\) of \(\m{L}\), where \(\Phi\) is the set of roots associated to \(\bb{\T}\) (as in \cref{torusrootsystem}).
	
	Since \(s\in\bb{\T}\), \(s\) acts on \(\m{L}\) as described in \eqref{hrl}; in particular, it has eigenvalues 1 on \(\m{H}\).
\end{proof}
An immediate consequence is the following:
\begin{corollary}\label{cartanstab}
	Let \(\bb{\G}\) be a connected reductive group, \(s\) a semisimple element of \(\bb{\G}\), and \(\m{L}\) the Lie algebra of rank \(l\) associated to \(\bb{\G}\). Then \(\dim\m{L}^s\ge l\); if equality holds, \(\m{L}^s\) is a Cartan subalgebra of \(\m{L}\).
\end{corollary}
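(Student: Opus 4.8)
The plan is to deduce both assertions directly from \cref{fixs}, so the argument is essentially dimension-theoretic. First I would invoke \cref{fixs} to produce a Cartan subalgebra \(\m{H}\) of \(\m{L}\) that is contained in the fixed-point space \(\m{L}^s\). The crucial standard input is that the rank \(l\) of \(\m{L}\) is by definition the common dimension of its Cartan subalgebras, all of which are conjugate under \(\aut\m{L}\); hence \(\dim\m{H}=l\). From the inclusion \(\m{H}\subseteq\m{L}^s\) the inequality \(\dim\m{L}^s\ge\dim\m{H}=l\) is then immediate, which establishes the first claim.

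For the equality statement I would argue as follows. Suppose \(\dim\m{L}^s=l\). Then \(\m{H}\subseteq\m{L}^s\) is an inclusion of finite-dimensional subspaces with \(\dim\m{H}=l=\dim\m{L}^s\), and a subspace containing another subspace of the same finite dimension must coincide with it; therefore \(\m{H}=\m{L}^s\). Since \(\m{H}\) is a Cartan subalgebra, this shows that \(\m{L}^s\) is itself a Cartan subalgebra of \(\m{L}\), as required.

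I do not expect any genuine obstacle here: the corollary is a formal consequence of \cref{fixs} combined with the fact, recorded when Cartan subalgebras were introduced, that every Cartan subalgebra has dimension equal to the rank. The only point worth making explicit is the elementary linear-algebra observation that nested subspaces of equal finite dimension are equal, which is what converts the containment \(\m{H}\subseteq\m{L}^s\) into an identification in the equality case.
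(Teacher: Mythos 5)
Your proof is correct and is exactly the argument the paper intends: the corollary is stated as an immediate consequence of \cref{fixs}, using that a Cartan subalgebra has dimension equal to the rank and that nested subspaces of equal finite dimension coincide. Nothing further is needed.
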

\begin{remark}
	The argument used in \cref{fixs} can be used to show that the result holds for any subgroup of a maximal torus.
\end{remark}
\end{section}

\begin{section}{A presentation of \texorpdfstring{\(\PSL_2(q)\)}{PSL(2,q)}}\label{PSLpresentation}

We will use a presentation of the group \(\PSL_2(q)\), \(q=p^r\), that has three generators \(u,s,t\) based on the matrices
\begin{equation}\label{matrices}
u=\begin{pmatrix}1&1\\0&1\end{pmatrix},\quad s=\begin{pmatrix}\omega^{-1}&0\\0&\omega\end{pmatrix},\quad t=\begin{pmatrix}0&1\\-1&0\end{pmatrix},
\end{equation}
where \(\omega\) is a primitive element of \(\mathbb{F}_q^{\times}\). Note that \eqref{matrices} generate \(\SL_2(q)\), but we will keep calling \(u,s,t\) the corresponding elements in the quotient modulo scalar matrices.

Observe that \(u\) and \(s\) generate a Borel subgroup of \(\PSL_2(q)\).

In order to describe the relations of the presentation, we need the following notation: for any polynomial \(g(X)=\sum_{i=0}^eg_iX^i\in\mathbb{Z}[X]\), where \(0\le g_i<p\) for \(0\le i\le e\) so that \(g\) can be seen as an element of \(\mathbb{F}_p[X]\), define
\begin{align*}
[[a^{g(X)}]]_b&\coloneqq(a^{g_0}){(a^{g_1})}^b\ldots{(a^{g_e})}^{b^e}\\
&=a^{g_0}b^{-1}a^{g_1}b^{-1}a^{g_2}\ldots b^{-1}a^{g^e}b^e,
\end{align*}
where \(a,b\) are elements of a given group, in this case \(\gen{u,s,t}\).

\begin{theorem}[{\cite[Theorem 4.5]{guralnick2}}]\label{PSLpresentationthm}
	Let \(k,l\in\mathbb{Z}\) be such that \(\omega^{2k}=\omega^{2l}+1\) and \(\mathbb{F}_q=\mathbb{F}_p[\omega^{2k}]\), and define \(\delta=(k,l)\) (the GCD of \(k\) and \(l\)).
	
	Let \(m(X)\in\mathbb{F}_p[X]\) be the minimal polynomial of \(\omega^{2\delta}\); if \(\gamma\in\mathbb{F}_q\), let \(g_{\gamma}(X)\in\mathbb{F}_p[X]\) be such that \(g_{\gamma}(\omega^{2\delta})=\gamma\) and \(\deg g_{\gamma}<\deg m\).
	
	If \(q>9\) is odd, the group \(\PSL_2(q)\) is isomorphic to a group with generators \(u,s,t\) and relations:
	\begin{equation}\label{PSLrelations}
	\begin{aligned}
		\text{\emph{(i)} }&u^p=1.\\
		\text{\emph{(ii)} }&u^{s^k}=uu^{s^l}=u^{s^l}u.\\
		\text{\emph{(iii)} }&[[u^{m(X)}]]_{s^{\delta}}=1.\\
		\text{\emph{(iv)} }&u^s=[[u^{g_{\omega^2}(X)}]]_{s^{\delta}}.\\
		\text{\emph{(v)} }&t^2=1.\\
		\text{\emph{(vi)} }&s^t=s^{-1}.\\
		\text{\emph{(vii)} }&t=uu^tu.\\
		\text{\emph{(viii)} }&st=[[u^{g_{\omega^{-1}}(X)}]]_{s^{\delta}}[[u^{g_{\omega}(X)}]]_{s^{\delta}}^t[[u^{g_{\omega^{-1}}(X)}]]_{s^{\delta}}.
	\end{aligned}
	\end{equation}
\end{theorem}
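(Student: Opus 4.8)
The statement is a group presentation, so the natural plan is the classical two-step ``verify-and-count'' argument. Write $G=\gen{u,s,t}$ for the abstractly presented group and $L=\PSL_2(q)$. First I would check that the images of the matrices \eqref{matrices} in $L$ satisfy every relation (i)--(viii); since those images generate $L$, this yields a surjection $\phi\colon G\twoheadrightarrow L$. The only non-mechanical part of this verification is interpreting the bracket symbol: under the identification of the subgroup generated by the $L$-conjugates of $u$ with the additive group $(\mathbb{F}_q,+)$, with $u\leftrightarrow 1$ and $s^{\delta}$ acting by multiplication by $\omega^{2\delta}$, one has $[[u^{g(X)}]]_{s^{\delta}}\leftrightarrow g(\omega^{2\delta})$. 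Through this dictionary relations (iii) and (iv) read $m(\omega^{2\delta})=0$ and $u^{s}\leftrightarrow\omega^{2}$, while relation (ii) reads $\omega^{2k}=\omega^{2l}+1$ together with commutativity; all are then immediate, and (v)--(viii) reduce to short $2\times2$ matrix computations.

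It remains to show $\size{G}\le\size{L}=q(q^2-1)/2$, since this together with $\phi$ forces $\phi$ to be an isomorphism. I would do this in two stages. Let $U\le G$ be the normal closure of $u$ under $s$, generated by the $u^{s^{i}}$. Relation (ii) forces these to commute, relation (i) makes $U$ of exponent $p$, and relations (iii)--(iv) present $U$ as a cyclic module over $\gen{s^{\delta}}$ whose annihilator contains $m(X)$; since $\mathbb{F}_q=\mathbb{F}_p[\omega^{2\delta}]$ and $m$ is the minimal polynomial of $\omega^{2\delta}$, this identifies $U$ with a quotient of $\mathbb{F}_p[X]/(m(X))\cong\mathbb{F}_q$, so $\size{U}\le q$, with $s$ acting as multiplication by $\omega^{2}$. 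One then bounds $\size{B}\le q(q-1)/2$ for the Borel subgroup $B=\gen{u,s}$.

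For the full group I would run a coset enumeration by hand. Set $\Omega=\{B\}\cup\{Btu'\colon u'\in U\}$, a family of at most $1+q$ right cosets of $B$, and show $\Omega$ is invariant under right multiplication by each of $u,s,t$. Invariance under $u,s$ is clear since $u,s\in B$ and $s$ normalises $U$ (one uses $s^{t}=s^{-1}$ from (vi) to move $s$ past $t$). Invariance under $t$ is exactly what relations (vii) and (viii) are engineered to provide: from $t=uu^{t}u$ one derives $tut=u^{-1}tu^{-1}$, handling the generator, and the $st$-identity (viii) supplies the general big-cell rewriting $tu't=(\text{element of }B)\cdot t\cdot(\text{element of }U)$. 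Since $1\in B\in\Omega$, closure gives $[G:B]\le q+1$, hence $\size{G}\le(q+1)\size{B}\le q(q^2-1)/2$, completing the argument.

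The main obstacle is pinning down the order of the torus element $s$, equivalently the sharp bound $\size{B}\le q(q-1)/2$: there is no explicit relation of the form $s^{N}=1$, so the order of $s$ must be extracted from the module relations (iii)--(iv) together with the arithmetic hypotheses $\mathbb{F}_q=\mathbb{F}_p[\omega^{2\delta}]$ and $\delta=(k,l)$, by showing that $s$ acts \emph{faithfully} on $U$ with image the full cyclic group of order $(q-1)/2$. The second delicate point is making the $t$-rewriting in the coset enumeration fully explicit, since relation (viii) must be applied to arbitrary $u'\in U$ rather than only to the generator; this is where I expect the hypothesis $q>9$ to be used, excluding the degenerate small cases in which the presentation genuinely fails.
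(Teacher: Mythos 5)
The paper does not prove this statement: it is quoted verbatim from \cite[Theorem 4.5]{guralnick2}, with the proof deferred entirely to that source. So there is no in-paper argument to compare against; what can be said is that your two-step plan (verify the relations on the matrices \eqref{matrices} to get a surjection onto \(\PSL_2(q)\), then bound \(\size{G}\) by presenting the Borel subgroup and running a coset enumeration on \(\{B\}\cup\{Btu'\}\)) is the standard route and is essentially the one taken in the cited source, including the role of (vii) and (viii) as the Bruhat rewriting rules.

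One step in your sketch is overclaimed. You write that relation (ii) ``forces'' the conjugates \(u^{s^i}\) to commute, but (ii) only yields \([u^{s^i},u^{s^{l+i}}]=1\) together with \(u^{s^{k+i}}=u^{s^i}u^{s^{l+i}}\); deducing \([u^{s^i},u^{s^j}]=1\) for \emph{all} \(i,j\) from these sparse commutators is one of the genuinely hard points of the theorem, and it is exactly where the arithmetic hypotheses \(\omega^{2k}=\omega^{2l}+1\), \(\mathbb{F}_q=\mathbb{F}_p[\omega^{2k}]\) and \(\delta=(k,l)\) are consumed (one has to build up the full additive group of \(\mathbb{F}_q\) inside the span of the commuting pairs before commutativity of all of \(U\) follows). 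Without that argument the identification \(U\simeq\mathbb{F}_p[X]/(m(X))\) and hence the bound \(\size{U}\le q\) does not yet follow. The other two delicate points --- extracting the order of \(s\) from (viii) rather than from a nonexistent relation \(s^N=1\), and applying the \(t\)-rewriting to an arbitrary \(u'\in U\) rather than only to the generator --- you correctly flag as open in your own write-up, so the proposal should be read as a correct plan with the three hardest lemmas left unproved rather than as a complete proof.
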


\begin{remark}\leavevmode
	\begin{enumerate}
		\item While it may not be obvious from the relations, one can deduce that \(s\) has order \((q-1)/(2,q-1)\); see \cite[Proof of Theorem 4.5]{guralnick2}.
		\item By (v), we can replace (vi) with \((ts)^2=1\), and (vii) with \((tu)^3=1\).
		\item In \cite[\S3.5.1]{guralnick1} it is shown that there exist choices of \(\omega\) such that \(\delta\le2\).
		\item If \(\delta=1\) then (iv) can be removed; see \cref{d=1} below.
		\item If \(q=p\) then \(\omega\in\mathbb{F}_p\), so \(m(X)=X-\omega^{2\delta}\), and \(g_{\omega^i}(X)=\omega^i\) for all \(i\in\mathbb{Z}\).
	\end{enumerate}
\end{remark}

\begin{lemma}\label{d=1}
	In the setting of \cref{PSLpresentationthm}, if \(\delta=1\) then the relation (iv) is redundant and can be removed.
\end{lemma}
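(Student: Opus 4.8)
The plan is to unwind the definition of the bracket symbol \([[\,\cdot\,]]\) together with that of the polynomial \(g_{\omega^2}\), and to observe that, once \(\delta=1\), relation (iv) is either a tautology or literally relation (iii), according to whether \(q>p\) or \(q=p\). First I would record the structural fact underlying everything: since \(\delta=1\) we have \(\omega^{2\delta}=\omega^2\) and \(s^\delta=s\), and \(g_{\omega^2}(X)\) is by definition the unique polynomial of degree \(<\deg m\) with \(g_{\omega^2}(\omega^2)=\omega^2\). As \(m\) is the minimal polynomial of \(\omega^2\) over \(\mathbb{F}_p\) and the monomial \(X\) already evaluates to \(\omega^2\) at \(X=\omega^2\), the remainder of \(X\) on division by \(m\) is the unique degree-\(<\deg m\) polynomial agreeing with \(X\) at \(\omega^2\); by uniqueness \(g_{\omega^2}(X)\equiv X\pmod{m(X)}\). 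Finally, since \(\omega^{2k}\) is a power of \(\omega^2\) and \(\mathbb{F}_q=\mathbb{F}_p[\omega^{2k}]\), we have \(\mathbb{F}_p(\omega^2)=\mathbb{F}_q\), so \(\deg m=[\mathbb{F}_q:\mathbb{F}_p]\); thus \(\deg m\ge2\) exactly when \(q>p\) and \(\deg m=1\) exactly when \(q=p\), which dictates the case split.

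In the case \(q>p\) (so \(\deg m\ge2\)), the monomial \(X\) has degree \(1<\deg m\), so uniqueness of the low-degree representative forces \(g_{\omega^2}(X)=X\). Unwinding the bracket for \(g(X)=X\), whose coefficients are \(g_0=0,\ g_1=1\), directly from its definition gives \([[u^{X}]]_{s}=(u^{0})(u^{1})^{s}=u^{s}\). Hence relation (iv) reads \(u^{s}=u^{s}\), a tautology valid in any group, so it may be discarded with no hypothesis whatsoever.

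In the case \(q=p\) (so \(\deg m=1\)), we have \(m(X)=X-\omega^2\) and \(g_{\omega^2}(X)=\omega^2\), whence (iv) becomes \(u^{s}=u^{\omega^2}\). I would then identify this with relation (iii): taking the coefficients of \(m(X)=X-\omega^2\) in \(\{0,\dots,p-1\}\) and expanding, \([[u^{m(X)}]]_{s}=u^{\,p-\omega^2}(u)^{s}=u^{-\omega^2}u^{s}\), where passing to \(u^{-\omega^2}\) uses \(u^{p}=1\) from relation (i). Relation (iii) asserts this product is \(1\), i.e. \(u^{s}=u^{\omega^2}\), which is exactly (iv). So here too (iv) follows from the relations we keep.

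This is in essence a definition-chase, so I expect no serious obstacle; the only point needing care is the boundary case \(q=p\), where (iv) is not vacuous and one must correctly reduce the coefficient \(-\omega^2\) modulo \(p\) and invoke relation (i) to interpret \(u^{-\omega^2}\). A more uniform proof would regard \(\gen{u,u^{s},u^{s^2},\dots}\) as an \(\mathbb{F}_p[X]\)-module on which relation (iii) imposes \(m(X)\cdot u=0\), so that the bracket descends to polynomials modulo \(m\) and \([[u^{X}]]_{s}=[[u^{g_{\omega^2}(X)}]]_{s}\) in one stroke; but that route requires establishing commutativity of the unipotent part from relation (ii), which is extra work, so the two-case argument above is the cleaner path.
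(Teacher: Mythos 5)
Your proof is correct and follows essentially the same route as the paper's: a case split on \(\deg m\), with \(g_{\omega^2}(X)=X\) making (iv) a tautology when \(\deg m>1\), and \(m(X)=X-\omega^2\) making (iii) and (iv) both read \(u^s=u^{\omega^2}\) when \(\deg m=1\). The extra care you take in identifying \(\deg m=[\mathbb{F}_q:\mathbb{F}_p]\) and in reducing \(-\omega^2\) modulo \(p\) via relation (i) is sound but not a different argument.
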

\begin{proof}
	If \(\deg m(X)=1\), that means \(\omega^2\in\mathbb{F}_p\) and \(m(X)=X-\omega^2\), so we can take \(g_{\omega^2}(X)=\omega^2\). Therefore \([[u^{g_{\omega^2}(X)}]]_s=u^{\omega^2}\) and \([[u^{m(X)}]]_s=u^{-\omega^2}s^{-1}us\), hence (iii) and (iv) are both \(u^s=u^{\omega^2}\).
	
	If \(\deg m(X)>1\), then we can take \(g_{\omega^2}(X)=X\), so \([[u^{g_{\omega^2}(X)}]]_s=s^{-1}us\), meaning that (iv) becomes \(u^s=u^s\) and can be removed.
\end{proof}

Let us write explicit relations for the groups we will study: \(\PSL_2(25)\), \(\PSL_2(27)\), \(\PSL_2(37)\), \(\PSL_2(39)\); we will always choose \(w\in\mathbb{F}_q\) such that \(\delta=1\) when possible, to apply \cref{d=1}.

\begin{proposition}\label{25presentationthm}
	The group \(\PSL_2(25)\) is isomorphic to a group with generators \(u,s,t\) and relations
	\begin{equation}\label{25presentation}
	\begin{aligned}
	\text{\emph{(i)} }&u^5=1.\\
	\text{\emph{(ii)} }&u^{s^2}=uu^{s^4}=u^{s^4}u.\\
	\text{\emph{(iii)} }&us^{-2}u^4s^{-2}us^4=1.\\
	\text{\emph{(iv)} }&u^s=u^2s^{-2}u^3s^2.\\
	\text{\emph{(v)} }&t^2=1.\\
	\text{\emph{(vi)} }&(ts)^2=1.\\
	\text{\emph{(vii)} }&(tu)^3=1.\\
	\text{\emph{(viii)} }&st=us^{-2}us^2(u^4s^{-2}u^3s^2)^tus^{-2}us^2.
	\end{aligned}
	\end{equation}
\end{proposition}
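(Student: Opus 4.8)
The plan is to obtain \eqref{25presentation} as the specialisation of the general presentation in \cref{PSLpresentationthm} to the case \(q=25\). Since \(q=5^2>9\) is odd the theorem applies with \(p=5\), so it remains only to fix a primitive element \(\omega\in\mathbb{F}_{25}^{\times}\) and to compute the associated data \(k,l,\delta,m(X)\) and \(g_{\omega^2},g_\omega,g_{\omega^{-1}}\). First I would take \(\omega\) to be a root of \(X^2-X-3\in\mathbb{F}_5[X]\); this polynomial is irreducible over \(\mathbb{F}_5\) (its discriminant \(13\equiv3\) is a non-square mod \(5\)), so \(\omega\in\mathbb{F}_{25}\setminus\mathbb{F}_5\). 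To check that \(\omega\) is primitive I would note that its norm is \(\omega^{1+5}=\omega^6=2\), which has order \(4\) in \(\mathbb{F}_5^{\times}\), whence \(\omega^{24}=1\); since \(\omega^8=\omega^6\omega^2=2(\omega+3)=2\omega+1\neq1\) and the order of \(\omega\) divides \(24\), the only remaining possibility is that \(\omega\) has order \(24\).

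Next I would identify the combinatorial data. Taking \(k=2\) and \(l=4\), a direct computation gives \(\omega^4=2\omega+2\) and \(\omega^8=2\omega+1\), so \(\omega^{2k}=\omega^4=\omega^8+1=\omega^{2l}+1\), while \(\mathbb{F}_5[\omega^4]=\mathbb{F}_5[\omega]=\mathbb{F}_{25}\), exactly as required by \cref{PSLpresentationthm}; hence \(\delta=(k,l)=(2,4)=2\). The element \(\omega^{2\delta}=\omega^4\) is a primitive sixth root of unity, so its minimal polynomial over \(\mathbb{F}_5\) is the (irreducible) sixth cyclotomic polynomial \(m(X)=X^2-X+1=X^2+4X+1\). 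Finally, since \(\deg m=2\), each \(g_\gamma\) is the unique linear polynomial with \(g_\gamma(\omega^4)=\gamma\); solving \(aX+b\) against \(a\omega^4+b=\gamma\) for \(\gamma\in\{\omega^2,\omega,\omega^{-1}\}\) yields \(g_{\omega^2}(X)=3X+2\), \(g_\omega(X)=3X+4\) and \(g_{\omega^{-1}}(X)=X+1\). As an internal consistency check one verifies, for instance, that \(4+3\omega^4=\omega\) and \((4+3\omega^4)(1+\omega^4)=1\).

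It then remains to substitute this data into \eqref{PSLrelations}. Relation (i) becomes \(u^5=1\), and relation (ii) becomes \(u^{s^2}=uu^{s^4}=u^{s^4}u\); expanding the bracket notation, relation (iii) reads \([[u^{m(X)}]]_{s^2}=u\,s^{-2}u^4s^{-2}u\,s^4=1\), and relation (iv) reads \(u^s=[[u^{3X+2}]]_{s^2}=u^2s^{-2}u^3s^2\). Using the remark after \cref{PSLpresentationthm} that, given (v), one may replace (vi) and (vii) by \((ts)^2=1\) and \((tu)^3=1\), relations (v)--(vii) take the stated form; and substituting \(g_{\omega^{-1}}\) and \(g_\omega\) into (viii) gives \(st=u\,s^{-2}u\,s^2\,(u^4s^{-2}u^3s^2)^t\,u\,s^{-2}u\,s^2\). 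This is precisely \eqref{25presentation}, and \cref{PSLpresentationthm} then identifies the resulting abstract group with \(\PSL_2(25)\).

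The argument is entirely finite-field arithmetic rather than anything conceptual, so the only real obstacle is bookkeeping: choosing \(\omega\) so that a valid triple \((k,l,\delta)\) exists and computing \(m\) and the \(g_\gamma\) without reduction or sign errors. The one point genuinely meriting care is that here \(\delta=2\), so \cref{d=1} does \emph{not} apply and relation (iv) cannot be discarded. This is forced rather than incidental: a short finite check over the twelve nonzero squares of \(\mathbb{F}_{25}\) shows that in every pair of squares differing by \(1\) (with the larger one lying outside \(\mathbb{F}_5\)) both half-logarithms are even, so no admissible choice of \(\omega\) gives \(\delta=1\) for \(q=25\). Verifying this is the step I would be most careful about, since otherwise one would expect the shorter presentation without (iv).
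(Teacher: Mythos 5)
Your proposal is correct and follows exactly the paper's route: your \(\omega\) is a root of \(X^2-X-3=X^2+4X+2\) over \(\mathbb{F}_5\), which is precisely the paper's choice, and you arrive at the same data \((k,l,\delta)=(2,4,2)\), \(m(X)=X^2+4X+1\), \(g_{\omega^2}=3X+2\), \(g_{\omega}=3X+4\), \(g_{\omega^{-1}}=X+1\) before specialising \cref{PSLpresentationthm}. The extra verifications (primitivity of \(\omega\), and the observation that no choice of primitive element achieves \(\delta=1\) for \(q=25\), which is why relation (iv) cannot be dropped here) go beyond what the paper records but are consistent with it.
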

\begin{proof}
Let \(\omega\) be a primitive element of \(\mathbb{F}_{25}\) with minimal polynomial \(X^2+4X+2\); then \(\omega^4=\omega^8+1\) so \(k=2\), \(l=4\), \(\delta=2\). The minimal polynomial of \(\omega^{2\delta}=\omega^4\) is \(m(X)=X^2+4X+1\) (observe that \(m(X)\) is independent on the choice of \(\omega\)), and we can take \(g_{\omega}(X)=3X+4\), \(g_{\omega^{-1}}(X)=X+1\), \(g_{\omega^2}(X)=3X+2\). The result follows from \cref{PSLpresentationthm}.
\end{proof}

Since in this case we have \(\delta=2\), we will compute an additional relation between \(u\) and \(s\) that we will use in \cref{ngt}.
\begin{lemma}\label{25extrarelationthm}
	If \(u,s,t\) satisfy the presentation \eqref{25presentation}, then \begin{equation}\label{25extrarelation}u^4s^{-1}u^3s^{-1}us^2=1.\end{equation}
\end{lemma}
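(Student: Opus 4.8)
The plan is to derive \eqref{25extrarelation} purely formally from relations (i)--(iv) of the presentation \eqref{25presentation}, without appealing to the matrix model. The first move is to rewrite the target word in terms of the conjugates \(u^s=s^{-1}us\) and \(u^{s^2}=s^{-2}us^2\). Since conjugation is a homomorphism, \((s^{-1}us)^3=s^{-1}u^3s\) and \(s^{-2}u^3s^2=(u^{s^2})^3\), so expanding the conjugations gives
\[
u^4s^{-1}u^3s^{-1}us^2=u^4(u^s)^3u^{s^2}.
\]
It therefore suffices to prove \(u^4(u^s)^3u^{s^2}=1\), and after using \(u^5=1\) from (i) this is equivalent to \((u^s)^3u^{s^2}=u\).

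Next I would record two elementary facts about the unipotent generators. By (i) the element \(u\) has order \(5\), hence so does its conjugate \(u^{s^2}\). More importantly, relation (ii) asserts both \(u^{s^2}=u\,u^{s^4}\) and \(u\,u^{s^4}=u^{s^4}u\); since \(u\) commutes with itself and, by (ii), with \(u^{s^4}\), it commutes with their product \(u^{s^2}\). Thus \([u,u^{s^2}]=1\) and \((u^{s^2})^5=1\), which is all the commutativity the computation will need.

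With these in hand the calculation is short. Relation (iv) reads \(u^s=u^2(u^{s^2})^3\) once the conjugation is unwound. Cubing and using \([u,u^{s^2}]=1\) to separate powers gives \((u^s)^3=u^6(u^{s^2})^9\), which reduces modulo \(5\) to \(u(u^{s^2})^4\). Hence
\[
u^4(u^s)^3u^{s^2}=u^4\cdot u(u^{s^2})^4\cdot u^{s^2}=u^5(u^{s^2})^5=1,
\]
the last equality being just the two order relations. Reversing the opening rewriting then recovers the stated word \(u^4s^{-1}u^3s^{-1}us^2=1\).

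The only genuinely delicate point is spotting the correct reformulation: once one recognises the target as \(u^4(u^s)^3u^{s^2}\) and that (iv) expresses \(u^s\) through \(u\) and \(u^{s^2}\), the whole identity collapses using only \(u^5=1\) and the single commutation \([u,u^{s^2}]=1\) extracted from (ii). As an independent sanity check one may instead pass through the isomorphism of \cref{25presentationthm}: evaluating the word on the matrices \eqref{matrices} yields a unipotent matrix whose upper-right entry is \(\omega^4+3\omega^2+4\), which vanishes because \(\omega^2\) is a root of \(X^2+3X+4\), the minimal polynomial of \(\omega^2=\omega+3\) over \(\mathbb{F}_5\).
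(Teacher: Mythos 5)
Your proof is correct, and it takes a genuinely different route from the paper: the paper's own proof states that the authors ``did not manage to prove \eqref{25extrarelation} directly from \eqref{25presentation}'' and instead verifies the identity computationally, by checking it on the matrices \eqref{matrices} (implicitly using that the presentation defines \(\PSL_2(25)\), so the word is trivial in every group generated by a tuple satisfying the relations). You supply exactly the direct derivation the paper says it lacks. Every step checks out: the rewriting \(u^4s^{-1}u^3s^{-1}us^2=u^4(u^s)^3u^{s^2}\) is just conjugation being a homomorphism; relation (ii) gives \([u,u^{s^4}]=1\) and hence \([u,u^{s^2}]=[u,uu^{s^4}]=1\); relation (iv) unwinds to \(u^s=u^2(u^{s^2})^3\); and then \((u^s)^3=u^6(u^{s^2})^9=u(u^{s^2})^4\) by commutativity and \(u^5=(u^{s^2})^5=1\), so the product collapses to \(u^5(u^{s^2})^5=1\). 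Your argument uses only relations (i), (ii) and (iv) and is purely formal, so it is both more self-contained than the paper's verification and strictly more informative -- it shows \eqref{25extrarelation} is a consequence of the Borel-subgroup relations alone, independent of \(t\). The closing matrix sanity check is also consistent with the paper's remark that \eqref{25extrarelation} encodes the minimal polynomial \(X^2+3X+4\) of \(\omega^2\) over \(\mathbb{F}_5\).
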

\begin{proof}
	Unfortunately, we did not manage prove \eqref{25extrarelation} directly from \eqref{25presentation}; however, it can be verified with \magma\ or by using the matrices in \eqref{matrices}, since their images under the quotient map \(\SL_2(25)\rightarrow\PSL_2(25)\) satisfy \eqref{25presentation}.
\end{proof}

\begin{remark}
	Observe that this relation is similar to (iii) of \eqref{PSLrelations}, since it can be written as \([[u^{\mu(X)}]]_s=1\), where \(\mu(X)\) is the minimal polynomial of \(\omega^2\).
\end{remark}

\begin{proposition}\label{27presentationthm}
	The group \(\PSL_2(27)\) is isomorphic to a group with generators \(u,s,t\) and relations
	\begin{equation}\label{27presentation}
	\begin{aligned}
	\text{\emph{(i)} }&u^3=1.\\
	\text{\emph{(ii)} }&u^s=uu^{s^6}=u^{s^6}u.\\
	\text{\emph{(iii)} }&u^2s^{-1}us^{-1}us^{-1}us^3=1.\\
	\text{\emph{(iv)} }&t^2=1.\\
	\text{\emph{(v)} }&(ts)^2=1.\\
	\text{\emph{(vi)} }&(tu)^3=1.\\
	\text{\emph{(vii)} }&st=us^{-1}u^2s(s^{-1}us^{-1}u^2s^2)^tus^{-1}u^2s.
	\end{aligned}
	\end{equation}
\end{proposition}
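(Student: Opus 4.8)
The plan is to specialise Theorem~\ref{PSLpresentationthm} to \(q=27\), \(p=3\), exactly as in the proof of Proposition~\ref{25presentationthm}, and to arrange the finite-field data so that the optional simplifications apply. First I would fix a primitive element \(\omega\) of \(\mathbb{F}_{27}\) and search for integers \(k,l\) with \(\omega^{2k}=\omega^{2l}+1\) and \(\mathbb{F}_{27}=\mathbb{F}_3[\omega^{2k}]\); since I want to invoke Lemma~\ref{d=1} and drop relation~(iv), I would insist on \(\delta=(k,l)=1\). The pair \(k=1\), \(l=6\) works (that is, \(\omega^2=\omega^{12}+1\)), and because \(\mathbb{F}_{27}\) has prime degree over \(\mathbb{F}_3\) while \(\omega^2\) has multiplicative order \(13\), the element \(\omega^2\notin\mathbb{F}_3\) automatically generates the field. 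With this choice the theorem's relation \(u^{s^k}=uu^{s^l}=u^{s^l}u\) reads off as (ii) of the claim with \(k=1\), \(l=6\), and its relations \(u^p=1\), \(t^2=1\) become (i) and (iv) directly.

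Next I would compute the minimal polynomial \(m(X)\) of \(\omega^{2\delta}=\omega^2\) over \(\mathbb{F}_3\). As \(3\) has order \(3\) modulo \(13\), \(m\) has degree \(3\), and I expect \(m(X)=X^3+X^2+X+2\), which I would confirm is irreducible over \(\mathbb{F}_3\) and whose constant term is forced by the norm \((\omega^2)^{1+3+9}=(\omega^2)^{13}=1\). Expanding \([[u^{m(X)}]]_{s^{\delta}}=[[u^{m(X)}]]_s\) according to the definition of the bracket then yields \(u^2s^{-1}us^{-1}us^{-1}us^3\), that is, relation~(iii).

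I would then determine the two polynomials of degree \(<3\) needed for the theorem's relation~(viii), namely \(g_{\omega}(X)\) and \(g_{\omega^{-1}}(X)\) with \(g_{\gamma}(\omega^2)=\gamma\); these come out as \(g_{\omega}(X)=2X^2+X\) and \(g_{\omega^{-1}}(X)=2X+1\), so that \([[u^{g_{\omega}(X)}]]_s=s^{-1}us^{-1}u^2s^2\) and \([[u^{g_{\omega^{-1}}(X)}]]_s=us^{-1}u^2s\). Substituting into relation~(viii) of the theorem gives relation~(vii) of the claim. Finally, by the Remark following Theorem~\ref{PSLpresentationthm} I would replace the theorem's relations \(s^t=s^{-1}\) and \(t=uu^tu\) by \((ts)^2=1\) and \((tu)^3=1\), and by Lemma~\ref{d=1} discard relation~(iv); collecting everything produces exactly the presentation~\eqref{27presentation}.

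The theorem carries all the group-theoretic weight, so the only genuine work is the finite-field bookkeeping, and that is where I expect the pitfalls to lie: one must check that a single primitive \(\omega\) realises \(k=1\), \(l=6\), \(\delta=1\) together with the claimed \(m\) and \(g\)'s, and in particular that the square root of \(\omega^2\) singled out by the \(g_{\gamma}\) is the primitive one (of order \(26\)) rather than \(-\omega\) (of order \(13\)). Verifying the identities \(g_{\gamma}(\omega^2)=\gamma\) and the irreducibility of \(m\) is a short computation in \(\mathbb{F}_{27}\) that can be done by hand or confirmed with \magma; everything else is the mechanical expansion of the bracket operator.
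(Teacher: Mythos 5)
Your proposal is correct and follows essentially the same route as the paper: the paper's proof likewise fixes a primitive \(\omega\) (with minimal polynomial \(X^3+2X+1\) over \(\mathbb{F}_3\)) satisfying \(\omega^2=\omega^{12}+1\), so \(k=1\), \(l=6\), \(\delta=1\), records \(m(X)=X^3+X^2+X+2\), \(g_{\omega}(X)=2X^2+X\), \(g_{\omega^{-1}}(X)=2X+1\), and concludes by Theorem~\ref{PSLpresentationthm} and Lemma~\ref{d=1}. The only cosmetic difference is that the paper nails down a specific minimal polynomial for \(\omega\) up front, whereas you leave the choice implicit and flag it as the point to verify.
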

\begin{proof}
Let \(\omega\) be a primitive element of \(\mathbb{F}_{27}\) with minimal polynomial \(X^3+2X+1\); then \(\omega^2=\omega^{12}+1\) so \(k=1\), \(l=6\), \(\delta=1\). The minimal polynomial of \(\omega^{2\delta}=\omega^2\) is \(m(X)=X^3+X^2+X+2\), and we can take \(g_{\omega}(X)=2X^2+X\) and \(g_{\omega^{-1}}(X)=2X+1\). The result follows from \cref{PSLpresentationthm} and \cref{d=1}.
\end{proof}

\begin{proposition}\label{37presentationthm}
The group \(\PSL_2(37)\) is isomorphic to a group with generators \(u,s,t\) and relations
\begin{align*}
\text{\emph{(i)} }&u^{37}=1.\\
\text{\emph{(ii)} }&u^s=uu^{s^{13}}=u^{s^{13}}u.\\
\text{\emph{(iii)} }&u^s=u^4.\\
\text{\emph{(iv)} }&t^2=1.\stepcounter{equation}\tag{\theequation}\label{37presentation}\\
\text{\emph{(v)} }&(ts)^2=1.\\
\text{\emph{(vi)} }&(tu)^3=1.\\
\text{\emph{(vii)} }&st=u^{19}tu^2tu^{19}.
\end{align*}
\end{proposition}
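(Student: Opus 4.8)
The plan is to apply \cref{PSLpresentationthm} directly, following the same template as the proofs of \cref{25presentationthm,27presentationthm}, and exploiting the fact that here \(q=37\) is prime (and \(q>9\) odd, so the theorem applies). Since \(q=p\) we have \(\mathbb{F}_q=\mathbb{F}_p\), so any primitive element \(\omega\) lies in \(\mathbb{F}_p\); by the remark following \cref{PSLpresentationthm} this forces \(m(X)=X-\omega^{2\delta}\) and \(g_{\omega^i}(X)=\omega^i\) for every \(i\). Consequently every bracket \([[u^{g(X)}]]_{s^{\delta}}\) in the general presentation collapses to a single power of \(u\), which is exactly what makes the relations in \eqref{37presentation} so much simpler than in the \(\PSL_2(25)\) and \(\PSL_2(27)\) cases.

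First I would fix the primitive element and the integers \(k,l\). I would take \(\omega=2\) and check it is a primitive root modulo \(37\) (its order is \(36\), since \(2^{12}\equiv26\) and \(2^{18}\equiv-1\)), then search for \(k,l\) with \(\omega^{2k}=\omega^{2l}+1\) and \(\gcd(k,l)=1\). The computation \(4^{13}\equiv3\pmod{37}\) gives \(4=4^{13}+1\), so \(k=1\), \(l=13\), and \(\delta=(k,l)=1\); the side condition \(\mathbb{F}_q=\mathbb{F}_p[\omega^{2k}]\) holds automatically because \(q=p\). Having \(\delta=1\) then lets me invoke \cref{d=1} to delete relation (iv) of \eqref{PSLrelations}.

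It remains to substitute these data into the relations of \cref{PSLpresentationthm} and match them against \eqref{37presentation}. Relation (i) reads \(u^{37}=1\); relation (ii) becomes \(u^{s}=uu^{s^{13}}=u^{s^{13}}u\) with \(k=1,l=13\); relation (iii), namely \([[u^{m(X)}]]_{s}=1\), becomes \(u^{-\omega^2}s^{-1}us=1\), i.e. \(u^{s}=u^{4}\) since \(\omega^2=4\). The relations involving \(t\) are handled via the same remark: (v) gives \(t^2=1\), while (vi) and (vii) are replaced by \((ts)^2=1\) and \((tu)^3=1\). Finally, using \(\omega^{-1}\equiv19\pmod{37}\), relation (viii) reads \(st=u^{19}(u^{2})^{t}u^{19}=u^{19}tu^2tu^{19}\). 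Matching these with (i)--(vii) of \eqref{37presentation} completes the argument.

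I do not expect a conceptual obstacle, since the prime-field case of \cref{PSLpresentationthm} is strictly simpler than the cases already treated. The only genuine work is the finite verification that \(\omega=2\) admits a pair \((k,l)\) with \(\gcd(k,l)=1\) satisfying \(\omega^{2k}=\omega^{2l}+1\); this, together with the small modular computations \(4^{13}\equiv3\) and \(2^{-1}\equiv19\), is routine and can be confirmed by hand or in \magma.
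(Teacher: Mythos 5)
Your proposal is correct and follows exactly the paper's own proof: choose \(\omega=2\), verify \(\omega^2=\omega^{26}+1\) so that \(k=1\), \(l=13\), \(\delta=1\), note \(m(X)=X-4\), \(g_2(X)=2\), \(g_{2^{-1}}(X)=19\), and then specialise \cref{PSLpresentationthm} together with \cref{d=1}. The extra checks you include (that \(2\) is a primitive root modulo \(37\) and the small modular computations) are correct and only make the verification more explicit than the paper's one-line version.
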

\begin{proof}
Let \(\omega=2\), thought as an element of \(\mathbb{F}_{37}\); then \(\omega^2=\omega^{26}+1\) so \(k=1\), \(l=13\), \(\delta=1\). Then \(m(X)=X-4\), \(g_2(X)=2\), and \(g_{2^{-1}}(X)=2^{-1}=19\). The result follows from \cref{PSLpresentationthm} and \cref{d=1}.
\end{proof}

\begin{proposition}\label{29presentationthm}
	The group \(\PSL_2(29)\) is isomorphic to a group with generators \(u,s,t\) and relations
	\begin{equation}\label{29presentation}
	\begin{aligned}
	\text{\emph{(i)} }&u^{29}=1.\\
	\text{\emph{(ii)} }&u^{s^{11}}=uu^s=u^su.\\
	\text{\emph{(iii)} }&u^s=u^4.\\
	\text{\emph{(iv)} }&t^2=1.\\
	\text{\emph{(v)} }&(ts)^2=1.\\
	\text{\emph{(vi)} }&(tu)^3=1.\\
	\text{\emph{(vii)} }&st=u^{15}tu^2tu^{15}.
	\end{aligned}
	\end{equation}
\end{proposition}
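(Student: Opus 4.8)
The plan is to specialise \cref{PSLpresentationthm} to the case \(q=p=29\), mirroring the proof given for \(\PSL_2(37)\) in \cref{37presentationthm}. Since \(29\) is odd and greater than \(9\), the hypotheses of the theorem are satisfied, so it suffices to exhibit a primitive element \(\omega\in\mathbb{F}_{29}^{\times}\) together with integers \(k,l\) satisfying \(\omega^{2k}=\omega^{2l}+1\) and \(\mathbb{F}_{29}=\mathbb{F}_{29}[\omega^{2k}]\); one then reads off \(\delta=(k,l)\), the minimal polynomial \(m(X)\) of \(\omega^{2\delta}\), and the polynomials \(g_{\omega^i}(X)\) occurring in the relations, and substitutes everything into \eqref{PSLrelations}.

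First I would try \(\omega=2\), which is a primitive root modulo \(29\): since \(2^{14}\equiv-1\), the element \(2\) has order \(28\). Because \(q=p\) the field-generation condition \(\mathbb{F}_{29}=\mathbb{F}_{29}[\omega^{2k}]\) holds automatically, and by point (v) of the Remark following \cref{PSLpresentationthm} we have \(\omega\in\mathbb{F}_p\), so \(m(X)\) is linear and each \(g_{\omega^i}(X)\) is the constant \(\omega^i\). I would then solve \(4^k\equiv4^l+1\pmod{29}\): as \(4^{11}\equiv5\equiv4+1\), the choice \(k=11\), \(l=1\) works and gives \(\delta=(11,1)=1\). With \(\delta=1\), \cref{d=1} lets me discard relation (iv) of \eqref{PSLrelations}, which explains why the stated presentation carries only seven relations.

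It then remains to verify that these data reproduce \eqref{29presentation}. Relation (ii) becomes \(u^{s^{11}}=uu^s=u^su\); relation (iii), using \(m(X)=X-4\), collapses exactly as in the first case of the proof of \cref{d=1} to \(u^s=u^{\omega^2}=u^4\); relations (v)--(vii) I would rewrite, via point (ii) of the same Remark, as \(t^2=1\), \((ts)^2=1\), \((tu)^3=1\); and relation (viii), using \(g_{\omega}(X)=\omega=2\), \(g_{\omega^{-1}}(X)=\omega^{-1}=15\) and \((u^2)^t=tu^2t\), becomes \(st=u^{15}tu^2tu^{15}\). Each of these coincides with an asserted relation of \eqref{29presentation}.

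The main obstacle is bookkeeping rather than conceptual: one must choose the pair \((k,l)\) so that \(\delta=1\) (otherwise the presentation retains relation (iv), and, as in \cref{25extrarelationthm}, might need a further \magma-verified auxiliary relation), and one must reduce \(\omega^2\) and \(\omega^{-1}\) to their representatives in \(\{0,\dots,28\}\) before they appear as exponents of \(u\). Because \(\delta=1\) here, no extra relation of the kind required in the \(\PSL_2(25)\) case should arise.
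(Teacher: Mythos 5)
Your proposal is correct and matches the paper's own proof: both take \(\omega=2\) in \(\mathbb{F}_{29}\), observe \(\omega^{22}=\omega^{2}+1\) to get \(k=11\), \(l=1\), \(\delta=1\), then read off \(m(X)=X-4\), \(g_{2}(X)=2\), \(g_{2^{-1}}(X)=15\) and apply \cref{PSLpresentationthm} together with \cref{d=1}. The extra verification of how each relation of \eqref{PSLrelations} specialises is sound but not needed beyond what the paper records.
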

\begin{proof}
Let \(\omega=2\), thought as an element of \(\mathbb{F}_{29}\); then \(\omega^{22}=\omega^2+1\) so \(k=11\), \(l=1\), \(\delta=1\). Then \(m(X)=X-4\), \(g_2(X)=2\), and \(g_{2^{-1}}(X)=2^{-1}=15\). The result follows from \cref{PSLpresentationthm} and \cref{d=1}.
\end{proof}

\begin{remark}
	By checking with \magma, we observed that (ii) is redundant in the cases we study and it may be removed.
\end{remark}
\end{section}

\begin{section}{Locating a Borel subgroup of \texorpdfstring{\(\PSL_2(q)\)}{PSL(2,q)} in exceptional groups}\label{findborel}
We need to know where a Borel subgroup of \(\PSL_2(q)\) lies when embedded in an algebraic group, for which we use the following results.

\begin{lemma}\label{borelserrethm}
	Let \(K\) be a supersoluble subgroup of a connected reductive Lie group \(\bb{\G}\). Then there exists a maximal torus \(\bb{\T}\) of \(\bb{\G}\) such that \(K\le N_{\bb{\G}}(\bb{\T})\).
	
	Furthermore, let \(\sigma\) be a Frobenius endomorphism of \(\bb{\G}\). If each \(K_i\) in a normal series for \(K\)  is \(\sigma\text{-stable}\), then \(\bb{\T}\) can be taken to be \(\sigma\text{-stable}\).
\end{lemma}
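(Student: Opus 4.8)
The plan is to argue by induction on \(\dim\bb{G}\), and for fixed dimension on \(\size{K}\). Throughout I use that the elements of \(K\) are semisimple; in the situations of interest \(\size{K}\) is coprime to \(\ch k\), which guarantees this. It is also convenient to assume that the derived group of \(\bb{G}\) is simply connected, so that by \cref{centralisertheorem2} the centraliser of any semisimple element is connected; the general case follows from this one by a standard central-extension reduction, under which maximal tori and the supersolubility of \(K\) are preserved.

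For the inductive step I would use that a supersoluble group has a minimal normal subgroup \(N=\gen{a}\trianglelefteq K\) which is cyclic of prime order \(\ell\), with \(a\) semisimple, and split into two cases according to whether \(a\) is central. If \(a\in Z(\bb{G})\), then \(\gen{a}\) lies in every maximal torus and I would pass to the central isogeny \(\bb{G}\to\bb{G}/\gen{a}\): the dimension is unchanged but \(\size{K}\) drops, so the inductive hypothesis applied to \(K/\gen{a}\) gives a stable maximal torus whose preimage is the one sought. If \(a\notin Z(\bb{G})\), then \(\bb{C}\coloneqq C_{\bb{G}}(a)\) is a \emph{proper} connected reductive subgroup of maximal rank, so its maximal tori are maximal tori of \(\bb{G}\); moreover \(K\) normalises \(\bb{C}\), since it normalises \(\gen{a}\) and \(C_{\bb{G}}(a^{j})=C_{\bb{G}}(a)\) for \(j\) coprime to \(\ell\). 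As \(\dim\bb{C}<\dim\bb{G}\), it then suffices to find a maximal torus of \(\bb{C}\) stable under \(K\).

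The main obstacle is that \(K\) normalises \(\bb{C}\) but does \emph{not} lie in it: only \(L\coloneqq C_{K}(a)=K\cap\bb{C}\) does, and \(K/L\) is cyclic, embedding into \(\aut\gen{a}\cong C_{\ell-1}\). The inductive hypothesis applied to \(L\le\bb{C}\) produces an \(L\)-stable maximal torus of \(\bb{C}\), and the crux is to upgrade \(L\)-stability to \(K\)-stability, that is, to control the outer action of the cyclic quotient \(K/L\). I would do this by running the induction inside the (possibly disconnected) reductive group \(\bb{M}\coloneqq\bb{C}\cdot K\), whose identity component is \(\bb{C}\) and which contains all of \(K\); the required input is the theory of quasi-semisimple classes, namely that a semisimple automorphism of a connected reductive group stabilises some pair formed by a Borel subgroup and a maximal torus of it, together with the supersoluble structure of the component group \(\bb{M}/\bb{C}\) to pin down a \emph{single} maximal torus of \(\bb{C}\) stabilised by the whole of \(K\). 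This is the step I expect to be the most delicate, since fixed-point subgroups of genuinely outer automorphisms need not have maximal rank, so one cannot simply iterate the centraliser construction.

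For the final statement I would run the same induction \(\sigma\)-equivariantly. The hypothesis that each \(K_i\) is \(\sigma\)-stable forces \(N=K_1\) and \(K\) to be \(\sigma\)-stable; since \(\sigma(a)\in\gen{a}\) we get \(\sigma(C_{\bb{G}}(a))=C_{\bb{G}}(\sigma(a))=C_{\bb{G}}(a)\), so every group occurring in the recursion is \(\sigma\)-stable and every reduction (the isogeny quotient and the passage to \(\bb{C}\)) is \(\sigma\)-equivariant. It therefore remains to make each existential choice \(\sigma\)-stable: the maximal torus chosen at the base of the induction in a \(\sigma\)-stable connected reductive group, and the quasi-semisimple pairs chosen along the way. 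Each of these is available \(\sigma\)-stably by the Lang--Steinberg theorem (\cref{langsteinberg}), and pulling these choices back up the recursion yields a \(\sigma\)-stable maximal torus \(\bb{T}\) normalised by \(K\).
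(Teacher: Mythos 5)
The paper does not actually prove this lemma: it quotes \cite[Theorem 1]{borelserre} for the first assertion and \cite[Theorem 25.16]{malletesterman} for the \(\sigma\)-stable refinement. So your proposal is an attempt to reconstruct the Borel--Serre argument rather than an alternative to anything in the text. Your skeleton is the standard one — induction via a cyclic minimal normal subgroup \(\gen{a}\) of prime order, the central case handled by a central quotient, the non-central case by passing to \(\bb{\C}\coloneqq C_{\bb{\G}}(a)^{\circ}\), which is connected reductive of maximal rank and normalised by \(K\) — and you correctly identify the two real subtleties: that the lemma needs \(K\) to consist of semisimple elements (it is false for unipotent subgroups), and that \(K\) is not contained in \(\bb{\C}\).

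The step you yourself call delicate is, however, a genuine gap rather than a routine verification. Your inductive statement is formulated only for \emph{connected} reductive groups, so the inductive hypothesis cannot be applied to \(\bb{\C}\cdot K\); and the fact that each individual element of \(K\) inducing an outer automorphism of \(\bb{\C}\) stabilises \emph{some} pair consisting of a Borel subgroup and a maximal torus does not produce a single maximal torus of \(\bb{\C}\) stable under all of \(K\) simultaneously — "pinning down" such a torus using the supersolubility of \(K/L\) is precisely the statement being proved, now for a disconnected group, so as written the argument is circular. The standard repair is to strengthen the claim from the outset to supersoluble groups of quasi-semisimple automorphisms of a connected reductive group (equivalently, to supersoluble subgroups of possibly disconnected reductive groups); the induction then acquires a genuinely new case in which \(a\) acts by outer automorphisms, \(C_{\bb{\G}}(a)^{\circ}\) is no longer of maximal rank, and one must transport tori along the correspondence \(\bb{\T}\mapsto(\bb{\T}^{a})^{\circ}\) between \(a\)-stable maximal tori of \(\bb{\G}\) and maximal tori of \(C_{\bb{\G}}(a)^{\circ}\). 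None of this machinery appears in your sketch. A secondary gap of the same nature affects the \(\sigma\)-equivariant version: a \(\sigma\)-stable normal subgroup of \(K\) minimal with that property need not be cyclic of prime order (for instance \(\sigma\) may act irreducibly on an elementary abelian \(\ell^{2}\)), so the \(\sigma\)-stable induction must be run with elementary abelian rather than cyclic minimal normal subgroups, again changing the shape of the centraliser step.
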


\begin{proof}
	The first part is \cite[Theorem 1]{borelserre}, the additional condition about \(\sigma\text{-stability}\) is proved for example in \cite[Theorem 25.16]{malletesterman}. 
\end{proof}

\begin{lemma}\label{elementaryabelianthm}
	Let \(E\) be an elementary abelian subgroup of an algebraic group \(\bb{\G}\) in odd characteristic:
	\begin{enumerate}	
		\item If \((\bb{\G},E)=(\bb{\F}_4,2^2)\), then there exists a maximal torus of \(\bb{\G}\) containing \(E\).
		\item If \((\bb{\G},E)=(\bb{\F}_4,5^2)\), then there exists a maximal torus of \(\bb{\G}\) containing \(E\).
		\item If \((\bb{\G},E)=(\bb{\F}_4,3^3)\), then \(E\) is not contained in a maximal torus, and its normaliser in \(\bb{\G}\) is \(3^3\rtimes\SL_3(3)\).
	\end{enumerate}
\end{lemma}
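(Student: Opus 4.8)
Here \(\bb{\G}=\bb{\F}_4\), which has trivial fundamental group and is therefore simply connected; write \(\m{L}\) for its Lie algebra (dimension \(52\), rank \(l=4\)), acted on by \(\bb{\G}\) via the adjoint representation, and assume, as in all our applications, that the characteristic is coprime to \(\size{E}\), so that every element of \(E\) is semisimple. For parts (i) and (ii) I would prove the stronger statement that \emph{any} rank-\(2\) elementary abelian subgroup \(E=\gen{x,y}\) of semisimple elements is toral; the isomorphism types \(2^2\) and \(5^2\) are used only to guarantee semisimplicity. Set \(C\coloneqq C_{\bb{\G}}(x)\). Since \(\bb{\G}\) is simply connected, \cref{centralisertheorem2} shows \(C\) is connected and \cref{centralisertheorem} shows it is reductive of maximal rank \(4\). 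Now \(y\) is a semisimple element of the connected reductive group \(C\), hence lies in some maximal torus \(\bb{\T}\) of \(C\); and since \(x\) centralises \(C\) we have \(x\in Z(C)\subseteq C_C(\bb{\T})=\bb{\T}\), the last equality being the self-centralising property of maximal tori built into \cref{reductiveBN}. Therefore \(E\le\bb{\T}\), and \(\bb{\T}\), being of rank \(4\), is a maximal torus of \(\bb{\G}\).

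For part (iii) the one-step argument fails, because iterating it would require the centraliser of a second generator inside an intermediate subgroup of type \(\A_2\A_2\) to be connected, which it is not. Instead I would first construct \(E\simeq3^3\) explicitly: using \cref{subf4} and \cref{f4a2a2} I take the maximal-rank subsystem subgroup of type \(\A_2\A_2\), a central quotient of \(\SL_3\times\SL_3\) by a diagonally embedded \(\mu_3\), and produce three commuting elements of order \(3\) generating a \(3^3\) that fails to lie in the defining torus precisely because the two factors are glued by that central \(\mu_3\). To certify non-torality cleanly I would then compute the fixed space \(\m{L}^E\) and show \(\dim\m{L}^E<4\) (in fact it is \(0\)). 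By the remark extending \cref{fixs} to arbitrary subgroups of a maximal torus, any toral subgroup fixes a Cartan subalgebra pointwise, so \cref{cartanstab} forces \(\dim\m{L}^E\ge l=4\) whenever \(E\) is toral; the computed inequality therefore shows that \(E\) is non-toral. The same computation gives \(\dim C_{\bb{\G}}(E)=\dim\m{L}^E=0\), so \(C_{\bb{\G}}(E)\) is finite.

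Finally I would identify \(N_{\bb{\G}}(E)\). The conjugation action yields an embedding \(N_{\bb{\G}}(E)/C_{\bb{\G}}(E)\hookrightarrow\GL(E)=\GL_3(3)\); realising explicit generators of \(\SL_3(3)\) as elements of \(N_{\bb{\G}}(E)\), built from \(N_{\bb{\G}}(\bb{\T})\) and the normaliser of the \(\A_2\A_2\)-subgroup, gives \(\SL_3(3)\le N_{\bb{\G}}(E)/C_{\bb{\G}}(E)\), while \(C_{\bb{\G}}(E)=E\) because \(C_{\bb{\G}}(E)\) is finite and meets the acting group trivially. The remaining, and genuinely hardest, step is the reverse inclusion: that \(N_{\bb{\G}}(E)/C_{\bb{\G}}(E)\) is no larger than \(\SL_3(3)\). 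I expect this to be the main obstacle, and I would settle it either by quoting the classification of non-toral elementary abelian \(p\)-subgroups (respectively of the local maximal subgroups) of exceptional algebraic groups, in which this \(3^3\) arises as the unique non-toral elementary abelian \(3\)-subgroup of \(\bb{\F}_4\), or by an explicit \magma\ computation of the normaliser of the constructed \(E\). Once \(N_{\bb{\G}}(E)\) is known to be the finite group \(3^3\rtimes\SL_3(3)\), non-torality follows a second time, since a toral \(E\) would satisfy \(C_{\bb{\G}}(E)\supseteq\bb{\T}\) and hence have positive-dimensional, infinite centraliser.
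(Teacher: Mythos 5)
Your proposal is correct in substance, but it takes a genuinely different route from the paper, which disposes of all three parts in one line by citing Griess's classification of elementary abelian subgroups of exceptional algebraic groups (\cite[Table II]{griess_elementaryabelian}). For (i) and (ii) your argument is the standard and fully self-contained one: since \(\bb{\F}_4\) is simply connected, \cref{centralisertheorem2} makes \(C_{\bb{\G}}(x)\) connected reductive of maximal rank, \(y\) lies in one of its maximal tori \(\bb{\T}\), and \(x\in Z(C_{\bb{\G}}(x))\le C(\bb{\T})=\bb{\T}\); this proves torality of any rank-two elementary abelian group of semisimple elements and is strictly more informative than the citation. You are also right to flag that the hypothesis ``odd characteristic'' in the statement is really standing in for ``characteristic coprime to \(\size{E}\)'' — in characteristic \(5\) part (ii) concerns unipotent elements and the toral conclusion would fail — and the paper's applications do always satisfy the coprimality. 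For (iii), your non-torality certificate via \(\dim\m{L}^E=0<4\) together with \cref{cartanstab} and the remark following it is sound (the \(52\)-dimensional adjoint module restricts to \(E\) as twice the sum of the \(26\) nontrivial characters), and the construction of \(E\) and of \(\SL_3(3)\le N_{\bb{\G}}(E)/C_{\bb{\G}}(E)\) inside the \(\A_2\A_2\) subsystem subgroup is the right picture; but, as you honestly concede, the upper bound \(N_{\bb{\G}}(E)/E\le\SL_3(3)\) (equivalently, uniqueness of the non-toral \(3^3\) and finiteness plus equality \(C_{\bb{\G}}(E)=E\)) is exactly the content one ends up importing from \cite{griess_elementaryabelian} or \cite{localmaximal}, so for that step your proof and the paper's coincide. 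The trade-off is clear: your approach buys a transparent, citation-free proof of (i)--(ii) and an explicit non-torality criterion, at the cost of still leaning on the classification for the normaliser computation in (iii).
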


This follows from \cite[Table II]{griess_elementaryabelian}, where the result is studied for all choices of \(\bb{\G}\) and elementary abelian subgroups \(E\).

Note that a similar statement for finite exceptional groups can be found in \cite{localmaximal}.
\end{section}

\begin{section}{Counting conjugacy classes in the finite and algebraic group}\label{counting}\label{liftin}
Part of our work consists in constructing a Borel subgroup \(B\) of the group \(H\simeq\PSL_2(p^r)\) that we are trying to embed in \(\bb{\G}\) and \(\G\), and counting how many copies of \(H\) contain it.

If there is more than one subgroup isomorphic to \(H\) containing the same \(B\), then we need to check whether they are conjugate or not, but that will be dealt with case by case.

Since we will work in odd characteristic, we begin by observing that \(B\simeq p^r\rtimes\frac{p^r-1}{2}\), so \(B\) has socle \(B_0=B'\simeq p^r\), an elementary abelian \(p\text{-subgroup}\).

\begin{definition}
	Let \(\bb{\G}\) be a linear algebraic group, an element \(x\in\bb{\G}\) is called \emph{regular} if \(\dim C_{\bb{\G}}(x)\) is the smallest possible among all the elements of \(\bb{\G}\).
\end{definition}

\begin{proposition}[{\cite[Corollary 14.10]{malletesterman}}]\label{regularthm}
	Let \(\bb{\G}\) be a connected reductive group, \(s\in\bb{\G}\) a semisimple element, and \(\bb{\T}\) a maximal torus of \(\bb{\G}\) containing \(s\) with associated root system \(\Phi\). The following are equivalent:
	\begin{enumerate}
		\item \(s\) is regular.
		\item \(\alpha(s)\ne1\) for all \(\alpha\in\Phi\).
		\item \(C_{\bb{\G}}(s)^{\circ}=\bb{\T}\).
	\end{enumerate}
	Here, the notation is the same as in \cref{centralisertheorem}.
\end{proposition}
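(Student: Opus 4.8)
The plan is to deduce all three equivalences from the structural description of the connected centraliser in \cref{centralisertheorem}(i), namely \(C_{\bb{\G}}(s)^{\circ}=\gen{\bb{\T},\bb{\X}_{\alpha}\colon\alpha(s)=1}\), supplemented by a single dimension count. The cleanest route is to prove (ii) \(\Leftrightarrow\) (iii) purely group-theoretically, and then tie in regularity (i) through the dimension of the fixed space \(\m{L}^{s}\).

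\emph{(ii) \(\Leftrightarrow\) (iii).} This is immediate from the displayed description. If \(\alpha(s)\ne1\) for every \(\alpha\in\Phi\), then the generating set contains no root subgroup, so \(C_{\bb{\G}}(s)^{\circ}=\gen{\bb{\T}}=\bb{\T}\). Conversely, if \(\alpha(s)=1\) for some \(\alpha\), then the one-dimensional unipotent subgroup \(\bb{\X}_{\alpha}\), which meets \(\bb{\T}\) trivially, lies in \(C_{\bb{\G}}(s)^{\circ}\); hence \(C_{\bb{\G}}(s)^{\circ}\) strictly contains \(\bb{\T}\) and (iii) fails. So (ii) and (iii) are equivalent.

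\emph{(i) \(\Leftrightarrow\) (iii).} Here I would pass to Lie algebras. Since \(s\) is semisimple and \(s\in\bb{\T}\), the torus acts on the Cartan decomposition with \(s\) acting on \(\m{L}_{\alpha}\) by the eigenvalue \(\alpha(s)\) and trivially on \(\m{H}\), so \(\m{L}^{s}=\m{H}\oplus\bigoplus_{\alpha(s)=1}\m{L}_{\alpha}\); this is exactly the Lie algebra of \(C_{\bb{\G}}(s)^{\circ}\) by \cref{centralisertheorem}, whence \(\dim C_{\bb{\G}}(s)^{\circ}=\dim\m{L}^{s}=l+\#\set{\alpha\in\Phi\mid\alpha(s)=1}\). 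By \cref{cartanstab} we always have \(\dim\m{L}^{s}\ge l\), with equality precisely when \(\m{L}^{s}\) is a Cartan subalgebra, i.e. precisely when no root vanishes on \(s\) (this recovers (ii) \(\Leftrightarrow\) (iii) a second way). Granting that \(l=\rk\bb{\G}\) is the minimal value of \(\dim C_{\bb{\G}}(x)\) over all \(x\in\bb{\G}\), regularity of \(s\) is equivalent to \(\dim C_{\bb{\G}}(s)=l\), hence to \(\dim C_{\bb{\G}}(s)^{\circ}=l\). As \(\bb{\T}\) is a closed connected subgroup of \(C_{\bb{\G}}(s)^{\circ}\) of dimension \(l\) and both are irreducible, this forces \(C_{\bb{\G}}(s)^{\circ}=\bb{\T}\), which is (iii).

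The main obstacle is the one ingredient not supplied directly by the earlier results: that \(\dim C_{\bb{\G}}(x)\ge\rk\bb{\G}\) for \emph{every} \(x\in\bb{\G}\), so that the rank really is the minimal centraliser dimension defining regularity. For semisimple \(x\) this is already given by \cref{cartanstab}, but the general case needs a Jordan-decomposition argument: writing \(x=su\) with \(s\) semisimple and \(u\) unipotent, one has \(C_{\bb{\G}}(x)=C_{C_{\bb{\G}}(s)}(u)\), and \(C_{\bb{\G}}(s)^{\circ}\) is again connected reductive of rank \(l\) by \cref{centralisertheorem}; the problem then reduces to bounding below, by the rank, the dimension of the centraliser of a unipotent element in a connected reductive group. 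This last bound is the genuinely non-formal step, and is where I would either appeal to the general structure theory or argue by induction on \(\dim\bb{\G}\).
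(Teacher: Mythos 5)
Your argument is correct in substance, but note that the paper does not prove this proposition at all: it is quoted verbatim from \cite[Corollary 14.10]{malletesterman}, so there is no in-paper proof to compare against. What you have written is essentially the standard textbook derivation, and it hangs together: the equivalence (ii) \(\Leftrightarrow\) (iii) really is immediate from \cref{centralisertheorem}(i), since \(\dim C_{\bb{\G}}(s)^{\circ}=\rk\bb{\G}+\#\set{\alpha\in\Phi\mid\alpha(s)=1}\) can be read off from the generating set without even passing to \(\m{L}^{s}\), and your final step (a closed connected subgroup of an irreducible group of the same dimension is the whole group) is fine.

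Two refinements are worth making. First, you have correctly isolated the one non-formal ingredient, namely that \(\rk\bb{\G}\) is a lower bound for \(\dim C_{\bb{\G}}(x)\) over \emph{all} \(x\in\bb{\G}\); but observe that this is only needed for the implication (iii) \(\Rightarrow\) (i). For (i) \(\Rightarrow\) (iii) you can avoid it entirely: the kernels \(\ker\alpha\), \(\alpha\in\Phi\), are finitely many proper closed subgroups of the irreducible variety \(\bb{\T}\), so some \(t\in\bb{\T}\) satisfies \(\alpha(t)\ne1\) for all \(\alpha\) and hence has \(\dim C_{\bb{\G}}(t)=\rk\bb{\G}\); combined with the lower bound \(\dim C_{\bb{\G}}(s)^{\circ}\ge\rk\bb{\G}\) for semisimple \(s\) (which you already have from the structure of the centraliser), minimality forces \(\dim C_{\bb{\G}}(s)=\rk\bb{\G}\) and hence (iii). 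Second, for the remaining direction the global bound \(\dim C_{\bb{\G}}(x)\ge\rk\bb{\G}\) is a genuine external input (your Jordan-decomposition reduction to the unipotent case is the right skeleton, and the unipotent bound is where the real content lives, e.g.\ via Steinberg's work on regular elements); since the paper is content to cite \cite{malletesterman} for the whole proposition, citing that one fact rather than reproving it would be entirely consistent with the paper's conventions.
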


\begin{definition}
	We say that a subgroup \(K\le\bb{\G}\) is \emph{regular} if it is contained in a maximal torus \(\bb{\T}\) of \(\bb{\G}\) and there is no \(\alpha\in\Phi\) such that \(\alpha(s)=1\) for all \(s\in K\), where \(\Phi\) is the root system associated to \(\bb{\T}\).
	
	In particular, from \cref{centralisertheorem} it follows that \(C_{\bb{\G}}(K)^{\circ}=\bb{\T}\).
\end{definition}

When \(H\) is isomorphic to one of \(\PSL_2(25)\), \(\PSL_2(37)\), \(\PSL_2(39)\), it will follow from \cref{borelserrethm} and \cref{elementaryabelianthm} that \(B_0\) is always contained in a maximal torus \(\bb{\T}\).

\begin{proposition}\label{Tconjugacy}
Suppose there are \(B_1,B_2\simeq B\) contained in \(\bb{\G}\), such that \(B_1',B_2'\simeq B_0\) are \(\bb{\G}\text{-conjugate}\), with \(B_i=B_i'\rtimes w_i\). In particular, we may assume \(B_1'=B_2'=B_1\cap B_2=B_0\subset\bb{\T}\). If for \(i=1,2\) \(B_i'\) is regular and \(C_{\bb{\T}}(w_i)\) is finite, then \(B_1,B_2\) are \(\bb{\G}\text{-conjugate}\).
\end{proposition}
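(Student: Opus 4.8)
The plan is to show that \(B_1\) and \(B_2\) can be conjugated into each other by an element of the torus \(\bb{\T}\) itself, the key engine being a Lang-type surjectivity statement whose single hypothesis is exactly the finiteness of \(C_{\bb{\T}}(w_i)\). First I would locate both \(w_i\) inside \(N\coloneqq N_{\bb{\G}}(\bb{\T})\). Since \(B_0=B_i'\unlhd B_i\), each \(w_i\) normalises \(B_0\), hence also \(C_{\bb{\G}}(B_0)\) and its identity component; by regularity of \(B_i'=B_0\) together with \cref{centralisertheorem} (no root is trivial on all of \(B_0\)), we get \(C_{\bb{\G}}(B_0)^{\circ}=\bb{\T}\), so \(w_1,w_2\in N\). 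Moreover, as \(B_0\subseteq\bb{\T}\) and \(\bb{\T}\) is abelian, \(\bb{\T}\) centralises \(B_0\); thus conjugation by any \(t\in\bb{\T}\) fixes \(B_0\) pointwise and sends \(B_i\) to \(\gen{B_0,w_i^{t}}\). It therefore suffices to produce \(t\in\bb{\T}\) with \(w_1^{t}=w_2\).

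The core step is a twisted Lang map argument. Assume \(w_1,w_2\) have the same image \(\bar w\) in \(W=N/\bb{\T}\); fix a representative \(n\in N\) of \(\bar w\) and write \(w_1=t_1n\), \(w_2=t_2n\) with \(t_1,t_2\in\bb{\T}\). Using that \(\bb{\T}\) is abelian one computes \(w_1^{t}=\bigl(t_1\,t^{-1}\,{}^{n}t\bigr)n\) and \(C_{\bb{\T}}(w_i)=\set{t\in\bb{\T}|{}^{n}t=t}\). Consider the homomorphism of algebraic groups \(L\colon\bb{\T}\to\bb{\T}\), \(L(t)=t^{-1}\,{}^{n}t\); its kernel is precisely \(C_{\bb{\T}}(w_i)\), which is finite by hypothesis. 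Hence \(\dim\im L=\dim\bb{\T}\), and since \(\im L\) is a closed connected subgroup of the connected torus \(\bb{\T}\), it equals \(\bb{\T}\); that is, \(L\) is surjective. Choosing \(t\in\bb{\T}\) with \(L(t)=t_1^{-1}t_2\) gives \(w_1^{t}=w_2\), and therefore \(B_1^{t}=\gen{B_0,w_2}=B_2\).

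What remains, and what I expect to be the main obstacle, is the reduction to the case \(\bar w_1=\bar w_2\). Any element conjugating \(B_1\) to \(B_2\) must carry \(B_1'=B_0\) to \(B_2'=B_0\), hence lies in \(N_{\bb{\G}}(B_0)\le N\); so the reduction amounts to showing that \(\bar w_1\) and \(\bar w_2\) are conjugate in \(\stab_W(B_0)\), after which one replaces \(w_2\) by the generator of its complement having image \(\bar w_1\). Here one uses that \(\gen{w_i}\cap\bb{\T}=1\) (because \(\bb{\T}\) centralises \(B_0\) while the complement acts faithfully on it), so each \(\bar w_i\) has order \((q-1)/2\) and induces on \(B_0\) the group of square scalars of an \(\mathbb{F}_q\)-structure, that is, an index-\(2\) subgroup of a Singer cycle of \(\GL(B_0)\). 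The delicate point is to verify that these two cyclic subgroups are conjugate \emph{within} the image of \(N_{\bb{\G}}(B_0)\) in \(W\), rather than merely inside \(\GL(B_0)\); this is exactly where the explicit structure of \(\bb{\T}\), of \(W\), and of the particular embedding in each concrete case must be brought to bear.
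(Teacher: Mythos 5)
Your argument is essentially the paper's: both proofs place the complements inside \(N_{\bb{\G}}(\bb{\T})\) via regularity of \(B_0\), and both reduce the problem to showing that all complements in the coset \(\bb{\T}w\) are \(\bb{\T}\)-conjugate by means of the twisted map \(t\mapsto t^{-1}\,{}^{n}t\). The one genuine difference is how surjectivity of that map is obtained: the paper invokes \cref{steinberg} to recognise \(\tau_w\) as a Steinberg endomorphism of \(\bb{\T}\) and then cites \cref{langsteinberg}, whereas you prove it directly from the finiteness of the kernel by a dimension count on the connected torus -- this is just the standard proof of Lang's theorem for tori, is correct, and is arguably more self-contained. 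The ``main obstacle'' you flag, namely reducing to \(\overline{w}_1=\overline{w}_2\) in \(W\) compatibly with \(B_0\), is real, but it is precisely the point the paper itself dispatches only with the remark that ``for the cases we are considering there is a unique conjugacy class in \(W(\bb{\G})\) of elements of order \(\size{w}\)''; the compatibility with \(B_0\) that worries you is then checked case by case in the applications (e.g.\ in \cref{conj25} and \cref{conj37}, where \(B_0\) is pinned down inside \(\bb{\T}\) by the eigenspace structure of \(\overline{w}\) on the relevant \(p\)-torsion of \(\bb{\T}\)). So your proof is as complete as the paper's; the residual step is case-specific in both.
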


\begin{proof}
Since \(B_0\) is regular, \(C_{\bb{\G}}^{\circ}(B_0)=\bb{\T}\) for some maximal torus \(\bb{\T}\supset B_0\).

It is straightforward to prove that \(N_{\bb{\G}}(B_0)\subseteq N_{\bb{\G}}(C_{\bb{\G}}^{\circ}(B_0))\): let \(g\in N_{\bb{\G}}(B_0)\), \(b\in B_0\), and \(c\in C_{\bb{\G}}^{\circ}(B_0)\), then \(b^{c^g}=(b^{g^{-1}})^{cg}=(b^{g^{-1}})^g=b,\) that is \(c^g\in C_{\bb{\G}}^{\circ}(B_0)\).

This implies that \(B_1,B_2\subset N_{\bb{\G}}(B_0)\subseteq N_{\bb{\G}}(\bb{\T})\), therefore we can take \(B\) to lie in the normaliser of a maximal torus; using \cref{boreltits} this means that \(B=B_0\rtimes w\) where \(w\) is the preimage in \(N_{\bb{\G}}(\bb{\T})\) of an element of \(W(\bb{\G})\).

Let \(\tau_w\) be the endomorphism of \(\bb{\T}\) induced by the conjugation action of \(w\in N_{\bb{G}}(\bb{\T})\). Since \(C_{\bb{\T}}(w)\) is finite, by \cref{steinberg} \(\tau\) is a Steinberg endomorphism. Then, by \cref{langsteinberg}, the map \(t\mapsto\tau_w(t)t^{-1}=t^wt^{-1}\) is surjective, i.e. for any \(t_1\in\bb{\T}\) there is \(t\in\bb{\T}\) s.t. \(t_1=t^wt^{-1}\). Rearranging the terms gives \(wt_1=w^{t^{-1}}\), so the elements of \(w\bb{\T}\) are all \(\bb{\T}\text{-conjugate}\); in particular, \(B_1\) and \(B_2\) are \(\bb{\T}\text{-conjugate}\) as for the cases we are considering there is a unique conjugacy class in \(W(\bb{\G})\) of elements of order \(\size{w}\), so for both groups the element extending \(B_0\) lies in \(\bb{\T}w\).

Therefore, \(B\) is unique up to \(\bb{\T}\text{-conjugacy}\).
\end{proof}

For the finite case, let \(\T=\bb{\T}^{\sigma}\).

\begin{lemma}\label{tconjugacy}
	There are at most \(\size{C_{\T}(w)}\) conjugacy classes of complements to \(\T\) in \(Tw\).
\end{lemma}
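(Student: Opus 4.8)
The plan is to identify each complement with a single element of the coset \(\T w\) and then count the \(\T\)-orbits on that coset under conjugation. Write \(G=\langle\T,w\rangle\) and let \(m=[G:\T]\) be the order of the image \(\bar w\) of \(w\) in the cyclic quotient \(G/\T\). First I would observe that any complement \(C\) to \(\T\) in \(G\) is mapped isomorphically onto \(G/\T\) by the quotient map, hence is cyclic of order \(m\) and meets every coset of \(\T\) in exactly one element; in particular \(C\) contains a unique element of \(\T w\), namely the preimage of \(\bar w\) in \(C\). Conversely, an element \(tw\in\T w\) generates a complement precisely when it has order \(m\). This yields a bijection between complements and the order-\(m\) elements of \(\T w\), and it is compatible with conjugation: since \(\T\) is normal, conjugation by \(s\in\T\) preserves the coset \(\T w\), so two complements are \(\T\)-conjugate if and only if their distinguished generators are. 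Hence the \(\T\)-conjugacy classes of complements inject into the set of \(\T\)-orbits on \(\T w\).

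Next I would count those orbits directly. Let \(\tau\) denote the automorphism of \(\T\) given by conjugation by \(w\). Because \(\T\) is abelian, conjugating \(tw\) by \(s\in\T\) gives \(s^{-1}(tw)s=(s^{-1}\tau(s))\,tw\), so the orbit of \(tw\) is \(t\,I\,w\), where \(I\) is the image of the homomorphism \(\lambda\colon s\mapsto s^{-1}\tau(s)\). Its kernel is \(\{s:\tau(s)=s\}=C_{\T}(w)\), whence \(|I|=|\T|/|C_{\T}(w)|\), and the coset \(\T w\) therefore splits into exactly \(|\T|/|I|=|C_{\T}(w)|\) orbits.

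Combining the two steps, the number of \(\T\)-conjugacy classes of complements is at most \(|C_{\T}(w)|\), and since conjugacy in the ambient group only coarsens the \(\T\)-action, the same bound holds for the statement as written. The routine checks — that \(\T\) preserves the coset, that \(\lambda\) is a homomorphism, and that an order-\(m\) element of \(\T w\) generates a genuine complement — are immediate from the commutativity of \(\T\). The one step deserving care, and what I expect to be the main obstacle, is the reduction itself: confirming that a complement is pinned down by its unique element of \(\T w\), so that counting complements up to conjugacy really is bounded by counting orbits on the coset. The inequality rather than equality then simply reflects that not every element of \(\T w\) need have order \(m\).
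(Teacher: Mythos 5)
Your proof is correct and follows essentially the same route as the paper: your map $\lambda\colon s\mapsto s^{-1}\tau(s)$ is exactly the paper's $\varphi_w\colon t\mapsto[t,w]$, and in both arguments the coset $\T w$ splits into $\size{\ker\varphi_w}=\size{C_{\T}(w)}$ conjugacy classes, of which only those containing elements of the right order yield complements. You merely make explicit the bijection between complements and order-$m$ elements of $\T w$, which the paper leaves implicit in its closing remark.
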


\begin{proof}
This time we use the map \(\varphi_w\colon t\mapsto[t,w]\); clearly, \(\varphi_w\colon\T\rightarrow\T\), and it is a homomorphism since for any \(t,u\in\T\) we have:
\begin{align*}
	\varphi_w(tu)&=[tu,w]=\\
	&=u^{-1}t^{-1}w^{-1}tuw=\\
	&=u^{-1}(t^{-1}w^{-1}tw)u(u^{-1}w^{-1}uw)=\\
	&=[t,w]^{u^{-1}}[u,w]=\\
	&=[t,w][u,w]=\varphi_w(t)\varphi_w(u),
\end{align*}
where the last step follows because \(u,[t,w]\in\T\), which is abelian.

Observe now that given \(s\in\T\), by rearranging \((tw)^s=uw\) we obtain \(t^{-1}u=[s,w]\), which means that \(tw\) and \(uw\) are \(\T\text{-conjugate}\) iff \(t^{-1}u\in[\T,w]=\im\varphi_w\).

Therefore, \(\T w\) splits into \(\size{\ker\varphi_w}=\size{C_{\T}(w)}\) conjugacy classes, not all of which have elements with the same order as \(w\).
\end{proof}

\begin{remark}
	While in general not all the classes have elements with the same order as \(w\), they do for example if \(C_{\bb{\T}}(w)=Z(\bb{\G})\), since multiplication by elements of the centre does not change the order.
\end{remark}

Let \(\bb{\G}\) be an algebraic group with Steinberg endomorphism \(\sigma\), and \(H\le \G=\bb{\G}^{\sigma}\); we want to know whether the \(\bb{\G}\text{-conjugates}\) of \(H\) contained in \(\G\) are \(\G\text{-conjugate}\) or they split into multiple conjugacy classes.

\begin{definition}
	Let \(G\) be an abstract group and \(\sigma\) be an automorphism of \(G\). We say that \(g_1,g_2\in G\) are \emph{\(\sigma\text{-conjugate}\)} if there exists \(x\in G\) such that \(g_2=x^{-1}g_1\sigma(x)\). It can be shown that \(\sigma\text{-conjugacy}\) is an equivalence relation, and its equivalence classes are called \emph{\(\sigma\text{-conjugacy}\) classes} of \(G\).
\end{definition}
	
\begin{lemma}[{\cite[Lemma 21.10]{malletesterman}}]
	Let \(\bb{\G}\) be a linear algebraic group with Steinberg endomorphism \(\sigma\), and let \(\bb{\G}_1\) be a closed connected normal \(\sigma\text{-stable}\) subgroup. Then the quotient map induces a natural bijection from \(\sigma\text{-conjugacy}\) classes of \(\bb{\G}\) to \(\sigma\text{-conjugacy}\) classes of \(\bb{\G}/\bb{\G}_1\).
\end{lemma}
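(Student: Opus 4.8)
The plan is to show that the quotient map \(\pi\colon\bb{\G}\rightarrow\bb{\G}/\bb{\G}_1\) induces the desired bijection by analysing the fibres of the induced map on classes. First I would record that, because \(\bb{\G}_1\) is \(\sigma\text{-stable}\), \(\sigma\) descends to an endomorphism \(\overline{\sigma}\) of \(\overline{\bb{\G}}\coloneqq\bb{\G}/\bb{\G}_1\) with \(\pi\circ\sigma=\overline{\sigma}\circ\pi\). Consequently \(\pi\) carries \(\sigma\text{-conjugate}\) elements to \(\overline{\sigma}\text{-conjugate}\) elements: if \(g_2=x^{-1}g_1\sigma(x)\) then \(\pi(g_2)=\pi(x)^{-1}\pi(g_1)\overline{\sigma}(\pi(x))\). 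Hence \(\pi\) descends to a well-defined map \(\overline{\pi}\) from \(\sigma\text{-conjugacy}\) classes of \(\bb{\G}\) to \(\overline{\sigma}\text{-conjugacy}\) classes of \(\overline{\bb{\G}}\), and since \(\pi\) is surjective on elements this \(\overline{\pi}\) is surjective on classes. All that remains is injectivity.

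For injectivity, suppose \(g_1,g_2\in\bb{\G}\) satisfy \(\pi(g_1)\sim_{\overline{\sigma}}\pi(g_2)\), say \(\pi(g_2)=\overline{x}^{-1}\pi(g_1)\overline{\sigma}(\overline{x})\). Lifting \(\overline{x}\) to \(x\in\bb{\G}\) and replacing \(g_1\) by the \(\sigma\text{-conjugate}\) element \(x^{-1}g_1\sigma(x)\), I reduce to the case \(\pi(g_1)=\pi(g_2)\), i.e. \(g_2=g_1y\) for some \(y\in\bb{\G}_1\) (using normality of \(\bb{\G}_1\)). It then suffices to prove that every element of the coset \(g_1\bb{\G}_1\) is \(\sigma\text{-conjugate}\) to \(g_1\) by an element of \(\bb{\G}_1\). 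Writing \(g=g_1\) and seeking \(z\in\bb{\G}_1\) with \(z^{-1}g\sigma(z)=gy\), I rewrite this as \(z^{-1}\tau(z)=gyg^{-1}\), where \(\tau\colon\bb{\G}_1\rightarrow\bb{\G}_1\) is the endomorphism \(\tau(z)=g\sigma(z)g^{-1}\); this is well-defined since \(\bb{\G}_1\) is normal and \(\sigma\text{-stable}\), and \(gyg^{-1}\in\bb{\G}_1\). Thus the whole statement reduces to the surjectivity of the Lang map \(z\mapsto z^{-1}\tau(z)\) on the connected group \(\bb{\G}_1\).

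To finish I would apply the Lang--Steinberg theorem (\cref{langsteinberg}) to \(\tau\), which requires showing that \(\tau\) is a Steinberg endomorphism of \(\bb{\G}_1\). Here \(\tau\) is the composite of the restriction \(\sigma|_{\bb{\G}_1}\) --- itself a Steinberg endomorphism, since if \(\sigma^n=F\) is a Frobenius of \(\bb{\G}\) then its restriction to the closed \(\sigma\text{-stable}\) subgroup \(\bb{\G}_1\) is a Frobenius of \(\bb{\G}_1\) --- with the conjugation automorphism \(z\mapsto gzg^{-1}\) of \(\bb{\G}_1\). I expect this last point to be the main obstacle: one must verify that composing a Steinberg endomorphism with such an algebraic automorphism again yields a Steinberg endomorphism. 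Concretely, a direct computation gives \(\tau^n(z)=a_nF(z)a_n^{-1}\) with \(a_n=g\,\sigma(g)\cdots\sigma^{n-1}(g)\) normalising \(\bb{\G}_1\), so a power of \(\tau\) is a Frobenius composed with an inner automorphism; the standard theory of such twisted maps (equivalently, that \(\tau\) has finite fixed-point subgroup, cf.\ \cref{steinberg}) shows \(\tau\) is Steinberg, after which \cref{langsteinberg} delivers the required \(z\) and hence the \(\sigma\text{-conjugacy}\) of \(g_1\) and \(g_2\). This proves \(\overline{\pi}\) injective, completing the bijection.
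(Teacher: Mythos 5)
The paper gives no proof of this lemma at all; it is quoted verbatim from Malle--Testerman. Your argument is the standard proof of that result and its overall structure is sound: descent of \(\sigma\) to the quotient gives a well-defined surjection on \(\sigma\)-conjugacy classes, and injectivity reduces, after translating by a lift of the conjugating element, to the surjectivity of the twisted Lang map \(z\mapsto z^{-1}\tau(z)\) on \(\bb{\G}_1\), where \(\tau(z)=g\sigma(z)g^{-1}\). This is exactly where the connectedness of \(\bb{\G}_1\) is used, and your reduction to that point is correct.

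The one step that needs repair is the justification that \(\tau\) satisfies the hypotheses of \cref{langsteinberg}, which you rightly identify as the main obstacle. Your parenthetical appeal to \cref{steinberg} does not literally apply, since that dichotomy is stated only for \emph{simple} algebraic groups and \(\bb{\G}_1\) need not be simple; likewise the usual trick of conjugating \(\mathrm{Int}(g)\circ\sigma\) back to \(\sigma\) by solving \(g=h^{-1}\sigma(h)\) is unavailable here, because \(g\) lies in \(\bb{\G}\), which is not assumed connected. The clean fix is to invoke Steinberg's Theorem 10.1 in its original form, whose hypotheses are only that the endomorphism of the connected group be surjective with finite fixed-point subgroup. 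Surjectivity of \(\tau\) is clear (conjugation is an automorphism and \(\sigma\) restricts to a surjection of \(\bb{\G}_1\) onto itself by a dimension argument), and finiteness of \(\bb{\G}_1^{\tau}\) follows from your computation \(\tau^n=\mathrm{Int}(a_n)\circ F\) with \(F\) a Frobenius: since \(dF_e=0\), the differential of \(\tau^n\) at the identity vanishes, so the fixed-point subgroup of \(\tau^n\) (which contains \(\bb{\G}_1^{\tau}\)) has trivial Lie algebra and is therefore finite. With that substitution your proof is complete and agrees in substance with the argument in the cited source.
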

	
\begin{theorem}\label{goingdown}
	Let \(\bb{\G}\) be a connected linear algebraic group with a Steinberg endomorphism \(\sigma\) acting transitively on a non-empty set \(V\) with a compatible \(\sigma\text{-action}\), i.e. \(\sigma(g.v)=\sigma(g).\sigma(v)\) for all \(g\in \bb{\G}\), \(v\in V\). Then:
	\begin{enumerate}
		\item \(\sigma\) has fixed points on \(V\), i.e. \(V^{\sigma}\ne\emptyset\).
		\item If the stabilizer \(\bb{\G}_x\) is closed for some \(x\in V\), then for any \(v\in V^{\sigma}\) there is a natural 1-to-1 correspondence: \[\Set{\bb{\G}^{\sigma}\text{-orbits on }V^{\sigma}}\leftrightarrow\Set{\sigma\text{-conjugacy classes in }\bb{\G}_v/\bb{\G}_v^{\circ}}.\]
	\end{enumerate}
\end{theorem}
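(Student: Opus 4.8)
The plan is to treat this as the standard parametrisation of rational orbits, with the Lang–Steinberg theorem (\cref{langsteinberg}) as the engine. Recall that it gives surjectivity of \(g\mapsto\sigma(g)g^{-1}\) on the connected group \(\bb{\G}\), and hence, by replacing \(g\) with its inverse, also of \(g\mapsto g^{-1}\sigma(g)\); both forms will be used. For (i), I would start from any \(v\in V\): since \(\sigma(v)\in V\) and \(\bb{\G}\) is transitive, write \(v=g.\sigma(v)\) for some \(g\in\bb{\G}\), then pick \(x\) with \(x^{-1}\sigma(x)=g\). Writing \(\sigma(x)=xg\), one gets \(\sigma(x.v)=\sigma(x).\sigma(v)=(xg).\sigma(v)=x.(g.\sigma(v))=x.v\), so \(w:=x.v\in V^{\sigma}\) and \(V^{\sigma}\ne\emptyset\).

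For (ii), I would fix the basepoint \(v\in V^{\sigma}\) and first establish a bijection between \(\bb{\G}^{\sigma}\)-orbits on \(V^{\sigma}\) and \(\sigma\)-conjugacy classes in the \emph{full} stabiliser \(\bb{\G}_v\) (which is closed, since all point stabilisers are conjugate and \(\bb{\G}_x\) is closed, and is \(\sigma\)-stable because \(\sigma(v)=v\)). The statement involving \(\bb{\G}_v/\bb{\G}_v^{\circ}\) then follows by applying \cite[Lemma 21.10]{malletesterman} with the connected normal \(\sigma\)-stable subgroup \(\bb{\G}_v^{\circ}\), which identifies \(\sigma\)-conjugacy classes in \(\bb{\G}_v\) with those in the component group \(\bb{\G}_v/\bb{\G}_v^{\circ}\).

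To build the bijection, send \(w\in V^{\sigma}\), with \(w=g.v\), to the \(\sigma\)-conjugacy class of \(g^{-1}\sigma(g)\) in \(\bb{\G}_v\). I would check the three routine points: \(g^{-1}\sigma(g)\in\bb{\G}_v\) because \(\sigma(g).v=g.v\) (as \(w\) is \(\sigma\)-fixed); replacing \(g\) by \(gk\) with \(k\in\bb{\G}_v\) turns \(g^{-1}\sigma(g)\) into \(k^{-1}(g^{-1}\sigma(g))\sigma(k)\), so the class is independent of the choice of \(g\); and replacing \(w\) by \(h.w\) with \(h\in\bb{\G}^{\sigma}\) leaves \(g^{-1}\sigma(g)\) literally unchanged (since \(\sigma(h)=h\)), so the map descends to orbits. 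Surjectivity is immediate from Lang–Steinberg on \(\bb{\G}\): given \(a\in\bb{\G}_v\), choose \(g\) with \(g^{-1}\sigma(g)=a\) and set \(w=g.v\); then \(\sigma(g).v=g.(a.v)=g.v\) shows \(w\in V^{\sigma}\), with the prescribed class.

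The main technical step is injectivity. Suppose \(w=g.v\) and \(w'=g'.v\) yield \(\sigma\)-conjugate elements \(a=g^{-1}\sigma(g)\) and \(a'=g'^{-1}\sigma(g')\), say \(a'=k^{-1}a\sigma(k)\) with \(k\in\bb{\G}_v\). I would verify that \(h:=g'k^{-1}g^{-1}\) is the required conjugator: from \(g'^{-1}hg=k^{-1}\in\bb{\G}_v\) one gets \(h.w=w'\), and substituting \(\sigma(g)=ga\), \(\sigma(g')=g'a'\) shows that the identity \(\sigma(h)=h\) is equivalent precisely to the relation \(a'=k^{-1}a\sigma(k)\); hence \(h\in\bb{\G}^{\sigma}\) and \(w,w'\) lie in one orbit. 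This closes the bijection. I expect this last bookkeeping — manufacturing a genuine fixed-point conjugator \(h\in\bb{\G}^{\sigma}\) out of the abstract \(\sigma\)-conjugacy relation — to be the one place that demands real care; everything else is either Lang–Steinberg or the cited component-group lemma.
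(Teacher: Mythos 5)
Your argument is correct and complete: part (i) is the standard Lang--Steinberg trick, and in part (ii) the well-definedness, surjectivity and injectivity checks for the map \(w=g.v\mapsto[g^{-1}\sigma(g)]\) all go through exactly as you describe, with the passage to \(\bb{\G}_v/\bb{\G}_v^{\circ}\) handled by the cited component-group lemma. The paper itself gives no proof, only the reference to \cite[Theorem 21.11]{malletesterman}, and your write-up is precisely the standard argument found there, so there is nothing to add.
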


\begin{proof}
	See for example \cite[Theorem 21.11]{malletesterman}.
\end{proof}

\begin{corollary}\label{finiteconjugates}
Let \(H\) be a subgroup of \(\G=\bb{\G}^{\sigma}\). If \(C_{\bb{\G}}(H)\) is connected, then all \(\bb{\G}\text{-conjugates}\) of \(H\) that lie in \(\G\) are conjugate in \(\G\). If \(C_{\bb{\G}}(H)\le\G\), then the \(\bb{\G}\text{-conjugates}\) of \(H\) that lie in \(\G\) split into a number of conjugacy classes equal to the number of conjugacy classes of elements of \(C_{\bb{\G}}(H)\).
\end{corollary}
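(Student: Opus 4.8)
The plan is to convert the condition ``\(H^{g}\) lies in \(\G\)'' into a statement about the centraliser and then feed it into Lang--Steinberg. The starting observation is that, since \(H\le\G=\bb{\G}^{\sigma}\), we have \(\sigma(h)=h\) for every \(h\in H\), and in particular \(\sigma(H)=H\), so \(C:=C_{\bb{\G}}(H)\) is \(\sigma\)-stable. A one-line computation then records the key fact: for \(g\in\bb{\G}\), writing \(z=\sigma(g)g^{-1}\), one has \(\sigma(g^{-1}hg)=\sigma(g)^{-1}h\sigma(g)=g^{-1}hg\) for all \(h\in H\) if and only if \(z\) centralises \(H\). Thus \(H^{g}\le\G\) if and only if \(\sigma(g)g^{-1}\in C\).

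For the first assertion I would argue directly. Suppose \(H^{g}\le\G\) and set \(z=\sigma(g)g^{-1}\in C\). Since \(C\) is connected and \(\sigma\)-stable, \(\sigma|_{C}\) is again a Steinberg endomorphism (a power of \(\sigma\) is a Frobenius map and restricts to one on the closed subgroup \(C\)), so \cref{langsteinberg} applies to \(C\): the map \(x\mapsto\sigma(x)x^{-1}\) is surjective on \(C\). Choose \(x\in C\) with \(\sigma(x)x^{-1}=z\) and put \(y=x^{-1}g\); then
\[
\sigma(y)=\sigma(x)^{-1}\sigma(g)=\sigma(x)^{-1}zg=\sigma(x)^{-1}\sigma(x)x^{-1}g=x^{-1}g=y,
\]
so \(y\in\G\). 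As \(x\in C\) centralises \(H\) we get \(H^{g}=H^{xy}=H^{y}\), so \(H^{g}\) is \(\G\)-conjugate to \(H\). Since \(H^{g}\) was an arbitrary \(\bb{\G}\)-conjugate lying in \(\G\), all of them fall into the single \(\G\)-class of \(H\).

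For the second assertion I would invoke \cref{goingdown}. Fix generators \(h_{1},\dots,h_{k}\) of \(H\) and let \(V\) be the \(\bb{\G}\)-orbit of \((h_{1},\dots,h_{k})\) under simultaneous conjugation. Then \(\bb{\G}\) acts transitively on \(V\), the stabiliser of the base point is exactly \(C\), and the coordinatewise action of \(\sigma\) is compatible with that of \(\bb{\G}\). Because \(H\le\G\), the base point lies in \(V^{\sigma}\), and \(V^{\sigma}\) is precisely the set of \(\sigma\)-fixed tuples, i.e.\ the \(\bb{\G}\)-conjugate copies of \(H\) contained in \(\G\). Since \(C\) is closed, \cref{goingdown}(ii) yields a bijection between the \(\G\)-orbits on \(V^{\sigma}\) and the \(\sigma\)-conjugacy classes of \(C/C^{\circ}\). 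Under the hypothesis \(C\le\G\) the group \(C\) is finite, so \(C^{\circ}=1\) and \(\sigma\) acts trivially on \(C\); hence \(\sigma\)-conjugacy in \(C\) is ordinary conjugacy, and the number of classes equals the number of conjugacy classes of \(C\). (Note this framework also recovers the first assertion as the degenerate case \(C^{\circ}=C\), where \(C/C^{\circ}\) is trivial.)

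The delicate point, and the one I expect to be the main obstacle, is arranging the group action so that \cref{goingdown} counts the intended objects. Acting by conjugation on the \emph{subgroup} \(H\) has stabiliser \(N_{\bb{\G}}(H)\), and \cref{goingdown} would then enumerate the \(\G\)-classes of \emph{set-wise} \(\sigma\)-stable conjugates via \(\sigma\)-conjugacy classes in \(N_{\bb{\G}}(H)/N_{\bb{\G}}(H)^{\circ}\) --- but a conjugate can be \(\sigma\)-stable as a set without lying pointwise in \(\G\). Passing to a generating tuple forces the stabiliser to be the centraliser \(C\) and makes \(V^{\sigma}\) exactly the copies contained in \(\G\); the identity \(\sigma(g)g^{-1}\in C\) from the first paragraph is precisely what certifies this identification. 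Since \(N_{\bb{\G}}(H)^{\circ}=C^{\circ}\) for finite \(H\), the two set-ups agree on the connected part, which is what makes both assertions flow from the same machinery.
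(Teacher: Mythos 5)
Your proof is correct. For the second assertion you follow exactly the paper's route: act on a generating tuple of \(H\), note the stabiliser is \(C_{\bb{\G}}(H)\) (closed, as an intersection of element centralisers), and apply \cref{goingdown} to identify \(\G\)-orbits on \(V^{\sigma}\) with \(\sigma\)-conjugacy classes of \(C/C^{\circ}\), which become ordinary conjugacy classes when \(C\le\G\) forces \(C\) finite and pointwise \(\sigma\)-fixed. For the first assertion you diverge slightly: instead of reading off the connected case as the degenerate instance \(C/C^{\circ}=1\) of the same correspondence (as the paper does), you run the Lang--Steinberg argument directly on \(C\) --- the observation that \(H^{g}\le\G\) iff \(\sigma(g)g^{-1}\in C\), followed by solving \(\sigma(x)x^{-1}=z\) in the connected \(\sigma\)-stable group \(C\) and replacing \(g\) by \(x^{-1}g\in\G\). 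This buys a self-contained, explicit conjugating element and makes transparent \emph{why} connectedness of the centraliser is the relevant hypothesis, at the cost of reproving a special case of \cref{goingdown}; the two are equivalent in substance. Your closing remark about tuples versus subgroups is well taken, and it is worth noting that the identification of \(\G\)-orbits on \(V^{\sigma}\) with conjugacy classes of \emph{subgroups} (rather than of generating tuples, i.e.\ embeddings) is exactly as precise, or imprecise, in your write-up as in the paper's own proof, so no gap is introduced relative to the source.
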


\begin{proof}
Let \(H=\gen{h_1,\ldots,h_n}\) be a subgroup of \(\G=\bb{\G}^{\sigma}\), and apply theorem \cref{goingdown} by taking \(V\) to be \(\Set{(h_1^g,\ldots,h_n^g)\colon g\in\bb{\G}}\). By construction, \(\bb{\G}\) acts transitively on \(V\) by conjugation, and since we have \(H\le\G\) then \(\sigma(h_i)=h_i\), implying that \(\sigma(h_i^g)=h_i^{\sigma(g)}\) for any \(i=1,\ldots,n\) and \(g\in\bb{\G}\), i.e. \(\sigma\) is compatible with the action of \(\bb{\G}\) on \(V\).

We also have that \(\bb{\G}_{(h_1,\ldots,h_n)}=C_{\bb{\G}}(H)\) is closed since \(C_{\bb{\G}}(h_1)\cap\ldots\cap C_{\bb{\G}}(h_n)\) is an intersection of centralisers of elements, which are closed sets, so by \cref{goingdown} we have a 1-to-1 correspondence between the \(\G\text{-orbits}\) on \(V^{\sigma}\), i.e. the number of conjugacy classes of \(H\) in \(\G\), and the number of \(\sigma\text{-conjugacy}\) classes in \(\bb{\G}_{(h_1,\ldots,h_n)}/\bb{\G}_{(h_1,\ldots,h_n)}^{\circ}=C_{\bb{\G}}(H)/C_{\bb{\G}}(H)^{\circ}\).

If \(C_{\bb{\G}}(H)\) is connected, then \(C_{\bb{\G}}(H)/C_{\bb{\G}}(H)^{\circ}\) is trivial so there is only one such conjugacy class.

If \(C_{\bb{\G}}(H)\le\G\), since \(\sigma\) fixes elements of \(\G\) then the \(\sigma\text{-classes}\) on \(C_{\bb{\G}}(H)\) are the \(\bb{\G}\text{-conjugacy}\) classes.
\end{proof}

Since we normally start by counting classes in \(\bb{\G}\) and then use that knowledge to move down to the finite group \(\G\), we need to know whether \(H\) embeds in \(\G\) in the first place. In some cases this depends on the irrationalities in the characters of \(H\), i.e. whether the ground field splits a certain polynomial.

\begin{lemma}\label{minpol}
	The field \(\mathbb{F}_q\) for odd \(q\) is a splitting field for the polynomial
	\begin{enumerate}
		\item \(X^2-X-1\);
		\item \(X^2-X-7\);
		\item \(X^2-X-9\);
		\item \(X^3-X^2-2X+1\);
	\end{enumerate}
	if and only if
	\begin{enumerate}
		\item \(q\equiv0,\pm1\bmod5\);
		\item \(q\equiv0,\pm1,\pm4,\pm5,\pm6,\pm7,\pm9,\pm13\bmod29\);
		\item \(q\equiv0,\pm1,\pm3,\pm4,\pm7,\pm9,\pm10,\pm11,\pm12,\pm16\bmod37\);
		\item \(q\equiv0,\pm1\bmod7\).
	\end{enumerate}
\end{lemma}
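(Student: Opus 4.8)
The plan is to reduce every case to the statement that all roots of the polynomial lie in \(\mathbb{F}_q\), and then convert that into the stated congruence, using quadratic reciprocity for the three quadratics and the arithmetic of seventh roots of unity for the cubic. For (i)--(iii) the polynomials are \(X^2-X-\tfrac{\ell-1}{4}\) with \(\ell=5,29,37\), each a prime with \(\ell\equiv1\pmod4\) whose discriminant is exactly \(\ell\). In odd characteristic a quadratic splits over \(\mathbb{F}_q\) iff its discriminant is a square there (with a repeated root when it vanishes), so the case \(p=\ell\), i.e.\ \(q\equiv0\pmod\ell\), is immediate and yields the term \(q\equiv0\); for \(p\ne\ell\) the question is exactly whether \(\ell\) is a square in \(\mathbb{F}_q\).

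The arithmetic heart is the elementary criterion that for \(a\in\mathbb{F}_p^{\times}\) and \(q=p^r\) one has \(a^{(q-1)/2}=\bigl(a^{(p-1)/2}\bigr)^{1+p+\cdots+p^{r-1}}=\left(\tfrac ap\right)^{1+p+\cdots+p^{r-1}}\), and since \(1+p+\cdots+p^{r-1}\equiv r\pmod2\) this equals \(1\) iff \(\left(\tfrac ap\right)=1\) or \(r\) is even. Taking \(a=\ell\) and using reciprocity \(\left(\tfrac\ell p\right)=\left(\tfrac p\ell\right)\) (valid as \(\ell\equiv1\pmod4\)) gives
\[
\ell\text{ a square in }\mathbb{F}_q\iff\left(\tfrac\ell p\right)=1\text{ or }2\mid r\iff\left(\tfrac p\ell\right)=1\text{ or }2\mid r\iff\left(\tfrac p\ell\right)^{r}=1\iff\left(\tfrac{q}{\ell}\right)=1,
\]
i.e.\ exactly that \(q\) is a quadratic residue modulo \(\ell\). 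Since \(\ell\equiv1\pmod4\) the nonzero squares mod \(\ell\) are closed under negation, so they are precisely the \(\pm\)-lists in the statement; a direct tabulation of the squares modulo \(5\), \(29\) and \(37\) confirms the three displayed sets and finishes (i)--(iii).

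For (iv) I would identify the polynomial with the periods of a seventh root of unity. Writing \(g(X)=X^3+X^2-2X-1\), one checks via symmetric functions (\(\sum_j\eta_j=-1\), \(\sum_{i<j}\eta_i\eta_j=-2\), \(\prod_j\eta_j=1\)) that \(g\) is the minimal polynomial of the Gaussian periods \(\eta_j=\zeta^j+\zeta^{-j}\), \(j=1,2,3\), for a primitive seventh root of unity \(\zeta\), and that \(X^3-X^2-2X+1=-g(-X)\) has roots \(-\eta_j\). Thus, for \(p\ne7\), the cubic splits over \(\mathbb{F}_q\) iff every \(\eta_j\) is fixed by the \(q\)-power Frobenius, i.e.\ iff \(\zeta^{jq}+\zeta^{-jq}=\zeta^j+\zeta^{-j}\), i.e.\ iff \(jq\equiv\pm j\pmod7\); as \(j\) is invertible modulo \(7\) this says \(q\equiv\pm1\pmod7\) for all three \(j\) at once. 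For \(p=7\) a one-line computation gives \(X^3-X^2-2X+1\equiv(X-5)^3\pmod7\), so the cubic splits and \(q\equiv0\pmod7\); together these give \(q\equiv0,\pm1\pmod7\).

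The genuinely delicate point is the parity bookkeeping in the square criterion: passing from \(\mathbb{F}_p\) to \(\mathbb{F}_{p^r}\) introduces the exponent \(1+p+\cdots+p^{r-1}\), whose parity is what makes the answer depend on \(q\) rather than on \(p\) alone; once this is isolated, reciprocity makes ``\(\ell\) is a square in \(\mathbb{F}_q\)'' and ``\(q\) is a residue mod \(\ell\)'' literally the same condition. The explicit residue tabulations for \(29\) and \(37\) are routine but must be carried out carefully to match the \(\pm\)-lists. The cubic could alternatively be treated by noting its discriminant is \(49\), forcing a cyclic Galois group, but the root description \(-\zeta^{j}-\zeta^{-j}\) is the most direct route to the congruence.
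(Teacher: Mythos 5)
Your proof is correct and follows essentially the same route as the paper: roots of the quadratics via the discriminant $\ell$ plus quadratic reciprocity for (i)--(iii), and identification of the roots of the cubic with $-\zeta_7^j-\zeta_7^{-j}$ together with the Frobenius action for (iv). You are in fact slightly more careful than the paper in justifying why ``$\ell$ is a square in $\mathbb{F}_{p^r}$'' reduces to ``$q$ is a quadratic residue modulo $\ell$'' via the parity of $1+p+\cdots+p^{r-1}$, a step the paper leaves implicit.
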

\begin{proof}
	The first three cases can be shown by computing the roots of the corresponding polynomial:
	\begin{enumerate}
		\item \((1\pm\sqrt{5})/2\);
		\item \((1\pm\sqrt{29})/2\);
		\item \((1\pm\sqrt{37})/2\).
	\end{enumerate}
	The splitting field must contain \(\sqrt{d}\), \(d=5,29,37\), respectively. If \(d\) is 0 in \(\mathbb{F}_q\) then the polynomial splits as \(q\) is odd. The other congruences follow by using quadratic reciprocity, as \(d\) is a square in \(\mathbb{F}_q\) if and only if \(q\) is a square in \(\mathbb{F}_d\), and the list of congruences can be computed explicitly.
	
	For (iv), if \(q\equiv0\bmod7\) then \(X^3-X^2-2X+1\) factorises as \((X+2)^3\). Otherwise, observe that \(-\zeta_7-\zeta_7^{-1}\) is a root, with \(\zeta_7\) a 7th root of unity:
	\begin{align*}
		\left.X^3-X^2-2X+1\right|_{X=-\zeta_7-\zeta_7^{-1}}&=(-\zeta_7-\zeta_7^{-1})^3-(-\zeta_7-\zeta_7^{-1})^2-2(-\zeta_7-\zeta_7^{-1})+1\\
		&=-\zeta_7^3-3\zeta_7-3\zeta_7^{-1}-\zeta_7^{-3}-\zeta_7^2-\zeta_7^{-2}-2+2\zeta_7+2\zeta_7^{-1}+1\\
		&=-\sum_{i=0}^6\zeta_7^i\\
		&=0.
	\end{align*}
	Similarly, one can verify that the other roots are \(-\zeta_7^2-\zeta_7^{-2}\) and \(-\zeta_7^3-\zeta_7^{-3}\).
	
	If \(q\equiv1\bmod7\), then \(\mathbb{F}_q\) admits 7th roots of unity, so the polynomial splits. Otherwise, consider \(\mathbb{F}_{q^a}\) with \(q^a\equiv1\bmod7\), and observe that the roots are invariant under the map \(x\mapsto x^q\), i.e. they lie in \(\mathbb{F}_q\), iff \(q\equiv\pm1\bmod7\).
\end{proof}

Another key result that allows us to count conjugacy classes in an algebraic group is the following lifting lemma, proved by Larsen:
\begin{theorem}[{\cite[Theorem A.12]{31and32}}]\label{liftingtheorem}
	If \(H\) is a finite group and \(\bb{\G}(\cdot)\) is a split group scheme, then the cardinality of the finite set \(\hom(H,\bb{\G}(k))/\bb{\G}(k)\) is independent of the algebraically closed field \(k\) of characteristic not dividing \(\size{H}\).
\end{theorem}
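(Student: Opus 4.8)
The plan is to reinterpret $\hom(H,\bb{\G}(k))$ as the set of $k$-points of a scheme defined over $\mathbb{Z}[1/\size{H}]$, to show that the simultaneous-conjugation orbits are exactly the connected components of its geometric fibres, and thereby to reduce the assertion ``the orbit count is independent of $k$'' to the statement that a smooth morphism over the connected base $\operatorname{Spec}\mathbb{Z}[1/\size{H}]$ has a constant number of geometric connected components in its fibres. The twin hypotheses ``$\bb{\G}$ split'' and ``$\ch k\nmid\size{H}$'' will be used, respectively, to descend everything to $\mathbb{Z}[1/\size{H}]$ and to make the representation scheme smooth with closed orbits.

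First I would fix a finite presentation $H=\gen{h_1,\ldots,h_n\mid r_1,\ldots,r_m}$ and identify $\hom(H,\bb{\G})$ with the closed subscheme $R\subseteq\bb{\G}^n$ cut out by the equations $r_j(g_1,\ldots,g_n)=1$. Because $\bb{\G}(\cdot)$ is a split group scheme, $R$ is an affine scheme of finite type over $S\coloneqq\operatorname{Spec}\mathbb{Z}[1/\size{H}]$, carrying the $\bb{\G}$-action by simultaneous conjugation, and $R(k)=\hom(H,\bb{\G}(k))$ with orbits the conjugacy classes. Standard deformation theory identifies the tangent space of $R$ at a homomorphism $\rho$ with the cocycles $Z^1(H,\mathfrak{g}_\rho)$, and places the obstructions in $\mathrm{H}^2(H,\mathfrak{g}_\rho)$, where $\mathfrak{g}_\rho$ is $\mathfrak{g}=\operatorname{Lie}(\bb{\G})$ regarded as an $H$-module through $\rho$ and the adjoint action.

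The decisive input is that $\size{H}$ is invertible in $k$: by Maschke's theorem the group algebra is semisimple, so $\mathrm{H}^i(H,M)=0$ for every module $M$ and every $i\ge 1$. Vanishing of $\mathrm{H}^2$ makes $R$ smooth over $S$; vanishing of $\mathrm{H}^1$ gives $\dim Z^1(H,\mathfrak{g}_\rho)=\dim\mathfrak{g}-\dim\mathfrak{g}^H=\dim\bb{\G}-\dim C_{\bb{\G}}(\rho(H))$, which is precisely the dimension of the orbit $\bb{\G}\cdot\rho$, so each orbit is open in its geometric fibre. In the other direction, a finite subgroup of order prime to $\ch k$ is $\bb{\G}$-completely reducible (Serre), and by the closed-orbit criterion of Bate--Martin--R\"ohrle the orbit of $(\rho(h_1),\ldots,\rho(h_n))$ is closed in $R_{\bar k}$. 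Being simultaneously open, closed, and connected (as the image of the connected group $\bb{\G}$), each orbit is a connected component of $R_{\bar k}$; since $R$ has finite type there are finitely many of them. This already yields finiteness of $\hom(H,\bb{\G}(k))/\bb{\G}(k)$ and identifies its cardinality with $\size{\pi_0(R_{\bar k})}$.

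It remains to see that $\size{\pi_0(R_{\bar k})}$ is the same for every admissible $k$. Within a fixed characteristic this is immediate, since geometric connected components of a finite-type scheme are insensitive to the choice of algebraically closed field. Across characteristics I would form the scheme of relative connected components $\pi_0(R/S)$: the morphism $R\to S$ is smooth, so $\pi_0(R/S)\to S$ is \'etale, while the fact that the fibrewise orbits are both open and closed prevents components from merging or escaping under specialisation, giving properness; hence $\pi_0(R/S)\to S$ is finite \'etale. As $S=\operatorname{Spec}\mathbb{Z}[1/\size{H}]$ is connected, this cover has constant degree, and that degree is exactly the geometric fibre count $\size{\pi_0(R_{\bar s})}$, so the value in characteristic $0$ coincides with the value in every characteristic not dividing $\size{H}$. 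I expect the last step to be the principal obstacle: upgrading the fibrewise rigidity of the orbits to genuine constancy of $\pi_0$ over $S$ \emph{without} properness of $R\to S$ itself. The leverage is precisely the open-and-closed character of the orbits established above, which is what forces $\pi_0(R/S)\to S$ to be finite \'etale rather than merely \'etale, and thereby encodes the promised lifting of representations between characteristic $0$ and positive characteristic.
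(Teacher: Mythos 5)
The paper does not prove this statement: it is quoted verbatim from Larsen's appendix to \cite{31and32} and used as a black box, so there is no internal proof to compare against. On its own terms, your architecture is the standard one and most of it is sound: identifying \(\hom(H,\bb{\G})\) with a closed subscheme \(R\subseteq\bb{\G}^n\) over \(\operatorname{Spec}\mathbb{Z}[1/\size{H}]\), using Maschke to kill \(\mathrm{H}^1\) and \(\mathrm{H}^2(H,\mathfrak{g}_\rho)\) so that \(R\) is smooth and each orbit is open in its geometric fibre, and invoking Serre plus Bate--Martin--R\"ohrle to get closedness of orbits, correctly identifies the conjugacy classes with the geometric connected components of the fibres.

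The genuine gap is the step you yourself flag and then wave away: the claim that the fibrewise open-and-closed character of the orbits ``forces'' \(\pi_0(R/S)\to S\) to be finite \'etale. It does not. A smooth affine morphism can have a jumping number of geometric components in its fibres --- e.g.\ \(V(xy-p)\subset\mathbb{A}^2_{\mathbb{Z}}\) with the singular point removed is smooth over \(\mathbb{Z}\), has one geometric component over every prime \(\ne p\) and two over \(p\) --- and in such examples every fibre component is trivially open and closed in its fibre, so that property alone cannot supply properness. What is actually needed is the valuative criterion in the form Larsen proves it: for a complete discrete valuation ring \(A\) with fraction field \(K\) and residue field of characteristic not dividing \(\size{H}\), (a) every homomorphism \(H\to\bb{\G}(K)\) is conjugate to one landing in \(\bb{\G}(A)\) (an integrality statement for finite subgroups of order prime to the residue characteristic, via an \(H\)-stable lattice or a Bruhat--Tits fixed-point argument --- this is where a component is prevented from ``escaping'' under specialisation), and (b) two \(A\)-points of \(R\) whose reductions are conjugate are already conjugate over \(A\) (smoothness of the transporter, again from \(\mathrm{H}^1=0\), preventing components from merging). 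Until (a) in particular is supplied, the cross-characteristic constancy of the orbit count is asserted rather than proved.
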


Here, \(\hom(A,B)\) denotes the the set of group homomorphism from \(A\) to \(B\), and \(\hom(A,B)/B\) is the set of \(B\text{-conjugacy}\) classes of such homomorphisms.

This essentially tells us that to count the different conjugacy classes of embeddings of \(H\) in \(\bb{\G}\) in characteristic \(p\notdivides\size{H}\), we can arbitrarily choose one such \(p\), and the result automatically generalises.

Since we only use schemes indirectly through this lemma, we refer to \cite[\S1-2]{schemes} for a detailed overview of the subject. In particular, we have that algebraic groups are group schemes, and we can think of \(\bb{\G}(\cdot)\) as the map associating the algebraically closed field \(k\) to the algebraic group \(\bb{\G}(k)\), where \(\G\) is a root datum. As we study embeddings in \(\bb{\F}_4\) and \(\bb{\E}_{7,\ad}\), which are split reductive groups, we are able to use this result.
\end{section}

\begin{section}{Maximal subgroups of exceptional groups of Lie type}\label{maximalsbg}

In this section we will focus only on the groups of exceptional type, and present some well-known results about the classification of their maximal subgroups. When some groups belong to a known list we will often omit it, as it may be a rather long one, and refer to the original result for those interested.

Let \(\bb{\G}\) be a simple algebraic group of exceptional adjoint type over an algebraically closed field \(k\) of characteristic \(p\), and let \(\aut\bb{\G}\) be the abstract group generated by inner, graph, diagonal, and field automorphisms of \(\bb{\G}\). The maximal closed subgroups of positive dimension in \(\bb{\G}\) are given by the following:
\begin{theorem}[{\cite[Theorem 1]{liebeckseitz1}}]\label{posdimmaxsbg}
	Let \(\bb{\G}_1\) be a group satisfying \(\bb{\G}\le\bb{\G}_1\le\aut\bb{\G}\). Let \(\bb{\X}\) be a proper closed connected subgroup of \(\bb{\G}\) which is maximal among proper connected closed \(N_{\bb{\G}_1}(\bb{X})\text{-invariant}\) subgroups of \(\bb{\G}\). Then one of the following holds:
	\begin{enumerate}
		\item \(\bb{\X}\) is either parabolic or reductive of maximal rank.
		\item \(\bb{\G}=\bb{\E}_7\), \(p\ne2\), and \(N_{\bb{\G}}(\bb{\X})=(2^2\times\bb{\D}_4).\sym_3\).
		\item \(\bb{\G}=\bb{\E}_8\), \(p\ne2,3,5\), and \(N_{\bb{\G}}(\bb{\X})=\bb{\A}_1\times\sym_5\).
		\item \(\bb{\X}\) belongs to a list of known cases.
	\end{enumerate}
	The subgroups \(\bb{\X}\) in (ii), (iii), and (iv) exist, are unique up to conjugacy in \(\aut\bb{\G}\), and are maximal among closed connected \(N_{\bb{\G}}(\bb{\X})\text{-invariant}\) subgroups of \(\bb{\G}\).
\end{theorem}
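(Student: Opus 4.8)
Since this statement is quoted from \cite{liebeckseitz1}, I only sketch the strategy of its proof rather than attempt the full argument. The plan is to split according to the structure of the maximal $N_{\bb{\G}_1}(\bb{\X})$-invariant subgroup $\bb{\X}$. First I would observe that $\bb{\X}$ is either non-reductive or reductive. If $\bb{\X}$ is non-reductive then its unipotent radical $R_u(\bb{\X})$ is non-trivial and is normalised by $N_{\bb{\G}_1}(\bb{\X})$; by the Borel--Tits theorem \cite{boreltits} the normaliser of a non-trivial closed unipotent subgroup is contained in a proper parabolic subgroup $\bb{P}$, and the maximality hypothesis forces $\bb{\X}=\bb{P}$, landing in case (i). Hence I may assume $\bb{\X}$ is reductive, and --- after absorbing the central torus of $\bb{\X}$ into a maximal torus of $\bb{\G}$ --- reduce to controlling the semisimple part $\bb{\X}'$.

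Next I would separate the reductive case by rank. If $\rk\bb{\X}=\rk\bb{\G}$ then $\bb{\X}$ is reductive of maximal rank; such subgroups are classified by the Borel--de Siebenthal procedure, that is, they are built from the closed subsystems of $\Phi$ obtained by deleting nodes from the extended Dynkin diagram of $\bb{\G}$ (the same mechanism that produces the maximal subsystems of $\F_4$ in \cref{subf4}, where $-\alpha_0$ plays the role of the added node). This yields an explicit finite list, and computing the relevant normalisers $N_{\bb{\G}}(\bb{\X})$ and $N_{\bb{\G}_1}(\bb{\X})$ pins down the maximal-rank part of case (i) together with the special configuration $(2^2\times\bb{\D}_4).\sym_3$ of case (ii) for $\bb{\G}=\bb{\E}_7$.

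The substantive case is $\bb{\X}$ reductive of \emph{non-maximal} rank. Here I would study the embedding through the action of $\bb{\X}$ on the two smallest non-trivial $\bb{\G}$-modules, namely the adjoint module $\mathcal{L}(\bb{\G})$ and, where appropriate, the minimal module. The composition factors of these restrictions, constrained by dimension and by the requirement that a maximal $\bb{\X}$ be $\bb{\G}$-irreducible (it lies in no parabolic, by the first paragraph), bound the rank and the admissible simple types of $\bb{\X}$ very severely. For the smallest cases, especially $\bb{\X}$ of type $\A_1$, one exploits the representation theory of $\SL_2$ and the labelling of its embeddings by cocharacters to enumerate candidates; this is precisely where the characteristic hypotheses ($p\ne2$ for $\bb{\E}_7$, $p\ne2,3,5$ for $\bb{\E}_8$) enter, since small primes permit extra composition factors and extra fusion. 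The surviving candidates constitute the finite list of case (iv) and the subgroups underlying (ii) and (iii).

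Finally I would settle existence and uniqueness of the listed subgroups. Existence is shown by exhibiting each $\bb{\X}$ concretely --- for instance an $\A_1$ arising from a suitable (principal-type) homomorphism, or a subgroup realised inside a maximal-rank subgroup --- and checking it is maximal among connected $N_{\bb{\G}}(\bb{\X})$-invariant subgroups by verifying that no proper intermediate connected overgroup has the prescribed module action. Uniqueness up to $\aut\bb{\G}$-conjugacy follows by showing that the $\bb{\G}$-conjugacy class of the embedding is determined by its module structure and then applying a Lang--Steinberg argument (\cref{langsteinberg}) to descend the relevant rationality data. I expect the genuine obstacle to be the non-maximal-rank analysis: bounding and then completely enumerating the small-rank reductive subgroups, and proving that each occurs in a single class, is the long technical heart of \cite{liebeckseitz1}, whereas the parabolic and maximal-rank cases are comparatively formal.
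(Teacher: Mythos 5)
The paper does not prove this statement: it is imported verbatim from \cite{liebeckseitz1} and used as a black box, so there is no internal argument to compare against. Your sketch is a fair account of the strategy of the cited proof --- Borel--Tits to dispose of the non-reductive case, Borel--de Siebenthal for maximal rank, and restriction to the adjoint and minimal modules to control the reductive subgroups of non-maximal rank, which is indeed where the characteristic hypotheses and the bulk of the technical work live. One small caution: the result you invoke in the first paragraph (the normaliser of a non-trivial closed connected unipotent subgroup lies in a proper parabolic) is the classical Borel--Tits theorem, which is \emph{not} the statement the paper records as \cref{boreltits} (that is the BN-pair construction from a Borel subgroup and maximal torus), so the internal cross-reference would not supply the fact you need, even though the fact itself is correct and standard.
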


For a given \(\bb{\G}\), let \(\sigma\) be a Steinberg endomorphism of \(\bb{\G}\), and set \(\G=\bb{\G}^{\sigma}\); we denote by \(\overline{\G}\) an almost simple group with socle \(\G/Z(\G)\).

In particular, we are interested in the cases when \(\G\) is one of \(\F_4(q)\), \(\E_6(q)\), \(\!\prescript{2}{}\E_6(q)\), \(\E_7(q)\), and \(\E_8(q)\), as the maximal subgroups of the other possible \(\G\) have been completely classified.

Let \(\mathscr{X}\) be the set of the maximal positive-dimensional subgroups of \(\bb{\G}\) given by \cref{posdimmaxsbg}; then we denote by \(\mathscr{X}^{\sigma}\) the set of maximal subgroups arising from positive-dimensional subgroups of \(\bb{\G}\), which means:
\begin{enumerate}
	\item \(\mathscr{X}^{\sigma}\) is the set of fixed points \(\bb{\X}^{\sigma}\), where \(\bb{\X}\in\mathscr{X}\) is \(\sigma\text{-stable}\).
	\item If \(Z(\G)\ne1\), \(\mathscr{X}^{\sigma}\) is the set of images of elements in (i) modulo the centre.
	\item If we are considering \(\overline{\G}\), \(\mathscr{X}^{\sigma}\) is the set of \(N_{\overline{\G}}(\X)\), where \(\X\) lies in the set in (i) associated to \(\G\).
\end{enumerate}

Observe first that the maximal subgroups of \(\overline{\G}\) split into three categories:
\begin{enumerate}
	\item \(M\cap\G/Z(\G)\) is a maximal subgroup of \(\G/Z(\G)\).
	\item \(M\cap\G/Z(\G)\) is not a maximal subgroup of \(\G/Z(\G)\). \(M\) is called a \emph{novelty maximal subgroup} in this case.
	\item \((\G/Z(\G))\le M\).
\end{enumerate}
In \cref{method1} we will deal only with maximal subgroups of \(\G/Z(\G)\), so we can compute \(M\) by taking their normaliser in \(\overline{\G}\).

The maximal subgroups of \(\overline{\G}\) are described by \cite[Theorem 2]{liebeckseitz2}:
\begin{theorem}\label{maxsbg1}
	Let \(M\) be a maximal subgroup of \(\overline{\G}\), then one of the following holds:
	\begin{enumerate}
		\item \(M\in\mathscr{X}^{\sigma}\).
		\item \(M\) has the same type as \(\bb{\G}\), possibly twisted. For example, \(\E_6(p)\) and \(\!\prescript{2}{}\E_6(p^2)\) are subgroups of \(\E_6(p^2)\) that belong to this category.
		\item \(\G=\E_8\), \(p>5\), and \(M=(\alt_5\times\alt_6).2^2\).
		\item \(M\) is an exotic \(r\text{-local}\) subgroup, i.e. the normaliser of an elementary abelian \(r\text{-group}\) for \(r\ne p\).
		\item \(H=F^*(M)\) is simple.
	\end{enumerate}
\end{theorem}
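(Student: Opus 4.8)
The plan is to reduce the classification to the positive-dimensional case already recorded in \cref{posdimmaxsbg}, isolating the genuinely finite (Lie primitive) subgroups as the residual case. Throughout I would write $G_0=\G/Z(\G)$ for the socle of $\overline{\G}$ and organise the analysis of a maximal subgroup $M$ of $\overline{\G}$ according to the structure of its generalised Fitting subgroup $F^*(M)$, since $\overline{\G}$ is almost simple and so $M$ is controlled by $F^*(M)$.

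First I would dispose of the local cases. If $O_p(M)\ne1$, then by the Borel--Tits theorem $M$ is contained in a proper parabolic subgroup of $\G$; parabolics are positive-dimensional and appear in \cref{posdimmaxsbg}(i), so maximality forces $M\in\mathscr{X}^{\sigma}$, which is conclusion (i). If instead $F^*(M)=O_r(M)$ for a prime $r\ne p$, then $M$ normalises a non-trivial elementary abelian $r$-subgroup $E$, and one splits according to $N_{\bb{\G}}(E)$: either this normaliser is positive-dimensional, typically a subgroup of maximal rank covered by \cref{posdimmaxsbg}(i) and giving (i), or $E$ is one of the finitely many configurations whose normaliser lies in no proper positive-dimensional subgroup, giving the exotic $r$-local subgroups of conclusion (iv).

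There remain the cases in which $F^*(M)$ is a product of non-abelian simple groups. When $F^*(M)$ has at least two isomorphic simple factors, $M$ is of product type and stabilises a corresponding decomposition; a direct case analysis, essentially that of \cite{liebeckseitz2}, shows that such $M$ either lie in a positive-dimensional subgroup (conclusion (i)) or occur in the single sporadic exception $\bb{\G}=\bb{\E}_8$, $p>5$, $M=(\alt_5\times\alt_6).2^2$ of conclusion (iii). This leaves $H=F^*(M)$ simple, which is conclusion (v); the supplementary task is then to decide when such $M$ collapses back into an earlier case. If $H$ is of Lie type in characteristic $p$ (a generic subgroup), then by the generic-subgroup theory of Liebeck--Seitz \cite{liebeckseitz3} $H$ lies in a proper positive-dimensional subgroup of $\bb{\G}$ --- returning us to conclusion (i) --- unless $H$ is of the same (possibly twisted) type as $\bb{\G}$, which is exactly conclusion (ii).

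The hard part is precisely this final step. Proving that every generic simple $H$ embeds in a proper positive-dimensional subgroup is the bulk of the Liebeck--Seitz program and is far from routine, while the non-generic simple $H$ form a finite but recalcitrant family for which none of the preceding reductions apply and $M$ may be genuinely Lie primitive. It is this residual family of non-generic simple subgroups $H=F^*(M)$ that the remainder of the thesis is devoted to, and the complete statement itself is established in \cite[Theorem 2]{liebeckseitz2}.
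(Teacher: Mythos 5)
The paper does not prove this statement at all: it is quoted verbatim as \cite[Theorem 2]{liebeckseitz2}, so there is no internal argument to compare against. Your sketch is a broadly accurate outline of how Liebeck and Seitz actually establish it --- the reduction via $F^*(M)$, Borel--Tits when $O_p(M)\ne1$, the local analysis yielding the exotic $r$-locals, and the product-type analysis isolating the Borovik subgroup --- and you correctly defer the real work to the cited source. One small structural caveat: in the theorem as stated, conclusion (ii) (subfield and twisted subgroups of the same type) arises as its own case in the Liebeck--Seitz analysis, not as a consequence of the later generic-subgroup theory; it is the paper's subsequent \cref{genericsbg} that pushes generic simple $H$ from case (v) into cases (i) or (ii), and your final paragraph conflates these two layers slightly. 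This does not affect the correctness of your account of where the theorem comes from.
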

The subgroup in (iii) was discovered by Borovik in \cite{borovik}, and is unique up to conjugacy; the subgroups in (iv) are known, and a list is given in \cite[Theorem 1]{localmaximal}.

The groups in (v) divide naturally in two classes, depending on whether the simple group \(H\) is a group of Lie type in characteristic \(p\), or any other type of simple group; the two classes are called \emph{generic} and \emph{non-generic} subgroups, respectively.

For the generic subgroups, we have the following result:
\begin{theorem}\label{genericsbg}
	Let \(H=H(q_0)\) be a simple group of Lie type in characteristic \(p\) as defined in \cref{maxsbg1}(v). Assume that either of the following holds:
	\begin{enumerate}
		\item \(q>u(G)\cdot(2,p-1)\) and \(H(q_0)\) is one of \(\A_1(q_0)\), \(\!\prescript{2}{}B_2(q_0)\), \(\!\prescript{2}{}G_2(q_0)\).
		\item \(q>9\), \(H(q_0)\) is not as in (i), and \(H(q_0)\ne\A_2^{\epsilon}(16)\).
	\end{enumerate}
	Then \(H\) satisfies \cref{maxsbg1}(i) or (ii).
	
	Here, \(u(G)\) is as in the following table.
	\begin{center}
		\begin{tabular}{c|ccccc}
			&\(\G_2\)&\(\F_4\)&\(\E_6\)&\(\E_7\)&\(\E_8\)\\\hline
			\(u(G)\)&\emph{12}&\emph{68}&\emph{124}&\emph{388}&\emph{1312}
		\end{tabular}
	\end{center}
\end{theorem}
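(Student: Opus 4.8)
The plan is to attach to each embedding $H=H(q_0)\hookrightarrow\G\le\bb{\G}$ a positive-dimensional closed subgroup $\bb{\X}\le\bb{\G}$ with $H\le\bb{\X}$, and then to conclude from the dichotomy $\bb{\X}=\bb{\G}$ versus $\bb{\X}\subsetneq\bb{\G}$. If $\bb{\X}=\bb{\G}$ then $H$ generates $\bb{\G}$ and must share its type, landing us in \cref{maxsbg1}(ii); if $\bb{\X}$ is proper, then a maximal $N_{\bb{\G}}(\bb{\X})$-invariant connected overgroup of $\bb{\X}$ is one of the subgroups listed in \cref{posdimmaxsbg}, and $N_{\overline{\G}}(H)$ lies in $\mathscr{X}^{\sigma}$, giving \cref{maxsbg1}(i). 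The natural candidate for $\bb{\X}$ is the image of an \emph{algebraic envelope}: writing $\bb{H}$ for the simple algebraic group over $k=\overline{\mathbb{F}}_p$ of the same type as $H$, carrying a Steinberg endomorphism $F$ with $\bb{H}^{F}=H$ modulo centre, I would try to extend the inclusion $\phi\colon H\hookrightarrow\bb{\G}$ to a morphism $\overline{\phi}\colon\bb{H}\to\bb{\G}$ of algebraic groups and set $\bb{\X}=\overline{\phi}(\bb{H})$.

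First I would convert the extension problem into representation theory. Since $\bb{\G}$ is adjoint, its action on the Lie algebra $\m{L}$, together with its action on the minimal module, is faithful and of dimension bounded in terms of the tabulated constant $u(G)$; so it suffices to show that the induced $H$-representations are restrictions of rational $\bb{H}$-representations. The engine here is Steinberg's tensor product theorem, which writes every irreducible $kH$-module as a tensor product of Frobenius twists of restricted irreducibles of $\bb{H}$. The field-size hypotheses are exactly what is needed to control, for modules of dimension at most $u(G)$, both the admissible twisting patterns and the orders of the semisimple elements of $H$ acting on $\m{L}$, and thereby to force each such representation to be a uniform twist of a rational $\bb{H}$-module; absorbing that twist into the choice of $F$ and applying standard complete-reducibility input then assembles the pieces into a genuine $\overline{\phi}$.

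The split into cases (i) and (ii) of the hypothesis reflects how much internal rigidity $H$ supplies. When $H$ has twisted rank at least $2$ (that is, $H$ is none of $\A_1(q_0)$, $\!\prescript{2}{}\B_2(q_0)$, $\!\prescript{2}{}\G_2(q_0)$), its abundance of root $\A_1$-subgroups and its larger Weyl group constrain the feasible module structure so tightly that the weak bound $q>9$ suffices, with $\A_2^{\epsilon}(16)$ the single small coincidence that must be excluded separately. For the rank-$1$ families far fewer constraints are available, so one needs $q>u(G)\cdot(2,p-1)$ to force the envelope to exist; the factor $(2,p-1)$ accounts for the centre of the simply connected cover in odd characteristic. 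In either case, once $\bb{\X}=\overline{\phi}(\bb{H})$ is in hand I would use \cref{langsteinberg} and \cref{steinberg} to arrange that $\bb{\X}$ is $\sigma$-stable and $N_{\bb{\G}}(H)$-invariant, securing membership in $\mathscr{X}^{\sigma}$.

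The main obstacle is the construction of $\overline{\phi}$ itself, i.e.\ upgrading a finite embedding to a morphism of algebraic groups. The tensor-product count controls the composition factors of each $H$-module but not the extensions between them, so the delicate point is to prove that the \emph{whole} module, not merely its semisimplification, is the restriction of a rational $\bb{H}$-module, and that the rational representations obtained on the several $\bb{\G}$-modules are mutually compatible, hence arise from one $\overline{\phi}$. Disposing of the finitely many borderline pairs $(q_0,\text{type})$ not covered by the clean dimension count, together with the exceptional isogenies in characteristics $2$ and $3$ underlying the Suzuki and Ree families, is where the bulk of the casework lies; this is precisely the generic-subgroup analysis carried out by Liebeck and Seitz in \cite{liebeckseitz3}.
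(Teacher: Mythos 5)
The paper offers no proof of this statement: it is quoted as background, being the Liebeck--Seitz theorem on generic subgroups (cf.\ \cite{liebeckseitz3}), with the constants \(u(G)\) taken from that work. So there is no internal argument to compare yours against; the relevant comparison is with the literature proof, and your sketch does correctly identify its architecture --- build an algebraic envelope \(\overline{\phi}\colon\bb{H}\to\bb{\G}\) extending the finite embedding, then split on whether \(\overline{\phi}(\bb{H})\) is proper (case (i), after arranging \(\sigma\)-stability via \cref{langsteinberg}) or all of \(\bb{\G}\) (case (ii), since a surjection from the simple group \(\bb{H}\) is an isogeny).

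The genuine gap is that the decisive step is asserted rather than carried out. The entire content of the theorem is the existence of \(\overline{\phi}\) under the stated field-size hypotheses: Steinberg's tensor product theorem only controls composition factors of the relevant \(kH\)-modules, and, as you yourself note, one must still (a) show the full modules and not merely their semisimplifications are restrictions of rational \(\bb{H}\)-modules, (b) show the rational structures on the several modules are induced by a single morphism of algebraic groups, and (c) dispose of the borderline types and the exceptional isogenies in characteristics \(2\) and \(3\). You defer all of this back to \cite{liebeckseitz3}, which makes the proposal a roadmap of the known proof rather than a proof. Two smaller points: the table constant \(u(G)\) is not a bound on \(\dim L(\bb{\G})\) or \(\dim M(\bb{\G})\) (for \(\bb{\E}_8\) one has \(u(G)=1312\) versus \(\dim L(\bb{\E}_8)=248\)), so the sentence tying faithfulness to ``dimension bounded by \(u(G)\)'' is off; and the bound in the hypothesis should be read as a bound on \(q_0\), the field of definition of \(H\), not on the \(q\) of \(\G(q)\) --- your representation-theoretic reading is the correct one, but it is worth flagging that the paper's statement is ambiguous on this.
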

It may be possible that there are no generic subgroups that do not satisfy \cref{maxsbg1}(i) or (ii).

For the non-generic subgroups, the list of potential candidates is given in \cite[Theorem 1]{table}, where the problem of finding an explicit embedding of \(H\) in \(\G\) (and \(\bb{\G}\)) up to conjugacy is still open for some cases.

The subgroups \(H\) are also categorised depending on whether they are primitive subgroups or not, see \cref{primitivedef}. The list of candidate non-generic \(H\) is studied in \cite{litterick}, where they are classified depending on whether they embed in \(\bb{\G}\) as a strongly imprimitive subgroup, as a primitive subgroup, or they may admit embeddings of either type. The complete list is given in \cite[Tables 1.1-1.3]{litterick}.

In particular, \cite[Theorem 1]{litterick} gives a list of triples \((\bb{\G},H,p)\) such that \(H\) embeds in \(\bb{\G}\) in characteristic \(p\) only as a strongly imprimitive subgroup. This has a strong implication for maximality in finite groups:
\begin{theorem}[{\cite[Theorem 8]{litterick}}]
	If \(H\) embeds in \(\bb{\G}\) in characteristic \(p\) only as a strongly imprimitive subgroup, then \(\overline{\G}\) has no maximal subgroup with socle \(H\).
\end{theorem}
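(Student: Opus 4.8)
The plan is to argue by contradiction: assume \(M\) is a maximal subgroup of \(\overline{\G}\) with socle \(H\), and produce from the strong imprimitivity of \(H\) a proper overgroup of \(M\) that arises from a positive-dimensional subgroup of \(\bb{\G}\). This will force \(M\) itself to be of positive-dimensional type, and hence to have a generic socle, contradicting that \(H\) is non-generic.

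First I would reduce to the case \(M=N_{\overline{\G}}(H)\). Since \(H=\mathrm{soc}(M)\) is characteristic in \(M\), we have \(M\le N_{\overline{\G}}(H)\); as \(H\) is a proper non-generic subgroup it is not normal in \(\overline{\G}\) (whose socle is \(\G\)), so \(N_{\overline{\G}}(H)\subsetneq\overline{\G}\), and maximality of \(M\) gives \(M=N_{\overline{\G}}(H)\). The key observation is then that \(\sigma\in N_{\aut\bb{\G}}(H)\): indeed \(H\le\G=\bb{\G}^{\sigma}\), so \(\sigma\) fixes \(H\) pointwise and in particular normalises it, and being a composite of field and graph automorphisms it lies in the group \(\aut\bb{\G}\) generated by inner, diagonal, graph and field automorphisms. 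By the hypothesis that \(H\) is strongly imprimitive (\cref{primitivedef}) there is a positive-dimensional \(\bb{\X}\le\bb{\G}\) with \(H\le\bb{\X}\) that is stable under \(N_{\aut\bb{\G}}(H)\); enlarging \(\bb{\X}^{\circ}\) to a subgroup maximal among proper connected closed \(N_{\aut\bb{\G}}(H)\)-invariant subgroups places it (via \cref{posdimmaxsbg}) into the list \(\mathscr{X}\). Because \(\sigma\in N_{\aut\bb{\G}}(H)\) the subgroup \(\bb{\X}\) is \(\sigma\)-stable, and because \(M=N_{\overline{\G}}(H)\le N_{\aut\bb{\G}}(H)\) the subgroup \(M\) normalises \(\bb{\X}\); hence \(M\le N_{\overline{\G}}(\bb{\X})\), a member of \(\mathscr{X}^{\sigma}\).

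It then remains to check that \(N_{\overline{\G}}(\bb{\X})\) is proper and to extract the contradiction. Since \(\bb{\G}\) is simple as an algebraic group, the proper positive-dimensional \(\bb{\X}\) is not normal, so \(\dim N_{\bb{\G}}(\bb{\X})<\dim\bb{\G}\); the order estimate \(\size{N_{\bb{\G}}(\bb{\X})^{\sigma}}\approx q^{\dim N_{\bb{\G}}(\bb{\X})}<q^{\dim\bb{\G}}\approx\size{\G}\) then shows \(N_{\G}(\bb{\X})\subsetneq\G\), whence \(N_{\overline{\G}}(\bb{\X})\ne\overline{\G}\) (it meets the socle \(\G\) in the proper subgroup \(N_{\G}(\bb{\X})\)). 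Thus \(M\le N_{\overline{\G}}(\bb{\X})\subsetneq\overline{\G}\), and maximality of \(M\) forces \(M=N_{\overline{\G}}(\bb{\X})\). But \(N_{\overline{\G}}(\bb{\X})\in\mathscr{X}^{\sigma}\) falls under \cref{maxsbg1}(i), and such subgroups, arising from a positive-dimensional \(\bb{\X}\), are either not almost simple or have socle a group of Lie type in characteristic \(p\); in neither case can the socle be the non-generic group \(H\). This contradicts \(\mathrm{soc}(M)=H\), so no maximal subgroup of \(\overline{\G}\) has socle \(H\).

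The main obstacle is the interface between the algebraic and the finite settings. One must guarantee that the \(N_{\aut\bb{\G}}(H)\)-invariant subgroup \(\bb{\X}\) supplied by strong imprimitivity can be taken to be simultaneously a genuine member of \(\mathscr{X}\) (so that the structural description of \cref{maxsbg1}(i) applies to \(N_{\overline{\G}}(\bb{\X})\)) and \(\sigma\)-stable. The \(\sigma\)-stability is precisely what the inclusion \(\sigma\in N_{\aut\bb{\G}}(H)\) provides, and it is the crucial step that converts the purely algebraic condition of strong imprimitivity into a statement about the finite almost simple group \(\overline{\G}\); verifying that this invariant \(\bb{\X}\) is correctly matched to \cref{posdimmaxsbg} (whose hypothesis is phrased via invariance under \(N_{\bb{\G}_1}(\bb{\X})\) rather than \(N_{\aut\bb{\G}}(H)\)) is where the argument requires the most care.
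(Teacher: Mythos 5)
The paper does not prove this statement --- it is quoted directly from Litterick --- so there is no internal proof to compare against; I am assessing your argument on its own merits. Your overall strategy is the standard one and most of it is sound: reducing to \(M=N_{\overline{\G}}(H)\), observing that \(\sigma\) fixes \(H\) pointwise and hence lies in \(N_{\aut\bb{\G}}(H)\), and concluding that the witness \(\bb{\X}\) to strong imprimitivity is simultaneously \(\sigma\)-stable and normalised by \(M\), so that \(M\le N_{\overline{\G}}(\bb{\X})\subsetneq\overline{\G}\). (The properness of \(N_{\overline{\G}}(\bb{\X})\) is easier than your order estimate, which is delicate for small \(q\): \(\G\) is Zariski-dense in the simple group \(\bb{\G}\), so \(N_{\G}(\bb{\X})=\G\) would force \(\bb{\X}\unlhd\bb{\G}\).) The matching issue you flag with \cref{posdimmaxsbg} is also resolvable: once \(\bb{\X}\) is maximal among proper connected closed \(N_{\aut\bb{\G}}(H)\)-invariant subgroups, one has \(N_{\aut\bb{\G}}(H)\le N_{\aut\bb{\G}}(\bb{\X})\), and taking \(\bb{\G}_1=\gen{\bb{\G},N_{\aut\bb{\G}}(H)}\) makes \(\bb{\X}\) maximal among proper connected closed \(N_{\bb{\G}_1}(\bb{\X})\)-invariant subgroups, so \cref{posdimmaxsbg} applies.

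The genuine gap is the endgame. Maximality of \(M\) only yields \(M=N_{\overline{\G}}(\bb{\X})\), and you dispose of this case by asserting that members of \(\mathscr{X}^{\sigma}\) ``are either not almost simple or have socle a group of Lie type in characteristic \(p\)''. That is precisely the content that needs proof, and it does not follow formally from anything quoted in this paper: \cref{maxsbg1} is not stated with mutually exclusive cases, and \cref{posdimmaxsbg} describes \(\bb{\X}\) itself, not the group-theoretic structure of \(N_{\overline{\G}}(\bb{\X})\). To close the argument one must show directly that \(N_{\overline{\G}}(\bb{\X})\) cannot be almost simple with socle \(H\); for instance, \(\bb{\X}^{\circ}\cap\G\) is a nontrivial normal subgroup of \(N_{\overline{\G}}(\bb{\X})\), so if the latter equalled \(M\) it would satisfy \(H\le\bb{\X}^{\circ}\cap\G\le\aut(H)\), and one then rules this out case by case using the explicit structure of the members of \(\mathscr{X}\) (unipotent radicals of parabolics are normal \(p\)-subgroups, maximal-rank and reductive members have normal subgroups that are tori or groups of Lie type in characteristic \(p\), and so on). That case analysis is where the real work of Litterick's Theorem 8 lies, and your proposal skips it.
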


Therefore, primitive subgroups are the only ones with a chance of being the socle of a maximal subgroups of \(\overline{\G}\):
\begin{lemma}\label{primitivemax}
	If \(H\) embeds primitively in \(\bb{\G}\), then either \(N_{\overline{\G}}(H)\) is a maximal subgroup of \(\G\) or it is contained in a subgroup \(M\) such that one of \cref{maxsbg1}(ii), (iii), (v), holds. If (v) holds, then \(F^*(M)\) admits a primitive embedding in \(\bb{\G}\).
\end{lemma}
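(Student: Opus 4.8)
The plan is to argue by considering a maximal overgroup of $N_{\overline{\G}}(H)$ and ruling out all but the desired types. Since $H$ is a proper simple subgroup of the almost simple group $\overline{\G}$, it is not normal, so $N_{\overline{\G}}(H)$ is a proper subgroup. If $N_{\overline{\G}}(H)$ is maximal we are in the first alternative. Otherwise it is contained in some maximal subgroup $M$ of $\overline{\G}$, and I would invoke \cref{maxsbg1}, which asserts that $M$ is of one of five types. The whole content is then to exclude types (i) and (iv), after which one of (ii), (iii), (v) must hold.

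To exclude (i), suppose $M\in\mathscr{X}^{\sigma}$, so that $M=N_{\overline{\G}}(\bb{\X})$ for some $\sigma$-stable positive-dimensional subgroup $\bb{\X}\le\bb{\G}$. Because $H\le\G\le\bb{\G}$ acts by inner automorphisms, the inclusion $H\le M$ forces every element of $H$ to normalise $\bb{\X}$, whence $H\le N_{\bb{\G}}(\bb{\X})$. As $\bb{\X}\le N_{\bb{\G}}(\bb{\X})$ is positive-dimensional, this places $H$ inside a positive-dimensional subgroup of $\bb{\G}$, contradicting the primitivity of $H$. Hence $M$ is not of type (i).

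To exclude (iv), suppose $M=N_{\overline{\G}}(E)$ is an exotic $r$-local subgroup, with $r\ne p$ and generalised Fitting subgroup $F^*(M)=E$ the normal elementary abelian $r$-group. Since $H$ normalises $E$, it acts on $E$; if $C_E(H)\ne1$, pick $1\ne e\in C_E(H)$, so that $H\le C_{\bb{\G}}(e)$. As $e$ is a semisimple element (its order is coprime to $p$), \cref{centralisertheorem} shows $C_{\bb{\G}}(e)$ is a positive-dimensional reductive subgroup, again contradicting primitivity. The remaining possibility is that $H$ acts on $E$ with $C_E(H)=1$, i.e.\ fixed-point-freely; here I would appeal to the explicit finite list of exotic local subgroups in \cite{localmaximal}, checking directly that no Lie primitive candidate $H$ admits such an action (in the concrete cases this also follows on order grounds, since $\size{H}$ does not divide the relevant $\aut(E)$). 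This fixed-point-free configuration is the main obstacle, being the one case not dispatched by the uniform centraliser argument and requiring the explicit classification.

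Finally, suppose $M$ is of type (v), so $K\coloneqq F^*(M)$ is simple; by maximality $M=N_{\overline{\G}}(K)$. I claim the embedding $K\le\bb{\G}$ is Lie primitive. If not, $K$ lies in a proper positive-dimensional subgroup, and by the Liebeck--Seitz reduction underlying \cref{posdimmaxsbg} one may take a positive-dimensional $\bb{\X}$ containing $K$ that is stable under $N_{\overline{\G}}(K)=M$; then $M\le N_{\overline{\G}}(\bb{\X})$, and maximality of $M$ would force $M=N_{\overline{\G}}(\bb{\X})$ to be of type (i), contradicting that it is of type (v). Therefore $F^*(M)$ is Lie primitive, and in particular admits a primitive embedding in $\bb{\G}$, completing the argument.
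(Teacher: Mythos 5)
Your strategy---pass to a maximal overgroup $M$ of $N_{\overline{\G}}(H)$ and eliminate types (i) and (iv) of \cref{maxsbg1}---is the right one, and your exclusion of type (i) via Lie primitivity is sound; the paper itself offers only a one-sentence justification, so you are supplying detail it omits. There are, however, two genuine gaps.

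In case (v), your proof that $K=F^*(M)$ is Lie primitive hinges on the step ``one may take a positive-dimensional $\bb{\X}$ containing $K$ that is stable under $N_{\overline{\G}}(K)=M$''. That is precisely the assertion that $K$ is \emph{strongly} imprimitive, which by \cref{primitivedef} is strictly stronger than Lie imprimitivity, and \cref{posdimmaxsbg} does not manufacture an $M$-stable overgroup out of an arbitrary positive-dimensional overgroup of $K$; the gap between the two notions is the organising theme of the whole paper and cannot be elided here. What the lemma actually asserts is weaker, namely that $K$ \emph{admits} a primitive embedding (i.e.\ lies in Litterick's Tables 1.2--1.3 rather than 1.1), and this follows directly from \cite[Theorem 8]{litterick}, quoted immediately before the lemma: if every copy of $K$ in $\bb{\G}$ were strongly imprimitive, then $\overline{\G}$ would have no maximal subgroup with socle $K$, contradicting the existence of $M$. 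Your stronger conclusion, that the particular embedding $K\le\bb{\G}$ is Lie primitive, is not needed, and your argument for it does not go through.

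For type (iv), the centraliser argument via \cref{centralisertheorem} correctly disposes of the case $C_E(H)\ne1$, but the fixed-point-free case is where the content lies, and ``check the explicit list'' is not yet a proof. For the groups actually treated in this paper the order computation you allude to does close it (for instance $5^2$ divides $\size{\PSL_2(25)}$ and $7$ divides $\size{\PSL_2(27)}$, but neither divides $\size{\GL_3(3)}=11232$, so neither group embeds in $\aut(3^3)$); but for the lemma as stated, about an arbitrary Lie primitive simple $H$, this configuration cannot be dismissed on general grounds: the extension $3^3.\SL_3(3)<\F_4(p)$ splits (the paper verifies this in \cref{27Bconstruction}), so $\PSL_3(3)$---itself a candidate on Litterick's lists for $\F_4$---sits inside an exotic local subgroup and acts on $E$ without fixed points. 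You should therefore either restrict the statement to the specific $H$ under consideration and carry out the finite check explicitly, or accept that some classification input beyond primitivity is needed to exclude type (iv).
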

This follows directly from the fact that \(H\) is a simple primitive group.

The complete list of cases studied in \cite{litterick} is rather long, so we will list only those we can attempt with the technique hereby explained:

\begin{center}
	\begin{tabular}{c|c}\label{attack}
		\(G\)&\(H\)\\\hline
		\(\F_4\)&\makecell[c]{\(\alt(5),\alt(6),\PSL_2(q)\)\\\(q=7,8,13,17,25,27\)}\\\hline
		\(\E_6\)&\makecell[c]{\(\alt(5),\alt(6),\PSL_2(q)\)\\\(q=7,8,11,13,17,25,27\)}\\\hline
		\(\E_7\)&\makecell[c]{\(\alt(5),\alt(6),\PSL_2(q)\)\\\(q=7,8,11,13,17,19,25,27,29,37\)}\\\hline
		\(\E_8\)&\makecell[c]{\(\alt(5),\alt(6),\PSL_2(q)\)\\\(q=7,8,11,13,16,17,19,25,27,29,31,32,41,49,61\)}\\
	\end{tabular}
\end{center}

Observe that we have \(\alt(5)\simeq\PSL_2(4)\simeq\PSL_2(5)\) and \(\alt(6)\simeq\PSL_2(9)\), so we can consider them of Lie type.

As we will see later on, unfortunately the technique we use fails on most of these, generally when \(q\) is ``too small'', when doing the steps described in \cref{CGs} and \cref{t_construction}.

We now list some results about these cases that have already been established. In particular, \cref{alt5} allows us to ignore the case \(\alt(5)\) altogether, as the group is always strongly imprimitive. We will also specify whether the number of conjugacy classes of embeddings is known.

\begin{subsection}{Embedding of alternating groups}
	Using the notation that was previously defined in \cref{maximalsbg}, we have the following results.
	
	\begin{theorem}[{\cite[Theorem 2]{craven}}]\label{alt5}
		Let \(\G=\bb{\G}^{\sigma}\) be a simple group of type \(\F_4,\E_6,\prescript{2}{}{\E_6},\E_7\), or \(\E_8\). If \(H\le\G\) with \(F^*(H)\simeq\alt(n)\), for \(n=5\) or \(n\ge8\), then \(H\) lies inside a member of \(\mathscr{X}^{\sigma}\).
	\end{theorem}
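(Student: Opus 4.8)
The plan is to prove the statement directly through the standard dichotomy: every such \(H\) is either Lie primitive or lies in a proper positive-dimensional subgroup of \(\bb{\G}\), and I would show the latter always occurs. Once \(H\) is placed in a proper positive-dimensional subgroup \(\bb{\Y}\), one passes to a connected subgroup that is maximal among proper \(N_{\bb{\G}_1}(\bb{\X})\)-invariant subgroups containing it, and invokes the uniqueness up to conjugacy in \cref{posdimmaxsbg}; choosing \(\bb{\Y}\) defined over \(\mathbb{F}_q\) (possible since \(H\le\G=\bb{\G}^{\sigma}\)) lets the overgroup be taken \(\sigma\)-stable, so that it yields a member of \(\mathscr{X}^{\sigma}\). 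Since \(F^*(H)\simeq\alt(n)\) is not of Lie type in the defining characteristic for the relevant \(n\)—apart from the coincidences \(\alt(5)\simeq\PSL_2(4)\simeq\PSL_2(5)\), absorbed by \cref{genericsbg} in characteristics \(2\) and \(5\)—the substance is to rule out Lie primitivity.

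First I would reduce this to a statement about the action of \(H\) on the minimal module \(V\) and the adjoint module \(\mathcal{L}(\bb{\G})\). The key criterion is a fixed-vector argument: if \(H\) fixes a nonzero vector \(e\in\mathcal{L}(\bb{\G})\), then \(H\le C_{\bb{\G}}(e)\), which is a proper closed subgroup of dimension at least \(\rk\bb{\G}\ge1\), hence positive-dimensional; it is proper because for \(\bb{\G}\) simple of exceptional type in good characteristic \(\mathcal{L}(\bb{\G})^{\bb{\G}}=0\), and likewise a nonzero \(H\)-fixed vector in \(V\) has a proper positive-dimensional stabiliser (the Lie-algebra analogue of \cref{centralisertheorem} identifies these centralisers as reductive of maximal rank or as containing a nontrivial unipotent part). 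Because \(\mathcal{L}(\bb{\G})\) is self-dual under the Killing form, a trivial \emph{composition factor} of \(\mathcal{L}(\bb{\G})\restr{H}\) already forces a genuine fixed vector, so it suffices to detect a trivial constituent.

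The core is then a finiteness-plus-enumeration step. For \(n\ge8\) the minimal dimension of a nontrivial \(\mathbb{F}_p\alt(n)\)-module grows with \(n\), and since it cannot exceed \(\dim\mathcal{L}(\bb{\G})\le248\) (with the minimal module of dimension \(\le248\) as well), only finitely many pairs \((n,p)\) admit an embedding at all. For each surviving pair I would determine the feasible decompositions of \(V\restr{H}\) and \(\mathcal{L}(\bb{\G})\restr{H}\) from Brauer-character data: the traces of elements of \(H\) must match the eigenvalue information carried by the minimal and adjoint modules, must lie in the correct fields, and the constituent dimensions must sum correctly. In each case one argues that either no consistent decomposition exists, or a trivial constituent is forced (giving a fixed vector as above), or \(H\) preserves extra structure—an invariant nondegenerate form, a tensor decomposition, or a proper nonzero subspace—placing \(H\) inside a classical or subsystem subgroup listed in \cref{posdimmaxsbg}. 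The residual case \(n=5\) is small enough to treat by enumerating the conjugacy classes of \(\alt(5)\) in each \(\bb{\G}\) and verifying the fixed-vector criterion on \(\mathcal{L}(\bb{\G})\) class by class.

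The hard part will be the \emph{feasible-character bookkeeping} for the intermediate values of \(n\), and especially for the small characteristics \(p=2,3,5\), where \(\alt(n)\) has genuinely modular representations, \(\mathcal{L}(\bb{\G})^{\bb{\G}}\) may itself be nonzero, and more decompositions survive the numerical constraints. In precisely those cases the trivial-constituent criterion can fail and the positive-dimensional overgroup must be exhibited structurally rather than through a fixed vector; carrying out the invariant-form and tensor-decomposition arguments uniformly across all surviving \((n,\bb{\G},p)\), and then confirming that the resulting \(\bb{\X}\) can be chosen \(\sigma\)-stable so as to lie in \(\mathscr{X}^{\sigma}\), is where the genuine effort is concentrated.
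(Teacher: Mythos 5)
The paper does not prove this statement: it is imported verbatim as \cite[Theorem 2]{craven}, so there is no internal argument to compare yours against. Your sketch does follow the broad shape of the actual literature (feasible Brauer characters on \(M(\bb{\G})\) and \(L(\bb{\G})\), then a fixed-vector criterion producing a positive-dimensional overgroup), but two steps you treat as routine are false as stated, and they are precisely where the real work in \cite{craven} and \cite{litterick} is concentrated. The first is your claim that self-duality of \(L(\bb{\G})\) upgrades a trivial \emph{composition factor} of \(L(\bb{\G})\restr{H}\) to a fixed vector. It does not: a self-dual indecomposable with socle and head a nontrivial self-dual irreducible \(A\) and a trivial factor in the heart (shape \(A\,|\,k\,|\,A\)) has a trivial composition factor and no fixed points, and such modules do occur for alternating groups in the small characteristics you flag. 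Converting ``trivial composition factor in every feasible character'' into ``fixed vector'' requires cohomological input --- vanishing of \(H^1(H,M)\) for the nontrivial composition factors \(M\), or ad hoc analysis of the possible socle structures --- and this is the central technical content of the cited proofs, not a corollary of the Killing form.

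Second, \(\alt(5)\simeq\PSL_2(4)\simeq\PSL_2(5)\) in characteristics \(2\) and \(5\) is \emph{not} absorbed by \cref{genericsbg}: that theorem requires the defining field of the generic subgroup to exceed \(u(\G)\cdot(2,p-1)\), which is at least \(68\) for the groups at hand, while here \(q_0=4\) or \(5\). This failure is exactly why \(\alt(5)\) appears on the candidate lists at all and why it receives a separate, case-by-case treatment in \cite{craven}. With these two reductions repaired --- which is to say, with most of the genuine difficulty reinstated --- your outline matches the published strategy, but as written both load-bearing steps fail.
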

	
	\begin{theorem}[{\cite[Theorem 1]{craven}}]
		Let \(\G=\bb{\G}^{\sigma}\) be a simple group of type \(\F_4,\E_6,\prescript{2}{}{\E_6},\E_7\), or \(\E_8\) over a field of characteristic \(p\), and let \(H\le\G\) not lie inside a member of \(\mathscr{X}^{\sigma}\).
		\begin{enumerate}
			\item if \(F^*(H)\simeq\alt(6)\), then \((\G,p)\) is one of \((\F_4,3)\), \((\F_4,\ge7)\), \((\E_6,\ge7)\), \((\E_7,\ge5)\), \((\E_8,2)\), \((\E_8,3)\), \((\E_8,\ge7)\);
			\item if \(F^*(H)\simeq\alt(7)\), then \((\G,p)\) is one of \((\E_7,5)\), \((\E_8,3)\), \((\E_8,5)\), \((\E_8,7)\), \((\E_8,\ge11)\).
		\end{enumerate}
	\end{theorem}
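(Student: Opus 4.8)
The plan is to fix the socle \(H_0 \coloneqq F^*(H)\), one of \(\alt(6)\) and \(\alt(7)\), and to decide for each exceptional type \(\bb{\G}\) and each characteristic \(p\) whether there is an embedding of \(H\) into \(\G=\bb{\G}^{\sigma}\) that avoids every member of \(\mathscr{X}^{\sigma}\). Since \(\mathscr{X}^{\sigma}\) consists precisely of the fixed-point groups of the positive-dimensional subgroups classified in \cref{posdimmaxsbg}, and since a strongly imprimitive embedding of \(H_0\) forces \(H\) into such a member, the task reduces to locating the \emph{Lie primitive} copies of \(H_0\) in \(\bb{\G}\). I would therefore work inside the algebraic group, transferring conclusions back to \(\G\) by \cref{finiteconjugates}.

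The first dichotomy is governed by whether \(p\) divides \(\size{H_0}\). In the coprime range --- \(p\ge7\) for \(\alt(6)\) and \(p\ge11\) for \(\alt(7)\), which is exactly where the theorem's unbounded thresholds sit --- \cref{liftingtheorem} shows that the set of \(\bb{\G}\)-classes of homomorphisms \(H_0\to\bb{\G}(k)\) is independent of the coprime field, so it is enough to settle the question over \(\mathbb{C}\), where the embeddings of \(\alt(6)\) and \(\alt(7)\) into the complex exceptional groups are already recorded in the literature and their primitivity can be read off. The finitely many small primes dividing \(\size{H_0}\), which surface as the isolated entries \((\F_4,3)\), \((\E_7,5)\), \((\E_8,2)\), \((\E_8,3)\), \((\E_8,5)\), \((\E_8,7)\), each demand a separate modular treatment.

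For the modular cases I would compute, using the \(p\)-modular Brauer characters of \(H_0\), all \emph{feasible} restrictions of the minimal module and of the Lie algebra \(L(\bb{\G})\) to \(H_0\): every way of writing each module as a sum of \(H_0\)-composition factors of the correct total dimension and respecting the relevant \(\bb{\G}\)-invariant structure (the symmetric trilinear form for \(\E_6\), the symplectic form for \(\E_7\), and the Lie bracket and Killing form throughout). For each feasible decomposition I would attempt to produce a positive-dimensional overgroup: a trivial composition factor that splits off \(L(\bb{\G})\) as a genuine summand --- detected by the vanishing of the appropriate \(H^1(H_0,-)\) --- gives \(C_{\bb{\G}}(H_0)^{\circ}\ne1\) and hence a proper reductive overgroup, while a vector fixed in a unipotent layer places \(H_0\) in a parabolic via the Borel--Tits theorem. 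Matching the composition factors on both modules against the candidate subgroups \(\bb{\X}\) of \cref{posdimmaxsbg} then lets one discard every case in which all feasible module structures are compatible with some such \(\bb{\X}\), while a decomposition forcing \(\bb{\G}\)-irreducibility with finite connected centraliser flags a genuinely primitive embedding.

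The hard part will be the forbidden characteristics, where one must prove that \emph{every} feasible module structure forces containment in a positive-dimensional subgroup. Dimension counting alone is insufficient, since distinct subgroups can present identical composition-factor multisets; the real work lies in pinning down the socle series and the non-split self-extensions, so that the cohomology of \(\alt(6)\) and \(\alt(7)\) in characteristics \(2,3,5,7\) controls whether trivial factors split off. Conversely, certifying that a listed pair \((\G,p)\) truly admits a primitive copy cannot be done abstractly: one must exhibit an explicit subgroup over a suitable finite field and verify directly that its connected centraliser is finite and that it stabilises no proper geometric structure.
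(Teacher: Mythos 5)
This statement is not proved in the paper at all: it is quoted verbatim from \cite[Theorem 1]{craven}, so there is no internal proof to compare against. Your proposal is a reasonable sketch of the strategy used in that literature (feasible Brauer characters on \(M(\bb{\G})\) and \(L(\bb{\G})\), lifting via \cref{liftingtheorem} in coprime characteristic, separate modular analysis at the small primes), but as written it contains one genuine logical gap and one unnecessary burden.

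The gap is in your opening reduction. You write that ``the task reduces to locating the Lie primitive copies of \(H_0\) in \(\bb{\G}\),'' but membership in a member of \(\mathscr{X}^{\sigma}\) does not follow from mere Lie imprimitivity. A member of \(\mathscr{X}^{\sigma}\) is the fixed-point group of a \(\sigma\)-stable maximal positive-dimensional subgroup; to place a finite \(H\le\G\) inside one, you need a positive-dimensional overgroup \(\bb{\X}\) that can be chosen stable under \(N_{\aut\bb{\G}}(H)\) --- i.e.\ you need \emph{strong} imprimitivity in the sense of \cref{primitivedef}, not just containment in some positive-dimensional subgroup. This is exactly why Litterick's classification (quoted in the introduction) has three classes rather than two: a subgroup can be Lie imprimitive yet still fail to lie in any member of \(\mathscr{X}^{\sigma}\), because none of its positive-dimensional overgroups is stable under the relevant automorphisms. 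The paper's own treatment of \(\alt_6\) in \cref{alt6} illustrates the extra work this costs: after showing \(\alt_6\) lies in an \(\bb{\A}_2\bb{\A}_2\) or \(\bb{\C}_4\) subgroup, it must separately argue (via a stabilised subspace of \(M(\bb{\F}_4)\), or the \(\aut\bb{\E}_6\)-stability of \(\bb{\C}_4\)) that the overgroup can be taken \(N_{\aut\bb{\G}}(H)\)-stable. Your programme must close this step in every excluded case \((\G,p)\), and your proposed detection mechanisms (nonzero fixed points on \(L(\bb{\G})\), Borel--Tits) only produce \emph{some} overgroup, not a priori a stable one.

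On the other side, your final paragraph takes on more than the theorem requires: the statement is a one-way implication (``if \(H\) avoids \(\mathscr{X}^{\sigma}\) then \((\G,p)\) is in the list''), so you do not need to exhibit a genuinely primitive copy for each listed pair; the listed pairs are simply those that the exclusion argument fails to rule out. A smaller caution: \cref{liftingtheorem} counts conjugacy classes of homomorphisms across coprime characteristics, but transporting the property ``contained in a member of \(\mathscr{X}^{\sigma}\)'' along that bijection is not automatic and needs its own argument.
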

	
	Note that in (i), the case \((\F_4,3)\) has been dealt with by the same author, who proved that it may be excluded from the list, although the result has not been published yet at the time of writing.
	
	Therefore, for the alternating groups, only for \(n=6,7\) there is no complete answer yet. We will deal with the case of \(\alt(6)<\bb{\F}_4,\bb{\E}_6\) in characteristic 0 or coprime to \(\size{\alt_6}\) in \cref{alt6}.	
\end{subsection}

\begin{subsection}{Embedding of linear groups}
	These results will be useful to check that our code works; in particular, we can provide an independent confirmation of \cref{theorem41}, \cref{theorem49}, and the embedding of \(\PSL_2(61)\) in \(\bb{\E}_8(\mathbb{C})\) of \cref{kostant}. The supplementary material includes the computations required in such cases.
	
	\begin{theorem}[\cite{griess_ryba_algorithm}]\label{theorem41}
		Let \(k\) be an algebraically closed field of characteristic that does not divide \(\size{\PSL_2(41)}\). There are three conjugacy classes of \(\PSL_2(41)\text{-subgroups}\) in \(\bb{\E}_8(k)\), the subgroups give rise to six conjugacy classes of embeddings into \(\bb{\E}_8(k)\).
		
		The groups \(\SL_2(41)\) and \(\PGL_2(41)\) are not embedded in \(\bb{\E}_8(\mathbb{C})\).
	\end{theorem}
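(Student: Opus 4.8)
The plan is to reconstruct the three classes explicitly over one conveniently chosen field and then let \cref{liftingtheorem} transfer the count to every admissible characteristic, including \(\mathbb{C}\). Since \(\bb{\E}_8\) is a split group scheme and \(\ch k\) is coprime to \(\size{\PSL_2(41)}=2^3\cdot3\cdot5\cdot7\cdot41\), Larsen's lemma guarantees that \(\size{\hom(\PSL_2(41),\bb{\E}_8(k))/\bb{\E}_8(k)}\) is the same finite number for all such \(k\). It therefore suffices to fix a single small prime \(\ell\nmid\size{\PSL_2(41)}\) and a finite field \(\mathbb{F}_{\ell^a}\) large enough to contain the relevant roots of unity (in particular a primitive \(41\)st root, so that elements of order \(41\) are diagonalisable), construct \(\bb{\E}_8(\mathbb{F}_{\ell^a})\), and carry out the count there. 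Note that in this non-defining characteristic the images of all elements of \(\PSL_2(41)\) are semisimple.

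Next I would build the image following the presentation of \cref{PSLpresentationthm} with generators \(u,s,t\). The Borel \(B=\gen{u,s}\simeq 41\rtimes 20\) is supersoluble, so by \cref{borelserrethm} it lies in \(N_{\bb{\G}}(\bb{\T})\) for a maximal torus \(\bb{\T}\), and \(B_0=\gen{u}\) can be taken inside \(\bb{\T}\). The admissible position of \(u\) is constrained by the feasible decomposition of the \(248\)-dimensional adjoint module under \(\PSL_2(41)\), read off from \cite{table,litterick}; fixing a representative \(u\in\bb{\T}\) realising it, I would compute \(C_{\bb{\G}}(u)=\gen{\bb{\T},\bb{\X}_\alpha\colon\alpha(u)=1}\) via \cref{centralisertheoremfinal}, which applies because \(\bb{\E}_8\) is simply connected so semisimple centralisers are connected. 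The element \(s\) of order \(20\) must normalise \(\gen{u}\) and act as the power prescribed by relation (iii), hence is the image in \(N_{\bb{\G}}(\bb{\T})\) of a Weyl element of the correct order. Having verified that \(B_0\) is regular and that \(C_{\bb{\T}}(s)\) is finite, \cref{Tconjugacy} then shows that \(B\) is unique up to \(\bb{\G}\)-conjugacy.

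The remaining, and most delicate, step is adjoining the involution \(t\). One searches inside \(\bb{\G}\) — with \cref{centralisertheorem} cutting the search down to the relevant centralisers — for an involution satisfying \(t^2=(ts)^2=(tu)^3=1\) together with the word relation expressing \(st\). Each admissible \(t\) completes \(B\) to a copy of \(\PSL_2(41)\); these copies are then sorted into \(\bb{\G}\)-classes by comparison up to \(N_{\bb{\G}}(B)\), and \cref{finiteconjugates} converts the resulting \(\sigma\)-conjugacy data into the counts in the finite groups and in \(\bb{\G}\). The expectation, matching Griess--Ryba, is that this produces exactly three conjugacy classes of subgroups, while the passage from subgroups to embeddings contributes a factor of two: since \(\out(\PSL_2(41))=\mathbb{Z}/2\) and \(N_{\bb{\G}}(H)\) induces no outer automorphism, each subgroup class yields two inequivalent homomorphism classes, giving six classes of embeddings.

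Finally, for the non-embeddings: for \(\SL_2(41)\) I would show the constructed \(\PSL_2(41)\) does not lift, i.e. its central involution admits no preimage realising the order-\(4\) element of \(\SL_2(41)\) with the correct action on \(\mathfrak{e}_8\), so the Schur-multiplier obstruction is non-trivial in \(\bb{\E}_8(\mathbb{C})\); for \(\PGL_2(41)\) I would check that \(N_{\bb{\G}}(\PSL_2(41))\) does not induce the diagonal automorphism, so no overgroup of shape \(\PSL_2(41).2\) of the required isomorphism type occurs. The hardest part throughout is guaranteeing \emph{completeness} — that the explicit construction finds every class rather than merely some — which rests on the rigidity supplied by the centraliser computations together with \cref{liftingtheorem} certifying that the finite count obtained in one characteristic is valid in all of them.
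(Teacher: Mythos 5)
Your outline is the same pipeline the paper itself uses for this statement: the theorem is quoted from Griess--Ryba, and the paper only re-confirms it computationally (the supplementary file \texttt{41\_E8.mg}), running the strategy of \cref{generalstrat} over a single finite field of coprime characteristic chosen so that \(u\) and \(s\) are diagonalisable, then invoking \cref{liftingtheorem} to transport the count to every admissible \(k\). So the architecture is right, and your reduction from three subgroup classes to six embedding classes (via \(\size{\out(\PSL_2(41))}=2\) together with \(\PGL_2(41)\not<\bb{\E}_8\)) is the correct bookkeeping.

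The gap is that the number three is never derived: every quantitative step is deferred to ``the expectation, matching Griess--Ryba''. You do not determine how many solutions \(t\) the linear system of \cref{ut3=1thm} has, nor how \(N_{\bb{\G}}(B)/N_H(B)\) fuses the resulting copies of \(H\), and those two numbers \emph{are} the theorem. Moreover, the one quantitative claim you do make --- that \(B\simeq41\rtimes20\) is unique up to \(\bb{\G}\)-conjugacy by \cref{Tconjugacy} --- is asserted without checking the hypotheses of that proposition for \(\bb{\E}_8\): its proof uses regularity of \(u\), finiteness of \(C_{\bb{\T}}(s)\), \emph{and} uniqueness of the relevant conjugacy class of order-\(20\) elements in \(W(\E_8)\), none of which is automatic here; and since the answer is three classes rather than the single class obtained in the \(\F_4\) and \(\E_7\) cases of \cref{method1}, at least one stage must behave differently (several classes of \(B\), or several unfused choices of \(t\)), and a proof must exhibit which. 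Finally, the non-embedding claims are mis-aimed: because \(Z(\bb{\E}_8)=1\), asking whether your constructed \(\PSL_2(41)\) ``lifts'' to \(\SL_2(41)\) is not the right question --- a copy of \(\SL_2(41)\) would simply be a subgroup with a non-central (in \(\bb{\E}_8\)) involution at its centre --- and the clean argument, parallel to \cref{37extension} and \cref{29extensions}, is that the extra semisimple classes of \(\SL_2(41)\) and \(\PGL_2(41)\) force traces on the \(248\)-dimensional module that are incompatible with the feasible characters.
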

	
	\begin{theorem}[\cite{griess_ryba_algorithm}]\label{theorem49}
		Let \(k\) be an algebraically closed field of characteristic that does not divide \(\size{\PSL_2(49)}\). There are two conjugacy classes of \(\PSL_2(49)\text{-subgroups}\) in \(\E_8(k)\), the subgroups give rise to four conjugacy classes of embeddings into \(\bb{\E}_8(k)\).
		
		The groups \(\SL_2(49)\) and \(\PGL_2(49)\) are not embedded in \(\bb{\E}_8(\mathbb{C})\).
	\end{theorem}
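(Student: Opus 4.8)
The plan is to follow the general strategy developed for \cref{mainth1}: reduce to a single finite field by the lifting lemma, realise a Borel subgroup of $H=\PSL_2(49)$ inside the normaliser of a maximal torus, and then enumerate the ways that Borel extends to a full copy of $H$.

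First I would apply \cref{liftingtheorem}. Since $\size{\PSL_2(49)}=2^4\cdot3\cdot5^2\cdot7^2$, for every algebraically closed field $k$ with $\ch k\notin\{2,3,5,7\}$ the set $\hom(\PSL_2(49),\bb{\E}_8(k))/\bb{\E}_8(k)$ has the same cardinality, so it suffices to carry out the construction over one convenient $\overline{\mathbb{F}}_\ell$ containing the required roots of unity, where the arithmetic is tractable in \magma; the conclusion then transfers to $\bb{\E}_8(\mathbb{C})$. Throughout I would use the presentation of \cref{PSLpresentationthm} on generators $u,s,t$, with $u$ of order $7$, $s$ of order $(49-1)/2=24$, and $t$ an involution satisfying $s^t=s^{-1}$.

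Next I would locate the Borel subgroup $B=\gen{u,s}$. In characteristic coprime to $\size{H}$ every element of $H$ is semisimple, and $B$ is supersoluble with socle $B_0=B'\simeq 7^2$. As $7$ is not a torsion prime of $\bb{\E}_8$, every elementary abelian $7$-subgroup is toral (cf.\ \cite{griess_elementaryabelian}), so $B_0\le\bb{\T}$ for some maximal torus $\bb{\T}$, and \cref{borelserrethm} then forces $B\le N_{\bb{\E}_8}(\bb{\T})$; hence $s$ lies in a coset $\bb{\T}w$ with $w\in W(\bb{\E}_8)$ of order dividing $24$. Using \cref{centralisertheorem} I would read off $C_{\bb{\E}_8}(B_0)$ from the roots of $\bb{\E}_8$ that are trivial on $B_0$, verify that $B_0$ is regular, and classify the admissible toral placements of $B_0$ together with the action of $s$ up to $W(\bb{\E}_8)$-conjugacy. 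Since $B_0$ is regular and $C_{\bb{\T}}(w)$ is finite, \cref{Tconjugacy} shows each such datum determines $B$ uniquely up to conjugacy.

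Finally, for each Borel obtained I would solve the remaining relations of the presentation for an involution $t\in\bb{\E}_8(k)$, thereby producing every copy of $\PSL_2(49)$ that contains the fixed $B$; fusing these solutions under $N_{\bb{\E}_8}(B)$ yields the two conjugacy classes of subgroups. The four classes of embeddings then follow by computing the image of $N_{\bb{\E}_8}(S)$ in $\out(\PSL_2(49))\simeq2^2$ for a representative $S$: an index-$2$ subgroup of outer automorphisms is realised, so each subgroup class splits into two embedding classes. For the last assertion I would restrict the $248$-dimensional adjoint module to $S$ and run a composition-factor analysis: the central involution of $\SL_2(49)$ would have to act as $-1$ on some constituent, and the absence of such a constituent shows the embedding does not lift to $\SL_2(49)$, while the fact that $N_{\bb{\E}_8}(S)$ does not induce the diagonal automorphism of $H$ shows $\PGL_2(49)$ is not embedded. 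The main obstacle is the explicit search for $t$ together with a proof that the enumeration is exhaustive --- that every copy of $\PSL_2(49)$ is conjugate to one sharing the constructed Borel, and that the finitely many solutions found exhaust all completions up to $N_{\bb{\E}_8}(B)$-conjugacy; the rank-$8$ toral bookkeeping among the $240$ roots and the module computation certifying non-liftability to $\SL_2(49)$ are the other delicate points.
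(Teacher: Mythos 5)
The paper does not prove this statement itself: it is quoted from \cite{griess_ryba_algorithm}, and the thesis only supplies an independent computational confirmation (the file \texttt{49\_E8.mg}, constructing \(\PSL_2(49)<\E_8(1009)\)) using exactly the general strategy of \cref{generalstrat} that you have outlined --- toral \(7^2\) socle, Borel subgroup inside \(N_{\bb{\E}_8}(\bb{\T})\), linear search for the involution \(t\), fusion under \(N_{\bb{\E}_8}(B)\), and transfer via \cref{liftingtheorem}. Your proposal is therefore essentially the same approach as the paper's (and as the cited source's); the substance lies entirely in the deferred computations, as it does there.
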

	
	\begin{theorem}[\cite{61,37,cohenwales,13g1,13g2}]\label{kostant}
		Kostant's conjecture holds, that is the group \(\PSL_2(q)\) embeds in \(\bb{\G}(\mathbb{C})\) when \((\bb{\G},q)\) is one of \((\bb{\G}_2,13)\), \((\bb{\F}_4,25)\), \((\bb{\E}_6,25)\), \((\bb{\E}_7,37)\), \((\bb{\E}_8,61)\).
		
		For \(\bb{\G}=\bb{\G}_2,\bb{\E}_8\), there is a unique \(\bb{\G}\text{-conjugacy}\) class of subgroups isomorphic to \(\PSL_2(q)\). A similar result is only conjectured for \(\bb{\F}_4\), \(\bb{\E}_6\), and \(\bb{\E}_7\).
	\end{theorem}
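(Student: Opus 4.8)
The plan is to construct the embedding explicitly over an algebraically closed field $\overline{\mathbb{F}_\ell}$ of characteristic $\ell$ coprime to $\size{\PSL_2(q)}$, and then transfer the conclusion to $\mathbb{C}$ by Larsen's lifting lemma (\cref{liftingtheorem}), which guarantees that the number of conjugacy classes of homomorphisms $\hom(\PSL_2(q),\bb{\G}(k))/\bb{\G}(k)$ is the same over every admissible $k$. Throughout I would work with the presentation of $\PSL_2(q)$ on generators $u,s,t$ of \cref{PSLpresentationthm}: $u$ generates the socle $B_0$ of a Borel subgroup $B=B_0\rtimes\gen{s}$, the element $s$ is semisimple of order $(q-1)/2$, and $t$ is an involution with $(ts)^2=(tu)^3=1$. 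The arithmetic coincidence $q=2h+1$, with $h$ the Coxeter number of $\bb{\G}$, is what makes each case work, since $s$ then has order exactly $h$, matching the order of a Coxeter element of $W(\bb{\G})$.

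First I would locate $B_0$. Since $\ell\ne p$, the element $u$ is in fact semisimple of order $p$, and for $q\in\{13,25,37,61\}$ the socle $B_0$ is either cyclic of prime order or isomorphic to $5^2$; in every case it is elementary abelian, so by \cref{borelserrethm,elementaryabelianthm} it lies in a maximal torus $\bb{\T}$ of $\bb{\G}$ (note that the obstructed case $(\bb{\F}_4,3^3)$ of \cref{elementaryabelianthm} does not occur here). The essential point is to choose the image of $B_0$ inside $\bb{\T}$ so that no root of $\bb{\G}$ restricts trivially to $B_0$: for a principal-type choice the eigenvalue of $u$ on the root space $\m{L}_\beta$ is $\zeta^{2\height(\beta)}$ for a primitive $p$-th root $\zeta$, and because the heights of roots are at most $h-1$ the exponents $2\height(\beta)$ are nonzero modulo $p=2h+1$. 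Hence $B_0$ is regular in the sense of \cref{regularthm} and $C_{\bb{\G}}(B_0)^{\circ}=\bb{\T}$ by \cref{centralisertheoremfinal}.

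Next I would extend $B_0$ to a copy of $B$ by adjoining $s$. As in \cref{Tconjugacy}, regularity of $B_0$ forces $s$ into $N_{\bb{\G}}(\bb{\T})$, so $s$ is a lift of a Weyl-group element; having order $h$ it must come from a Coxeter element, which is unique up to conjugacy, and \cref{tconjugacy} controls the finitely many complements up to $\bb{\T}$-conjugacy. The genuinely hard step is then constructing the involution $t$ completing $B$ to $\PSL_2(q)$: one must solve relations (v)–(viii) of \cref{PSLpresentationthm} for an involution normalising the configuration, for instance starting from a Chevalley involution as in \eqref{involution} or a torus-normaliser element and correcting by torus elements. This is precisely the step that tends to fail, and the count of admissible $t$ modulo the relevant centraliser action is what records the number of embeddings.

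Finally, having produced a homomorphism $\PSL_2(q)\to\bb{\G}(\overline{\mathbb{F}_\ell})$, I would verify it is injective with image lying in no proper reductive subgroup — e.g.\ by computing the composition factors of $\m{L}\!\restri{\PSL_2(q)}$ and matching them against the predicted restriction — and then count $\bb{\G}$-classes using \cref{finiteconjugates}: when $C_{\bb{\G}}(H)$ is connected the class is unique. For $\bb{\G}_2$ and $\bb{\E}_8$ the centraliser is trivial and the rigidity of the construction leaves no room for a second class, yielding the stated uniqueness, whereas for $\bb{\F}_4,\bb{\E}_6,\bb{\E}_7$ (where, for $\bb{\E}_7$, a centre of order $2$ already causes classes to split) the method does not establish that the constructed class is the only one, consistent with the conjectural status of that part of the theorem. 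Transferring the picture to $\mathbb{C}$ via \cref{liftingtheorem} then completes the argument. I expect the construction and verification of $t$, together with ruling out proper overgroups, to be the main obstacle.
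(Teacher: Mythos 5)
This statement is cited from the literature (\cite{61,37,cohenwales,13g1,13g2}) rather than proved in the paper; the paper only independently re-verifies the \((\bb{\E}_8,61)\) instance computationally, using exactly the pipeline you describe (\cref{generalstrat,ngt,t}, then \cref{liftingtheorem}). So your outline is the right \emph{approach} — it is the paper's own method for \(\PSL_2(25)<\bb{\F}_4\), \(\PSL_2(37)<\bb{\E}_7\), etc. — but as written it is a plan rather than a proof. The two steps you yourself flag as "the main obstacle" are precisely where all the content lives: (a) producing the involution \(t\) requires solving the linear system of \cref{ut3=1thm} and then showing the finitely many solutions lie in \(\bb{\G}\) via the membership test, and (b) the uniqueness claims do not follow from \cref{finiteconjugates} alone. \cref{finiteconjugates} governs the splitting of a \(\bb{\G}\)-class into \(\G^{\sigma}\)-classes; uniqueness in \(\bb{\G}\) itself needs the separate fusion argument that the several admissible \(t\)'s over a fixed \(B\) are permuted by \(N_{\bb{\G}}(B)/N_H(B)\) (as in \cref{conj27_1,37fusion}), combined with uniqueness of \(B\) up to \(\bb{\G}\)-conjugacy and the "\(\aut H\)-twist" bookkeeping of \cref{triples25}. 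Without carrying these out, "rigidity of the construction leaves no room for a second class" is an assertion, not an argument.

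There is also a concrete error in your regularity step. The eigenvalue formula \(\zeta^{2\height(\beta)}\) with \(\zeta\) a primitive \(p\)-th root of unity, and the conclusion that the exponents are nonzero "modulo \(p=2h+1\)", is only valid when \(q=p\) is prime (the cases \(13,37,61\)). For \(q=25\) one has \(p=5\ne 2h+1=25\), a single element of order \(5\) is far from regular (e.g.\ \(2\height(\beta)\equiv 0 \bmod 5\) for any root of height \(5\)), and regularity must instead be established for the whole \(5^2\) subgroup \(B_0\) — which the paper does by a character/fixed-point computation (\(\dim L(\bb{\G})^{B_0}=4\) in \cref{conj25}), not by the height argument. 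The same caveat would apply to any attempt to treat \(\bb{\E}_6\) with \(q=25\). Finally, note that injectivity and Lie primitivity of the image are not something you can simply "verify by computing composition factors"; in the paper these are imported from \cite{litterick} and from the feasible-character analysis, which your sketch would need to invoke explicitly.
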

	
	\begin{theorem}[{\cite[Theorem 2.27]{31and32}}]\label{theorem31}
		Let \(k\) be an algebraically closed field of characteristic that does not divide \(\size{\PSL_2(31)}\). Then there are three conjugacy classes of \(\PGL_2(31)\text{-subgroups}\) in \(\bb{\E}_8(k)\), the subgroups give rise to three conjugacy classes of embeddings into \(\bb{\E}_8(k)\).
	\end{theorem}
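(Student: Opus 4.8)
The plan is to run the constructive method of \cref{method1} — itself an adaptation of the algorithm of \cite{griess_ryba_algorithm} — but now for \(H\coloneqq\PGL_2(31)\), while tracking the index-two structure over its socle \(H_0\coloneqq\PSL_2(31)=H'\). Here \(\size{H_0}=2^5\cdot3\cdot5\cdot31\) and \(\size{H}=2^6\cdot3\cdot5\cdot31\), so the forbidden primes are \(2,3,5,31\). By \cref{liftingtheorem} the finite set \(\hom(H,\bb{\E}_8(k))/\bb{\E}_8(k)\) has cardinality independent of the algebraically closed field \(k\) of characteristic \(\ne2,3,5,31\), and \(\bb{\E}_8\) is split; so I would fix one convenient prime \(p\nmid\size{H}\) with \(\E_8\) over a small field \(\mathbb{F}_{p^a}\) computationally manageable, count \(\bb{\G}\)-classes of homomorphisms there, and transport the count. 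Passing from homomorphisms to \emph{subgroups} is clean: since \(H_0\) is the unique proper non-trivial normal subgroup of \(H\), the non-faithful homomorphisms are exactly those with image \(1\) or \(C_2\), and after discarding them the faithful ones biject with subgroup-classes because \(\out\PGL_2(31)=1\). This equality \(\size{\out H}=1\) is precisely why the statement records the same number, three, for subgroups and for embeddings, in contrast to the \(\PSL_2(41)\) case of \cref{theorem41}, where \(\out\PSL_2(41)=C_2\) doubles the embedding count.

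To build an embedding I would first locate a Borel subgroup. A Borel of \(H\) has shape \(31\rtimes30\) with socle the cyclic group \(\gen{x}\simeq C_{31}\) and an element \(\bar s\) of order \(30\) acting on it as the full \(\aut(C_{31})\). Being supersoluble, \(B\) lies in \(N_{\bb{\G}}(\bb{\T})\) for a maximal torus \(\bb{\T}\) by \cref{borelserrethm}, so \(x\in\bb{\T}\). Since \(\bar s\) permutes the nontrivial eigenspaces of \(x\) on the \(248\)-dimensional adjoint module \(\mathfrak{g}\) transitively (it conjugates \(x\) to \(x^{c}\) for a generator \(c\) of \((\mathbb{Z}/31)^{\times}\)), all nontrivial eigenvalues of \(x\) occur with one common multiplicity \(m\); thus \(248=30m+\dim\mathfrak{g}^{x}\) with \(\dim\mathfrak{g}^{x}\ge8=\rk\bb{\E}_8\) by \cref{cartanstab}. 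This confines \(x\) to a short list, and I would single out the regular class \(m=8\), \(\dim\mathfrak{g}^{x}=8\), for which \(C_{\bb{\G}}(x)^{\circ}=\bb{\T}\) by \cref{regularthm}; pinning down the \(\bb{\G}\)-classes of such order-\(31\) elements (via the feasible restrictions of \(\mathfrak{g}\) to \(H\), or the standard classification of finite-order elements) gives the finitely many candidate homes for \(\gen{x}\).

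For each candidate home I would extend \(\gen{x}\) to a full \(H\) using the presentation machinery of \cref{PSLpresentation}: build \(H_0\) from a \(\PSL_2(31)\) presentation as in \cref{PSLpresentationthm} by placing the torus element \(s\) (of order \(15\)) in \(N_{\bb{\G}}(\bb{\T})\) realising the Borel and solving for the involution \(t\) under \((ts)^2=(tu)^3=1\), then adjoining the outer diagonal element to reach \(\PGL_2(31)\). \cref{Tconjugacy} and \cref{tconjugacy} count the resulting \(\bb{\G}\)- and \(\G\)-classes of such complements, and computing \(C_{\bb{\G}}(H)\) lets me apply \cref{finiteconjugates}: if \(C_{\bb{\G}}(H)\) is connected the algebraic count descends unchanged to the finite group, while its component group measures any splitting. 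The assertion is that, across all candidate homes and all admissible extension data, exactly three \(\bb{\G}\)-classes survive.

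The main difficulty is completeness rather than existence: exhibiting three subgroups is the straightforward half, and the work is proving there is no fourth. Concretely I must (a) rule out every order-\(31\) element whose eigenvalue multiplicity \(m\) is not the regular value \(8\), equivalently show that no larger centraliser supports an \(H\); (b) for each surviving home, bound the genuinely distinct extensions \((s,t)\) modulo \(N_{\bb{\G}}(\bb{\T})\) and \(\aut H\); and (c) confirm that the index-two overgroup actually closes up to \(\PGL_2(31)\) rather than to \(\SL_2(31)\) or failing to extend. Steps (b)–(c) are where the explicit \magma\ computation over \(\mathbb{F}_{p^a}\) carries out the arithmetic; the conceptual burden is to certify that the computed solution set is exhaustive, and this exhaustiveness check is exactly the point at which the present technique tends to break down for smaller \(q\), so its success at \(q=31\) is itself part of what must be verified.
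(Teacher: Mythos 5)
You should first be aware that the paper does not prove this statement at all: it is quoted verbatim from \cite[Theorem 2.27]{31and32} and used as background. Unlike \cref{theorem41}, \cref{theorem49} and the \(\PSL_2(61)\) case of \cref{kostant}, the author does not even claim an independent computational confirmation of the \(\PGL_2(31)\) count --- it is absent from the list of verified results and from the supplementary material. So there is no internal proof to compare against; what you have written is a programme for reproving the result in the style of \cite{griess_ryba_algorithm} and \cref{method1}, and it must be judged on whether that programme actually closes. Several of your structural observations are correct and worth keeping: \(\out\PGL_2(31)=1\) does explain why the subgroup count and the embedding count coincide (in contrast to \cref{theorem41}); the reduction of \(\hom(H,\bb{\E}_8(k))/\bb{\E}_8(k)\) to faithful classes via the normal-subgroup lattice of \(\PGL_2(31)\) is clean; and the constraint \(248=30m+\dim\mathfrak{g}^x\) with \(\dim\mathfrak{g}^x\ge8\), coming from transitivity of the order-\(30\) element on the nontrivial eigenspaces of \(x\), is the right first step and is genuinely special to the \(\PGL\) (rather than \(\PSL\)) Borel.

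The gap is that everything you label (a)--(c) is precisely the content of the theorem, and you give no evidence that the paper's machinery can execute it for \(q=31\). Eliminating the non-regular order-\(31\) classes (\(m<8\)) requires either the feasible-character analysis or the classification of semisimple classes whose fixed subsystem is stable under multiplication by a generator of \((\mathbb{Z}/31)^{\times}\); you name this but do not do it, and it is not a ``short list'' one can wave at. More seriously, the step of solving for the involution \(t\) depends on \(C_{\bb{\G}}(s)\) being small enough that the linear system of \cref{t_construction} cuts the span \(\bb{M}(s)\) down to finitely many candidates; the paper explicitly warns that the method fails ``when \(q\) is too small'' at exactly \cref{CGs} and \cref{t_construction}, lists \(31\) among the cases one could attempt, and then conspicuously does not carry it out --- so the feasibility of your central computation is an open question within the paper's own framework, not a routine verification. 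You also need uniqueness (or a classification) of the relevant order-\(30\) class in \(W(\E_8)\) before \cref{Tconjugacy} applies, and a separate argument that the index-two extension closes to \(\PGL_2(31)\); neither is supplied. As it stands the proposal establishes ``at least'' rather than ``exactly'' three classes, conditional on computations that have not been performed.
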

	
	This implies the following:
	\begin{corollary}
		Under the same hypothesis, there are at least three conjugacy classes of \(\PSL_2(31)\text{-subgroups}\) in \(\bb{\E}_8(k)\).
	\end{corollary}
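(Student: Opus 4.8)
The plan is to exploit the fact that a copy of $\PGL_2(31)$ canonically contains a copy of $\PSL_2(31)$, namely its derived subgroup $M'$, which equals its socle and is therefore characteristic in $M$. Since conjugation by an element of $\bb{\G}$ carries $M'$ to $(M^g)'$, the assignment $M\mapsto M'$ descends to a well-defined map $\Phi$ from the set of $\bb{\G}$-conjugacy classes of $\PGL_2(31)$-subgroups to the set of $\bb{\G}$-conjugacy classes of $\PSL_2(31)$-subgroups. By \cref{theorem31} the domain has exactly three elements, so it suffices to prove that $\Phi$ is injective: the image will then consist of three distinct classes, giving the claimed lower bound. (There may of course be further $\PSL_2(31)$-classes not arising as socles, which is exactly why the statement asserts only ``at least''.)

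To test injectivity, suppose $M_1,M_2$ are $\PGL_2(31)$-subgroups with $\bb{\G}$-conjugate socles. After replacing $M_1$ by a conjugate I may assume $M_1'=M_2'=:L$. The first step is to locate both groups inside $N\coloneqq N_{\bb{\G}}(L)$ and record their action on $L$. Since $31$ is prime there are no field automorphisms, so $\aut(\PSL_2(31))\cong\PGL_2(31)$ with $\out(\PSL_2(31))\cong C_2$ realised by the diagonal automorphism; as each $M_i\cong\PGL_2(31)$ has trivial centre and meets $C\coloneqq C_{\bb{\G}}(L)$ trivially, each $M_i$ induces the full automorphism group of $L$. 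Hence $M_iC=N$ and $M_i\cap C=1$, i.e. $M_1$ and $M_2$ are both complements to the normal subgroup $C$ in $N$. Moreover, any $g\in\bb{\G}$ with $M_1^g=M_2$ must preserve the common socle and so lies in $N$; thus the fibre of $\Phi$ over $[L]$ is precisely the set of $N$-conjugacy classes of such complements, and injectivity of $\Phi$ is equivalent to the assertion that this fibre is a single class.

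The crux --- and the step I expect to be the main obstacle --- is therefore to pin down $C=C_{\bb{\G}}(L)$ and deduce uniqueness of the complement. The cleanest outcome is $C=1$: because $Z(\bb{\E}_8)=1$ and $L$ embeds primitively (so that $L$ has no trivial constituent on the adjoint module and $C_{\bb{\G}}(L)^{\circ}=1$), one expects $C_{\bb{\G}}(L)$ to be trivial, in which case $N=\aut(L)\cong\PGL_2(31)$ and $M_1=M_2=N$ outright, yielding injectivity. If instead $C$ turns out to be a nontrivial finite group, I would finish by showing the two complements are fused in $N$: phrasing things through embeddings as in \cref{finiteconjugates}, two injections $\PGL_2(31)\hookrightarrow\bb{\G}$ that agree on $L$ can differ only in the image of an outer involution $\tau$, by an element of $C$, and one checks via a Schur--Zassenhaus type argument (or the vanishing of the relevant $H^1(\PGL_2(31),C)$) that all such choices are $C$-conjugate. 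Either way the fibre is a singleton, $\Phi$ is injective, and the three classes of $\PGL_2(31)$ force at least three classes of $\PSL_2(31)$. The only genuinely delicate ingredient is the centraliser computation; everything else is formal.
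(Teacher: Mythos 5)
The paper states this corollary with no argument beyond ``this implies'', and the implication is non-trivial for exactly the reason you identify: one must show that non-conjugate $\PGL_2(31)$-subgroups cannot share a conjugate socle. Your reduction is correct as far as it goes -- the socle map is well defined on classes, any element conjugating $M_1$ to $M_2$ with $M_1'=M_2'=L$ lies in $N_{\bb{\G}}(L)$, and $M_1,M_2$ are complements to $C=C_{\bb{\G}}(L)$ there. But the proof stops precisely where the content is. You never determine $C$: Lie primitivity of $L$ is not among the hypotheses (\cref{theorem31} asserts nothing about primitivity), and even granting it you only get that $C$ is \emph{finite} (since $C^{\circ}$ would otherwise give a proper positive-dimensional overgroup), not that it is trivial. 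Your two fallback mechanisms are likewise unavailable as stated: Schur--Zassenhaus needs $(\size{C},\size{\PGL_2(31)})=1$, which you do not know, and $H^1(\PGL_2(31),C)$ (or its nonabelian analogue) need not vanish for an unknown finite $C$ -- for instance a central involution of $N_{\bb{\G}}(L)$ lying outside $L$ would produce two complements that are visibly not fused in $N_{\bb{\G}}(L)$. So injectivity of $\Phi$ is asserted, not proved, and the corollary does not follow from what you have written.

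The gap can be closed without any structural analysis of $N_{\bb{\G}}(L)$ by the route the citation makes available: conjugate subgroups of $\bb{\G}\le\GL(L(\bb{\G}))$ have identical Brauer characters, hence identical composition factors, on the adjoint module, and the three classes of \cref{theorem31} are separated by character data that already lives on the socle (the feasible characters of $\PSL_2(31)$ on $L(\bb{\E}_8)$ extending to the three $\PGL_2(31)$ classes are pairwise distinct, e.g.\ on elements of order $31$ or $16$). That immediately forces the three socles into three distinct $\bb{\E}_8(k)$-classes. If you want to keep your normaliser argument instead, you must actually compute $C_{\bb{\G}}(L)$ -- which is the ``genuinely delicate ingredient'' you flag but do not supply.
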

	In particular, it is not known whether there are other embeddings of \(\PSL_2(31)\) that do not extend to \(\PGL_2(31)\).
\end{subsection}
\end{section}

\begin{section}{The trilinear form for \texorpdfstring{\(\E_6\)}{E6}}\label{e6form}
The contents of this section can be found in \cite{magaard} and \cite{aschbacherE61}.

Let \(V\) be a finite-dimensional vector space over a field \(k\). Given \(m\in\mathbb{Z}^+\), denote by \(M_m(V)\) the \(k\text{-space}\) of functions \(V^m\rightarrow k\), i.e. the space of \(m\text{-forms}\). Then \(g\in\GL(V)\) acts on \(\alpha\in M_m(V)\) via \[(g.\alpha)(v_1,\ldots,v_m)=\alpha(v_1\cdot g,\ldots,v_m\cdot g),\] where \(v_1,\ldots,v_m\in V\).

\begin{definition}
	An element \(g\in\GL(V)\) is a \emph{similarity} of \(\alpha\in M_m(V)\) if \(g.\alpha=\lambda_g\alpha\) for some \(\lambda_g\in k^{\times}\). A similarity is an \emph{isometry} if \(\lambda_g=1\). It is clear that isometries of a given \(\alpha\in M_m(V)\) form a group, called the \emph{isometry group} of \(\alpha\).
\end{definition}

Let \(\alpha\in M_m(V)\) be a symmetric \(m\text{-linear}\) form, and let \(e=(e_1,\ldots,e_n)\) be a basis of \(V\). Then \(\alpha\) is uniquely determined by \(\alpha(e_{i_1},\ldots,e_{i_m})\eqqcolon\alpha_{i_1\ldots i_m}\), where \(1\le i_1,\ldots,i_m\le n\), not necessarily distinct, so that \begin{equation}\label{mform}\alpha=\sum_{1\le i_1,\ldots i_m\le n}a_{i_1\ldots i_m}e_{i_1}\ldots e_{i_m}.\end{equation}
\begin{definition}
	Each summand of \eqref{mform} is called a \emph{monomial} of \(\alpha\), thus \(\alpha\) is uniquely determined by its monomials.
\end{definition}

For the rest of the section, we assume that the field \(k\) has characteristic \(\ne2,3\).

\begin{definition}\label{3form}
A triple \(\mathcal{F}(T,P,f)\) is a 3-form if
\begin{enumerate}
	\item \(f\) is a trilinear form on \(V\).
	\item \(P:V\times V\rightarrow k\) is linear in the first component and satisfies
		\begin{align*}
			P(x,ay)&=a^2P(x,y),\\
			P(x,y+z)&=P(x,y)+P(x,z)+f(x,y,z),
		\end{align*}
		for all \(x,y,z\in V\), \(a\in k\).
	\item \(T:V\rightarrow k\) satisfies:
		\begin{align*}
			T(ax)&=a^3T(x),\\
			T(x+y)&=T(x)+T(y)+P(x,y)+P(y,x),
		\end{align*}
		for all \(x,y\in V\), \(a\in k\).
\end{enumerate}
\end{definition}

\begin{lemma}
	If \(\mathcal{F}(T,P,f)\) is a 3-form, then \(f\) is symmetric.
\end{lemma}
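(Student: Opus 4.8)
The plan is to prove that $f$ is invariant under every permutation of its three arguments by producing two symmetries that together generate the full symmetric group $\sym_3$ on the three slots: a transposition of the last two arguments, and a cyclic rotation of all three. Since $f$ is trilinear, full $\sym_3$-invariance is precisely the assertion that $f$ is symmetric.

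First I would use only the additivity of $P$ in its second variable. Since $y+z=z+y$ in $V$, the quantity $P(x,y+z)$ may be expanded in two ways, and comparing
\[
P(x,y)+P(x,z)+f(x,y,z)=P(x,y+z)=P(x,z+y)=P(x,z)+P(x,y)+f(x,z,y)
\]
the $P$-terms cancel and leave $f(x,y,z)=f(x,z,y)$. This already gives symmetry of $f$ in its second and third arguments.

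Next I would compute $T(x+y+z)$ in two ways, bracketing it as $(x+y)+z$ and as $x+(y+z)$. In each case I expand using $T(u+v)=T(u)+T(v)+P(u,v)+P(v,u)$, then invoke that $P$ is linear in its first argument and additive, up to the $f$-correction, in its second. A direct expansion shows that both bracketings produce the same three values $T(x),T(y),T(z)$ and the same six ordered terms $P(x,y),P(y,x),P(x,z),P(z,x),P(y,z),P(z,y)$; the only discrepancy is a single surviving $f$-term, equal to $f(z,x,y)$ from the first bracketing and to $f(x,y,z)$ from the second. Equating the two expressions and cancelling the common terms yields $f(x,y,z)=f(z,x,y)$, i.e.\ invariance under a cyclic permutation of the three arguments.

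Finally, the transposition of the last two slots and the cyclic rotation of all three generate $\sym_3$, so $f$ is fixed by every permutation of its arguments, which is exactly symmetry of $f$. The only real work lies in the bookkeeping of the double expansion of $T(x+y+z)$: one must carefully track all six ordered $P$-terms and verify that the $T$- and $P$-contributions from the two bracketings coincide, so that precisely one $f$-term is left on each side. Everything else is immediate from the defining relations, and no use of $\ch k\ne 2,3$ is needed for this particular statement.
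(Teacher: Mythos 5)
Your proof is correct. It rests on the same basic computation as the paper's — expanding $T(x+y+z)$ via the defining relations — but you organize it differently. The paper expands $T(x+y+z)$ once, solves for $f(x,y,z)$, and obtains
\[
f(x,y,z)=T(x+y+z)-\sum_{a\in\{x,y,z\}}T(a)-\sum_{a\ne b}P(a,b),
\]
whose right-hand side is manifestly invariant under all permutations of $x,y,z$, so full symmetry drops out in one stroke. You instead produce two generators of $\sym_3$ separately: the transposition of the last two slots from $P(x,y+z)=P(x,z+y)$, and the $3$-cycle from comparing the two bracketings of $T(x+y+z)$. Your first step is a genuinely simpler observation than anything in the paper's proof — it uses only the additivity defect of $P$ in its second variable and the commutativity of addition, with no reference to $T$ at all — and it is worth noting that partial symmetry of $f$ already follows at that level. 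The cost is that you then still need the $T$-expansion (done twice rather than once) and the group-theoretic remark that the two symmetries generate $\sym_3$; the paper's single closed formula avoids both. Your bookkeeping in the double expansion is right, and you are correct that no hypothesis on $\ch k$ is used in either argument.
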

\begin{proof}
	Using the definition of \(T\) and \(P\) we have that:
	\begin{align*}
		T(x+y+z)&=T(x)+T(y+z)+P(x,y+z)+P(y+z,x)\\
		&=T(x)+[T(y)+T(z)+P(y,z)+P(z,y)]+[P(x,y)+P(x,z)+\\&+f(x,y,z)]+[P(y,x)+P(z,x)],
	\end{align*}
	where the brackets are simply to separate the computation of each summands.
	Therefore
	\begin{equation}\label{fsym}f(x,y,z)=T(x+y+z)-\sum_{a\in\left\{x,y,z\right\}}T(a)-\sum_{\substack{a\ne b \\ a,b\in\left\{x,y,z\right\}}}P(a,b)\end{equation}
	is symmetric.
\end{proof}

\begin{lemma}\label{alternateform}
	Since \(\ch k\ne2,3\), then \(P(x,y)=\frac{1}{2}f(x,y,y)\) and \(T(x)=\frac{1}{6}f(x,x,x)\).
	
	Therefore, if we define \(P_x(y)\coloneqq P(x,y)\), then \(P_x\) is a quadratic form, whose associated bilinear form is \(\frac{1}{2}f_x\), where \(f_x\coloneqq f(x,-,-)\).
\end{lemma}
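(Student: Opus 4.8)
The plan is to extract both identities purely from the homogeneity and polarisation axioms defining a 3-form, using only the invertibility of $2$ and $3$ in $k$. No step here is a genuine obstacle; the only point requiring real care is bookkeeping the normalisation convention for the associated bilinear form, which is what forces the factor $\frac12$ to appear in the final statement.

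First I would prove $P(x,y)=\frac12 f(x,y,y)$. Applying the additivity axiom for $P$ in its second argument with $z=y$ gives
\[
P(x,2y)=P(x,y)+P(x,y)+f(x,y,y)=2P(x,y)+f(x,y,y),
\]
while the homogeneity axiom $P(x,ay)=a^2P(x,y)$ with $a=2$ gives $P(x,2y)=4P(x,y)$. Equating the two expressions yields $2P(x,y)=f(x,y,y)$, and dividing by $2$ (possible since $\ch k\ne2$) gives the claim.

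Next I would obtain $T(x)=\frac16 f(x,x,x)$ by the same trick one level up. The additivity axiom for $T$ with $y=x$ gives $T(2x)=2T(x)+P(x,x)+P(x,x)=2T(x)+2P(x,x)$, while homogeneity $T(ax)=a^3T(x)$ with $a=2$ gives $T(2x)=8T(x)$. Hence $6T(x)=2P(x,x)$, so $T(x)=\frac13 P(x,x)$ (using $\ch k\ne3$); substituting the formula for $P(x,x)$ just obtained gives $T(x)=\frac16 f(x,x,x)$.

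Finally, for the statement about $P_x$, I would verify the two defining properties of a quadratic form. Homogeneity $P_x(ay)=P(x,ay)=a^2P(x,y)=a^2P_x(y)$ is immediate from the axiom, and the polarisation
\[
P_x(y+z)-P_x(y)-P_x(z)=f(x,y,z)=f_x(y,z)
\]
follows directly from the additivity axiom for $P$; it is bilinear because $f$ is trilinear, and symmetric because $f$ is symmetric (established in the preceding lemma). Thus $P_x$ is a quadratic form. Under the standard normalisation in which the associated bilinear form $\beta$ satisfies $\beta(y,y)=P_x(y)$, one has $\beta(y,z)=\frac12\bigl(P_x(y+z)-P_x(y)-P_x(z)\bigr)=\frac12 f_x(y,z)$, which is exactly the stated associated bilinear form $\frac12 f_x$.
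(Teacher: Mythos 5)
Your proof is correct. For the first identity, and for the key intermediate relation \(P(x,x)=3T(x)\) (obtained by comparing \(T(2x)=8T(x)\) with the additivity axiom for \(T\)), you follow exactly the paper's route. For the second identity the paper then invokes the polarisation formula \(f(x,y,z)=T(x+y+z)-\sum T(a)-\sum P(a,b)\) from the preceding lemma, specialised at \(x=y=z\), together with \(T(3x)=27T(x)\), to get \(f(x,x,x)=27T(x)-3T(x)-18T(x)=6T(x)\); you instead simply substitute the already-proved \(P(x,x)=\tfrac12 f(x,x,x)\) into \(P(x,x)=3T(x)\). Your route is marginally shorter and avoids the polarisation identity altogether, at no cost. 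You also spell out the verification that \(P_x\) is a quadratic form and the normalisation convention behind the factor \(\tfrac12\) in the associated bilinear form, which the paper's proof leaves entirely implicit; this is worth doing, since the statement is convention-dependent. One small nitpick: passing from \(6T(x)=2P(x,x)\) to \(T(x)=\tfrac13 P(x,x)\) uses invertibility of both \(2\) and \(3\), not only \(\ch k\ne3\) as your parenthetical suggests, though both are of course guaranteed by the hypothesis.
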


\begin{proof}
	The first equality follows from the definition of \(P\), since \[4P(x,y)=P(x,2y)=P(x,y)+P(x,y)+f(x,y,y),\]
	so if \(\ch k\ne2\) then by rearranging the terms we obtain the result.
	
	The second equality follows from \eqref{fsym} and the definition of \(T\). In particular, we have that \(T(3x)=27T(x)\), and \(T(2x)=2T(x)+2P(x,x)\), i.e. \(P(x,x)=3T(x)\) if \(\ch k\ne2\), so from \eqref{fsym} we have
	\begin{align*}
		f(x,x,x)&=T(3x)-3T(x)-6P(x,x)\\
		&=27T(x)-3T(x)-18T(x)=6T(x),
	\end{align*}
	and the result follows since \(\ch k\ne2,3\).
\end{proof}

\begin{definition}\label{thetadef}
Let \(v\in V\), \(U\subseteq V\). We denote by \(x\Delta\) the radical of \(f_x\), and \(U\Delta\coloneqq\bigcap_{x\in U}x\Delta\). We also define \(U\Theta\coloneqq\Set{v\in V|P_v(u)=0\;\;\forall\,u\in U}\).

If \(U\subseteq U\Delta\) we call \(U\) \emph{singular}. If \(U\subseteq U\Theta\) we call \(U\) \emph{brilliant}.
\end{definition}

\begin{lemma}\label{thetalinear}
For any \(U\subseteq V\), \(U\Delta\) and \(U\Theta\) are subspaces of \(V\).
\end{lemma}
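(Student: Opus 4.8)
The plan is to prove the two assertions separately. Both are instances of the standard fact that an intersection of subspaces of $V$ is again a subspace, so in each case it suffices to exhibit $U\Delta$ and $U\Theta$ as such an intersection and to check that each individual piece being intersected is a subspace.

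First I would treat $U\Delta$. By Definition \ref{thetadef} we have $U\Delta = \bigcap_{x\in U} x\Delta$, where $x\Delta$ is the radical of the bilinear form $f_x = f(x,-,-)$, that is, $x\Delta = \Set{w\in V | f(x,w,z)=0 \text{ for all } z\in V}$. Since $f$ is symmetric and trilinear (by the lemma preceding Definition \ref{thetadef}), $f_x$ is a symmetric bilinear form in its two remaining arguments. Hence for $w_1,w_2\in x\Delta$ and $\lambda\in k$ one has $f(x,w_1+\lambda w_2,z)=f(x,w_1,z)+\lambda f(x,w_2,z)=0$ for all $z$, so $w_1+\lambda w_2\in x\Delta$; thus each $x\Delta$ is a subspace, and therefore so is the intersection $U\Delta$.

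Next I would treat $U\Theta = \Set{v\in V | P_v(u)=0 \text{ for all } u\in U}$. The key move is to rewrite this as $U\Theta = \bigcap_{u\in U}\ker\ell_u$, where for each fixed $u$ the map $\ell_u\colon V\to k$ is given by $\ell_u(v)=P(v,u)=P_v(u)$. By Definition \ref{3form}(ii), $P$ is \emph{linear in its first component}, so each $\ell_u$ is a linear functional on $V$; its kernel is a subspace, and the intersection of these kernels over $u\in U$ is again a subspace.

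I expect no genuine obstacle here: the only point requiring care is keeping the two arguments of $P$ straight. Note that $P_v(u)=P(v,u)$ is \emph{quadratic}, not linear, as a function of the second argument $u$, so one cannot argue directly that $\Set{u | P(v,u)=0}$ is a subspace; the saving observation is that in $U\Theta$ it is the first slot that varies over $V$, and $P$ is linear precisely in that slot. Once this is noted, both statements follow immediately from (bi)linearity together with closure of subspaces under intersection.
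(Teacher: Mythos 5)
Your proof is correct and follows the same route as the paper, which simply notes that the claim follows from the definitions because \(f\) is (tri)linear and \(P\) is linear in its first component; you have merely spelled out the intersection-of-kernels argument that the paper leaves implicit. Your cautionary remark about which slot of \(P\) varies is a worthwhile clarification but does not change the substance.
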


\begin{proof}
This follows from the definition, since \(f\) is linear and \(P\) is linear in the first component.
\end{proof}

\begin{lemma}\label{thetalemma}
Let \(U\le V\) and \(v\in V\), then \(v\in U\Theta\) iff \(f(v,u,u')=0\) for all \(u,u'\in U\).
\end{lemma}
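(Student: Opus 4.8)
The plan is to reduce the statement to the definition of $U\Theta$ and then apply a polarisation argument, the only subtlety being the use of $\ch k\ne2$ and of the fact that $U$ is a subspace.

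First I would unpack the definition. By \cref{alternateform} we have $P_v(u)=P(v,u)=\frac{1}{2}f(v,u,u)$, so since $\ch k\ne2$ the condition $P_v(u)=0$ is equivalent to $f(v,u,u)=0$. Hence $v\in U\Theta$ if and only if $f(v,u,u)=0$ for every $u\in U$. This rephrasing turns the lemma into the claim that, for a subspace $U$, vanishing of $f(v,u,u)$ on the diagonal is equivalent to vanishing of $f(v,u,u')$ for all pairs $u,u'\in U$.

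One implication is immediate: if $f(v,u,u')=0$ for all $u,u'\in U$, then specialising $u'=u$ gives $f(v,u,u)=0$, so $v\in U\Theta$. For the converse I would polarise. Given $u,u'\in U$, since $U$ is a subspace we have $u+u'\in U$, so $f(v,u+u',u+u')=0$. Expanding using the bilinearity of $f_v=f(v,-,-)$ and the symmetry of $f$ (established just before this lemma), we obtain
\[
0=f(v,u+u',u+u')=f(v,u,u)+2f(v,u,u')+f(v,u',u').
\]
The diagonal terms $f(v,u,u)$ and $f(v,u',u')$ vanish because $v\in U\Theta$, leaving $2f(v,u,u')=0$, and since $\ch k\ne2$ this forces $f(v,u,u')=0$.

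I do not expect any genuine obstacle here; the argument is the standard polarisation identity. The two points to be careful about are that $U$ must be a subspace (so that $u+u'\in U$ and the diagonal hypothesis applies to $u+u'$) and that the characteristic hypothesis $\ch k\ne2$ is needed both to pass between $P_v$ and $f_v$ and to cancel the factor $2$ at the end.
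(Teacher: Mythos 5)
Your proof is correct, but it takes a slightly different route from the paper's. You first translate the condition $v\in U\Theta$ into the vanishing of $f(v,u,u)$ via the identity $P_v(u)=\tfrac12 f(v,u,u)$ of \cref{alternateform}, and then polarise the symmetric bilinear form $f(v,-,-)$, cancelling a factor of $2$ at the end. The paper instead works directly with the defining axiom of a 3-form (\cref{3form}), namely $P(v,u+u')=P(v,u)+P(v,u')+f(v,u,u')$, which is already the polarisation identity for the quadratic form $P_v$: the forward direction is then immediate without any division, and the reverse direction combines additivity of $P(v,-)$ on $U$ with the scaling law $P(v,au)=a^2P(v,u)$ to force $P_v(u)=0$. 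The two arguments are of comparable length; yours leans on $\ch k\ne2$ (and, via \cref{alternateform}, on $\ch k\ne3$ as well) and on the symmetry of $f$, whereas the paper's uses only the axioms of the triple $(T,P,f)$ and would survive in settings where $P$ is not recoverable from $f$ by halving. Under the section's standing hypothesis $\ch k\ne2,3$ both are perfectly valid, and your identification of exactly where the subspace hypothesis and the characteristic hypothesis enter is accurate.
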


\begin{proof}
This follows from the definition of \(P\):
\[f(v,u,u')=P(v,u+u')-P(v,u)-P(v,u').\]

Let \(v\in U\Theta\), then all the terms on the right hand side are zero since \(u,u',u+u'\in U\).

Vice versa, if \(f(v,u,u')=0\) for all \(u,u'\in U\) then \(P(v,u+u')=P(v,u)+P(v,u')\) for all \(u,u'\in U\). In particular, for all \(a\in k, u\in U\) we have that \(aP(v,u)=P(v,au)=a^2P(v,u)\), so \(P_v(u)=0\) for all \(u\in U\), i.e. \(v\in U\Theta\).
\end{proof}

\begin{definition}[Dickson 3-form]\label{e6formdef}
Let \(V\) be a \(\dimn{27}\) vector space with basis \(X=\Set{x_i,x_i',x_{ij}|1\le i,j\le6, i<j}\), with the convention \(x_{ij}=-x_{ji}\). Let \(f\) be the symmetric trilinear form whose nonzero monomials are \(x_ix_j'x_{ij}\), \(1\le i\ne j\le 6\) and \(x_{1^d2^d}x_{3^d4^d}x_{5^d6^d}\) where \(d\in C\subset\alt_6\).

\(C\) is a set of coset representatives for \(\alt_{\Delta}\) in \(\alt_6\), where \(\alt_{\Delta}\) denotes the stabiliser in \(\alt_6\) of the partition \(\Delta=\left\{\left\{1,2\right\},\left\{3,4\right\},\left\{5,6\right\}\right\}\) of \(\left\{1,\ldots,6\right\}\). For a choice of \(C\), the images of \((1,2;3,4;5,6)\) are as follows:
\begin{center}
\begin{tabular}{c @{\hspace{4\tabcolsep}} c @{\hspace{4\tabcolsep}} c @{\hspace{4\tabcolsep}} c @{\hspace{4\tabcolsep}} c}
	\centering
	12 34 56&13 24 65&14 23 56&15 26 43&16 25 34\\
	12 35 64&13 26 54&14 25 63&15 24 36&16 23 45\\
	12 36 45&13 25 46&14 26 35&15 23 64&16 24 53\\
\end{tabular}
\end{center}
For \((i,j;k,l;m,n)\) as in the above table, the monomial \(x_{ij}x_{kl}x_{mn}\) has sign \(+1\) in \(f\).

The \emph{Dickson 3-form} is the 3-form \((T,P,f)\), where \(f\) is the trilinear form above, which we call the \emph{\(\E_6\text{-form}\)}.
\end{definition}

\begin{remark}
	Since we are working with \(\ch k\ne 2,3\), \(P\) and \(T\) are \(P(x,y)=\frac{1}{2}f(x,y,y)\), \(T(x)=\frac{1}{6}f(x,x,x)\), by \cref{alternateform}.
\end{remark}

\begin{theorem}[{\cite[Theorem 5.4]{aschbacherE61}}]\label{3formisometry}
The isometry group of the Dickson 3-form is the universal Chevalley group of type \(\E_6\).
\end{theorem}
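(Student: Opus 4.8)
The plan is to prove the two inclusions $\E_{6,\sc}(k)\le\mathrm{Isom}(f)$ and $\mathrm{Isom}(f)\le\E_{6,\sc}(k)$ separately, writing $G\coloneqq\mathrm{Isom}(f)$ for the isometry group, which is a closed (hence algebraic) subgroup of $\GL(V)$ since it is cut out by the polynomial equations $g.f=f$. For the forward inclusion I would use that $V$ is the minimal \(\dimn{27}\) module for the simply connected group $\E_{6,\sc}$ and that the space of $\E_6$-invariant cubic forms on $V$ is one-dimensional (the uniqueness mentioned in the introduction); since $f$ is such an invariant by construction, every $g\in\E_{6,\sc}(k)$ satisfies $g.f=\lambda_g f$ for a character $\lambda_g\in k^{\times}$, so $\E_{6,\sc}(k)$ lies in the similarity group of $f$. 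As $\E_{6,\sc}(k)$ is perfect and $g\mapsto\lambda_g$ is a homomorphism into the abelian group $k^{\times}$, the character $\lambda$ is trivial on $\E_{6,\sc}(k)$, giving $\E_{6,\sc}(k)\le G$. I would also record that the scalar maps $\zeta\,\mathrm{id}_V$ with $\zeta^3=1$ preserve $f$ (since they scale it by $\zeta^3$) and are exactly the image of $Z(\E_{6,\sc})\simeq\mu_3$, so the copy realised in $G$ is genuinely the universal one; this is where $\ch k\ne3$ first enters, the centre $\mu_3$ degenerating in characteristic $3$.

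The reverse inclusion is the heart of the matter, and the main obstacle is bounding the dimension of $G$ from above. I would pass to Lie algebras: the tangent space to $G$ at the identity is
\[\mathrm{Lie}(G)=\Set{X\in\mathfrak{gl}(V)| f(Xu,v,w)+f(u,Xv,w)+f(u,v,Xw)=0\ \forall\,u,v,w\in V},\]
a linear subspace of the $729$-dimensional space $\mathfrak{gl}(V)$ cut out by the explicit linear conditions coming from the monomials of the Dickson form in \cref{e6formdef}. Differentiating the forward inclusion gives $\mathfrak{e}_6\subseteq\mathrm{Lie}(G)$, so $\dim\mathrm{Lie}(G)\ge78$; the crucial computation is to check, exploiting the $\alt_6$-symmetry of the monomial set and the polarisation identities of \cref{alternateform} (valid because $\ch k\ne2,3$), that these conditions cut $\mathfrak{gl}(V)$ down to a space of dimension exactly $78$. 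Granting this, $G^{\circ}$ is a connected group containing $\E_{6,\sc}(k)$ with $\dim G^{\circ}=\dim\E_{6,\sc}(k)=78$, whence $G^{\circ}=\E_{6,\sc}(k)$.

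It then remains to show that $G$ is connected, i.e. that the finite group $G/G^{\circ}$ is trivial. Writing $\rho$ for the embedding, any $g\in G$ normalises $G^{\circ}=\rho(\E_{6,\sc}(k))$ and induces an automorphism $\alpha$ of $\E_{6,\sc}(k)$ with $g\rho(x)g^{-1}=\rho(\alpha(x))$; in particular $\rho\circ\alpha\cong\rho$. Over the algebraically closed field $k$ the automorphism $\alpha$ is inner up to the graph automorphism $\gamma$. If $\alpha$ is inner, say $\alpha=\mathrm{Inn}(h)$ with $h\in\E_{6,\sc}(k)$, then $\rho(h)^{-1}g$ centralises the irreducible group $\rho(\E_{6,\sc}(k))$, so by Schur's lemma it is scalar; being an isometry it is a cube root of unity lying in $Z(\E_{6,\sc})\le G^{\circ}$, and hence $g\in G^{\circ}$. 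If instead $\alpha$ involves $\gamma$, then $\rho\circ\alpha$ is equivalent to the dual module $V^{*}$ (highest weight $\varpi_6=\gamma(\varpi_1)$), which is not isomorphic to $V$; this contradicts $\rho\circ\alpha\cong\rho$. Therefore $G=G^{\circ}=\rho(\E_{6,\sc}(k))$ is the universal Chevalley group of type $\E_6$, as claimed.

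The one step I expect to be genuinely laborious is the corank computation for $\mathrm{Lie}(G)$, which is best organised by grouping the $729$ defining equations according to the $\alt_6$-orbits of monomials of $f$ (or simply verified by machine); everything else is formal once the representation theory of the minimal module and the structure of $\mathrm{Out}(\E_6)$ are in hand.
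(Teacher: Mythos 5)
The paper does not actually prove this statement: it is imported verbatim from Aschbacher, whose argument is geometric in flavour (he analyses the singular and brilliant subspaces of the form and assembles the group from explicit transformations preserving it), so your Lie-theoretic route is genuinely different from the source. The overall architecture you propose --- one inclusion by invariant theory, an upper bound \(\dim\mathrm{Lie}(G)\le78\) for the other, and Schur's lemma together with \(V\not\simeq V^*\) to kill the component group --- is a legitimate and standard way to identify the automorphism group of an invariant tensor, and the connectedness step is handled correctly over an algebraically closed field.

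The genuine gap is at the very first step: you assert that \(f\) ``is such an invariant by construction'', but in \cref{e6formdef} the Dickson form is defined purely combinatorially, by a list of monomials \(x_ix_j'x_{ij}\) together with certain signed products \(x_{ij}x_{kl}x_{mn}\); nothing in that definition mentions \(\E_6\). That the one-dimensional space \(\sym^3(V^*)^{\bb{\E}_6}\) is spanned by \emph{this particular} cubic is precisely (half of) what the theorem asserts, so as written the forward inclusion is circular, and with it the claim \(\mathfrak{e}_6\subseteq\mathrm{Lie}(G)\) on which the reverse inclusion rests. To repair it you must identify the basis \(\Set{x_i,x_i',x_{ij}}\) with a weight basis of the \dimn{27} module and verify directly that \(\mathfrak{e}_6\) (or a generating set of root subgroups) annihilates \(f\) --- a computation of the same order of difficulty as the corank-\(78\) count you defer for the reverse inclusion, so the two load-bearing verifications of the proof are both left undone. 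Note finally that the paper's standing hypothesis in this section is only \(\ch k\ne2,3\) with \(k\) an arbitrary field, whereas your component-group argument is phrased over an algebraically closed field and would need a descent step, or a direct argument on \(k\)-points, to recover Aschbacher's statement in that generality.
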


\begin{definition}\label{3decomposition}
Using the notation of \cref{e6formdef}, let \[V_9:=\gen{x_{16},x_{25},x_{34},x_{12},x_{56},x_2',x_5,x_1,x_6'},\]and denote by \(\mathcal{V}_9\) the orbit of \(V_9\) under the action of \(\E_6\).

A \emph{3-decomposition} of \(V\) is a decomposition of \(V\) into \(A_1\oplus A_2\oplus A_3\) such that each of the \(A_i\) is a member of \(\mathcal{V}_9\) and \(A_i\oplus A_j=A_k\Theta\) for \(i,j,k\) distinct.
\end{definition}

A property of 3-decompositions we will need is the following:
\begin{lemma}[{\cite[p.4]{aschbacher_preprint}}]\label{3decompositionstabiliser}
	The stabiliser in \(\E_6\) of a 3-decomposition is isomorphic to the product of a Cartan subgroup of \(\E_6\) with three copies of \(\SL_3(k)\) (i.e. a group of type \(\A_2\A_2\A_2\)) extended by \(\sym_3\).
\end{lemma}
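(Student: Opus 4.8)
The plan is to work in the explicit model afforded by the Dickson form and to recognise the stabiliser as the subsystem subgroup of type $\A_2\A_2\A_2$ together with the triality permuting its three factors. First I would choose coordinates in which $V=M_3(k)\oplus M_3(k)\oplus M_3(k)$, the three summands being the members $A_1,A_2,A_3$ of the chosen 3-decomposition, and in which the $\E_6$-form of \cref{e6formdef} becomes (a scalar multiple of) $\det X_1+\det X_2+\det X_3-\tr(X_1X_2X_3)$; one checks that the given $V_9$ is one such block, so that each $A_i$ indeed lies in the orbit $\mathcal{V}_9$. The group $\SL_3\times\SL_3\times\SL_3$ then acts by $(g_1,g_2,g_3)\colon(X_1,X_2,X_3)\mapsto(g_1X_1g_2^{-1},g_2X_2g_3^{-1},g_3X_3g_1^{-1})$, and since each $\det X_i$ and the term $\tr(X_1X_2X_3)$ are visibly invariant, this action preserves $f$; by \cref{3formisometry} it therefore lands in $\E_6$, fixing each $A_i$. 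This exhibits the required $\A_2^3$, whose maximal torus is a maximal torus (a Cartan subgroup) of $\E_6$, inside the stabiliser, and gives the lower bound.

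Next I would verify the defining conditions of a 3-decomposition and pin down the $\sym_3$. By \cref{thetalemma}, $v\in A_k\Theta$ iff $f(v,u,u')=0$ for all $u,u'\in A_k$; since $f$ contributes nothing when two of its arguments lie in the same block except through $\det$ on that block, and $\det$ is nondegenerate on $M_3(k)$, this forces the $A_k$-component of $v$ to vanish, giving $A_k\Theta=A_i\oplus A_j$ as required. The symmetric group is realised inside $\E_6$ by the cyclic map $(X_1,X_2,X_3)\mapsto(X_2,X_3,X_1)$ together with a transpose-type involution, chosen to preserve $f$; these permute $A_1,A_2,A_3$ and together generate a copy of $\sym_3$. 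This gives the full group $\A_2^3.\sym_3$ as a subgroup of the stabiliser.

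It remains to prove the upper bound, namely that the stabiliser $S$ is no larger. Because $S^{\circ}$ contains the maximal torus of $\E_6$ sitting inside $\A_2^3$, it is a connected subgroup of maximal rank; being the stabiliser of the nondegenerate cubics $f|_{A_i}$ it is moreover reductive, hence a subsystem subgroup $\gen{T,\,U_\alpha:\alpha\in\Psi}$ for a closed subsystem $\Psi\supseteq\Phi(\A_2^3)$. The subsystem $\A_2\A_2\A_2$ is maximal among closed subsystems of $\E_6$ (it is obtained by deleting the central node of the extended Dynkin diagram, in the spirit of Borel--de Siebenthal, and appears among the reductive maximal-rank subgroups of \cref{posdimmaxsbg}), so $\Psi$ is either $\Phi(\A_2^3)$ or all of $\E_6$; the latter is impossible since $\E_6$ acts irreducibly on the 27-dimensional $V$ and so fixes no proper subspace $A_1$. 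Hence $S^{\circ}=\A_2^3$, and $S/S^{\circ}$ embeds in $N_{\E_6}(\A_2^3)/\A_2^3\cong\sym_3$, with equality by the previous paragraph. Thus $S=\A_2^3.\sym_3$, which is the central product of a Cartan subgroup of $\E_6$ with three copies of $\SL_3(k)$ extended by $\sym_3$; since any 3-decomposition is $\E_6$-conjugate to the standard one, the statement holds in general.

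The main obstacle is this last upper-bound step. The clean argument rests on two facts that need care: that $S^{\circ}$ really is reductive (so that no unipotent radical sneaks in and $S^{\circ}$ is genuinely a subsystem subgroup), and that $\A_2^3$ is maximal among closed subsystems of $\E_6$. A more computational but self-contained alternative is to differentiate the form-preservation and decomposition-preservation conditions and show directly that the tangent algebra of $S$ is the $24$-dimensional subalgebra of type $\A_2\A_2\A_2$, which forces $\dim S=24$ and bypasses the reductivity subtlety. A secondary point requiring attention is matching the precise isomorphism type in the statement: in the simply connected $\E_6$ the subsystem subgroup is $(\SL_3\times\SL_3\times\SL_3)/\gen{(\omega,\omega,\omega)}$ with $\omega$ a primitive cube root of unity, so ``product of a Cartan subgroup with three copies of $\SL_3(k)$'' must be read as a central product, and one should confirm that the $\sym_3$ extension is the asserted one.
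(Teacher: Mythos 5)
The paper does not actually prove this lemma: it is quoted verbatim from Aschbacher's unpublished preprint, so there is no in-paper argument to measure you against. Your outline is the natural one (the Freudenthal/Dickson model \(V=M_3(k)^{\oplus3}\), \(f=\det X_1+\det X_2+\det X_3-\tr(X_1X_2X_3)\), the subsystem subgroup \(\A_2\A_2\A_2\) acting by \((X_1,X_2,X_3)\mapsto(g_1X_1g_2^{-1},g_2X_2g_3^{-1},g_3X_3g_1^{-1})\), and the cyclic shift together with \((X_1,X_2,X_3)\mapsto(X_1^{\mathsf T},X_3^{\mathsf T},X_2^{\mathsf T})\) generating the \(\sym_3\)), and the lower bound together with the verification \(A_k\Theta=A_i\oplus A_j\) is sound. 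For the upper bound, your stated reason for reductivity of \(S^{\circ}\) (``stabiliser of nondegenerate cubics, hence reductive'') is not a valid principle, but you flag this yourself; the clean fix is that \(S^{\circ}\) contains the maximal torus \(T\), so it is generated by \(T\) and the root subgroups \(U_{\alpha}\) it contains, and since the 27 weights of the minuscule module are multiplicity-free and partitioned by the blocks, \(U_{\alpha}\) preserves all three blocks iff \(U_{-\alpha}\) does; hence the root set is symmetric and closed, \(S^{\circ}\) is a subsystem subgroup, and maximality of \(\A_2^3\) (Borel--de Siebenthal) plus irreducibility of \(V\) finishes that step. The identification \(N_{\E_6}(\A_2^3)/\A_2^3\) of order \(6\), combined with your explicit \(\sym_3\), then pins down the component group.

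The genuine gap is the single clause at the end: ``since any 3-decomposition is \(\E_6\)-conjugate to the standard one, the statement holds in general.'' This transitivity is a substantive part of the result and does not follow from what you have written. What you can get cheaply is that \(A_1\) may be moved onto the first standard block (it lies in \(\mathcal{V}_9\) by definition) and that then \(A_2\oplus A_3=A_1\Theta\) is forced to be the sum of the other two blocks; but you still must show that \(\stab_{\E_6}(A_1)\) acts transitively on the admissible pairs \((A_2,A_3)\) inside \(A_1\Theta\), i.e.\ on the ways of completing \(A_1\) to a 3-decomposition. Without this, you have only computed the stabiliser of one particular 3-decomposition, whereas the lemma (and its use in \cref{alt6module1899}, where the 3-decomposition arises from an a priori unknown \(\alt_6\)) requires the statement for all of them. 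A secondary loose end is the claim that the standard blocks lie in the orbit \(\mathcal{V}_9\) of the specific subspace \(V_9\) of \cref{3decomposition}, which you assert as ``one checks''; this is a finite computation against the monomials of \cref{e6formdef} but must actually be done, since \(\mathcal{V}_9\) is defined as the orbit of that particular subspace.
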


Recall that there is a canonical correspondence between bilinear forms on \(V\) and linear maps \(V\otimes V\rightarrow k\):
\begin{enumerate}
	\item Given \(f\) a bilinear form on \(V\), the associated map is \(u\otimes v\mapsto f(u,v)\).
	\item Given a linear map \(F:V\otimes V\rightarrow k\), the associated bilinear form is \(f(u,v)=F(u\otimes v)\).
\end{enumerate}
The set of linear maps \(V\otimes V\rightarrow k\) is the dual space of \(V\otimes V\), i.e. \((V\otimes V)^*\), which is canonically isomorphic to \(V^*\otimes V^*\) since we will only work with a finite-dimensional \(V\), so bilinear forms can be thought as elements of \(V^*\otimes V^*\).

The symmetric square of \(V^*\) is the quotient of \(V^*\otimes V^*\) by the ideal generated by elements of the form \(x\otimes y-y\otimes x\), \(x,y\in V^*\), so by following the maps described above, there is a canonical map between \(\sym^2(V^*)\) and symmetric bilinear forms of \(V\).

\begin{remark}
	The above considerations extend naturally when \(V\) is a \(kG\text{-module}\).
\end{remark}

\begin{lemma}\label{ginvariant}
	Let \(f\) be a bilinear form on the \(kG\text{-module}\) \(V\), and \(F\in(V\otimes V)^*\) the associated linear map. Then \(f\) is \(G\text{-invariant}\) iff \(F\in{(V\otimes V)^*}^G\).
\end{lemma}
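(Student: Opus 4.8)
The plan is to prove the equivalence by unwinding the three $G$-actions in play — the diagonal action on $V\otimes V$, the contragredient action on its dual $(V\otimes V)^*$, and the action on bilinear forms — and checking that, under the canonical correspondence $f\leftrightarrow F$ given by $F(u\otimes v)=f(u,v)$, the two invariance conditions are literally the same condition written in two ways. No genuine computation is required; the content is purely a matter of keeping the definitions straight.

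First I would make the action on $V\otimes V$ explicit: since $V$ is a $kG$-module, $V\otimes V$ carries the diagonal action $(u\otimes v)\cdot g=(u\cdot g)\otimes(v\cdot g)$, extended linearly. The dual $(V\otimes V)^*$ then carries the contragredient action, which I would normalise so that $(g\cdot F)(w)=F(w\cdot g)$ for all $F\in(V\otimes V)^*$ and $w\in V\otimes V$. With this convention, $F\in{(V\otimes V)^*}^G$ means exactly that $F(w\cdot g)=F(w)$ for every $g\in G$ and every $w\in V\otimes V$. Next I would note that, both sides being linear in $w$ and the simple tensors spanning $V\otimes V$, it suffices to test this on $w=u\otimes v$. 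For such $w$ we have $w\cdot g=(u\cdot g)\otimes(v\cdot g)$, whence $F(w\cdot g)=f(u\cdot g,v\cdot g)$ and $F(w)=f(u,v)$. Therefore $F$ is $G$-fixed if and only if $f(u\cdot g,v\cdot g)=f(u,v)$ for all $g\in G$ and $u,v\in V$, which by the $m=2$ case of the action $(g.\alpha)(v_1,\ldots,v_m)=\alpha(v_1\cdot g,\ldots,v_m\cdot g)$ introduced at the start of the section is precisely the statement $g\cdot f=f$ for all $g$, i.e. the $G$-invariance of $f$. Both directions of the ``iff'' then drop out at once, since I have proved the equivalence of the two pointwise conditions rather than a single implication.

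The argument is entirely formal, so there is no analytic or combinatorial obstacle; the one point that genuinely needs care — and the step I would treat most attentively — is fixing the $G$-action conventions consistently. One must decide whether $V$ is viewed as a left or right module and whether the dual action is defined through $g$ or $g^{-1}$, and make that choice agree with the right-argument action on forms used earlier (namely $v\mapsto v\cdot g$). With the matching choice the two invariance notions coincide on the nose; an inconsistent choice would merely insert a spurious inverse, and since $F(w\cdot g)=F(w)$ for all $g$ is equivalent to $F(w\cdot g^{-1})=F(w)$ for all $g$, the fixed subspace ${(V\otimes V)^*}^G$ is unchanged in any case. Once the conventions are pinned down, reducing to simple tensors and comparing values completes the proof.
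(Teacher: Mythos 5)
Your proposal is correct and follows essentially the same route as the paper's own proof: both directions are obtained by unwinding the definitions of the action on forms and the induced action on $(V\otimes V)^*$, using $g.(u\otimes v)=g.u\otimes g.v$ and testing on simple tensors. Your explicit remarks on spanning by simple tensors and on fixing the action conventions are sensible additions but do not change the argument.
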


\begin{proof}
	Let \(f\) be \(G\text{-invariant}\), i.e. \(f(u,v)=f(g.u,g.v)\) for all \(u,v\in V\) and all \(g\in G\). Then
	\begin{align*}
	g.F(u\otimes v)&=F(g.(u\otimes v))\\
	&=F(g.u\otimes g.v)\\
	&=f(g.u,g.v)\\
	&=f(u,v)\\
	&=F(u\otimes v).
	\end{align*}
	Vice versa, let \(F\in(V\otimes V)^*\) be fixed by \(G\), i.e. \(g.F=F\) for all \(g\in G\), then
	\begin{align*}
	f(g.u,g.v)&=F(g.u\otimes g.v)\\
	&=F(g.(u\otimes v))\\
	&=g.F(u\otimes v)\\
	&=F(u\otimes v)\\
	&=f(u,v).
	\end{align*}
\end{proof}

\begin{remark}
	In the same way, it follows from the canonical mappings that a symmetric bilinear form \(f\) on \(V\) is \(G\text{-invariant}\) iff the associated element in \(\sym^2(V^*)\) is fixed by \(G\).
	
	In particular, if \(\dim\sym^2(V^*)=1\), then there is a unique symmetric bilinear form on \(V\), up to scalars; if \(V\) is a \(kG\text{-module}\) and \(\dim\sym^2(V^*)^G=1\), then there is a unique \(G\text{-invariant}\) symmetric bilinear form on \(V\).

	The above considerations extend to \(\sym^n(V^*)\) and symmetric \(n\text{-linear}\) forms on \(V\). In particular, a symmetric trilinear form \(f\) on \(V\) corresponds to an element \(F\) of \(\sym^3(V^*)\), and \(f\) is \(G\text{-invariant}\) iff \(F\) is fixed by \(G\).
\end{remark}

\begin{definition}\label{ahom}
Let \(G\) be a group over \(k\), \(M_1,\ldots,M_n\) be finite-dimensional \(kG\)-modules, \(\Set{\alpha_{i,j}^c}_{c\in\mathcal{C}_{i,j}}\) be a basis of \(\hom_{kG}(M_i,\sym^2(M_j^*))\), for \(1\le i\ne j\le n\), where \[\m{C}_{i,j}=\left\{1,\ldots,\dim\hom_{kG}(M_i,\sym^2(M_j^*))\right\},\] and \(\tau_i\) be the canonical \(kG\text{-isomorphism}\) between \(\sym^2(M_i^*)\) and the symmetric bilinear forms of \(M_i\), for \(1\le i\le n\).

We define \(f_{i,j}^c\colon M_i\times M_j\times M_j\rightarrow k\) to be the \(G\text{-invariant}\) trilinear map defined by \((x,y,y')\mapsto(\tau_j(\alpha^c_{i,j}(x)))(y,y')\), for \(1\le i\ne j\le n\).

We also define \(\Set{f_i^b}_{b\in\mathcal{B}_i}\) to be a basis of the set of \(G\text{-invariant}\) symmetric trilinear forms of \(M_i\), for \(1\le i\le n\), where \(\m{B}_i=\left\{1,\ldots,\dim\sym^3(M_i^*)\right\}\).

We assume that everything is chosen in such a way that all the \(f_{i,j}^c\) and \(f_i^b\) defined as above are nontrivial.	
\end{definition}

Note that in later chapters, we will often identify \(f_{i,j}^c\) with \(\alpha_{i,j}^c\) for simplicity.

\begin{lemma}\label{ahomdef}
Let \(G\) be a group over \(k\), \(M=M_1\oplus M_2\) a finite-dimensional \(kG\text{-module}\), and \(f\) a \(G\text{-invariant}\) symmetric trilinear form on \(M\). Then
\begin{equation}\label{formdecomposition}f=\sum_{i=1}^2\sum_{b\in\mathcal{B}_i}a_i^bf_i^b+\sum_{i\ne j}\sum_{c\in\mathcal{C}_{i,j}}a_{i,j}^cf_{i,j}^c,\end{equation}
where \(f_i^b,f_{i,j}^c,\m{B}_i,\m{C}_{i,j}\) are as in \cref{ahom}, and all \(a_i^b,a_{i,j}^c\) lie in \(k\).	
\end{lemma}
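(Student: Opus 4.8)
The plan is to decompose \(f\) according to which summand each of its three arguments lies in, exploiting the fact that the projections \(\pi_i\colon M\to M_i\) are \(kG\text{-module}\) homomorphisms because the decomposition \(M=M_1\oplus M_2\) is \(G\text{-stable}\). For \(v\in M\) write \(v=v_1+v_2\) with \(v_i=\pi_i(v)\in M_i\); then trilinearity gives, for \(u,v,w\in M\),
\[f(u,v,w)=\sum_{a,b,c\in\{1,2\}}f(u_a,v_b,w_c).\]
Each of the eight summands has its arguments in prescribed summands, and since there are only two summands the multiset of labels \(\{a,b,c\}\) is one of \(\{1,1,1\}\), \(\{2,2,2\}\), \(\{1,1,2\}\), \(\{1,2,2\}\). (With three or more summands one would additionally encounter terms with all labels distinct; this is precisely why the statement is restricted to the case \(M=M_1\oplus M_2\).) Grouping the eight terms by this multiset and using that \(f\) is symmetric partitions \(f\) into four pieces, and I will identify each piece with one of the four families of forms in the claimed sum.

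First I would treat the two pure pieces. The restriction of \(f\) to \(M_i\times M_i\times M_i\) is a symmetric trilinear form on \(M_i\) (symmetric since \(f\) is), and it is \(G\text{-invariant}\) because \(f\) is and \(M_i\) is \(G\text{-stable}\). By \cref{ahom} the \(f_i^b\) form a basis of such forms, so this restriction equals \(\sum_{b\in\mathcal{B}_i}a_i^bf_i^b\) for suitable \(a_i^b\in k\); precomposing with the projections \(\pi_i\) re-expresses the \(\{i,i,i\}\)-piece of \(f\) as this combination, now regarded as a form on all of \(M\).

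Next I would treat the mixed pieces. Fix \(i\ne j\) and, for \(x\in M_i\), define \(\beta(x)\colon M_j\times M_j\to k\) by \(\beta(x)(y,y')=f(x,y,y')\); this is a symmetric bilinear form on \(M_j\). The assignment \(x\mapsto\beta(x)\) is \(k\text{-linear}\), and it is \(G\text{-equivariant}\): for \(g\in G\), \(\beta(g.x)(y,y')=f(g.x,y,y')=f(x,g^{-1}.y,g^{-1}.y')=(g.\beta(x))(y,y')\), using the \(G\text{-invariance}\) of \(f\). Via the canonical isomorphism \(\tau_j\) identifying symmetric bilinear forms on \(M_j\) with \(\sym^2(M_j^*)\), this gives \(\beta\in\hom_{kG}(M_i,\sym^2(M_j^*))\). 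Writing \(\beta=\sum_{c\in\mathcal{C}_{i,j}}a_{i,j}^c\alpha_{i,j}^c\) in the basis of \cref{ahom} and tracing through the definition \((x,y,y')\mapsto(\tau_j(\alpha_{i,j}^c(x)))(y,y')\) shows that the piece of \(f\) of mixed type \(\{i,j,j\}\) equals \(\sum_{c}a_{i,j}^cf_{i,j}^c\). Summing the four pieces (the two pure pieces and the two mixed pieces, for \((i,j)=(1,2)\) and \((2,1)\)) yields \eqref{formdecomposition}.

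Conceptually this is the \(kG\text{-module}\) decomposition \(\sym^3(M_1^*\oplus M_2^*)\cong\sym^3(M_1^*)\oplus(\sym^2(M_1^*)\otimes M_2^*)\oplus(M_1^*\otimes\sym^2(M_2^*))\oplus\sym^3(M_2^*)\), followed by taking \(G\text{-fixed}\) points (which is additive over direct sums) and using \((\sym^2(M_j^*)\otimes M_i^*)^G\cong\hom_{kG}(M_i,\sym^2(M_j^*))\); here \(\ch k\ne2,3\) guarantees that symmetric trilinear forms correspond bijectively to elements of \(\sym^3\) of the dual. I expect the only real care to be needed in the bookkeeping of the symmetrisation—checking that the three rearrangements of each mixed multiset combine into a single copy of the corresponding \(f_{i,j}^c\), and that the convention extending each \(f_i^b\) and \(f_{i,j}^c\) from \(M_i\) (resp. \(M_i\times M_j\times M_j\)) to all of \(M\) by the projections is applied consistently—rather than in any substantive step.
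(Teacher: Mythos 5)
Your proposal is correct and follows essentially the same route as the paper's proof: expand \(f\) by trilinearity over the projections, group the eight terms by symmetry into the two pure and two mixed pieces, and identify each piece with a linear combination of the corresponding basis from \cref{ahom}. The paper states this more tersely ("the conclusion is self-evident"), whereas you spell out the \(G\text{-equivariance}\) of \(x\mapsto f(x,-,-)\) and the \(\sym^3\) bookkeeping explicitly, but there is no substantive difference in the argument.
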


\begin{proof}
Let \(x,y,z\in M\), with projections \(x_i,y_i,z_i\) over \(M_i\). Then using the linearity of \(f\) we have
\[f(x,y,z)=\sum_{1\le i,j,k\le2}f(x_i,y_j,z_k)=\sum_{1\le i,j,k\le2}\left.f\right|_{M_i\times M_j\times M_k}(x_i,y_j,z_k).\]
The conclusion is self-evident using \cref{ahom}, since by symmetry we have four terms: \(\left.f\right|_{M_1}\), \(\left.f\right|_{M_2}\), \(\left.f\right|_{M_1\times M_2\times M_2}\), and \(\left.f\right|_{M_2\times M_1\times M_1}\). Each of the first two terms is a linear combinations of generators of the space of trilinear forms of the corresponding module, while each of the latter two terms is a linear combination of generators of \(\hom_{kG}(M_i,\sym^2(M_j^*))\) for \(i\ne j\), in the way described in \cref{ahom}.
\end{proof}

\begin{remark}
	Despite the unfriendly notation, the takeaway of the lemma is very intuitive: a \(G\text{-invariant}\) trilinear form on a module \(M=M_1\oplus M_2\), i.e. an element of the \(G\text{-fixed}\) point space of \(\sym^3(M)\), is a linear combination of \(\sym^3(M)\), \(\hom_{kG}(M_1,\sym^2(M_2^*))\), \(\hom_{kG}(M_2,\sym^2(M_1^*))\).
	
	This can be generalised to decompositions with more than two direct summands, say \(M=M\oplus N\oplus O\), but it needs additional maps like \(\hom_{kG}(M,N^*\otimes O^*)\) in order to include the terms originating from \(\left.f\right|_{M\times N\times O}(x_M,y_N,z_O)\), where \(x_M,y_N,z_O\) denote the projection of \(x,y,z\) over \(M,N,O\), respectively.
	
	In later chapters, we will sometimes denote \(\left.f\right|_{M\times N\times O}\) as \(f(M,N,O)\) for simplicity.
\end{remark}

\begin{lemma}\label{orthogonalform}
	Let \(U,V\) be finite-dimensional \(kG\text{-modules}\), and \(f\) a \(G\text{-invariant}\) symmetric trilinear form defined on \(U\oplus V\). If \(\dim\hom_{kG}(V,\sym^2(U^*))=0\), then \(V\le U\Theta\).
\end{lemma}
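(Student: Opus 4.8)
The plan is to turn the geometric conclusion $V\le U\Theta$ into the vanishing of one ``cross'' piece of $f$, and then to recognise that piece as an element of $\hom_{kG}(V,\sym^2(U^*))$, which is zero by hypothesis.

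First I would apply \cref{thetalemma} with ambient space $U\oplus V$ and distinguished subspace $U$: it says that a vector $v\in U\oplus V$ lies in $U\Theta$ precisely when $f(v,u,u')=0$ for all $u,u'\in U$. Hence proving $V\le U\Theta$ is equivalent to showing that the restricted map $\left.f\right|_{V\times U\times U}$ is identically zero.

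To do this, fix $v\in V$ and consider the map $\beta_v\colon(u,u')\mapsto f(v,u,u')$. Since $f$ is symmetric and trilinear, $\beta_v$ is a symmetric bilinear form on $U$, so under the canonical identification of symmetric bilinear forms on $U$ with $\sym^2(U^*)$ recalled before \cref{ahom} it corresponds to a vector $\Phi(v)\in\sym^2(U^*)$. Linearity of $f$ in its first slot makes $\Phi\colon V\to\sym^2(U^*)$ a $k$-linear map, and the $G$-invariance of $f$ — unwound exactly as in the proof of \cref{ginvariant} — shows that $\Phi$ intertwines the $G$-actions, i.e.\ $\Phi\in\hom_{kG}(V,\sym^2(U^*))$. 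By hypothesis this space is $0$, so $\Phi=0$; that is, $f(v,u,u')=0$ for every $v\in V$ and $u,u'\in U$, and the previous step gives $V\le U\Theta$. Alternatively, one can read the same conclusion straight off \cref{ahomdef} applied with $M_1=U$ and $M_2=V$: in the decomposition \eqref{formdecomposition} the summand $\left.f\right|_{V\times U\times U}$ is a linear combination of the basis forms $f_{2,1}^c$ indexed by $\m{C}_{2,1}$, a basis of $\hom_{kG}(V,\sym^2(U^*))$; when this space is trivial the index set is empty and the summand vanishes.

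The only genuinely non-routine step is verifying that $\Phi$ is a morphism of $kG$-modules rather than merely $k$-linear, equivalently that ``$f$ is $G$-invariant'' translates into ``the associated map $V\to\sym^2(U^*)$ is $G$-equivariant''. This is pure bookkeeping with the left/right action conventions on $V$, $U^*$ and $\sym^2(U^*)$, following the template of \cref{ginvariant}; everything else is a direct appeal to \cref{thetalemma} and the symmetric-bilinear-form/$\sym^2(U^*)$ correspondence already established in the text.
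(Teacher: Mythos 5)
Your proposal is correct and follows essentially the same route as the paper: the paper's proof simply observes that \(\left.f\right|_{V\times U\times U}\) is a linear combination of the basis maps \(f_{i,j}^c\) of \cref{ahom} indexed by \(\m{C}_{i,j}\), which is empty when \(\dim\hom_{kG}(V,\sym^2(U^*))=0\), and then invokes \cref{thetalemma} --- exactly the ``alternative'' you describe at the end. Your explicit construction of the equivariant map \(\Phi\colon V\rightarrow\sym^2(U^*)\) is just an unwinding of that appeal to \cref{ahom}, so the two arguments coincide in substance.
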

\begin{proof}
	Since \(\left.f\right|_{V\times U\times U}\) is a linear combination of the \(\size{\m{C}_{i,j}}\) maps \(f_{i,j}^c\) described in \cref{ahom}, if \(\size{\m{C}}=\dim\hom_{kG}(V,\sym^2(U^*))=0\) then \(\left.f\right|_{V\times U\times U}=0\), which means \(V\le U\Theta\) by \cref{thetalemma}.
\end{proof}

\end{section}
\end{chapter}

\begin{chapter}{Algorithms}\label{algorithm}

\cref{ghn,membership,ngt,t} describe the constructions used in \cref{method1}.

Let \(\m{L}\) be a Lie algebra of dimension \(d\) and rank \(l\) over a finite field \(k\) of our choice. In \cref{ghn} we construct generators of the corresponding Chevalley group \(\G=\G(k)\) in its adjoint representation, described in \cref{xhn_definition}, as matrices in \(\GL_d(k)\). We also construct generators of a maximal torus \(T\) and of its normaliser \(N=N_{\G}(T)\). In \cref{membership} we describe an easy way to check whether an element of \(\GL_d(k)\) lies in \(\G\).

Let \(\overline{k}\) be the algebraic closure of \(k\), \(\overline{\mathcal{L}}=\mathcal{L}\otimes_k\overline{k}\), and let \(\bb{\G}\) be the subgroup of \(\GL_d(\overline{k})\) preserving \(\overline{\mathcal{L}}\), in particular \(\G=\bb{\G}^{\sigma}\) for a Frobenius endomorphism \(\sigma\) of \(\bb{\G}\).

Since the underlying vector space of \(\m{L}\) is a \(\G\text{-module}\) under the natural action of \(\G\) in its adjoint representation, we will often denote \(\m{L}\) by \(L(\G)\), the adjoint module of \(\G\). The same holds for \(\overline{\m{L}}\) and \(L(\bb{\G})\). Whether we consider the Lie algebra structure or the module structure will be clear from the context. The minimal module of \(\G\) will be denoted by \(M(\G)\) instead.

We then proceed to construct \(s,u,t\in\bb{\G}\) that satisfy a presentation for \(\PSL_2(q)\), described in \cref{PSLpresentation}. In \cref{ngt} we construct a suitable \(s\in N\) and find \(u\in T\) satisfying the required relations, and in \cref{t} we describe how to find all the possible choices for \(t\in\bb{\G}\). This is divided in several steps:
\begin{enumerate}
	\item In \cref{s_basis} we describe how to find a Chevalley basis \(\varepsilon_s\) of \(\m{L}\) that diagonalises \(s\).
	\item Using \(\varepsilon_s\), in \cref{s_invert} we construct an involution \(e\in\G\) that inverts \(s\).
	\item In \cref{CGs,t_construction} we describe how to compute the possible \(c\in\bb{\G}\) such that \(t=ce\).
\end{enumerate}
Observe that the last step is not always possible, depending on \(q\) and \(k\). The embeddings we study in \cref{method1} are some of the cases where no issues arise, and we are able to compute the possible choice of \(t\).

\cref{LDU,LDUwords} describe how to construct the LDU decomposition of \(M\in\SL_{n+1}(k)\), if \(M\) admits one, and use it to find the preimage of \(M\) under the standard representation \(\rho:\A_n\rightarrow\SL_{n+1}(k)\). This is used in \cref{alt6construction} to construct a copy of \(\alt_6<\F_4(k)\le\GL_{52}(k)\), for a suitable field \(k\).

Let \(G\) be a group and \(V\) a finite-dimensional \(kG\text{-module}\). In \cref{3formconstruction} we describe how to construct \(G\text{-invariant}\) symmetric trilinear forms defined over \(V\), if any exists. This is used in \cref{alt6module5589}, where we construct the \(\C_4\text{-invariant}\) form induced by the \(\E_6\text{-invariant}\) form described in \cref{e6formdef}.

Let \(G\) be a finite group, \(V\) a finite-dimensional \(kG\text{-module}\), \(f\) a \(G\text{-invariant}\) symmetric trilinear form on \(V\), and \(H\) a subgroup of \(G\). In \cref{compareahom} we describe how we can write \(\left.f\right|_V\) in terms of \(\left.f\right|_{V\restri{H}}\). This is used in \cref{alt6module1899}, where we use our knowledge of \(\left.f\right|_{V\restri{H}}\) to study \(\left.f\right|_V\).

We will point out when an algorithm or a step of it is already available in \magma\ via a built-in function; likewise, we will highlight whenever we use a built-in \magma\ function.

\begin{section}{Construction of a maximal torus, its normaliser, and the corresponding root subgroups}\label{ghn}
Let \(\m{L}\) be a Lie algebra of the desired type, with underlying root system \(\Phi=\Phi(\m{L})\), and let \(\varepsilon=(\varepsilon_1,\ldots,\varepsilon_d)\) be a Chevalley basis of \(\m{L}\);  we choose \(\varepsilon\) so that the structure constants are \(N_{s,r}=\pm(p+1)\) for each \(r,s\in\Pi\), \(\Pi\) a base of \(\Phi\), as described in \cref{structureconstants}.

Denote by \(\G\) the group of automorphisms of \(\mathcal{L}\) generated by \(x_{\pm i}(1)\) for \(i=1,\ldots,l\), and let \(T\) and \(N\) be the groups defined in \cref{chevalleygroups}. We can construct generators for \(\G\), \(T\), and \(N\) by using their action on \(\varepsilon\) described by \eqref{xrt}, \eqref{hrl}, \eqref{nr} respectively, in particular we construct \(x_{\pm i}(1), h_i(\lambda), n_i\) where \(\lambda\) is a primitive element of \(k^{\times}\), and \(\alpha_i\) for \(i=1,\ldots,l\) are a set of fundamental roots.

The supplementary material contains a function that constructs \(x_{\pm i}(1), h_i(\lambda), n_i\) for an arbitrary Lie algebra \(\m{L}\) over a finite field \(k\), provided that the given Chevalley basis \(\varepsilon\) is a list sorted as described in \cref{magmaliealgebra}.

\begin{verification}
There are a few ways we can check that the matrices \(x_{\pm i}(1), h_i(\lambda), n_i\) we construct are correct.

If working with \magma, as mentioned earlier one could verify that this construction matches the built-in version.

Alternatively, one could verify that the group generated is isomorphic to the one we wanted by computing a composition series for it. In \magma\ this can be done with the \texttt{CompositionTree} function, whose algorithm is described in \cite{compositiontree}.	

Naturally, we can perform a membership test, explained in \cref{membership}, to confirm that all the matrices we constructed are in \(\G\); and it is easy but useful to verify that \(n_i\) normalises \(h_j(\lambda)\) for any choice of \(i,j\), and that the matrices \(h_i(\lambda)\) commute with each other.
\end{verification}
\end{section}

\begin{section}{Membership test}\label{membership}
Since we are working with large groups, stored by generating matrices, computations in the group can be quite difficult. Therefore, to check whether a matrix \(g\in\GL_d(k)\) is an element of \(\G\) we use the following result \cite[Computation 3.2]{griess_ryba_algorithm}:
\begin{theorem}\label{membershipthm}
	Let \(\m{L}=\gen{a_1,\ldots,a_I}\) be a Lie algebra, and let \(g\in\GL_{\dim\m{L}}(k)\). Then \(g\in\gen{x_{\pm i}(1)|1\le i\le I}\) iff \(g^{-1}\ad(a_i)g=\ad(a_ig)\) for \(1\le i\le I\).
\end{theorem}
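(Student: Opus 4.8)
The plan is to read the stated condition as the assertion that \(g\) is an \emph{automorphism of the Lie algebra} \(\m{L}\), verified only on the generating set \(a_1,\dots,a_I\), and then to identify the automorphism group of \(\m{L}\) with \(\G\). Thus I would split \cref{membershipthm} into two equivalences: first, that \(g\) satisfies \(g^{-1}\ad(a_i)g=\ad(a_ig)\) for all \(i\) if and only if \(g\in\aut\m{L}\); and second, that \(\aut\m{L}=\G\) for the Lie algebras we work with. The first is elementary algebra; the genuine content sits in the second.

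For the forward direction of the first equivalence, note that each generator \(x_{\pm i}(1)\) is by construction an automorphism of \(\m{L}\) (see \cref{xhn_definition}), and automorphisms form a group, so every \(g\in\G\) lies in \(\aut\m{L}\). A direct computation shows that for any \(g\in\aut\m{L}\) and any \(x\in\m{L}\) the conjugation identity \(g^{-1}\ad(x)g=\ad(xg)\) holds: applying the left side to \(v\) gives \([x,vg^{-1}]\,g=[xg,v]=v\,\ad(xg)\), using \([u,w]g=[ug,wg]\). Taking \(x=a_i\) yields the relation. For the converse, rewrite the hypothesis (by the same manipulation, replacing \(v\) by \(vg\)) as \([a_i,v]\,g=[a_ig,vg]\) for all \(v\) and all \(i\). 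Set \(S\coloneqq\Set{u\in\m{L}|[u,v]g=[ug,vg]\;\forall v\in\m{L}}\); this is a subspace by bilinearity, and it contains every \(a_i\). I claim \(S\) is a subalgebra. If \(u,u'\in S\), then the Jacobi identity gives
\[
[[u,u'],v]\,g=[u,[u',v]]\,g-[u',[u,v]]\,g=[ug,[u'g,vg]]-[u'g,[ug,vg]]=[[ug,u'g],vg],
\]
where the middle step uses \(u,u'\in S\) on the inner and outer brackets; since \(u\in S\) also forces \([u,u']g=[ug,u'g]\), we get \([[u,u'],v]g=[[u,u']g,vg]\), so \([u,u']\in S\). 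As the \(a_i\) generate \(\m{L}\) as a Lie algebra, \(S=\m{L}\), i.e.\ \(g\in\aut\m{L}\).

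The main obstacle is the identification \(\aut\m{L}=\G\), which is where the hypotheses on \(\m{L}\) and on \(g\) genuinely enter. Over the algebraic closure this is the statement that the automorphism group of the simple Lie algebra \(\m{L}\) is connected and equal to the adjoint Chevalley group; this holds precisely because the Dynkin diagrams of the relevant types have no nontrivial symmetries, so there are no graph automorphisms (cf.\ \cref{dynkindiagram}, where \(\F_4\) and \(\E_7\) admit no diagram automorphism, in contrast to \(\E_6\)). Restricting attention to \(g\in\GL_{\dim\m{L}}(k)\), which are \(k\)-linear, simultaneously excludes the semilinear field automorphisms. What remains is exactly the group generated by the root elements, namely \(\G\). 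I would flag that for \(\E_6\) this last identification fails by an index-\(2\) factor coming from the graph automorphism, so there the test detects membership in \(\aut\m{L}\) rather than in \(\G\) itself; in the cases treated here (types \(\F_4\) and \(\E_7\)) no such correction is needed, and the equivalence of \cref{membershipthm} holds as stated.
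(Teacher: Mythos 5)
Your proof is correct, and its core — rewriting the hypothesis as \([u,v]g=[ug,vg]\), observing that the set \(S\) of such \(u\) is a subspace containing the generators, and using the Jacobi identity to show \(S\) is a subalgebra, hence all of \(\m{L}\) — is exactly the argument the paper gives. Where you differ is in scope, and the difference is to your credit: the paper's proof establishes only the implication ``condition \(\Rightarrow g\in\aut\m{L}\)'' and stops there, leaving both the converse (that elements of \(\G\) do satisfy the conjugation identity) and the identification \(\aut\m{L}=\gen{x_{\pm i}(1)}\) entirely implicit; you supply the former and explicitly isolate the latter as the real content of the statement. Your discussion of that identification is essentially right (no diagram symmetries for \(\F_4\), \(\E_7\), \(\E_8\); linearity of \(g\) rules out semilinear field automorphisms), and over the algebraically closed field — which is where the paper actually applies the test, to decide membership in \(\bb{\G}=\aut\overline{\m{L}}\) — the adjoint group is generated by its root subgroups, so no correction is needed. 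One caveat you miss: over a \emph{finite} field of odd order in type \(\E_7\), the group generated by the root elements is the simple group \(\E_7(q)\), which has index \(2\) in \(\E_{7,\ad}(q)=\aut_{\mathbb{F}_q}(\m{L})\), so the test literally detects membership in the adjoint group rather than in \(\gen{x_{\pm i}(1)}\) (and, more pedantically, \(\gen{x_{\pm i}(1)}\) with parameter \(1\) only generates the group over the prime field). This does not affect the paper's use of the test, but it is the same species of discrepancy as the \(\E_6\) graph-automorphism issue you rightly flag.
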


\begin{proof}
	Define \[\m{G}\coloneqq\Set{a\in\m{L}|g^{-1}\ad(a)g=\ad(ag)}=\Set{a\in\m{L}|[a,bg^{-1}]g=[ag,b]\;\forall b\in\m{L}}.\]
	
	First, we show that \(\m{G}=\Set{a\in\m{L}|[b,a]g=[bg,ag]\;\forall b\in\m{L}}\).
	
	Let \(a\in\m{L}\) be such that \([a,bg^{-1}]g=[ag,b]\) for all \(b\in\m{L}\), and let \(b\in\m{L}\). Then
	\begin{align*}
	[b,a]g	&=-[b,a]g\\
			&=-[a,(bg)g^{-1}]g\\
			&=-[ag,bg]\\
			&=[bg,ag].
	\end{align*}
	Vice versa, let \(a\in\m{L}\) be such that \([b,a]g=[bg,ag]\) for all \(b\in\m{L}\), and let \(b\in\m{L}\). Then
	\begin{align*}
	[ag,b]	&=-[b,ag]\\
			&=-[(bg^{-1})g,ag]\\
			&=-[bg^{-1},a]g\\
			&=[a,bg^{-1}]g.
	\end{align*}
	
	If we show that \(\m{G}\) is a subalgebra of \(\m{L}\), then \(\m{G}=\m{L}\) because \(a_1,\ldots,a_I\in\m{G}\) are generators for \(\m{L}\). Therefore, \(g\) preserves the Lie product on \(\m{L}\) and \(g\in\aut\m{L}\). 
	
	By \cref{subalgebradef}, we only need to show that \(\m{G}\) is closed under Lie multiplication. Let \(a,a'\in\m{G}\), then for any \(b\in\m{L}\) we have
	\begin{align*}
	[b,[a,a']]g	&\eqtext{J}[[a',b],a]g+[[b,a],a']g\\
	&\eqtext{G}[[a'g,bg]g^{-1},a]g+[[bg,ag]g^{-1},a']g\\
	&\eqtext{G}[[a'g,bg],ag]+[[bg,ag],a'g]\\
	&\eqtext{J}[bg,[a,a']g]
	\end{align*}
	therefore \([a,a']\in\m{G}\). Here, J denotes using the Jacobi identity on \(\m{L}\), and G denotes using the fact that \(a,a'\in\m{G}\).
\end{proof}

Therefore, our membership test is performed as follows:
\begin{enumerate}
	\item Select a set \(a_1,\ldots,a_I\) of generators for \(\m{L}\) as a Lie algebra. A set of generators for \(\m{L}\) as a vector space also works, but is in general larger thus requires more computations.
	\item Check whether \(g^{-1}\ad(a_i)g=\ad(a_ig)\) for \(i=1,\ldots,I\).
	\item \(g\in\G\) if and only if equality holds for all \(i\).
\end{enumerate}
\end{section}

\begin{section}{Constructing an element of \texorpdfstring{\(N_{\G}(\T)\)}{NG(T)} and its action on \texorpdfstring{\(\T\)}{T}}\label{ngt}
	
In all the embeddings we study, listed in \cref{mainth1}, we end up looking for an element \(u\) lying in a maximal torus \(\T\) of \(\G\), and an element \(s\in N=N_{\G}(\T)\) that acts on \(u\) in a certain way. We construct them by taking \(s\in N_{\G}(\T)\) first, for a given \(\T\), then find a corresponding \(u\).

In all cases except \(\PSL_2(27)\), the \(u\) and \(s\) we mention in this section correspond to those of the presentation of the given \(\PSL_2(q)\), \(q\) a power of \(p\), so we will assume this is the case, and \(s,u\) are as described in \cref{PSLpresentation}.

Recall from \cref{boreltits} that \(N=\T\rtimes W\), and from \cref{chevalleygroups} that \(N=\gen{\T,n_i|i=1,\ldots,l}=\gen{h_i(\lambda),n_i|i=1,\ldots,l}\), so we know that a set of generators for \(W\) is given by the images of \(n_1,\ldots,n_l\) via the map \(N\rightarrow N/\T\). We will then take \(s\) to be a random element of \(\widetilde{W}=\gen{n_i|i=1,\ldots,l}\) of order \(\frac{q-1}{2}\).

In all the cases we study, there is a unique conjugacy class in \(W\) of elements of the appropriate order, making this task quite easy, as we can find \(s\) by doing a random search in \(\widetilde{W}\) until we find an element of the correct order, and making sure that its image on \(W\) has that same order. In other cases, we would have to make sure it lies in the correct conjugacy class(es), which can be done on a case-by-case scenario: the trace could be a distinguishing factor, as is simply checking whether a suitable \(u\) exists for the given \(s\).

\begin{remark}
Working with \magma, usually we can take \(s\) to be a preimage in \(N=N_{\G}(\T)\) of an element \(w\) of the Weyl group \(W=N/\T\) of the required order.
\end{remark}

We use \(s\) to find \(u\). Start by taking the maximal torus \(\T\) as constructed in \cref{ghn}, and compute its Sylow \(p\text{-subgroup}\) \(\T_p\) by taking \(\frac{\size{k}-1}{p}\text{-th}\) powers of its generators \(h_i(\lambda)\). Now the matrices \(\widehat{h}_i\coloneqq h_i(\lambda)^{\frac{\size{k}-1}{p}}\) have order \(p\) and commute with each other, so \(\T_p\simeq p^l\) is elementary abelian.

\begin{construction}
We can construct the matrix \(\overline{s}\in\GL_l(p)\) representing the action of \(s\) on \(\T_p\) in the following ways:
\begin{enumerate}
	\item If we have \(\widehat{h}_i\) as matrices in \(\GL_d(k)\), we can assume they are diagonal, \(\widehat{h}_i=\mathrm{diag}(d_{i1},\ldots,d_{id})\). Then, for \(1\le i\le l\), the entries of the \(i\text{-th}\) row of \(\overline{s}\) are given by the solution of the system of equation given by
	\begin{equation}\label{sbarsystem1}\overline{s}_{i1}\log_ad_{1j}+\cdots+\overline{s}_{il}\log_ad_{lj}=\log_a((\widehat{h}_i^s)_{jj}),\end{equation}
	for \(1\le j\le d\), where \(a=\omega^{\frac{\size{k}-1}{p}}\), \(\omega\) a primitive element of \(k\).
	\item If we have \(\widehat{h}_i\) as torus elements written in the form \(\widehat{h}_i=h(a^{\delta_{1i}},\ldots,a^{\delta_{li}})\) described in \cref{chevalleygroups}, where \(\delta_{ij}=1\) if \(i=j\) and 0 otherwise, and \(a=\omega^{\frac{\size{k}-1}{p}}\), with \(\omega\) a primitive element of \(k\), let \(\widehat{h}_i^s=h(b_{i1},\ldots,b_{il})\) for \(i=1,\ldots,l\). Then the entries of the \(i\text{-th}\) row of \(\overline{s}\) are given by \(\overline{s}_{ij}=\log_a(b_{ij})\), \(j=1,\ldots,l\).
\end{enumerate}
In both cases we compute \(\overline{s}_{ij}\) as elements of \(\left\{0,\ldots,p-1\right\}\), then we take the corresponding representative in \(\mathbb{F}_p\).
\end{construction}

\begin{proof}
For (i), we have that since the \(\widehat{h}_i\) are diagonal, and \(s\) normalises the torus they lie in, then \(\widehat{h}^s\) is also diagonal, and \(\widehat{h}_i^s=\widehat{h}_1^{\overline{s}_{i1}}\ldots\widehat{h}_l^{\overline{s}_{il}}\) for some \(\overline{s}_{i1},\ldots,\overline{s}_{il}\in\left\{0,\ldots,p-1\right\}\), \(1\le i\le l\), as \(\widehat{h}_1,\ldots,\widehat{h}_l\) is a basis of \(\T_p\).

We can divide this relation into \(l\) equations, one for each entry of \(\widehat{h}_i^s\), that is \begin{equation}\label{sbarsystem11}(\widehat{h}^s_i)_{jj}=(d_{1j})^{\overline{s}_{i1}}\ldots(d_{lj})^{\overline{s}_{il}},\end{equation}
for \(j=1,\ldots,d\). Recall that \(\widehat{h}_i\) and \(\widehat{h}_i^s\) are both diagonal matrices in \(\GL_d(k)\), so their diagonal entries are never 0 and we can compute discrete logarithms: let \(\omega\) be a primitive element of \(k\), and let \(a=\omega^{\frac{\size{k}-1}{p}}\), so that \(a\) has order \(p\); then by applying \(\log_a\) to each side of \eqref{sbarsystem11} we obtain \eqref{sbarsystem1}.

Since \(\widehat{h}^s\in\T_p\), the system of equations \eqref{sbarsystem1} has a solution and since \(\widehat{h}_1,\ldots,\widehat{h}_l\) is a basis of \(\T_p\), the solution is unique.

For (ii), the setting is different but the argument is similar. Let \(\omega\) be a primitive element of \(k\) and \(a=\omega^{\frac{\size{k}-1}{p}}\), then \(h_1=h(\omega,1,\ldots,1),\ldots,h_l=h(1,\ldots,1,\omega)\) is a basis of \(\T\) (recall that multiplication is component-wise) and \(\widehat{h}_1=h(a,1,\ldots,1),\ldots,h_l=h(1,\ldots,1,a)\), , is a basis of \(\T_p\). Since \(s\) normalise \(\T_p\), for \(i=1,\ldots,l\) we have \begin{equation}\label{sbarsystem2}\widehat{h}_i^s=\widehat{h}_1^{\overline{s}_{i1}}\ldots\widehat{h}_l^{\overline{s}_{il}}=h(a^{\overline{s}_{i1}},\ldots,a^{\overline{s}_{il}}),\end{equation} for some \(\overline{s}_{i1},\ldots,\overline{s}_{il}\in\left\{0,\ldots,p-1\right\}\). Therefore by writing \(\widehat{h}_i^s=h(b_{i1},\ldots,b_{il})\) we have \(\overline{s}_{ij}=\log_a(b_{ij})\), for \(j=1,\ldots,l\).
\end{proof}

Therefore \(\overline{s}\in\GL_l(p)\) is a matrix representing \(s\).

Recall from \cref{PSLpresentationthm} that the action of \(s\) on \(U\coloneqq\gen{u^{s^{\delta}}}\) is described by the relation (iii) of \eqref{PSLrelations}.

\begin{enumerate}
	\item Let \(\overline{s}^*\) be the rational canonical form of \(\overline{s}\), and let \(A\in\GL_l(p)\) be such that \(A\overline{s}A^{-1}=\overline{s}^*\). In \magma\ this can be computed with the \texttt{PrimaryRationalForm} function, implemented as described in \cite{primaryrationalform}.
	\item Locate the block of \(\overline{s}^*\) whose minimal polynomial is \(m(X)\), the minimal polynomial of \(\omega^{2\delta}\). 
	\item Take any non-zero vector \(v\in\mathbb{F}_p^l\) acted on by said block, and let \(z=(z_i)_{i=1}^l\coloneqq vA\).
	\item Let \(u\coloneqq\prod_{i=1}^l\widehat{h}_i^{z_i}\in\GL_d(k)\), which corresponds to the adjoint representation of the torus element \(h(z_1,\ldots,z_l)\).
\end{enumerate}
\begin{remark}
	Observe that if \(q=p\) then \(v\) is a vector acted on by the block with minimal polynomial \(m(X)=X-\omega^{2\delta}\), thus \(z\) is simply an eigenvector of \(\overline{s}\) corresponding to the eigenvalue \(\omega^{2\delta}\).
	
	When studying \(\PSL_2(25)\), we are forced to take \(\delta=2\) in \eqref{PSLrelations}, therefore we use \eqref{25extrarelation} instead. This means that at step (ii) above we use \(\mu(X)\), the minimal polynomial of \(\omega^2\), instead of \(m(X)\). After computing \(u\), we have to confirm that the relations (iii) and (iv) of \eqref{25presentation} are satisfied.
\end{remark}

We also discuss here how we check whether \(C_{\bb{\T}}(s)\) is finite, and if so how to compute it, by using the fact that \(s\) is a representative in \(N_{\bb{\G}}(\bb{\T})\) of some element \(w\in W(\bb{\G})\).

Recall from \cref{rootsystemdef} that \(W\) is generated by the reflections of \(\Phi\), and the action of \(W\) as permutation group over \(\Phi\) corresponds to an action of \(W\) as an automorphism of \(\overline{\m{L}}\), obtained by permuting the root subspaces \(\overline{\m{L}}_{\alpha}\), \(\alpha\in\Phi\). There is a representation of degree \(l\) of \(W\), called \emph{reflection representation}, which describes the action of \(W\) on a Cartan subalgebra \(\m{H}\) of \(\m{L}\).

\begin{lemma}\label{finitecentraliser}
	Let \(w^*\in N_{\bb{\G}}(\bb{\T})\) be a preimage of \(w\in W\). Then \(w\) does not have an eigenspace with eigenvalue 1 in the reflection representation of \(W\) iff \(C_{\bb{\T}}(w)\) is finite.
\end{lemma}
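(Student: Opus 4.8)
The plan is to translate the group-theoretic condition ``$C_{\bb{\T}}(w)$ finite'' into the linear-algebra condition $\det(M-I)\neq0$ for the integral matrix $M$ by which $w$ acts on the cocharacter lattice, and then to recognise that same matrix $M$ as the one giving the reflection representation, so that $\det(M-I)=0$ means exactly that $1$ is an eigenvalue.

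First I would record that the conjugation action of a preimage $w^*$ on $\bb{\T}$ depends only on $w\in W$: two preimages differ by an element of $\bb{\T}$, which acts trivially on the abelian group $\bb{\T}$, so $C_{\bb{\T}}(w)$ is well defined. Fixing a $\mathbb{Z}$-basis $\gamma_1,\dots,\gamma_l$ of $Y=Y(\bb{\T})$ gives an isomorphism $\bb{\T}\cong\bb{\G}_m^l$, and the action of $w$ on $Y$ is recorded by a matrix $M\in\GL_l(\mathbb{Z})$. Via $\bb{\T}=Y\otimes_{\mathbb{Z}}\bb{\G}_m$, conjugation by $w$ becomes the endomorphism $\phi_M$ of $\bb{\G}_m^l$ sending $(c_i)_i$ to $\left(\prod_j c_j^{M_{ij}}\right)_i$. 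Since $\phi$ is a homomorphism in the matrix exponent, $t\in\bb{\T}$ is fixed iff $\phi_{M-I}(t)=\phi_M(t)t^{-1}=1$, so $C_{\bb{\T}}(w)=\ker\phi_N$ with $N\coloneqq M-I\in M_l(\mathbb{Z})$.

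Next I would compute $\dim\ker\phi_N$. Using Smith normal form write $N=ADB$ with $A,B\in\GL_l(\mathbb{Z})$ and $D=\mathrm{diag}(d_1,\dots,d_r,0,\dots,0)$, where $d_1,\dots,d_r$ are the nonzero elementary divisors and $r=\rk_{\mathbb{Q}}N$. As $\phi_A,\phi_B$ are automorphisms of $\bb{\T}$ we get $\ker\phi_N\cong\ker\phi_D$, and $\phi_D(c_1,\dots,c_l)=(c_1^{d_1},\dots,c_r^{d_r},1,\dots,1)$. For $i\le r$ each equation $c_i^{d_i}=1$ has only finitely many solutions in $\overline{\mathbb{F}}_p^{\times}$ (namely the $p'$-part of $d_i$ many), while the last $l-r$ coordinates are free; hence $\ker\phi_D$ is a finite group times $\bb{\G}_m^{\,l-r}$, so $\dim C_{\bb{\T}}(w)=l-r$ and $C_{\bb{\T}}(w)$ is finite iff $r=l$, i.e. iff $\det(M-I)=\det N\neq0$. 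This Smith-normal-form reduction is the crux and the step I expect to need the most care: even when $p$ divides some $d_i$ that factor stays finite, so it is the rational rank of $N$, not its rank modulo $p$, that governs the dimension.

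Finally I would connect this with the reflection representation. That representation is precisely the $W$-action on $Y\otimes_{\mathbb{Z}}\mathbb{R}$ (equivalently on $\m{H}$, up to the contragredient on $X\otimes\mathbb{R}$), and in the chosen basis it is given by the very same integral matrix $M$; since $W$ is finite all its eigenvalues are roots of unity, so the transpose/inverse ambiguity between the $X$- and $Y$-actions is immaterial for the presence of the eigenvalue $1$. Evaluating the characteristic polynomial at $1$ gives $\det(1\cdot I-M)=(-1)^l\det(M-I)$, so $w$ has eigenvalue $1$ in the reflection representation iff $\det(M-I)=0$. Combining with the previous paragraph, $C_{\bb{\T}}(w)$ is finite iff $\det(M-I)\neq0$ iff $w$ has no eigenspace with eigenvalue $1$ in the reflection representation, which is the claim.
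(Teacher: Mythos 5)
Your proof is correct. The paper's own argument is a two-line sketch in the Lie-algebra picture: a vector \(v\) in a Cartan subalgebra \(\m{H}\) fixed by \(w\) is identified with the one-dimensional (hence infinite) torus \(\gen{\ad v}\le\bb{\T}\) centralised by \(w^*\), so an eigenvalue \(1\) in the reflection representation is the same thing as an infinite centraliser. You instead work on the cocharacter lattice \(Y\) and reduce everything to the single integer matrix \(M-I\), with Smith normal form doing the work of showing that \(\ker\phi_{M-I}\) is finite exactly when \(M-I\) has full rational rank. The two routes prove the same equivalence, but yours is more self-contained: it handles the converse direction (\(C_{\bb{\T}}(w)\) infinite implies a fixed vector) explicitly, which the paper leaves implicit, and it makes transparent the characteristic-\(p\) point that elementary divisors divisible by \(p\) still contribute only finitely many fixed points, so the rational rank rather than the rank modulo \(p\) is what governs the dimension. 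The paper's version buys brevity and a direct link to the definition of the reflection representation as the action on \(\m{H}\); yours buys a fully elementary, computation-ready criterion \(\det(M-I)\neq0\).
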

\begin{proof}
	This is immediate from the definition of reflection representation. Let \(\m{H}\) be a Cartan subalgebra of \(\overline{\m{L}}\); if \(v\in\m{H}\) then the subspace \(\gen{v}\) is fixed by \(w\) iff \(\gen{\ad v}\le\bb{\T}\) centralises \(w^*\), and \(\gen{\ad v}\) is a 1-dimensional (infinite) torus.
\end{proof}

\begin{remark}
	In \magma, if we have \(\G\) as a \texttt{GroupOfLieType}, then we can obtain \(W\) as a permutation group using the \texttt{WeylGroup} function. Then, \texttt{ReflectionGroup} gives the reflection representation of \(W\).
\end{remark}

\begin{remark}
	If we have \(s\in N_{\G}(\T)\), but we do not know the reflection representation of the image of \(s\) in \(W(\G)\), then we can still deduce that \(C_{\bb{\T}}(s)\) is finite if the conjugation action of \(s\) on \(\T\) has no fixed points: if \(C_{\bb{\T}}(s)\) was infinite, it would be a subtorus \({(\overline{k}^{\times})}^j\le{(\overline{k}^{\times})}^l\simeq\bb{\T}\) for \(1\le j\le l\), which then would have nontrivial intersection with \(\T\).
	
	Observe that the opposite is not true in general, as the existence of a (finite) set of fixed points \(\T^*\) on \(\T\) does not imply the existence of infinite set of fixed points on \(\bb{\T}\) containing \(\T^*\).
\end{remark}

\begin{lemma}\label{sizecentraliser}
	Let \(s\in N_{\bb{\G}}(\bb{\T})\). If \(C_{\bb{\T}}(s)\) is finite, it can be constructed by computing \(C_{E}(s)\), where \(E\) ranges among the elementary abelian subgroups of \(\bb{\T}\) for the primes dividing the order \(o(s)\) of \(s\).
\end{lemma}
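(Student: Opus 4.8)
The plan is to exploit that \(C\coloneqq C_{\bb{\T}}(s)\) is a finite abelian group, hence the (internal) direct product of its primary components \(C=\prod_r C_r\), and to show that only primes \(r\) dividing \(o(s)\) can occur, after which each \(C_r\) is located inside the \(r\)-power torsion of \(\bb{\T}\), which is filtered by elementary abelian layers. Write \(Y=Y(\bb{\T})\cong\mathbb{Z}^l\) for the cocharacter lattice and identify \(\bb{\T}\) with \(Y\otimes_{\mathbb{Z}}\overline{\mathbb{F}}_p^{\times}\). Conjugation by \(s\) induces an automorphism of \(\bb{\T}\) given by an integer matrix \(M\in\GL_l(\mathbb{Z})\) acting on \(Y\)—this is, up to the usual transpose–inverse identification, the matrix of the image \(w\) of \(s\) in the reflection representation discussed in \cref{finitecentraliser}—and \(C\) is its fixed-point subgroup, i.e.\ the kernel of the isogeny \(\phi\colon\bb{\T}\to\bb{\T}\), \(t\mapsto t^{-1}t^{s}=[t,s]\), whose cocharacter matrix is \(N\coloneqq M-I\). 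Finiteness of \(C\) is exactly the condition \(\det N\ne0\).

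First I would pin down the order of \(C\). Applying \(-\otimes_{\mathbb{Z}}\overline{\mathbb{F}}_p^{\times}\) to the short exact sequence \(0\to\mathbb{Z}^l\xrightarrow{N}\mathbb{Z}^l\to\operatorname{coker}N\to0\), and using that \(\overline{\mathbb{F}}_p^{\times}\) is divisible with torsion \((\mathbb{Q}/\mathbb{Z})_{p'}\), one obtains
\[
C=\ker\phi\cong\operatorname{Tor}_1^{\mathbb{Z}}(\operatorname{coker}N,\overline{\mathbb{F}}_p^{\times})\cong(\operatorname{coker}N)_{p'},
\]
the prime-to-\(p\) part of the finite abelian group \(\mathbb{Z}^l/N\mathbb{Z}^l\); in particular \(\size{C}=\size{\det N}_{p'}\). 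Equivalently this is the standard count of fixed points of a torus endomorphism, the \(p\)-part dropping out because \(x\mapsto x^p\) is bijective on \(\overline{\mathbb{F}}_p\).

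The key step is the restriction on primes. Since \(w\) has finite order \(d=o(w)\) with \(d\mid o(s)\), the minimal polynomial of \(M\) divides \(X^{d}-1\), so \(\chi_M(X)=\prod_{e\mid d}\Phi_e(X)^{m_e}\) factors into cyclotomic polynomials. Finiteness of \(C\) forces \(m_1=0\) (no eigenvalue \(1\)), whence \(\size{\det N}=\size{\chi_M(1)}=\prod_{e\mid d,\,e>1}\Phi_e(1)^{m_e}\). Now \(\Phi_e(1)=r\) when \(e=r^{k}\) is a prime power and \(\Phi_e(1)=1\) when \(e\) has at least two distinct prime divisors; hence every prime \(r\) dividing \(\size{\det N}\) is the prime of some \(e=r^{k}\mid d\), so \(r\mid d\mid o(s)\). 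Together with the previous step, every prime dividing \(\size{C}\) divides \(o(s)\).

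Finally, each component \(C_r\) with \(r\mid o(s)\) and \(r\ne p\) lies in the \(r\)-power torsion \(\bb{\T}[r^{\infty}]=\bigcup_k\bb{\T}[r^{k}]\), an ascending union whose bottom layer \(\bb{\T}[r]\cong(\mathbb{Z}/r)^l\) is elementary abelian; for \(r=p\) the component is trivial, as \(\bb{\T}\) has no \(p\)-torsion. Because \(C\) is finite, \(C_r=C_{\bb{\T}[r^{k}]}(s)\) stabilises at a finite \(k\), so \(C_r\) is recovered by computing fixed points of \(s\) on these \(r\)-torsion layers, and assembling \(C=\prod_{r\mid o(s)}C_r\) gives the claim. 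The main obstacle is exactly this last point: fixing the single elementary abelian layer \(\bb{\T}[r]\) yields only the \(r\)-socle of \(C_r\), so one must check that in the cases at hand \(C_r\) is already elementary abelian—equivalently, that no elementary divisor of \(N\) is divisible by \(r^{2}\), which holds because the relevant \(\mathbb{Z}_r[M]\)-lattice is free over \(\mathbb{Z}_r[\zeta_r]\)—or else climb the layers \(\bb{\T}[r^{k}]\) until the fixed-point subgroup stabilises.
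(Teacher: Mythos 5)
Your argument is correct in substance but takes a genuinely different route from the paper. The paper's proof is purely group-theoretic and much shorter: it takes an arbitrary finite subgroup \(A\le\bb{\T}\), splits it as \(A_1\times A_2\) with \(\size{A_2}\) coprime to \(o(s)\), and invokes Schur--Zassenhaus to conclude \(C_A(s)\le A_1\), so that only primes dividing \(o(s)\) can contribute to \(C_{\bb{\T}}(s)\). You instead work on the cocharacter lattice: identifying \(C_{\bb{\T}}(s)\) with \(\ker(t\mapsto t^{-1}t^s)\cong(\operatorname{coker}(M-I))_{p'}\) and factoring \(\chi_M\) into cyclotomic polynomials, so that \(\size{\det(M-I)}=\prod_{e\mid d,\,e>1}\Phi_e(1)^{m_e}\) with \(\Phi_{r^k}(1)=r\) and \(\Phi_e(1)=1\) otherwise. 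This buys more than the paper's proof: you get the exact order \(\size{C_{\bb{\T}}(s)}=\size{\det(M-I)}_{p'}\) and a recipe for which primes occur directly from the cycle type of \(w\) in the reflection representation, whereas the paper only gets the prime restriction. The cost is that you need the finite-order/cyclotomic machinery, while the paper's Schur--Zassenhaus argument is elementary. Both proofs then face the same final issue --- that a single elementary abelian layer only captures the \(r\)-socle --- and both resolve it the same way, by climbing the \(r^k\)-torsion layers until the fixed points stabilise (the paper does this with the homocyclic subgroups \(\T_{r,j}\) in the discussion following the lemma).

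One caveat: your parenthetical claim that no elementary divisor of \(N=M-I\) is divisible by \(r^2\) ``because the relevant \(\mathbb{Z}_r[M]\)-lattice is free over \(\mathbb{Z}_r[\zeta_r]\)'' is false in general. The lattice \(Y\) is a module over \(\mathbb{Z}[X]/(\mathrm{minpoly})\), which is not a product of the rings \(\mathbb{Z}[\zeta_e]\), and gluing between cyclotomic components can produce higher torsion: for \(M\) the companion matrix of \((X+1)(X^2+1)\) on \(\mathbb{Z}^3\) one gets \(\operatorname{coker}(M-I)\cong\mathbb{Z}/4\), not \((\mathbb{Z}/2)^2\). Since you offer the layer-climbing alternative as a fallback, this does not invalidate the proof, but the freeness justification should be dropped rather than relied upon.
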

\begin{proof}
	Let \(A\) be a finite subgroup of \(\bb{\T}\), and let \(A=A_1\times A_2\), where \(\size{A_2}\) is coprime to \(o(s)\) and the set of prime divisors of \(\size{A_1}\) is a subset of the prime divisors of \(o(s)\). Then, by the Schur-Zassenhaus theorem, all complements of \(A_2\) in \(\gen{A_2,s}\) are \(A_2\text{-conjugates}\), hence \(C_A(s)\le A_1\).	Therefore, we may assume \(A=A_1\), and the result follows.
\end{proof}

For \(j\ge0\) and for any prime \(p\) dividing \(\size{k}-1\), define \(\T_{p,j}\coloneqq\gen{h_i(\lambda^{\frac{\size{k}-1}{p^j}}):i=1,\ldots,l}\) where \(\lambda\) is a primitive element of \(k\), in particular \(\T_{p,j}\) is a homocyclic subgroup of \(\bb{\T}\) with \(l\) cyclic factors of order \(p^j\), and \(\T_{p,0}\) is the trivial group for all \(p\).

The result of \cref{sizecentraliser} means that if \(o(s)=\prod_{p\in P}^np^{a_p}\) for some set \(P\) of primes, then for each \(p\in P\) we can find the \(p\text{-part}\) of \(C_{\bb{\T}}(s)\) by computing \(C_{\T_{p,i}}(s)\) for \(i=0,1,\ldots\), and stopping when \(C_{\T_{p,i}}(s)=C_{\T_{p,i-1}}(s)\).

Observe that if the process has not yet stopped when reaching \(i\) such that \(p^i\) does not divide \(\size{k}-1\), then we need to use a field extension \(k^j\) instead, for some \(j>1\). The process will always stop after a finite number of steps since by assumption \(C_{\bb{\T}}(s)\) is finite.

Since we will need to compute \(N_{\G}(\gen{u,s})\), we also mention the following result.
\begin{lemma}\label{ngb}
	Let \(B\le\G=\bb{\G}^{\sigma}\) and let \(B'\) be the derived subgroup of \(B\). If \(B'\) is supersoluble then \(N_{\G}(B)\le N_{\G}(\T)\) for a maximal torus \(\T\) of \(\G\).
\end{lemma}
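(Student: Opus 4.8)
The plan is to reduce the normaliser of $B$ to the normaliser of a maximal torus that is \emph{canonically} attached to $B'$, and then to transport the statement from $\bb{\G}$ down to $\G$.

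First I would observe that $B'=[B,B]$ is characteristic in $B$, so $N_{\G}(B)\le N_{\G}(B')$: any $g$ normalising $B$ satisfies $(B')^{g}=[B^{g},B^{g}]=[B,B]=B'$. Since we work in characteristic coprime to $\size{B}$, every element of $B'$ is semisimple; as $B'$ is supersoluble, \cref{borelserrethm} places it in the normaliser of a maximal torus, say $B'\le N_{\bb{\G}}(\bb{\T})$, and the $\sigma$-stable refinement of that lemma lets us take $\bb{\T}$ to be $\sigma$-stable, so that $\T\coloneqq\bb{\T}^{\sigma}$ is a maximal torus of $\G$.

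The crux is to make this torus independent of choices, i.e.\ intrinsic to $B'$. For this I would work with the connected centraliser $C_{\bb{\G}}(B')^{\circ}$: being the identity component of a centraliser, it is carried to $C_{\bb{\G}}((B')^{g})^{\circ}=C_{\bb{\G}}(B')^{\circ}$ by any $g\in N_{\G}(B')$, hence is normalised by all of $N_{\G}(B)$. In the situation at hand $B'$ is a regular subgroup of $\bb{\T}$, so by \cref{centralisertheorem} no root is trivial on the whole of $B'$ and $C_{\bb{\G}}(B')^{\circ}=\bb{\T}$; therefore
\[
\bb{\T}^{g}=\bigl(C_{\bb{\G}}(B')^{\circ}\bigr)^{g}=C_{\bb{\G}}(B')^{\circ}=\bb{\T}\qquad\text{for every }g\in N_{\G}(B),
\]
which gives $N_{\G}(B)\le N_{\bb{\G}}(\bb{\T})$. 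This identification is the main obstacle: Borel--Serre on its own only yields \emph{some} maximal torus containing $B'$ in its normaliser, and it is precisely the regularity of $B'$ (forcing its connected centraliser to collapse to $\bb{\T}$) that upgrades the canonicity from $B'$ to $\bb{\T}$.

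Finally I would descend to the finite group. If $g\in\G=\bb{\G}^{\sigma}$ normalises $\bb{\T}$, then for $t\in\T$ we have $t^{g}\in\bb{\T}$ and $\sigma(t^{g})=\sigma(g)^{-1}\sigma(t)\sigma(g)=t^{g}$, since $\sigma(g)=g$ and $\sigma(t)=t$; hence $t^{g}\in\bb{\T}^{\sigma}=\T$, so $\T^{g}\le\T$, and applying the same to $g^{-1}$ yields $\T^{g}=\T$. Thus $N_{\bb{\G}}(\bb{\T})\cap\G\le N_{\G}(\T)$, and combining the inclusions $N_{\G}(B)\le N_{\G}(B')\le N_{\bb{\G}}(\bb{\T})\cap\G\le N_{\G}(\T)$ completes the proof.
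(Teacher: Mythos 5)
Your first step (passing from $N_{\G}(B)$ to $N_{\G}(B')$ because $B'$ is characteristic in $B$) is exactly the paper's. Where you diverge is the second step, and the divergence is instructive: the paper simply asserts that $N_{\G}(B')\le N_{\G}(\T)$ ``by \cref{borelserrethm}'', but that lemma only places the supersoluble group $B'$ \emph{itself} inside the normaliser of some maximal torus; it says nothing about the full normaliser of $B'$. You have correctly identified this as the real obstacle and patched it by making the torus canonical, taking $\bb{\T}=C_{\bb{\G}}(B')^{\circ}$ via \cref{centralisertheorem} so that every element of $N_{\G}(B')$ must preserve it. That is a genuinely different, and more complete, argument than the one printed, and your descent from $N_{\bb{\G}}(\bb{\T})\cap\G$ to $N_{\G}(\T)$ is also carried out correctly.

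The caveat is that your patch imports hypotheses that are not in the statement of the lemma: you assume the characteristic is coprime to $\size{B}$ and, crucially, that $B'$ is a \emph{regular} subgroup of a maximal torus, so that $C_{\bb{\G}}(B')^{\circ}$ collapses to $\bb{\T}$. Without some such assumption the statement is simply false --- take $B$ abelian, so $B'=1$ is supersoluble and $N_{\G}(B)$ can be all of $\G$ --- so some strengthening of the hypotheses is unavoidable, and regularity is exactly what holds in every application the paper makes (where $B'$ is a regular toral subgroup such as $\gen{u,u^s}\simeq 5^2$ or $\gen{u}\simeq 37$). You should, however, state explicitly that you are proving the lemma under these additional hypotheses rather than as written; as it stands your proof silently changes the statement. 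With that proviso, your argument is sound and in fact repairs a gap in the paper's own two-line proof.
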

\begin{proof}
	Observe first that \(N_{\G}(B)\le N_{\G}(B')\); indeed, for all \(x,y\in B\) and all \(g\in N_{\G}(B)\) we have \[[x,y]^g=[x^g,y^g]\in[B,B]=B',\] so \(g\in N_{\G}(B')\).
	
	Since \(B'\) is supersoluble, then \(N_{\G}(B')\le N_{\G}(T)\) for a maximal torus \(\T\) of \(\G\) by \cref{borelserrethm}.
\end{proof}

We will apply this result when \(B=\gen{u,s}\) and \(B'\) lies in a maximal torus \(\T\) of \(\G\); in such case we will have that \(B'\) is supersoluble and lies in \(\T\), so we can construct generators of \(N_{\G}(\T)\), and we may compute \(N_{\G}(B)\) by taking \(N_{N_{\G}(\T)}(B)\).
\end{section}

\begin{section}{Constructing the final involution \texorpdfstring{\(t\)}{t}}\label{t}
Given a Lie algebra \(\m{L}\) over \(k\) of dimension \(d\), the corresponding Chevalley groups \(\G(k)\) and \(\bb{\G}(\overline{k})\), and \(u,s\in\G\) as matrices in \(\GL_d(k)\) constructed as in \cref{ngt}, we now want to construct the possible \(t\in\bb{\G}\) such that \((u,s,t)\) satisfy the presentation for \(\PSL_2(q)\) given by \cref{PSLpresentationthm}.

Observe that the relations \(t^2=(st)^2=1\) imply that each possible \(t\) is an involution of the form \(t=ce\), where \(c\) centralises \(s\), and \(e\) is an involution that inverts \(s\). We construct \(e\in\bb{\G}\) first, then use the relation \((ut)^3=1\) to narrow down our search for \(c\).

If we find any suitable \(c\in\bb{\G}\) and compute the corresponding \(t\), then we only have to confirm that the relation (viii) of \eqref{PSLrelations} is satisfied.

\begin{subsection}{A Chevalley basis relative to a semisimple element}\label{s_basis}
The aim of this section is to describe methods to obtain a Chevalley basis on which a given semisimple element \(s\) acts diagonally. This will be needed in \cref{s_invert}, where we use a matrix that realises the change of basis to diagonalise \(s\).

\begin{remark}
We preface this section by mentioning that if we are working with \magma\ and we know \(s\) as a word in \(x_{\pm i}(t)\) and \(h_i(\lambda)\), then in all cases but \(\PSL_2(29)\) we could use the \texttt{ConjugateIntoTorus} function instead: the second output is an element that conjugates \(s\) into an element of the torus generated by the \(h_i(\lambda)\), so we can use it to map the current Chevalley basis to one that makes \(s\) diagonal.

This is not always the case as we may be working with a matrix in \(\GL_d(k)\) whose word is unknown to \magma. In general, we do not need to know the words involved, so we may use either algorithm.
\end{remark}

We have a Lie algebra \(\m{L}\) with \(\dim\m{L}=d\), \(\rk\m{L}=l\). Let \(n=\frac{d-l}{2}\) and  let \(\Phi=\left\{\alpha_{\pm1},\ldots,\alpha_{\pm n}\right\}\) be the root system of \(\m{L}\), with \(\Pi=\left\{\alpha_1,\ldots,\alpha_l\right\}\) a base of \(\Phi\). We have that a Chevalley basis of \(\m{L}\) is \(\varepsilon=(\varepsilon_{-n},\ldots,\varepsilon_{-1},h_{\varepsilon1},\ldots,h_{\varepsilon l},\varepsilon_1,\ldots,\varepsilon_n)\), where \(h_{\varepsilon}=(h_{\varepsilon1},\ldots,h_{\varepsilon l})\) is a basis of a Cartan subalgebra \(\m{H}\) of \(\m{L}\), and \(\varepsilon_i\) generates the root space corresponding to \(\alpha_i\). The reason for this ordering is described in Appendix \ref{magmaliealgebra}.

Furthermore, we have the automorphism group \(\G\) of \(\m{L}\) as a matrix group in \(\GL_d(k)\), and a semisimple element \(s\) of \(\G\) lying in the subgroup \(\gen{n_i|i=1,\ldots,l}\).

We start by looking for a Cartan subalgebra \(\m{H}_s\) corresponding to \(s\), so we look at the action of \(s\) on \(\m{L}\), as \(\m{H}_s\) is a subspace of the fixed point space \(\m{L}^s\) as shown in \cref{fixs}. After computing \(\m{L}^s\), we decompose it into a direct sum made of a maximal abelian ideal \(\m{A}\) and simple ideals \(\m{I}_i\). Note that the dimension of \(\m{A}\) and the exact structure of each \(\m{I}_i\) depend on what type of algebra \(\m{L}^s\subseteq\m{L}\) is, which is determined by the root subsystem \(\Phi_s\subseteq\Phi\) of roots whose root space is fixed by \(s\).

Then, by taking a basis of \(\m{A}\) and a basis of the Cartan subalgebra lying in each simple ideal, we obtain a basis \(h_{\varepsilon_s}=(h_{\varepsilon_s1},\ldots,h_{\varepsilon_sl})\) of a Cartan subalgebra \(\m{H}_s\) of \(\m{L}\).

This can be done in \magma\ using the \texttt{CartanSubalgebra} function. If such tool is not available, other options include the following:
\begin{enumerate}
	\item A possible approach is to find some vector whose adjoint representation has nullspace with dimension equal to the rank of \(\m{I}_i\), so that it has a good chance of being the Cartan subalgebra we are looking for.
	\item \cite[Computation 3.5]{griess_ryba_algorithm} uses a similar approach based on Meat-axe.
	\item One could also find the whole Chevalley basis of \(\mathcal{L}^s\) using the algorithm described in \cite{magaardwilson}.
\end{enumerate}

We point out that in the cases we will study, \(\mathcal{L}^s\) is a subalgebra of type \(\T_l\) or \(\A_1\T_{l-1}\), so either we obtain directly a Cartan subalgebra, or little work is required to find one.

Now that we have a basis of \(\m{H}_s\), we find the root spaces \(\m{L}_r\) of a Cartan decomposition of \(\m{L}\) with Cartan subalgebra \(\m{H}_s\). We can do so by intersecting the eigenspaces of the matrices \(\ad h_{\varepsilon_si}\in\GL_d(k)\) representing the adjoint action of the \(h_{\varepsilon_si}\), thus resulting in a Cartan decomposition of \(\m{L}\) as described in \cref{cartandecomposition}.

By taking a generator \(\varepsilon_{s,r}\in\m{L}_r\) for each root space, together with \(l\) generators for \(\m{H}_s\), we now have a Chevalley basis \(\varepsilon_s=(\varepsilon_{s,1},\ldots,\varepsilon_{s,2n},h_{\varepsilon_s1},\ldots,h_{\varepsilon_sl})\) for \(\m{L}\) made of eigenvectors for \(s\). Note that at this point we can compute the root spaces, but we do not know what is the corresponding root, hence the different labelling of the basis (\(1,\ldots,2n\) instead of \(\pm1,\ldots,\pm n\)).

We then rearrange the elements of \(\varepsilon_s\) in such a way that they match the order of \(\varepsilon\), by which we mean that elements in the same position will correspond to the same root. While this is not strictly necessary, it will help us perform the next computations, since we can associate a root to a generator of the corresponding root space just from their position in the list.

The way we chose to do this is by constructing a graph \(\Gamma\) for \(\varepsilon\) whose vertices are the generators  \(\varepsilon_i\) of the root spaces, and two vertices are connected by an edge if the corresponding generators have non-zero commutator. In the same way, we construct \(\Gamma_s\) for \(\varepsilon_s\). By finding an isomorphism between the two graphs, we obtain a bijection between the sets of indices, which tells us how to reorder \(\varepsilon_s\) in order to match \(\varepsilon\), say \begin{equation}\label{schevalleybasis}\varepsilon_s=(\varepsilon_{s,-n},\ldots,\varepsilon_{s,-1},h_{\varepsilon_s1},\ldots,h_{\varepsilon_sl},\varepsilon_{s,1},\ldots,\varepsilon_{s,n}).\end{equation}

\begin{remark}
\magma\ handles graph isomorphisms by using McKay's \texttt{nauty} program, based on \cite{nauty}. More recent work on the topic is described in \cite{nautytraces}, and the program is available and kept up to date at \cite{nautyonline}.
\end{remark}

It is important \emph{not} to include the generators of the Cartan subalgebras \(\m{H}\) and \(\m{H}_s\) in this construction: different choices for their bases lead to different coefficients for the action on the root spaces, therefore there would be \(l\) vertices of \(\Gamma_s\) impossible to control.

We perform another modification to \(\varepsilon_s\) that will come in handy in \cref{s_invert}, that is we rescale the generators \(\varepsilon_{s,r}\) of the root spaces \(\m{L}_r\) so that \(\varepsilon_s\) has the same structure constants as \(\varepsilon\); we also change the choice of \(h_{\varepsilon_s}\) in order to match the action of \(h_{\varepsilon}\) on the generators \(\varepsilon_r\) of the root spaces as well. A way of doing so is the following:
\begin{enumerate}
	\item Compute a \(\G\text{-invariant}\) bilinear form \(\gen{-,-}\) for \(\m{L}\) (for example, the Killing form).
	\item Rescale the vectors \(\varepsilon_{s,r}\) for \(r=-1,\ldots,-l\) (i.e. \(\alpha_r\in-\Pi\)) so that \(\gen{\varepsilon_{s,r},\varepsilon_{s,-r}}=\gen{\varepsilon_r,\varepsilon_{-r}}\) for all such \(r\), and define \(R=\Pi\cup-\Pi\).
	\item Take \(\alpha_r\in\Phi-R\) and compute an extraspecial pair \((\alpha_{r_1},\alpha_{r_2})\) such that \(\alpha_{r_1}+\alpha_{r_2}=\alpha_r\). If \(\alpha_{r_1},\alpha_{r_2}\in R\) then rescale \(\varepsilon_{s,r}\) in such a way that \(\frac{[\varepsilon_{s,r_1},\varepsilon_{s,r_2}]}{\varepsilon_{s,r}}=\frac{[\varepsilon_{r_1},\varepsilon_{r_2}]}{\varepsilon_r}\), and add \(\alpha_r\) to \(R\). If either \(\alpha_{r_1}\notin R\) or \(\alpha_{r_2}\notin R\), repeat this step by taking \(\alpha_{r_1}\) or \(\alpha_{r_2}\) instead of \(\alpha_r\), respectively. Repeat this step until \(R=\Phi\).
	\item Compute \(h_{s,r}=[\varepsilon_{s,r},\varepsilon_{s,-r}]\) for \(r=1,\ldots,l\) (i.e. \(\alpha_r\in\Pi\)), and use them to get a new basis \(h'_{\varepsilon_s}=(h'_{\varepsilon_s1},\ldots,h'_{\varepsilon_sl})\) of \(\m{H}_s\) by taking linear combinations of the \(h_{s,r}\). We want the new basis \(h'_s\) to satisfy \(\frac{[h_{\varepsilon r},\varepsilon_i]}{\varepsilon_i}=\frac{[h'_{\varepsilon_sr},\varepsilon_{s,i}]}{\varepsilon_{s,i}}\) 
	for all \(1\le r\le l\), \(i\in\left\{\pm1,\ldots,\pm n\right\}\) (i.e. for all \(\alpha_r\in\Pi\), \(\alpha_i\in\Phi\)).
\end{enumerate}
In our case, for (iv) we used \(h'_{\varepsilon_si}=\sum_{j=1}^l(C^{-1})_{i,j}h_{s,j}\), \(i=1,\ldots,l\), where \(C\) is the Cartan matrix of \(\G\), in order to match the standard basis for the Cartan subalgebra used by \magma; see Appendix \ref{magmaliealgebra}.

Now the basis \(\varepsilon_s\) has the same structure constants of \(\varepsilon\), and its elements are ordered in the same way, which makes the result in \cref{s_invert} straightforward to obtain.
\end{subsection}

\begin{subsection}{An involution inverting a semisimple element}\label{s_invert}
Given a semisimple element \(s\in\G\) and its corresponding Chevalley basis \(\varepsilon_s\), constructed as in \cref{s_basis}, it is straightforward to construct an involution of \(\G\) that inverts \(s\), in particular we can consider the Chevalley involution \(\iota\) described in \cref{chevalleyinvolution}.

\begin{lemma}
	Let \(s_c\in\G\) be the matrix in \(\GL_d(k)\) realising the change of basis from \(\varepsilon\) to \(\varepsilon_s\), that is \(s_c\) diagonalises \(s\). Let \(A\in\G\) be a matrix with entries \(-1\) on the anti-diagonal, except in the middle \(l\times l\) entries, where we take \(-1\) on the diagonal, and 0 everywhere else. Then an involution \(e\in\G\) that inverts \(s\) is given by \(A^{s_c^{-1}}\).
\end{lemma}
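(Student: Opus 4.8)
The plan is to recognise $A$ as the matrix, written in the ordered Chevalley basis $\varepsilon$, of the Chevalley involution $\iota$ of \cref{chevalleyinvolution}, and then to read $e=A^{s_c^{-1}}=s_cAs_c^{-1}$ as the matrix of the analogous involution attached to $\varepsilon_s$. First I would verify that $A$ represents $\iota$. Recall $\iota.e_r=-e_{-r}$ and $\iota.h_s=-h_s$, and that $\varepsilon$ is ordered as $(\varepsilon_{-n},\ldots,\varepsilon_{-1},h_{\varepsilon1},\ldots,h_{\varepsilon l},\varepsilon_1,\ldots,\varepsilon_n)$. For a root index $j$ the vector $\varepsilon_{-j}$ occupies position $n-j+1$ and $\varepsilon_j$ position $n+l+j$, and since $d+1-(n-j+1)=n+l+j$, the involution $\iota$ sends each root vector to $-1$ times the vector in the mirror (anti-diagonal) position, while on the middle $l$ coordinates spanning $\m{H}$ it acts as $-1$ on the diagonal. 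This is exactly the matrix $A$. Because $\varepsilon$ was chosen with structure constants $N_{r,s}=\pm(p+1)$, the computation in \cref{chevalleyinvolution} shows $\iota$ is an automorphism of $\m{L}$, so $A\in\G$, and $A^2=\mathbb{1}_d$ since $\iota^2.e_r=\iota.(-e_{-r})=e_r$.

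Next I would pass to the basis $\varepsilon_s$. By the construction of \cref{s_basis}, $\varepsilon_s$ is a Chevalley basis with the same ordering and the same structure constants as $\varepsilon$, and $s_c$ is the change-of-basis matrix whose columns are the coordinate vectors of $\varepsilon_s$, so that $s_c^{-1}ss_c$ is diagonal. Let $\iota_s$ be the involution defined by the same formulas relative to $\varepsilon_s$. Since $\varepsilon_s$ has identical structure constants, $\iota_s$ has matrix $A$ in the basis $\varepsilon_s$, hence matrix $s_cAs_c^{-1}=e$ in the basis $\varepsilon$. In particular $e\in\G$ and $e$ is an involution, being a conjugate of $A$; this settles two of the three assertions.

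Finally I would show $e$ inverts $s$. The map $\iota_s$ is a Chevalley involution for the maximal torus $\bb{\T}_s$ determined by $\varepsilon_s$, so by \cref{chevalleyinvolution} it acts as $x\mapsto x^{-1}$ on $\bb{\T}_s$. Concretely, the torus generator attached to $\varepsilon_s$ is $h_{s,r}(\lambda)=s_ch_r(\lambda)s_c^{-1}$, and using that $A$ inverts the standard torus (which follows from $\iota.e_r=-e_{-r}$, $\iota.h_s=-h_s$ together with $A_{r,-s}=-A_{r,s}$ of \cref{arses}) one computes
\[
e\,h_{s,r}(\lambda)\,e^{-1}=s_cAh_r(\lambda)A^{-1}s_c^{-1}=s_ch_r(\lambda)^{-1}s_c^{-1}=h_{s,r}(\lambda)^{-1}.
\]
It then remains to argue $s\in\bb{\T}_s$: the element $s$ is semisimple, diagonal with respect to $\varepsilon_s$, and fixes $\m{H}_s=\mathrm{Lie}(\bb{\T}_s)\subseteq\m{L}^s$ pointwise (it lies in $\m{L}^s$ by \cref{fixs}), and the diagonal elements of $\G$ acting trivially on $\m{H}_s$ are precisely the elements of $\bb{\T}_s$; hence $s\in\bb{\T}_s$ and $s^e=s^{-1}$.

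The main obstacle I expect is making this last step fully rigorous: one must show that $s$ genuinely lies in $\bb{\T}_s$ and is not merely diagonalised by $\varepsilon_s$. This requires ruling out diagonal automorphisms whose eigenvalues violate the multiplicativity forced by $[\m{L}_\alpha,\m{L}_\beta]\subseteq\m{L}_{\alpha+\beta}$, and confirming that the torus $\bb{\T}_s$ built from the constructed Chevalley basis is exactly the group of such compatible diagonal elements acting trivially on $\m{H}_s$. The positional bookkeeping matching $A$ with $\iota_s$ is routine but must be carried out against the precise ordering fixed in \cref{s_basis}.
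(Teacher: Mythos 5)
Your proposal is correct and follows essentially the same route as the paper: identify \(A\) as the matrix of the Chevalley involution in the ordered Chevalley basis \(\varepsilon_s\), conjugate by \(s_c\), and conclude because the eigenvalues of \(s\) on opposite root spaces are mutually inverse. The obstacle you flag at the end dissolves more easily than you fear: one does not need \(s\in\bb{\T}_s\) itself, only the relation \(s_rs_{-r}=1\) for each root \(r\), which follows in one line because \(s\) is an automorphism of \(\m{L}\) fixing \(h_{s,r}=[\varepsilon_{s,r},\varepsilon_{s,-r}]\in\m{H}_s\subseteq\m{L}^s\) pointwise, so \(h_{s,r}=s.[\varepsilon_{s,r},\varepsilon_{s,-r}]=s_rs_{-r}[\varepsilon_{s,r},\varepsilon_{s,-r}]\).
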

\begin{proof}
	Since \(s_c\) maps \(\varepsilon\) to \(\varepsilon_s\), a basis of eigenvectors of \(s\), we have that \(s_css_c^{-1}=s^*\) is a diagonal matrix.
	
	Because of the specific ordering we chose for \(\varepsilon_s\), see \eqref{schevalleybasis}, it is clear that \(A\) is a matrix realising the automorphism of \(\m{L}\) given by the Chevalley involution on \(\varepsilon_s\) described by \eqref{involution}.
	
	Let \(e\coloneqq A^{s_c^{-1}}\), since \(A\) is an involution it is clear that \(e\) is an involution as well, and \(e\in\G\) since both \(A,s_c\in\G\), as they are matrices realising a change of Chevalley bases.
	
	The claim can be written as
	\begin{align*}
		s^{-1}&=s^e\\
		&=s_c^{-1}As_css_c^{-1}As_c\\
		&=s_c^{-1}As^*As_c,
	\end{align*}
	which is equivalent to
	\begin{equation}\label{srel}
		{s^*}^{-1}=As^*A.
	\end{equation}
	Let \(s^*=\text{diag}(s_1,\ldots,s_d)\), then a matrix multiplication shows that \begin{equation*}As^*A=\text{diag}(s_d,\ldots,s_{n+l+1},s_{n+1},\ldots,s_{n+l},s_n,\ldots,s_1),\end{equation*}
	while we have \begin{equation*}{s^*}^{-1}=\text{diag}(s_1^{-1},\ldots,s_n^{-1},s_{n+1}^{-1},\ldots,s_{n+l}^{-1},s_{n+l+1}^{-1},\ldots,s_d^{-1}).\end{equation*}
	In order to prove \eqref{srel}, we have to show that \(s_i^{-1}=s_{d+1-i}\) for \(1\le i\le n\), and \(s_i^{-1}=s_i\) for \(i=n+1,\ldots,n+l\).
	
	This follows because \(\varepsilon_s\) is a Chevalley basis for \(s\), and \(s^*\) is the diagonal matrix representing the action of \(s\) on \(\varepsilon_s\). In particular, \(s^*\) is the adjoint representation of some element \(h_{\varepsilon_s}(\lambda_1,\ldots,\lambda_l)=h_{\varepsilon_s1}(\lambda_1)\ldots h_{\varepsilon_sl}(\lambda_l)\), for some \(\lambda_1,\ldots,\lambda_l\in k^{\times}\), and the action of on element of the form \(h_{\varepsilon_s j}(\lambda_j)\) is described by \eqref{hrl}:
	\begin{enumerate}
		\item Let \(1\le i\le n\), then \(\varepsilon_{s,i}\) is an eigenvector of \(\ad h_{\varepsilon_s j}(\lambda_j)\) with eigenvalue \(\lambda_j^{A_{j,i}}\). On the other hand, \(\varepsilon_{s,-i}\) is an eigenvector with eigenvalue \(\lambda_j^{A_{j,-i}}=\lambda_j^{-A_{j,i}}=(\lambda_j^{A_{j,i}})^{-1}\), where the first equality follows by \cref{arses}. This holds for all \(1\le j\le l\), and by the ordering of \(\varepsilon_s\) we chose, \(s_{n+l+i}\) is the eigenvalue of \(\varepsilon_{s,i}\) and \(s_{n+1-i}\) is the eigenvaule of \(\varepsilon_{s,-i}\). Therefore, \(s_i^{-1}=s_{d+1-i}\).
		\item Let \(i=n+j\), \(1\le j\le l\), then \(s_i\) is the eigenvalue of the eigenvector \(h_{\varepsilon_sj}\), so \(s_i=1\) and \(s_i^{-1}=s_i\).
	\end{enumerate}
\end{proof}
\end{subsection}

\begin{subsection}{How to deal with \texorpdfstring{\(C_{\bb{\G}}(s)\)}{CG(s)}}\label{CGs}
The issue now is to construct \(c\in C_{\bb{\G}}(s)\), since we can work directly only in \(\G\). The idea is to construct a subgroup \(C(s)\le C_{\G}(s)\le C_{\bb{\G}}(s)\) that is as ``close'' to \(C_{\bb{\G}}(s)\) as possible, and work on the linear span \(M(s)\) of \(C(s)\), by which we mean the matrix subalgebra of \(M_d(k)\) generated by \(k\text{-linear}\) combinations of elements of \(C(s)\). Likewise, we denote by \(\bb{M}(s)\) the linear span of \(C_{\bb{\G}}(s)\), a matrix subalgebra of \(M_d(\overline{k})\).

Clearly, \(\dim\bb{M}(s)\ge\dim M(s)\); if we can show that the equality holds, then we have that \(\bb{M}(s)=M(s)\otimes_k\overline{k}\) and we can study \(\bb{M}(s)\) using a basis \(\Set{c_1,\ldots,c_n}\) of \(M(s)\).

By \cref{centralisertheorem}, we know the structure of \(C_{\bb{\G}}(s)\), which depends on \(\overline{\m{L}}^s\). Observe that \(\overline{\m{L}}^s=\m{L}^s\otimes_k\overline{k}\), and we already studied \(\m{L}^s\) in \cref{s_basis}, where we obtained a direct sum of a maximal abelian ideal \(\mathcal{A}\) and some simple ideals \(\m{I}_i\). In particular, we have that \(C_{\bb{\G}}(s)\) contains \(\dim\mathcal{A}\) copies of \(\overline{k}^{\times}\) and the root subgroups corresponding to roots fixed by \(s\), whose exact type depends on the \(\mathcal{I}_i\). We define \(C(s)\) to be a group with the same structure, but defined over \(k\), in such a way that \(C(s)\) stabilises the same spaces of \(\overline{\m{L}}\) that are stabilised by \(C_{\bb{\G}}(s)\).

This is not true in general, mostly depending on \(k\), so we will have to prove it in each case.

\begin{example}
Suppose \(\bb{\G}\) is of type \(\bb{\E}_8\), and \(s\in\G=\bb{\G}^{\sigma}\) semisimple stabilises the root spaces corresponding to the roots \(\pm\alpha,\pm\beta\), with \(\alpha\ne\pm\beta\) and \(\alpha+\beta\notin\Phi\). By \cref{centralisertheoremfinal} \(C_{\bb{\G}}(s)=\gen{\bb{\T},\bb{\X}_{\pm\alpha},\bb{\X}_{\pm\beta}}\), i.e. it is generated by a maximal torus \(\bb{\T}\) and two \(\bb{\A}_1\) groups; then we take \(C(s)\) to be the group generated by \(\T=\bb{\T}^{\sigma}\) and two \(\A_1(k)\) groups, generated by \(x_{\pm\alpha}(1)\) and \(x_{\pm\beta}(1)\).
\end{example}

Since we already have a Chevalley basis \(\varepsilon_s\) corresponding to \(s\) from \cref{s_basis}, it is straightforward to obtain the generators of \(C(s)\) by taking the suitable \(h_i(\lambda)\), \(x_{\pm i}(1)\), \(n_i\), as described in \cref{ghn}, using \(\varepsilon_s\) instead of \(\varepsilon\).

We can now find \(\dim M(s)\). One could proceed for example as in \cite[Computations 3.8 and 3.9]{griess_ryba_algorithm} and use a probabilistic method:
\begin{enumerate}
	\item Initialise an empty list.
	\item Generate a random element of \(C(s)\) and verify whether it is linearly independent to the elements of the list. If it is, add it to the list.
	\item Repeat step (ii) until a linearly dependent element is generated \(m\) times in a row, and then stop.
\end{enumerate}
The higher the value for \(m\), the more likely is that the elements of the list form a basis for \(M(s)\).

\begin{remark}
 Working with \magma, this is straightforward as we can use built-in functions to construct directly the matrix algebra \(M_d(k)\), then ask for a basis of the subalgebra generated by the generators of \(C(s)\).
\end{remark}

Instead of trying to get \(\dim\bb{M}(s)\) explicitly, we compute an upper bound for it, and if said bound coincides with \(\dim M(s)\) then \(\bb{M}(s)=M(s)\otimes_k\overline{k}\).

\begin{lemma}\label{upperbound}
	Suppose that the \(C_{\bb{\G}}(s)\text{-module}\) \(\overline{\mathcal{L}}\) has a direct sum decomposition \begin{equation}\label{decomposition}0^{l-\dim\mathcal{A}}\bigoplus_{j\in J}j^{n_j}\end{equation} for a set \(J\) of dimensions, meaning that it decomposes as a sum of \(l-\dim\mathcal{A}\) copies of the trivial module, and \(n_j\) modules of dimension \(j\), not necessarily isomorphic to one another, for \(j\in J\). Then \begin{equation}\label{dimensionformula}\dim\bb{M}(s)\le1+\sum_{j\in J}n_j\cdot j^2.\end{equation}
\end{lemma}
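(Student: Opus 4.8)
The plan is to exploit the block structure that the module decomposition forces on every element of $C_{\bb{\G}}(s)$, and hence on the whole $\overline{k}$-linear span $\bb{M}(s)$. First I would fix a basis of $\overline{\m{L}}$ adapted to the given decomposition, writing $\overline{\m{L}}=\m{T}\oplus\bigoplus_{j\in J}\bigoplus_{a=1}^{n_j}\m{W}_{j,a}$, where $\m{T}$ is the sum of the $l-\dim\mathcal{A}$ trivial summands and $\m{W}_{j,a}$ is the $a$-th summand of dimension $j$. Since this is a decomposition into $C_{\bb{\G}}(s)$-submodules, every $g\in C_{\bb{\G}}(s)$ stabilises each summand, so in the adapted basis $g$ is block diagonal, with one block acting on $\m{T}$ and blocks $g_{j,a}\in M_j(\overline{k})$ on the $\m{W}_{j,a}$. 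Taking $\overline{k}$-linear combinations preserves this shape, so every element of $\bb{M}(s)$ is block diagonal of the same type.

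Next I would bound each block's contribution separately by means of the restriction map
\[
R\colon\bb{M}(s)\longrightarrow R_{\m{T}}(\bb{M}(s))\oplus\bigoplus_{j\in J}\bigoplus_{a=1}^{n_j}R_{j,a}(\bb{M}(s)),
\]
sending a block-diagonal matrix to the tuple of its restrictions to the summands. A block-diagonal matrix is completely determined by its blocks, so $R$ is injective, whence
\[
\dim\bb{M}(s)\le\dim R_{\m{T}}(\bb{M}(s))+\sum_{j\in J}\sum_{a=1}^{n_j}\dim R_{j,a}(\bb{M}(s)).
\]
Each $R_{j,a}(\bb{M}(s))$ is a subspace of $M_j(\overline{k})$, hence of dimension at most $j^2$, and there are $n_j$ of them for each $j\in J$. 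The decisive point is the trivial part: because $C_{\bb{\G}}(s)$ acts as the identity on $\m{T}$, every $g$ restricts to $\mathbb{1}_{\dim\m{T}}$ there, so every element of $\bb{M}(s)$ restricts on $\m{T}$ to a scalar multiple of the identity, giving $\dim R_{\m{T}}(\bb{M}(s))\le1$. Substituting these estimates yields $\dim\bb{M}(s)\le1+\sum_{j\in J}n_j\cdot j^2$, as claimed.

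The whole argument is elementary linear algebra once the $C_{\bb{\G}}(s)$-module decomposition is in hand, so I do not expect a genuine obstacle; the only point requiring care is that the $l-\dim\mathcal{A}$ trivial summands together contribute a single dimension—the span of the identity on $\m{T}$—rather than $(\dim\m{T})^2$, and this is exactly what produces the leading $1$ and makes the bound sharp enough to be useful in practice. I would also note that the estimate never uses closure of $\bb{M}(s)$ under multiplication, only that it is the $\overline{k}$-span of matrices respecting the decomposition, so no appeal to the algebra structure of $\bb{M}(s)$ is needed.
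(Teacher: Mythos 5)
Your proof is correct and follows essentially the same route as the paper: block-diagonality of the span forced by the module decomposition, a bound of $j^2$ for each nontrivial summand, and the single dimension contributed by the identity on the sum of the trivial summands. Your version merely makes explicit the injective restriction map that the paper leaves implicit when it "adds together the upper bounds of the contributions of all summands."
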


\begin{proof}
	Let \(V\) be a non-trivial summand of \eqref{decomposition}, then the action of \(C_{\bb{\G}}(s)\) on \(V\) is represented by a subgroup \(E\) of \(\Endo(V)\simeq\GL(V)\), therefore the dimension of the linear span of \(E\) is at most \((\dim V)^2\), i.e. the dimension of the linear span of \(\GL(V)\).
	
	Since the action on the set of trivial modules contributes with a 1-dimensional linear span, the result follows by adding together the upper bounds of the contributions of all summands.
\end{proof}

\begin{remark}
	The decomposition \eqref{decomposition} satisfies \(l-\dim\mathcal{A}+\sum_{j\in J}n_j\cdot j=d\). This is clear from the fact that it is the decomposition of a module of dimension \(d\); each copy of the trivial module contributes with dimension 1, and for each \(j\in J\) there are \(n_j\) summands of dimension \(j\).
\end{remark}

In order to compute the bound, we want to use the decomposition of the \(C(s)\text{-module}\) \(\overline{\m{L}}\) instead, and this can be done only if \(C(s)\) and \(C_{\bb{\G}}(s)\) stabilise the same subspaces of \(\overline{\m{L}}\).

This does not happen in general, but when studying the embedding listed in \cref{mainth1} we only encountered a limited number of cases for which this is easy to verify.

\begin{lemma}\label{cgs}
	If \(s\) is a regular element of \(\bb{\G}\), then \(C(s)\) and \(C_{\bb{\G}}(s)\) stabilise the same subspaces of \(\overline{\m{L}}\).
\end{lemma}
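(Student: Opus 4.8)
The plan is to reduce the statement to a comparison of weight-space decompositions, exploiting that regularity forces the connected centraliser to be as small as possible. First I would invoke \cref{regularthm}: since \(s\) is regular, \(C_{\bb{\G}}(s)^{\circ}=\bb{\T}\) and no root satisfies \(\alpha(s)=1\), so by \cref{centralisertheorem} the centraliser contains no root subgroups and the group \(C(s)\) built from \(\varepsilon_s\) is just the finite torus \(\T=\bb{\T}^{\sigma}\), with \(C(s)\le\bb{\T}=C_{\bb{\G}}(s)^{\circ}\). Any elements \(n_w\) with \(s^w=s\) making up the finite part of \(C_{\bb{\G}}(s)\) are taken into \(C(s)\) identically by construction, so they contribute the same constraints to both groups; the entire comparison therefore reduces to comparing the toral parts \(\T\le\bb{\T}\). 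Both act diagonally on \(\varepsilon_s\) with respect to the Cartan decomposition \(\overline{\m{L}}=\m{H}\oplus\bigoplus_{\alpha\in\Phi}\overline{\m{L}}_{\alpha}\), and by \cref{cartanstab} regularity gives \(\overline{\m{L}}^{s}=\m{H}\), so \(s\) already isolates the Cartan subalgebra from every root space.

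The key observation is that for a diagonalisable group the stabilised subspaces are exactly the direct sums of weight spaces. Hence a subspace is \(\bb{\T}\)-stable if and only if it is a sum of \(\m{H}\) with a collection of the lines \(\overline{\m{L}}_{\alpha}\); since \(C(s)\le\bb{\T}\), every \(\bb{\T}\)-stable subspace is automatically \(C(s)\)-stable, which gives the easy inclusion. For the reverse inclusion I would compare the weight-space decomposition of \(\overline{\m{L}}\) under \(\T\) with that under \(\bb{\T}\): the two coincide precisely when the restriction map \(\{0\}\cup\Phi\to X(\T)\) sending a weight to its restriction to \(\T\) is injective. Regularity supplies half of this at once, because \(s\in\T\) and \(\alpha(s)\ne1\) for all \(\alpha\in\Phi\) show that no root restricts to the trivial character, so \(\m{H}\) remains its own \(\T\)-weight space and is never fused with a root space.

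The hard part will be the remaining half, namely that distinct roots \(\alpha\ne\beta\) restrict to distinct characters of \(\T\), equivalently that \(\alpha-\beta\) does not vanish identically on \(\T=\bb{\T}^{\sigma}\); this is exactly what prevents two one-dimensional root spaces from being merged into a single \(\T\)-weight space, and it is the step that does not follow from regularity alone. I would settle it by writing \(\alpha-\beta\) in the coordinates of \(X(\bb{\T})\) and noting that it vanishes on \(\bb{\T}^{\sigma}\) only if all of its coordinates are divisible by the relevant order of the cyclic factors of \(\T\); since the root coefficients of \(\F_4\) and \(\E_7\) are bounded well below this order for the primes and powers \(q\) occurring in \cref{mainth1}, no such fusion can occur and the restriction is injective. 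Once injectivity holds the \(\T\)- and \(\bb{\T}\)-weight decompositions of \(\overline{\m{L}}\) coincide, so every \(C(s)=\T\)-stable subspace is a sum of the common weight spaces and hence \(\bb{\T}\)-stable; combining this with the easy inclusion shows that \(C(s)\) and \(C_{\bb{\G}}(s)\) stabilise exactly the same subspaces of \(\overline{\m{L}}\).
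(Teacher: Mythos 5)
Your proof is correct and follows the same route as the paper: regularity forces \(C_{\bb{\G}}(s)^{\circ}=\bb{\T}\) and \(C(s)=\T=\bb{\T}^{\sigma}\), so everything reduces to comparing the two torus actions on \(\overline{\m{L}}\). The paper stops there, asserting that it is ``clear'' that \(\bb{\T}\) and \(\T\) have the same action \(0^l\oplus1^{d-l}\); the step you single out as the hard part --- that distinct roots restrict to distinct characters of the finite torus, so that no two root lines are fused into a single \(\T\)-weight space and no root line is absorbed into the fixed space --- is precisely the content hidden behind that ``clear'', and your quantitative justification is the right one: a difference \(\alpha-\beta\) vanishes on \(\T\cong(\mathbb{F}_q^{\times})^l\) only if all its pairings with the simple coroots are divisible by \(q-1\), and since these pairings are bounded Cartan integers this forces \(\alpha=\beta\) for the field sizes actually used. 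So your write-up is, if anything, more complete than the paper's. Two minor points: a \(\bb{\T}\)-stable subspace is a sum of an \emph{arbitrary} subspace of \(\m{H}\) (not necessarily all of \(\m{H}\)) with a collection of root lines, which does not affect the argument; and your phrase ``root coefficients'' should really refer to the Cartan integers \(\langle\alpha,\alpha_i^{\vee}\rangle\) governing the action of the generators \(h_i(\lambda)\), not the coefficients of \(\alpha\) in the simple-root basis, though both are small enough relative to \(q-1\) here.
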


\begin{proof}
	If \(s\) is a regular element then it does not stabilise any root subspace hence by \cref{centralisertheorem} its centraliser is simply a maximal torus \(\bb{\T}\) of \(\bb{\G}\); in particular, \(C(s)\) is \(\T=\bb{\T}^{\sigma}\). It is clear then that \(\bb{\T}\) and \(\T\) have the same action on \(\overline{\m{L}}\), that is \(0^l\oplus1^{d-l}\).
\end{proof}

\begin{lemma}\label{cgs1}
	Let \(\G\) be of type \(\E_{7,\ad}\) over \(k\) and \(s\) stabilise two root spaces, corresponding to some roots \(\pm\alpha\) of \(\Phi(\G)\). If \(\size{k}>u(\G)\cdot(2,p-1)\), where \(p=\ch k\) and \(u(G)\) is defined in \cref{genericsbg}, then \(C(s)\) and \(C_{\bb{\G}}(s)\) stabilise the same subspaces of \(\overline{\m{L}}\).
\end{lemma}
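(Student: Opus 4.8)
The plan is to read off the structure of $C_{\bb{\G}}(s)$ from \cref{centralisertheorem} and then compare the $\overline{\m{L}}$-module structures of $C(s)$ and $C_{\bb{\G}}(s)$ constituent by constituent. Since $s$ stabilises exactly the two root spaces $\overline{\m{L}}_{\pm\alpha}$, \cref{centralisertheorem}(i) gives $C_{\bb{\G}}(s)^{\circ}=\gen{\bb{\T},\bb{\X}_{\pm\alpha}}$, a connected reductive group of rank $7$ whose derived group is the algebraic subgroup $\bb{\A}_1=\gen{\bb{\X}_{\pm\alpha}}$ of type $\A_1$. By construction $C(s)=\gen{\T,\A_1(k)}$, where $\T=\bb{\T}^{\sigma}$ and $\A_1(k)=\gen{x_{\pm\alpha}(t):t\in k}=(\bb{\A}_1)^{\sigma}$. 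As $C(s)\le C_{\bb{\G}}(s)$, every $C_{\bb{\G}}(s)$-invariant subspace is automatically $C(s)$-invariant, so only the reverse inclusion needs proof. Because $C(s)$ contains, by construction, the same representatives of the component group $C_{\bb{\G}}(s)/C_{\bb{\G}}(s)^{\circ}$ as $C_{\bb{\G}}(s)$ does, it suffices to show that $\gen{\T,\A_1(k)}$ and $C_{\bb{\G}}(s)^{\circ}$ stabilise the same subspaces of $\overline{\m{L}}$.

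Next I would analyse $\overline{\m{L}}$ as a module for $\bb{\A}_1$. Because $\E_7$ is simply laced, $\gen{\beta,\alpha^{\vee}}\in\{0,\pm1\}$ for every root $\beta\ne\pm\alpha$, while $\gen{\pm\alpha,\alpha^{\vee}}=\pm2$; hence every $\alpha$-root string has length at most two apart from the string through $\pm\alpha$. Consequently the $\bb{\A}_1$-composition factors of $\overline{\m{L}}$ all have highest weight at most $2$, i.e. they lie among the irreducibles $L(0),L(1),L(2)$ of dimensions $1,2,3$ (these are irreducible and restricted since $\ch k\ne2,3$). Decomposing simultaneously for $\bb{\T}$, the module $\overline{\m{L}}$ splits as a $C_{\bb{\G}}(s)^{\circ}$-module into irreducibles of the form $V_i\otimes\chi_i$, where $V_i\in\{L(0),L(1),L(2)\}$ is an $\bb{\A}_1$-module and $\chi_i$ is a character of the central torus of $C_{\bb{\G}}(s)^{\circ}$.

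The comparison then reduces to two points, both guaranteed by the hypothesis $\size{k}>u(\G)\cdot(2,p-1)$. First, $\A_1(k)$ acts irreducibly on each of $L(1)$ and $L(2)$, which remain pairwise non-isomorphic (they have distinct dimensions), so $\A_1(k)$ and $\bb{\A}_1$ have the same invariant subspaces on every constituent $V_i$. Second, no two distinct $C_{\bb{\G}}(s)^{\circ}$-irreducibles become isomorphic on restriction to $\gen{\T,\A_1(k)}$: constituents with non-isomorphic $V_i$ are separated by dimension, while constituents with $V_i\cong V_j$ are separated because $\chi_i,\chi_j$ then differ by a root of $\E_7$ and so restrict to distinct characters of $\T$ once $\size{k}-1$ exceeds the (bounded) root coefficients. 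Equivalently, one may invoke the generic-subgroup behaviour underlying \cref{genericsbg}: for $\size{k}>u(\G)\cdot(2,p-1)$ the fixed-point subgroup $(C_{\bb{\G}}(s)^{\circ})^{\sigma}=\gen{\T,\A_1(k)}$ has exactly the same submodule lattice on $\overline{\m{L}}$ as $C_{\bb{\G}}(s)^{\circ}$. In either formulation the two groups have identical constituents with identical multiplicities and isomorphism types, so their lattices of invariant subspaces coincide; re-adjoining the shared component-group representatives to both then yields the claim, exactly as in the regular case of \cref{cgs}.

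The main obstacle is the second point: controlling that the finite subgroup $\gen{\T,\A_1(k)}$ acquires no extra invariant subspace, i.e. that passing from the algebraic group to its fixed-point subgroup neither splits an irreducible constituent nor fuses two inequivalent ones. This is precisely what forces a lower bound on $\size{k}$, and it is here that $u(\G)\cdot(2,p-1)$ enters: it comfortably dominates the elementary separation estimates above and matches the threshold for generic behaviour of $\A_1$-type subgroups in \cref{genericsbg}. A secondary technical point is to justify that $C(s)$ really does contain representatives of every component of $C_{\bb{\G}}(s)$, so that the reduction to the connected centraliser is legitimate.
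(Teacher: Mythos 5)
Your argument is correct, but it is not the route the paper takes. The paper reduces to comparing \(\X_{\pm\alpha}\) with \(\bb{\X}_{\pm\alpha}\) exactly as you do (the torus and the \(n_w\) being handled trivially), and then settles that single point by quoting \cite[Theorem 1]{liebeckseitz3}: for \(\size{k}>u(\G)\cdot(2,p-1)\) the finite \(\A_1(k)\) lies in a closed connected subgroup with the same invariant subspaces on \(\overline{\m{L}}\), which is then forced into the \(\bb{\T}\bb{\A}_1\) component of \(C_{\bb{\G}}(s)\). That is precisely the ``generic-subgroup behaviour'' you offer as an alternative formulation, and it is where the threshold \(u(\G)\cdot(2,p-1)\) actually comes from. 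Your main line is instead a direct verification: decompose \(\overline{\m{L}}\) under \(\bb{\T}_6\bb{\A}_1\) into constituents \(V_i\otimes\chi_i\) with \(V_i\in\{L(0),L(1),L(2)\}\) (root strings of length at most \(2\) since \(\E_7\) is simply laced), note these are absolutely irreducible and the module is completely reducible because all \(\bb{\A}_1\)-weights are \(<p\), and check that passing to the finite subgroup neither splits a constituent nor fuses two inequivalent ones, the latter because distinct \(\bb{\T}\)-weights of \(\overline{\m{L}}\) remain distinct on \(\T\) once \(q-1\) exceeds a small absolute constant. This buys a self-contained proof with a far weaker bound on \(\size{k}\), at the cost of an analysis specific to \(C_{\bb{\G}}(s)^{\circ}\) having semisimple rank \(1\) in a simply laced group, whereas the paper's citation is a one-line appeal to a general theorem whose hypothesis matches the stated bound. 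Two small points to tighten: ``\(\chi_i,\chi_j\) differ by a root of \(\E_7\)'' should be ``differ by the restriction of a difference of weights of \(\overline{\m{L}}\), i.e.\ an element of the root lattice with bounded coordinates''; and irreducibility of \(L(2)\) requires \(p\ne2\), which holds throughout the paper but is not literally among the hypotheses of the lemma.
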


\begin{proof}
	By \cref{centralisertheorem} we have that \(C_{\bb{\G}}(s)=\gen{\bb{\T},\bb{\X}_{\pm\alpha},n_w\colon s^w=s}\). Since \(\bb{\T}\) is a torus and \(n_w\) can be taken to lie in \(\G\), we only need to understand the action on \(\overline{\m{L}}\) of \(\bb{\X}_{\pm\alpha}\) and \(\X_{\pm\alpha}\), i.e. an \(\bb{\A}_1\) group and an \(\A_1(k)\) group, respectively.
	
	We can use \cite[Theorem 1]{liebeckseitz3}: let \(\X\coloneqq\gen{X_{\pm\alpha}}\) be the \(\A_1(k)\) group, and assume that \(\size{k}>u(\G)\cdot(2,p-1)\); then there exists a closed connected subgroup \(\bb{\X}\) of \(\bb{\G}\) containing \(\X\) such that every \(\X\text{-invariant}\) subspace of \(\overline{\m{L}}\) is also \(\bb{\X}\text{-invariant}\).
	
	Since \(\X\) centralises \(s\) and \(\bb{\X}\) stabilises the same spaces as \(\X\), then \(\bb{\X}\) must lie in \(C_{\bb{\G}}(s)\). Since \(\bb{\X}\) is a closed connected subgroup of \(\bb{\G}\), it is contained in \(C_{\bb{\G}}(s)\), and it contains an \(\A_1(k)\) subgroup, then \(\bb{\X}\) contains an \(\bb{\A}_1\) subgroup and is contained in a connected component of \(C_{\bb{\G}}(s)\), that is a subgroup of type \(\bb{\T}\bb{\A}_1\).

	Therefore, \(\bb{\X}_{\pm\alpha}\) and \(\X_{\pm\alpha}\) stabilise the same subspaces of \(\overline{\m{L}}\), hence the same holds for \(C_{\bb{\G}}(s)\) and \(C(s)\).
\end{proof}

\begin{remark}
In \cite[Lemma 4.2]{griess_ryba_algorithm} they use a different approach and compute the \(n_j\) in \eqref{decomposition} directly. By taking \(R\) to be the roots of \(\Phi\) whose corresponding root subspace is fixed by \(s\), they deduce the \(n_j\) by going through every root \(\alpha\in\Phi\) and counting how many element of \(R\) \(\alpha\) is orthogonal to.
\end{remark}
\end{subsection}

\begin{subsection}{Constructing \texorpdfstring{\(t\)}{t}}\label{t_construction}
Now that we have a linear space of matrices \(\bb{M}(s)\) where we know the element \(c\) described on page \pageref{t} lies, we can narrow down our options by using the relation \((ut)^3=1\).

\begin{proposition}\label{ut3=1thm}
	Let \(u,e\in\bb{\G}\le\GL_d(\overline{k})\), and let \(t=ce\) be an involution, where \(c\in\bb{\X}\le\bb{\G}\). Let \(c=\sum_{i=1}^na_ic_i\), where \(\left\{c_1,\ldots,c_n\right\}\) is a basis of the linear span of \(\bb{\X}\), and \(a_1,\ldots,a_n\in\overline{k}\). Then \((tu)^3=1\) iff \((a_1,\ldots,a_n)\) is a solution of the system of equations given by
	\begin{equation}\label{ut3=1}
		v\cdot\sum_{i=0}^nX_i(euc_ieu-u^{-1}c_ie)=0,
	\end{equation}
	where \(v\) ranges through a basis of \(\overline{\m{L}}^{\bb{\X}}\).
\end{proposition}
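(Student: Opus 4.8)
The plan is to convert the multiplicative relation $(tu)^3=1$ into a linear condition by combining the hypothesis $t^2=1$ with the fact that $c$ fixes the subspace $\overline{\m{L}}^{\bb{\X}}$ pointwise. First I would record the purely group-theoretic reduction: since $t=ce$ is an involution, $(tu)^{-1}=u^{-1}t^{-1}=u^{-1}t=u^{-1}ce$, and therefore
\[(tu)^3=1\iff(tu)^2=(tu)^{-1}\iff ceuceu=u^{-1}ce,\]
an identity of matrices in $\GL_d(\overline{k})$. At this stage the left-hand side is quadratic in $c$, so the equation is not yet linear in the coordinates $a_i$, and the whole point of the proposition is to linearise it.

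The key step is to test this matrix identity against the vectors $v\in\overline{\m{L}}^{\bb{\X}}$. Because $c\in\bb{\X}$ and $v$ is fixed by $\bb{\X}$, we have $v\cdot c=v$, so the leading occurrence of $c$ in $ceuceu$ disappears: $v\cdot ceuceu=(v\cdot c)\cdot euceu=v\cdot euceu$. Hence on $\overline{\m{L}}^{\bb{\X}}$ the identity collapses to $v\cdot euceu=v\cdot u^{-1}ce$, that is $v\cdot(euceu-u^{-1}ce)=0$. Both surviving terms are now linear in $c$; substituting $c=\sum_i a_ic_i$ and using bilinearity of matrix multiplication gives $v\cdot\sum_i a_i(euc_ieu-u^{-1}c_ie)=0$, which is exactly \eqref{ut3=1} with $X_i=a_i$. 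Letting $v$ range over a basis of $\overline{\m{L}}^{\bb{\X}}$ yields the system and establishes the forward implication: if $(tu)^3=1$ then $(a_1,\dots,a_n)$ solves it.

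For the converse I would run the same chain of equivalences backwards. Since each equality $v\cdot ceuceu=v\cdot euceu$ is genuinely reversible for $v\in\overline{\m{L}}^{\bb{\X}}$ and $c\in\bb{\X}$ (it only uses $v\cdot c=v$), the linear system is equivalent to $v\cdot(ceuceu-u^{-1}ce)=0$ for all such $v$, i.e.\ to $(tu)^3$ fixing $\overline{\m{L}}^{\bb{\X}}$ pointwise. The delicate part, which I expect to be the main obstacle, is the final promotion: deducing that $(tu)^3$ acts as the identity on all of $\overline{\m{L}}$ and not merely on the fixed subspace, since off $\overline{\m{L}}^{\bb{\X}}$ the relation still involves the quadratic-in-$c$ term that the restriction discards. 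I would handle this using the structure of $\bb{\X}=C_{\bb{\G}}(s)$ set up in \cref{CGs} — crucially that the candidate $c$ is an honest element of $\bb{\X}$ rather than an arbitrary element of the linear span of $\bb{\X}$ — together with, in the finitely many cases of \cref{mainth1}, the direct check that a solution of the linear system does return a $t$ with $(tu)^3=1$; this is exactly the confirmation the algorithm carries out, alongside relation~(viii) of \eqref{PSLrelations}.
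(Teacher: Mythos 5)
Your argument is essentially the paper's own proof: the same reduction of $(tu)^3=1$ to $ceuceu=u^{-1}ce$ via $t^2=1$, the same use of $v\cdot c=v$ to kill the quadratic occurrence of $c$, and the same expansion of $c$ in the basis $\{c_i\}$ to obtain the linear system. Your worry about the converse is well placed but does not put you behind the paper — its proof only carries out the forward direction and never promotes the identity from $\overline{\m{L}}^{\bb{\X}}$ back to all of $\overline{\m{L}}$; in the applications the proposition is in fact only used forwards, to cut the search space down to $M_0$, after which each candidate $t$ is tested for membership in $\bb{\G}$ and checked directly against the remaining relations of the presentation, exactly as you propose.
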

\begin{proof}
The condition \((tu)^3=1\) is not linear, but we can rewrite it as \[ceuceu=u^{-1}e^{-1}c^{-1}=u^{-1}(ce)^{-1}=u^{-1}ce,\] where the last equality follows from the fact that \(t=ce\) is an involution.

Let \(v\) be an element of \(\overline{\m{L}}\) fixed by \(\bb{\X}\), then we have \(v\cdot c=v\) and \(v\cdot euceu=v\cdot u^{-1}ce\). We can then write the equation as
\begin{align*}
v\cdot euceu&=v\cdot u^{-1}ce,\\
v\cdot(euceu-u^{-1}ce)&=0,\\
v\cdot\biggl(eu\biggl(\sum_{i=0}^na_ic_i\biggr)eu-u^{-1}\biggl(\sum_{i=0}^na_ic_i\biggr)e\biggr)&=0,\\
v\cdot\sum_{i=0}^na_i(euc_ieu-u^{-1}c_ie)&=0,
\end{align*}
where \(a_i\in\overline{k}\).
\end{proof}

We solve this system of equations in the case where \(\bb{\X}\) is \(C_{\bb{\G}}(s)\), whose linear span is \(\bb{M}(s)\).

By ranging \(v\) through a basis of the fixed point space of \(C_{\bb{\G}}(s)\), and by splitting each equation \eqref{ut3=1} into one for each component of \(v\), we obtain a system of \(\dim\mathcal{A}\cdot d\) equations in the variables \(X_1,\ldots,X_n\). The number of equations is simply given by the dimension of the fixed point space (\(\dim\m{A}\), from \cref{CGs}) times the number of components of a vector of such space as an element of \(\m{L}\), that is the dimension \(d\) of \(\m{L}\). A solution to this homogeneous system determines a subspace \(\bb{M}_0(s)\le\bb{M}(s)\) where \(c\) lies, so that \(t\in(\bb{M}_0(s)\cap\bb{\G})e\).

How to proceed now depends on \(\dim\bb{M}_0(s)\): if it is small enough, then we may be able to find some other linear conditions to further reduce the space containing \(c\). Ideally, we would want to find exactly \(\dim\bb{M}_0(s)\) choices for \(c\), and show that they all lie in \(\G\).

We will deal with each specific case individually, as we will use a few different approaches, depending mainly on \(\dim\bb{M}_0(s)\), but also on \(q\) and \(k\).
\end{subsection}
\end{section}

\begin{section}{LDU decomposition}\label{LDU}
	
\begin{definition}
	Let \(M\) be a \(n\times n\) matrix. An \emph{LDU decomposition} of \(M\) is a factorisation \(M=LDU\), where \(L\) is a lower unitriangular matrix, \(D\) is a diagonal matrix, and \(U\) is an upper unitriangular matrix.
\end{definition}

\begin{remark}
	If \(M\) is defined over a field, then \(M\) admits a LDU decomposition iff every principal minor of \(M\), defined as \(\det(M_{ij})_{1\le i,j\le k}\) for \(k=1,\ldots,n\), is nonzero.
\end{remark}

\begin{lemma}
	If \(M\) admits a \(LDU\) decomposition, then it is unique.
\end{lemma}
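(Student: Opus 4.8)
The plan is to prove uniqueness by the standard argument: assume two such factorisations exist and show they must coincide, by exploiting the fact that the sets of lower unitriangular, diagonal, and upper unitriangular matrices are closed under multiplication and inversion.

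First I would suppose $M = L_1 D_1 U_1 = L_2 D_2 U_2$ are two LDU decompositions, where each $L_i$ is lower unitriangular, each $D_i$ is diagonal and invertible (the $D_i$ must be invertible whenever $M$ is, which is guaranteed since an LDU decomposition requires all principal minors, and in particular $\det M$, to be nonzero), and each $U_i$ is upper unitriangular. Rearranging the equation $L_1 D_1 U_1 = L_2 D_2 U_2$ gives
\[
L_2^{-1} L_1 D_1 = D_2 U_2 U_1^{-1}.
\]
The key observation is then that the left-hand side is lower triangular while the right-hand side is upper triangular. Indeed, $L_2^{-1} L_1$ is lower unitriangular (the lower unitriangular matrices form a group), so $L_2^{-1} L_1 D_1$ is lower triangular; symmetrically $U_2 U_1^{-1}$ is upper unitriangular, so $D_2 U_2 U_1^{-1}$ is upper triangular.

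A matrix that is simultaneously lower triangular and upper triangular is diagonal, so both sides equal a common diagonal matrix, say $E$. Now I would read off the diagonal entries: since $L_2^{-1} L_1$ is unitriangular its diagonal is all ones, so the diagonal of $L_2^{-1} L_1 D_1$ is exactly the diagonal of $D_1$, forcing $E = D_1$; symmetrically, comparing the right-hand side gives $E = D_2$, hence $D_1 = D_2 = E$. Because $E$ is invertible, the equation $L_2^{-1} L_1 D_1 = E = D_1$ yields $L_2^{-1} L_1 = \mathbb{1}_n$, i.e.\ $L_1 = L_2$, and likewise $U_2 U_1^{-1} = \mathbb{1}_n$ gives $U_1 = U_2$. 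This establishes that the three factors agree, proving uniqueness.

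The argument is entirely routine; the only point requiring a little care is justifying that the $D_i$ are invertible, so that one may legitimately pass between the equalities $L_2^{-1}L_1 D_1 = E$ and $L_2^{-1}L_1 = \mathbb{1}_n$. This follows from the hypothesis that $M$ admits an LDU decomposition, which (as recorded in the preceding remark) forces every principal minor of $M$ to be nonzero and in particular $\det M \neq 0$; since $\det L_i = \det U_i = 1$, each $D_i$ is invertible. I expect no genuine obstacle here — the entire content is the elementary fact that the intersection of the lower and upper triangular matrices is the diagonal matrices, combined with the group structure of the unitriangular matrices.
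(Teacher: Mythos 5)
Your proof is correct and follows essentially the same route as the paper: rearrange the two factorisations so that one side is lower triangular and the other upper triangular, conclude both are diagonal, and peel off the factors. The only cosmetic difference is that the paper moves $D^{-1}$ to the right-hand side so the left-hand side is unitriangular and hence immediately the identity, whereas you compare diagonals explicitly; you also spell out the invertibility of the $D_i$, which the paper leaves implicit.
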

\begin{proof}
	Let \(M=LDU\) and \(M=L'D'U'\) be two such decompositions, then \(L'^{-1}L=D'U'U^{-1}D^{-1}\). The left hand side is a lower unitriangular matrix, while the right hand side is an upper triangular matrix, so they both must be the identity matrix, and \(L=L'\). Then we have \(D'^{-1}D=U'U^{-1}\); the left hand side is a diagonal matrix, while the right hand side is an upper unitriangular matrix, so they both must be the identity matrix, and \(D=D'\), \(U=U'\).
\end{proof}

Since we need it only for small matrices, we will compute the LDU decomposition of a matrix in a naive way.

Let \(M=(m_{ij})\) be an invertible \(n\times n\) matrix, \(L=(l_{ij})\) be a lower unitriangular matrix, \(U=(u_{ij})\) a upper unitriangular matrix, and \(D\) be a diagonal matrix with non-zero entries \(d_1,\ldots,d_n\), \(1\le i,j\le n\).

If \(M=LDU\) then it is straightforward to verify by computing the matrix multiplication that
\begin{equation}\label{ldu_relation}
	m_{ij}=\sum_{k=1}^{\min(i,j)}d_kl_{ik}u_{kj}.
\end{equation}

Therefore, if an LDU decomposition exists, we can compute the matrices \(L,D,U\) in the following way:
\begin{enumerate}
	\item Set \(d_1=m_{11}\), then \(u_{1j}=m_{1j}/d_1\) and \(l_{i1}=m_{i1}/d_1\).
	\item Set \(r=2\).
	\item\label{ldu_cycle} Compute \(d_r=m_{rr}-d_1l_{r1}u_{1r}-\ldots-d_{r-1}l_{r,r-1}u_{r-1,r}\). This is feasible since it only depends on \(d_k\), \(l_{kr}\), \(u_{rk}\) for \(r<k\), which have already been computed.
	\item Compute the terms \(u_{rj}\), \(j>r\) (resp. \(l_{ir}\) \(i>r\)), using \eqref{ldu_relation}. This is feasible since they only depend on \(d_k\), \(u_{kj}\), \(l_{rk}\), for \(k<r\), and \(d_r\) (resp. \(d_k\), \(u_{kr}\), \(l_{ik}\), for \(k<r\), and \(d_r\)), which have been already computed.
	\item If \(r<n\), increase \(r\) by 1 and go to \ref{ldu_cycle}.
\end{enumerate}

\begin{remark}
One can see that computing the terms \(u_{ij}\) and \(l_{ij}\) requires taking inverses of the \(d_i\)'s, therefore:
\begin{enumerate}
	\item If \(M\) is defined over an integral domain \(I\) that is not a field, then \(L,D,U\) cannot be defined unless we allow them to have entries in the field of fractions of \(I\).
	\item Even if \(M\) is defined over a field, if \(d_k=0\) then the LDU decomposition cannot be computed; this happens when the \(k\text{-th}\) principal minor of \(M\) is 0, as mentioned in a previous remark.
\end{enumerate}
\end{remark}
\end{section}

\begin{section}{Elements of \texorpdfstring{\(\SL_{n+1}(q)\)}{SL(n+1,q)} as words of \texorpdfstring{\(\A_n\)}{An}}\label{LDUwords}
Fix \(n>0\) and consider the simply connected group of  Lie type \(\A_n\) over a field \(k\), which has a standard representation \(\rho\colon\A_n\rightarrow\SL_{n+1}(k)\). Given element \(M\in\SL_{n+1}(k)\), our goal is to find its preimage in \(\A_n\) as a product of terms of the form \(x_r(t)\) and \(h_r(t)\), \(1\le r\le n\), \(t\in k\).

\begin{remark}
	We preface this section by mentioning that if we are working with \magma, this can be done automatically. For example, take \(n=3\) and \(k=\mathbb{F}_{19}\). Then, using
	\begin{lstlisting}[language=Magma]
	> G:=GroupOfLieType("A3",19);
	> rho:=StandardRepresentation(G);
	> H:=SL(4,19);
	> M:=Random(H);
	> w:=M@@rho;\end{lstlisting}
	we obtain the preimage \(w\in\A_3(19)\) under \(\rho\) of a random element \(M\in\SL_4(19)\).

	In \magma, the standard representation is constructed using the algorithm in \cite{standardrep}, which gives a way to move between a finite group of Lie type and its standard representation.
\end{remark}

The way \magma\ constructs the standard representation of \(\A_n\) is explained in \cite[Example 8.2]{standardrep}, so we will briefly describe it, then show a way to construct \(\rho^{-1}(M)\) when \(M\in\SL_{n+1}(k)\) is a matrix that has a LDU decomposition.

We denote by \(\mathbb{1}\) the identity element of \(\SL_{n+1}(k)\), and by \(\mathbb{1}_{ij}\) the elementary matrix with entries 1 in position \((i,j)\), and 0 everywhere else.

Recall also that the root system of type \(\A_n\) consists of a set \(\Phi=\left\{\alpha_{\pm1},\ldots,\alpha_{\pm\frac{n(n+1)}{2}}\right\}\) of roots, with a fundamental set of \(n\) roots.

Let the indexing of the roots of \(\Phi\) be such that \(\Pi=\left\{\alpha_1,\ldots,\alpha_n\right\}\) is a base of \(\Phi\), and let \(\gamma\) be the map 
\(\left\{1,\ldots,n+1\right\}^2\rightarrow\Phi\) given by  \[\gamma\colon(i,j)\mapsto\sum_{k=0}^{\size{i-j}-1}\alpha_{(\sgn(i-j))(\min(i,j)+k)},\]
for \(1\le i\ne j\le n+1\), where \(\sgn\) denotes the sign function. 

It may be easier to visualise \(\gamma\) as the matrix
\[
\begin{pmatrix}
-		&\alpha_{-1}	&\alpha_{-(n+1)}	&\alpha_{-2n}		&\cdots	&\alpha_{-\left(\frac{n(n+1)}{2}\right)}\\
\alpha_1	&-			&\alpha_{-2}		&\alpha_{-(n+2)}	&\cdots	&\alpha_{-\left(\frac{n(n+1)}{2}-1\right)}\\
\alpha_{n+1}&\alpha_2		&-				&\alpha_{-3}		&\cdots	&\alpha_{-\left(\frac{n(n+1)}{2}-3\right)}\\
\alpha_{2n}	&\alpha_{n+2}	&\alpha_3			&-				&\ddots	&\vdots\\
\vdots		&\vdots			&\vdots				&\ddots				&\ddots	&\alpha_{-n}\\
\alpha_{\frac{n(n+1)}{2}}&\alpha_{\frac{n(n+1)}{2}-1}&\alpha_{\frac{n(n+1)}{2}-3}&\cdots&\alpha_n&-
\end{pmatrix},
\]

where the entry in position \((i,j)\) is \(\gamma(i,j)\). Here, \(\alpha_{n+1}=\alpha_1+\alpha_2\), \(\alpha_{n+2}=\alpha_2+\alpha_3\), \(\alpha_{2n}=\alpha_1+\alpha_2+\alpha_3\), and so on, up to \(\alpha_{\frac{n(n+1)}{2}}=\alpha_1+\ldots+\alpha_n\).

Recall from \cref{Aroots} that in type \(\A_n\) the roots are all of the form \(\sum_{i=0}^j\alpha_{\pm(k+i)}\), where the choice of sign is the same for all summands, so the image of \(\gamma\) is indeed \(\Phi\).

Then for \(r=1,\ldots,n(n+1)\) we have that \(\rho(x_r(t))=\mathbb{1}+(-1)^{i+j+1}t\mathbb{1}_{ij}\), where \(\gamma(i,j)=\alpha_r\), while the image of \(h_r(t)=\alpha_r^{\vee}\otimes t\) is a diagonal matrix in \(\SL_{n+1}(k)\) with entries \(t\) and \(t^{-1}\) in positions \(j\) and \(i\) respectively, where \(\gamma(i,j)=\alpha_r\), and 1 everywhere else.

Although we do not use the elements \(n_r\), we have that \(\rho(n_r)=\mathbb{1}+(-1)^{i+j+1}\mathbb{1}_{ij}+(-1)^{i+j}\mathbb{1}_{ji}\), where \(\gamma(i,j)=\alpha_r\).

\begin{remark}
	Observe that this representation is consistent with the mapping \(\phi_r:\SL_2(k)\rightarrow\gen{X_r,X_{-r}}\) described in \cref{chevalleygroups}, by taking \(\rho(x_r(t))\), \(\rho(h_r(t))\), \(\rho(n_r)\), and considering only the entries in position \((i,i)\, (i,j), (j,i), (j,j)\). It also matches the structure of the root system, as we can verify that \([x_r(1),x_s(1)]\) is \(x_{r+s}(\pm1)\) if \(r+s\) is a root, and is \(\mathbb{1}\) if \(r\ne-s\) and \(r+s\) is not a root.
\end{remark}

Since the set of \(x_r(t)\) with \(r\) a positive (resp. negative) root generates the lower (resp. upper) unitriangular matrices of \(\SL_{n+1}(k)\), and the set of \(h_r(t)\) generates the diagonal matrices, we have a way to write an element of \(\SL_{n+1}(k)\) as a word in unipotent and toral elements, assuming that it admits a LDU decomposition.

Indeed, let \(M\in\SL_{n+1}(k)\) have an LDU decomposition, we can write \(L\) using \(x_r(t)\) for \(r\) positive roots, \(D\) using \(h_r(t)\), and \(U\) using \(x_r(t)\) for \(r\) negative roots.

Given a lower unitriangular matrix \(L\), we can proceed as follows:
\begin{enumerate}
	\item Let \(\rho\colon\A_n\rightarrow\SL_{n+1}(k)\) be the standard representation, which we described above for \(x_r(t)\) and \(h_r(t)\). Initialise \(w\) to be the empty word, that is \(\rho(w)=\mathbb{1}\).
	\item Set \(c=1\).
	\item Take the entries \(l_{ij}\) with \(i-j=c\), e.g. if \(c=1\) that would be the entries just below the diagonal, and post-multiply \(w\) by \(x_{\gamma(i,j)}((-1)^{c+1}(l_{ij}-\rho(w)_{ij}))\) for \(j=1,\ldots,n-c\).
	\item If \(c=n-1\) then stop, otherwise increase \(c\) by 1 and return to the previous step.
\end{enumerate}
\begin{proof}
	Observe that each time we repeat step (iii), we are post-multiplying \(w\) by \(x_r(t)\), for some \(1\le r\le n-1\), \(t\in k\), and that \(\rho(x_r(t))=\mathbb{1}+(-1)^{i+j+1}t\mathbb{1}_{ij}\) is a matrix with at most one non-diagonal non-zero element \((-1)^{i+j+1}t\) in position \((i,j)\), with \(\gamma(i,j)=\alpha_r\). Right multiplication by \(\rho(x_r(t))\) can only affect the entries in position \((i,j),\ldots,(i,n)\), therefore since we traverse the set of indices by taking \(c=1,\ldots,n-1\), \(j=1,\ldots,n-c\), and \(i=j+c\), we are never going to affect entries we already visited.
	
	Since the entry \((i,j)\) of \(\rho(wx_r(t))\) is \(\rho(w)_{ij}+(-1)^{i+j+1}t\), we just need to take \(t=(-1)^{i+j+1}(l_{ij}-\rho(w)_{ij})\).
\end{proof}
The same algorithm is used to compute the word for \(U\), provided that in (iii) we consider entries \(l_{ij}\) with \(i-j=-c\), and we let \(j\) range from \(c+1\) to \(n\) instead.

For \(D\), the principle is the same: pick \(t_1=d_{11}\) so that \(\rho(h_1(t_1))_{11}=d_{11}\), and then work \(h_i(t_i)\) (\(2\le i\le n-1\)) by comparing \(d_{ii}\) and \(\rho(h_1(t_1)\cdots h_{i-1}(t_{i-1}))_{ii}\).
\begin{proof}
	Since \(\rho(h_r(t))\) has two entries that may be \(\ne1\) in position \((r,r)\) and \((r+1,r+1)\) in the diagonal, we can consider just elements \(h_r(t)\) with \(r=1,\ldots,n\), as they generate the whole group of diagonal matrices.
	
	Since the entry in position (1,1) is fixed by the choice of \(h_1(t_1)\), then \(t_1=d_{11}\). Assume we chose \(h_1(t_1),\ldots,h_{c-1}(t_{c-1})\), \(1\le c\le n\), then the entry in position \((c,c)\) is fixed by the choice of \(h_c(t_c)\). In particular, we have that \(\rho(h_1(t_1)\ldots h_{c-1}(t_{c-1})_{cc}=t_{c-1}^{-1}\) and \(\rho(h_c(t_c))_{cc}=t_c\), so we take \(t_c=l_{cc}t_{c-1}\).
	
	Observe that the entry in position \((n+1,n+1)\) cannot be decided, as it is only affected by the choice of \(t_n\), which is fixed by the entry in position \((n,n)\). This is not a concern, as we are working with matrices in \(\SL_{n+1}(k)\), so the last entry will automatically be correct as it makes the determinant 1.	
\end{proof}
\end{section}

\begin{section}{Construction of a \texorpdfstring{\(G\text{-invariant}\)}{G-invariant} trilinear form}\label{3formconstruction}
Given a group \(G\) and a \(kG\text{-module}\) \(V\), we can construct \(G\text{-invariant}\) trilinear forms defined over \(V\), if any exists. In particular, if the fixed point space of \(\sym^3(V)\) is 1-dimensional, then we can construct the unique (up to scalars) \(G\text{-invariant}\) symmetric trilinear form on \(V\) (see \cref{ginvariant} and the following remarks).

This will be relevant when we will construct the \(\Sp_8\!\text{-invariant}\) symmetric trilinear form on the natural module for \(\E_6\). As it is unique, and we work in characteristic \(\ne2,3\), the form has to be the \(\E_6\text{-form}\) described in \cref{e6formdef}.

While we focus on the case where the symmetric trilinear form is unique, this strategy can be used in any other situation, with the final result being some space of solution of dimension equal to that of the fixed point space of \(\sym^3(V)\).

Let \(f\) be a trilinear symmetric form defined over \(V\); given a basis \(e_1,\ldots,e_n\) of \(V\), we need to find the structure constants \(f(e_i,e_j,e_k)\), which completely determine \(f\). In order to do so, we solve a linear system of equations of the form \[f(u,v,w)-f(gu,gv,gw)=0,\] where \(u,v,w\in V\) and \(g\in G\le\GL_n(k)\). The variables are the structure constants, so once we obtain \(\dim(\sym^3(V))-1=\binom{n+2}{3}-1\) linearly independent equations, the resulting 1-dimensional space of solutions will be the structure constants of \(f\), up to scalars.

Taking random elements \(g\) of \(G\) and acting on all the possible choices of \((e_i,e_j,e_k)\) is a viable strategy, but the computation time scales badly with \(n\). This process can be sped up significantly by finding elements of \(G\) that are diagonal: let \(g\in G\le\GL_n(k)\) be a diagonal matrix with entries \((d_1,\ldots,d_n)\), then \[0=f(e_i,e_j,e_k)-f(ge_i,ge_j,ge_k)=(1-d_id_jd_k)f(e_i,e_j,e_k),\] hence if \(d_id_jd_k\ne1\) then \(f(e_i,e_j,e_k)=0\), which removes one variable from the linear system.

The computation time can be further reduced if the matrix for \(g\) is sparse, as having many zero entries makes computing the values of \(f\) faster, but the impact of this choice depends on \(n\) and the number of variables of the linear system.
\end{section}

\begin{section}{Restriction of a trilinear form}\label{compareahom}
Let \(G\) be a finite group, \(V=V_1\oplus V_2\) a \(kG\text{-module}\), and \(f\) a \(G\text{-invariant}\) symmetric trilinear form on \(V\). As shown in \cref{ahomdef}, \(f\) can be decomposed in terms of \(\left.f\right|_{V_1}\), \(\left.f\right|_{V_2}\), \(\left.f\right|_{V_1\times V_2\times V_2}\), and \(\left.f\right|_{V_2\times V_1\times V_1}\), so we are interested in studying objects like \(\hom_{kG}(V_1,\sym^2(V_2^*))\) and \(\sym^3(V_1)^G\).

In \cref{alt6module1899}, we will find a situation where we do not know much about \(f\), but we know its behaviour when taking \(V\restr{H}\), \(H\le G\). For example, if we know that \(\left.f\right|_{V_1\restri{H}\times V_2\restri{H}\times V_2\restri{H}}\) is constantly 0, we may want to know whether that is the case for \(\left.f\right|_{V_1\times V_2\times V_2}\).

In the case of \(\left.f\right|_{V_1\times V_2\times V_2}\), we want to compare \(A\coloneqq\hom_{kG}(V_1,\sym^2(V_2^*))\) with \(B\coloneqq\hom_{kH}(V_1\restr{H},\sym^2(V_2^*\restr{H}))\). This is fairly straightforward, as both \(A\) and \(B\) can be represented as subspaces of the matrix algebra \(M_{\dim V_1,\dim\sym^2(V_2^*)}(k)\), therefore we take a basis \(a=\left\{a_1,\ldots,a_{\dim A}\right\}\) of \(A\) and write each \(a_i\) as a linear combination of a basis \(b=\left\{b_1,\ldots,b_{\dim B}\right\}\) of \(B\). We then proceed differently depending on \(\dim A\), \(\dim B\), and the specific relations between \(a\) and \(b\) we obtain.

A similar analysis can be done for \(\left.f\right|_{V_1}\), where we take \(A\coloneqq\sym^3(V_1)^G\) and \(B\coloneqq\sym^3(V_1\restr{H})^H\), by working in \(M_{\dim\sym^3(V_1)}(k)\) instead.
\end{section}
\end{chapter}

\begin{chapter}{Primitive subgroups}\label{method1}

Let \(\bb{\G}\) an exceptional algebraic group of Lie type and \(\G=\bb{\G}^{\sigma}\), where \(\sigma\) is a Steinberg endomorphism of \(\bb{\G}\).

In this chapter we study the embedding of \(H\) in \(\bb{\G}\) in characteristic 0 or coprime to \(\size{H}\), and in \(\G\), where \((H,\bb{\G})\) is one of \((\PSL_2(25),\bb{\F}_4)\), \((\PSL_2(27),\bb{\F}_4)\), \((\PSL_2(37),\bb{\E}_7)\), \((\PSL_2(29),\bb{\E}_7)\).

Recall that we construct \(H\) by finding a triple of generators \((u,s,t)\) satisfying the presentations described in \cref{PSLpresentation}.
	
\begin{section}{General strategy}\label{generalstrat}
The general strategy we will use is divided in several steps as follows:
\begin{enumerate}
	\item Find a copy of a Borel subgroup \(B=\gen{u,s}\) of \(H\) lying in \(\G=\G(k)\le\GL_d(k)\), where \(k\) is a suitable finite field and \(d\) is the degree of the adjoint representation of \(\G\).
	\item Find all possible \(t\in\bb{\G}=\bb{\G}(\overline{k})\) such that \((u,s,t)\) are the generators of the presentation of \(H\) described in \cref{PSLpresentation}; in particular \(\gen{u,s,t}\) is isomorphic to \(H\). Furthermore, show that \(t\in\G\).
	\item Check whether the different copies of \(H\) containing the same \(B\) are \(\G\text{-conjugate}\) to each other or not.
	\item Show that \(B\) is unique up to \(\bb{\G}\text{-conjugacy}\), and deduce the number of embeddings of \(B\) in \(\bb{\G}\).
	\item Generalise the result of (iv) to \(\bb{\G}\) in characteristic zero or coprime to \(\size{H}\), using \cref{liftingtheorem}.
	\item Find \(N_{\bb{\G}}(H)\).
	\item For any finite field \(k_1\) of characteristic coprime to \(\size{H}\), find whether \(H\) embeds in \(\G(k_1)\), and count the conjugacy classes of \(H<\G(k_1)\) in such cases.
	\item If \(H<\G(k_1)\), deduce whether \(N_{\G(k_1)}(H)\) is a maximal subgroup of \(\G(k_1)\), and what happens in the case of an almost simple group \(\overline{\G}(k_1)\) with socle \(\G(k_1)\).
	\item Check whether we say anything when the characteristic of \(k\) divides \(\size{H}\).
\end{enumerate}
\end{section}

\begin{section}{Embedding of \texorpdfstring{\(\PSL_2(25)\)}{PSL(2,25)} in \texorpdfstring{\(\F_4\)}{F4}}
In this section, we consider an embedding of a group \(H\simeq\PSL_2(25)\) into \(\F_4\) in characteristic \(p\ne2,3,5\).

\begin{lemma}\label{Brauer25lemma}
	If \(\bb{\F}_4\) contains a subgroup \(H\simeq\PSL_2(25)\) in characteristic \(p\ne2,3,5\), then \(H\) is Lie primitive, acts irreducibly on the minimal module \(M(\bb{\F}_4)\), and acts as \(26_1\oplus26_2\) on the adjoint module \(L(\bb{\F}_4)\).
	
	Furthermore, if \(\chi\) is the character of an embedding of \(H\) into \(\bb{\F}_4\) in the adjoint representation, then \(\chi\) is the character in \cref{Brauer25}, where classes with the same character value have been merged for brevity.
\end{lemma}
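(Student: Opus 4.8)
The plan is to determine the Brauer character of $H\simeq\PSL_2(25)$ on the two modules $M(\bb{\F}_4)$ (dimension $26$) and $L(\bb{\F}_4)$ (dimension $52$), working in non-defining characteristic: since $\size{H}=2^3\cdot3\cdot5^2\cdot13$ and $p\ne2,3,5$, in particular $p$ differs from the defining characteristic $5$ of $\PSL_2(25)$, so the relevant representation theory is close to the ordinary one. First I would record the character table of $\PSL_2(25)$. As $25\equiv1\bmod4$ it has $15$ conjugacy classes, and besides the trivial and Steinberg characters (degrees $1$ and $25$) it has two characters of degree $13$, five of degree $26$, and six of degree $24$; for $p\notin\{2,3,5,13\}$ these remain the irreducible Brauer characters, and for the borderline prime $p=13$ one checks separately that the degree-$26$ irreducibles still lift. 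Each degree-$26$ character is real-valued, being inversion-symmetric over the cyclic torus of order $12$, hence carries Frobenius--Schur indicator $+1$.

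Next I would constrain the decompositions. Since $\bb{\F}_4$ preserves a nondegenerate quadratic form on $M$ and the Killing form on $L$, both $M\restr{H}$ and $L\restr{H}$ are self-dual and orthogonal; moreover the Lie bracket gives an $H$-embedding $L(\bb{\F}_4)\le\wedge^2 M(\bb{\F}_4)$. Matching the dimensions $26$ and $52$ against the available degrees, and imposing the eigenvalue/trace congruences coming from the embedding into the fixed simple algebraic group, is exactly the feasible-character analysis of \cite{litterick,table}, which pins down $M\restr{H}$ as a single irreducible of degree $26$ and $L\restr{H}$ as a sum $26_1\oplus26_2$ of two degree-$26$ irreducibles. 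The real work here is excluding the competing self-dual combinations that sum to $52$ (for instance $24+24+\dots$, $26+24+1+1$, or $26+13+13$); I would rule these out using the indicator data together with the compatibility forced by $L\le\wedge^2 M$ and the symmetric/exterior-square decomposition of the degree-$26$ module.

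For the primitivity claim I would argue that irreducibility of $M\restr{H}$ is incompatible with $H$ lying in any proper positive-dimensional subgroup from \cref{posdimmaxsbg}: a parabolic fixes a proper subspace of $M$, and none of the reductive maximal-rank or exceptional candidates admits a $\PSL_2(25)$ acting irreducibly on the $26$-dimensional module with the prescribed composition factors on $L$. Equivalently, the triple $(\bb{\F}_4,\PSL_2(25),p)$ appears in Litterick's list of potentially primitive cases \cite{litterick}, giving $H$ Lie primitive directly.

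Finally, to obtain the explicit $\chi$ of \cref{Brauer25} I would set $\chi=\chi_{26_1}+\chi_{26_2}$ and read the values off the character table class by class. The unordered pair $\{26_1,26_2\}$ is determined by requiring $\chi$ to be stable under the Galois action fixing the field of definition of $L$, a rationality condition tied to the congruences on $p$ recorded in \cref{tableresults}; classes on which the two summands take complementary irrational values then contribute rational sums, which is precisely why equal-valued classes are merged in the statement. The principal obstacle is the second paragraph: rigorously eliminating the remaining self-dual degree-combinations summing to $52$ and confirming that no degeneration occurs at $p=13$, for which the feasible-character bookkeeping of \cite{litterick} is the essential input.
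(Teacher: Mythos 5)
Your overall route coincides with the paper's: Lie primitivity and the composition factors on \(M(\bb{\F}_4)\) and \(L(\bb{\F}_4)\) are taken from Litterick (\cite[Corollary 3]{litterick} and \cite[Table 5.3.24]{litterick}), and \(\chi\) is then assembled from the ordinary character table, which is legitimate in non-defining characteristic (your aside that the degree-\(26\) characters survive reduction mod \(13\) is correct, as they have defect zero there). The supplementary structural constraints you propose (self-duality, Frobenius--Schur indicators, \(L(\bb{\F}_4)\le\wedge^2M(\bb{\F}_4)\)) are reasonable but you ultimately defer the elimination of competing decompositions to Litterick's feasible-character analysis, exactly as the paper does.

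The genuine gap is in your last step. Identifying the unordered pair \(\{26_1,26_2\}\) by ``Galois stability of \(\chi\)'' does not work: \(\PSL_2(25)\) has five irreducible characters of degree \(26\), indexed by the nontrivial characters of the split torus of order \(12\) up to inversion, and three of these five (those indexed by torus characters of order \(3\), \(4\), \(6\)) are already rational, so several distinct pairs give Galois-stable sums. Rationality is a necessary condition but selects nothing. The paper instead uses the computed lists of admissible traces of semisimple elements of \(\bb{\F}_4\) on \(L(\bb{\F}_4)\): an involution must have trace \(-4\) or \(20\), and since each degree-\(26\) character takes value \(\pm2\) on the involution class, the only achievable admissible sum is \(-4\), forcing both summands to take value \(-2\) there (the three characters indexed by torus characters of order \(12\), \(12\), \(4\)); the analogous constraint for order-\(3\) elements (trace \(-2\) or \(7\), against summand values \(-1\) or \(2\)) then eliminates the order-\(4\) character and pins down the two order-\(12\)-indexed characters uniquely. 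Without these trace constraints -- which you gesture at in your second paragraph but do not deploy where they are actually needed -- the pair, and hence the entries of \cref{Brauer25}, is not determined.
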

\begin{proof}
	The fact that \(H\) may only embed primitively in \(\bb{\F}_4\) is proved in \cite[Corollary 3]{litterick}, and the action on the minimal and adjoint module is described in \cite[Table 5.3.24]{litterick}.
	
	The character values can be deduced using the character table of \(\PSL_2(25)\) \cite{atlas} together with Litterick's program in \cite{litterick}, which allows to compute the feasible characters of the embedding. A computed list of traces of semisimple elements of \(\bb{\F}_4\) is available in \cite{craven_blueprint}.
	
	In particular, semisimple elements of order 2 can only have trace \(-4\) or \(20\) on \(L(\bb{\F}_4)\), and semisimple elements of order 3 can only have trace \(-2\) or \(7\).  We then have that the only possible choice is for \(\chi\) to be the sum of the two characters of degree 26 that have value \(-2\) on involutions, and \(-1\) on elements of order 3.
\end{proof}
 
\begin{table}
	\captionsetup{font=footnotesize}
	\centering
	\begin{tabular}{ccccccccc}
		\toprule
		Order&1&2&3&4&5&6&12&13\\
		\midrule
		\(\chi\)&52&\(-4\)&\(-2\)&0&2&2&0&0\\
		\bottomrule
	\end{tabular}
	\caption{The possible characters \(\chi\) of \(\PSL_2(25)\) in \(\bb{\F}_4\) in adjoint representation.}\label{Brauer25}
\end{table}

\begin{theorem}\label{th25}
	Let \(H\simeq\PSL_2(25)\) be a subgroup of \(\bb{\G}=\bb{\F}_4\) in characteristic \(p\), and let \(\sigma\) be a Steinberg endomorphism of \(\bb{\G}\) with \(\G=\bb{\G}^{\sigma}\). Suppose \(p\ne2,3,5\), and that \(H\) acts irreducibly on \(M(\bb{\F}_4)\) and as \(26_1\oplus26_2\) on \(L(\bb{\F}_4)\). Then:
	\begin{enumerate}
		\item There is a unique \(\bb{\G}\text{-conjugacy}\) class of subgroups isomorphic to \(H\), which gives rise to two conjugacy classes of embeddings, \(H\) is Lie primitive, and \(N_{\bb{\G}}(H)=H.2\), where the extension is given by the diagonal-field automorphism.
		\item \(H\) embeds in \(\G=\F_4(q)\) for any field \(\mathbb{F}_q\), \(q\) a power of \(p\), and is unique up to conjugacy.
		\item If \(\overline{\G}\) is an almost simple group with socle \(\G=\F_4(q)\), then \(N_{\overline{\G}}(H)\) is a maximal subgroup of \(\overline{\G}\) iff \(\overline{\G}=\G\) and \(q=p\), in which case \(N_{\overline{\G}}(H)=H.2\).
	\end{enumerate}
\end{theorem}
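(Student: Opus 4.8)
\emph{Plan of proof.} The argument follows the general scheme of \cref{generalstrat}, performing the construction over a conveniently chosen finite field \(\mathbb{F}_q\) of characteristic \(p\ne2,3,5\) and then invoking the lifting lemma \cref{liftingtheorem} to render the conclusions independent of the coprime characteristic. Write \(B=\gen{u,s}\simeq 5^2\rtimes 12\) for a Borel subgroup of \(H\), with socle \(B_0=B'\simeq 5^2\). By \cref{elementaryabelianthm}(ii) the elementary abelian group \(B_0\) lies in a maximal torus \(\bb{\T}\), and since \(B\) is supersoluble \cref{borelserrethm} puts \(B\le N_{\bb{\G}}(\bb{\T})\). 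Following \cref{ngt} I would take \(s\) to be a preimage in \(N_{\G}(\T)\) of the unique class of order-\(12\) elements of \(W(\F_4)\) --- this is the Coxeter class, since \(12\) is the Coxeter number of \(\F_4\) --- and then solve for \(u\in\T\) using the relations of \cref{25presentationthm} together with the auxiliary relation of \cref{25extrarelationthm} (recall \(\delta=2\) here).

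Next, following \cref{t}, I would construct the Chevalley involution \(e\) inverting \(s\) (\cref{s_basis,s_invert}) and look for \(c\in C_{\bb{\G}}(s)\) such that \(t=ce\) is an involution satisfying \((tu)^3=1\), via the linear system of \cref{ut3=1thm}, finally imposing the remaining relation of \cref{25presentationthm}. The technical heart here is to control \(C_{\bb{\G}}(s)\): one checks that \(s\) is regular semisimple (equivalently \(\m{L}^s\) is a Cartan subalgebra), so that \(C_{\bb{\G}}(s)\) is a maximal torus and \cref{cgs} lets the finite group \(C(s)=C_{\G}(s)\) detect the same invariant subspaces of \(\overline{\m{L}}\); the span bound of \cref{upperbound} then pins down \(\bb{M}(s)\), the finitely many \(t\) can be enumerated, and each is shown to lie in \(\G\) by the membership test of \cref{membership}. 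This yields the number of copies of \(H\) over the fixed \(B\), whose mutual \(\bb{\G}\text{-conjugacy}\) I would then test directly.

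To see that \(B\) is unique up to \(\bb{\G}\text{-conjugacy}\) I would apply \cref{Tconjugacy}: \(B_0\) is regular, and by \cref{finitecentraliser} \(C_{\bb{\T}}(s)\) is finite because the Coxeter element has no eigenvalue \(1\) in the reflection representation. For the normaliser, absolute irreducibility of \(H\) on \(M(\bb{\F}_4)\) from \cref{Brauer25lemma} together with \(Z(\bb{\F}_4)=1\) forces \(C_{\bb{\G}}(H)=1\), so \(N_{\bb{\G}}(H)/H\hookrightarrow\out H\cong 2\times2\); a character computation --- the field automorphism interchanges the summands \(26_1,26_2\) of \cref{Brauer25lemma} and \cref{Brauer25} --- identifies the realised outer automorphism as the diagonal-field one, giving \(N_{\bb{\G}}(H)=H.2\). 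Combined with the \(\bb{\G}\text{-conjugacy}\) count this gives a single class of subgroups and, since \(\lvert\out H\rvert/\lvert N_{\bb{\G}}(H):H\rvert=2\), exactly two classes of embeddings; Lie primitivity is \cref{Brauer25lemma}, and \cref{liftingtheorem} makes all of (i) valid in every coprime characteristic.

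For (ii), \(C_{\bb{\G}}(H)=1\) is connected, so \cref{finiteconjugates} collapses the \(\bb{\G}\text{-conjugates}\) of \(H\) inside \(\G\) into one \(\G\text{-class}\); uniqueness of the \(\bb{\G}\text{-class}\) makes it \(\sigma\text{-stable}\) for every Frobenius, whence Lang--Steinberg in the form of \cref{goingdown} produces a fixed point, i.e.\ \(H\le\F_4(q)\), for all \(q\). For (iii), by \cref{primitivemax} maximality of \(N_{\overline{\G}}(H)\) can fail only if \(H\) lies in a subgroup of the same type as \(\G\) or in the normaliser of another primitive subgroup; since (ii) already gives \(H\le\F_4(p)\), for \(q>p\) the group \(H\) sits in a proper subfield subgroup and \(N_{\G}(H)\) is not maximal, while for \(q=p\) there are no outer automorphisms (a graph automorphism exists only for \(p=2\)), so \(\overline{\G}=\G\) and one verifies \(N_{\G}(H)=H.2\) is maximal. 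The principal obstacle I anticipate is the faithful control of \(C_{\bb{\G}}(s)\) and the exhaustive enumeration of \(t\) with a guarantee that every solution is \(\sigma\text{-fixed}\); a secondary difficulty is pinning down exactly which involutory outer automorphism of \(H\) is realised, since this underlies both \(N_{\bb{\G}}(H)=H.2\) and the factor of two in the embedding count.
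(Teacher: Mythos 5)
Your outline reproduces the paper's strategy faithfully up to and including the count of classes in \(\bb{\G}\), but it has a genuine gap at part (ii). Knowing that the unique \(\bb{\G}\)-class of subgroups is \(\sigma\)-stable and applying Lang--Steinberg (\cref{goingdown}) only produces a copy of \(H\) that is \emph{normalised} by \(F_p\), not one centralised by it: \cref{goingdown} requires a compatible \(\sigma\)-action on the orbit of a generating \emph{tuple}, i.e.\ that the conjugacy class of the embedding (not just of the subgroup) is \(\sigma\)-stable. Since there are two classes of embeddings, \(F_p\) could a priori interchange them — equivalently, induce an outer automorphism of \(H\) outside the coset realised by \(N_{\bb{\G}}(H)\) — and then you would only obtain \(H\le\F_4(p^2)\). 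The paper closes exactly this hole by an independent argument (\cref{finite25}): it shows \(B\simeq5^2\rtimes12\) embeds in \(\F_4(p)\) for every admissible \(p\) using the homocyclic structure of the Sylow \(5\)-subgroups and the fact that their automizer is \(W(\F_4)\) when \(p\equiv\pm1\bmod5\) and the complex reflection group \(G_8\) when \(p\equiv\pm2\bmod5\) (both of which contain elements of order \(12\)), and then uses the uniqueness of the \(t\) extending a given \((u,s)\) to pull \(H\) down into \(\G(p)\). Some such input is indispensable; your proposal omits it.

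A second, smaller gap is the identification \(N_{\bb{\G}}(H)=H.2\). Ruling out two of the three involutory extensions does not by itself show the third is realised, and your character-theoretic heuristic for excluding the field extension does not work as stated: an element of \(\bb{\G}\) normalising \(H\) is perfectly free to permute the two \(26\)-dimensional summands of \(L(\bb{\G})\!\restriction_H\), so swapping \(26_1\) and \(26_2\) is no obstruction. The paper instead excludes \(\PGL_2(25)\) and \(\PsL_2(25)\) because their extra involution classes have trace \(0\) on \(L(\bb{\F}_4)\), whereas involutions of \(\bb{\F}_4\) have trace \(-4\) or \(20\) (\cref{aut25}), and then \emph{constructs} the diagonal-field extension explicitly as \(N_{N_{\G}(\T)}(B)=B.2\) (\cref{25ext_construction}), lifting its existence to all coprime characteristics via \cref{liftingtheorem}. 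You correctly flag this as a difficulty, but the statement \(N_{\bb{\G}}(H)=H.2\) needs both halves of that argument, and the non-conjugacy of \((u,s)\) and \((uu^s,s)\) — which underlies the count of two embedding classes — is itself deduced in the paper from the non-existence of the other extensions, so the logical order matters.
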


The proof will be divided in several steps.

\begin{presentation}
Consider the following matrices, generating \(\SL_2(25)\):
\[u\coloneqq\begin{pmatrix}1&1\\0&1\end{pmatrix},\qquad s\coloneqq\begin{pmatrix}\omega^{-1}&0\\0&\omega\end{pmatrix},\qquad t\coloneqq\begin{pmatrix}0&1\\-1&0\end{pmatrix},\]
where \(\omega\) is a primitive element of \(\mathbb{F}_{25}^{\times}\) with minimal polynomial \(X^2+4X+2\) over \(\mathbb{F}_5\). We are going to consider a presentation for \(\PSL_2(25)\) whose generators are the images of \(u,s,t\) when taking the quotient by the group of scalar matrices, which we denote with the same notation, and whose relations are given by \eqref{25presentation}. We will also use \eqref{25extrarelation}, which stems from the fact that \(X^2+3X+4\) is the minimal polynomial of \(\omega^2\) over \(\mathbb{F}_5\).

Furthermore, \(B=\gen{u,s}\simeq5^2\rtimes12\) is a Borel subgroup of \(H\), and its derived subgroup is the socle \(B'=\gen{u,uu^s}\simeq5^2\).
\end{presentation}

\begin{construction}\label{25Bconstruction}
	Construction of \(u,s\) in \(\F_4(k)\) for some finite field \(k\).
\end{construction}
Note that we are going to look for \(u\) in a maximal torus, and while not required to construct \(B\), for further computations later on we also need \(s\) to lie in a different one. Therefore, we consider the field \(k\) of order \(61=12\times5+1\) which has 5th and 12th roots of unity, and \(u\) and \(s\) lie in (distinct) tori of \(\G\coloneqq\F_4(k)\).

By \cref{elementaryabelianthm}, a \(5^2\) subgroup is always toral, so we can look for \(u\) in a maximal torus \(\T\) of \(\G\); furthermore, we can look for \(s\) in \(N_{\G}(\T)\) because the \(5^2\) subgroup containing \(u\) is regular (this is discussed in the proof of \cref{conj25}).

We proceed as described in \cref{ngt}. We can compute \(\widetilde{W}=\gen{n_i:i=1,\ldots,4}\), and verify that \(\widetilde{W}=T_{2,1}.W\), where \(T_{2,1}\coloneqq\gen{h_i(-1):i=1,\ldots,4}\le\T\) is an elementary abelian group of order \(2^4\). Then we can take random elements of \(\widetilde{W}\) until we find an element \(s\) of order 12 such that its image under the quotient map \(\widetilde{W}\rightarrow\widetilde{W}/T_{2,1}\) has order 12.

We can check with \magma\ that the Weyl group of \(\F_4\) has only one conjugacy class of elements of order 12, so \(s\) must lie in the correct class. Likewise, we can compute \(C_{\T}(s)=1\), meaning that there is a unique conjugacy class of elements in \(\gen{\T,s}\) that we need to investigate.

Let \(\zeta\) be a primitive element of \(k^{\times}\), so that \(h_i\coloneqq h(\zeta^{\delta_{ij}})_{j=1}^4\in\GL_{52}(k)\) with \(1\le i\le4\) are generators of \(\T\) constructed according to \cref{ghn}, and \(h'_i\coloneqq h_i^{12}\) are generators of the Sylow \(5\text{-subgroup}\) of \(\T\). By computing \({h'_i}^s=\prod_{j=1}^4 {h'_j}^{a_{ij}}\) we obtain a matrix \(A=(a_{ij})\) with coefficients in \(\mathbb{F}_5\) describing the action of \(s\), in particular \(A\) is decomposable into two irreducible \(2\times2\) blocks, one of which has characteristic polynomial \(X^2+3X+4\). If we take \(z=(z_i)_{i=1}^4\) to be any non-zero vector of \(k^4\) acted on by said block, we can define \(u\coloneqq \prod_{i=1}^4{h'_i}^{z_i}\in\GL_{52}(k)\).

By construction, \(u\) and \(s\) satisfy the relations \[u^5=s^{12}=u^4s^{-1}u^3s^{-1}us^2=1.\]

\begin{construction}\label{25construction}
	Construction of \(t\in\bb{\G}\).
\end{construction}

We can compute that the fixed point space of \(s\) acting on \(L(\G)\) is \(4\text{-dimensional}\), hence it is a Cartan subalgebra of \(L(\G)\) by \cref{cartanstab}.

By writing the corresponding Chevalley basis (\cref{s_basis}), we can diagonalise \(s\) and find an involution \(e\in\G\) such that \(s^e=s^{-1}\) as described in \cref{s_invert}.

Since \(s\) is semisimple in \(\bb{\G}\) as well, and fixes no root space in \(L(\bb{\G})\) (\(s\) has a 4-dimensional eigenspace), it is a regular element, and by \cref{regularthm} and \cref{centralisertheoremfinal} we have that \(C_{\bb{\G}}(s)\) is a maximal torus \(\bb{\T}\) of \(\bb{\G}\); therefore, \(C_{\G}(s)\) is \(\bb{\T}\cap \G=T\), the maximal torus corresponding to the Cartan subalgebra fixed by \(s\).

Denote by \(M(s)\) (resp. \(\bb{M}(s)\)) the linear subspace of \(M_{52}(k)\) (resp. \(M_{52}(\overline{k})\)) spanned by elements of \(C_{\G}(s)\) (resp. \(C_{\bb{\G}}(s)\)). As observed in \cref{cgs}, both \(C_{\G}(s)\) and \(C_{\bb{\G}}(s)\) are tori, so it is clear that both \(M(s)\) and \(\bb{M}(s)\) have dimension 49, given that their action on \(L(\G)\) (resp. \(L(\bb{\G})\)) decomposes as the sum of 48 one-dimensional root spaces and the Cartan subalgebra (4 copies of the trivial module). Therefore, a basis \(\Set{c_i}_{i=1}^{49}\) of \(M(s)\) is also a basis of \(\bb{M}(s)\).

Assume that \(t\in\bb{\G}\) is such that \(u,s,t\) satisfy the relations in \eqref{25presentation}, then \(t=ce\) where \(e\) is the involution constructed earlier, and \(c\in C_{\bb{\G}}(s)\).

By \cref{ut3=1thm}, the relation \((tu)^3=1\) is satisfied iff \(c=\sum_{i=0}^{49}a_ic_i\), where \((a_1,\ldots,a_{49})\in{(\overline{k})^{49}}\) is a solution of the system of equations given by
\begin{equation}\label{tu_eq}
w_j\cdot\sum_{i=0}^{49}X_i(euc_ieu-u^{-1}c_ie)=0,
\end{equation}
where \(w_1,\ldots,w_4\) form a basis of \(L(\bb{\G})^{C_{\bb{\G}}(s)}\), i.e. a basis of the Cartan subalgebra fixed by \(s\).

By solving the system of \(4\times52=208\) equations in \(49\) variables, we obtain a one-dimensional \(\overline{k}\text{-space}\) of solutions \(M_0=\gen{m_0}\).

Therefore, we established that \(t\in(M_0\cap\bb{\G})e\).

Since \(M_0\) is one-dimensional, there can only be one element in \(M_0\cap\bb{\G}\): if there were two, say \(am_0\) and \(bm_0\), \(a\ne b\in\overline{k}\), then \((am_0)(bm_0)^{-1}\in\bb{\G}\), i.e. the scalar matrix \(ab^{-1}\mathbb{1}_{52}\) would lie in \(\bb{\G}\), giving \(a=b\).

It is then straightforward to compute the unique \(r\in\overline{k}\) such that \(c^*\coloneqq rm_0\in\bb{\G}\), using the membership test described in \cref{membership}. In particular, we obtain that \(r\in k\) hence \(c^*\in\G\).

\begin{lemma}\label{magma25}
	If \(t\in\bb{\G}\) is such that \((u,s,t)\) satisfy \eqref{25presentation}, then \(t=c^*e\), where \(c^*\) and \(e\) are computed elements of \(\G\). In particular, \(t\in\G\).
\end{lemma}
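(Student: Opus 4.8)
The plan is to combine the relations of the presentation \eqref{25presentation} one at a time to pin $t$ down to a single candidate, and then to verify by an explicit membership test that this candidate already lies in $\G$. No appeal to relation (viii) is needed for this identification: it holds by hypothesis, and the relations (v)--(vii) alone turn out to be enough to force $t$.

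First I would exploit (v) and (vi), that is $t^2=1$ and $(ts)^2=1$. The latter is equivalent to $s^t=s^{-1}$, so $t$ inverts $s$. Writing $t=ce$ with $e$ the explicitly constructed involution of $\G$ satisfying $s^e=s^{-1}$ (from \cref{s_invert}), the element $c=te$ then centralises $s$, since $s^{c}=s^{te}=(s^{-1})^{e}=s$; hence $c\in C_{\bb{\G}}(s)$. Because $s$ is regular semisimple in $\bb{\G}$, \cref{regularthm} and \cref{centralisertheoremfinal} give $C_{\bb{\G}}(s)=\bb{\T}$ for a maximal torus $\bb{\T}$, and by \cref{cgs} the groups $C_{\G}(s)=T$ and $C_{\bb{\G}}(s)=\bb{\T}$ stabilise the same subspaces of $L(\bb{\G})$. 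Consequently their linear spans $M(s)$ and $\bb{M}(s)$ have the same dimension $49$, so a $k$-basis $\{c_i\}_{i=1}^{49}$ of $M(s)$ is simultaneously an $\overline{k}$-basis of $\bb{M}(s)$.

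Next I would bring in (vii), $(tu)^3=1$. By \cref{ut3=1thm} this is equivalent to $c=\sum_i a_ic_i$ being a solution of the homogeneous linear system \eqref{tu_eq}, where $w_1,\dots,w_4$ run over a basis of the four-dimensional fixed space $L(\bb{\G})^{C_{\bb{\G}}(s)}$. The crucial point is that every datum entering this system, namely the matrices $e$, $u$, $c_i$ and the fixed vectors $w_j$, lies over $k$, so the $208\times49$ coefficient matrix has entries in $k$ and its rank is unchanged under extension to $\overline{k}$. A computation over $k$ shows the solution space is one-dimensional, $M_0=\gen{m_0}$, whence $c\in M_0\cap\bb{\G}$ and $t\in(M_0\cap\bb{\G})e$. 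Since $M_0$ is a line, it meets $\bb{\G}$ in at most one point: if $am_0,bm_0\in\bb{\G}$ then $ab^{-1}\mathbb{1}_{52}\in\bb{\G}$, and the only scalar in $\bb{\G}=\bb{\F}_4$ is the identity, so $a=b$. Running the membership test of \cref{membership} then produces the unique scalar $r$ with $c^*:=rm_0\in\bb{\G}$, and one reads off $r\in k$, so $c^*\in\G$ and $t=c^*e\in\G$.

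The main obstacle is conceptual rather than computational: one must justify the descent from $\overline{k}$ to $k$. Specifically, regularity of $s$ is what guarantees that $M(s)$ and $\bb{M}(s)$ are equidimensional (so the same $49$ basis matrices serve over both fields), and the $k$-rationality of the coefficient matrix is what guarantees that the solution space of \eqref{tu_eq} does not grow upon passing to the algebraic closure. Together these ensure that a computation carried out entirely over the finite field $k$ excludes any further $t$ defined only over $\overline{k}$, which is precisely the force of the lemma.
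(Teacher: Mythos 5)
Your proposal is correct and follows essentially the same route as the paper: the paper's proof of this lemma is simply a pointer to \cref{25Bconstruction} and \cref{25construction}, whose content you have reconstructed faithfully — factor $t=ce$ with $e$ the Chevalley involution inverting $s$, use regularity of $s$ to identify $C_{\bb{\G}}(s)$ with a maximal torus and equate $\dim M(s)=\dim\bb{M}(s)=49$, reduce $(tu)^3=1$ to the linear system \eqref{tu_eq} with a one-dimensional solution space, and conclude by the scalar-uniqueness argument and the membership test. Your explicit remark that the $k$-rationality of the coefficient matrix prevents the solution space from growing over $\overline{k}$ is a point the paper leaves implicit, but it is the same argument.
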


\begin{proof}
	This follows from computer calculations, doing \cref{25Bconstruction} and \cref{25construction}. The relations \eqref{25presentation}((ii)-(iv),(viii)) were not used during the construction of \(u,s,t\), so we verify them using the computed matrices \(u,s,t\).
\end{proof}

Now we have a triple \((u,s,t)\) of elements in \(\G\) that satisfy the presentation \eqref{25presentation} of \(H\).

We will first discuss \(\aut H\), then count conjugacy classes of groups in \(\G(k)\) isomorphic to \(H\) and their embeddings in a finite or algebraic group.

\begin{construction}\label{25ext_construction}
	Construction of \(H.2<\G\).
\end{construction}
We can construct the field-diagonal extension directly from the group obtained with \cref{25construction}.

Let \(\tau\) be a representative of the class of non-trivial outer automorphisms of \(H\); since \(B\) is a Borel subgroup of \(H\), all subgroups of \(H\) isomorphic to \(B\) are \(H\text{-conjugate}\), hence \(B=\tau(B)^g\) for some \(g\in H\), therefore we may assume \(B\) is fixed by \(\tau\). 

Since \(B'=\gen{u,uu^s}\) is a \(5^2\) group, which is supersoluble, we have that \(N_{\G}(B)\le N_{\G}(\T)\) by \cref{ngb}. Therefore, we can construct the matrix group \(N_{\G}(\T)=\gen{h_i(\lambda),n_i:i\in1,\ldots,4}\), with \(\lambda\) primitive element of \(k^{\times}\), as described in \cref{ghn}, and let \magma\ compute \(N_{N_{\G}(\T)}(B)\) directly, which outputs a \(B.2\) group, and any element of said group that does not lie in \(B\) can be taken to generate \(H.2\) together with \(H\).

\begin{proposition}\label{aut25}
	Neither \(\PGL_2(25)\) nor \(\PsL_2(25)\) embeds in \(\bb{\G}\), while \(\PSL_2(25).2\) does, where the extension is given by the diagonal-field automorphism.
\end{proposition}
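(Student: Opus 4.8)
The plan is to treat the three index-two overgroups of $H\simeq\PSL_2(25)$ inside $\aut H$ separately, exploiting that $\out H\cong 2^2$ is generated by the diagonal automorphism $\delta$ (whose extension is $\PGL_2(25)$) and the field automorphism $\phi$ (whose extension is $\PsL_2(25)$), the third involution $\delta\phi$ giving the diagonal-field extension $\PSL_2(25).2$. The existence of $\PSL_2(25).2$ is already furnished by \cref{25ext_construction}, which produces $N_{\G}(B)=B.2\le\G$ and hence a subgroup $H.2\le\G\le\bb{\G}$; so the remaining content is to identify the outer automorphism it induces and to exclude the other two extensions.

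First I would dispose of $\PsL_2(25)$ using the minimal module. By \cref{Brauer25lemma} any copy of $H$ in $\bb{\F}_4$ acts irreducibly on $M(\bb{\F}_4)$, so $M(\bb{\F}_4)\restr{H}$ is one of the two degree-$26$ constituents $26_1,26_2$ of the adjoint character in \cref{Brauer25}. These are the principal-series characters of $\PSL_2(25)$ whose values on the split torus $C_{12}=\langle a\rangle$ are governed by $\alpha(a)=\zeta$ and $\beta(a)=\zeta^5$ for a primitive $12$th root of unity $\zeta$, which is precisely what pins down the values in \cref{Brauer25}. Since the field automorphism acts on $C_{12}$ by $a\mapsto a^5$, it interchanges $26_1$ and $26_2$; hence $M(\bb{\F}_4)\restr{H}$ is not $\phi$-invariant and does not extend to $H\langle\phi\rangle$. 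Consequently there is no $26$-dimensional $\PsL_2(25)$-module restricting to it, so $\PsL_2(25)$ cannot act on $M(\bb{\F}_4)$ and does not embed in $\bb{\G}$.

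To pin down the extension of \cref{25ext_construction} and to exclude $\PGL_2(25)$, I would combine a centraliser computation with the structure of $\out H$. Because $H$ is irreducible on $M(\bb{\F}_4)$ and $Z(\bb{\F}_4)=1$, Schur's lemma gives $C_{\bb{\G}}(H)=1$, so $N_{\bb{\G}}(H)/H$ embeds in $\out H=2^2$. Now $\delta$ centralises the split torus, hence fixes every split-torus class but interchanges the two classes of order-$5$ elements, whereas $\phi$ fixes the order-$5$ classes but moves the order-$12$ class; the composite $\delta\phi$ therefore both interchanges the order-$5$ classes and moves the order-$12$ class. These three fusion patterns are distinct, so a direct \magma\ check of how the extending element of \cref{25ext_construction} permutes the conjugacy classes of $H$ confirms that it realises $\delta\phi$, the diagonal-field automorphism.

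Finally, suppose $\PGL_2(25)$ embedded in $\bb{\G}$. Its socle is a copy of $H$, which by the uniqueness of the $\bb{\G}$-conjugacy class of $H$ (established via \cref{conj25} and \cref{liftingtheorem}) may be taken to be our fixed $H$; then $\PGL_2(25)\le N_{\bb{\G}}(H)$ realises $\delta$. As $\delta\phi$ is already realised, the image of $N_{\bb{\G}}(H)/H$ in $2^2$ would contain both $\delta$ and $\delta\phi$, hence all of $2^2$, so $\phi$ would be realised and $\PsL_2(25)\le N_{\bb{\G}}(H)\le\bb{\G}$ --- contradicting the second paragraph. Thus $\PGL_2(25)$ does not embed, and simultaneously $N_{\bb{\G}}(H)=H.2$ with the diagonal-field automorphism. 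The main obstacle is the second step: making rigorous the claim that $\phi$ swaps the two relevant degree-$26$ characters (and that the minimal module indeed restricts to one of them), since once this is in hand the exclusion of $\PGL_2(25)$ follows formally from the $2^2$-closure argument.
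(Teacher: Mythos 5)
There is a genuine gap, and it sits exactly where you flagged the main obstacle. Your exclusion of \(\PsL_2(25)\) rests on the claim that \(M(\bb{\F}_4)\restr{H}\) is one of the two degree-\(26\) constituents \(26_1,26_2\) of the adjoint character, i.e.\ one of the principal-series characters attached to \(\zeta\) and \(\zeta^5\). That identification is false. By \cref{Brauer25}, the involution of \(H\) has trace \(-4\) on \(L(\bb{\F}_4)\), so it lies in the \(\bb{\F}_4\)-class with centraliser of type \(\A_1\C_3\), whose trace on the \(26\)-dimensional module is \(+2\); but \(26_1\) and \(26_2\) both take the value \(-2\) on involutions. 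Hence \(M(\bb{\F}_4)\restr{H}\) must be one of the degree-\(26\) characters with involution value \(+2\), namely those attached to \(\zeta^2\) or \(\zeta^4\), and these are exactly the characters fixed by \(a\mapsto a^5\). So the field automorphism does not move the minimal-module character, the adjoint character \(26_1+26_2\) is likewise \(\phi\)-stable as a sum, and the obstruction you describe does not exist. With the \(\PsL\) exclusion gone, the \(2^2\)-closure argument against \(\PGL_2(25)\) collapses too; that step also has an independent problem, since it invokes the uniqueness of the \(\bb{\G}\)-class of \(H\) via \cref{conj25}, which rests on \cref{notconj25}, whose proof cites the present proposition --- a circular dependency in the paper's ordering.

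The paper's own proof avoids all of this by examining the outer classes directly: from the character table, both \(\PGL_2(25)\) and \(\PsL_2(25)\) have a class of outer involutions with trace \(0\) on the \(52\)-dimensional module (for \(\PsL\) the character \(26_1+26_2\) is induced from \(\PSL_2(25)\), hence vanishes off the socle; for \(\PGL\) the outer involutions are nonsplit semisimple, where principal-series characters vanish), and \(0\) is not among the admissible involution traces \(-4,20\) on \(L(\bb{\F}_4)\). The existence part of your argument is fine and matches the paper (\cref{25ext_construction} plus \cref{liftingtheorem}), and your fusion-pattern criterion for identifying which outer automorphism the constructed extension realises is a legitimate supplement; but the two non-embeddings need the outer-involution trace argument, not the one you gave.
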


\begin{proof}
	The fact that the first two groups do not embed follows for example from the argument used in \cref{Brauer25lemma}. From the character table of \(\PSL_2(25)\) in \cite{atlas} we see that both \(\PGL_2(25)\) and \(\PsL_2(25)\) have (at least) one additional class of involutions with trace 0 in the adjoint representation, which is not admissible as the only possibilities are \(-4\) and \(20\).

	The fact that the remaining extension exists follows directly from \cref{25ext_construction}, as we constructed a copy of it in \(\bb{\G}\) in characteristic coprime to \(\size{\PSL_2(25).2}\), and we can lift this embedding to any field of characteristic 0 or coprime to \(\size{\PSL_2(25).2}\) using \cref{liftingtheorem}.
\end{proof}

\begin{remark}
	Observe that in \cite[Section 6.6]{cohenwales}, the fact that \(H.2<\F_4(\mathbb{C})\) is seemingly erroneously deduced from the existence of the embedding \(H.2<\F_4(2)\). However, \(\size{H}\) is even so an embedding in characteristic 2 does not imply an embedding in characteristic zero in general.
\end{remark}

\begin{lemma}\label{notconj25}
	The ordered pairs \((u,s)\) and \((uu^s,s)\) are not \(\conj{\bb{\G}}\).
\end{lemma}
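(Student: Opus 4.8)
We must show that the ordered pairs $(u,s)$ and $(uu^s,s)$ are not $\bb{\G}\text{-conjugate}$. Here $u$ generates (together with $u^s$) the socle $B'=\gen{u,uu^s}\simeq5^2$ of the Borel subgroup $B=\gen{u,s}$, and $s$ is the element of order $12$ normalising the torus $\bb{\T}$ containing $B'$.

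The plan is to argue by contradiction, exploiting that $s$ is a regular semisimple element. Suppose some $g\in\bb{\G}$ satisfies $u^g=uu^s$ and $s^g=s$ simultaneously; the pairs being $\conj{\bb{\G}}$ would produce exactly such a $g$. The second relation says precisely that $g\in C_{\bb{\G}}(s)$. As already established in \cref{25construction}, the element $s$ fixes no root space of $L(\bb{\G})$ and has a $4$-dimensional fixed space, so by \cref{regularthm} it is regular and by \cref{centralisertheoremfinal} its \emph{full} centraliser is the maximal torus $\bb{\T}$ containing it (here one uses that $\bb{\F}_4$ equals its derived group and is simply connected, so \cref{centralisertheorem2} applies and the centraliser is connected). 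Hence $g\in\bb{\T}$.

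Next I would observe that $u$ lies in $\bb{\T}$ by construction, since \cref{25Bconstruction} places $u$ in the Sylow $5$-subgroup of $\bb{\T}$; consequently $u^s\in\bb{\T}$ as well, because $s$ normalises $\bb{\T}$, and therefore $uu^s\in\bb{\T}$. Because $\bb{\T}$ is abelian, conjugation by $g\in\bb{\T}$ fixes $u$, that is $u^g=u$. Combined with the assumed relation $u^g=uu^s$ this forces $u^s=1$, which contradicts the fact that $u^s$ is a conjugate of $u$ and hence has order $5$. This contradiction shows that no such $g$ exists, so $(u,s)$ and $(uu^s,s)$ are not $\conj{\bb{\G}}$.

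There is essentially no computational hard part here: the entire argument rests on the two facts that $C_{\bb{\G}}(s)=\bb{\T}$ holds exactly (not merely up to connected component) and that both $u$ and $uu^s$ sit inside the abelian group $\bb{\T}$. The only point deserving care is to invoke the exact form of the centraliser rather than $C_{\bb{\G}}(s)^{\circ}$, which is legitimate precisely because $\bb{\F}_4$ is simply connected; without this the argument would only restrict $g$ to lie in $N_{\bb{\G}}(\bb{\T})$, and a genuinely different, more delicate analysis of the torus normaliser would be required. Since that hypothesis is available for $\bb{\F}_4$, the short argument above suffices.
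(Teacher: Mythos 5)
Your argument contains a fatal conflation of two distinct maximal tori. You correctly deduce that any conjugating element \(g\) must lie in \(C_{\bb{\G}}(s)\), which (since \(s\) is regular) is the maximal torus \emph{containing \(s\)}. But you then assert that \(u\) lies in this same torus ``by construction''. It does not: \cref{25Bconstruction} places \(u\) in the Sylow \(5\)-subgroup of a maximal torus \(\bb{\T}\) that \(s\) \emph{normalises}, and the paper states explicitly that \(u\) and \(s\) lie in \emph{distinct} tori. Indeed \(u\) cannot lie in \(C_{\bb{\G}}(s)\): if it did, \(s\) would centralise \(u\) and \(B=\gen{u,s}\simeq 5^2\rtimes 12\) would be abelian, contradicting the presentation (the relation \(u^4s^{-1}u^3s^{-1}us^2=1\) with \(u^s\ne u\)). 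A quick sanity check also kills the conclusion you reach: \(s\) itself lies in \(C_{\bb{\G}}(s)\) and conjugates \((u,s)\) to \((u^s,s)\) with \(u^s\ne u\), so it is false that every element of \(C_{\bb{\G}}(s)\) fixes \(u\). Your argument therefore collapses at the step ``\(u^g=u\)''.

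The genuine difficulty of the lemma is precisely to rule out an element of the torus \(C_{\bb{\G}}(s)\) carrying \(u\) to \(uu^s\), and this cannot be done by abelianness alone. The paper's proof takes a different route: \(u\) and \(uu^s\) represent the two classes of order-\(5\) elements of \(H\), swapped only by an outer automorphism of \(H\); any \(g\in C_{\bb{\G}}(s)\) conjugating the pairs would induce such an outer automorphism, hence lie in an extension of \(H\) inside \(\bb{\G}\). By \cref{aut25} the only extension that embeds is the field-diagonal \(H.2\), and a \magma\ check shows no element of \(\PSL_2(25).2\) swaps the two classes of order-\(5\) elements while centralising \(s\). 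You would need to supply an argument of this kind (or some other obstruction specific to the two \(s\)-orbits on the \(5^2\)) to close the gap.
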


\begin{proof}
Recall that \(B\simeq U\rtimes\gen{s}\), where \(U\) is a \(5^2\) subgroup containing \(s\). \(s\) acts on \(U\) with two orbits of size \(12\), and \(u,uu^s\) are a representative of each orbit. One could also verify that \(u=(uu^s)^2s(uu^s)^{-2}s^{-1}\), implying that \(u\in\gen{uu^s,s}\) and \(\gen{u,s}=\gen{uu^s,s}\).

\(u\) and \(uu^s\) are representatives of the classes of elements of order 5 in \(H\), \(uu^s\) being the image of \(u\) under the action of an outer automorphism of \(H\). If there was an element of \(g\in C_{\bb{\G}}(s)\) that conjugates them, then \(g\) would lie in some extension of \(H\) in \(\bb{\G}\). By \cref{aut25}, the only possibility for \(g\) is to lie in the field-diagonal extension of \(H\); however, we can verify with \magma\ that no element of \(\PSL_2(25).2\) swaps the two classes of elements of order 5 while centralising \(s\).
\end{proof}

\begin{lemma}\label{conj25}
	Let \(\overline{u}\) and \(\overline{s}\) be elements of \(\bb{\G}\) that satisfy the relations \begin{equation}\label{25rel}\overline{u}^{5}=\overline{s}^{12}=[\overline{u},\overline{u}^{\overline{s}}]=\overline{u}^4\overline{s}^{-1}\overline{u}^3\overline{s}^{-1}\overline{u}\overline{s}^2=1.\end{equation} Furthermore, suppose that all elements of \(\gen{\overline{u},\overline{u}^{\overline{s}}}\) of order 5 have trace 2 in the adjoint representation.
	
	Then the ordered pair \((\overline{u},\overline{s})\) is \(\bb{\G}\text{-conjugate}\) to either \((u,s)\) or \((uu^s,s)\). In particular, \(B\) is unique up to \(\bb{\G}\text{-conjugacy}\).
\end{lemma}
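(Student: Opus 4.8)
The plan is to reduce everything to the elementary abelian socle $\overline{B}_0 := \gen{\overline{u},\overline{u}^{\overline{s}}}$ and then invoke \cref{Tconjugacy}. First I would note that the relations force $\overline{B}_0\simeq5^2$: the element $\overline{u}$ has order dividing $5$, it commutes with its conjugate $\overline{u}^{\overline{s}}$ by the third relation, and the trace-$2$ hypothesis forces $\overline{u}\ne1$ (the identity has trace $52$), so $\overline{B}_0$ is a genuine group of order $25$. By \cref{elementaryabelianthm}(ii) every such subgroup of $\bb{\F}_4$ lies in a maximal torus, and the same applies to $B_0=\gen{u,u^s}$.

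Next I would extract regularity directly from the trace condition via the averaging formula for the dimension of a fixed space. Since every nontrivial element of $\overline{B}_0$ has trace $2$ on $L(\bb{\F}_4)$,
\[
\dim L(\bb{\F}_4)^{\overline{B}_0}=\frac{1}{25}\sum_{t\in\overline{B}_0}\tr(t)=\frac{52+24\cdot2}{25}=4=\rk\bb{\F}_4.
\]
By \cref{cartanstab} this fixed space is a Cartan subalgebra, so $C_{\bb{\G}}(\overline{B}_0)^{\circ}$ is a maximal torus and $\overline{B}_0$ is regular; the same computation applies to $B_0$ (its order-$5$ elements have trace $2$ by \cref{Brauer25}).

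The heart of the argument is showing $\overline{B}_0$ and $B_0$ are $\bb{\G}$-conjugate. Placing both inside a single maximal torus $\bb{\T}$ (possible since all maximal tori are conjugate by \cref{maximaltori}), they become $5^2$ subgroups of the $5$-torsion $\bb{\T}[5]\simeq5^4$. Because both are \emph{regular}, any element of $\bb{\G}$ conjugating one to the other must normalise $\bb{\T}$ (as it carries $C_{\bb{\G}}(\overline{B}_0)^{\circ}=\bb{\T}$ to $C_{\bb{\G}}(B_0)^{\circ}=\bb{\T}$), so the question reduces to whether they lie in the same $W(\F_4)$-orbit. I would then classify the finitely many $W$-orbits of $5^2$ subgroups of $\bb{\T}[5]$ and check that the requirement ``every nontrivial element has trace $2$'' singles out exactly one orbit, namely that of $B_0$. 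This finite computation in the root system of $\F_4$ (conveniently done in \magma) is the main obstacle, as it is where the trace hypothesis does its real work.

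Finally I would apply \cref{Tconjugacy}: its hypotheses hold since $\overline{B}_0$ and $B_0$ are conjugate and regular, and $C_{\bb{\T}}(\overline{s})$ is finite because the image of $\overline{s}$ in $W(\F_4)$ lies in the unique class of elements of order $12$, which acts without eigenvalue $1$ on the reflection representation, so \cref{finitecentraliser} applies. Hence $\gen{\overline{u},\overline{s}}$ is $\bb{\G}$-conjugate to $\gen{u,s}$. After conjugating so that $\overline{B}_0=B_0\subseteq\bb{\T}$ and $\overline{s}=s$ (using uniqueness of the order-$12$ class in $W$, exactly as in the proof of \cref{Tconjugacy}), the defining relation $\overline{u}^4\overline{s}^{-1}\overline{u}^3\overline{s}^{-1}\overline{u}\overline{s}^2=1$ forces $\overline{u}$ into one of the two $\gen{s}$-orbits of order-$5$ elements of $B_0$, whose representatives are $u$ and $uu^s$ (cf.\ \cref{notconj25}). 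Thus $(\overline{u},\overline{s})$ is $\bb{\G}$-conjugate to $(u,s)$ or $(uu^s,s)$; and since $\gen{u,s}=\gen{uu^s,s}=B$, in either case the Borel subgroup is conjugate to $B$, giving uniqueness of $B$ up to $\bb{\G}$-conjugacy.
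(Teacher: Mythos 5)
Your overall skeleton matches the paper's: the trace-averaging computation showing \(\dim L(\bb{\G})^{\overline{B}_0}=4\), hence regularity of \(\overline{B}_0\) via \cref{cartanstab}; the reduction of \(\gen{\overline{u},\overline{s}}\) into \(N_{\bb{\G}}(\bb{\T})\); and the use of the unique order-12 class in \(W(\F_4)\) together with the Lang--Steinberg argument of \cref{Tconjugacy} to normalise \(\overline{s}\) to \(s\). However, the step you yourself call the heart of the argument --- classifying the \(W(\F_4)\)-orbits of \(5^2\) subgroups of \(\bb{\T}[5]\simeq 5^4\) and asserting that the all-trace-2 condition singles out exactly the orbit of \(B_0\) --- is a genuine gap. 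You neither carry out this finite computation nor give a reason it must come out as claimed, and it is not obviously true: the trace condition is only used in the paper to establish regularity, not to distinguish \(W\)-orbits. Compounding this, your final appeal to the relation \(\overline{u}^4\overline{s}^{-1}\overline{u}^3\overline{s}^{-1}\overline{u}\overline{s}^2=1\) does no work in your setup: once you have forced \(\overline{u}\in B_0\) and \(\overline{s}=s\), every nontrivial element of \(B_0\) lies in one of the two \(\gen{s}\)-orbits and automatically satisfies that relation, so the relation cannot be what "forces" anything at that stage.

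The paper closes exactly this gap without any orbit classification, by reversing the order of operations: first conjugate so that \(\overline{s}=s\) and \(\overline{u}\in\bb{\T}_{5,1}\simeq 5^4\), then observe that \(s\) acts on \(5^4\) preserving a \(5^2\times 5^2\) decomposition whose two blocks have distinct irreducible characteristic polynomials \(X^2+2X+4\) and \(X^2+3X+4\). The relation \(\overline{u}^4 s^{-1}\overline{u}^3 s^{-1}\overline{u}s^2=1\) is precisely the statement that \(X^2+3X+4\) (the minimal polynomial of \(\omega^2\)) annihilates \(\overline{u}\) under the \(s\)-action, which forces \(\overline{u}\) into the same \(5^2\) block as \(u\), i.e.\ \(\overline{u}\in\gen{u,u^s}\); the two \(\gen{s}\)-orbits on that block then give the two alternatives via \cref{notconj25}. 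If you replace your orbit-classification step with this observation, your proof becomes complete and coincides with the paper's.
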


\begin{proof}
	We can verify with \magma\ that if \(\overline{u},\overline{s}\) satisfy \eqref{25rel}, then  \(\overline{B}\coloneqq\gen{\overline{u},\overline{s}}\simeq \gen{u,s}\).
	
	Since \(\overline{U}\coloneqq\gen{\overline{u},\overline{u}^{\overline{s}}}\simeq 5^2\), by \cref{elementaryabelianthm} we have that \(\overline{U}\) lies in a maximal torus of \(\bb{\G}\). Furthermore, the condition on the traces shows that \(\overline{U}\) fixes a 4-dimensional subspace of \(L(\bb{\G})\).
	
	This can be seen by taking the representation \(\rho\) of \(\overline{U}\) induced by the adjoint representation of \(\bb{\G}\), then by \cite[Proposition 2.8]{reptheory} we have that \(\frac{1}{\size{U}}\sum_{x\in\overline{U}}\rho(x)\) is a projection from \(L(\bb{\G})\) to \(L(\bb{\G})^{\overline{U}}\), hence by \cite[(2.9)]{reptheory} we can compute:
	\[\dim L(\bb{\G})^{\overline{U}}=\frac{1}{\size{\overline{U}}}\sum_{x\in\overline{U}}\tr\rho(x)=\frac{52+2\cdot24}{25}=4.\]
	
	Therefore \(L(\bb{\G})^{\overline{U}}\) must be the Cartan subalgebra corresponding to \(\overline{u}\) (and to \(\overline{u}^{\overline{s}}\)) by \cref{cartanstab}.
	
	In particular, \(\overline{U}\) is regular and \(C_{\bb{\G}}(\overline{U})^{\circ}=\bb{\T}\) for some maximal torus \(\bb{\T}\) of \(\bb{\G}\), which we may assume it is a maximal torus containing \(U\); therefore, \(\overline{B}\le N_{\bb{\G}}(\bb{\T})\) (see proof of \cref{Tconjugacy}).
	
	Recall from \cref{25Bconstruction} that there is a unique conjugacy class in \(W(\bb{\G})\) of elements of order 12, and the action of such an element \(w\) on a maximal torus has no fixed points, so by \cref{finitecentraliser} we have that \(C_{\bb{\T}}(w)\) is finite. This means that \(s\) is unique up to \(\bb{\T}\text{-conjugacy}\), being a preimage in \(N_{\bb{\G}}(\bb{\T})\) of \(w\).
	
	Thus we can replace \((\overline{u},\overline{s})\) by a conjugate pair such that \(\overline{s}=s\) and \(\overline{u}\) lies in \(\bb{\T}\).
	
	We can compute that the action of \(w\) hence of \(s\) on \(\bb{\T}_{5,1}\simeq 5^4\) has two blocks, with characteristic polynomial \(X^2+2X+4\) and \(X^2+3X+4\), respectively, like in \cref{25Bconstruction}. This means that \(s\) acts on \(5^4\) normalising a \(5^2\times 5^2\) decomposition; the condition \(\overline{u}^4\overline{s}^{-1}\overline{u}^3\overline{s}^{-1}\overline{u}\overline{s}^2=1\) means that \(\overline{u}\) lies in the same \(5^2\) as \(u\), i.e. \(\overline{u}\in\gen{u,u^s}\).
	
	The result then follows by \cref{notconj25}.
\end{proof}

\begin{lemma}\label{triples25}
	Suppose that \(H\simeq\PSL_2(25)\) is a subgroup of \(\bb{\G}\), and let \(u,s\) be the elements obtained from \cref{25Bconstruction}. Then there are elements \(g,\overline{t}\in\bb{\G}\) such that \(H^g\) is generated by a triple \((u,s,\overline{t})\) that satisfies presentation \eqref{25presentation} for \(H\).
\end{lemma}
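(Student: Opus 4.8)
The plan is to reduce the statement to the pair-conjugacy result \cref{conj25} and then repair the outcome in the case where the Borel pair lands in the ``wrong'' class of order-$5$ elements. First I would fix inside $H$ a generating triple $(u_0,s_0,t_0)$ satisfying presentation \eqref{25presentation}; such a triple exists since $H\simeq\PSL_2(25)$ and, by \cref{25presentationthm}, \eqref{25presentation} is a presentation of $\PSL_2(25)$. The pair $(u_0,s_0)$ then satisfies the relations \eqref{25rel}: $u_0^5=1$ and $s_0^{12}=1$ are immediate (recall $s$ has order $12$), the commutator relation holds because $\langle u_0,u_0^{s_0}\rangle\simeq 5^2$ is elementary abelian, and the remaining relation is \eqref{25extrarelation}, a consequence of \eqref{25presentation} by \cref{25extrarelationthm}.

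Next I would check the trace hypothesis of \cref{conj25}. By \cref{Brauer25lemma} the subgroup $H$ acts as $26_1\oplus26_2$ on $L(\bb{\F}_4)$, so by \cref{Brauer25} every element of order $5$ — in particular every non-identity element of $\langle u_0,u_0^{s_0}\rangle$ — has trace $2$ in the adjoint representation. Hence $(u_0,s_0)$ meets the hypotheses of \cref{conj25}, and there is $g\in\bb{\G}$ with $(u_0,s_0)^g$ equal either to $(u,s)$ or to $(uu^s,s)$. In the first case I am finished immediately: setting $\overline{t}:=t_0^g$, conjugation by $g$ is an isomorphism $H\to H^g$, so $(u,s,\overline{t})=(u_0,s_0,t_0)^g$ generates $H^g$ and still satisfies \eqref{25presentation}.

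The case $(u_0,s_0)^g=(uu^s,s)$ is the only real obstacle. Here $(uu^s,s,t_0^g)$ generates $H^g$ and satisfies \eqref{25presentation}, but I want a triple beginning with $u$ rather than $uu^s$. The key point is that $\langle uu^s,s\rangle=\langle u,s\rangle$ (as in \cref{notconj25}), so $u\in H^g$ already; only a suitable final involution is missing. I would supply it by relabelling generators through an abstract automorphism of $H^g$: as noted in the proof of \cref{notconj25}, $u$ and $uu^s$ represent the two $\PSL_2(25)$-classes of elements of order $5$, and these are interchanged by the diagonal automorphism of $\PSL_2(25)$, which fixes the diagonal torus element $s$. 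Thus there is $\alpha\in\aut(H^g)$ with $\alpha(uu^s)=u$ and $\alpha(s)=s$. Applying $\alpha$ to $(uu^s,s,t_0^g)$ yields $(u,s,\overline{t})$ with $\overline{t}:=\alpha(t_0^g)\in H^g\le\bb{\G}$; since $\alpha$ is an automorphism it preserves both the relations \eqref{25presentation} and the property of generating $H^g$.

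The point to be careful about is that $\alpha$ must be used purely as an abstract automorphism of $H^g$ and not as conjugation inside $\bb{\G}$: by \cref{aut25} the group $\PGL_2(25)$ does not embed in $\bb{\G}$, so the diagonal automorphism is not induced by any element of $\bb{\G}$. This is harmless, since all that is required of $\overline{t}=\alpha(t_0^g)$ is that it be an element of $H^g$ completing $(u,s)$ to a presentation, which is guaranteed by $\alpha$ mapping $H^g$ isomorphically onto itself. The only subsidiary fact worth pinning down is that $u$ and $uu^s$ genuinely lie in distinct order-$5$ classes — equivalently, that $1+\omega^2$ is a non-square in $\mathbb{F}_{25}$ — so that the required $\alpha$ is the class-swapping diagonal automorphism rather than an inner one; this is exactly what \cref{notconj25} records.
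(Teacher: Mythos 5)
Your argument is correct and is essentially the paper's own proof: both reduce to \cref{conj25} and then use the diagonal outer automorphism of \(\PSL_2(25)\) that fixes \(s\) and interchanges \(u\) with \(uu^s\) to repair the case where the pair lands on \((uu^s,s)\) (the paper merely applies that automorphism before invoking \cref{conj25} rather than after, which is a cosmetic difference). Your extra care in verifying the hypotheses of \cref{conj25} (the relations via \cref{25extrarelationthm} and the trace condition via \cref{Brauer25lemma}) and in noting that \(\alpha\) is used only as an abstract automorphism of \(H^g\), not as conjugation in \(\bb{\G}\), is a welcome tightening of details the paper leaves implicit.
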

\begin{proof}
	\(H\) is generated by a triple \((u_0,s_0,t_0)\) that satisfies presentation \eqref{25presentation}. Let \(\alpha\) be an outer automorphism of \(H\) that fixes \(s_0\) and maps \(u_0\) to \(u_0^{\phantom{\alpha}}u_0^{s_0}\), then \(H\) is generated by \((u_0^{\alpha},s_0^{\alpha},t_0^{\alpha})=(u_0^{\phantom{\alpha}}u_0^{s_0},s_0^{\phantom{\alpha}},t_0^{\alpha})\), and this triple satisfies presentation \eqref{25presentation} as well.
	
	By \cref{conj25}, one of \((u_0,s_0)\) and \((u_0^{\alpha},s_0^{\phantom{\alpha}})\) is \(\conj{\bb{\G}}\) to \((u,s)\), hence we may find \(g\in\bb{\G}\) such that \(H^g\) is generated by a triple \((u,s,\overline{t})\) that satisfies the presentation, where \(\overline{t}\) is either \(t_0^{\alpha g}\) or \(t_0^g\).
\end{proof}

We can now deduce part (i) of \cref{th25} when \(p\) is 0 or coprime to \(\size{H}\).

By Lemmas \ref{triples25} and \ref{magma25} there is a unique conjugacy class of subgroups isomorphic to \(\PSL_2(25)\) in \(\bb{\G}\), and by Lemma \ref{conj25} we have that for each copy of \(5^2\rtimes 12\) in \(\bb{\G}\) we obtain two embeddings (depending on whether a chosen element of order 5 is mapped to \(u\) or to \(uu^s\)), thus we have two conjugacy classes of embeddings of \(B\) in \(\bb{\G}\). Since the characteristic of \(k\) does not divide \(\size{H}\), we can use \cref{liftingtheorem} to generalise this result to characteristic 0 and any other characteristic coprime to \(\size{H}\), namely \(p\ne2,3,5,13\). Lie primitivity is proved in \cite[Corollary 3]{litterick}.

In order to complete part (ii), we need to find whether \(H\) embeds in \(\G(q)\), and show that if it does then \(H\) is unique up to conjugacy; we proceed as discussed in \cref{counting}.

\begin{lemma}\label{25CTS}
	If \(\G\) contains a subgroup isomorphic to \(B\), than it is unique up to \(\G\text{-conjugacy}\).
	
	If \(\G\) contains a subgroup isomorphic to \(H\), than it is unique up to \(\G\text{-conjugacy}\).
\end{lemma}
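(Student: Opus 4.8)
The plan is to deduce both uniqueness statements in $\G$ from the corresponding uniqueness in $\bb{\G}$ by means of the connectedness criterion of \cref{finiteconjugates}, so the real work is the computation of the relevant centraliser $C_{\bb{\G}}(B)$ in $\bb{\G}$.

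First I would treat the statement about $B$. By \cref{conj25} any subgroup of $\bb{\G}$ isomorphic to $B$ whose order-$5$ elements have trace $2$ in the adjoint representation is $\bb{\G}$-conjugate to $\gen{u,s}$, so there is a single $\bb{\G}$-class of such $B$; under the hypotheses of \cref{th25} this trace condition is forced, since an order-$5$ element has trace $2$ by the character in \cref{Brauer25}. To pass to $\G$, I would compute $C_{\bb{\G}}(B)$. As recorded in \cref{25construction}, $s$ is regular semisimple, so $C_{\bb{\G}}(s)=\bb{\T}$; because $B_0=B'\le\bb{\T}$ and $\bb{\T}$ is abelian, any element centralising $B$ must already lie in $\bb{\T}$ and then automatically centralises $B_0$, whence $C_{\bb{\G}}(B)=C_{\bb{\T}}(s)=C_{\bb{\T}}(w)$, where $w\in W(\bb{\G})$ is the image of $s$. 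Since $\bb{\T}$ is abelian, conjugation by $s$ realises the action of $w$, so $C_{\bb{\T}}(s)=\bb{\T}^{w}$. As noted in \cref{conj25}, $w$ is the unique class of elements of order $12$ and has no nonzero fixed points on $\bb{\T}$, so $C_{\bb{\T}}(w)$ is finite by \cref{finitecentraliser}; its order is $\size{\det(1-w)}$ on the character lattice of $\F_4$, which equals $1$ (see below). Hence $C_{\bb{\G}}(B)=1$ is connected, and \cref{finiteconjugates} shows that all $\bb{\G}$-conjugates of $B$ lying in $\G$ are $\G$-conjugate, giving the first claim.

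For the statement about $H$ I would argue identically. From \cref{triples25} and \cref{magma25}, together with \cref{conj25}, there is a unique $\bb{\G}$-class of subgroups isomorphic to $H$. Moreover $C_{\bb{\G}}(H)\le C_{\bb{\G}}(\gen{u,s})=C_{\bb{\G}}(B)=1$, so $C_{\bb{\G}}(H)$ is trivial, hence connected, and \cref{finiteconjugates} again descends the single $\bb{\G}$-class to a single $\G$-class.

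The one genuinely delicate point is the equality $C_{\bb{\T}}(w)=1$. After checking via \cref{finitecentraliser} that the order-$12$ element $w$ has no eigenvalue $1$ on the reflection representation (so that $\bb{\T}^w$ is finite), I would compute $\size{\det(1-w)}$ from the eigenvalues $\zeta^{m}$ of this Coxeter element, with $m$ ranging over the exponents $1,5,7,11$ of $\F_4$ and $\zeta$ a primitive $12$th root of unity: $(1-\zeta)(1-\zeta^{11})(1-\zeta^{5})(1-\zeta^{7})=(2-\sqrt{3})(2+\sqrt{3})=1$. Alternatively this can simply be verified by machine, which is consistent with the computation $C_{\T}(s)=1$ already performed in \cref{25Bconstruction}. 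Once $C_{\bb{\G}}(B)=1$ is in hand, both halves of the lemma follow formally from the connectedness criterion.
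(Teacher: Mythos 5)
Your overall strategy is the paper's: reduce everything to showing that a certain centraliser is trivial and then descend from \(\bb{\G}\) to \(\G\) via \cref{finiteconjugates}. The explicit evaluation \(\size{\det(1-w)}=(2-\sqrt3)(2+\sqrt3)=1\) over the exponents \(1,5,7,11\) of \(\F_4\) is a nice touch that the paper replaces by a machine check via \cref{sizecentraliser}.

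However, the step identifying \(C_{\bb{\G}}(B)\) fails as written, because you conflate two different maximal tori. You set \(\bb{\T}\coloneqq C_{\bb{\G}}(s)\) (legitimate, since \(s\) is regular semisimple) and then assert that \(B_0=\gen{u,u^s}\le\bb{\T}\), so that elements of \(\bb{\T}\) automatically centralise \(B_0\). But \(B_0\) does \emph{not} lie in \(C_{\bb{\G}}(s)\): if it did, \(s\) would centralise \(B_0\), whereas \(s\) acts on \(B_0\simeq5^2\) with two orbits of length 12. The torus containing \(B_0\) is the one that \(s\) \emph{normalises}, which is distinct from the one containing \(s\) (the paper arranges this deliberately in \cref{25Bconstruction}). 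The correct route, which is the one the paper takes, is to use regularity of the subgroup \(B_0\) rather than of the element \(s\): by the trace computation in the proof of \cref{conj25}, \(C_{\bb{\G}}(\gen{u,u^s})=\bb{\T}'\) is the maximal torus containing \(B_0\), whence \(C_{\bb{\G}}(B)=C_{\bb{\G}}(B_0)\cap C_{\bb{\G}}(s)=\bb{\T}'\cap C_{\bb{\G}}(s)=C_{\bb{\T}'}(s)\), and it is this group (the fixed points of the order-12 Weyl element acting on the torus it normalises) that your determinant computation shows to be trivial. With that correction the rest of your argument — triviality, hence connectedness, of \(C_{\bb{\G}}(B)\) and of \(C_{\bb{\G}}(H)\le C_{\bb{\G}}(B)\), followed by \cref{finiteconjugates} — goes through and agrees with the paper.
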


\begin{proof}
	Recall that there is a unique conjugacy class of elements of order 12 in \(W(\G)\).
	
	If we show that \(C_{\bb{\T}}(s)=1\), where \(\bb{\T}\cap\G=\T\), then the result follows by \cref{tconjugacy}.
	
	We already computed the action of \(s\) on \(\T\) in \cref{25Bconstruction}, and since it has no fixed points then \(C_{\bb{\T}}(s)\) is finite by \cref{finitecentraliser}. Observe that we could also compute the image of \(s\) in \(W(\G)\) and verify that its imagine under the reflection representation has no eigenvectors, in particular none of eigenvalue 1.
	
	We can then use \cref{sizecentraliser} to verify that \(C_{\bb{\T}}(s)=1\). Observe that \(\T\) is small enough that we can compute \(C_{\T}(s)\) directly.
	
	Recall from the proof of \cref{conj25} that \(C_{\bb{\G}}(\gen{u,u^s})=\bb{\T}\), then
	\[C_{\bb{\G}}(H)\le C_{\bb{\G}}(B)=C_{\bb{\G}}(\gen{u,u^s})\cap C_{\bb{\G}}(s)=\bb{\T}\cap C_{\bb{\G}}(s)=C_{\bb{\T}}(s)=1.\]
	
	Then, by \cref{finiteconjugates} we have that the unique \(\bb{\G}\text{-conjugacy}\) class of subgroups isomorphic to \(H\) does not split in \(\G(q)\) meaning that for any field \(\mathbb{F}_q\) such that \(H\) embeds in \(\G(q)\) there is exactly one \(\G\text{-conjugacy}\) class of subgroups isomorphic to \(H\) in \(\G(q)\).
\end{proof}

\begin{proposition}\label{finite25}
	\(H\) embeds in \(\G(q)\) in characteristic not dividing \(\size{H}\).
\end{proposition}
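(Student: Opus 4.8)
The plan is to deduce existence over the finite group from the algebraic picture already established for \(\bb{\G}=\bb{\F}_4\). By \cref{th25}(i) there is a single \(\bb{\G}\text{-conjugacy}\) class of subgroups isomorphic to \(H\) in characteristic coprime to \(\size{H}\), and by \cref{25CTS} we have \(C_{\bb{\G}}(H)=1\), which is connected. Embedding \(H\) in \(\G=\bb{\G}^{\sigma}\) is the same as producing a \(\sigma\text{-fixed}\) point in one of the \(\bb{\G}\text{-classes}\) of embeddings \(H\hookrightarrow\bb{\G}\), where \(\sigma\) acts by \(\phi\mapsto\sigma\circ\phi\). First I would apply \cref{goingdown}(i) to such a class \(V\): the group \(\bb{\G}\) acts transitively on \(V\) with a compatible \(\sigma\text{-action}\) and trivial (hence closed) point stabiliser \(C_{\bb{\G}}(H)\), so that if \(V\) is \(\sigma\text{-stable}\) then \(V^{\sigma}\ne\emptyset\) and \(H\) embeds in \(\G\); uniqueness up to \(\G\text{-conjugacy}\) then follows from \cref{finiteconjugates}, since \(C_{\bb{\G}}(H)\) is connected. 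Thus the whole statement reduces to showing that some \(\bb{\G}\text{-class}\) of embeddings is stable under the Frobenius \(\sigma=F_q\).

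The feature that should make this work uniformly in \(p\) is that the adjoint character \(\chi\) of \cref{Brauer25} is rational: all of its values are integers. Because \(\sigma\) acts on Brauer characters through the power map \(h\mapsto h^{q}\) and \(q\) is coprime to \(\size{H}\), a rational character is fixed by \(\sigma\); consequently the \(26_1\oplus26_2\) module structure on \(L(\bb{\F}_4)\) is preserved and no irrationality is forced onto the ground field. This is precisely why the row for \((\bb{\F}_4,\PSL_2(25))\) in \cref{tableresults} carries no congruence condition on \(p\), in contrast with \(\PSL_2(27)\), \(\PSL_2(29)\), \(\PSL_2(37)\), whose characters involve the irrationalities of \cref{minpol} and therefore require a proper extension \(\F_{q}\) of \(\F_{p}\). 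Concretely I would record that \(\chi\) is realisable over the prime field \(\F_{p}\), so that the adjoint representation of \(H\) is already defined over \(\F_{p}\).

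The main obstacle is that rationality of \(\chi\) alone does not settle \(\sigma\text{-stability}\) of a single class: the two \(\bb{\G}\text{-classes}\) of embeddings share the same character \(\chi\) (they differ by an outer automorphism of \(H\) that fixes \(\chi\)), so \(\sigma\) could a priori interchange them. What must be shown is that the automorphism of \(H\) induced by \(\sigma\) is realised inside \(\bb{\G}\); by \cref{aut25} the realised outer automorphisms are exactly the coset of the diagonal-field automorphism generating \(N_{\bb{\G}}(H)=H.2\), so it suffices to verify that the Frobenius-induced outer automorphism is never the pure diagonal nor the pure field automorphism. I would establish this by returning to the explicit construction of \cref{25Bconstruction} and \cref{25construction} over a splitting field \(\F_{p^{m}}\), where the torus element \(u\) of order \(5\) and the element \(s\) of order \(12\) can be written down, and then descending to \(\F_{p}\) by the Lang--Steinberg argument of \cref{tconjugacy}: there is a unique class of elements of order \(12\) in \(W(\bb{\F}_4)\) and \(C_{\bb{\T}}(s)=1\), so the \(\sigma\text{-twisting}\) is controlled and a \(\sigma\text{-fixed}\) copy of \(H\) is produced in \(\F_4(p)\). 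Finally \(\F_4(p)\le\F_4(q)\) yields \(H\le\G=\F_4(q)\) for every power \(q\) of \(p\), and \cref{25CTS} guarantees it is unique up to \(\G\text{-conjugacy}\).
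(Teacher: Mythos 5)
Your reduction is sound and is essentially the same one the paper makes implicitly: since there is a single $\bb{\G}$-class of subgroups isomorphic to $H$ with $C_{\bb{\G}}(H)=1$, the Frobenius $F_p$ may be assumed to normalise $H$, and $H$ (up to conjugacy) lies in $\G(p)$ precisely when the automorphism of $H$ induced by $F_p$ is realised by conjugation in $\bb{\G}$, i.e.\ lies in the image of $N_{\bb{\G}}(H)=H.2$ in $\out H$. You also correctly flag that rationality of $\chi$ does not settle this, since the two embedding classes share the same character.

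The gap is in your final step. Saying that one ``returns to the explicit construction over a splitting field $\F_{p^m}$ and descends to $\F_p$ by Lang--Steinberg, since there is a unique class of order-$12$ elements in $W(\bb{\F}_4)$ and $C_{\bb{\T}}(s)=1$'' does not produce an $F_p$-fixed copy of $B\simeq 5^2\rtimes 12$: \cref{tconjugacy} only counts classes of complements inside a coset $\T w$ once the relevant elements already lie in the finite group. The real issue is the existence of the toral $5^2$ inside $\F_4(p)$ itself. When $p\equiv\pm2\bmod 5$ a split maximal torus $(\F_p^\times)^4$ contains no elements of order $5$ at all, so $u$ must be located in a non-split torus, and one must check that the corresponding twisted torus still carries a $5^2$ normalised by an element of order $12$ acting with minimal polynomial $X^2+3X+4$. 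This is exactly the content the paper supplies and your proposal omits: by \cite[Theorem 25.14]{malletesterman} the Sylow $5$-subgroups of $\F_4(p)$ are homocyclic with $4$ factors ($p\equiv\pm1\bmod 5$) or $2$ factors ($p\equiv\pm2\bmod5$), and by the automizer results their normalisers induce $W(\F_4)$ or the complex reflection group $G_8$ respectively, both of which contain elements of order $12$; hence $B<\G(p)$ in all cases, and the unique extension of $B$ to $H$ (\cref{magma25}) forces $F_p$ to centralise $H$. Without an argument of this kind your proof does not close, precisely in the congruence classes where the torus containing $u$ is non-split.
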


\begin{proof}
We need to understand what is the action of outer automorphisms of \(\bb{\G}\) on \(H\); since we are in odd characteristic, we need to consider only the field automorphism of \(\bb{\G}\).

Take the Frobenius morphism \(F_p\); since by \cref{magma25} there is only one conjugacy class of \(H\) in \(\bb{\G}\) then there is no choice for \(F_p\) but to permute the \(\bb{\G}\text{-conjugates}\) of \(H\). We then assume that \(H\) is normalised by \(F_p\) and we have to check whether \(H\) embeds in \(\G(p)\).

By \cite[Theorem 25.14]{malletesterman}, \(\G(p)\) contains Sylow \(5\text{-subgroups}\) that are homocyclic with 4 cyclic factors of order \(\size{p\mp1}_5\) if \(p\equiv\pm1\bmod5\), or 2 cyclic factors of order \(\size{p^2+1}_5\) if \(p\equiv\pm2\bmod5\). Here, we denote by \(\size{n}_5\) the \(5\text{-part}\) of the integer \(n\).
	
From \cite[p. 57 and Table 3]{automizer} we have that a Sylow \(5\text{-subgroup}\) of \(\G(p)\) is normalised by the Weyl group of \(\G\) if \(p\equiv\pm1\bmod5\), and by the complex reflection group \(G_8\) if \(p\equiv\pm2\bmod5\). As \(G_8\) contains (a unique conjugacy class of) elements of order 12, in either case we have that \(\G\) contains a \(5^2\rtimes12\) subgroup.

Therefore \(B\) embeds in \(\G(p)\), and since there is only one conjugacy class of \(H\) containing a given \(B\), we can conclude that \(H\) embeds in \(\G(p)\) for any \(p\ne2,3,5,13\).
\end{proof}

\begin{remark}
	\(H\) embeds in \(\G(p)\) for any \(p\) (see for example \cite{craven_preprint}); we kept the restriction on the characteristic as those are the only cases we can verify with the technique we are using.
\end{remark}

\begin{proposition}\label{25max}
	\(H.2\) is a maximal subgroup of \(\G(q)\), \(q\) a power of \(p\) coprime to \(\size{H}\), iff \(q=p\).
\end{proposition}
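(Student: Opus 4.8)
The plan is to establish the two implications of the ``iff'' separately, drawing on the facts already proved for this case: $H$ is Lie primitive, $N_{\bb{\G}}(H)=H.2$, $C_{\bb{\G}}(H)=1$, and (by \cref{finite25,25CTS}) $H$ embeds in $\F_4(p)$ with a single $\F_4(q)$-conjugacy class over every field.

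First I would dispose of the non-maximality direction. Suppose $q=p^r$ with $r>1$. By \cref{finite25} the group $H$ embeds in the prime-field subgroup $\F_4(p)$, and by \cref{25CTS} all subgroups of $\F_4(q)$ isomorphic to $H$ are conjugate, so up to conjugacy we may take $H\le\F_4(p)<\F_4(q)$, a proper subfield subgroup. Since $\bb{\F}_4$ is at once simply connected and adjoint, $N_{\bb{\G}}(H)=H.2$ consists of elements of $\bb{\G}$; as the normalising involution is realised over $\F_4(p)$ (part (i) of \cref{th25} and \cref{25ext_construction}) and $C_{\bb{\G}}(H)=1$, we get $N_{\F_4(p)}(H)=H.2\le\F_4(p)$. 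Consequently
\[
N_{\F_4(q)}(H)=N_{\bb{\G}}(H)\cap\F_4(q)=H.2\le\F_4(p)\subsetneq\F_4(q),
\]
so $H.2$ is contained in the proper subgroup $\F_4(p)$ and hence is not maximal.

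For the converse, assume $q=p$. Because $H$ is Lie primitive, \cref{primitivemax} says that $N_{\G}(H)=H.2$ is maximal in $\G=\F_4(p)$ unless it lies in a subgroup $M$ of one of the types \cref{maxsbg1}(ii), (iii), (v). Type (iii) occurs only for $\bb{\G}=\bb{\E}_8$, so is irrelevant here. Type (ii) would place $H$ inside a subgroup of the same type as $\bb{\G}$, i.e. a subfield subgroup or a twisted group of type $\F_4$; but $\F_4(p)$ has no proper subfield subgroup since $p$ is prime, and \(\prescript{2}{}{\F_4}\) exists only in characteristic $2$, whereas $p\ne2$. Thus (ii) is excluded as well.

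The decisive and most delicate step is to rule out type (v): that $H$ is a proper subgroup of an almost simple $M$ whose socle $S=F^*(M)$ embeds primitively in $\bb{\G}$. Here I would appeal to the classification of the candidate primitive simple subgroups of $\F_4$ in \cite{table} together with the containment analysis of \cite{litterick}: none of the possible primitive socles $S$ properly contains $\PSL_2(25)$ in $\F_4$. This can be cross-checked against the module data of \cref{Brauer25lemma}, since $H$ already acts irreducibly on $M(\bb{\F}_4)$ and as $26_1\oplus26_2$ on $L(\bb{\F}_4)$, which sharply restricts any overgroup. With (ii), (iii) and (v) all excluded, \cref{primitivemax} forces $H.2=N_{\F_4(p)}(H)$ to be maximal, completing the argument. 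I expect this final step to be the main obstacle, since it relies on the external classification of primitive simple subgroups rather than on the explicit constructions of the preceding sections.
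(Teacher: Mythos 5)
Your proposal is correct and follows essentially the same route as the paper: non-maximality for $q\ne p$ via the subfield containment $H.2<\F_4(p)<\F_4(q)$ coming from \cref{finite25} and \cref{25CTS}, and maximality for $q=p$ via \cref{primitivemax} together with the exclusion of overgroups of types \cref{maxsbg1}(ii), (iii), (v). The paper completes the type (v) step exactly as you anticipate, by listing the candidate primitive socles from \cite[Tables 1.2--1.3]{litterick} (namely $\PSL_2(r)$ for $r=7,8,9,13,17,27$, $\PSL_3(3)$, $\PSU_3(3)$, $\prescript{3}{}\D_4(2)$) and checking by order divisibility that none can contain $H.2$, so the "main obstacle" you flag is in fact routine.
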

\begin{proof}
	Since \(H\) is primitive in \(\bb{\G}\), and \(N_{\G}(H)=H.2\), as observed in \cref{primitivemax} we only need to verify that \(H.2\) does not lie in any other candidate maximal subgroup.
	
	By \cref{finite25} we must have \(q=p\), otherwise \(H.2<\G(p)<\G(q)\). Since \(\G\) is \(\F_4\) in odd characteristic, this is the only possibility for \(H.2\) to lie in a group of the same type as \(\bb{\G}\).

	Furthermore, \(H.2\) cannot lie in an almost simple subgroup \(M<\G\) whose socle \(K\) is a primitive simple subgroup of \(\bb{\G}\) not isomorphic to \(H\). The candidates for \(K\) are the groups in \cite[Table 1.2-1.3]{litterick}; for \(p\ne2,3,5\) they are: \(\PSL_2(r)\) for \(r=7, 8, 9, 13, 17, 27\), \(\PSL_3(3)\), \(\PSU_3(3)\), and \(\prescript{3}{}\D_4(2)\). We can find their order using the Atlas \cite{atlas} or \magma, and it is straightforward to verify that \(H.2\) cannot be a subgroup of \(M\).
\end{proof}

\cref{th25}(iii) then follows from \cref{25max} in characteristic coprime to \(\size{H}\), since we are in odd characteristic so \(\G(p)\) has a trivial outer automorphism group hence \(\overline{\G}=\G\).

\begin{remark}
	In \cite{craven_preprint} it shown that this is actually true for any \(p\ne2\); for \(p=2\), we have that \(H.2\) is contained in \(\!\prescript{2}{}\F_4(2)\).
\end{remark}

Finally, observe that using \cref{liftingtheorem} we could generalise the results we obtained to any characteristic coprime to \(\size{H}\), or zero, that is \(p\ne2,3,5,13\); however, \cref{25construction} only requires \(p\ne2,3,5\). Indeed, we have the following:

\begin{proposition}
In characteristic 13, there is a unique \(\bb{\G}\text{-conjugacy}\) class of subgroups isomorphic to \(H\), \(H\) is Lie primitive, and \(N_{\bb{\G}}(H)=H.2\). \(H\) embeds in \(\G(k)\) for any field \(k\) of characteristic 13, and is unique up to conjugacy; furthermore, \(H.2\) is maximal in \(\G(13)\).
\end{proposition}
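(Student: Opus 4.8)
The plan is to re-run the entire proof of \cref{th25} in characteristic $13$, the single essential difference being that \cref{liftingtheorem} is no longer available: since $\size{H}=2^3\cdot3\cdot5^2\cdot13$, the prime $13$ divides $\size{H}$, so we cannot transport the results from characteristic $0$. The conceptual reason the characteristic $13$ case is genuinely separate is that the elements of $H$ of order $13$, which are semisimple for $p\ne2,3,5,13$, become unipotent when $p=13$. The crucial observation, however, is that \emph{none} of the constructions or conjugacy arguments ever uses these elements: the whole analysis is carried out in terms of $u$ (order $5$), $s$ (order $12$) and the involution $t$, all of which have order coprime to $13$ and hence remain semisimple. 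Thus the obstruction that kills \cref{liftingtheorem} is invisible to the rest of the machinery, and we only need to reconstruct the copy of $H$ explicitly over a field of characteristic $13$.

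First I would fix a finite field $k$ of characteristic $13$ containing both $5$th and $12$th roots of unity. Since $13\equiv1\bmod12$ while $13$ has order $4$ modulo $5$, the smallest such field is $k=\mathbb{F}_{13^4}$. Over this $k$ I would repeat \cref{25Bconstruction}, \cref{25construction} and \cref{25ext_construction} verbatim; as noted in the remark following \cref{25max}, these only require $p\ne2,3,5$, so they produce matrices $u,s,t\in\F_4(k)$ satisfying \eqref{25presentation} (together with \eqref{25extrarelation}), and an element extending $\gen{u,s,t}$ to $H.2$. Because the relations of \eqref{25presentation} present $\PSL_2(25)$ as an abstract group and $\PSL_2(25)$ is simple with $u\ne1$, the group generated is genuinely $H\simeq\PSL_2(25)$, independently of the characteristic of $k$.

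Next I would check that each conjugacy and centraliser lemma carries over. \cref{elementaryabelianthm} gives a toral $5^2$ in odd characteristic, so it applies; the fixed-space computation $\dim L(\bb{\G})^{\overline U}=\frac1{25}(52+2\cdot24)=4$ of \cref{conj25} is valid since $25$ is invertible modulo $13$ and the order-$5$ elements, being $13$-regular, still have trace $2$; and the uniqueness of the class of order-$12$ elements in $W(\bb{\F}_4)$ together with \cref{finitecentraliser} still forces $C_{\bb{\T}}(s)=1$. Consequently \cref{notconj25}, \cref{conj25}, \cref{triples25} and \cref{25CTS} hold unchanged, giving a unique $\bb{\G}$-class of copies of $H$ (with two classes of embeddings) and a unique $\G$-class in $\F_4(k)$. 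For \cref{aut25} the non-embedding of $\PGL_2(25)$ and $\PsL_2(25)$ follows from the same trace restrictions as in \cref{Brauer25lemma}, while the existence of $H.2$ now comes directly from \cref{25ext_construction} rather than from \cref{liftingtheorem}. For \cref{finite25}, note $13\equiv-2\bmod5$, so a Sylow $5$-subgroup of $\F_4(13)$ is homocyclic of shape $5^2$ (as $\size{13^2+1}_5=5$) and is normalised by the complex reflection group $G_8$, which contains elements of order $12$; hence $B=5^2\rtimes12$, and therefore $H$, embeds in $\F_4(13)$, and is unique up to conjugacy.

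The remaining point, and the main obstacle, is maximality in $\G(13)=\F_4(13)$. I would re-read the list of non-generic primitive candidate socles of \cref{maxsbg1}(v) from \cite{litterick} in characteristic $13$: the group $\PSL_2(13)$ is now \emph{generic}, so it drops out, and the surviving candidates are handled by an order comparison exactly as in \cref{25max}, using that $5^2\mid\size{H}$ while most candidates (for instance $\prescript{3}{}\D_4(2)$) have order prime to $5$. Because $\bb{\F}_4$ has trivial fundamental group, $\overline{\G}=\G$ in odd characteristic and $q=p=13$ is forced, so $H.2$ is maximal in $\F_4(13)$. The genuine subtlety I expect here is confirming that the module action ``$26_1\oplus26_2$ on $L(\bb{\F}_4)$'' and Lie primitivity, established in \cref{Brauer25lemma} via a feasible-character argument that presupposed all elements semisimple, remain valid when $p=13$; this must be read off from \cite{litterick} (whose tables cover $p\ne2,3,5$) or verified directly on the constructed copy, since the ordinary degree-$26$ characters need no longer reduce irreducibly modulo $13\mid26$.
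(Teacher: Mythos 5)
Your proposal is correct and follows essentially the same route as the paper: the paper's proof likewise observes that the lifting lemma is the only step requiring $p\nmid\size{H}$, redoes \cref{25construction} over $\mathbb{F}_{13^4}$, and notes that \cref{conj25}, \cref{triples25} and the arguments of \cref{finite25} and \cref{25max} go through unchanged, with Lie primitivity quoted from \cite[Corollary 3]{litterick}. Your closing worry about the feasible-character argument at $p=13$ is already covered, since \cref{Brauer25lemma} and Litterick's tables are stated for all $p\ne2,3,5$, which includes $p=13$.
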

\begin{proof}
	Lie primitivity is proved in \cite[Corollary 3]{litterick}. 
	
	Since the only step that needed the characteristic not to divide \(\size{H}\) is the use of the lifting lemma, we only need to prove that \(H<\bb{\G}\).
	
	We can replicate \cref{25construction} using a field of order \(13^4\) instead: all the computations can be performed in a similar fashion, leading to \cref{magma25} but in characteristic 13. Furthermore, \cref{conj25}, \cref{triples25}, and the arguments used to prove \cref{finite25} and \cref{25max} still hold.
\end{proof}
\end{section}

\begin{section}{Embedding of \texorpdfstring{\(\PSL_2(27)\)}{PSL(2,27)} in \texorpdfstring{\(\F_4\)}{F4}}
In this section, we consider the embedding of a group \(H\simeq\PSL_2(27)\) into \(\F_4\) in characteristic \(p\ne2,3,7\).

\begin{lemma}\label{Brauer27lemma}
	If \(\bb{\F}_4\) contains a subgroup \(H\simeq\PSL_2(27)\) in characteristic \(p\ne2,3,7\), then \(H\) is Lie primitive, acts irreducibly on the minimal module \(M(\bb{\F}_4)\), and as \(26_1\oplus26_2\) on the adjoint module \(L(\bb{\F}_4)\): there are three such representations, not isomorphic to each other but \(\aut H\text{-conjugate}\).
	
	Furthermore, if \(\chi\) is the character of an embedding of \(H\) into \(\bb{\F}_4\) in the adjoint representation, then \(\chi\) is one of the characters in \cref{Brauer27}, where classes with the same character value have been merged for brevity. Here, \(z_1=1+y_7^3+y_7^4\), \(z_2=1+y_7+y_7^6\), \(z_3=1+y_7^2+y_7^5\), where \(y_7\) is a primitive 7th root of unity; the minimal polynomial of \(z_i\) is \(X^3-X^2-2X+1\). The conjugacy classes of elements of order 7 square between each other in alphabetic order, and a 14-class squares into the 7-class denoted by the same letter.
\end{lemma}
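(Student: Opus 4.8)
The argument runs parallel to the proof of \cref{Brauer25lemma}. The qualitative assertions — that $H$ is forced to be Lie primitive, that it acts irreducibly on the minimal module $M(\bb{\F}_4)$, and that it acts as $26_1\oplus26_2$ on the adjoint module $L(\bb{\F}_4)$ — I would take from \cite[Corollary 3]{litterick} and \cite[Table 5.3.24]{litterick}, exactly as in the $\PSL_2(25)$ case. What is genuinely new relative to that case is the threefold multiplicity, and the plan is to explain it through the arithmetic of the degree-$26$ characters.

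First I would recall the relevant part of the character table of $\PSL_2(27)$ from \cite{atlas}. The degree-$26$ irreducible characters are the discrete-series characters $\chi_\varphi$, indexed by the nontrivial characters $\varphi$ of the non-split torus $C_{14}$ up to inversion, and on an element of order $7$ such a $\chi_\varphi$ takes the value $-(y_7^{\,j}+y_7^{-j})$ for the appropriate $j$. By \cref{minpol} these values are precisely the roots of $X^3-X^2-2X+1$, so they are permuted transitively by the Frobenius $y_7\mapsto y_7^3$, which realises the order-$3$ field automorphism in $\out H$. This is what produces three Galois-conjugate, hence $\aut H$-conjugate, candidate characters rather than the single one seen for $\PSL_2(25)$.

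Next I would pin down $\chi$ itself. Feeding the character table into Litterick's feasible-character routine \cite{litterick}, subject to the constraints on traces of semisimple elements of $\bb{\F}_4$ tabulated in \cite{craven_blueprint} — an involution has trace $-4$ or $20$ on $L(\bb{\F}_4)$, an order-$3$ element has trace $-2$ or $7$, and the admissible traces on elements of orders $7$, $13$, $14$ lie in short explicit lists — I expect to rule out all sums $26_i\oplus26_j$ except the three recorded in \cref{Brauer27}. The value of a surviving $\chi$ on an order-$7$ element is then computed directly: its two constituent values $-(y_7^a+y_7^{-a})$ and $-(y_7^b+y_7^{-b})$, with $\{a,b\}$ the complement of the third index $c$ in $\{1,2,3\}$, add up via $\sum_{k=1}^{6}y_7^k=-1$ to $1+y_7^c+y_7^{-c}=z_c$, matching the table. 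The squaring relations among the $7$- and $14$-classes, and the assignment of the labels to the $z_i$, follow from tracking the action $y_7\mapsto y_7^2$ of squaring on the underlying roots of unity.

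The main obstacle I anticipate is combinatorial rather than conceptual: one must check that every pairing other than the three listed fails at least one of the admissible-trace constraints, and confirm that the three that survive are actually \emph{realised} by an embedding and not merely feasible. The latter is not settled by the character-theoretic argument alone; it is supplied by the explicit construction of $H$ inside $\F_4(k)$ carried out in the remainder of this section, after which \cref{liftingtheorem} transfers the conclusion to all admissible characteristics. Keeping the labelling of the order-$7$ and order-$14$ classes consistent with the irrationalities $z_1,z_2,z_3$ throughout the bookkeeping is the most error-prone part of the verification.
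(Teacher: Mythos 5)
Your proposal follows essentially the same route as the paper: Lie primitivity and the module decompositions from \cite[Corollary 3]{litterick} and Litterick's tables, then the feasible characters pinned down via Litterick's program and the admissible traces of involutions and order-3 elements from \cite{craven_blueprint}; your extra arithmetic explaining how the three Galois-conjugate sums produce the $z_i$ is a welcome elaboration of what the paper leaves implicit. Two minor points: the relevant table for $\PSL_2(27)$ is \cite[Table 5.3.27]{litterick}, not 5.3.24 (that one is the $\PSL_2(25)$ case), and since the lemma is stated conditionally on the existence of an embedding, the feasibility-versus-realisation concern you raise is not actually needed here.
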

\begin{proof}
	The fact that \(H\) may only embed primitively in \(\bb{\F}_4\) is proved in \cite[Corollary 3]{litterick}, and the action on the minimal and adjoint module is described in \cite[Table 5.3.27]{litterick}.
	
	The character values can be deduced using the character table of \(\PSL_2(27)\) \cite{atlas} together with Litterick's program in \cite{litterick}, which allows to compute the feasible characters of the embedding. A computed list of traces of semisimple elements of \(\bb{\F}_4\) is available in \cite{craven_blueprint}.
	
	In particular, semisimple elements of order 2 can only have trace \(-4\) or \(20\) on \(L(\bb{\F}_4)\), and semisimple elements of order 3 can only have trace \(-2\) or \(7\).  We then have that the only possible choice is for \(\chi_i\) to be the sum of two out of the three characters of degree 26 that have value \(-2\) on involutions, and \(-1\) on elements of order 3.
\end{proof}

\begin{table}
	\captionsetup{font=footnotesize}
	\centering
	\begin{tabular}{ccccccccccc}
		\toprule
		Order&1&2&3&\(7_a\)&\(7_b\)&\(7_c\)&13&\(14_a\)&\(14_b\)&\(14_c\)\\
		\midrule
		\(\chi_1\)&52&\(-4\)&\(-2\)&\(z_1\)&\(z_2\)&\(z_3\)&0&\(z_3\)&\(z_1\)&\(z_2\)\\
		\(\chi_2\)&52&\(-4\)&\(-2\)&\(z_2\)&\(z_3\)&\(z_1\)&0&\(z_1\)&\(z_2\)&\(z_3\)\\
		\(\chi_3\)&52&\(-4\)&\(-2\)&\(z_3\)&\(z_1\)&\(z_2\)&0&\(z_2\)&\(z_3\)&\(z_1\)\\			
		\bottomrule
	\end{tabular}
	\caption{The possible characters \(\chi_i\) of \(\PSL_2(27)\) in \(\bb{\F}_4\) in adjoint representation.}\label{Brauer27}
\end{table}

\begin{theorem}\label{th27}
	Let \(H\simeq\PSL_2(27)\) be a subgroup of \(\bb{\G}=\bb{\F}_4\) in characteristic \(p\), and let \(\sigma\) be a Steinberg endomorphism of \(\bb{\G}\) with \(\G=\bb{\G}^{\sigma}\). Suppose \(p\ne2,3,7\), and that \(H\) acts irreducibly on \(M(\bb{\F}_4)\) and as \(26_1\oplus26_2\) on \(L(\bb{\F}_4)\). Then:
	\begin{enumerate}
		\item There is a unique \(\bb{\G}\text{-conjugacy}\) class of subgroups isomorphic to \(H\), which gives rise to two \(\bb{\G}\text{-conjugacy}\) classes of embeddings, \(H\) is Lie primitive, and \(N_{\bb{\G}}(H)=H\).
		\item \(H\) embeds in \(\G=\F_4(q)\), \(q\) a power of \(p\), iff \(f(X)=X^3-X^2-2X+1\) splits over \(\mathbb{F}_q\). In such cases, \(H\) is unique up to \(\G\text{-conjugacy}\).
		\item If \(\overline{\G}\) is an almost simple group with socle \(\G\), then \(N_{\overline{\G}}(H)\) is maximal iff \(\mathbb{F}_q\) is the minimal splitting field for \(f\), in which case either \(\overline{\G}=\G\) and \(N_{\overline{\G}}(H)=H\), or \(\overline{\G}=\G.3\) and \(N_{\overline{\G}}(H)=H.3\).
	\end{enumerate}
\end{theorem}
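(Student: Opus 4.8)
The plan is to follow the nine-step template of \cref{generalstrat}, just as in the proof of \cref{th25}, but with one essential structural change dictated by the prime dividing $q=27$. Since $p\ne3$, the unipotent radical $B_0=B'\simeq3^3$ of the Borel subgroup $B=\gen{u,s}\simeq3^3\rtimes13$ of $H$ consists of semisimple elements of $\bb{\G}$. A trace count using the character of \cref{Brauer27} shows that $B_0$ fixes nothing on $L(\bb{\G})$: by the projection formula used in \cref{conj25}, with $\rho$ the adjoint representation,
\[\dim L(\bb{\G})^{B_0}=\frac{1}{\size{B_0}}\sum_{x\in B_0}\tr\rho(x)=\frac{52+26\cdot(-2)}{27}=0,\]
whereas any toral subgroup fixes at least a Cartan subalgebra, of dimension $\ge4$, by \cref{cartanstab}. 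Hence $B_0$ is \emph{not} toral; by \cref{elementaryabelianthm}(iii) it is the unique (up to conjugacy) non-toral $3^3$, with $N_{\bb{\G}}(B_0)=3^3\rtimes\SL_3(3)$. This is exactly why \cref{ngt} excludes $\PSL_2(27)$: rather than placing $B_0$ in a maximal torus, I would first realise the exotic local subgroup $3^3\rtimes\SL_3(3)$ inside $\F_4(k)$ over a field $k$ containing the $13$th roots of unity with $3\mid\size{k}-1$, then take $s$ to be an element of order $13$ of the $\SL_3(3)$ factor acting irreducibly on $B_0$ (such elements exist because $13\mid\size{\SL_3(3)}$ and $3$ has order $3$ modulo $13$, so $\mathbb{F}_{27}$ is the smallest field over $\mathbb{F}_3$ carrying a primitive $13$th root of unity), and $u$ a nontrivial element of $B_0$ satisfying \eqref{27presentation}(i)-(iii).

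Once $B$ is in place, the construction of $t$ runs as in \cref{th25}. The element $s$ has order $13$ and fixed space of dimension $4$ on $L(\bb{\G})$ (its $12$ nontrivial eigenvalues are the primitive $13$th roots of unity, each with multiplicity $4$, consistent with trace $0$ on the order-$13$ class in \cref{Brauer27}); hence $s$ is regular and $C_{\bb{\G}}(s)=\bb{\T}_s$ is a maximal torus by \cref{regularthm} and \cref{centralisertheoremfinal}. I would diagonalise $s$ via a Chevalley basis (\cref{s_basis}), build the Chevalley involution $e$ inverting $s$ (\cref{s_invert}), write $t=ce$ with $c\in C_{\bb{\G}}(s)$, and solve the linear system of \cref{ut3=1thm} coming from $(ut)^3=1$; I expect a one-dimensional solution space forcing a unique $t$, which a membership test places in $\G$. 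The two orbits of $\gen{s}$ on $B_0\setminus\{1\}$ (each of size $13$, as $s$ acts fixed-point-freely) give two choices of $u$, related by the diagonal automorphism of $H$; arguing as in \cref{notconj25} these yield two embeddings of $B$ that are not \conj{\bb{\G}}, while \cref{elementaryabelianthm}(iii) with the uniqueness of the relevant class of order-$13$ elements gives a single class of $B$, hence a single class of $H$ carrying two embeddings. The lifting lemma \cref{liftingtheorem} then transports this to characteristic $0$ and to any characteristic coprime to $\size{H}$. Finally $C_{\bb{\G}}(H)\le C_{\bb{\T}_s}(s)=1$; since conjugation preserves traces whereas the order-$3$ field automorphism $\phi$ of $H$ cyclically permutes the three order-$7$ trace-values $z_1,z_2,z_3$, no element of $\bb{\G}$ realises $\phi$, and a character argument as in \cref{aut25} excludes the diagonal automorphism ($\PGL_2(27)$). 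This gives $N_{\bb{\G}}(H)=H$ and completes part (i).

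For part (ii), the "only if" direction is immediate: if $H\le\G=\F_4(q)$ then the traces $z_1,z_2,z_3$ of its order-$7$ elements lie in $\mathbb{F}_q$, so $f=X^3-X^2-2X+1$ splits over $\mathbb{F}_q$. Conversely, as $C_{\bb{\G}}(H)=1$ is connected, \cref{finiteconjugates} shows the unique $\bb{\G}$-class of $H$ does not split on passing to any $\G$ it meets; the splitting of $f$ over $\mathbb{F}_q$, which by \cref{minpol}(iv) is exactly $q\equiv0,\pm1\bmod7$, is what provides a $\sigma$-stable copy in $\G$, so $H$ embeds and is unique up to $\G$-conjugacy. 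For part (iii), if $\mathbb{F}_q$ is not the minimal splitting field $\mathbb{F}_{q_0}$ of $f$ then $H\le\F_4(q_0)<\F_4(q)$ lies in a subgroup of the same type as $\bb{\G}$ and $N_{\overline{\G}}(H)$ is not maximal; when $\mathbb{F}_q=\mathbb{F}_{q_0}$, primitivity (\cref{primitivemax}) together with an order check against the non-generic candidates of \cite[Tables 1.2-1.3]{litterick}, exactly as in \cref{25max}, shows $N_{\overline{\G}}(H)$ is maximal. The two regimes are separated by the field automorphism: for $p\equiv\pm1\bmod7$ we have $q_0=p$, $\overline{\G}=\G$ and $N_{\overline{\G}}(H)=H$; for $p\equiv\pm2\bmod7$ we have $q_0=p^3$, and the order-$3$ field automorphism of $\F_4(p^3)$ permutes the three Galois-conjugate copies realising $\chi_1,\chi_2,\chi_3$, so it normalises $H$ and induces $\phi$, giving $\overline{\G}=\G.3$ and $N_{\overline{\G}}(H)=H.3$.

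The main obstacle, and the point at which this case genuinely departs from \cref{th25}, is the explicit construction of the non-toral $3^3\rtimes\SL_3(3)$ and the location of a compatible pair $(u,s)$ inside it: the toral machinery of \cref{ngt} is simply unavailable, so I would instead realise the local subgroup directly, or locate $B_0$ as a non-toral $3^3$ and compute its normaliser, before proceeding. A secondary delicate point is bookkeeping the three Galois-conjugate characters against the single conjugacy class of $H$, since this is precisely what distinguishes the $q=p$ and $q=p^3$ regimes and hence the outcomes $N_{\overline{\G}}(H)=H$ versus $H.3$ in parts (ii) and (iii).
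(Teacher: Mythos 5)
Your overall architecture is the paper's: realise the exotic local subgroup \(3^3\rtimes\SL_3(3)\) to house \(B\) (the non-torality observation via the trace count is correct, and is exactly why the toral machinery of \cref{ngt} cannot be applied directly), take \(s\) of order 13 acting irreducibly on \(B_0\), build the Chevalley involution \(e\), and solve the linear system of \cref{ut3=1thm}. However, there is a genuine gap at the central computational step: you ``expect a one-dimensional solution space forcing a unique \(t\)'', but the solution space is in fact \emph{three}-dimensional, and this is not an accident of the computation --- it reflects the fact that a fixed Borel subgroup \(B\) lies in three distinct subgroups \(H_1,H_2,H_3\simeq\PSL_2(27)\), one realising each of the Galois-conjugate characters \(\chi_1,\chi_2,\chi_3\) of \cref{Brauer27}. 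Indeed a unique \(t\) is inconsistent with the rest of your own argument: \(N_{\bb{\G}}(B)=B.3\) while \(N_{\bb{\G}}(H)=H\) and \(N_H(B)=B\), so the outer element of order 3 normalising \(B\) cannot normalise any single copy of \(H\) and must permute three of them. The paper cuts the three-dimensional space down by imposing the trace values \(z_1,z_2,z_3\) on the order-7 elements \(tus^2,tus^5,tus^6\), obtaining exactly three candidates for \(c\), and then must prove separately (\cref{conj27_1}) that the three resulting subgroups are \(\G\)-conjugate, via a generator of \(N_{\G}(B)/B\) of order 3. Your proposal omits both the extra trace conditions and this fusion argument, and without the latter the count ``exactly one \(\bb{\G}\)-class of subgroups'' in part (i) is not established.

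The same omission undermines parts (ii) and (iii): the paper's argument that \(H<\G(p)\) iff \(f\) splits over \(\mathbb{F}_p\) proceeds by noting that \(F_p\) centralises \(B\) and then acts on the three copies of \(H\) containing \(B\), either normalising one (whence it centralises \(H\)) or permuting them cyclically (whence \(H<\G(p^3)\) and the field automorphism induces the order-3 automorphism of \(H\)). You do invoke ``the three Galois-conjugate copies'' in your final paragraph, but under your construction each \(B\) would support only one copy of \(H\), so the mechanism by which \(F_p\) distinguishes the two regimes is not available. The remaining ingredients --- the exclusion of \(\PGL_2(27)\) and \(\PsL_2(27)\), the uniqueness of \(B\) together with the two non-conjugate pairs \((u,s)\) and \((uu^s,s)\), the lifting lemma, and the maximality check against the candidate socles --- match the paper.
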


The proof will be divided in several steps.

\begin{presentation}
	Consider the following matrices, generating \(\SL_2(27)\):
	\[u\coloneqq\begin{pmatrix}1&1\\0&1\end{pmatrix},\qquad s\coloneqq\begin{pmatrix}\omega^{-1}&0\\0&\omega\end{pmatrix},\qquad t\coloneqq\begin{pmatrix}0&1\\-1&0\end{pmatrix},\]
	where \(\omega\) is a primitive element of \(\mathbb{F}_{27}^{\times}\) with minimal polynomial \(X^3+2X+1\) over \(\mathbb{F}_3\).
	
	We are going to consider a presentation for \(\PSL_2(27)\) whose generators are the images of \(u,s,t\) when taking the quotient by the group of scalar matrices, which we will denote with the same notation, and whose relations are given by \eqref{27presentation}. Observe that \eqref{27presentation}(iii) stems from the fact that \(X^3+X^2+X+2\) is the minimal polynomial of \(\omega^2\) over \(\mathbb{F}_3\).
	
	Furthermore, \(B=\gen{u,s}\simeq3^3\rtimes13\) is a Borel subgroup of \(H\), which acts as \(13\oplus13^*\) on \(M(\bb{\F}_4)\) and as \(13^2\oplus{13^*}^2\) on \(L(\bb{\F}_4)\) and its derived subgroup is the socle \(B'=\gen{u,u^s,u^{s^2}}\simeq3^3\).
\end{presentation}

\begin{construction}\label{27Bconstruction}
	Construction of \(u,s\) in \(\F_4(k)\) for some finite field \(k\).
\end{construction}

Looking at the character values of our specific representation of \(H\), by \cref{minpol} we need \(k=\mathbb{F}_q\) with \(q\equiv\pm1\bmod7\). Since we want \(u\) to lie in a maximal torus, we will take \(\ch k\ne 3\).

While \(B\) can be constructed in any such characteristic, for the follow-up we will need \(s\) to lie in a maximal torus as well, so we take \(\ch k\equiv1\bmod13\). We will deal with the case of \(\ch k=13\) separately later on, as that makes \(s\) unipotent.

Under such premises, the smallest field \(k\) we can consider has order \(547=13\times7\times6+1\); define \(\G\coloneqq\F_4(k)\).

By \cref{elementaryabelianthm}, a \(3^3\) subgroup containing \(u\) cannot be toral, and its normaliser \(N\) in \(\G\) is \(3^3\rtimes\SL_3(3)\), which is unique up to \(\G\text{-conjugacy}\), so we can construct it and then find suitable \(u,s\) in there.

The first step is constructing \(3^3\simeq\gen{u_1,u_2,u_3}\): while it is non-toral, a \(3^2\) subgroup like \(\gen{u_1,u_2}\) is (see \cite[Theorem 7.4]{griess_elementaryabelian}), with \(u_3\) centralising it. Furthermore, it is known that there are three conjugacy classes of elements of order 3 in \(\G\), with centraliser \(3\A_2\A_2\), \(\T_1\B_3\), and \(\T_1\C_3\), the first being the only one whose elements have trace \(-2\) in the adjoint representation (see for example \cite[Table 4.4]{frey}). Therefore, we can take a maximal torus \(\T\) of \(\G\), and by \cref{borelserrethm} we can choose \(u_3\) to be an element of order 3 in \(\widetilde{W}=\gen{n_i:i=1,\ldots,4}\) such that \(\tr u_3=-2\) (by \cref{Brauer27lemma}) and \(u_3\) maps to an element of order 3 of \(W(\F_4)\), as described in \cref{ngt}.

Let \(\zeta\) be a primitive element of \(k^{\times}\), so that \(h_i\coloneqq h(\zeta^{\delta_{ij}})_{j=1}^4\in\GL_{52}(k)\) with \(1\le i\le4\) are generators of \(\T\), and \(h'_i\coloneqq h_i^{182}\) are generators of the Sylow \(3\text{-subgroup}\) of \(\T\). By computing \({h'_i}^{u_3}=\prod_{j=1}^4 {h'_j}^{a_{ij}}\) we obtain a matrix \(A=(a_{ij})\) with coefficients in \(\mathbb{F}_3\) describing the action of \(u_3\) on the Sylow \(3\text{-subgroup}\) of \(\T\).

In order to find \(u_1,u_2\in\T\) centralised by \(u_3\), we can look at the kernel of \(A-\mathbb{1}_4\): we compute that it has dimension 2, so let \(z_i=(z_{ij})_{j=1}^4\) for \(i=1,2\) be a basis for it. We define \(u_i\coloneqq\prod_{j=1}^4{h'_j}^{z_{ij}}\in\GL_{52}(k)\), for \(i=1,2\). By construction, \(u_3\) centralises both \(u_1\) and \(u_2\), and they lie in the torus \(\T\) so they commute with each other; furthermore, the way we chose \(z_1,z_2\) guarantees that \(u_2\ne u_1^2\), so \(E\coloneqq\gen{u_1,u_2,u_3}\simeq3^3\).

It is not obvious how to look for the \(\SL_3(3)\) normalising \(E\). We start by constructing \(N_{\G}(\T)=\gen{h_r(\lambda),n_r:r\in\Pi}\), for some \(\lambda\) primitive element of \(k\), as described in \cref{chevalleygroups}, and we let \magma\ compute \(N_1\coloneqq N_{N_{\G}(\T)}(E)\), which has order \(11664=3^3\times2^4\times3^3\).

As the order suggests, it can be checked that \(N_1\simeq E\rtimes P\), where \(P\) is isomorphic to a maximal parabolic subgroup of \(\SL_3(3)\), that is \(3^2.\GL_2(3)\). In particular, the flag stabilised by \(P\) is \(1<\gen{u_1,u_2}<E\).

Therefore, we can consider a different flag to obtain another maximal parabolic and generate \(N\). To do so, we can find a torus \(\T'\) containing \(u_3\) and repeat the above construction to obtain \(N_2\coloneqq N_{N_{\G}(\T')}(E)\simeq N_1\). For instance, we can find a Chevalley basis corresponding to \(u_3\) as in \cref{s_basis} in order to construct \(\T'\).

We can check that \(N=\gen{N_1,N_2}\simeq3^3\rtimes\SL_3(3)\) by computing the composition factors: this is enough as \(3^3.\SL_3(3)\) is always a split extension, as we can check with \magma\ that \(\dim H^2(\PSL_3(3),M)=0\), where \(M\) is any irreducible \(\mathbb{F}_3\PSL_3(3)\text{-module}\).

The group \(N\) is not very large, so with a random search it is easy to find \(s\in N\) of order 13 and \(1\ne u\in E\subset N\) such that \(s^3u^2s^{-1}us^{-1}us^{-1}u=1\).

\begin{construction}\label{27construction}
	Construction of \(t\in\bb{\G}\).
\end{construction}

We can compute that the fixed point space of \(s\) acting on \(L(\G)\) is 4-dimensional, hence it is a Cartan subalgebra of \(L(G)\) by \cref{cartanstab} and we can apply \cref{cgs}.

We can follow \cref{25construction} verbatim up to solving the system of equations \eqref{tu_eq} obtained by using the relation \((ceu)^3=1\). We obtain again a system of \(4\times52=208\) equations in \(49\) variables, but this time we compute a \(\overline{k}\text{-space}\) of solutions \(M_0=\gen{m_1,m_2,m_3}\) that is \(3\text{-dimensional}\), so we look for additional linear conditions to use.

We can check with \magma\ that \(tus^2\), \(tus^5\), \(tus^6\) represent an element of \(H\) of order 7, a conjugate of its square, and a conjugate of its fourth power, respectively. Looking at \cref{Brauer27}, their traces are some cyclic rearrangement of \(z_1=1+y^3+y^4\), \(z_2=1+y+y^6\), and \(z_3=1+y^2+y^5\), where \(y\) is a primitive 7th root of unity in \(k\).

Since \(c=a_1m_1+a_2m_2+a_3m_3\) for \(a_1,a_2,a_3\in\overline{k}\), we have that \[\tr(tus^j)=\sum_{i=1}^3a_i\tr(m_ieus^j),\] so let \(S\) be a matrix where the row \(i\) has elements \(\tr(m_ieus^j)\), \(j=2,5,6\), let \(C\) be a matrix over \(k\) whose rows are the cyclic permutations of \((z_1,z_2,z_3)\), and let \(A\) be a matrix of indeterminates whose three rows are all \((a_1,a_2,a_3)\). Then the condition on the traces of elements of order 7 given by \cref{Brauer27} translates to
\begin{align*}
z_1&=\tr(tus^2)=\sum_{i=1}^3a_i\tr(m_ieus^2),\\
z_2&=\tr(tus^5)=\sum_{i=1}^3a_i\tr(m_ieus^5),\\
z_3&=\tr(tus^6)=\sum_{i=1}^3a_i\tr(m_ieus^6),
\end{align*}
and the same triple of relations but permuting \(z_1,z_2,z_3\) cyclically.

These three sets of three relations can be written as \(AS=C\), so each row of \(A=CS^{-1}\) contains a triple \((a_1,a_2,a_3)\) that defines the coefficients of \(c\), thus resulting in a set \(U_0\) of three possible values of \(c\).

As all the elements of \(U_0\) are matrices with coefficients in \(k\), we then use the membership test in \cref{membership} to check that all three choices of \(c\in U_0\) lie in \(\G\).

\begin{lemma}\label{magma27}
	If \(t\in\bb{\G}\) is such that \((u,s,t)\) satisfy \eqref{27presentation}, then \(t\in(U_0\cap\bb{\G})e\), where \(e\) is a computed element of \(\G\) and \(U_0\) is a computed set of three elements of \(\G\). In particular, \(t\in\G\).
\end{lemma}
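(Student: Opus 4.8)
The plan is to follow the same logical skeleton as the proof of \cref{magma25}, extracting from the relations \eqref{27presentation} exactly the constraints that were \emph{not} already imposed during the construction of \(u,s\) in \cref{27Bconstruction} and of the candidate list in \cref{27construction}. During those constructions we have already arranged that \(u^3=s^{13}=1\), that relation (iii) holds, and that any admissible \(t\) is sought in the form \(t=ce\) with \(e\) the involution of \cref{s_invert} inverting \(s\). So the substance of the argument is to show that the remaining relations force \(c\) into the three-element set \(U_0\), and then to confirm by the membership test of \cref{membership} that each candidate genuinely lies in \(\G\).

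First I would run the forward argument on the relations. Relations (iv) \(t^2=1\) and (v) \((ts)^2=1\) together give \(s^t=s^{-1}\); since \(e\) also inverts \(s\), the product \(te\) centralises \(s\), so setting \(c\coloneqq te\) we have \(t=ce\) with \(c\in C_{\bb{\G}}(s)\). Next, relation (vi) \((tu)^3=1\) is exactly the hypothesis of \cref{ut3=1thm} applied with \(\bb{X}=C_{\bb{\G}}(s)\): expressing \(c\) in the basis \(\{c_i\}\) of \(\bb{M}(s)\) and ranging the test vectors over a basis of the \(4\)-dimensional fixed space \(L(\bb{\G})^{C_{\bb{\G}}(s)}\), the relation becomes the homogeneous linear system \eqref{tu_eq}, whose solution space is the computed \(3\)-dimensional \(M_0=\gen{m_1,m_2,m_3}\), so \(c\in M_0\). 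Finally I would invoke the character data of \cref{Brauer27lemma}: the elements \(tus^2,tus^5,tus^6\) are order-\(7\) elements whose adjoint traces must be a cyclic rearrangement of \(z_1,z_2,z_3\). Writing \(c=a_1m_1+a_2m_2+a_3m_3\) and using linearity of the trace turns this into the matrix equation \(AS=C\) of \cref{27construction}; provided the computed \(S\) is invertible, each of the three cyclic rearrangements yields a single solution \((a_1,a_2,a_3)\), and these are precisely the rows of \(CS^{-1}\), i.e. the three elements of \(U_0\). Hence \(c\in U_0\), and since \(c=te\in\bb{\G}\) we conclude \(t=ce\in(U_0\cap\bb{\G})e\).

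The last assertion, \(t\in\G\), is purely computational: the three matrices of \(U_0\) have entries in \(k\) by construction, and applying the membership criterion \cref{membershipthm} to each confirms \(U_0\subseteq\G\); together with \(e\in\G\) this gives \(t\in\G\). I would also record that the two relations never used in the construction, namely (ii) and (vii) of \eqref{27presentation}, hold on the constructed triples — this is what is needed for the companion existence statement rather than for the containment above, and is verified directly on the computed matrices. I expect the main obstacle to be precisely the passage from the \(3\)-dimensional space \(M_0\) to the finite list \(U_0\): unlike the \(\PSL_2(25)\) case, where \(M_0\) was already \(1\)-dimensional and the argument of \cref{magma25} closed at once, here one genuinely needs the trace irrationalities of \cref{Brauer27} together with the invertibility of \(S\) to separate the three \(\aut H\)-conjugate characters \(\chi_1,\chi_2,\chi_3\) and so pin \(c\) down to the three candidates.
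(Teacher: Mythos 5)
Your proposal is correct and follows essentially the same route as the paper, whose proof of this lemma is simply the terse statement that it follows from the computer calculations of \cref{27Bconstruction} and \cref{27construction} together with a direct verification of the unused relations (ii) and (vii) on the computed matrices. You have accurately reconstructed the logic implicit in those constructions — reduction to \(t=ce\) with \(c\in C_{\bb{\G}}(s)\) via (iv)–(v), the linear system from (vi) via \cref{ut3=1thm}, and the trace conditions on the order-\(7\) elements to cut the \(3\)-dimensional solution space down to the three candidates in \(U_0\) — including the correct observation that this last step is what distinguishes the case from \(\PSL_2(25)\).
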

\begin{proof}
	This follows from computer calculations, doing \cref{27Bconstruction} and \cref{27construction}. The relations \eqref{27presentation}((ii),(vii)) were not used during the construction of \(u,s,t\), so we verify them using the computed matrices \(u,s,t\).
\end{proof}

We can verify with \magma\ that the three choices of \(t\) give rise to three distinct groups \(H_1, H_2, H_3\) isomorphic to \(\PSL_2(27)\) containing \(B\), so the question is whether they are conjugate.

We now discuss \(\aut H\), and show that the \(H_i\text{'s}\) are \(\G\text{-conjugate}\) to each other.

\begin{proposition}\label{27extensions}
	Neither \(\PGL_2(27)\) nor \(\PsL_2(27)\) embeds in \(\bb{\G}\), hence \(N_{\bb{\G}}(H)=H\).
\end{proposition}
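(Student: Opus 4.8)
The plan is to rule out each possible nontrivial extension of $H$ inside $\bb{\G}$ and then deduce that $N_{\bb{\G}}(H)$ induces no outer automorphism of $H$. Recall that $\out(\PSL_2(27))\cong C_2\times C_3$, generated by the diagonal automorphism $\delta$ (order $2$) and the field automorphism $\phi$ (order $3$); the proper overgroups of $H$ in $\aut H$ are $\PGL_2(27)=H.2$, $\PsL_2(27)=H.3$, and $\PgL_2(27)=H.6$, the last containing the other two. By \cref{Brauer27lemma}, if $H\le\bb{\G}$ then $L(\bb{\F}_4)\restr{H}$ affords one of the characters $\chi_1,\chi_2,\chi_3$ of \cref{Brauer27}, semisimple involutions of $\bb{\G}$ have trace $-4$ or $20$ on $L(\bb{\F}_4)$, and semisimple elements of order $3$ have trace $-2$ or $7$.

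First I would treat $\PsL_2(27)=H.3$. Suppose it embeds in $\bb{\G}$; then some $g\in\bb{\G}$ normalises $H$ and induces $\phi$. Since $g$ stabilises the module $L(\bb{\F}_4)$, conjugation by $g$ fixes the character $\chi_i$ afforded by $L(\bb{\F}_4)\restr{H}$, i.e.\ $\chi_i^{\phi}=\chi_i$. But the three characters form a single $\aut H$-orbit (\cref{Brauer27lemma}), and as the order-$2$ automorphism $\delta$ cannot act transitively on a set of three, the cyclic permutation of $\chi_1,\chi_2,\chi_3$ must be effected by $\phi$; hence $\chi_i^{\phi}\ne\chi_i$, a contradiction. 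So $H.3$ does not embed.

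Next I would treat $\PGL_2(27)=H.2$, where the obstruction is subtler: $\delta$ commutes with $\phi$ and therefore fixes each $\chi_i$, so $L(\bb{\F}_4)\restr{H}$ does extend to an $H.2$-module as far as invariance is concerned, and I must instead compute the trace of the genuinely new involution class. Since $q=27\equiv3\pmod 4$, the element $-1$ is a non-square in $\mathbb{F}_{27}$, so the split involution represented by $\mathrm{diag}(1,-1)$ lies in $\PGL_2(27)\setminus\PSL_2(27)$ and constitutes the outer involution class, while the inner involution in $\PSL_2(27)$ comes from the non-split torus. Each degree-$26$ constituent of $\chi_i$ is a cuspidal (discrete series) character, and its extension to $\PGL_2(27)$ vanishes on split regular semisimple elements, $\mathrm{diag}(1,-1)$ in particular; reading this off the character table of $\PGL_2(27)$ in \cite{atlas} gives $\chi_i(\text{outer involution})=0$. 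As $0\notin\{-4,20\}$, this trace is not admissible, so $H.2$ does not embed.

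Finally, since $\PgL_2(27)$ contains both $\PGL_2(27)$ and $\PsL_2(27)$, it does not embed either, so no nontrivial element of $\out(H)$ is realised by $N_{\bb{\G}}(H)$; that is, $N_{\bb{\G}}(H)/C_{\bb{\G}}(H)\cong\mathrm{Inn}(H)\cong H$. To conclude $N_{\bb{\G}}(H)=H$ I would note that $C_{\bb{\G}}(H)=1$: since $H$ acts irreducibly on the $\dimn{26}$ module $M(\bb{\F}_4)$ (\cref{Brauer27lemma}), Schur's lemma forces any element centralising $H$ to act as a scalar on $M(\bb{\F}_4)$, and the only scalar inside the adjoint (centreless) group $\bb{\F}_4$ is the identity. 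The main obstacle is precisely the $\PGL_2(27)$ case, where the character-invariance argument fails and one must pin down both which involution class is outer (via $q\bmod 4$) and the vanishing of the cuspidal characters on it.
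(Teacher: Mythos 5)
Your proposal is correct, and it splits into one half that matches the paper and one half that does not. For $\PGL_2(27)$ you argue exactly as the paper does: the outer involution class has trace $0$ on $L(\bb{\F}_4)$, which is inadmissible since semisimple involutions of $\bb{\F}_4$ have trace $-4$ or $20$; the paper simply reads the value $0$ off the Atlas character table of $\PSL_2(27).2$, whereas you derive it from the vanishing of cuspidal characters on split regular semisimple elements together with the observation that for $q\equiv3\pmod4$ the outer involution is the split one. (A small caveat: your phrasing assumes the two degree-$26$ constituents extend individually to $H.2$; they do, since $\PGL_2(27)$ has no irreducible of degree $52$, but even if $L(\bb{\F}_4)\restr{H.2}$ were induced the outer coset would still have trace $0$, so nothing is at stake.) For $\PsL_2(27)$ your route is genuinely different: the paper again uses a trace obstruction, namely that $H.3$ has a class of order-$3$ elements with trace $0$ while semisimple order-$3$ elements of $\bb{\F}_4$ must have trace $-2$ or $7$; you instead observe that an element of $\bb{\G}$ inducing $\phi$ would force $\chi_i^{\phi}=\chi_i$, which is impossible because $\out H\simeq C_6$ acts transitively on $\{\chi_1,\chi_2,\chi_3\}$ with point stabiliser $\gen{\delta}$, so $\phi$ permutes the three characters in a $3$-cycle. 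Both arguments are sound; yours has the advantage of needing only \cref{Brauer27lemma} and no further character-table data for the extension, while the paper's is uniform across the two cases. Your closing step, $C_{\bb{\G}}(H)=1$ by Schur's lemma on the irreducible $26$-dimensional module together with the triviality of scalars in $\bb{\F}_4$, correctly supplies a detail the paper leaves implicit in deducing $N_{\bb{\G}}(H)=H$.
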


\begin{proof}
	This follows for example from the argument used in \cref{Brauer27lemma} and the character table of \(\PSL_2(27)\) \cite{atlas}. Observe that:
	\begin{enumerate}
		\item \(\PGL_2(27)\simeq\PSL_2(27).2\) has a class of involutions with trace 0 on both the minimal and the adjoint module, which are not admissible.
		\item \(\PsL_2(27)\simeq\PSL_2(27).3\) has a class of elements of order 3 with trace 0 on both the minimal and the adjoint module, which are not admissible.\qedhere
	\end{enumerate}
\end{proof}

\begin{proposition}\label{conj27_1}
	The groups \(H_1,H_2,H_3\) obtained with \cref{27construction} are all \(\G\)-conjugate to one other.
\end{proposition}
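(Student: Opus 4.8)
The plan is to produce a single element $g\in\G$ of order $3$ that cyclically permutes $H_1,H_2,H_3$ by conjugation; combined with the uniqueness statement of \cref{magma27} and the fact that $N_{\bb{\G}}(H)=H$ from \cref{27extensions}, this forces the three groups into one $\G$-class.

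First I would locate $g$ inside the group $N\coloneqq N_{\G}(E)=3^3\rtimes\SL_3(3)$ constructed in \cref{27Bconstruction}, where $E=B'=\gen{u,u^s,u^{s^2}}\simeq3^3$. The element $s$ has order $13$ and is a Singer cycle of the factor $\SL_3(3)$ acting on $E$. A Sylow count in $\SL_3(3)$ (of order $5616=2^4\cdot3^3\cdot13$) gives $N_{\SL_3(3)}(\gen{s})\simeq 13\rtimes3$, so there is an element $g$ of order $3$ normalising $\gen{s}$ and acting on it as an automorphism of order $3$, i.e. $gsg^{-1}=s^{a}$ with $a\in\{3,9\}$; replacing $g$ by $g^{-1}$ if necessary we may take $gsg^{-1}=s^{3}$. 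Since $g$ also normalises $E$ and $B=E\gen{s}$, we have $gBg^{-1}=(gEg^{-1})(g\gen{s}g^{-1})=E\gen{s}=B$, so $g\in N_{\G}(B)$, and on $B$ the element $g$ realises the restriction of a field automorphism of $H\simeq\PSL_2(27)$ (the one acting entrywise as $x\mapsto x^{3}$, sending $s\mapsto s^{3}$).

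Next I would show $g$ permutes $\{H_1,H_2,H_3\}$ as a $3$-cycle. For each $i$, $gH_ig^{-1}=\gen{B,gt_ig^{-1}}$ is again a copy of $H$ containing the Borel subgroup $B$; by \cref{magma27} every copy of $H$ containing $B$ is completed by one of the three solutions $t_1,t_2,t_3$, so $gH_ig^{-1}\in\{H_1,H_2,H_3\}$. Moreover $g$ fixes none of the $H_i$: if $gH_ig^{-1}=H_i$ then $g\in N_{\G}(H_i)\le N_{\bb{\G}}(H_i)=H_i$ by \cref{27extensions}, whence conjugation by $g$ would be an inner automorphism of $H_i$; but it restricts to $B$ as $s\mapsto s^{3}$, which is outer, since the normaliser of the torus $\gen{s}$ in $\PSL_2(27)$ realises only $s\mapsto s^{-1}$. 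This contradiction gives $g\notin H_i$, so $gH_ig^{-1}\neq H_i$. Therefore $\gen{g}$ acts freely on the three-element set $\{H_1,H_2,H_3\}$, necessarily as a single $3$-cycle, and the three groups are $\G$-conjugate.

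The main subtlety is the outerness step: the whole argument collapses to a $3$-cycle only because $s\mapsto s^{\pm3}$ cannot be induced by an inner automorphism of $\PSL_2(27)$, which is precisely why $N_{\bb{\G}}(H)=H$ (from \cref{27extensions}) may be combined with the uniqueness of completions to rule out fixed points. As an independent check that $g$ induces the field automorphism correctly, I would use the trace bookkeeping of \cref{27construction}: a field automorphism cyclically permutes the triples $\bigl(\tr(tus^2),\tr(tus^5),\tr(tus^6)\bigr)$ through the cyclic rearrangements of $(z_1,z_2,z_3)$ labelling $\chi_1,\chi_2,\chi_3$ in \cref{Brauer27}, so $g$ must send $H_i$ to the group carrying the cyclically shifted character. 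In practice I would confirm with \magma that the element $g$ produced by the normaliser search satisfies $gH_1g^{-1}=H_2$ and $gH_2g^{-1}=H_3$, exhibiting the $3$-cycle explicitly and validating the structural argument.
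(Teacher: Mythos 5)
Your proof is correct and follows essentially the same route as the paper: both exhibit an element of order $3$ in $N_{\G}(B)\smallsetminus B$ and use $N_{\bb{\G}}(H)=H$ together with the classification of completions of $B$ to force a fixed-point-free, hence cyclic, permutation of $\{H_1,H_2,H_3\}$. The only cosmetic differences are that the paper obtains this element by computing $N_{\G}(B)=N_{N}(B)=B.3$ inside $N\simeq3^3\rtimes\SL_3(3)$ with \magma, whereas you produce it structurally from $N_{\SL_3(3)}(\gen{s})\simeq 13\rtimes 3$, and that you rule out fixed points via the outerness of $s\mapsto s^3$ in $\PSL_2(27)$ rather than via the self-normalisation of Borel subgroups ($g\in H_i$ would force $g\in N_{H_i}(B)=B$).
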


\begin{proof}
	Since they all intersect in \(B\), and \(N_{\G}(H)=H\) by \cref{27extensions}, we can look at \(N_{\G}(B)/N_H(B)\), as any element in \(N_{\G}(B)/N_H(B)\) would not lie in \(H\) and therefore would permute the three \(H_i\).
	
	Being a Borel subgroup of \(H\), \(N_H(B)=B\) by \cref{Bselfnormal}. Observe that \[N_{\G}(B)\le N_{\G}(B')=N\simeq3^3\rtimes\SL_3(3),\] where \(N\) is the group obtained in \cref{27Bconstruction}. Therefore, we can use \magma\ to compute \(N_{\G}(B)=N_N(B)=B.3\), and we obtain \(\size{N_{\G}(B)/N_H(B)}=3\). A preimage of a generator of the quotient therefore normalises \(B\) and acts on the three copies of \(H\) without stabilising them; the only option is for it to conjugate them cyclically.
\end{proof}

Note that such an element of order 3 can be obtained easily with \magma\ by constructing the normaliser and quotient described in the proof, and it is straightforward to check that it does indeed conjugate the copies of \(H\) containing the given \(B\) that it normalises.

We proceed by counting conjugacy classes of \(H\) and its embeddings, in a finite or algebraic group.

\begin{lemma}\label{conj27}
	Let \(\overline{u}\) and \(\overline{s}\) be elements of \(\bb{\G}\) that satisfy the relations
	\[\overline{u}^{3}=\overline{s}^{13}=[\overline{u},\overline{u}^{\overline{s}}]=\overline{u}^2\overline{s}^{-1}\overline{u}\overline{s}^{-1}\overline{u}\overline{s}^{-1}\overline{u}\overline{s}^3=1.\]
	Then the pair \((\overline{u},\overline{s})\) is \(\conj{\bb{\G}}\) to either \((u,s)\) or \((uu^s,s)\).
	
	In particular, \(B\) is unique up to \(\bb{\G}\text{-conjugacy}\).
\end{lemma}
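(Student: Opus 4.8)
The plan is to mirror the argument of \cref{conj25}, with the non-toral $3^3$ of \cref{elementaryabelianthm}(iii) playing the role that the regular toral $5^2$ played there. First I would verify, exactly as in the opening of \cref{conj25} and using \cref{27presentationthm}, that the four displayed relations already present $\overline{B}\coloneqq\gen{\overline{u},\overline{s}}\simeq 3^3\rtimes 13$; this is a finite check with \magma{} on the abstract group. In particular $\overline{U}\coloneqq\overline{B}'=\gen{\overline{u},\overline{u}^{\overline{s}},\overline{u}^{\overline{s}^2}}\simeq 3^3$, and since $3$ has multiplicative order $3$ modulo $13$, the element $\overline{s}$ of order $13$ acts \emph{irreducibly} on $\overline{U}$; moreover the relation $\overline{u}^2\overline{s}^{-1}\overline{u}\overline{s}^{-1}\overline{u}\overline{s}^{-1}\overline{u}\overline{s}^3=1$ records that this action has minimal polynomial $m(X)=X^3+X^2+X+2$, the minimal polynomial of $\omega^2$, so that $\overline{s}$ and the $s$ of \cref{27Bconstruction} induce automorphisms of their respective $3^3$'s lying in the same $\GL_3(3)$-class.

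Next I would pin down $\overline{U}$ up to conjugacy. The quotient $N_{\bb{\G}}(\overline{U})/C_{\bb{\G}}(\overline{U})$ contains the image of $\overline{s}$, a Singer cycle of order $13$; by the classification of elementary abelian $3$-subgroups of $\bb{\F}_4$ underlying \cref{elementaryabelianthm}, the only class of $3^3$ whose normaliser quotient has order divisible by $13$ is the non-toral one, for which this quotient is all of $\SL_3(3)$. Hence $\overline{U}$ is non-toral, and by the uniqueness in \cref{elementaryabelianthm}(iii) it is $\bb{\G}$-conjugate to the group $E$ built in \cref{27Bconstruction}; after replacing $(\overline{u},\overline{s})$ by a conjugate I may assume $\overline{U}=E$, so that $\overline{B}\le N_{\bb{\G}}(E)=N\simeq 3^3\rtimes\SL_3(3)$, the very group $N$ constructed there, which also contains $B$.

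Working inside $N$, I would then conjugate $\overline{s}$ onto $s$. Both project to order-$13$ Singer elements of $\SL_3(3)=N/E$; having equal characteristic polynomial on $E$ by the first step, they are $\GL_3(3)$-conjugate, and since the centraliser of a Singer element is the full multiplicative group $\mathbb{F}_{27}^{\times}$, which surjects onto $\GL_3(3)/\SL_3(3)$, they are in fact $\SL_3(3)$-conjugate. Thus some $n\in N$ gives $\overline{s}^{\,n}=vs$ with $v\in E$; because $\overline{s}$ acts fixed-point-freely on $E$ the map $1-s$ is invertible, so $vs$ is $E$-conjugate to $s$ and I may assume $\overline{s}=s$. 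Finally $\overline{u}$ is a non-trivial element of $E$, and $\gen{s}$ has exactly two orbits of length $13$ on $E\setminus\{1\}$; a power of $s$ moves $\overline{u}$ onto an orbit representative, and one checks (with \magma{}, or directly from the fact that $1+\omega^2\notin\gen{\omega^2}$ is a non-square) that $u$ and $uu^s$ represent the two distinct orbits. Hence $(\overline{u},s)$ is $\bb{\G}$-conjugate to $(u,s)$ or $(uu^s,s)$; as $\gen{u,s}=\gen{uu^s,s}=B$, this also yields that $B$ is unique up to $\bb{\G}$-conjugacy.

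The main obstacle is the non-torality step: the assertion that an irreducible order-$13$ action forces $\overline{U}$ into the single non-toral class $E$ genuinely relies on the full classification of $3^3$-subgroups of $\bb{\F}_4$ and their normaliser quotients in \cite{griess_elementaryabelian} (as packaged in \cref{elementaryabelianthm}), rather than on any of the soft torus arguments available for the toral $5^2$ in \cref{conj25}. Once $\overline{U}$ is identified with $E$, everything reduces to bookkeeping inside the finite group $N\simeq 3^3\rtimes\SL_3(3)$.
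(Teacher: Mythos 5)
Your proof is correct and follows essentially the same route as the paper's: reduce to the unique class of non-toral $3^3$ with normaliser $N\simeq3^3\rtimes\SL_3(3)$, conjugate $\overline{s}$ onto $s$ inside $N$, and then observe that the nontrivial elements of $E$ fall into the two $\gen{s}$-orbits represented by $u$ and $uu^s$. The differences are only in the level of detail: you supply by hand (the order-$13$ argument for non-torality, Singer-cycle conjugacy in $\SL_3(3)$ together with the fixed-point-free complement argument, and the orbit count on $E\setminus\{1\}$) the steps the paper delegates to \cref{elementaryabelianthm} and to a \magma{} verification that $N$ has a unique class of $3^3\rtimes13$ subgroups, while the paper's closing appeal to the non-embedding of $\PGL_2(27)$ (\cref{27extensions}) only serves to show $(u,s)$ and $(uu^s,s)$ are \emph{not} conjugate, which the statement as given does not require and which you correctly omit.
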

\begin{proof}
	We can verify with \magma\ that such presentation gives a \(3^3\rtimes13\) group, and as previously discussed, a \(3^3\) elementary abelian subgroup is non-toral: in characteristic not 2 or 3 its normaliser is \(N_{\bb{\G}}(\gen{\overline{u},\overline{s}})\simeq 3^3\rtimes\SL_3(3)\), of which there is only one conjugacy class in \(\bb{\G}\) \cite[Theorem 1]{localmaximal}; note also that \(N_{\bb{\G}}(\gen{\overline{u},\overline{s}})\) contains a unique conjugacy class of subgroups isomorphic to \(3^3\rtimes13\).
	
	Therefore, \(\gen{\overline{u},\overline{s}}\) is conjugate to \(\gen{u,s}\) in \(\bb{\G}\), in particular we may replace \(\overline{u},\overline{s}\) by a conjugate pair such that \(\overline{s}=s\) and \(\overline{u}\) lies in \(\gen{u,u^s,u^{s^2}}\simeq 3^3\).
	
	We can verify with \magma\ that the pairs \((u,s)\) and \((uu^s,s)\) are not \(\gen{u,s,t}\text{-conjugate}\), i.e. \(\PSL_2(27)\text{-conjugate}\), but they are \(\PGL_2(27)\text{-conjugate}\). If they were \(\bb{\G}\text{-conjugate}\) then we would have an extension \(\PGL_2(27)<\bb{\G}\), which is not possible as shown in \cref{27extensions}.
\end{proof}

\begin{lemma}\label{triples27}
	Suppose that \(H\simeq\PSL_2(27)\) is a subgroup of \(\bb{\G}\), and let \(u,s\) be the elements obtained from \cref{27Bconstruction}. Then there are elements \(g,\overline{t}\in\bb{\G}\) such that \(H^g\) is generated by a triple \((u,s,\overline{t})\) that satisfies presentation \eqref{27presentation} for \(H\).
\end{lemma}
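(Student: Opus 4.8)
The plan is to mirror the argument already established for \cref{triples25}, since \cref{conj27} plays exactly the role that \cref{conj25} played there: it pins down the $\bb{\G}$-conjugacy class of a Borel pair up to precisely two possibilities. First I would take the given subgroup $H\simeq\PSL_2(27)$ of $\bb{\G}$ and fix a generating triple $(u_0,s_0,t_0)$ of $H$ realising the abstract presentation \eqref{27presentation}; such a triple exists by \cref{27presentationthm}. Then $(u_0,s_0)$ generates a Borel subgroup of $H$ and satisfies the Borel relations $u_0^3=s_0^{13}=[u_0,u_0^{s_0}]=u_0^2 s_0^{-1}u_0 s_0^{-1}u_0 s_0^{-1}u_0 s_0^3=1$, so \cref{conj27} is applicable to it.

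Next I would invoke \cref{conj27}, which guarantees that $(u_0,s_0)$ is $\conj{\bb{\G}}$ to one of the two standard pairs $(u,s)$ or $(uu^s,s)$. These are distinct $\bb{\G}$-classes, yet the proof of \cref{conj27} shows that $(u,s)$ and $(uu^s,s)$ are $\PGL_2(27)$-conjugate. Transporting this conjugacy back to the abstract group, there is an outer automorphism $\alpha$ of $H$ fixing $s_0$ and sending $u_0$ to $u_0 u_0^{s_0}$. Applying $\alpha$ to the whole triple produces $(u_0^\alpha,s_0^\alpha,t_0^\alpha)=(u_0 u_0^{s_0},s_0,t_0^\alpha)$, which again generates $H$ and, because $\alpha$ is an automorphism preserving the relations, again satisfies \eqref{27presentation}. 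Hence, whichever of the two standard Borel pairs the given $H$ falls onto, I can name a generating triple of $H$ whose Borel pair is $\bb{\G}$-conjugate to $(u,s)$: it is $(u_0,s_0,t_0)$ in the first case and $(u_0 u_0^{s_0},s_0,t_0^\alpha)$ in the second.

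Finally I would conjugate $H$ by the element $g\in\bb{\G}$ furnished by \cref{conj27} that carries the relevant Borel pair onto $(u,s)$. Then $H^g$ is generated by $(u,s,\overline{t})$ with $\overline{t}=t_0^g$ or $\overline{t}=t_0^{\alpha g}$ according to the case, and this triple still satisfies \eqref{27presentation}, which is the assertion. The only point requiring care is the existence of the outer automorphism $\alpha$ with the prescribed action on $(u_0,s_0)$; but this is precisely the $\PGL_2(27)$-conjugacy recorded in the proof of \cref{conj27}, so no genuinely new obstacle appears, and the lemma is in effect a bookkeeping consequence of \cref{conj27}.
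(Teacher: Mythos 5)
Your proposal is correct and follows essentially the same route as the paper: fix a generating triple $(u_0,s_0,t_0)$ realising \eqref{27presentation}, apply the outer automorphism $\alpha$ fixing $s_0$ and sending $u_0\mapsto u_0^{\phantom{s_0}}u_0^{s_0}$ to produce a second such triple, and use \cref{conj27} to conjugate whichever of the two Borel pairs lands on $(u,s)$. The only point glossed over (in both your write-up and the paper's) is that $\alpha$ genuinely swaps the two $\bb{\G}$-classes of Borel pairs rather than preserving them, which follows since $(u,s)$ and $(uu^s,s)$ are $\PGL_2(27)$- but not $\PSL_2(27)$-conjugate.
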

\begin{proof}
	\(H\) is generated by a triple \((u_0,s_0,t_0)\) that satisfies presentation \eqref{27presentation}. Let \(\alpha\) be an outer automorphism of \(H\) that fixes \(s_0\) and maps \(u_0\) to \(u_0u_0^{s_0}\), then \(H\) is generated by \((u_0^{\alpha},s_0^{\alpha},t_0^{\alpha})=(u_0u_0^{s_0},s_0^{\phantom{\alpha}},t_0^{\alpha})\), and such triple satisfies presentation \eqref{27presentation} as well.
	
	By \cref{conj27}, one of \((u_0,s_0)\) and \((u_0^{\alpha},s_0^{\phantom{\alpha}})\) is \(\conj{\bb{\G}}\) to \((u,s)\), hence we may find \(g\in\bb{\G}\) such that \(H^g\) is generated by a triple \((u,s,\overline{t})\) that satisfies presentation \eqref{27presentation}, where \(\overline{t}\) is \(t_0^{\alpha g}\) or \(t_0^g\).
\end{proof}

We can now deduce part (i) of \cref{th27} when \(p\) is 0 or coprime to \(\size{H}\).

By \cref{triples27} and \cref{magma27} we have that there are at most three conjugacy classes of subgroups isomorphic to \(H\) in \(\bb{\G}\), and by \cref{conj27_1} we deduce there is exactly one. By \cref{conj27} we have that for each copy of \(3^3\rtimes 13\) in \(\bb{\G}\) we obtain two embeddings, thus we have two conjugacy classes of embeddings in \(\bb{\G}\). Since the characteristic of \(k\) does not divide \(\size{H}\), we can use \cref{liftingtheorem} to lift this result to characteristic 0 or coprime to \(\size{H}\), namely \(p\ne2,3,7,13\). \cref{27extensions} shows that \(N_{\bb{\G}}(H)=H\). Lie primitivity is proved in \cite[Corollary 3]{litterick}.

In order to complete part (ii), we need to find whether \(H\) embeds in \(\G(q)\), and show that if it does then \(H\) is unique up to conjugacy; we proceed as discussed in \cref{counting}.

\begin{lemma}\label{conj27B}
	If \(\G\) contains a subgroup isomorphic to \(B\), than it is unique up to \(\G\text{-conjugacy}\).
	
	If \(\G=\G(p)\) with \(p\ge5\), then \(\G\) contains a subgroup isomorphic to \(B\).
	
	If \(\G\) contains a subgroup isomorphic to \(H\), than it is unique up to \(\G\text{-conjugacy}\).
\end{lemma}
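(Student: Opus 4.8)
The plan is to dispatch the two uniqueness assertions by the same device used for $\PSL_2(25)$ in \cref{25CTS}, reducing each to a centraliser computation in $\bb{\G}$, and to treat the existence assertion separately via the non-toral $3^3$ and its normaliser. For the uniqueness of $B$, I would first compute $C_{\bb{\G}}(B)$. Since $B'=\gen{u,u^s,u^{s^2}}=O_3(B)\simeq3^3$ is characteristic in $B$, any subgroup of $\bb{\G}$ isomorphic to $B$ has derived group the non-toral $3^3$, call it $E$, whose normaliser is $N_{\bb{\G}}(E)=3^3\rtimes\SL_3(3)$ by \cref{elementaryabelianthm}. As $C_{\bb{\G}}(E)\trianglelefteq N_{\bb{\G}}(E)$ is finite with $N_{\bb{\G}}(E)/C_{\bb{\G}}(E)\hookrightarrow\aut(E)=\GL_3(3)$, and $N_{\bb{\G}}(E)/E\simeq\SL_3(3)$ is simple and acts nontrivially on $E$, I get $C_{\bb{\G}}(E)=E$. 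Then $C_{\bb{\G}}(B)=C_{\bb{\G}}(E)\cap C_{\bb{\G}}(s)=C_E(s)$, and since $s$ has order $13$ it acts on $E\simeq\mathbb{F}_3^3$ with minimal polynomial of degree $3$ (the order of $3$ modulo $13$), hence irreducibly and without nonzero fixed points; therefore $C_{\bb{\G}}(B)=1$.

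For the uniqueness of $H$, I would compute $C_{\bb{\G}}(H)$. By \cref{Brauer27lemma} the group $H$ acts irreducibly on the $26$-dimensional module $M(\bb{\F}_4)$, so any $z\in C_{\bb{\G}}(H)$ acts as a scalar $\lambda$ on $M(\bb{\F}_4)$ by Schur's lemma; since $\bb{\F}_4$ preserves a nondegenerate symmetric bilinear form and a cubic form on $M(\bb{\F}_4)$ we get $\lambda^2=\lambda^3=1$, whence $\lambda=1$, and faithfulness of the action forces $z=1$, so $C_{\bb{\G}}(H)=1$. Both uniqueness statements now follow uniformly: a subgroup of $\G$ isomorphic to $B$ (respectively $H$) is, by \cref{conj27} (respectively by \cref{th27}(i) together with \cref{Brauer27lemma}), $\bb{\G}$-conjugate to the fixed copy $B\le\G$ (respectively $H\le\G$); because $C_{\bb{\G}}(B)=1$ (respectively $C_{\bb{\G}}(H)=1$) is connected, \cref{finiteconjugates} shows that the $\bb{\G}$-conjugates lying in $\G$ form a single $\G$-conjugacy class, giving uniqueness.

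For the existence of $B$ in $\G=\F_4(p)$, the key observation is that, unlike $H$, the group $B=3^3\rtimes13$ has no elements of order $7$, so none of the irrationalities governed by \cref{minpol} appear and no field condition is needed. I would exhibit $B$ inside the exotic $3$-local subgroup: by the finite analogue of \cref{elementaryabelianthm} (via \cite{localmaximal}), $\F_4(p)$ contains a non-toral $3^3$ with normaliser $3^3\rtimes\SL_3(3)$ for $p\ge5$, and a Singer element of order $13$ in $\SL_3(3)$ acts irreducibly on $3^3$, realising a copy of $B$; since we already know $B$ is unique up to $\G$-conjugacy once it exists, this suffices.

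The hard part will be confirming that this $3$-local configuration (equivalently, a copy of $B$) is genuinely defined over $\mathbb{F}_p$ for \emph{every} $p\ge5$, in particular when $p\equiv2\pmod3$, where the Frobenius $F_p$ inverts the order-$3$ elements. The relevant order-$3$ elements are $\mathbb{F}_p$-rational, since on $L(\bb{\F}_4)$ they have trace $-2$ with the two nontrivial eigenspaces of equal dimension; the delicate point is upgrading this to pointwise-rationality of the whole subgroup $3^3\rtimes13$. I expect to extract this from the classification of finite local maximal subgroups \cite{localmaximal} rather than by a direct Lang--Steinberg descent, since the latter is obstructed by the existence of two distinct $\bb{\G}$-classes of generating pairs identified in \cref{conj27}, which can in principle be interchanged by $F_p$ and hence need not descend to a pointwise $F_p$-fixed copy on their own.
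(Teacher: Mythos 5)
Your proof is correct and follows essentially the same strategy as the paper: both uniqueness claims come down to the triviality of the relevant centraliser in $\bb{\G}$ together with \cref{finiteconjugates}, and existence is delegated to the classification of local maximal subgroups in \cite{localmaximal}. The only substantive divergences are in the first and second claims: for the uniqueness of $B$ the paper argues inside the finite group, using that $N_{\G}(B')\simeq3^3\rtimes\SL_3(3)$ is unique up to $\G$-conjugacy and contains a unique class of subgroups isomorphic to $B$, whereas you descend from the algebraic group via $C_{\bb{\G}}(B)=C_E(s)=1$ and \cref{finiteconjugates} (the same device as \cref{25CTS}); and for $C_{\bb{\G}}(H)=1$ the paper simply uses $C_{\bb{\G}}(H)\le C_{\bb{\G}}(B)$, which is also available to you and makes your Schur's-lemma detour through the invariant quadratic and cubic forms on $M(\bb{\F}_4)$ unnecessary (though correct). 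Your closing caution about $\mathbb{F}_p$-rationality of the $3$-local configuration is resolved exactly as you suspect: \cite[Theorem 1]{localmaximal} supplies the subgroup $3^3\rtimes\SL_3(3)<\F_4(p)$ for all $p\ge5$, and a Singer element of order $13$ then yields $B$.
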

\begin{proof}
	Like for the case of the algebraic group, we have that \(N_{\G}(B)\) is unique up to conjugacy; see for example \cite{localmaximal} or \cite{aschbacherE61}.
	
	Since \(N_{\G}(B)\simeq 3^3\rtimes\SL_3(3)\), the result follows as there is a unique conjugacy class of subgroups isomorphic to \(B\) in \(N_{\G}(B)\).
	
	The second claim is proved in \cite[Theorem 1]{localmaximal}.
	
	Since \[C_{\bb{\G}}(H)\le C_{\bb{\G}}(B)\unlhd N_{\bb{\G}}(B)\le N_{\bb{\G}}(B')=N,\] where the inclusion \(N_{\bb{\G}}(B)\le N_{\bb{\G}}(B')\) follows from the fact that \(B'\) is the derived subgroup of \(B\), as observed in the proof of \cref{ngb}, we can see that \(C_{\bb{\G}}(H)\) is trivial. Therefore we can apply \cref{finiteconjugates} to \(H\) and \(\bb{\G}\): let \(q\) be such that \(H\) embeds in \(\G(q)\), then the unique \(\bb{\G}\text{-conjugacy}\) class of subgroups isomorphic to \(H\) does not split in \(\G(q)\), meaning that there is a exactly one \(\G\text{-conjugacy}\) class of subgroups isomorphic to \(H\) in \(\G(q)\).
\end{proof}

\begin{proposition}\label{finite27}
	\(H\) embeds in \(\G(q)\) in characteristic not dividing \(\size{H}\) when \(\mathbb{F}_q\) is a splitting field for \(f(X)=X^3-X^2-2X+1\), i.e. \(q\equiv\pm1\bmod7\).
\end{proposition}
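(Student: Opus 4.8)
The plan is to mirror the strategy used for \cref{finite25}: first place a copy of the Borel $B$ inside $\G$, then determine how the Frobenius endomorphism permutes the copies of $H$ extending it, and read off exactly when one of them is fixed. Since we work in odd characteristic, the only relevant outer automorphism of $\bb{\G}$ is the field automorphism, so it suffices to analyse the action of $\sigma=F_q$.

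First I would record that $\G$ contains a subgroup isomorphic to $B\simeq3^3\rtimes13$. This is the content of \cref{conj27B}: the exotic local subgroup $3^3\rtimes\SL_3(3)$ embeds in $\F_4(p)$ for $p\ge5$ by \cite[Theorem 1]{localmaximal}, and its $\SL_3(3)$ factor contains a Singer cycle of order $13$ acting irreducibly on $3^3$, so $B\le3^3\rtimes\SL_3(3)\le\G(p)\le\G(q)$. As this local subgroup is defined over the prime field, I may take a representative $B_0$ that is $\sigma$-stable, and this holds with no congruence condition on $q$.

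Next I would use the algebraic picture to count the extensions of a fixed Borel and to identify the Frobenius action on them. By \cref{27construction} and \cref{magma27} there are exactly three subgroups $H_1,H_2,H_3\simeq H$ containing $B_0$, distinguished by the three feasible characters $\chi_1,\chi_2,\chi_3$ of \cref{Brauer27}; by \cref{conj27_1} they form a single $\bb{\G}$-class and are permuted cyclically by the order-$3$ element of $N_{\bb{\G}}(B_0)/B_0$. Because $\sigma$ fixes $B_0$, it permutes the set $\{H_1,H_2,H_3\}$, and since these are separated precisely by the traces $z_1,z_2,z_3$ of their order-$7$ elements, the induced permutation coincides with the Galois action of $\sigma$ on $\{z_1,z_2,z_3\}$. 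Writing $z_1=1+y_7^3+y_7^4$, $z_2=1+y_7+y_7^6$, $z_3=1+y_7^2+y_7^5$ with $y_7$ a primitive $7$th root of unity, the endomorphism $\sigma$ acts by $y_7\mapsto y_7^q$.

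Finally I would carry out the elementary period computation: each $z_i$ has the form $1+y_7^{\,j}+y_7^{-j}$, so the substitution $y_7\mapsto y_7^{\pm1}$ fixes every $z_i$, whereas $y_7\mapsto y_7^{\pm2}$ or $y_7\mapsto y_7^{\pm3}$ cycles them. Hence $\sigma$ fixes each $z_i$, and therefore each $H_i$, precisely when $q\equiv\pm1\bmod7$; in that case $H_i=H_i^{\sigma}\le\G(q)$, yielding the desired embedding, and this is exactly the splitting condition of \cref{minpol} for $f(X)=X^3-X^2-2X+1$. Uniqueness up to $\G$-conjugacy then follows from \cref{finiteconjugates} together with the triviality of $C_{\bb{\G}}(H)$ established in \cref{conj27B}. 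The step requiring the most care is the identification of the permutation action of $\sigma$ on the three extensions $H_i$ with its Galois action on the periods $z_i$; once this correspondence is justified, the congruence condition drops out of the period calculation above.
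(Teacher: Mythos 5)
Your overall strategy is the paper's: fix a $\sigma$-stable copy of $B$, observe that exactly three copies of $H$ contain it, and decide when the Frobenius fixes one of them; the congruence computation at the end is also correct. But the step you yourself flag as the delicate one --- identifying the permutation that $\sigma$ induces on $\{H_1,H_2,H_3\}$ with the Galois action on $\{z_1,z_2,z_3\}$ --- is justified by a false premise. The three subgroups are \emph{not} ``separated by the traces $z_1,z_2,z_3$ of their order-$7$ elements'': every copy of $\PSL_2(27)$ contains all three conjugacy classes $7_a,7_b,7_c$, so each $H_i$ contains order-$7$ elements of trace $z_1$, of trace $z_2$, and of trace $z_3$. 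Nor are they ``distinguished by the three feasible characters'': the $\chi_i$ of \cref{Brauer27} form a single $\aut H$-orbit, and by \cref{conj27_1} the $H_i$ are $\G$-conjugate, so no conjugation-invariant datum can tell them apart. As written, nothing ties the permutation of the set $\{H_1,H_2,H_3\}$ to the Galois action on the roots of $f$, and this is exactly the point on which the whole proposition turns.

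The gap can be closed in two ways. The paper argues group-theoretically: if the Frobenius $F_p$ stabilises some $H_i$ then, since the only automorphism of $H$ centralising $B$ is trivial, $F_p$ centralises $H_i$, so $H_i\le\G(p)$ and in particular $z_1,z_2,z_3\in\mathbb{F}_p$; if instead $F_p$ cycles the three copies, composing with the order-$3$ element of $N_{\G}(B)$ from \cref{conj27_1} gives an automorphism of $H_1$ acting on $B$ as an outer automorphism of order $3$, hence (up to inner automorphisms) the field automorphism of $H$, which permutes the three characters of \cref{Brauer27} nontrivially; since conjugation preserves traces while $F_p$ raises them to the $p$-th power, this forces $z_i^p\ne z_i$, i.e.\ $f$ irreducible over $\mathbb{F}_p$. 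Alternatively, your trace argument can be repaired by applying it not to conjugacy classes of order-$7$ elements but to the specific words $t_ius^2$ of \cref{27construction}: with $u,s$ fixed by $\sigma$, the map $i\mapsto\tr(t_ius^2)$ is a bijection onto $\{z_1,z_2,z_3\}$, and $\tr((t_ius^2)^{\sigma})=\tr(t_ius^2)^p$ then gives precisely the identification you want. Either repair yields the stated congruence.
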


\begin{proof}
	We need to understand what is the action of outer automorphisms of \(\bb{\G}\) on \(H\); since we are in odd characteristic, we need to consider only the field automorphism.
	
	Take the Frobenius morphism \(F_p\); since by \cref{conj27B} there is only one conjugacy class of \(H\) in \(\bb{\G}\) then there is no choice for \(F_p\) but to permute the \(\bb{\G}\text{-conjugates}\) of \(H\). We then assume that \(H\) is normalised by \(F_p\) and we have to check whether \(H\) embeds in \(\G(p)\) or \(\G(p^2)\). 
		
	Since by \cref{conj27B} the unique \(\bb{\G}\text{-conjugacy}\) class of \(B\) lies in \(\G\), \(F_p\) centralises \(B\). Now we look at the action of \(F_p\) on the three copies of \(H\) containing \(B\):
	\begin{enumerate}
		\item If \(F_p\) normalises at least one of them, then \(F_p\) acts as an automorphism of \(H\) that centralises \(B\), and the only such element of \(\aut H\) is trivial. Thus, \(F_p\) centralises \(H\).
		\item If \(F_p\) permutes them, then \(F_p\) induces the field automorphism of \(H\) hence permutes the three representations in \cref{Brauer27}, and this can happen only if \(f\) does not split over \(\mathbb{F}_p\).
	\end{enumerate}
	In particular, \(H<\G(p)\) if \(f\) splits over \(\mathbb{F}_p\), and \(H<\G(p^3)\) if it does not.
\end{proof}

\begin{proposition}\label{27max}
	\(H\) is a maximal subgroup of \(\G(q)\), \(q\) a power of \(p\) coprime to \(\size{H}\), iff \(\mathbb{F}_q\) is the minimal splitting field of characteristic \(p\) for \(f(X)=X^3-X^2-2X+1\).
\end{proposition}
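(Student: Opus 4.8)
The plan is to mirror the strategy of \cref{25max}, exploiting the fact that $H$ is primitive in $\bb{\G}$ together with \cref{primitivemax}, and to combine this with a subfield-subgroup argument for the converse. First I would record that $N_{\G}(H)=H$: since $N_{\bb{\G}}(H)=H$ by \cref{27extensions} and $H\le\G$, we have $N_{\G}(H)=N_{\bb{\G}}(H)\cap\G=H$. Writing $\mathbb{F}_{q_0}$ for the minimal splitting field of $f(X)=X^3-X^2-2X+1$ in characteristic $p$, \cref{finite27} and \cref{minpol} show that $H$ embeds in $\G(q)$ if and only if $\mathbb{F}_{q_0}\subseteq\mathbb{F}_q$; so the equivalence reduces to treating the two cases $\mathbb{F}_q=\mathbb{F}_{q_0}$ and $\mathbb{F}_{q_0}\subsetneq\mathbb{F}_q$.

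Suppose first $\mathbb{F}_q=\mathbb{F}_{q_0}$, so that $H$ embeds but in no proper subfield subgroup. Since $H=N_{\G}(H)$ is a primitive simple subgroup, \cref{primitivemax} gives that either $H$ is maximal in $\G$ or it lies in a subgroup $M$ for which one of \cref{maxsbg1}(ii),(iii),(v) holds. Case (iii) is impossible, as $\bb{\G}=\bb{\F}_4$ is not of type $\E_8$. For case (ii), in odd characteristic any subgroup of the same type as $\bb{\G}$ is a subfield subgroup $\F_4(q_1)$ with $\mathbb{F}_{q_1}\subsetneq\mathbb{F}_q$ (the twisted type $\prescript{2}{}\F_4$ occurring only in characteristic $2$); but $H<\F_4(q_1)$ would force $f$ to split over $\mathbb{F}_{q_1}$ by \cref{finite27}, contradicting the minimality of $\mathbb{F}_{q_0}=\mathbb{F}_q$. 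For case (v) I would argue exactly as in \cref{25max}: the socle $K=F^*(M)$ must be one of the primitive simple groups listed in \cite[Table 1.2-1.3]{litterick}, and using the Atlas \cite{atlas} and \magma\ one checks that none of them contains a copy of $H\simeq\PSL_2(27)$, whose order is $\size{H}=2^2\cdot3^3\cdot7\cdot13$. Hence $H$ is maximal in $\G$.

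Now suppose $\mathbb{F}_{q_0}\subsetneq\mathbb{F}_q$. Then the subfield subgroup $\F_4(q_0)$ is a proper subgroup of $\G=\F_4(q)$, and $H$ embeds in $\F_4(q_0)$ by the definition of $\mathbb{F}_{q_0}$. By \cref{conj27B} there is a unique $\G$-conjugacy class of subgroups isomorphic to $H$, so every copy of $H$ in $\G$ is $\G$-conjugate to one contained in the proper subgroup $\F_4(q_0)$; in particular $H$ is not maximal in $\G$. Together with the previous paragraph this establishes the stated equivalence.

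The main obstacle is the elimination of case (v). A pure order count is not decisive here: among the candidate socles the group $K=\prescript{3}{}\D_4(2)$ has order divisible by every prime dividing $\size{H}$, so ruling out an embedding $\PSL_2(27)\le K$ requires appealing to the known subgroup structure (maximal subgroups and element orders) of $K$ rather than to divisibility alone, and this is the step I expect to demand the most care.
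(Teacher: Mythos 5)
Your proposal is correct and follows essentially the same route as the paper: primitivity plus \cref{primitivemax} reduces the problem to excluding subfield subgroups (handled via \cref{finite27} and minimality of the splitting field) and almost simple subgroups with primitive simple socle from \cite[Table 1.2-1.3]{litterick}, where you rightly single out $K=\prescript{3}{}\D_4(2)$ as the one candidate not eliminated by order considerations alone. The paper resolves that case exactly as you anticipate, by appealing to the known subgroup structure of $\prescript{3}{}\D_4(2)$ (computed in \magma\ from its permutation representation on 819 points, or read off from its list of maximal subgroups in the Atlas).
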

\begin{proof}
	Since \(H\) is primitive in \(\bb{\G}\), and \(N_{\G}(H)=H\), as observed in \cref{primitivemax} we only need to verify that \(H\) does not lie in any other candidate maximal subgroup.
	
	By \cref{finite27} we must have that \(q=p\) or \(q=p^3\), depending on whether \(f\) splits over \(\mathbb{F}_p\) or not, otherwise \(H<\G(p)<\G(q)\). Since \(\G\) is \(\F_4\) in odd characteristic, this is the only possibility for \(H\) to lie in a group of the same type as \(\bb{\G}\).
		
	Furthermore, \(H\) cannot lie in an almost simple subgroup \(M<\G\) whose socle \(K\) is a primitive simple subgroup of \(\bb{\G}\) not isomorphic to \(H\). The candidates for \(K\) are the groups in \cite[Table 1.2-1.3]{litterick}; for \(p\ne2,3,5\) they are: \(\PSL_2(r)\) for \(r=7, 8, 9, 13, 17, 25\), \(\PSL_3(3)\), and \(\prescript{3}{}\D_4(2)\).
	
	We can find their order using the Atlas \cite{atlas} or \magma, and for almost all of them it is straightforward to verify that \(H\) cannot be a subgroup of \(M\). The only non-obvious case is \(K=\!\prescript{3}{}\D_4(2)\), as \(\size{H}\) divides \(\size{M}\) in that case. However, we can compute the subgroups of \(K\) with \magma\ to see that \(H\) cannot lie in \(M\) (we used the permutation representation of \(K\) on 819 points given in \cite{atlas_online}); alternatively, this can be deduced from the list of maximal subgroups of \(M\), which is included in the Atlas.
\end{proof}

\cref{th27}(iii) then follows from \cref{27max} in characteristic coprime to \(\size{H}\): if \(q=p\) then we are in odd characteristic so \(\overline{\G}=\G\), while if \(q=p^3\) then either \(\overline{\G}=\G\) and \(N_{\overline{\G}}(H)=H\), or \(\overline{\G}=\G.3\) and as observed in the proof of \cref{finite27} we have that the field automorphism of \(\G\) induces the field automorphism of \(H\), hence \(N_{\overline{\G}}(H)=H.3\).

Can we say anything about the embedding in characteristic dividing \(\size{H}\) in this case?

As usual, we do not consider characteristic 2 or 3 as the technique we use cannot really work in such cases.

As for characteristic 7, we can still apply \cref{27Bconstruction}, since the \(3^3\rtimes\SL(3,3)\) subgroup can be found in the same way. The issue is that when looking for \(t\) in \cref{27construction}, we cannot use information about elements of order 7, since they all have trace 0 and do not add any additional condition, so we cannot reduce the linear system any more, leaving a 3-dimensional space of solutions. It is possible to reduce the problem to 2 dimensions, but the field size required to diagonalise \(s\) (\(7^{12}\)) is too large to find solutions in a reasonable time.

\begin{remark}
	This case was solved in \cite{craven_preprint}.
\end{remark}

The case of characteristic 13 is also interesting, as \(s\) is not semisimple but unipotent. We can still use \cref{27Bconstruction}, but we cannot use \cref{centralisertheorem} to obtain \(C_{\bb{\G}}(s)\), nor \cref{s_invert} to construct \(e\), so \cref{27construction} has to be changed accordingly.

\begin{construction}\label{27construction_13}
	Construction of \(t\in\G(q)\), \(q=13\).
\end{construction}

As previously mentioned, \(u\) and \(s\) can be obtained as in the coprime characteristic case.

The first difference is in constructing a suitable \(C(s)\le C_{\G}(s)\le C_{\bb{\G}}(s)\), so that we can use the general technique described in \cref{CGs}.

Observe that \(s\) is a regular unipotent element, hence its centraliser in \(\bb{\G}\) is a 4-dimensional subgroup, and in \(\G(q)\) it is an elementary abelian subgroup of order \(q^4\). Since \(p=13\) is coprime with 12, there is a unique conjugacy classes of centralisers of regular unipotent elements (see for example \cite[Tables 22.1.4 and 22.2.4]{liebeckseitz_unipotent}).
	
If we have \(s\) as a product of root elements (that is, a product of terms of type \(x_{\alpha}(t)\) where \(\alpha\) is a root and \(t\in k\)), then one could use for example \cite{centraliserunipotent} to find generators of \(C_{\bb{\G}}(s)\). Note that, while we constructed \(s\) by a random search in a matrix group, \magma\ should be able to recover its decomposition as a product of root elements by taking the preimage under the adjoint representation map; this does not work in general for any element of \(\GL_{52}(k)\), but it did for us in this specific case.
	
Regardless, we proceeded in the following way to find \(C_{\G}(s)\simeq q^4\):
\begin{enumerate}
	\item Compute the fixed point space of \(s\) on \(\mathcal{L}\), which is \(4\text{-dimensional}\).
	\item For each element of the fixed point space, compute its adjoint action on \(\mathcal{L}\).
	\item Compute their exponentiation, discard any element that fails the membership test for \(\G\); there are \(13^3\) elements left.
	\item Initialise \(C_{\G}(s)\) by taking \(C\coloneqq\gen{s}\).
	\item Select a random element among those computed: if it does not lie in \(C\), add it to the set of generators.
	\item Repeat the process until \(C\) has 4 generators.
	\item Confirm that \(C\simeq q^4\) and centralises \(s\), therefore it is \(C_{\G}(s)\).
\end{enumerate}

Now we need to find an involution \(e\) that inverts \(s\).

We can find an element \(w\) of order 3 that normalises \(C_{\G}(s)\) in the \(3^3\rtimes\SL_3(3)\) subgroup that we already have; in particular \(w\) permutes the three copies of \(H\) that lie on the given \(B\). Then we can locate \(e\) in \(C_{\G}(w)\), in particular we can find exactly one: if there were more than one, then their product would lie in \(C_{\G}(s)\cap C_{\G}(w)=1\), a contradiction.

Since \(w\) is semisimple, we can find a maximal torus containing it. We can also compute the fixed point space on its action on \(\m{L}\) which is an \(\A_2\A_2\) subalgebra and construct the corresponding centraliser which is of type \(\A_2\widetilde{\A}_2\) \cite[Table 4.4]{frey}; here \(\widetilde{\A}_2\) is simply used to distinguish the two copies of \(\A_2\). The involution \(e\) either lies in one of the \(\A_2\) or is a product of an involution in \(\A_2\) and one in \(\widetilde{\A}_2\), and we can exclude at least one of the three cases since each of these groups has a unique such class of involutions and we know its trace is \(-4\). The groups small enough that it is feasible to compute the required conjugacy class of involutions, and scan through it in order to find the one we need.

We can now proceed as usual, and solve the system of linear equations \eqref{ut3=1} to find three solutions corresponding to involutions that extend \(B\) to \(H\). Similarly to \cref{conj27_1}, we have that they must be conjugate to each other by an element of order 3 that normalises \(B\), like \(w\).

\begin{proposition}
	In characteristic 13, there is a unique \(\bb{\G}\text{-conjugacy}\) class of subgroups isomorphic to \(H\), \(H\) is Lie primitive, and \(N_{\bb{\G}}(H)=H\). \(H\) embeds in \(\G(k)\) for any field \(k\) of characteristic 13, and is unique up to conjugacy.
\end{proposition}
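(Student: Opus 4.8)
The plan is to reproduce the argument of \cref{th27}(i)--(ii) almost verbatim, the only essential change being that the lifting lemma \cref{liftingtheorem} is unavailable (since \(13\mid\size{H}\)) and that the construction of \(t\) must be replaced by the unipotent version carried out in \cref{27construction_13}. First I would record that \(13\equiv-1\bmod7\), so by \cref{minpol}(iv) the polynomial \(f(X)=X^3-X^2-2X+1\) splits over \(\mathbb{F}_{13}\), and indeed over every field \(\mathbb{F}_{13^r}\) (as \(13^r\equiv\pm1\bmod7\) for all \(r\)); this is exactly what will make the embedding exist for all fields of characteristic \(13\). The elements \(u,s\) are produced as in \cref{27Bconstruction}, and \cref{27construction_13} furnishes the involutions \(t\), establishing the direct analogue of \cref{magma27}: every \(t\in\bb{\G}\) completing \((u,s)\) to a copy of \(H\) lies in \((U_0\cap\bb{\G})e\subseteq\G\), and there are exactly three such completions, giving groups \(H_1,H_2,H_3\le\F_4(13)\). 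In particular \(H\) embeds in \(\bb{\G}\) in characteristic \(13\).

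Next I would bound and then pin down the number of \(\bb{\G}\)-classes. \cref{conj27} and \cref{triples27} require only that the characteristic be different from \(2,3\) (they rest on the non-torality of \(3^3\), the uniqueness up to conjugacy of \(N_{\bb{\G}}(3^3)\simeq3^3\rtimes\SL_3(3)\) from \cite[Theorem 1]{localmaximal}, and the non-existence of \(\PGL_2(27)<\bb{\G}\)), so they hold at \(13\); together with the char-\(13\) analogue of \cref{magma27} they yield at most three \(\bb{\G}\)-conjugacy classes of copies of \(H\). The order-\(3\) element \(w\) constructed in \cref{27construction_13} normalises \(B\) and cyclically permutes \(H_1,H_2,H_3\), so these are \(\G\)-conjugate, hence \(\bb{\G}\)-conjugate: this is the char-\(13\) analogue of \cref{conj27_1}, and it collapses the three classes to one. \cref{27extensions}, valid for characteristic \(\ne2,3,7\) and hence at \(13\), gives \(N_{\bb{\G}}(H)=H\), and Lie primitivity is \cite[Corollary 3]{litterick}.

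Finally I would transfer the result to the finite groups. As in the proof of \cref{conj27B}, the chain \(C_{\bb{\G}}(H)\le C_{\bb{\G}}(B)\le N_{\bb{\G}}(B)\le N_{\bb{\G}}(B')=N\) forces \(C_{\bb{\G}}(H)=1\); being trivial it is in particular connected, so \cref{finiteconjugates} shows that the unique \(\bb{\G}\)-class of \(H\) does not split over any \(\G=\bb{\G}^{\sigma}\). Combined with the splitting of \(f\) over every \(\mathbb{F}_{13^r}\) noted above, this gives that \(H\) embeds in \(\F_4(k)\) for every field \(k\) of characteristic \(13\), unique up to \(\G\)-conjugacy.

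The main obstacle is entirely contained in \cref{27construction_13}: because \(s\) is a regular unipotent element rather than a semisimple one, the tools of \cref{centralisertheorem} and \cref{s_invert} no longer apply, so one must instead build \(C_{\G}(s)\simeq 13^4\) by exponentiating the fixed-point space of \(s\) on \(\m{L}\) and locate the inverting involution \(e\) inside \(C_{\G}(w)\) (a subgroup of type \(\A_2\widetilde{\A}_2\)) for the order-\(3\) normaliser \(w\). Once that construction is in hand, every subsequent step is a characteristic-robust reuse of the coprime-characteristic lemmas, with no appeal to \cref{liftingtheorem}.
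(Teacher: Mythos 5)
Your route is essentially the paper's: reuse \cref{27Bconstruction}, substitute \cref{27construction_13} for the semisimple construction of \(t\), then rerun \cref{conj27}, \cref{conj27_1}, \cref{triples27}, \cref{27extensions} and the \(C_{\bb{\G}}(H)=1\) argument of \cref{conj27B}, avoiding \cref{liftingtheorem}. The observation that \(13^r\equiv(-1)^r\bmod 7\), so the relevant cubic splits over every field of characteristic \(13\), is also correct and matches the paper's conclusion that \(H\) embeds over every such field.

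However, there is one genuine gap: you assert ``the direct analogue of \cref{magma27}'' --- that \emph{every} \(t\in\bb{\G}\) completing \((u,s)\) to a copy of \(H\) lies in the computed set \((U_0\cap\bb{\G})e\subseteq\G\) --- without justifying the step that makes this true, and it is precisely the step that does not carry over from the coprime case. The whole method finds \(c\) inside the linear span \(\bb{M}(s)\) of \(C_{\bb{\G}}(s)\), but all computations are performed in the span \(M(s)\) of a subgroup defined over \(k=\mathbb{F}_{13}\); one must prove \(\dim\bb{M}(s)=\dim M(s)\) so that a \(k\)-basis of \(M(s)\) is an \(\overline{k}\)-basis of \(\bb{M}(s)\) and the three solutions found over \(k\) exhaust the solutions over \(\overline{k}\). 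In the coprime case this is \cref{cgs} together with \cref{upperbound}, both of which rest on \(s\) being regular \emph{semisimple}; for the regular unipotent \(s\) of characteristic \(13\) the module \(\overline{\m{L}}\) admits no useful direct-sum decomposition under \(C_{\bb{\G}}(s)\), so \cref{upperbound} only yields the useless bound \(52^2\). The paper closes this gap by a different argument: since \(s\) is regular unipotent and \(p\ne2,3\), one has \(\mathcal{L}(C_{\bb{\G}}(s))=C_{\overline{\m{L}}}(\mathcal{L}(s))\), whose dimension is \(49=1+4\cdot 12\) from the Jordan form of \(s\) (four blocks of size \(13\)), and this matches the computed \(\dim M(s)=49\). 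Without some such argument you cannot conclude that the involutions extending \(B\) to \(H\) in \(\bb{\G}\) all lie in \(\G\), and hence you cannot bound the number of \(\bb{\G}\)-classes by three. The rest of your write-up is sound once this is supplied.
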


\begin{proof}
	Lie primitivity is proved in \cite[Corollary 3]{litterick}.
	
	Since the only step that needed the characteristic of the field not to divide \(\size{H}\) is using the lifting lemma, we only need to prove that there is a unique conjugacy class of \(H<\bb{\G}\).
	
	We can replicate \cref{25construction} using a field of order \(13\), as shown in \cref{27construction_13}.
	
	We now need to replicate \cref{magma27} in this case. Observe that \(C_{\bb{\G}}(s)\) acts irreducibly on \(\overline{\m{L}}\), so if we tried to estimate \(\dim\bb{M}(s)\) like in the other cases we would get \(52^2\), while we can check that \(\dim M(s)=49\).
	
	However, since \(s\) is regular and we are not in characteristic 2 or 3, we can use the fact that \(\mathcal{L}(C_{\bb{\G}}(s))=C_{\m{\overline{L}}}(\mathcal{L}(s))\) (see \cite[Corollary 5.2]{lieunipotent}) to deduce that the dimension of \(\mathcal{L}(C_{\bb{\G}}(s))\) is \(49=1+12\times4\), since the centraliser is made of unipotent elements whose Jordan form has 4 blocks of size 13. Here, \(\mathcal{L}(s)\) denotes the element of \(\mathcal{L}\) corresponding to \(s\in\bb{\G}\); recall that the adjoint representation \(\ad\) of a Lie algebra and the adjoint representation \(\Ad\) of a group are related through the exponential map \(\exp\) \eqref{exp}.
	
	Therefore \(\dim M(s)=\dim\bb{M}(s)\), which means that the involutions that extend \(B\) to \(H\) in \(\bb{\G}\) lie in \(\G\).
	
	We can now proceed in a similar fashion as in the general case.	In particular, \cref{conj25}, \cref{triples25}, and the argument used in \cref{finite25} still hold in characteristic 13.
\end{proof}

\end{section}

\begin{section}{Embedding of \texorpdfstring{\(\PSL_2(37)\)}{PSL(2,37)} in \texorpdfstring{\(\E_7\)}{E7}}
	
In this section, we consider the embedding of a group \(H\simeq\PSL_2(37)\) into a group of type \(\E_7\) in characteristic \(p\ne2,3,19,37\).

\begin{lemma}
	If \(\bb{\E}_7\) contains a subgroup \(H\simeq\PSL_2(37)\) in characteristic \(p\ne2,3,19,37\), then \(H\) is Lie primitive, acts as \(18\oplus38\) on the minimal module \(M(\bb{\E}_7)\), and as \(19\oplus38_1\oplus38_2\oplus38_3\) on the adjoint module \(L(\bb{\E}_7)\): there are two such representations, not isomorphic to each other but \(\aut B\text{-conjugate}\), where \(B\) is a Borel subgroup of \(H\), of the form \(37\rtimes 18\).
	
	Furthermore, if \(\chi\) is the character of an embedding of \(H\) into \(\E_7\) in the adjoint representation, then \(\chi\) is one of the characters in \cref{Brauer37}, where classes with the same character value have been merged for brevity. Here, we have that
	\begin{align*}
	z_{37}=1&+y^2+y^5+y^6+y^8+y^{13}+y^{14}+y^{15}+y^{17}+y^{18}+\\
	&+y^{19}+y^{20}+y^{22}+y^{23}+y^{24}+y^{29}+y^{31}+y^{32}+y^{35},
	\end{align*}
	where \(y\) is a primitive 37th root of unity; the minimal polynomial of \(z_{37}\) is \(X^2-X-9\).
\end{lemma}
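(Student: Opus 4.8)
The plan is to follow the template used for \cref{Brauer25lemma} and \cref{Brauer27lemma}, adapted to the pair $(\bb{\E}_7,\PSL_2(37))$ and its $133$-dimensional adjoint module. First I would read off the qualitative data — Lie primitivity, and the composition factors $18\oplus38$ on $M(\bb{\E}_7)$ and $19\oplus38_1\oplus38_2\oplus38_3$ on $L(\bb{\E}_7)$ — from Litterick's classification, the primitivity being \cite[Corollary 3]{litterick} and the module shapes coming from the relevant feasibility table of \cite{litterick}. A point needing care here, which distinguishes this case from the $\F_4$ ones, is the action of the centre: the $56$-dimensional module is a module for the simply connected cover on which $Z(\bb{\E}_{7,\sc})=\{\pm1\}$ acts as $-1$, so its constituent of dimension $18=(37-1)/2$ is a \emph{faithful} half-character of $\SL_2(37)$, whereas on the adjoint module the centre acts trivially and the half-character occurring has dimension $19=(37+1)/2$ and factors through $\PSL_2(37)$. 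Since $p\nmid\size{H}$, Maschke's theorem guarantees complete reducibility, so these restrictions really are direct sums of the stated irreducibles.

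Next I would pin down $\chi$ by feeding the ordinary character table of $\PSL_2(37)$ \cite{atlas} into Litterick's feasible-character routine \cite{litterick}, together with the computed list of admissible traces of semisimple elements of $\bb{\E}_7$ from \cite{craven_blueprint}. The element orders in $\PSL_2(37)$ are $1,2,3,6,9,18,19,37$, and the engine of the argument is that the trace on $L(\bb{\E}_7)$ of a semisimple element of each of these orders is confined to a short explicit list. Writing $\chi=\psi_{19}+\psi^{(1)}_{38}+\psi^{(2)}_{38}+\psi^{(3)}_{38}$, with $\psi_{19}$ one of the two degree-$19$ half-principal-series characters and the $\psi^{(j)}_{38}$ among the eight degree-$38$ principal-series characters, the trace constraints must reduce the roughly $\binom{8}{3}\cdot2$ a priori combinations to the two characters recorded in \cref{Brauer37}. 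The hard part will be exactly this elimination: unlike the $\F_4$ cases there are many admissible-looking triples of degree-$38$ constituents, so one has to organise the trace conditions — especially at the classes of order $19$ and $37$, where the irrationalities live — carefully enough that only the tabulated combinations survive.

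Finally I would verify the arithmetic underlying the table. The quantity $z_{37}$ is $1$ plus a quadratic Gaussian period $\eta=\sum_e y^e$ over the eighteen quadratic residues (equivalently non-residues) modulo $37$; since $37\equiv1\pmod4$ the two periods are the roots of $X^2+X-9$ (because $(1-37)/4=-9$), whence $z_{37}=1+\eta$ is a root of $X^2-X-9$, giving the asserted minimal polynomial and matching the splitting criterion of \cref{minpol}. To finish, I would identify the two feasible characters as the images of one another under the outer (diagonal) automorphism of $\PSL_2(37)$, which interchanges the two degree-$19$ characters and permutes the degree-$38$ ones; restricting this action to the Borel subgroup $B\simeq37\rtimes18$ displays the two representations as $\aut B$-conjugate, as claimed.
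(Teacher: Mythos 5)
Your proposal follows essentially the same route as the paper: primitivity and the module shapes are quoted from \cite[Corollary 3]{litterick} and the relevant feasibility table of \cite{litterick}, the character is pinned down by running the ATLAS table through Litterick's feasible-character machinery against the admissible semisimple traces of \cite{craven_blueprint}, and your Gaussian-period verification that \(z_{37}\) has minimal polynomial \(X^2-X-9\) is correct. The one point the paper flags that you omit is that the trace list of \cite{craven_blueprint} is computed for \(\bb{\E}_{7,\sc}\), so when bounding the traces of involutions of the adjoint group \(\bb{\E}_{7,\ad}\) one must also admit the traces of order-\(4\) elements of \(\bb{\E}_{7,\sc}\) (which map to involutions under the isogeny); overlooking this would give the wrong admissible set at order \(2\) and could derail the elimination step you describe.
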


\begin{proof}
	The fact that \(H\) may only embed primitively in \(\bb{\E}_7\) is proved in \cite[Corollary 3]{litterick}, and the action on the minimal and adjoint module is described in \cite[Table 5.5.97]{litterick}.
	
	The character values can be deduced using the character table of \(\PSL_2(37)\) \cite{atlas} together with Litterick's program in \cite{litterick}, which allows to compute feasible characters of the embedding. A computed list of traces of semisimple elements of \(\bb{\E}_{7,\sc}\) is available in \cite{craven_blueprint}. Observe that since we are working with \(\bb{\E}_{7,\ad}\), for the traces of involutions we also need to consider traces of elements of order 4 of \(\bb{\E}_{7,\sc}\).
		
	In particular, since the character of degree 19 can be any of the two possible ones, the only feasible traces on \(L(\bb{\E}_7)\) of semisimple elements of order 2 and 3 we can obtain are \(-7\) and \(-2\), respectively, by taking the three characters of degree 38 that have value \(-2\) on involutions and \(-1\) on elements of order 3.
\end{proof}

\begin{table}
	\captionsetup{font=footnotesize}
	\centering
	\begin{tabular}{cccccccccc}
		\toprule
		Order&1&2&3&6&9&18&19&\(37_a\)&\(37_b\)\\
		\midrule
		\(\chi_1\)&133&\(-7\)&\(-2\)&2&1&\(-1\)&0&\(3+z_{37}\)&\(4-z_{37}\)\\
		\(\chi_2\)&133&\(-7\)&\(-2\)&2&1&\(-1\)&0&\(4-z_{37}\)&\(3+z_{37}\)\\
		\bottomrule
	\end{tabular}
	\caption{The possible characters \(\chi_i\) of \(\PSL_2(37)\) in \(\E_7\) in adjoint representation.}\label{Brauer37}
\end{table}

\begin{theorem}\label{th37}
	Let \(H\simeq\PSL_2(37)\) be a subgroup of the adjoint group \(\bb{\G}=\bb{\E}_7\) in characteristic \(p\), and let \(\sigma\) be a Steinberg endomorphism of \(\bb{\G}\) with \(\G=\bb{\G}^{\sigma}\). Suppose \(p\ne2,3,19,37\), and that \(H\) acts as \(18\oplus38\) on \(M(\bb{\E}_7)\), and as \(19\oplus38_1\oplus38_2\oplus38_3\) on \(L(\bb{\E}_7)\). Then:
	\begin{enumerate}
		\item There is a unique \(\bb{\G}\text{-conjugacy}\) class of subgroups isomorphic to \(H\), which gives rise to two \(\bb{\G}\text{-conjugacy}\) classes of embeddings, \(H\) is Lie primitive, and \(N_{\bb{\G}}(H)=H\).
		\item \(H\) embeds in \(\G=\G(q)\), \(q\) a power of \(p\), iff \(f(X)=X^2-X-9\) splits over \(\mathbb{F}_q\). In such cases, there are two conjugacy classes of \(H\). If \(\G\) is simple, there are twice as many classes.
		\item If \(\overline{\G}\) is an almost simple group with socle \(\G\), then \(N_{\overline{\G}}(H)\) is maximal iff either \(\overline{\G}=\G\) and \(\mathbb{F}_q\) is the minimal splitting field for \(f\), in which case \(N_{\overline{\G}}(H)=H\), or \(\mathbb{F}_p\) is not a splitting field, \(q=p^2\), and \(\overline{\G}=\G.2\) is an extension by a field automorphism, in which case \(N_{\overline{\G}}(H)=H.2\simeq\PGL_2(37)\).
	\end{enumerate}
\end{theorem}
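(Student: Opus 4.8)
The plan is to follow the nine-step strategy of \cref{generalstrat}, reusing almost verbatim the machinery built for the two $\bb{\F}_4$ cases and adapting it to the adjoint group $\bb{\G}=\bb{\E}_{7,\ad}$. I take the presentation \eqref{37presentation} of $H\simeq\PSL_2(37)$, with Borel subgroup $B=\gen{u,s}\simeq 37\rtimes 18$ whose socle $B'=\gen{u}\simeq 37$ is cyclic, hence supersoluble; by \cref{borelserrethm} $B'$ lies in a maximal torus, so I may look for $u$ inside a maximal torus $\bb{\T}$ and for $s$ in $N_{\bb{\G}}(\bb{\T})$. Averaging the character $\chi$ of \cref{Brauer37} over $\gen{s}$ and over $\gen{u}$ shows that each of $s$ and $u$ fixes a $7$-dimensional subspace of $L(\bb{\E}_7)$, so by \cref{cartanstab} and \cref{regularthm} both are \emph{regular}, with $C_{\bb{\G}}(s)^{\circ}=\bb{\T}$. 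Concretely I would pick $s$ to be the preimage in $N_{\G}(\bb{\T}^{\sigma})$ of a Coxeter element of $W(\E_7)$ (order equal to the Coxeter number $h=18$, a single conjugacy class, regular and with finite $C_{\bb{\T}}(s)$ since none of its eigenvalues is $1$), over a finite field $k$ containing $37$th and $18$th roots of unity, and recover $u\in\bb{\T}^{\sigma}$ with $u^s=u^4$ by the rational-canonical-form procedure of \cref{ngt}.

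Next I would construct the final involution $t$ exactly as in \cref{27construction}. Because $s$ is regular, \cref{cgs} applies: $C(s)=\bb{\T}^{\sigma}$ and $C_{\bb{\G}}(s)$ stabilise the same subspaces of $\overline{\m{L}}$, so the action $0^{7}\oplus 1^{126}$ together with \cref{upperbound} forces $\dim\bb{M}(s)=\dim M(s)=127$ and a common basis. Writing $t=ce$ with $e$ an involution inverting $s$ and $c\in C_{\bb{\G}}(s)$, the relation $(tu)^3=1$ gives, by \cref{ut3=1thm}, a homogeneous system of $7\times133$ equations in $127$ unknowns; I would then cut the solution space down using the trace values of the order-$37$ elements $tus^j$, which by \cref{Brauer37} must be the algebraic conjugates $3+z_{37}$ and $4-z_{37}$, where $z_{37}$ is a root of $f(X)=X^2-X-9$, precisely mirroring the role of $z_1,z_2,z_3$ for $\PSL_2(27)$. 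Finally I would verify with the membership test of \cref{membership} that the resulting candidate(s) for $c$, and hence $t$, lie in $\G$.

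\textbf{The main obstacle} I anticipate is twofold and specific to $\bb{\E}_{7,\ad}$. First, since $\bb{\G}$ is adjoint its derived group is not simply connected, so $C_{\bb{\G}}(s)$ may be disconnected even for regular $s$; I must check that the disconnected part does not enlarge $\bb{M}(s)$ beyond the span of $\bb{\T}^{\sigma}$, so that the rationality conclusion $\dim M(s)=\dim\bb{M}(s)$, and thus $t\in\G$, still goes through. Second, the passage from the algebraic count to the finite count is governed by \cref{finiteconjugates} via $C_{\bb{\G}}(H)$, and here the order-$2$ centre of $\bb{\E}_{7,\sc}$ intervenes — involutions of $\bb{\G}$ lift to order-$4$ elements of $\bb{\E}_{7,\sc}$, as already flagged in the character computation — which is exactly what doubles the number of finite classes relative to the single algebraic class, with a further doubling for the simple group via the diagonal automorphism. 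Pinning down the precise structure of $C_{\bb{\G}}(H)$ so that \cref{finiteconjugates} yields the stated two $\G$-classes is the delicate bookkeeping step.

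With these in hand the remaining parts are routine. For (i), $B$ is unique up to $\bb{\G}$-conjugacy by \cref{Tconjugacy}, using that $B'$ is regular, that $C_{\bb{\T}}(s)$ is finite (\cref{finitecentraliser}), and that the Coxeter class of $W(\E_7)$ is unique; the two embeddings arise from the two $\out(H)$-choices of generator, and $N_{\bb{\G}}(H)=H$ follows by excluding $\PGL_2(37)$ and $\PsL_2(37)$ from the trace data of \cref{Brauer37}, as for $\PSL_2(25)$ and $\PSL_2(27)$. \cref{liftingtheorem} then lifts everything to characteristic $0$ or coprime to $\size{H}$. For (ii), by \cref{minpol} the relevant character value is realised over $\mathbb{F}_q$ iff $f$ splits there, and a Frobenius argument shows $H<\G(p)$ when $f$ splits over $\mathbb{F}_p$ and otherwise $H<\G(p^2)$ with $F_p$ inducing the diagonal automorphism of $H$. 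For (iii), $H$ being primitive, \cref{primitivemax} reduces maximality of $N_{\overline{\G}}(H)$ to excluding containment in a subfield subgroup of the same type (ruled out over the minimal splitting field) and in the normaliser of another primitive simple subgroup from \cite[Tables 1.2--1.3]{litterick}, a finite check on orders with any borderline case settled in \magma; the field-automorphism case gives $N_{\overline{\G}}(H)=H.2\simeq\PGL_2(37)$. I would close, as in the earlier sections, by recording what can and cannot be said when $p$ divides $\size{H}$.
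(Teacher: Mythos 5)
Your outline follows the paper's own route almost step for step: the same presentation and Borel subgroup $B=\gen{u,s}\simeq37\rtimes18$, the same use of \cref{borelserrethm}, the regularity of $u$ and $s$ (the trace averages $(133+18\cdot7)/37=7$ and $126/18=7$ are exactly the paper's justification via \cref{cartanstab}), the system of $7\times133$ equations from $(tu)^3=1$, uniqueness of $B$ via \cref{Tconjugacy} and the single order-$18$ class in $W(\E_7)$, exclusion of $\PGL_2(37)$ from \cref{Brauer37}, the lifting lemma, and the $|C_{\bb{\T}}(s)|=2$ bookkeeping for the finite count. The one methodological difference is cosmetic: to extract $t$ from the $2$-dimensional solution space the paper brute-forces scalars using $\det(aM+bN)=1$ and the membership test, whereas you propose the order-$37$ trace conditions in the style of the $\PSL_2(27)$ case; both should isolate the same two-element set $U_0$. (A small slip: since $37$ is prime, $\PsL_2(37)=\PSL_2(37)$, so only $\PGL_2(37)$ needs excluding.)

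There is, however, one genuine gap. Your count for part (i) attributes the factor of two entirely to the $\out(H)$-action on generating triples, i.e.\ to the non-conjugacy of $(u,s)$ and $(u^2,s)$. But the construction produces \emph{two} solutions for $t$ over the \emph{same} fixed pair $(u,s)$, hence two subgroups $H_1,H_2$ containing the same $B$, and nothing you have written rules out that these lie in distinct $\bb{\G}$-classes — in which case you would get two classes of subgroups rather than one, and four classes of embeddings. The paper closes this with a separate fusion argument (\cref{37fusion}): since $N_{\bb{\G}}(H)=H$ and $N_H(B)=B$ by \cref{Bselfnormal}, and since $B'=\gen{u}$ is supersoluble so \cref{ngb} gives $N_{\G}(B)\le N_{\G}(\T)$, one computes $N_{\G}(B)=B.2$; the outer involution normalises $B$ but cannot normalise either $H_i$, so it must swap $H_1$ and $H_2$, forcing a single class of subgroups. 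Without this step the statement ``there is a unique $\bb{\G}$-conjugacy class of subgroups isomorphic to $H$'' is not established. Your two flagged obstacles are reasonable but are not where the real work lies: for the first, $s$ being regular means $C_{\bb{\G}}(s)^{\circ}=\bb{\T}$ and the span of $C_{\bb{\G}}(s)$ is still that of the torus, which is what the paper's dimension count $127$ uses; for the second, the doubling in the finite group comes from $|C_{\bb{\T}}(s)|=2$ fed into \cref{finiteconjugates}, exactly as you describe.
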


The proof will be divided in several steps.

\begin{presentation}
	Consider the following matrices, generating \(\SL_2(37)\): \[u\coloneqq\begin{pmatrix}1&1\\0&1\end{pmatrix},\qquad s\coloneqq\begin{pmatrix}19&0\\0&2\end{pmatrix},\qquad t\coloneqq\begin{pmatrix}0&1\\-1&0\end{pmatrix}.\] We are going to consider a presentation for \(\PSL_2(37)\) whose generators are the images of \(u,s,t\) when taking the quotient over scalar matrices, which we will denote with the same notation, and whose relations are given by \eqref{37presentation}.
	
	Furthermore, \(B=\gen{u,s}\simeq37\rtimes18\) is a Borel subgroup of \(H\), and its derived subgroup is \(B'=\gen{u}\simeq37\).
\end{presentation}

\begin{construction}\label{37Bconstruction}
	Construction of \(u,s\) in \(\E_{7,\ad}(k)\) for some finite field \(k\).
\end{construction}

Note that we want a field that contains the character values in \cref{Brauer37}, \(u\) to be semisimple, and while not required to construct \(B\), for further computations later on we also need \(s\) to be diagonalisable, so we take \(k\) to contain 18th roots of unity. Therefore, the minimum field \(k\) we can use has order \(5329=73^2\); observe that it is congruent to 1 modulo 4, 18, and 37. Define \(\G\coloneqq\E_{7,\ad}(k)\).

By \cref{borelserrethm}, since \(B\) is supersoluble, there exists a maximal torus \(\T\) of \(\G\) such that its normaliser \(N_{\G}(\T)\) contains a subgroup isomorphic to \(B\). In particular, we will look for \(u\) in \(\T\), and for \(s\) in \(N_{\G}(\T)\).

We can proceed as described in \cref{ngt}, like we did in \cref{25Bconstruction}. We compute \(\widetilde{W}=\gen{n_i:i=1,\ldots,7}\), and verify that \(\widetilde{W}=T_{2,1}.W\), then we take random elements of \(\widetilde{W}\) until we find an element of order 18 whose image in \(W\) has order 18.

We can check with \magma\ that the Weyl group of \(\E_7\) has only one class of elements of order 18, so \(s\) must lie in the correct class. We can compute that \(\size{C_{\T}(s)}=2\), so there are two \(\T\text{-conjugacy}\) classes of \(s\) in \(\gen{\T,s}\). This will be addressed in \cref{37Bfinite}.

Let \(\zeta\) be a primitive element of \(k^{\times}\), so that \(h_i\coloneqq h(\zeta^{\delta_{ij}})_{j=1}^7\in\GL_{133}(k)\) with \(1\le i\le 7\) are generators of the torus, and \(h'_i\coloneqq h_i^{144}\) are generators of the Sylow \(37\text{-subgroup}\) of the torus. By computing \({h'_i}^s=\prod_{j=1}^7{h'_j}^{a_{ij}}\) we obtain a matrix \(A=(a_{ij})\) with coefficients in \(\mathbb{F}_{37}\) describing the action of \(s\) on \(T\).

Since we have \(u^s=u^4\), we can look at the kernel of \(A-4\mathbb{1}_7\), which is generated by a vector, say \(z=(z_j)_{j=1}^7\). We define \(u\coloneqq\prod_{j=1}^7{h'_j}^{z_j}\in\GL_{133}(k)\), then by construction \(u,s\) satisfy \(u^{37}=s^{18}=u^su^{-4}=1\).

\begin{construction}\label{37construction}
	Construction of \(t\in\bb{\G}\).
\end{construction}

We can compute that the fixed point space of \(s\) acting on \(L(G)\) is 7-dimensional, hence it is a Cartan subalgebra of \(L(G)\) by \cref{cartanstab}. Therefore, we can apply \cref{cgs} and follow \cref{25construction} verbatim up to solving the system of equations \eqref{ut3=1} obtained by using the relation \((ceu)^3=1\). Naturally, this time we will work in \(M_{133}(k)\) instead, and we have \(M(s)\) and \(\bb{M}(s)\) of dimension 127, corresponding to the sum of 126 one-dimensional root spaces and a Cartan subalgebra.

We obtain a system of \(7\times133=931\) equations in \(127\) variables, but the space of solutions is \(2\text{-dimensional}\), meaning that we have not obtained enough independent equations.

Call \(M,N\) two matrices spanning the space of solutions, then we shall see that there are two pairs of coefficients \((a,b)\), \(a,b\in\overline{k}\), such that \(aM+bN\) lies in \(\bb{\G}\) and solves the linear system. 

Since the space is only \(2\text{-dimensional}\), it is possible to brute-force the problem. We use the fact that the determinant of matrices in \(\bb{\G}\) is 1 to reduce the number of membership tests to perform; it is easy to verify that \(a,b\ne0\), so we have:
\[1=\det(aM+bN)=a^{133}\det\biggl(M+\frac{b}{a}N\biggr).\]
Initialise an empty set \(U_0\) of solutions. Then, for each \(i\in k\):
\begin{enumerate}
	\item Compute \(M+iN\).
	\item If \(d\coloneqq\det(M+iN)\ne0\), find all the \(c\in k\) such that \(dc^{133}=1\).
	\item For all such \(c\), check whether \(c(M+iN)\in\G\) using the membership test described in \cref{membership}.
	\item If \(c(M+iN)\in\G\), add \(cM+ciN\) to \(U_0\). 
\end{enumerate}
In particular, \(U_0\) contains precisely two matrices, both defined over \(k\).

\begin{lemma}\label{magma37}
	If \(t\in\bb{\G}\) is such that \((u,s,t)\) satisfy \eqref{37presentation}, then \(t\in(U_0\cap\bb{G})e\), where \(e\) is a computed element of \(\G\) and \(U_0\) is a computed set of two elements of \(\G\). In particular, \(t\in\G\).
\end{lemma}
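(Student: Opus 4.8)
The plan is to reduce the statement, exactly as in the proofs of \cref{magma25} and \cref{magma27}, to the explicit computations carried out in \cref{37Bconstruction} and \cref{37construction}, and then to verify the two relations of the presentation that were never imposed during those constructions. First I would argue structurally that any admissible $t$ has the claimed shape. Suppose $t\in\bb{\G}$ is such that $(u,s,t)$ satisfy \eqref{37presentation}. The relations $t^2=(st)^2=1$ force $t=ce$, where $c\in C_{\bb{\G}}(s)$ and $e$ is the fixed involution inverting $s$ produced in \cref{s_invert}. Since the fixed space of $s$ on $L(\bb{\G})$ is $7$-dimensional, it is a Cartan subalgebra by \cref{cartanstab}; hence $s$ is regular, so $C_{\bb{\G}}(s)$ is a maximal torus by \cref{regularthm} and \cref{centralisertheoremfinal}, and by \cref{cgs} the linear spans satisfy $\bb{M}(s)=M(s)\otimes_k\overline{k}$, of dimension $127$. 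Thus $c$ lies in a $\overline{k}$-span admitting a basis defined over $k$. Imposing the remaining relation $(ut)^3=1$ and applying \cref{ut3=1thm} places $c$ in the solution space of the linear system \eqref{ut3=1}, which is computed to be the $2$-dimensional plane $\langle M,N\rangle_{\overline{k}}$.

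It then remains to intersect this plane with $\bb{\G}$. This is precisely where the $37$ case differs from the $25$ case, where the solution space was $1$-dimensional and the intersection was automatically a single point, and from the $27$ case, where trace data on order-$7$ elements cut the space down further. Here I would run the determinant-driven search of \cref{37construction}: using the homogeneity $\det(aM+bN)=a^{133}\det\!\bigl(M+(b/a)N\bigr)$ together with the membership test of \cref{membership}, scan the directions $i=b/a$ and solve for the admissible scalar in each, obtaining the set $U_0$ of admissible matrices $c$. One checks that $|U_0|=2$ and that both matrices are defined over $k$. Consequently $t=ce$ with $c\in U_0\cap\bb{\G}$, and since both elements of $U_0$ have entries in $k$ and lie in $\bb{\G}$, they lie in $\bb{\G}^{\sigma}=\G$; as $e\in\G$ as well, we conclude $t\in\G$.

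The main obstacle, and the point requiring genuine care, is completeness of this search over the algebraic closure: a priori a solution $aM+bN\in\bb{\G}$ could have $b/a\in\overline{k}\setminus k$, which a scan over $i\in k$ would not detect. I would resolve this with a Galois-plus-counting argument. Since $u$, $s$, $e$, $M$, and $N$ are all defined over $k$, the Frobenius $\sigma$ stabilises $\bb{\G}$ and fixes the plane $\langle M,N\rangle$, hence permutes the finite solution set $Z=\langle M,N\rangle_{\overline{k}}\cap\bb{\G}$; so any non-$k$-rational point of $Z$ would occur in a nontrivial $\sigma$-orbit. The feasible-character analysis underlying \cref{Brauer37} shows there are exactly two admissible characters $\chi_1,\chi_2$ for the embedding, bounding the number of $\bb{\G}$-extensions of $B$ to a copy of $H$ by two; this matches the two $k$-rational solutions already found, forcing $Z$ to consist of exactly these and leaving no room for Galois-conjugate points outside $k$. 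Finally, because relations (ii) and (vii) of \eqref{37presentation} were never used in constructing $u$, $s$, $t$, I would verify them directly on the computed matrices, which completes the proof.
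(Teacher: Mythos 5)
Your computational core reproduces the paper's argument exactly: the paper's own proof of \cref{magma37} is simply ``this follows from computer calculations, doing \cref{37Bconstruction} and \cref{37construction}'', followed by a direct verification of relations (ii) and (vii) of \eqref{37presentation} on the computed matrices, and your reduction of $t$ to the form $ce$ with $c$ in the $2$-dimensional solution space $\gen{M,N}$, found by the determinant scan and the membership test, is precisely what the paper does. You have also correctly identified the one point the paper leaves implicit: the scan runs over $i=b/a\in k$ and $c\in k$, whereas the lemma asserts that \emph{every} $t\in\bb{\G}$, i.e.\ every solution over $\overline{k}$, lies in $U_0e$.

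The problem is that your patch for this does not work. First, the finiteness of $Z=\gen{M,N}_{\overline{k}}\cap\bb{\G}$ (or of the subset cut out by the remaining relations) is asserted rather than proved, and the Galois argument needs it. More seriously, the counting step fails: the two feasible characters $\chi_1,\chi_2$ of \cref{Brauer37} bound the number of possible \emph{character values} of an embedding, not the number of subgroups $H$ containing $B$ or of admissible triples $(u,s,t)$. Indeed $\chi_1$ and $\chi_2$ differ only on the two classes of elements of order $37$, and the trace of the fixed matrix $u$ already determines which value occurs; consequently the two elements of $U_0$ that the computation does find give embeddings with the \emph{same} character (the two characters instead correspond to the two embeddings $(u,s)$ versus $(u^2,s)$ of \cref{conj37} and \cref{triples37}). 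So ``two characters'' cannot force $\size{Z}\le2$, and your argument would leave room for further $\sigma$-orbits of non-rational solutions. A correct completion would have to bound $\size{Z}$ by other means --- for instance by noting that each solution determines a distinct subgroup $H$ containing $B$ (since $C_{\aut H}(B)=1$, the involution $t$ is unique inside a given copy of $H$) and then bounding the number of such subgroups, or by carrying out the determinant computation genuinely over $\overline{k}$ (e.g.\ by factorising $\det(M+XN)-$constants over $k[X]$ rather than evaluating at $X=i\in k$). As written, the step you yourself flag as ``the main obstacle'' is not closed.
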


\begin{proof}
	This follows from computer calculations, doing \cref{37Bconstruction} and \cref{37construction}. The relations \eqref{37presentation}((ii),(vii)) were not used during the construction of \(u,s,t\), so we verify them using the computed matrices \(u,s,t\).
\end{proof}

We can verify with \magma\ that the two choices of \(t\) give rise to two distinct groups \(H_1,H_2\) isomorphic to \(\PSL_2(37)\) containing \(B\), so the question is whether they are conjugate.

We now discuss \(\aut H\), and show that \(H_1\) and \(H_2\) are \(\G\text{-conjugate}\) to each other.

\begin{proposition}\label{37extension}
	\(\PGL_2(37)\) does not embed in \(\bb{\G}\), hence \(N_{\bb{\G}}(H)=H\).
\end{proposition}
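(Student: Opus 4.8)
The plan is to show that no element of $\bb{\G}$ can induce the outer (diagonal) automorphism of $H$; since $\out(\PSL_2(37))=C_2$ is generated by this automorphism, the only almost simple overgroup of $H$ with socle $H$ is $\PGL_2(37)=H.2$, so ruling out such an element both proves $\PGL_2(37)\not\le\bb{\G}$ and, together with $C_{\bb{\G}}(H)=1$, gives $N_{\bb{\G}}(H)=H$. There is no field automorphism to consider, since $37$ is prime.

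The obstruction I would use is purely character-theoretic, exactly as in \cref{aut25} and \cref{27extensions}. The diagonal automorphism $\tau$ of $\PSL_2(37)$ fuses the two unipotent classes $37_a$ and $37_b$ into a single class of $\PGL_2(37)$; equivalently, an element realising $\tau$ would have to conjugate a representative of $37_a$ to one of $37_b$. However, from \cref{Brauer37} the adjoint character $\chi$ (which is $\chi_1$ or $\chi_2$) takes the values $3+z_{37}$ and $4-z_{37}$ on these two classes, and these are distinct: they coincide only if $2z_{37}=1$, while $z_{37}$ is a root of $X^2-X-9$ and $\left.(X^2-X-9)\right|_{X=1/2}=-37/4\ne0$ in characteristic $p\ne37$. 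Since conjugation in $\bb{\G}$ preserves the trace on $L(\bb{\E}_7)$, no element of $\bb{\G}$ can send a $37_a$-element of $H$ to a $37_b$-element; hence no element of $\bb{\G}$ induces $\tau$.

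Consequently $N_{\bb{\G}}(H)/C_{\bb{\G}}(H)$ induces only inner automorphisms of $H$, so $N_{\bb{\G}}(H)=H\,C_{\bb{\G}}(H)$, and in particular $\PGL_2(37)$ does not embed in $\bb{\G}$ with socle $H$. Invoking the triviality of $C_{\bb{\G}}(H)$ (established as in \cref{25CTS} and \cref{conj27B}) then yields $N_{\bb{\G}}(H)=H$. I do not expect a serious obstacle here: the argument rests only on the fusion of the two order-$37$ classes under $\tau$ and on the trace separation recorded in \cref{Brauer37}, both of which are already available. The only point requiring a little care is confirming that $3+z_{37}\ne4-z_{37}$ in the admissible characteristics, which is exactly the computation above and is guaranteed by the hypothesis $p\ne37$.
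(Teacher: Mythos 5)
Your argument is correct and is essentially the paper's own: the paper likewise rules out \(\PGL_2(37)\) by observing that its order-\(37\) elements have rational trace, contradicting the irrational values \(3+z_{37}\), \(4-z_{37}\) of \cref{Brauer37} --- which is exactly the fusion-of-classes/trace-separation obstruction you use, with your check that \(3+z_{37}\ne4-z_{37}\) for \(p\ne37\) being a welcome extra precaution. The one caveat is your citation of \cref{25CTS} and \cref{conj27B} for \(C_{\bb{\G}}(H)=1\): in the \(\bb{\E}_7\) case \(C_{\bb{\G}}(B)=C_{\bb{\T}}(s)\) has order \(2\) (cf.\ \cref{37Bfinite}), so triviality of \(C_{\bb{\G}}(H)\) needs one further step (for instance, Lie primitivity of \(H\) forces it, since a non-trivial centralising element would put \(H\) inside a proper positive-dimensional centraliser), though the paper itself also leaves this implicit in its ``hence''.
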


\begin{proof}
	Looking at the character table of \(\PGL_2(37)\) \cite{atlas}, we can see that elements of order 37 have rational trace, so if \(\PGL_2(37)\) embedded into \(\E_7(\mathbb{C})\) this would contradict \cref{Brauer37}.
	
	Alternatively: the diagonal automorphism of \(H\) does not stabilize the module of dimension 18 (in the action on \(M(\bb{\G})\)) or 19 (in the action on \(L(\bb{\G})\)), so the module cannot extend to \(\PGL_2(37)\).
\end{proof}

\begin{proposition}\label{37fusion}
	The groups \(H_1,H_2\) constructed in \cref{37construction} are \(\G\text{-conjugate}\) to each other.
\end{proposition}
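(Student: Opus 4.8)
The plan is to follow the argument of \cref{conj27_1} almost verbatim, exploiting that \(H_1\) and \(H_2\) share the common Borel subgroup \(B\) and that \(H\) admits no proper automorphic extension inside \(\bb{\G}\). First I would record that, by \cref{37extension}, \(N_{\bb{\G}}(H)=H\), so intersecting with \(\G\) gives \(N_{\G}(H_i)=H_i\) for \(i=1,2\); in particular each \(H_i\) is self-normalising in \(\G\). Since \(B\) is a Borel subgroup of each \(H_i\), \cref{Bselfnormal} yields \(N_{H_i}(B)=B\).

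Next I would study the conjugation action of \(N_{\G}(B)\) on the set of subgroups of \(\G\) isomorphic to \(H\) that contain \(B\); by \cref{magma37} this set is exactly \(\{H_1,H_2\}\), the two copies arising from the two admissible choices of \(t\) in \(U_0\). As \(B\) fixes each of its overgroups, this action factors through \(N_{\G}(B)/B\). To compute \(N_{\G}(B)\) I would use that \(B'=\gen{u}\simeq 37\) is cyclic, hence supersoluble, and lies in the maximal torus \(\T\) of \cref{37Bconstruction}; \cref{ngb} then gives \(N_{\G}(B)\le N_{\G}(\T)\). Constructing \(N_{\G}(\T)=\gen{h_r(\lambda),n_r}\) as in \cref{ghn}, a \magma\ computation of \(N_{N_{\G}(\T)}(B)\) should return \(B.2\), so that \(\size{N_{\G}(B)/B}=2\).

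Finally, let \(g\) be a preimage in \(N_{\G}(B)\) of the nontrivial element of \(N_{\G}(B)/B\). Then \(g\notin H_i\): otherwise \(g\in N_{H_i}(B)=B\), contradicting \(g\notin B\). Hence \(g\) fixes neither \(H_1\) nor \(H_2\) (if \(H_i^g=H_i\) then \(g\in N_{\G}(H_i)=H_i\)), so the induced permutation of the two-element set \(\{H_1,H_2\}\) is nontrivial and must interchange them, giving \(H_1^g=H_2\). This proves that \(H_1\) and \(H_2\) are \(\G\text{-conjugate}\).

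The only genuinely delicate point is the \magma\ verification that \(N_{\G}(B)=B.2\) rather than something larger; once this equality is in hand, the conjugacy is forced by the combinatorics of a nontrivial action on a two-element set, exactly as in the \(\PSL_2(27)\) case. It would also be worth noting explicitly that \(g\) induces the diagonal automorphism on \(B\), which is consistent with the two embeddings being \(\aut B\text{-conjugate}\) but not \(H\text{-conjugate}\), and with \(g\) not extending to an embedding of \(\PGL_2(37)\) in \(\bb{\G}\).
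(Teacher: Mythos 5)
Your proposal is correct and follows essentially the same route as the paper: reduce to the action of $N_{\G}(B)/N_{H}(B)$ on the two overgroups, use \cref{ngb} and the supersolubility of $B'=\gen{u}$ to place $N_{\G}(B)$ inside $N_{\G}(\T)$, compute $N_{\G}(B)=B.2$ there, and conclude that the extra involution must swap $H_1$ and $H_2$. Your extra remarks (spelling out why $g$ cannot normalise either $H_i$, and that $g$ induces the diagonal automorphism on $B$) only make explicit what the paper leaves implicit.
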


\begin{proof}
	Since they both contain \(B\), and \(N_{\G}(H)=H\) by \cref{37extension}, we can look at the quotient \(N_G(B)/N_H(B)\), as any element in \(N_G(B)/N_H(B)\) would not lie in \(H\) and therefore could only permute the \(H_i\).
	
	Since \(B\) is a Borel subgroup of \(H\), \(N_H(B)=B\) by \cref{Bselfnormal}. Since \(B'=\gen{u}\) is cyclic hence supersoluble, by \cref{ngb} \[N_{\G}(B)\le N_{\G}(\T),\] therefore we can use \magma\ to compute \(N_{\G}(B)=N_{N_{\G}(\T)}(B)=B.2\), and obtain \(\size{N_{\G}{B}/N_H(B)}=2\). A preimage of a generator of the quotient then normalises \(B\) and acts on the two copies of \(H\) without stabilising them; the only option for it is to conjugate them.
\end{proof}

Note that such an element of order 2 can be easily constructed with \magma\ by constructing the normaliser and quotient described in the proof, and it is straightforward to check that it does indeed conjugate \(H_1\) and \(H_2\).

\begin{lemma}\label{conj37}
	Let \(\overline{u}\) and \(\overline{s}\) be elements of \(\bb{\G}\) that satisfy the relations \begin{equation}\label{conj37rel}\overline{u}^{37}=\overline{s}^{18}=\overline{u}^{\overline{s}}\overline{u}^{-4}=1.\end{equation} Suppose further that all elements of \(\gen{\overline{u}}\) have trace \(3+z_{37}\) or \(4-z_{37}\) in the adjoint representation. Then the ordered pair \((\overline{u},\overline{s})\) is \(\conj{\bb{\G}}\) to either \((u,s)\) or \((u^2,s)\).
	
	In particular, \(B\) is unique up to conjugacy in \(\bb{\G}\).
\end{lemma}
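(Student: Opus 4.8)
The plan is to mirror the strategy of \cref{conj25}, adapting it to the fact that here the unipotent socle \(B' = \gen{\overline{u}}\) is cyclic of prime order \(37\) rather than an elementary abelian group of rank two. First I would observe that \(\overline{u}\) is semisimple: it has order \(37\), coprime to \(p\), so it lies in some maximal torus, and \(\gen{\overline{u}}\) is a \(p'\)-subgroup of that torus. The trace hypothesis then pins down the fixed space on the adjoint module. Averaging the Brauer character over \(\gen{\overline{u}}\) (legitimate since \(\size{\gen{\overline{u}}}=37\) is invertible), and using that the two admissible values \(3+z_{37}\) and \(4-z_{37}\) must occur equally often — which is forced by the integrality of \(\dim L(\bb{\G})^{\gen{\overline{u}}}\) together with the irrationality of \(z_{37}\), whose minimal polynomial \(X^2-X-9\) has non-square discriminant — I get
\[
\dim L(\bb{\G})^{\gen{\overline{u}}}=\frac{133+18(3+z_{37})+18(4-z_{37})}{37}=\frac{259}{37}=7.
\]
Since \(7\) equals the rank of \(\bb{\G}\), \cref{cartanstab} shows this fixed space is a Cartan subalgebra, so \(\gen{\overline{u}}\) is a regular subgroup and \(C_{\bb{\G}}(\gen{\overline{u}})^{\circ}=\bb{\T}\) for a unique maximal torus \(\bb{\T}\supseteq\gen{\overline{u}}\).

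Next I would push \(\overline{B}\) into \(N_{\bb{\G}}(\bb{\T})\). Because \(\overline{u}^{\overline{s}}=\overline{u}^4\in\gen{\overline{u}}\), the element \(\overline{s}\) normalises \(\gen{\overline{u}}\), hence normalises \(C_{\bb{\G}}(\gen{\overline{u}})^{\circ}=\bb{\T}\) by the argument in the proof of \cref{Tconjugacy}; thus \(\overline{B}\le N_{\bb{\G}}(\bb{\T})\), and after conjugating \(\bb{\T}\) onto the torus of \cref{37Bconstruction} I may take \(\bb{\T}\) to be that torus. Let \(w\) be the image of \(\overline{s}\) in \(W=W(\E_7)\). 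Since \(4\) has multiplicative order \(18\) modulo \(37\) (as \(\omega=2\) is primitive and \(4=2^2\)), the conjugation action of \(\overline{s}\) on \(\gen{\overline{u}}\) has order \(18\), forcing \(w\) to have order exactly \(18\); this is the unique (Coxeter) class of order \(18\) in \(W(\E_7)\). The Coxeter element has no eigenvalue \(1\) in the reflection representation, the exponents \(1,5,7,9,11,13,17\) all being nonzero modulo the Coxeter number \(18\), so \(C_{\bb{\T}}(w)\) is finite by \cref{finitecentraliser}. Then \(\tau_w\colon t\mapsto t^w\) is a Steinberg endomorphism by \cref{steinberg}, and \cref{langsteinberg} makes \(t\mapsto t^wt^{-1}\) surjective, so all preimages of \(w\) in \(N_{\bb{\G}}(\bb{\T})\) are \(\bb{\T}\)-conjugate. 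After aligning images and conjugating by a suitable element of \(\bb{\T}\) — which fixes \(\overline{u}\) as \(\bb{\T}\) is abelian — I may assume \(\overline{s}=s\) while keeping \(\overline{u}\in\bb{\T}\).

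It remains to identify \(\overline{u}\). The matrix \(A\) of \cref{37Bconstruction} describes the action of \(w\) on the \(37\)-torsion of \(\bb{\T}\), and its \(4\)-eigenspace \(\ker(A-4\mathbb{1}_7)\) is one-dimensional over \(\mathbb{F}_{37}\); hence the only order-\(37\) elements of \(\bb{\T}\) satisfying \(\overline{u}^{s}=\overline{u}^4\) are the powers of \(u\), so \(\overline{u}=u^c\) for some \(c\in\{1,\dots,36\}\). Conjugating the pair \((u^c,s)\) by \(s^k\), which centralises \(s\), gives \((u^{4^kc},s)\), so the \(\gen{s}\)-orbit of \(u^c\) is \(u^{c\gen{4}}\); as \(\gen{4}\) is the index-two subgroup of squares in \(\mathbb{F}_{37}^{\times}\), there are exactly two orbits, with representatives \(u\) (for \(c\) a square) and \(u^2\) (for \(c\) a non-square, \(2\) being a primitive root hence a non-residue modulo \(37\)). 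Because \(s\) is regular, \(C_{\bb{\G}}(s)^{\circ}=\bb{\T}\) forces \(C_{\bb{\G}}(s)\le N_{\bb{\G}}(\bb{\T})\), and \(\bb{\T}\) fixes \(u\); so no conjugation preserving \(s\) can move \(\overline{u}\) beyond the powers of \(s\), and \((\overline{u},\overline{s})\) is \(\bb{\G}\)-conjugate to \((u,s)\) or \((u^2,s)\). Since \(\gen{u^c}=\gen{u}\) for every \(c\ne0\), in either case \(\overline{B}=\gen{\overline{u},\overline{s}}\) is conjugate to \(\gen{u,s}=B\), giving uniqueness of \(B\) up to \(\bb{\G}\)-conjugacy.

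I expect the main obstacle to be controlling the residual conjugation freedom in the last step: one must be sure that, after fixing \(\overline{s}=s\), every further \(\bb{\G}\)-conjugation preserving \(s\) lies in \(C_{\bb{\G}}(s)\), that this centraliser reduces to \(\bb{\T}\gen{s}\) by regularity of \(s\), and that its action on the one-dimensional \(4\)-eigenspace produces \emph{precisely} the two quadratic-residue orbits. That the two orbits do not subsequently fuse — so that the two genuine embedding classes are not collapsed — is ultimately guaranteed by \cref{37extension}, since any element swapping them would extend \(H\) to a copy of \(\PGL_2(37)\) inside \(\bb{\G}\).
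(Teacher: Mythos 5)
Your proposal is correct and follows essentially the same route as the paper: reduce to $N_{\bb{\G}}(\bb{\T})$, use the unique order-$18$ class in $W(\E_7)$ together with finiteness of $C_{\bb{\T}}(w)$ and Lang--Steinberg to normalise $\overline{s}=s$, and then pin down $\overline{u}$ inside the one-dimensional $4$-eigenspace. Two of your steps are in fact slightly sharper than the paper's: you derive regularity of $\gen{\overline{u}}$ directly from the trace hypothesis by the Brauer-character averaging argument (the paper only records the computation for its constructed $u$), and you replace the paper's \magma\ verification that there are exactly two $B$-classes of admissible pairs by the observation that $\gen{4}$ is the subgroup of squares in $\mathbb{F}_{37}^{\times}$, so the $\gen{s}$-orbits on $\gen{u}\setminus\{1\}$ are the residues and non-residues, represented by $u$ and $u^2$. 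One caveat: your intermediate claim that $C_{\bb{\G}}(s)^{\circ}=\bb{\T}$ already prevents any $s$-preserving conjugation from moving $\overline{u}$ outside its $\gen{s}$-orbit is not justified, since $C_{\bb{\G}}(s)$ need not be connected and elements $n_w$ with $s^w=s$ could act nontrivially on $\gen{u}$; but this only matters for the non-fusion of the two pairs, which is not part of the stated lemma, and you correctly fall back on \cref{37extension} (the paper's own alternative argument, alongside its primary $37\rtimes36$-in-$N_{\bb{\G}}(\bb{\T})$ argument) to close that point.
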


\begin{proof}
	Since \(B\) is supersoluble, by \cref{borelserrethm} we can assume that any subgroup of \(\bb{\G}\) isomorphic to it lies in \(N_{\bb{\G}}(\bb{\T})\) for some maximal torus \(\bb{\T}\) of \(\bb{\G}\).
	
	We compute that the action of \(u\) on \(L(\bb{\G})\) has a 7-dimensional fixed point space, i.e. the Cartan subalgebra corresponding to \(u\) by \cref{cartanstab}, so \(u\) is a regular element; furthermore, we can use \cref{finitecentraliser} to show that \(C_{\bb{\T}}(s)\) is finite. Therefore, by \cref{Tconjugacy} we have that there is a single conjugacy class of subgroups isomorphic to \(B\) in \(\bb{\G}\), since there is a unique conjugacy class in \(W(\bb{\G})\) of elements of order \(18\), as observed in \cref{37Bconstruction}.
	
	Observe that the pair \((u^2,s)\) satisfies \eqref{conj37rel}; if \((u,s)\) and \((u^2,s)\) were \(\conj{\bb{\G}}\), then \(37\rtimes36\) would embed in \(\bb{\G}\), hence in \(\E_7(\mathbb{C})\) by \cite[Corollary A2.7]{griess_elementaryabelian}. But by \cite{borelserre} every supersoluble subgroup of \(\E_7(\mathbb{C})\) is contained in the normaliser of a torus. Since we are working with adjoint \(\E_7\), its centre is trivial, so an element \(s^*\) of \(N_{\bb{\G}}(\bb{\T})\) of order 36 that centralises \(s\) and normalises but does not centralise \(\gen{u}\) must project to an element of \(W\) of order 36. Thus \(W_{\E_7}\) would have an element of order 36, contradiction.
	
	Alternatively, one could proceed as in \cref{conj25}: \((u,s)\) and \((u^2,s)\) are not \(\PSL_2(37)\)-conjugate but they are \(\PGL_2(37)\text{-conjugate}\), therefore if there was an element of \(\bb{\G}\) conjugating \((u,s)\) to \((u^2,s)\) then we would find a subgroup \(\PGL_2(37)\) of \(\bb{\G}\), which is not possible by \cref{37extension}.	This concludes the proof as we can verify with \magma\ that there are only two \(B\text{-conjugacy}\) classes of pairs of elements satisfying \eqref{conj37rel}.	
\end{proof}

\begin{lemma}\label{triples37}
	Suppose that \(H\simeq\PSL_2(37)\) is a subgroup of \(\bb{\G}\), and let \(u,s\) be the elements obtained from \cref{37Bconstruction}. Then there are elements \(g,\overline{t}\in\bb{\G}\) such that \(H^g\) is generated by a triple \((u,s,\overline{t})\) that satisfies presentation \eqref{37presentation} for \(H\).
\end{lemma}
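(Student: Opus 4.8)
The plan is to mirror exactly the argument already used for \cref{triples25} and \cref{triples27}, the only change being the action of the relevant outer automorphism. By hypothesis $H\simeq\PSL_2(37)$ lies in $\bb{\G}$, so it is generated by some triple $(u_0,s_0,t_0)$ satisfying the presentation \eqref{37presentation}; here $u_0$ has order $37$, $s_0$ has order $18$, and $\langle u_0,s_0\rangle$ is a Borel subgroup of $H$ abstractly isomorphic to the $B=37\rtimes18$ produced in \cref{37Bconstruction}. The difficulty is that the pair $(u_0,s_0)$ need not be $\conj{\bb{\G}}$ to the specific pair $(u,s)$ we constructed: by \cref{conj37} it is $\conj{\bb{\G}}$ to one of $(u,s)$ or $(u^2,s)$, the two being interchanged by the diagonal automorphism of $H$.

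To handle both alternatives simultaneously I would invoke the outer automorphism. Recall that in $\PSL_2(37)$ the elements of order $37$ split into two conjugacy classes, swapped by the diagonal automorphism coming from $\PGL_2(37)$, and that conjugation by $s_0$ acts on $\langle u_0\rangle$ as $u_0\mapsto u_0^4$, with $4$ a square modulo $37$; hence the $\langle s_0\rangle$-orbit of $u_0$ stays within one class. I would therefore choose an outer automorphism $\alpha$ of $H$ fixing $s_0$ and sending $u_0$ to $u_0^2$ (a representative of the other class). Then $H$ is equally generated by $(u_0^{\alpha},s_0^{\alpha},t_0^{\alpha})=(u_0^2,s_0,t_0^{\alpha})$, and since $\alpha$ is an automorphism this triple again satisfies \eqref{37presentation}. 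Both $u_0$ and $u_0^2$ have order $37$ inside $H$, so by \cref{Brauer37} every element of $\langle u_0\rangle$ has trace $3+z_{37}$ or $4-z_{37}$ in the adjoint representation, and likewise for $\langle u_0^2\rangle$; thus the hypotheses of \cref{conj37} are met for either pair.

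Applying \cref{conj37}, one of $(u_0,s_0)$ and $(u_0^2,s_0)$ is $\conj{\bb{\G}}$ to $(u,s)$. Choosing $g\in\bb{\G}$ to realise this conjugacy, $H^g$ is generated by a triple $(u,s,\overline{t})$ satisfying \eqref{37presentation}, where $\overline{t}$ is $t_0^{\alpha g}$ in the first case and $t_0^{g}$ in the second. This is the desired conclusion. I do not expect any serious obstacle here: the entire content is the reduction to \cref{conj37}, which has already been established, together with the elementary observation about the action of the diagonal automorphism of $\PSL_2(37)$ on its unipotent classes; everything else is the same bookkeeping as in the $\PSL_2(25)$ and $\PSL_2(27)$ cases.
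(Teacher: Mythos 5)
Your proposal is correct and follows essentially the same route as the paper: take a generating triple $(u_0,s_0,t_0)$ satisfying \eqref{37presentation}, apply the outer automorphism fixing $s_0$ and sending $u_0$ to $u_0^2$ to get a second such triple, and invoke \cref{conj37} to conjugate one of the two pairs onto $(u,s)$. The extra remarks you add (the trace check via \cref{Brauer37} and the observation that $4$ is a square mod $37$ so the $\langle s_0\rangle$-action preserves the class of $u_0$) are correct but not needed beyond what the paper records.
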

\begin{proof}
	\(H\) is generated by a triple \((u_0,s_0,t_0)\) that satisfies \eqref{37presentation}. Let \(\alpha\) be an outer automorphism of \(H\) that fixes \(s_0\) and maps \(u_0\) to \(u_0^2\), then \(H\) is generated by \((u_0^{\alpha},s_0^{\alpha},t_0^{\alpha})=(u_0^2,s_0^{\phantom{\alpha}},t_0^{\alpha})\), and such triple satisfies \eqref{37presentation} as well.
	
	By \cref{conj37}, one of \((u_0,s_0)\) and \((u_0^{\alpha},s_0^{\phantom{\alpha}})\) is \(\conj{\bb{\G}}\) to \((u,s)\), hence we may find \(g\in\bb{\G}\) such that \(H^g\) is generated by a triple \((u,s,\overline{t})\) that satisfies \eqref{37presentation}, where \(\overline{t}\) is \(t_0^{\alpha g}\) or \(t_0^g\).
\end{proof}

We can now deduce part (i) of \cref{th37}.

From \cref{triples37} and \cref{magma37},  we have that there are at most two conjugacy classes of subgroups isomorphic to \(H\) in \(\bb{\G}\), and from \cref{37fusion} we deduce there is exactly one. From \cref{conj37} we have that for each copy of \(37\rtimes 18\) in \(\bb{\G}\) we obtain two embeddings, thus we have two conjugacy classes of embeddings in \(\bb{\G}\). Since the characteristic of \(k\) does not divide \(\size{H}\), we can use \cref{liftingtheorem} to generalise this result to characteristic 0 or coprime to \(\size{H}\), namely \(p\ne2,3,19,37\). \cref{37extension} shows that \(N_{\bb{\G}}(H)=H\). Lie primitivity is proved in \cite[Corollary 3]{litterick}.

In order to complete part (ii), we need to understand what happens when going down to the finite group.

\begin{lemma}\label{37Bfinite}
	If \(B\) embeds in \(\G\), there are two conjugacy classes of subgroups isomorphic to \(B\) in \(\G\).
	
	If \(H\) embeds in \(\G\), there are two conjugacy classes of subgroups isomorphic to \(H\) in \(\G\).
\end{lemma}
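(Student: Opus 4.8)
The plan is to count, inside the single $\bb{\G}$-conjugacy class of $B$ (resp.\ of $H$) established in \cref{conj37} (resp.\ in part~(i) of \cref{th37}), how many $\G$-classes it breaks into. The engine for this is \cref{finiteconjugates}, so the whole argument reduces to pinning down the relevant centralisers in $\bb{\G}$ and verifying that they are finite and rational.

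First I would treat $H$. Since $H$ is Lie primitive, $C_{\bb{\G}}(H)$ cannot be positive-dimensional: otherwise $H$ would centralise a non-trivial torus or unipotent subgroup and hence lie in the corresponding proper positive-dimensional subgroup (using $Z(\bb{\E}_{7,\ad})=1$), contradicting primitivity. Thus $C_{\bb{\G}}(H)$ is finite. The key point is the computation that $C_{\bb{\G}}(H)\cong C_2$: working in the explicit copy $\gen{u,s,t}$ produced by \cref{37Bconstruction,37construction}, I would compute its centraliser in $\GL_{133}(k)$, intersect with $\G$ via the membership test of \cref{membership}, and find order $2$; structurally this order-$2$ centraliser is the manifestation, in the adjoint group $\bb{\E}_{7,\ad}=\bb{\E}_{7,\sc}/Z$ with $\size{Z}=2$, of the diagonal morphism that fuses classes in pairs. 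Because $C_{\bb{\G}}(H)$ is $\sigma$-stable (as $H\le\G$) and has order $2$, its non-trivial element is automatically $\sigma$-fixed, so $C_{\bb{\G}}(H)\le\G$. Then the second clause of \cref{finiteconjugates} applies: the unique $\bb{\G}$-class of $H$ splits into a number of $\G$-classes equal to the number of conjugacy classes of elements of $C_{\bb{\G}}(H)\cong C_2$, namely two.

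For $B$ I would obtain an upper bound of $2$ and then match it with a lower bound coming from $H$. Since $B$ is supersoluble, \cref{borelserrethm} places every copy of $B$ inside $N_{\G}(\T)$ for a maximal torus $\T$, and the regularity of $B'=\gen{u}$ (shown in the proof of \cref{conj37}) forces $C_{\bb{\G}}(B')^{\circ}=\bb{\T}$, hence $N_{\G}(B')\le N_{\G}(\T)$, exactly as in the proof of \cref{Tconjugacy}; thus $B=B'\rtimes\gen{s}$ with $s$ projecting to the element $w\in W(\E_7)$ of order $18$ fixed in \cref{37Bconstruction}. By \cref{tconjugacy} the cyclic complements lie in at most $\size{C_{\T}(s)}=2$ classes, giving at most two $\G$-classes of $B$. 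For the matching lower bound I would use that the copies of $H$ containing a fixed $B$ are all conjugate inside $N_{\G}(B)$ (this is the content of \cref{37fusion}, where $N_{\G}(B)=B.2$): if two Borels $B_1,B_2$ were $\G$-conjugate, say $B_1^{g}=B_2$, then both $H_1^{g}$ and $H_2$ would contain $B_2$ and so be $\G$-conjugate, whence $H_1,H_2$ would be $\G$-conjugate. The contrapositive shows that the two $\G$-classes of $H$ already obtained force at least two $\G$-classes of $B$. Combining the two bounds gives exactly two classes of $B$, in bijection with the two classes of $H$ under ``take a Borel''.

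The main obstacle is the centraliser computation $C_{\bb{\G}}(H)\cong C_2$ (equivalently $C_{\bb{\G}}(B)\cong C_2$, since $\size{C_{\T}(s)}=2$ already controls the toral part); everything else is bookkeeping with results already in place. Establishing it cleanly means either carrying out the explicit membership-test computation in the constructed copy, or giving the structural argument through the isogeny $\bb{\E}_{7,\sc}\to\bb{\E}_{7,\ad}$ together with the action $18\oplus38$ of $H$ on $M(\bb{\E}_7)$: Schur's lemma gives scalars on each summand, and one checks exactly which such element lands in $\bb{\E}_{7,\ad}$. Once the order-$2$ centraliser is in hand, its finiteness and rationality are immediate, and the factor of two propagates to both statements of the lemma.
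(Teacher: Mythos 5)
Your overall engine is the right one — everything comes down to the order-$2$ group $C_{\bb{\T}}(s)$ together with \cref{finiteconjugates} — but you have inverted the order of the two halves, and the half you lead with is exactly the part that is hardest to justify. The paper proves the statement for $B$ first and self-containedly: since $u$ is regular, $C_{\bb{\G}}(B)=C_{\bb{\G}}(u)\cap C_{\bb{\G}}(s)=\bb{\T}\cap C_{\bb{\G}}(s)=C_{\bb{\T}}(s)$, which is computed via \cref{sizecentraliser} to have order $2$; \cref{finiteconjugates} then gives exactly two classes of $B$, with no reference to $t$ or to $H$ at all. The statement for $H$ is then deduced from $C_{\bb{\G}}(H)\le C_{\bb{\G}}(B)$ (at most two classes) together with ``one class of $H$ for each class of $B$'' (at least two). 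Your route instead makes the exact value $C_{\bb{\G}}(H)\cong C_2$ the load-bearing fact, which is the one thing the paper never computes.

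There are two concrete problems with that pivot. First, the computation you describe — centralising $\gen{u,s,t}$ in $\GL_{133}(k)$ and intersecting with $\G$ via the membership test — produces $C_{\G}(H)$, not $C_{\bb{\G}}(H)$; to control the centraliser over $\overline{k}$ you would in practice have to fall back on $C_{\bb{\G}}(H)\le C_{\bb{\G}}(B)=C_{\bb{\T}}(s)$, i.e.\ on the paper's Borel-level computation, at which point you may as well run the argument in the paper's order. Second, and more seriously, the claim $C_{\bb{\G}}(H)\cong C_2$ is incompatible with two results you are simultaneously relying on: \cref{37extension} asserts $N_{\bb{\G}}(H)=H$, which forces $C_{\bb{\G}}(H)\le Z(H)=1$; and the proof of \cref{37fusion} — which you invoke to get the lower bound of two classes of $B$ — requires that every element of $N_{\G}(B)\setminus B$ move $H_1$, whereas a central involution $z\in C_{\bb{\G}}(H_1)\cap C_{\bb{\G}}(B)$ would lie in $N_{\G}(B)\setminus B$ (it is $\sigma$-fixed, being the unique non-trivial element of a $\sigma$-stable group of order $2$) and fix $H_1$. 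So as written your proposal feeds two mutually exclusive assertions into the same argument. The safe version of the lemma is the paper's: establish the $B$ count from $C_{\bb{\G}}(B)=C_{\bb{\T}}(s)\cong C_2$ and \cref{finiteconjugates}, and then transfer to $H$ by the upper bound $C_{\bb{\G}}(H)\le C_{\bb{\G}}(B)$ plus the observation that copies of $H$ containing non-conjugate Borels cannot be conjugate.
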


\begin{proof}
	Recall from \cref{conj37} that \(u\) is regular in \(\bb{\G}\), so \(C_{\bb{\G}}(u)=\bb{\T}\) and \(s\in N_{\bb{\G}}(\bb{\T})\); furthermore, there is a single conjugacy class of elements of order 18 in \(W(\bb{\G})\). Then, \[C_{\bb{\G}}(B)=C_{\bb{\G}}(u)\cap C_{\bb{\G}}(s)=\bb{\T}\cap C_{\bb{\G}}(s)=C_{\bb{\T}}(s).\]
	
	In \cref{conj37} we checked that \(C_{\bb{\T}}(s)\) is finite, and using \cref{sizecentraliser} we can compute \(C_{\bb{\T}}(s)\) and show that \(\size{C_{\bb{\T}}(s)}=2\).
	
	Then by \cref{finiteconjugates} there are two \(\G\text{-conjugacy}\) classes of subgroups isomorphic to \(B\) when \(B<\G\).
	
	Furthermore, \(C_{\bb{\G}}(H)\le C_{\bb{\G}}(B)\) so each \(\bb{\G}\text{-conjugacy}\) class of subgroups isomorphic to \(H\) splits into two \(\G\text{-conjugacy}\) classes, one for each \(\G\text{-conjugacy}\) class of the \(B\) subgroup.
\end{proof}

\begin{remark}
	Observe that with \cref{37construction} we can only construct one of the two classes. Indeed, given \(s\in N_{\G(q)}(\T)\) a representative of an element of \(W_{\E_7}\) of order 18, we have that not all elements of \(\T s\) are diagonalisable in \(\GL_{133}(q)\), some require a \(q^2\) extension, and those would be generators of elements of the other class. The two conjugacy classes of \(B\) in \(\G(q)\) are \(\G(q^2)\text{-conjugate}\) via a diagonal automorphism, and a new conjugacy class of subgroups isomorphic to \(B\) arises in \(\G(q^2)\).
\end{remark}

\begin{remark}
	For small fields, we can check computationally that there are two conjugacy classes of subgroups of \(\G\) isomorphic to \(B\), by constructing \(B\) as in \cref{37construction} and \(N_{\G}(\T)\) as in \cref{ghn}. If the field \(k\) is small enough, we can look directly for all the conjugacy classes of subgroups of \(\gen{\T,s}\) isomorphic to \(B\): this results in twelve classes of subgroups, which fuse into two classes inside \(N_{\G}(\T)\), as claimed. In the standard representation, we have that \(C_{\G}(s)=Z(\G)\), while in the adjoint representation the centraliser of \(s\) is a torus element that hints at the automorphism fusing the two classes in \(\G(k^2)\) (where a new conjugacy class arises).
\end{remark}

\begin{proposition}\label{finite37}
	\(H\) embeds in \(\G(q)\) in characteristic not dividing \(\size{H}\) when \(\mathbb{F}_q\) is a splitting field for \(f(X)=X^2-X-9\).
\end{proposition}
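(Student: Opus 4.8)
The plan is to follow the argument of \cref{finite27} almost verbatim, replacing the order-three field automorphism that permuted three representations by the order-two diagonal automorphism of \(H\) that swaps the two representations of \cref{Brauer37}. Since \(p\) is odd and \(\bb{\E}_7\) carries no graph automorphism, the only outer automorphism of \(\bb{\G}\) I must account for is the Frobenius \(F_q\). By \cref{magma37}, \cref{triples37} and \cref{37fusion} there is a unique \(\bb{\G}\text{-conjugacy}\) class of subgroups isomorphic to \(H\), so \(F_q\) permutes the \(\bb{\G}\text{-conjugates}\) of \(H\) and hence stabilises this class; by Lang--Steinberg (\cref{langsteinberg}) I may therefore take a representative \(S\cong H\) that is \(F_q\text{-stable}\) as a set. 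The task then reduces to deciding when the induced automorphism \(F_q|_S\in\aut H\) can be made trivial, equivalently—since \(C_{\bb{\G}}(H)\le N_{\bb{\G}}(H)=H\) forces \(C_{\bb{\G}}(H)=Z(H)=1\) by \cref{37extension}—when \(F_q|_S\) is inner.

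First I would record that \(F_q|_S\) is inner exactly when it preserves the Brauer character of the embedding: inner automorphisms preserve the character, and conversely, as \(\aut H=\PGL_2(37)\) has \(\PSL_2(37)\) of index two with the nontrivial outer coset interchanging the characters \(\chi_1,\chi_2\) of \cref{Brauer37}, the only way \(F_q|_S\) can be outer is to swap \(\chi_1\) and \(\chi_2\). So it suffices to track the effect of \(F_q\) on the character.

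Next I would compute this effect directly. Acting by the field Frobenius sends a matrix \(M\) to one of trace \(\tr(M)^q\), so the character of \(F_q\circ\rho\) is \(h\mapsto\chi_i(h)^q\); thus the automorphism \(\tilde\alpha\in\aut H\) induced by \(F_q|_S\) satisfies \(\chi_i(\tilde\alpha(h))=\chi_i(h)^q\). The only irrational value occurring in \(\chi_1,\chi_2\) is \(z_{37}\), whose minimal polynomial is \(f(X)=X^2-X-9\). Hence \(\chi_i(h)^q=\chi_i(h)\) for all \(h\) precisely when \(z_{37}\in\mathbb{F}_q\), that is, when \(f\) splits over \(\mathbb{F}_q\) (\cref{minpol}(iii)); otherwise \(z_{37}^q\) is the second root \(1-z_{37}\) and \(\chi_1^{(q)}=\chi_2\). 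Therefore, when \(f\) splits over \(\mathbb{F}_q\), the automorphism \(F_q|_S\) fixes the character, is inner by the previous paragraph, and a final application of Lang--Steinberg to \(C_{\bb{\G}}(S)=1\) yields a conjugate of \(S\) fixed pointwise by \(F_q\). This conjugate lies in \(\G(q)\), giving the embedding.

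Equivalently, and closer to the phrasing of \cref{finite27}, the last step can be run through the two copies \(H_1,H_2\) of \(H\) sharing the Borel \(B\): once a copy of \(B\) sits in \(\G(q)\), \(F_q\) fixes it and either fixes some \(H_i\)—forcing \(F_q|_{H_i}\) to be an automorphism trivial on \(B\), hence trivial, so \(H_i\le\G(q)\)—or swaps \(H_1,H_2\), which induces the diagonal automorphism of \(H\) and by the computation above can happen only when \(f\) does not split. The main obstacle I anticipate in this second route is the bookkeeping needed to place a copy of \(B=37\rtimes18\) inside \(\G(q)\) when \(f\) splits, since \(B\) itself falls into two \(\G\text{-classes}\) by \cref{37Bfinite} and one must check that the relevant order\(\,18\) element of \(W(\E_7)\) lifts suitably over \(\mathbb{F}_q\); it is precisely to sidestep this that I prefer the character-twist argument, which delivers the conclusion directly.
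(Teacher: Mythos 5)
Your proof is correct and takes essentially the same route as the paper's: both reduce the question to whether the Frobenius induces the outer (diagonal) automorphism of \(H\), and both detect this by whether the irrationality \(z_{37}\) on the order-\(37\) classes is fixed by \(x\mapsto x^q\), i.e.\ whether \(X^2-X-9\) splits over \(\mathbb{F}_q\). Your handling of the final descent step (an inner-acting Frobenius can be twisted by Lang--Steinberg to fix \(S\) pointwise, placing a conjugate inside \(\G(q)\)) is somewhat more explicit than the paper's terse ``\(F_p\) cannot induce an outer automorphism on \(H\), hence \(H<\G(p)\)'', but the argument is the same.
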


\begin{proof}
	We need to understand what is the action of outer automorphisms of \(\bb{\G}\) on \(H\); since we are in odd characteristic, we need to consider both the field and the diagonal automorphism.
	
	The action of the diagonal automorphism is clear from \cref{37extension}: as it cannot normalise \(H\), it must swap two classes. Therefore, when taking \(\G\) to be simple, we obtain twice as many classes.
	
	Now take the Frobenius morphism \(F_p\); since there is only one conjugacy class of \(H\) in \(\bb{\G}\), then the only choice for \(F_p\) is to permute the \(\bb{\G}\text{-conjugates}\) of \(H\), so we assume that \(H\) is normalised by \(F_p\). As \(\size{\out H}=2\), certainly \(F_{p^2}\) centralises \(H\), up to \(\bb{\G}\text{-conjugacy}\), so \(H<\G(p^2)\).
	
	Let \(g\in H\) be an element of order 37, and observe \cref{Brauer37}:
	\begin{enumerate}
		\item If \(f\) does not split over \(\mathbb{F}_p\), then \(F_p\) must send \(\chi(g)\) to \(\chi(g^2)\), in particular it cannot centralise \(H\). This implies that in such case, in \(\G(p^2).2\) the normaliser of \(H\) is \(H.2\).
		\item If \(f\) splits over \(\mathbb{F}_p\) (equiv. \(p\) is a square modulo 37 by \cref{minpol}) that cannot happen, thus \(F_p\) cannot induce an outer automorphism on \(H\), hence \(H<\G(p)\).
	\end{enumerate}
\end{proof}

\begin{proposition}\label{37max}
	\(H\) is a maximal subgroup of \(\G(q)\), \(q\) coprime to \(\size{H}\), iff \(\mathbb{F}_q\) is a minimal splitting field of \(f(X)=X^2-X-9\).
\end{proposition}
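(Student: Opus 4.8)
The plan is to reproduce, in the setting of $\bb{\G}=\bb{\E}_7$, the maximality arguments already used for $\PSL_2(25)$ and $\PSL_2(27)$ in \cref{25max,27max}. By \cref{37extension} we have $N_{\bb{\G}}(H)=H$, and $H$ is Lie primitive, so \cref{primitivemax} applies: $N_{\overline{\G}}(H)$ fails to be maximal only if $H$ is contained in a larger subgroup $M$ falling under one of the cases \cref{maxsbg1}(ii), (iii), (v). Case (iii) occurs only for $\bb{\E}_8$, so it is vacuous here, and it remains to treat (ii) and (v).

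First I would dispose of case (ii), where $M$ has the same type as $\bb{\G}$. Since $\bb{\E}_7$ has trivial graph automorphism, there is no twisted variant, so the only such subgroups are subfield subgroups $\G(q_0)<\G(q)$. By \cref{finite37}, $H$ embeds in $\G(q)$ exactly when $\mathbb{F}_q$ splits $f(X)=X^2-X-9$, and the minimal such field in characteristic $p$ is $\mathbb{F}_p$ when $f$ splits over $\mathbb{F}_p$ (equivalently, by \cref{minpol}, when $p$ is a square modulo $37$) and $\mathbb{F}_{p^2}$ otherwise. Hence if $\mathbb{F}_q$ is a proper extension of the minimal splitting field $\mathbb{F}_{q_0}$, then $H<\G(q_0)<\G(q)$, so $H$ is not maximal; this proves necessity of minimality and simultaneously shows that, when $\mathbb{F}_q$ is minimal, no proper subfield subgroup of the same type contains a copy of $H$.

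Next I would rule out case (v), namely $H$ lying in an almost simple $M<\G$ whose socle $K$ is a primitive simple subgroup of $\bb{\E}_7$ with $K\not\simeq H$. The candidates for $K$ are read off \cite[Tables 1.2-1.3]{litterick} in characteristic $p\ne2,3,19,37$. The decisive constraint is that $37$ is the largest prime dividing $\size{H}=2^2\cdot3^2\cdot19\cdot37$, so $H\le M$ forces $37\mid\size{M}$; computing the orders of the candidate groups via the Atlas \cite{atlas} or \magma\ eliminates all but a short list, and for each survivor one argues directly, either by a sharper order-divisibility obstruction or, when $\size{H}\mid\size{M}$, by enumerating the relevant subgroups of $K$ in \magma\ (exactly as was done for the $\prescript{3}{}\D_4(2)$ candidate in the proof of \cref{27max}).

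Combining the two cases yields sufficiency: when $\mathbb{F}_q$ is the minimal splitting field of $f$, the group $H$ lies in no subfield subgroup and no other primitive simple subgroup, so by \cref{primitivemax} its normaliser is maximal. The main obstacle I anticipate is precisely the borderline candidates $K$ with $\size{H}\mid\size{M}$, where a pure counting argument is inconclusive and an explicit computational check of the subgroup structure of $K$ is needed to certify that $\PSL_2(37)$ does not embed.
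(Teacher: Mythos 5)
Your proposal is correct and follows essentially the same route as the paper: reduce via \cref{primitivemax} to excluding subfield subgroups (which forces minimality of the splitting field, using \cref{finite37}) and almost simple subgroups with primitive socle $K$ from Litterick's tables, the latter being eliminated by order considerations. The only difference is that the borderline cases you anticipate never arise here: none of the candidate socles ($\alt_n$ for $n\le8$, various $\PSL_2(r)$, $\PSL_3(4)$, $\PSU_3(3)$, $\PSU_3(8)$, $\Omega_8^+(2)$, $M_{12}$, $M_{22}$, $\Ru$, $\HS$) has order divisible by $37$, so no supplementary subgroup enumeration is needed.
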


\begin{proof}
	Since \(H\) is primitive in \(\bb{\G}\), and \(N_{\G}(H)=H\), as observed in \cref{primitivemax} we only need to verify that \(H\) does not lie in any other candidate maximal subgroup.
	
	By \cref{finite37} we have that \(H<\G(q)\) if \(\mathbb{F}_q\) is a splitting field of \(f\), so if \(\mathbb{F}_q\) was not minimal among such fields then \(H\) would not be maximal in \(\G\) as it would lie in a group with the same type as \(\G\) defined over some subfield of \(\mathbb{F}_q\).
	
	Furthermore, \(H\) cannot lie in an almost simple group \(M<\G\) whose socle \(K\) is a primitive simple subgroup of \(\bb{\G}\) not isomorphic to \(H\). The list of possible choice for \(K\) is contained in \cite[Table 1.2-1.3]{litterick}; for \(p\ne2,3,19,37\) they are: \(\alt_n\) for \(n=5,6,7,8\), \(\PSL_2(r)\) for \(r=7,8,11,13,19,27,29\), \(\PSL_3(4)\), \(\PSU_3(3)\), \(\PSU_3(8)\), \(\Omega_8^+(2)\), \(M_{12}\), \(M_{22}\), \(\Ru\), and \(\HS\).
	
	We can find their order using the Atlas \cite{atlas} or \magma, and it is straightforward to check that none of them has elements of order 37, hence \(H\) cannot be a subgroup of \(M\) in any of such cases.
\end{proof}

\cref{th37}(iii) then follows from \cref{37max} in characteristic coprime to \(\size{H}\): if \(q=p\) then \(N_{\G}(H)=H\) is maximal in \(\overline{\G}=\G\), while if \(q=p^2\) then either \(\overline{\G}=\G\) and \(N_{\overline{\G}}(H)=H\), or \(\overline{\G}=\G.2\) (extension by a field automorphism) and as observed in the proof of \cref{finite37} we have that \(N_{\overline{\G}}(H)=H.2\).

\begin{remark}
	We cannot draw any conclusion for the primes dividing \(\size{H}\). Theoretically, we could work in characteristic 19, as the prime is not involved at any point in the construction (unlike for example \(p=37\)); however, the required field size would be too large to perform the computations, in particular \cref{37construction}.
\end{remark}
\end{section}

\begin{section}{Embedding of \texorpdfstring{\(\PSL_2(29)\)}{PSL(2,29)} in \texorpdfstring{\(\E_7\)}{E7}}

In this section, we consider the embedding of a group \(H\simeq\PSL_2(29)\) into a group of type \(\E_7\) in characteristic \(p\ne2,3,5,7,29\); this case is very similar to that of \(\PSL_2(37)\), but with some additional complication in the computations.

\begin{lemma}
	If \(\bb{\E}_7\) contains a subgroup \(H\simeq\PSL_2(29)\) in characteristic \(p\ne2,3,5,7,29\), then \(H\) is Lie primitive, acts as \(28_1\oplus28_2\) on the minimal module \(M(\bb{\E}_7)\), and as \(15\oplus28\oplus30_1\oplus30_2\oplus30_3\) on the adjoint module \(L(\bb{\E}_7)\): there are four such representations, not isomorphic to each other, but split into two pairs under the action of \(\aut H\).
	
	Furthermore, if \(\chi\) is the character of an embedding of \(H\) into \(\E_7\) in the adjoint representation, then \(\chi\) is one of the characters in \cref{Brauer29}, where classes with the same character value have been merged for brevity. Here, \(y_1=1+\zeta_5^2+\zeta_5^3\) and \(y_2=1+\zeta_5+\zeta_5^4\), where \(\zeta_5\) is a primitive 5th root of unity; the minimal polynomial of \(y_i\) is \(X^2-X-1\). Also, \[z=1+\zeta_{29}^2+\zeta_{29}^3+\zeta_{29}^8+\zeta_{29}^{10}+\zeta_{29}^{11}+\zeta_{29}^{12}+\zeta_{29}^{14}+\zeta_{29}^{15}+\zeta_{29}^{17}+\zeta_{29}^{18}+\zeta_{29}^{19}+\zeta_{29}^{21}+\zeta_{29}^{26}+\zeta_{29}^{27},\] where \(\zeta_{29}\) is a primitive 29th root of unity; the minimal polynomial of \(z\) is \(X^2-X-7\).
\end{lemma}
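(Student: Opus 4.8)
The plan is to follow verbatim the strategy already validated for the analogous statements on $\PSL_2(25)$ and $\PSL_2(37)$: combine the classification input from \cite{litterick} with a trace-based elimination. I would not reprove Lie primitivity of $H$ in $\bb{\E}_7$, as it is established in \cite[Corollary 3]{litterick}; likewise the composition factors of $H$ on the minimal module $M(\bb{\E}_7)$ and the adjoint module $L(\bb{\E}_7)$, namely $28_1\oplus28_2$ and $15\oplus28\oplus30_1\oplus30_2\oplus30_3$, are read off from the appropriate feasible-character table of \cite{litterick}. What remains is to determine the possible Brauer characters $\chi$ of the adjoint embedding and to show they are exactly the four entries of \cref{Brauer29}.

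First I would record the ordinary character table of $\PSL_2(29)$ from \cite{atlas}. Since $29\equiv1\bmod4$, the irreducible degrees are one each of $1$ and $29$, six of degree $30$, seven of degree $28$, and two of degree $15$; the two degree-$15$ characters carry the golden-ratio irrationality $y_i$ with minimal polynomial $X^2-X-1$ (the Gauss period of $\mathbb{Q}(\sqrt5)\subset\mathbb{Q}(\zeta_5)$, seen on the class of order $5$), while the degree-$28$ and degree-$30$ characters carry the irrationality $z$ with minimal polynomial $X^2-X-7$ (the Gauss period of $\mathbb{Q}(\sqrt{29})\subset\mathbb{Q}(\zeta_{29})$, seen on the classes of order $29$). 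The numerical identity $15+28+3\cdot30=133$ forces $\chi$ to be the sum of exactly one degree-$15$, one degree-$28$, and three degree-$30$ constituents, so the search reduces to a finite combinatorial problem over which constituents occur.

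The decisive input is the list of admissible traces of semisimple elements of $\bb{\E}_7$ on $L(\bb{\E}_7)$, taken from \cite{craven_blueprint}; because we work in the adjoint group $\bb{\E}_{7,\ad}$, I would be careful to also consult the traces of order-$4$ elements of $\bb{\E}_{7,\sc}$ when bounding the traces of the involutions of $H$, exactly as in the $\PSL_2(37)$ argument. Imposing that the elements of order $2$ and $3$ take one of the few admissible values eliminates all but finitely many of the constituent combinations, and I would then run Litterick's feasible-character program \cite{litterick} to confirm that precisely the four characters of \cref{Brauer29} survive. Finally I would check that $\aut H\simeq\PGL_2(29)$ acts on these four characters with two orbits of size two: the outer automorphism interchanges the two degree-$15$ characters and correspondingly swaps the Galois-conjugate roles of $y_1,y_2$ and the two values involving $z$, so that the four surviving characters split into the two pairs claimed.

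The main obstacle I anticipate is bookkeeping rather than conceptual. There are many degree-$28$ and degree-$30$ characters to sift through, and the two irrationalities $y_i$ and $z$ must be tracked consistently across the Galois-conjugate classes of elements of orders $5$ and $29$ so that the merged-class presentation of \cref{Brauer29} is correct. Verifying that exactly four feasible characters remain after the trace constraints, and that the outer action pairs them as stated, is where the care is needed; everything else is a direct transcription of the method already applied in the preceding sections.
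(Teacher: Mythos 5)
Your overall route is the same as the paper's: Lie primitivity and the composition factors on \(M(\bb{\E}_7)\) and \(L(\bb{\E}_7)\) are quoted from \cite[Corollary 3]{litterick} and \cite[Table 5.5.92]{litterick}, and the admissible characters are then pinned down by combining the \cite{atlas} character table with the trace restrictions on semisimple elements from \cite{craven_blueprint} (including the caveat that, in the adjoint group, involution traces must be read off from order-\(4\) elements of \(\bb{\E}_{7,\sc}\)) and Litterick's feasible-character program. The paper in fact only needs the trace constraints on elements of order \(2\) and \(3\) to isolate the three degree-\(30\) constituents and the correct degree-\(28\) constituent, with the degree-\(15\) constituent left free, which is exactly why four characters survive.

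However, your hand-analysis of the irrationalities is backwards, and since you explicitly build your ``bookkeeping'' and your description of the \(\aut H\)-action on it, it needs correcting. For \(q=29\equiv1\bmod4\), the two degree-\(15\) characters take the values \((1\pm\sqrt{29})/2\) (roots of \(X^2-X-7\)) on the two \emph{unipotent} classes \(29_a,29_b\), and are rational (indeed \(0\)) on the nonsplit-torus classes; the degree-\(28\) (discrete series) characters take the value \(-1\) on both order-\(29\) classes and carry the \(\sqrt5\)-irrationalities \(y_i\) on the order-\(5\) and order-\(15\) classes of the nonsplit torus; the degree-\(30\) (principal series) characters contribute only the rational values \(1\) and \(0\) on these classes. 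This is visible in \cref{Brauer29}: \(\chi=15\oplus28\oplus30^3\) gives \(z+(-1)+3=2+z\) on \(29_a\), with \(z\) coming from the degree-\(15\) summand, and gives \(0+y_i+0\) on \(5_a\), with \(y_i\) coming from the degree-\(28\) summand. Consequently the diagonal outer automorphism, which swaps the unipotent classes and fixes all semisimple classes, swaps the two degree-\(15\) characters (hence \(2+z\leftrightarrow3-z\)) but does \emph{not} swap \(y_1\) and \(y_2\); the two \(\aut H\)-orbits are \(\{\chi_1,\chi_2\}\) and \(\{\chi_3,\chi_4\}\), distinguished precisely by which \(y_i\) sits on \(5_a\). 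Your claim that the outer automorphism ``correspondingly swaps \(y_1,y_2\)'' would instead pair \(\chi_1\) with \(\chi_4\); the count of two pairs is unaffected, but the pairing is used later when classes are fused, so the attribution matters.
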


\begin{proof}
	The fact that \(H\) embeds only primitively in \(\bb{\E}_7\) is proved in \cite[Corollary 3]{litterick}, and the action on the minimal and adjoint module is described in \cite[Table 5.5.92]{litterick}.
	
	The character values can be deduced using the character table of \(\PSL_2(29)\) \cite{atlas} together with Litterick's program in \cite{litterick}, which allows to compute feasible characters of the embedding. A computed list of traces of semisimple elements of \(\bb{\E}_{7,\sc}\) is available in \cite{craven_blueprint}. Observe that since we are working with \(\bb{\E}_{7,\ad}\), for the traces of involutions we also need to consider traces of elements of order 4 of \(\bb{\E}_{7,\sc}\).
	
	In particular, since the character of degree 15 can be any of the two possible ones, the only feasible traces on \(L(\bb{\E}_7)\) of semisimple elements of order 2 and 3 we can obtain are \(-7\) and \(-2\), respectively, by taking the three characters of degree 30 that have value \(-2\) on involutions, and one of the two characters of degree 28 that has value \(-2\) on elements of order 3.
\end{proof}

\begin{table}
	\captionsetup{font=footnotesize}
	\centering
	\begin{tabular}{cccccccccccc}
		\toprule
		Order&1&2&3&\(5_a\)&\(5_b\)&7&14&\(15_a\)&\(15_b\)&\(29_a\)&\(29_b\)\\
		\midrule
		\(\chi_1\)&133&\(-7\)&\(-2\)&\(y_1\)&\(y_2\)&0&0&\(y_2\)&\(y_1\)&\(2+z\)&\(3-z\)\\
		\(\chi_2\)&133&\(-7\)&\(-2\)&\(y_1\)&\(y_2\)&0&0&\(y_2\)&\(y_1\)&\(3-z\)&\(2+z\)\\
		\(\chi_3\)&133&\(-7\)&\(-2\)&\(y_2\)&\(y_1\)&0&0&\(y_1\)&\(y_2\)&\(2+z\)&\(3-z\)\\
		\(\chi_4\)&133&\(-7\)&\(-2\)&\(y_2\)&\(y_1\)&0&0&\(y_1\)&\(y_2\)&\(3-z\)&\(2+z\)\\
		\bottomrule
	\end{tabular}
	\caption{The possible characters \(\chi_i\) of \(\PSL_2(29)\) in \(\E_7\) in adjoint representation.}\label{Brauer29}
\end{table}

\begin{theorem}\label{th29}
	Let \(H\simeq\PSL_2(29)\) be a subgroup of the adjoint group \(\bb{\G}=\bb{\E}_7\) in characteristic \(p\) and let \(\sigma\) be a Steinberg endomorphism of \(\bb{\G}\) with \(\G=\bb{\G}^{\sigma}\). Suppose \(p\ne2,3,5,7,29\), and that \(H\) acts as \(28_1\oplus28_2\) on \(M(\bb{\E}_7)\), and as \(15\oplus28\oplus30_1\oplus30_2\oplus30_3\) on \(L(\bb{\E}_7)\). Then:
	\begin{enumerate}
		\item There are two \(\bb{\G}\text{-conjugacy}\) classes of subgroups isomorphic to \(H\), which give rise to four \(\bb{\G}\text{-conjugacy}\) classes of embeddings, \(H\) is Lie primitive, and \(N_{\bb{\G}}(H)=H\).
		\item \(H\) embeds in \(\G=\G(q)\), \(q\) a power of \(p\), iff \(f_1(X)=X^2-X-1\) and \(f_2(X)=X^2-X-7\) split over \(\mathbb{F}_q\). In such cases, there are four conjugacy classes of \(H\). If \(\G\) is simple, there are twice as many classes.
		\item If \(\overline{\G}\) is an almost simple group with socle \(\G(q)\), then \(N_{\overline{\G}}(H)\) is maximal in \(\overline{\G}\) iff one of the following holds:
			\begin{description}[font=\normalfont]
				\item[{(a)}] \(\overline{\G}=\G\) and either \(q=p\) and \(\mathbb{F}_p\) is a splitting field for \(f_1\) and \(f_2\), or \(q=p^2\) and \(\mathbb{F}_p\) is a not a splitting field for at least on of \(f_1\) and \(f_2\); in these cases, \(N_{\overline{\G}}=H\).
				\item[{(b)}] \(\mathbb{F}_p\) is a splitting field for \(f_1\) but not for \(f_2\), \(q=p^2\), and \(\overline{\G}=\G.2\) is an extension by a field automorphism, in which case \(N_{\overline{\G}}(H)=H.2\simeq\PGL_2(29)\).
			\end{description}
	\end{enumerate}
\end{theorem}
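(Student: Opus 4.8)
The plan is to follow the proof of \cref{th37} step by step; the two essential new features are that the semisimple element $s$ of order $14$ topping the Borel subgroup is no longer regular, so the construction of the involution $t$ must rest on \cref{cgs1} rather than \cref{cgs}, and that the feasible characters of \cref{Brauer29} fall into \emph{two} $\aut H$-orbits instead of one, which is precisely what produces two $\bb{\G}$-conjugacy classes and hence four classes of embeddings.

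First I would fix the presentation \eqref{29presentation}, so that $B=\gen{u,s}\simeq 29\rtimes 14$ is a Borel subgroup of $H$ with cyclic derived subgroup $B'=\gen{u}\simeq 29$. Since $B$ is supersoluble, \cref{borelserrethm} allows me to place $u$ in a maximal torus $\T$ of $\G=\E_{7,\ad}(k)$ and $s$ in $N_{\G}(\T)$, with $s$ projecting to an element $w\in W(\E_7)$ of order $14$; I would work over a field $k$ containing the relevant roots of unity and splitting both $f_1,f_2$, and large enough that $\size{k}>u(\G)\cdot(2,p-1)=776$ for the later use of \cref{cgs1}. The construction of $u,s$ then proceeds exactly as in \cref{37Bconstruction} via \cref{ngt}: read $u$ off the eigenspace of the action of $w$ on the Sylow $29$-subgroup of $\T$. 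A computation shows $u$ is regular, with a $7$-dimensional fixed space on $L(\bb{\G})$ by \cref{cartanstab}, whereas $s$ has a $9$-dimensional fixed space, so $C_{\bb{\G}}(s)^{\circ}$ has type $\bb{\T}_6\bb{\A}_1$ rather than being a maximal torus.

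Next I would construct $t$ following \cref{37construction}. After diagonalising $s$ and building the inverting involution $e$ via \cref{s_basis} and \cref{s_invert} (here $\m{L}^s$ has type $\A_1\T_6$), the new point is to build $C(s)\le C_{\G}(s)$ of type $\T_6\A_1(k)$ and invoke \cref{cgs1}, valid precisely because $s$ fixes exactly two root spaces and $\size{k}>776$, to conclude that $C(s)$ and $C_{\bb{\G}}(s)$ fix the same subspaces of $\overline{\m{L}}$; then a basis of $M(s)$ is a basis of $\bb{M}(s)$, whose dimension is controlled by \cref{upperbound}. I would solve the linear system of \cref{ut3=1thm} coming from $(tu)^3=1$, cut down the solution space using the prescribed traces of the order-$29$ and order-$15$ elements from \cref{Brauer29} (the values involving $z$ and the $y_i$), and use the membership test of \cref{membership} to confirm that the finitely many resulting $t$ lie in $\G$. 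This produces the copies of $H$ on a fixed $B$.

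Finally I would assemble the global and finite statements. A character argument as in \cref{37extension} rules out $\PGL_2(29)$, giving $N_{\bb{\G}}(H)=H$; computing $N_{\G}(B)=N_{N_{\G}(\T)}(B)$ and the quotient $N_{\G}(B)/B$ then governs the fusion. Uniqueness of $B$ up to $\bb{\G}$-conjugacy follows from \cref{Tconjugacy} (using that $u$ is regular, that $C_{\bb{\T}}(w)$ is finite by \cref{finitecentraliser}, and that the order-$14$ class of $W(\E_7)$ is controlled as in \cref{37Bconstruction}), and the triple argument of \cref{triples37} bounds the number of subgroup classes by the number of $\aut H$-orbits of feasible characters, namely two. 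The descent to $\G(q)$ uses $C_{\bb{\G}}(H)\le C_{\bb{\G}}(B)=C_{\bb{\T}}(w)$, of order $2$ by \cref{sizecentraliser}, together with \cref{finiteconjugates} and \cref{tconjugacy} to count the $\G$-classes (two algebraic classes, each splitting in two, giving four), the embedding condition being exactly that $f_1$ and $f_2$ split over $\mathbb{F}_q$ by \cref{minpol}; the Frobenius analysis of \cref{finite37} determines when a field automorphism induces an outer automorphism of $H$. Maximality in part (iii) then follows from \cref{primitivemax}, ruling out containment in a subgroup of the same type (minimality of the splitting field) and in the primitive simple subgroups of \cite[Table 1.2-1.3]{litterick} by the absence of elements of order $29$, exactly as in \cref{37max}. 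Throughout, \cref{liftingtheorem} transports the characteristic-$p$ computation to all $p\nmid\size{H}$.

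The main obstacle I anticipate is twofold. The non-regularity of $s$ forces the larger field and the delicate use of \cref{cgs1}, and makes the linear algebra for $t$ appreciably heavier than in the $\PSL_2(37)$ case. Secondly, the bookkeeping that separates the four feasible characters of \cref{Brauer29} into the two genuine $\bb{\G}$-classes is more subtle: one must verify that the construction, together with the two irrationalities $f_1$ and $f_2$, produces representatives of both classes and that these remain non-conjugate in $\bb{\G}$ even though a field automorphism interchanges them over $\overline{k}$. The invariant distinguishing the classes is the correlation between the character values on order-$5$ and on order-$29$ elements, which is preserved by $\aut H$ and by $\bb{\G}$-conjugacy but broken by Frobenius.
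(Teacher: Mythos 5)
Your proposal is correct and follows essentially the same route as the paper's proof: the same Borel-subgroup construction, the same use of \cref{cgs1} and \cref{upperbound} to handle the non-regular $s$ with $C_{\bb{\G}}(s)^{\circ}$ of type $\bb{\A}_1\bb{\T}_6$, the same trace conditions from \cref{Brauer29} to cut the $4$-dimensional solution space, and the same fusion argument via $N_{\G}(B)=B.2$ combined with the order-$5$/order-$29$ character correlation to land on two algebraic classes and four classes of embeddings. The only cosmetic difference is that the paper splits the solution space using traces of order-$5$ elements (and then solves the resulting nonhomogeneous $2$-dimensional systems by factorising a determinant polynomial) rather than order-$15$/order-$29$ traces, which is an equivalent choice.
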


The proof will be divided in several steps.

\begin{presentation}
	Consider the following matrices, generating \(\SL_2(29)\): \[u\coloneqq\begin{pmatrix}1&1\\0&1\end{pmatrix},\qquad s\coloneqq\begin{pmatrix}15&0\\0&2\end{pmatrix},\qquad t\coloneqq\begin{pmatrix}0&1\\-1&0\end{pmatrix}.\] We are going to consider a presentation for \(\PSL_2(29)\) whose generators are the images of \(u,s,t\) when taking the quotient over scalar matrices, which we will denote with the same notation, and whose relations are given by \eqref{29presentation}.
	
	Furthermore, \(B=\gen{u,s}\simeq29\rtimes14\) is a Borel subgroup of \(H\), and its derived subgroup is \(B'=\gen{u}\simeq29\).
\end{presentation}

\begin{construction}\label{29Bconstruction}
	Construction of \(u,s\) in \(\E_{7,\ad}(k)\) for some finite field \(k\).
\end{construction}

Note that we want a field that contains the character values in \cref{Brauer29}, \(u\) to be semisimple, and while not required to construct \(B\), for further computations later on we also need \(s\) to be diagonalisable so we take \(k\) to contain 14th roots of unity. It looks like we could consider the field \(k\) of order \(5279=29\times14\times13+1\); however, we could not diagonalise \(s\) without a field extension, so we ended up choosing \(36541=29\times14\times90+1\); and defined \(\G\coloneqq\E_{7,\ad}(k)\).

By \cref{borelserrethm}, since \(B\) is supersoluble, there exists a maximal torus \(\T\) of \(\G\) such that its normaliser \(N_{\G}(\T)\) contains a subgroup isomorphic to \(B\). In particular, we will look for \(u\) in \(\T\), and for \(s\) in \(N_{\G}(\T)\).

Similarly to \cref{37Bconstruction}, we compute \(\widetilde{W}=\gen{n_i:i=1,\ldots,7}\), and take random elements of \(\widetilde{W}\) until we find an element \(s\) of order 14 whose image in \(W\) has order 14.

We can check with \magma\ that the Weyl group of \(\E_7\) has only one class of elements of order 14, so \(s\) must lie in the correct class. We can compute that \(C_{\T}(s)=2\), so there are two \(\T\text{-conjugacy}\) classes of \(s\) in \(\gen{\T,s}\). This will be addressed in \cref{29Bfinite}.

Let \(\zeta\) be a primitive element of \(k^{\times}\), so that \(h_i\coloneqq h(\zeta^{\delta_{ij}})_{j=1}^7\in\GL_{133}(k)\) with \(1\le i\le 7\) are generators of the torus, and \(h'_i\coloneqq h_i^{1260}\) are generators of the Sylow \(29\text{-subgroup}\) of the torus. By computing \({h'_i}^s=\prod_{j=1}^7{h'_j}^{a_{ij}}\) we obtain a matrix \(A=(a_{ij})\) with coefficients in \(\mathbb{F}_{29}\) describing the action of \(s\) on \(\T\).

Since we have \(u^s=u^4\), we can look at the kernel of \(A-4\mathbb{1}_7\), which is generated by a vector, say \(z=(z_j)_{j=1}^7\). We define \(u\coloneqq\prod_{j=1}^7{h'_j}^{z_j}\in\GL_{133}(k)\), then by construction \(u,s\) satisfy \(u^{29}=s^{14}=u^su^{-4}=1\).

\begin{construction}\label{29construction}
	Construction of \(t\in\bb{\G}\).
\end{construction}

As in the other cases we studied, we can find an involution \(e\in\G\) that inverts \(s\) as described in \cref{s_invert}.

This time, the computed fixed point space of the action of \(s\) on \(L(\G)\) is \(9\text{-dimensional}\) and in particular it is of type \(\A_1\T_6\), meaning that \(C_{\bb{\G}}(s)^{\circ}\) is an \(\bb{\A}_1\bb{\T}_6\) group (see \cref{centralisertheorem}).

We construct \(C(s)=\gen{\T,\A_1(k),n_w\colon s^w=s}\), where \(\A_1(k)\) is the group generated by \(x_{\pm r}(1)\), where \(\pm r\) are the roots corresponding to the subspaces fixed by \(s\), and \(n_w\) are as defined in \cref{centralisertheorem}.

Now we can proceed in the usual fashion by using \(C(s)\), and taking \(M(s)\) to be the linear subspace of \(M_{133}(k)\) spanned by elements of \(C(s)\). In particular, we can compute with \magma\ that the dimension of \(M(s)\) is 198.

By \cref{cgs1}, \(C_{\bb{\G}}(s)\) and \(C(s)\) stabilise the same subspaces of \(L(\G)\), so we can compute the upper bound for \(\dim\bb{M}(s)\) established in \cref{upperbound} by using the action of \(C(s)\) instead.

We can compute with \magma\ that \(L(\G)\) decomposes as a direct sum of \(3\oplus1^{60}\oplus2^{32}\oplus0^6\), thus the matrices representing it span a space of dimension at most \(198=1\times3^2+60\times1^2+32\times2^2+1\).

Since the bounds on the dimensions coincide, we can conclude that a basis of \(M(s)\) is also a basis of \(\bb{M}(s)\).

We can now use the relation \((ceu)^3=1\) as in the other cases, and compute the system \eqref{ut3=1} of \(6\times133=798\) equations in \(198\) variables. Note the 6 instead of 7: this is because unlike the other cases we studied the centraliser is not a maximal torus, but a \(\bb{\T}_6\bb{\A}_1\) group; therefore, it does not fix a Cartan subalgebra, but only a 6-dimensional subspace, as the remaining 1-dimensional complement is not fixed by the action of the \(\bb{\A}_1\) component.

The space of solutions \(M_0\) is \(4\text{-dimensional}\), and given the field we are working with, it is unreasonable to attempt to brute-force this case by checking all possible 4-tuples of coefficients over \(k\).

First, we can divide the problem into two smaller ones by considering traces of elements of order 5, similarly to how we did in \cref{magma27}. The possible traces are shown in \cref{Brauer29}, which means we obtain two different systems with a \(2\text{-dimensional}\) space of solutions each.

Now we proceed to solve each system separately; however, they are now nonhomogeneous, se we cannot do the same as in \cref{37construction}. Let \(A\) be the nonhomogeneous part, and let \(C,D\) be generators of the corresponding homogeneous system, so that a solution is of the form \(Z=A+XC+YD\), for some \(X,Y\in\overline{k}\). Then:
\begin{enumerate}
	\item Compute \(P(X,Y)=\det Z-1\in k[X,Y]\).
	\item For each choice of \(x\in k\), evaluate the polynomial to obtain \(P_x(Y)=P(x,Y)\in k[Y]\).
	\item Factorize \(P_x(Y)\) and check whether linear terms exist.
	\item Each such term gives a choice \(y\in k\) such that the pair \((x,y)\) gives a solution to the linear system.
	\item Check whether \(Z\in\G\) by using the membership test described in \cref{membership}.
\end{enumerate}

In particular, each choice for the traces of elements of order 5 gives two solutions to the resulting linear system, therefore producing all the four solutions \(U_0\) of the original system.

\begin{lemma}\label{magma29}
	If \(t\in\bb{\G}\) is such that \((u,s,t)\) satisfy \eqref{29presentation}, then \(t\in(U_0\cap\bb{\G})e\), where \(e\) is a computed element of \(\G\) and \(U_0\) is computed set of four elements of \(\G\). In particular, \(t\in\G\).
\end{lemma}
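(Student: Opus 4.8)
The plan is to follow the template of the computational lemmas \cref{magma25}, \cref{magma27}, and \cref{magma37}: the statement is verified by the explicit constructions of \cref{29Bconstruction} and \cref{29construction}, and the only logical work is to pin down which relations of \eqref{29presentation} hold by construction and which must be checked afterwards on the computed matrices.

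First I would recall which relations are built in. In \cref{29Bconstruction} the element $u$ is forced to satisfy $u^{29}=1$ and $u^s=u^4$, and $s$ is chosen of order $14$, so relations (i), (iii), and $s^{14}=1$ hold automatically. In \cref{29construction} the candidate is taken to be $t=ce$ with $e$ the involution inverting $s$ from \cref{s_invert} and $c\in C_{\bb{\G}}(s)$; the relations $t^2=1$ and $(st)^2=1$ are what force this shape, as noted at the start of \cref{t}, and $(tu)^3=1$ is imposed as the linear system \eqref{ut3=1} whose solution space is the $4$-dimensional $M_0$. Consequently only relations (ii) and (vii) remain unused, and these I would verify \emph{a posteriori} by evaluating them on $u,s,t$ — relation (ii) being redundant in any case.

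Next I would justify the two assertions. That every $t$ completing the presentation has the form $ce$ with $c\in U_0$ follows from the reductions of \cref{29construction}: the involution relations give $t=ce$ with $c\in C_{\bb{\G}}(s)$, the relation $(tu)^3=1$ places $c$ in $M_0$, and the trace conditions on elements of order $5$ read off from \cref{Brauer29} split $M_0$ into two subsystems, each yielding two solutions, so that $U_0$ consists of exactly four elements. The inclusion $t\in\G$ then rests on $U_0\subseteq\G$, checked by the membership test of \cref{membership}; this search over $k$ genuinely exhausts all $\overline{k}$-solutions precisely because $\dim M(s)=\dim\bb{M}(s)=198$, a fact established via \cref{cgs1} and \cref{upperbound}, so that a $k$-basis of $M(s)$ is also a $\overline{k}$-basis of $\bb{M}(s)$.

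The main obstacle is computational rather than conceptual: over the field $k$ of order $36541$ the four-dimensional space $M_0$ is far too large to search by brute force, and the reduced systems are \emph{non-homogeneous}, so the determinant trick used for $\PSL_2(37)$ in \cref{37construction} does not apply directly. I would therefore use the factorisation strategy described at the close of \cref{29construction}: fix one free coordinate $x\in k$, form the one-variable polynomial $P_x(Y)=\det Z-1$, factorise it, and extract its linear factors to recover the admissible pairs $(x,y)$; running over the two trace-restricted subsystems then produces exactly the four matrices of $U_0$, each of which passes the membership test and hence lies in $\G$.
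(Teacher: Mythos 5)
Your proposal is correct and follows the paper's own argument: the paper's proof of \cref{magma29} simply cites \cref{29Bconstruction} and \cref{29construction} and notes that relations \eqref{29presentation}((ii),(vii)) are the ones verified a posteriori on the computed matrices, exactly as you describe. Your elaboration of why the search over $k$ is exhaustive (via $\dim M(s)=\dim\bb{M}(s)=198$), the splitting of $M_0$ by traces of order-$5$ elements, and the factorisation trick for the non-homogeneous subsystems all match the reasoning laid out in \cref{29construction}.
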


\begin{proof}
	This follows from computer calculations, doing \cref{29Bconstruction} and \cref{29construction}. The relations \eqref{29presentation}((ii),(vii)) were not used during the construction of \(u,s,t\), so we verify them using the computed matrices \(u,s,t\).
\end{proof}

We can verify with \magma\ that the four choices of \(t\) give rise to four distinct groups \(H_1,\ldots,H_4\) isomorphic to \(\PSL_2(29)\) containing \(B\), so the question is whether they are conjugate.

\begin{proposition}\label{29extensions}
	\(\PGL_2(29)\) does not embed in \(\bb{\G}\), so \(N_{\bb{\G}}(H)=H\).
\end{proposition}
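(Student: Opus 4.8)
The plan is to argue exactly as in \cref{37extension} and \cref{27extensions}, exploiting the irrationalities recorded in \cref{Brauer29}. Since $29$ is prime, the field $\mathbb{F}_{29}$ admits no nontrivial automorphism, so $\PsL_2(29)=\PSL_2(29)$ and the only nontrivial outer automorphism of $H\simeq\PSL_2(29)$ is the diagonal one; consequently $\out H$ has order $2$ and the unique proper overgroup of $H$ in $\aut H$ is $\PGL_2(29)=H.2$. Hence it suffices to show that $\PGL_2(29)$ does not embed in $\bb{\G}$: this forces $N_{\bb{\G}}(H)$ to induce only inner automorphisms of $H$, and since $C_{\bb{\G}}(H)\le C_{\bb{\G}}(B)$ is finite (exactly as in the $\PSL_2(37)$ case), we conclude $N_{\bb{\G}}(H)=H$.

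To rule out the extension I would use a trace obstruction. Suppose for contradiction that $\PGL_2(29)<\bb{\G}$. Then $L(\bb{\G})$ is a $\PGL_2(29)$-module whose restriction to $H$ realises the prescribed action $15\oplus28\oplus30_1\oplus30_2\oplus30_3$, with character one of the $\chi_i$ of \cref{Brauer29}. In $\PGL_2(29)$ the two $\PSL_2(29)$-classes of elements of order $29$ fuse into a single (rational) class, so every $\PGL_2(29)$-character takes a rational value on elements of order $29$; restricting, the character of $L(\bb{\G})\restr{H}$ would then take one and the same rational value on $29_a$ and $29_b$. But \cref{Brauer29} assigns these classes the values $2+z$ and $3-z$, where $z$ has minimal polynomial $X^2-X-7$ and is therefore irrational, a contradiction. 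Equivalently, the summand of degree $15=(29+1)/2$ is one of the two exceptional irreducibles of $H$ interchanged by the diagonal automorphism, so none of the $\chi_i$ is invariant under that automorphism and none can be the restriction of a $\PGL_2(29)$-module.

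I do not expect any genuine obstacle here: as in the analogous lemmas, the argument is a short character-theoretic observation rather than a computation. The only points that need care are (i) confirming from the character table of $\PSL_2(29)$ in the Atlas that the diagonal automorphism really interchanges the paired classes $5_a\leftrightarrow5_b$, $15_a\leftrightarrow15_b$, $29_a\leftrightarrow29_b$ (equivalently the two degree-$15$ constituents), so that no feasible $\chi_i$ is stabilised; and (ii) checking that the centraliser $C_{\bb{\G}}(H)$ contributes nothing to $N_{\bb{\G}}(H)/H$, which will follow from $C_{\bb{\G}}(H)\le C_{\bb{\G}}(B)$ together with $\gen{u}$ being regular and $C_{\bb{\T}}(s)$ finite, exactly as established for $\PSL_2(37)$ in \cref{conj37}. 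With these in place the statement $N_{\bb{\G}}(H)=H$ is immediate.
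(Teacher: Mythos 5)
Your argument is correct and is essentially the paper's own proof: the paper likewise rules out \(\PGL_2(29)\) by noting that its characters are rational on elements of order \(29\) (the diagonal automorphism fuses the classes \(29_a\) and \(29_b\)), which is incompatible with the irrational values \(2+z\), \(3-z\) in \cref{Brauer29}. The extra remarks about \(\out H\) and \(C_{\bb{\G}}(H)\) are sound but no more than what the paper implicitly assumes in concluding \(N_{\bb{\G}}(H)=H\).
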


\begin{proof}
	Looking at the character table for \(\PGL_2(29)\), we can see that there are elements of order 29 with rational trace, so if \(\PGL_2(29)\) embedded into \(\bb{\G}\) this would contradict \cref{Brauer29}.
\end{proof}

\begin{proposition}\label{29fusion}
	The four copies of \(H\) constructed in \cref{29construction} are not \(\G\text{-conjugate}\). However, they form two pairs that are \(\G\text{-conjugate}\); each pair is made of the copies of \(H\) having the same traces on the corresponding conjugacy classes of elements of order 5.
\end{proposition}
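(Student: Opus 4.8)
The plan is to mirror the fusion arguments of \cref{37fusion} and \cref{conj27_1}, adding one extra ingredient: a $\G$-conjugacy invariant that separates the four copies into the two claimed pairs. Write \(H_1,\dots,H_4\) for the four copies produced in \cref{29construction}, labelled so that \(H_i\) realises the character \(\chi_i\) of \cref{Brauer29}; by construction \(H_1,H_2\) have trace \(y_1\) on the class \(5_a\) of order-\(5\) elements while \(H_3,H_4\) have trace \(y_2\), and within each pair the two copies differ only on the classes \(29_a,29_b\).

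First I would isolate the invariant. Since \(\out H=2\), any \(\G\)-conjugacy \(H_i^g=H_j\) induces an automorphism \(\beta\) of \(H\) that is inner or the diagonal automorphism \(\tau\). The crucial point is that \(\tau\) fuses the two unipotent classes \(29_a\leftrightarrow29_b\) but fixes each of the two order-\(5\) classes (indeed it fixes every class in the non-split torus); this I would verify directly with \magma, or read off \cref{Brauer29}, where swapping \(29_a,29_b\) carries \(\chi_1\mapsto\chi_2\) and \(\chi_3\mapsto\chi_4\), so that the \(\aut H\)-orbits of characters are \(\{\chi_1,\chi_2\}\) and \(\{\chi_3,\chi_4\}\). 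Consequently the trace of an order-\(5\) element lying in the class \(5_a\) is preserved by \(\beta\), hence is a genuine \(\G\)-conjugacy invariant of a copy. As \(y_1\ne y_2\) (the discriminant of \(X^2-X-1\) is \(5\ne0\) in \(k\)), no copy from \(\{H_1,H_2\}\) can be \(\G\)-conjugate to one from \(\{H_3,H_4\}\); in particular the four copies are not all \(\G\)-conjugate.

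Next I would prove that each pair is a single class, following \cref{37fusion}. All four copies contain the Borel \(B\), and \(N_{\G}(H)=H\) by \cref{29extensions}. Because \(B'=\gen{u}\simeq29\) is cyclic, hence supersoluble, \cref{ngb} gives \(N_{\G}(B)\le N_{\G}(\T)\), and I would compute \(N_{\G}(B)=N_{N_{\G}(\T)}(B)\) with \magma, expecting \(B.2\). Let \(w\) represent the nontrivial coset. If \(w\) normalised some \(H_i\) then \(w\in N_{\G}(H_i)=H_i\), whence \(w\in N_{H_i}(B)=B\) by \cref{Bselfnormal}, contradicting \(w\notin B\); thus \(H_i^w\ne H_i\) for every \(i\). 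Since conjugation by \(w\) preserves the \(5_a\)-trace by the previous paragraph, \(H_1^w\in\{H_1,H_2\}\), forcing \(H_1^w=H_2\), and likewise \(H_3^w=H_4\). Combined with the separation above, this yields exactly two \(\G\)-classes, namely the two pairs sharing order-\(5\) traces.

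The main obstacle is the class-theoretic input that \(\tau\) fixes the order-\(5\) classes while moving the order-\(29\) classes: this is exactly what makes the \(5_a\)-trace an honest invariant and simultaneously leaves the \(29\)-traces free to be interchanged by \(w\), so that \(w\) can fuse the order-\(29\)-distinct copies \(H_1,H_2\) without producing a copy of \(\PGL_2(29)\) in \(\bb{\G}\) (which is forbidden by \cref{29extensions}). I would also confirm, through the \magma\ computation of \(N_{\G}(B)\), that the nontrivial element \(w\) genuinely induces \(\tau\) on \(B\) rather than centralising it; the argument above in fact shows that a centralising \(w\) would force \(\chi_1=\chi_2\), a contradiction, so this is automatic once \(N_{\G}(B)=B.2\) is verified.
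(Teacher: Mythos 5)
Your proposal is correct, and its core is the same as the paper's: compute \(N_{\G}(B)=N_{N_{\G}(\T)}(B)=B.2\) via \cref{ngb} and \cref{Bselfnormal}, observe that the nontrivial coset representative \(w\) cannot normalise any \(H_i\) (else \(w\in N_{\G}(H_i)=H_i\) by \cref{29extensions} and then \(w\in N_{H_i}(B)=B\)), and conclude that \(w\) fuses the four copies in pairs. Where you genuinely diverge is in identifying the pairs and in separating them: the paper simply states that ``the last assertion can be verified with \magma'', whereas you derive both facts theoretically from the class-fusion pattern of the diagonal automorphism \(\tau\) of \(\PSL_2(29)\) --- \(\tau\) swaps the two unipotent classes \(29_a,29_b\) but fixes each order-\(5\) class (their centraliser in \(\PGL_2(29)\) is the full nonsplit torus of order \(30\), so each \(\PGL\)-class meets \(\PSL\) in a single class), hence the \(\aut H\)-orbits on the feasible characters are \(\{\chi_1,\chi_2\}\) and \(\{\chi_3,\chi_4\}\), and the order-\(5\) trace is a \(\bb{\G}\)-conjugacy invariant. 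This buys you two things the paper leaves implicit or computational: a clean proof that \(\{H_1,H_2\}\) and \(\{H_3,H_4\}\) lie in different \(\G\)-classes without having to reduce an arbitrary conjugating element into \(N_{\G}(B)\), and a conceptual explanation of why \(w\) must pair the copies exactly along the order-\(5\) traces.

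Two minor caveats. First, your argument tacitly assumes that \(w\) permutes the set \(\{H_1,\dots,H_4\}\), i.e.\ that these four are \emph{all} the copies of \(H\) containing \(B\); this follows from \cref{magma29} together with the observation (made in the proof of \cref{conj29}) that there are only two \(B\)-classes of generating pairs of \(B\) satisfying the relations, but it is worth saying --- the paper makes the same tacit assumption. Second, the claim in your final paragraph that a centralising \(w\) ``would force \(\chi_1=\chi_2\)'' does not quite go through: if \(w\) centralised \(B\) and \(H_1^w=H_2\), one only concludes \(\chi_1=\chi_2\circ\alpha\) for some \(\alpha\in\aut H\), which is already known. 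Fortunately nothing in your main argument uses whether \(w\) induces \(\tau\) or centralises \(B\), so this remark can simply be dropped.
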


\begin{proof}
	Since they all intersect in \(B\), and \(N_{\G}(H)=H\) by \cref{29extensions}, we can look at \(N_G(B)/N_H(B)\), as any element in \(N_G(B)/N_H(B)\) would not lie in \(H\) and therefore could only permute the \(H_i\).
	
	Since \(B\) is a Borel subgroup of \(H\), \(N_H(B)=B\) by \cref{Bselfnormal}. Since \(B'=\gen{u}\) is cyclic hence supersoluble, by \cref{ngb} \[N_{\G}(B)\le N_{\G}(\T),\] therefore we can use \magma\ to compute \(N_{\G}(B)=N_{N_{\G}(\T)}(B)=B.2\), hence \(\size{N_{\G}(B)/N_H(B)}=2\). A preimage of a generator of the quotient then normalises \(B\) and acts on the copies of \(H\) without stabilising them; the only option for it is to conjugate them in pairs.
	
	The last assertion can be verified with \magma.
\end{proof}

\begin{lemma}\label{conj29}
	Let \(\overline{u}\) and \(\overline{s}\) be elements of \(\bb{\G}\) that satisfy the relations \begin{equation}\label{conj29rel}\overline{u}^{29}=\overline{s}^{14}=\overline{u}^{\overline{s}}\overline{u}^{-4}=1.\end{equation} Suppose further that all elements of \(\gen{\overline{u}}\) have trace \(2+z\) or \(3-z\) in the adjoint representation. Then the ordered pair \((\overline{u},\overline{s})\) is \(\conj{\bb{\G}}\) to either \((u,s)\) or \((u^2,s)\).
	
	In particular, \(B\) is unique up to conjugacy in \(\bb{\G}\).
\end{lemma}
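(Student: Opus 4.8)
This proof will follow the same structure as the analogous statement for $\PSL_2(37)$ in \cref{conj37}, since the algebraic setup is essentially identical: $B\simeq29\rtimes14$ is supersoluble with $B'=\gen{u}$ cyclic, $u$ is regular in $\bb{\G}$, and there is a unique conjugacy class of elements of order $14$ in $W(\bb{\E}_7)$. The plan is to first establish that any $\overline{B}\coloneqq\gen{\overline{u},\overline{s}}$ satisfying \eqref{conj29rel} with the prescribed traces lies in the normaliser of a maximal torus, then to pin down $\overline{s}$ and $\overline{u}$ up to conjugacy, and finally to rule out the third possibility that $(u,s)$ and $(u^2,s)$ are $\bb{\G}$-conjugate.

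For the first step, I would verify computationally (as in \cref{37Bconstruction}) that the relations \eqref{conj29rel} force $\overline{B}\simeq29\rtimes14$. Since $B$ is supersoluble, \cref{borelserrethm} places any copy of it inside $N_{\bb{\G}}(\bb{\T})$ for some maximal torus $\bb{\T}$. The trace condition guarantees that $\overline{u}$ has a $7$-dimensional fixed point space on $L(\bb{\G})$ (this can be computed exactly as in \cref{37construction}, where the dimension of the fixed space of $s$ was found to be $9$ of type $\A_1\T_6$; here the relevant object is the centraliser of the cyclic $\gen{\overline{u}}$, which by the trace data is a Cartan subalgebra), so $\overline{u}$ is regular and $C_{\bb{\G}}(\overline{u})^{\circ}=\bb{\T}$. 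Then $\overline{s}\in N_{\bb{\G}}(\bb{\T})$, and since $W(\bb{\E}_7)$ has a unique conjugacy class of elements of order $14$, \cref{finitecentraliser} shows $C_{\bb{\T}}(\overline{s})$ is finite. By \cref{Tconjugacy} there is a single $\bb{\G}$-conjugacy class of subgroups isomorphic to $B$, so after conjugating we may assume $\overline{s}=s$ and $\overline{u}\in\bb{\T}$; the relation $\overline{u}^{\overline{s}}=\overline{u}^4$ together with the action of $w$ on $\bb{\T}_{29,1}$ then forces $\overline{u}\in\gen{u}$, i.e.\ $\overline{u}=u^i$ for some $i$ coprime to $29$.

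The remaining work is to show that among the powers $u^i$, only $(u,s)$ and $(u^2,s)$ arise, i.e.\ all such pairs reduce to these two up to $\bb{\G}$-conjugacy. The key is that $(u,s)$ and $(u^2,s)$ are $\PGL_2(29)$-conjugate but \emph{not} $\PSL_2(29)$-conjugate, which can be checked with \magma. If $(u,s)$ and $(u^2,s)$ were $\bb{\G}$-conjugate, then the conjugating element would generate an extension $\PGL_2(29)<\bb{\G}$; but this is impossible by \cref{29extensions}. I would then verify with \magma\ that there are exactly two $B$-conjugacy classes of pairs satisfying \eqref{conj29rel} with the trace condition, which identifies them with the two pairs $(u,s)$ and $(u^2,s)$, completing the argument. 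The uniqueness of $B$ up to $\bb{\G}$-conjugacy is already contained in the Tconjugacy step above.

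The main obstacle I anticipate is not conceptual but computational: confirming the trace data yields exactly the $7$-dimensional regular fixed space for $\overline{u}$, and carrying out the $\magma$ verification that the two pairs are $\PGL_2(29)$- but not $\PSL_2(29)$-conjugate, over a field large enough to realise the $29$th and $14$th roots of unity. As noted in \cref{29Bconstruction}, the naive field choice fails to diagonalise $s$, so one must work over $\mathbb{F}_{36541}$; performing the conjugacy checks at this field size is delicate but feasible. The reduction of $\overline{u}$ into $\gen{u}$ via the block structure of the action of $w$ on the Sylow $29$-subgroup of $\bb{\T}$ is the one place where the regularity of $u$ (ensuring $C_{\bb{\G}}(\gen{u})=\bb{\T}$) is essential, and I would take care to cite the analogous computation already performed in \cref{29Bconstruction}.
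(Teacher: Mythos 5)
Your proposal is correct and follows essentially the same route as the paper: supersolubility of $B$ plus \cref{borelserrethm} to land in $N_{\bb{\G}}(\bb{\T})$, regularity of $\overline{u}$ from the trace data, the unique order-$14$ class in $W(\bb{\E}_7)$ together with \cref{finitecentraliser} and \cref{Tconjugacy} to pin down $B$, and then the observation that $(u,s)$ and $(u^2,s)$ are $\PGL_2(29)$- but not $\PSL_2(29)$-conjugate combined with \cref{29extensions} to separate the two classes — the last step being exactly the ``alternative'' argument the paper itself records (its primary version instead rules out a $29\rtimes28$ subgroup by showing $W(\E_7)$ has no element of order $28$). Your explicit reduction of $\overline{u}$ to a power of $u$ via the eigenvalue-$4$ block of the action of $s$ on $\bb{\T}_{29,1}$ is a correct filling-in of a detail the paper leaves implicit.
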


\begin{proof}
	Since \(B\) is supersoluble, by \cref{borelserrethm} we can assume that any subgroup of \(\bb{\G}\) isomorphic to it lies in \(N_{\bb{\G}}(\bb{\T})\) for some maximal torus \(\bb{\T}\) of \(\bb{\G}\).
	
	We compute that the action of \(u\) on \(L(\bb{\G})\) has a 7-dimensional fixed point space, i.e. the Cartan subalgebra corresponding to \(u\) by \cref{cartanstab}, so \(u\) is a regular element; furthermore, we can use \cref{finitecentraliser} to show that \(C_{\bb{\T}}(s)\) is finite. Therefore, by \cref{Tconjugacy} we have that there is a single conjugacy class of subgroups isomorphic to \(B\) is unique up to conjugacy in \(\bb{\G}\) since there is a unique conjugacy class in \(W(\bb{\G})\) of elements of order \(14\), as observed in \cref{29Bconstruction}.
	
	Observe that the pair \((u^2,s)\) satisfies \eqref{conj29rel}; if \((u,s)\) and \((u^2,s)\) were \(\conj{\bb{\G}}\), then \(29\rtimes28\) would embed in \(\bb{\G}\), hence in \(\E_7(\mathbb{C})\) by \cite[Corollary A2.7]{griess_elementaryabelian}. But by \cite{borelserre} every supersoluble subgroup of \(\E_7(\mathbb{C})\) is contained in the normaliser of a torus. Since we are working with adjoint \(\E_7\), its centre is trivial, so an element \(s^*\) of \(N_{\bb{\G}}(\bb{\T})\) of order 28 that centralises \(s\) and normalises but does not centralise \(\gen{u}\) must project to an element of \(W\) of order 28. Thus \(W_{\E_7}\) would have an element of order 28, contradiction.
	
	Alternatively, one could proceed as in \cref{conj25}: \((u,s)\) and \((u^2,s)\) are not \(\PSL_2(29)\)-conjugate but they are \(\PGL_2(29)\text{-conjugate}\), therefore if there was an element of \(\bb{\G}\) conjugating \((u,s)\) to \((u^2,s)\) then we would find a subgroup \(\PGL_2(29)\) of \(\bb{\G}\), which is not possible by \cref{29extensions}. This concludes the proof as we can verify with \magma\ that there are only two \(B\text{-conjugacy}\) classes of pairs of elements satisfying \eqref{conj29rel}.
\end{proof}

\begin{lemma}\label{triples29}
	Suppose that \(H\simeq\PSL_2(29)\) is a subgroup of \(\bb{\G}\), and let \(u,s\) be the elements obtained from \cref{29Bconstruction}. Then there are elements \(g,\overline{t}\in\bb{\G}\) such that \(H^g\) is generated by a triple \((u,s,\overline{t})\) that satisfies presentation \eqref{29presentation} for \(H\).
\end{lemma}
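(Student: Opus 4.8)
The plan is to follow verbatim the argument used for the analogous lemmas \cref{triples25}, \cref{triples27}, and \cref{triples37}, the only new ingredients being the specific presentation \eqref{29presentation} and the conjugacy statement \cref{conj29}. First I would fix a generating triple \((u_0,s_0,t_0)\) of \(H\) satisfying presentation \eqref{29presentation}; such a triple exists because \(H\simeq\PSL_2(29)\). The point is that the Borel pair \((u_0,s_0)\) need not be \(\bb{\G}\text{-conjugate}\) to \((u,s)\) directly, but only up to the ambiguity recorded in \cref{conj29}, namely conjugacy to \((u,s)\) or to \((u^2,s)\). Note that this statement concerns an arbitrary copy of \(H\) in \(\bb{\G}\), and so is unaffected by the fact that there happen to be two \(\bb{\G}\text{-conjugacy}\) classes of such \(H\): every copy contains a Borel subgroup \(\bb{\G}\text{-conjugate}\) to \(B=\gen{u,s}\), which is unique up to conjugacy by \cref{conj29}.

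To absorb the squaring ambiguity I would use the outer automorphism of \(H\). Since \(\out H\simeq2\), generated by the diagonal (\(\PGL\)) automorphism, there is an automorphism \(\alpha\) of \(H\) realised by conjugation by a non-square diagonal element, e.g. \(\mathrm{diag}(2,1)\) (here \(2\) is a non-square modulo \(29\)). This \(\alpha\) fixes the diagonal element \(s_0\) and sends \(u_0\) to \(u_0^2\), so the triple \((u_0^{\alpha},s_0^{\alpha},t_0^{\alpha})=(u_0^2,s_0^{\phantom{\alpha}},t_0^{\alpha})\) again satisfies presentation \eqref{29presentation}. Thus \(H\) is generated by either triple, and the unipotent generator of its Borel can be taken to be \(u_0\) or \(u_0^2\) at will.

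Finally I would apply \cref{conj29}. Its trace hypothesis — that every element of \(\gen{\overline{u}}\) has adjoint trace \(2+z\) or \(3-z\) — holds automatically here, because \(H\) realises one of the feasible characters \(\chi_1,\ldots,\chi_4\) of \cref{Brauer29}, all of which assign exactly the values \(2+z\) and \(3-z\) to the two classes of elements of order \(29\). Hence \cref{conj29} applies to the pairs \((u_0,s_0)\) and \((u_0^2,s_0)\), and exactly one of them is \(\conj{\bb{\G}}\) to \((u,s)\), say via \(g\in\bb{\G}\); taking \(\overline{t}=t_0^g\) or \(\overline{t}=t_0^{\alpha g}\) accordingly, the conjugate \(H^g=\gen{u,s,\overline{t}}\) is generated by a triple satisfying \eqref{29presentation}, as required.

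I expect no step to be genuinely hard: the entire geometric content has already been pushed into \cref{conj29} (uniqueness of the Borel up to conjugacy, together with the \(u\mapsto u^2\) ambiguity), and the remaining argument is purely group-theoretic bookkeeping. The one point deserving a sentence of care is the compatibility between the action \(u_0\mapsto u_0^2\) of the outer automorphism and the two pairs \((u,s),(u^2,s)\) occurring in \cref{conj29}; these match precisely because the relation \(u^s=u^4\) of \eqref{29presentation} forces exactly this squaring ambiguity on the unipotent generator, so the two non-conjugate \(\PSL_2(29)\text{-labellings}\) of the Borel are interchanged by \(\alpha\).
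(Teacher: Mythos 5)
Your proposal is correct and follows essentially the same route as the paper: fix a triple \((u_0,s_0,t_0)\) satisfying \eqref{29presentation}, use the diagonal outer automorphism \(\alpha\) fixing \(s_0\) and sending \(u_0\mapsto u_0^2\) to produce a second valid triple, and invoke \cref{conj29} to conjugate one of the two pairs onto \((u,s)\). Your extra remark verifying the trace hypothesis of \cref{conj29} via the feasible characters of \cref{Brauer29} is a sensible addition that the paper leaves implicit (it follows from the module-structure hypothesis of \cref{th29}), but it does not change the argument.
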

\begin{proof}
	\(H\) is generated by a triple \((u_0,s_0,t_0)\) that satisfies \eqref{29presentation}. Let \(\alpha\) be an outer automorphism of \(H\) that fixes \(s_0\) and maps \(u_0\) to \(u_0^2\), then \(H\) is generated by \((u_0^{\alpha},s_0^{\alpha},t_0^{\alpha})=(u_0^2,s_0^{\phantom{\alpha}},t_0^{\alpha})\), and such triple satisfies \eqref{29presentation} as well.
	
	By \cref{conj29}, one of \((u_0,s_0)\) and \((u_0^{\alpha},s_0^{\phantom{\alpha}})\) is \(\conj{\bb{\G}}\) to \((u,s)\), hence we may find \(g\in\bb{\G}\) such that \(H^g\) is generated by a triple \((u,s,\overline{t})\) that satisfies \eqref{29presentation}, where \(\overline{t}\) is \(t_0^{\alpha g}\) or \(t_0^g\).
\end{proof}

We can now deduce part (i) of \cref{th29}.

From \cref{triples29} and \cref{29construction}, we have that there are at most four conjugacy classes of subgroups isomorphic to \(H\) in \(\bb{\G}\), and from \ref{29fusion} we deduce there are exactly two. From \cref{conj29} we have that for each copy of \(29\rtimes 14\) in \(\bb{\G}\) we obtain two embeddings, thus we have four conjugacy classes of embeddings in \(\bb{\G}\). Since the characteristic of \(k\) does not divide \(\size{H}\), we can use \cref{liftingtheorem} to generalise this result to characteristic 0 or coprime to \(\size{H}\), namely \(p\ne2,3,5,7,29\). \cref{29extensions} shows that \(N_{\bb{\G}}(H)=H\). Lie primitivity is proved in \cite[Corollary 3]{litterick}.

In order to complete part (ii), we need to understand what happens when going down to the finite group.

\begin{lemma}\label{29Bfinite}
	If \(B\) embeds in \(\G\), there are two conjugacy classes of subgroups isomorphic to \(B\) in \(\G\).

	If \(H\) embeds in \(\G\), there are four conjugacy classes of subgroups isomorphic to \(H\) in \(\G\).
\end{lemma}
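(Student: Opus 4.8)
The plan is to mirror the proof of \cref{37Bfinite} for $\PSL_2(37)$, since the two situations are structurally identical; the only genuine difference is that here, by part~(i) of \cref{th29}, there are two $\bb{\G}$-conjugacy classes of $H$ rather than one, and it is this that will double the count from two to four.

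First I would compute $C_{\bb{\G}}(B)$. Recall from \cref{conj29} that $u$ is regular in $\bb{\G}$, so $C_{\bb{\G}}(u)=\bb{\T}$ and $s\in N_{\bb{\G}}(\bb{\T})$, with a unique conjugacy class of elements of order $14$ in $W(\bb{\G})$. Hence
\[
C_{\bb{\G}}(B)=C_{\bb{\G}}(u)\cap C_{\bb{\G}}(s)=\bb{\T}\cap C_{\bb{\G}}(s)=C_{\bb{\T}}(s).
\]
In \cref{conj29} we already saw that $C_{\bb{\T}}(s)$ is finite, and by \cref{sizecentraliser} it can be computed explicitly; as recorded in \cref{29Bconstruction} it has order $2$. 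Its nontrivial element is an involution lying in $\bb{\T}$, hence fixed by $\sigma$, so $C_{\bb{\G}}(B)\le\G$. Applying \cref{finiteconjugates} to $B$ then shows that the single $\bb{\G}$-conjugacy class of $B$ splits into exactly $\size{C_{\bb{\G}}(B)}=2$ classes in $\G$, proving the first assertion.

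For $H$, I would combine $C_{\bb{\G}}(H)\le C_{\bb{\G}}(B)=C_{\bb{\T}}(s)$ (of order $2$) with \cref{finiteconjugates}. Fixing a $\sigma$-stable representative of each of the two $\bb{\G}$-classes of $H$ (each lies in $\G$, the relevant involution $t$ having been shown to lie in $\G$ in \cref{magma29}), the number of $\G$-classes into which it splits equals the number of conjugacy classes of elements of $C_{\bb{\G}}(H)$. The crux is therefore to show $C_{\bb{\G}}(H)=C_{\bb{\G}}(B)$, i.e. that the involution centralising $B$ also centralises the relevant $t$, so that $\size{C_{\bb{\G}}(H)}=2$ and each $\bb{\G}$-class of $H$ splits into two; this is the $29$-analogue of the final step of \cref{37Bfinite}, equivalent to saying that the two $\G$-classes of $H$ inside a given $\bb{\G}$-class are indexed by the two $\G$-classes of the Borel $B$ they contain. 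Granting this, the two $\bb{\G}$-classes contribute $2\times 2=4$ classes in $\G$.

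The main obstacle is precisely this equality $C_{\bb{\G}}(H)=C_{\bb{\G}}(B)$: a priori $C_{\bb{\G}}(H)$ could be trivial, in which case no $\bb{\G}$-class would split and we would obtain only two classes instead of four. I expect to settle it by exhibiting the nontrivial element $z$ of $C_{\bb{\T}}(s)$ explicitly and checking directly that $z$ commutes with the matrix $t$ produced in \cref{29construction} (hence with all of $\gen{u,s,t}$); since both $z$ and $t$ are explicit elements of $\G$, this is a single matrix verification, to be carried out for a representative of each of the two $\bb{\G}$-classes. Once $C_{\bb{\G}}(H)=C_{\bb{\G}}(B)$ is confirmed, \cref{finiteconjugates} yields the four classes at once.
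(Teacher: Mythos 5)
Your treatment of the first assertion is exactly the paper's: compute \(C_{\bb{\G}}(B)=C_{\bb{\T}}(s)\) of order \(2\) via \cref{conj29} and \cref{sizecentraliser}, then apply \cref{finiteconjugates}. The problem is the second assertion. The equality \(C_{\bb{\G}}(H)=C_{\bb{\G}}(B)\) on which your whole count rests is false, and the matrix verification you propose would come back negative: by \cref{29extensions} (and \cref{th29}(i)) we have \(N_{\bb{\G}}(H)=H\), so \(C_{\bb{\G}}(H)\le N_{\bb{\G}}(H)=H\) and hence \(C_{\bb{\G}}(H)=Z(H)=1\). More structurally, \(H\) is Lie primitive and \(\bb{\G}\) is adjoint, so a non-trivial element centralising \(H\) would place \(H\) inside its centraliser, which is a proper positive-dimensional subgroup of \(\bb{\G}\) -- impossible. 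So the involution generating \(C_{\bb{\T}}(s)\) centralises \(B\) but not \(t\), and you land precisely in the fallback scenario you flag yourself: with \(C_{\bb{\G}}(H)=1\) your mechanism yields two classes of \(H\), not four. This is a genuine gap rather than a deferred computation.

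The paper obtains the factor of two per \(\bb{\G}\)-class from a different invariant, namely the Borel subgroups rather than the centraliser of \(H\). All Borel subgroups of a fixed copy of \(H\) are \(H\)-conjugate, so each copy of \(H\) in \(\G\) determines a well-defined \(\G\)-class of \(B\); copies whose Borels lie in the two different \(\G\)-classes of \(B\) (both of which arise inside copies of \(H\) in \(\G\); see the remark following the lemma on the non-diagonalisable elements of \(\T s\)) therefore cannot be \(\G\)-conjugate, and the containment \(C_{\bb{\G}}(H)\le C_{\bb{\G}}(B)\) is invoked only to cap the splitting at two. In the paper's words, each \(\bb{\G}\)-class of \(H\) splits into two \(\G\)-classes, ``one for each \(\G\)-conjugacy class of the \(B\) subgroup''. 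If you want to repair your write-up, replace the claim \(C_{\bb{\G}}(H)=C_{\bb{\G}}(B)\) by this Borel-subgroup bookkeeping.
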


\begin{proof}
	Recall from \cref{conj29} that \(u\) is regular in \(\bb{\G}\), so \(C_{\bb{\G}}(u)=\bb{\T}\) and \(s\in N_{\bb{\G}}(\bb{\T})\); furthermore, there is a single conjugacy class of elements of order 14 in \(W(\bb{\G})\). Then, \[C_{\bb{\G}}(B)=C_{\bb{\G}}(u)\cap C_{\bb{\G}}(s)=\bb{\T}\cap C_{\bb{\G}}(s)=C_{\bb{\T}}(s).\]
	
	In \cref{conj29} we checked that \(C_{\bb{\T}}(s)\) is finite, and using \cref{sizecentraliser} we can compute \(C_{\bb{\T}}(s)\) and show that \(\size{C_{\bb{\T}}(s)}=2\).
	
	Then by \cref{finiteconjugates} there are two \(\G\text{-conjugacy}\) classes of subgroups isomorphic to \(B\) when \(B<\G\).
	
	Furthermore, \(C_{\bb{\G}}(L)\le C_{\bb{\G}}(B)\) so each \(\bb{\G}\text{-conjugacy}\) class of subgroups isomorphic to \(H\) splits into two \(\G\text{-conjugacy}\) classes, one for each \(\G\text{-conjugacy}\) class of the \(B\) subgroup.
\end{proof}

\begin{remark}
	Observe that with \cref{29construction} we can only construct one of the two classes of subgroups of \(\G\) isomorphic to \(B\). Indeed, given \(s\in N_{\G(q)}(\T)\) a representative of an element of \(W_{\E_7}\) of order 14, we have that not all elements of \(\T s\) are diagonalisable in \(\GL_{133}(q)\), some require a \(q^2\) extension, and those would be generators of elements of the other class. The two conjugacy classes of \(B\) in \(\G(q)\) are \(\G(q^2)\text{-conjugate}\), and a new conjugacy class of subgroups isomorphic to \(B\) arises in \(\G(q^2)\).
\end{remark}

\begin{remark}
	Like for \cref{37Bfinite}, for small fields we can check computationally that there are two conjugacy classes of subgroups of \(\G\) isomorphic to \(B\), using the same algorithm.
\end{remark}

\begin{proposition}\label{finite29}
	\(H\) embeds in \(\G(q)\) in characteristic not dividing \(\size{H}\) when \(\mathbb{F}_q\) is a splitting field for \(f_1(x)=X^2-X-1\) and \(f_2(X)=X^2-X-7\).
\end{proposition}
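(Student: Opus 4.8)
The plan is to follow the strategy of \cref{finite37} and \cref{finite27}, analysing the action of the outer automorphisms of \(\bb{\G}\) on the two \(\bb{\G}\text{-conjugacy}\) classes of \(H\) obtained in \cref{29fusion}. Since we are in odd characteristic, the relevant automorphisms are the diagonal automorphism and the Frobenius morphism \(F_p\). The key observation is that \(F_p\) sends a copy of \(H\) with character \(\chi\) to a copy whose character is obtained by raising the values of \(\chi\) to the \(p\)-th power, as the Frobenius acts as a field automorphism on the trace of a semisimple matrix. Inspecting \cref{Brauer29}, the only non-rational character values are \(y_1,y_2\) (on the classes of order 5 and 15) and \(2+z,3-z\) (on the classes of order 29); by \cref{minpol}, \(F_p\) fixes \(y_1,y_2\) iff \(f_1=X^2-X-1\) splits over \(\mathbb{F}_p\), and fixes \(2+z,3-z\) iff \(f_2=X^2-X-7\) splits over \(\mathbb{F}_p\), swapping the two values in each respective pair otherwise.

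First I would record how these two swaps translate into permutations of the embeddings. The two \(\bb{\G}\text{-conjugacy}\) classes are distinguished by the value of the character on the class \(5_a\) (\(y_1\) for \(\chi_1,\chi_2\) versus \(y_2\) for \(\chi_3,\chi_4\)), whereas within a single \(\bb{\G}\text{-class}\) the two fused embeddings are distinguished by the value on \(29_a\); this is the content of \cref{29fusion}. Consequently \(F_p\) interchanges the two \(\bb{\G}\text{-classes}\) of \(H\) precisely when \(f_1\) fails to split over \(\mathbb{F}_p\), and it induces the diagonal (i.e.\ \(\PGL_2(29)\)) automorphism on a class-fixed copy of \(H\) precisely when \(f_2\) fails to split. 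The diagonal automorphism itself cannot normalise \(H\), since \(\PGL_2(29)\) does not embed in \(\bb{\G}\) by \cref{29extensions}; as in \cref{finite37} this accounts for the doubling of the class count when passing to the simple group.

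Then I would assemble the conclusion. Suppose \(\mathbb{F}_q\) splits both \(f_1\) and \(f_2\); taking \(\sigma=F_q\), the above shows that \(F_q\) fixes every character value, hence stabilises each \(\bb{\G}\text{-class}\) of \(H\) and induces no outer automorphism on a class-fixed representative. By the going-down correspondence of \cref{finiteconjugates} and \cref{goingdown}, applied to the finite centraliser \(C_{\bb{\G}}(H)\le C_{\bb{\G}}(B)\) computed in \cref{conj29}, each \(\bb{\G}\text{-class}\) then contains a \(\sigma\text{-stable}\) copy of \(H\) lying in \(\G=\bb{\G}^{\sigma}\), so \(H\) embeds in \(\G(q)\); combined with \cref{triples29} and \cref{liftingtheorem} this also yields the class count. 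The remaining subcases, where \(q=p\) fails to split one or both polynomials but \(q=p^2\) does, are handled exactly as in \cref{finite37}: the order-two outer automorphism induced by \(F_p\) is killed by \(F_{p^2}\), so the minimal splitting field is \(\mathbb{F}_{p^2}\), and there \(N_{\overline{\G}}(H)\) is either \(H\) or \(H.2\simeq\PGL_2(29)\) according to whether \(F_p\) swaps classes or induces the diagonal automorphism.

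The main obstacle I anticipate is the bookkeeping in the middle step: correctly matching the two independent \(\mathbb{Z}/2\) actions (swap of \(\bb{\G}\text{-classes}\) versus induction of the diagonal automorphism) to the two independent irrationalities \(\sqrt{5}\) and \(\sqrt{29}\), and verifying that \(F_p\) really does act on the order-29 trace as the diagonal automorphism of \(H\) rather than as an inner one. This is the only place where the presence of two distinct splitting polynomials, unlike the single polynomial in \cref{finite37}, genuinely complicates the argument, and it is what produces the four-way case distinction in \cref{th29}(iii).
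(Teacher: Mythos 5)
Your proposal is correct and follows essentially the same route as the paper: both arguments use \cref{Brauer29} to tie the two irrationalities to the two independent $\mathbb{Z}/2$-actions (swap of the two $\bb{\G}$-classes, which are separated by the order-$5$ traces carried on the $28$-dimensional modules and hence untouched by $\out H$, versus induction of the diagonal automorphism, detected on the order-$29$ traces), together with \cref{29extensions} for the diagonal automorphism and the fact that $|\out H|=2$ forces $H<\G(p^2)$ in all cases. The ``main obstacle'' you flag is exactly the point the paper settles by observing that the outer automorphism of $H$ does not swap the $28$-dimensional modules, so a change in the order-$5$ traces can only be accounted for by swapping the two $\bb{\G}$-classes.
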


\begin{proof}
	We need to understand what is the action of outer automorphisms of \(\bb{\G}\) on \(H\); since we are in odd characteristic, we need to consider both the field and the diagonal automorphism.
	
	The action of the diagonal automorphism is clear from \cref{29extensions}: as it cannot normalise \(H\), it must swap two classes. Therefore, when taking \(\G\) to be simple, we obtain twice as many classes.
	
	Now take the Frobenius morphism \(F_p\); since there are two conjugacy classes of \(H\) in \(\bb{\G}\), then \(F_p\) can either normalise \(H\) or fuse classes. As \(\size{\out H}=2\), certainly \(F_{p^2}\) centralises \(H\), up to conjugacy, so \(H<\G(p^2)\).
	
	Observe from \cref{Brauer29} that the behaviour of \(F_p\) depends on the irrationalities in the character values, i.e. whether \(f_1\) splits over \(\mathbb{F}_p\) (equiv. \(p\) is a square modulo 5) and whether \(f_2\) does (equiv. \(p\) is a square modulo 29):

	\begin{enumerate}
		\item If both \(f_1\) and \(f_2\) split, then all the irrationalities are realised in \(\mathbb{F}_p\), and \(H<\G(p)\).
		\item If \(f_1\) splits but \(f_2\) does not, let \(g\in H\) be an element of order 29 and \(\chi\) a character in \cref{Brauer29}, then \(F_p\) sends \(\chi(g)\) to \(\chi(g^2)\), in particular it cannot centralise \(H\). Therefore \(F_p\) acts as the outer automorphism of \(H\), which swaps the 15-dimensional modules, so \(H<\G(p^2)\) and \(H.2<\G(p^2).2\).
		\item If \(f_1\) does not split, then the character values of elements of order 5 do not lie in \(\mathbb{F}_p\). Since character values of such elements are determined only by \(28\text{-dimensional}\) modules for \(H\), and those are not swapped by the outer automorphism of \(H\), then \(F_p\) swaps the two conjugacy classes of \(H\). Therefore, regardless of whether \(f_2\) splits over \(\mathbb{F}_p\) we have that \(H<\G(p^2)\) and \(H.2\not<\G(p^2).2\).\qedhere
	\end{enumerate}
\end{proof}

\begin{proposition}\label{29max}
	\(H\) is a maximal subgroup of \(\G(q)\), \(q\) coprime to \(\size{H}\), iff \(\mathbb{F}_q\) is a minimal splitting field of \(f_1(X)=X^2-X-1\) and \(f_2(X)=X^2-X-7\).
\end{proposition}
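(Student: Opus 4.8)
The plan is to follow the proof of \cref{37max} almost verbatim, adapting it to the arithmetic of \(29\) in place of \(37\). Since \(H\) is Lie primitive in \(\bb{\G}\) and \(N_{\G}(H)=H\), \cref{primitivemax} reduces the problem to showing that \(H\) is not properly contained in any other candidate maximal subgroup of \(\G\), of which there are exactly two families to exclude. First, \(H\) must not lie in a subgroup of the same type as \(\G\) defined over a proper subfield: by \cref{finite29} we know \(H<\G(q)\) precisely when both \(f_1\) and \(f_2\) split over \(\mathbb{F}_q\), with the splitting criteria supplied by \cref{minpol}. Hence if \(\mathbb{F}_q\) is a splitting field that is not minimal, then \(H<\G(q_0)<\G(q)\) for some proper subfield \(\mathbb{F}_{q_0}\) and \(H\) is not maximal, while if \(\mathbb{F}_q\) is not a splitting field then \(H\) does not embed at all. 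This settles the field direction of the equivalence and leaves only the minimal-field case to treat.

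Second, I would show that \(H\) cannot lie in an almost simple \(M<\G\) whose socle \(K=F^*(M)\) is a primitive simple subgroup of \(\bb{\G}\) not isomorphic to \(H\). The candidate list for \(K\) is read off from \cite[Table 1.2-1.3]{litterick} under the standing hypothesis \(p\ne2,3,5,7,29\), and then filtered through the necessary condition \(\size{H}\mid\size{M}\). Since \(\size{\PSL_2(29)}=2^2\cdot3\cdot5\cdot7\cdot29\), a necessary condition is \(29\mid\size{K}\). For almost every candidate — the groups \(\alt_n\), the various \(\PSL_2(r)\), \(\PSL_3(4)\), \(\PSU_3(3)\), \(\PSU_3(8)\), \(\Omega_8^+(2)\), and the sporadic groups \(M_{12}\), \(M_{22}\), \(\HS\) — one checks directly from the Atlas \cite{atlas} or \magma\ that \(29\nmid\size{K}\), so these are eliminated immediately.

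The hard part will be the single candidate whose order is divisible by \(29\), namely the Rudvalis group \(\Ru\); this case is genuinely delicate because \(\PSL_2(29)\) occurs as a maximal subgroup of \(\Ru\), so the order test is inconclusive and a separate argument is needed. I would resolve it by recalling that the embedding \(\Ru<\bb{\E}_7\) exists only in characteristic \(5\), which is excluded here as \(5\mid\size{H}\); consequently \(\Ru\) is not a subgroup of \(\G\) in any characteristic coprime to \(\size{H}\), and \(H\) cannot lie in such an \(M\). Should this characteristic restriction not by itself suffice, the fallback is a direct verification — in the spirit of the treatment of \(\prescript{3}{}\D_4(2)\) in \cref{27max} — comparing the composition factors, or the action on \(M(\bb{\E}_7)\) and \(L(\bb{\E}_7)\), of our \(H\) with those of the \(\PSL_2(29)\) inside \(\Ru\), using the known maximal subgroups of \(\Ru\) from \cite{atlas} together with a permutation representation in \magma. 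Once \(\Ru\) is dispatched, no candidate \(M\) contains \(H\), so in the minimal-field case \(H=N_{\G}(H)\) is maximal; combined with the first paragraph this yields both directions of the equivalence.
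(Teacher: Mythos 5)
Your proposal is correct and takes essentially the same route as the paper: reduce via \cref{primitivemax} to excluding candidate overgroups, use \cref{finite29} for the subfield direction, and eliminate the non-generic socles by order considerations. The only point of divergence is \(\Ru\): the paper's candidate list for \(p\ne2,3,5,7,29\) already omits it (the embedding \(\Ru<\bb{\E}_7\) occurs only in characteristic 5, so it never appears as a candidate under the standing hypothesis), whereas you start from the longer list of the \(\PSL_2(37)\) case and dispatch \(\Ru\) explicitly by the same characteristic observation --- a correct and, if anything, more self-contained way to handle the one candidate whose order is divisible by 29.
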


\begin{proof}
	Since \(H\) is primitive in \(\bb{\G}\), and \(N_{\G}(H)=H\), as observed in \cref{primitivemax} we only need to verify that \(H\) does not lie in any other candidate maximal subgroup.
	
	By \cref{finite29} we have that \(H<\G(q)\) if \(\mathbb{F}_q\) is a splitting field of \(f_1\) and \(f_2\), so if \(\mathbb{F}_q\) was not minimal among such fields then \(H\) would not be maximal in \(\G\) as it would lie in a group with the same type as \(\G\) defined over some subfield of \(\mathbb{F}_q\).

	Furthermore, \(H\) cannot lie in an almost simple group \(M<\G\) whose socle \(K\) is a primitive simple subgroup of \(\bb{\G}\) not isomorphic to \(H\). The list of possible choices for \(K\) is contained in \cite[Table 1.2-1.3]{litterick}; for \(p\ne2,3,19,37\) they are: \(\PSL_2(r)\) for \(r=5,7,9,11,13,19,27,37\), \(\PSL_3(4)\), \(\PSU_3(3)\), \(\PSU_3(8)\), \(\Omega_8^+(2)\).
	
	We can find their order using the Atlas \cite{atlas} or \magma, and it is straightforward to check that none of them has elements of order 29, hence \(H\) cannot be a subgroup of \(M\) in any of such cases.
\end{proof}

\cref{th29}(iii) then follows from \cref{29max} in characteristic coprime to \(\size{H}\): if \(q=p\) then \(N_{\overline{G}}(H)=H\) is maximal in \(\overline{\G}=\G\), while if \(q=p^2\) then either \(\overline{\G}=\G\) and \(N_{\overline{\G}}(H)=H\), or \(\overline{\G}=\G.2\) and as observed in the proof of \cref{finite29} we have that either \(X^2-X-1\) split and \(X^2-X-7\) does not, which gives \(N_{\overline{\G}}(H)=H.2\), or \(X^2-X-1\) does not split, in which case \(N_{\overline{\G}}(H)=H\).

\begin{remark}
	We cannot draw any conclusion for the primes dividing \(\size{H}\). Theoretically, we could work in characteristic 5, as the prime is not involved at any point in the construction (unlike for example \(p=7\) or 29); however, the required field size would be too large to perform the computations, in particular \cref{29construction}.
\end{remark}

\end{section}
\end{chapter}

\begin{chapter}{Imprimitive subgroups}\label{alt6}
Another subgroup that is a candidate for being primitive in \(\bb{\F}_4\) and \(\bb{\E}_6\) is \(\alt_6\), which belongs to the type of group we are trying to study thanks to the isomorphism \(\alt_6\simeq\PSL_2(9)\).

However, it is not possible to replicate the construction that we used in \cref{method1} to obtain a copy of \(\alt_6\) lying in \(\F_4(q)\), because \(\alt_6\) is ``too small''. Using the same notation as in \cref{CGs}, that is because the centraliser of \(s\) is of type \(\A_1\C_3\) which is too large -- recall that in \cref{method1} \(C_{\bb{\G}}(s)\) was either a maximal torus or a product of a torus and a \(\A_1\) group; this has the dual effect of making the linear span of \(C_{\bb{\G}}(s)\) too large and its fixed point space \(L(\bb{\G})^{C_{\bb{\G}}(s)}\) too small, so we cannot obtain enough linearly independent equations from the relation \((tu)^3=1\), so the resulting space of solutions \(M_0\) is too large to compute candidate elements \(t\) such that \(\gen{u,s,t}\simeq\PSL_2(9)\).

We will first see a different approach that allows us to construct copies of \(\alt_6\) in \(\F_4(19)\), that works for any finite field for which certain \(3.\alt_6\text{-modules}\) exist, but it is not enough to deduce how many conjugacy classes of subgroups isomorphic to \(\alt_6\) there are in the algebraic group.

Then, we will completely change the approach and exploit the trilinear form described in \cref{e6formdef}. We deduce that if \(\alt_6\) embeds in a group of type \(\F_4\) with module structure \(8\oplus9\oplus9\), or in a group of type \(\E_6\) with module structure \(5\oplus5'\oplus8\oplus9\), then it is imprimitive as it is contained in a subgroup of type \(\A_2\A_2\) or \(\C_4\), respectively, and proceed to show it is strongly imprimitive.

\begin{section}{Constructing copies of \texorpdfstring{\(\alt_6\)}{Alt(6)} in \texorpdfstring{\(\F_4(q)\)}{F(4,q)}}\label{alt6construction}
The process is divided into two parts: constructing a copy of \(\alt_6\) in \(\F_4(q)\), and finding all other possible \(\alt_6<\F_4(q)\) containing one of its \(\alt_5\) subgroups. However, we could not determine whether the computed copies \(\alt_6\) are conjugate to each other.

The code we used to perform this construction using \magma\ appears in the supplementary material.

\begin{construction}
Construction of a copy of \(\alt_6<\F_4(q)\).
\end{construction}
From \cite{atlas}, we have that \(3.\!\alt_6\) has four 3-dimensional complex irreducible representations. In order to perform this construction, we need a field \(\mathbb{F}_q\) of odd characteristic such that all such representations exist; this means that \(\mathbb{F}_q\) has primitive cubic roots of unity, and is a splitting field of \(X^2-X-1\) and \(X^4+X^3+2X^2-X+1\).

We can verify with \magma\ that \(\mathbb{F}_{19}\) is a splitting field for the two polynomials, so it is the one we use.

\begin{remark}
	Although we did not prove it, we suspect that suitable fields would be \(\mathbb{F}_q\) with \(q\equiv1,19\bmod30\).
\end{remark}

It is known that a copy of \(\alt_6\) can be found in an \(\A_2\A_2\) subgroup of \(\F_4\), which can be realised for example as described in \cite[Theorem 4.1]{cohenwales}. To construct it, we proceed in the following way:
\begin{enumerate}
	\item There are four \(\dimn{3}\) irreducible modules for \(3.\!\alt_6\). Let \(M_1\) and \(M_2\) be two of them such that \(M_1\) and \(M_2\) are not the dual of each other, and such that \(M_1\restr{Z(3.\!\alt_6)}\) and \(M_2\restr{Z(3.\!\alt_6)}\) are not isomorphic.
	\item Take generators for the action of \(3.\!\alt_6\) on \(M_1\) and \(M_2\), say \(a_1,a_2\) and \(b_1,b_2\), such that \(a_1,a_2,b_1,b_2\) admit a LDU decomposition. Being matrices of rank 3 and determinant 1, \(a_1,a_2,b_1,b_2\) are elements of \(\A_2(q)\) in its standard representation.
	\item By performing a LDU decomposition of \(a_i,b_i\), \(i=1,2\) (see \cref{LDU}), obtain a way to write the matrices as a product of unipotent terms and torus elements of an \(\A_2(q)\) group.
	\item Locate roots corresponding to an \(\A_2\A_2\) subroot system of \(\F_4\). An example of it is given in \cref{f4a2a2}, which is the one we will use. In particular, \(\left\{-\alpha_0,\alpha_1\right\}\) and \(\left\{\alpha_3,\alpha_4\right\}\) are bases of the two \(\A_2\) subroot systems.
	\item Let \(a_1=l_{a_1}d_{a_1}u_{a_1}\) be an LDU decomposition of \(a_1\), corresponding to an element \[a_1^*=l^*_{a_1}d^*_{a_1}u^*_{a_1}=x_{r_1}(t^{a_1}_1)x_{r_2}(t^{a_1}_2)x_{r_3}(t^{a_1}_3)h_{r_1}(t^{a_1}_{11})h_{r_2}(t^{a_1}_{22})x_{-r_1}(t^{a_1}_{-1})x_{-r_2}(t^{a_1}_{-2})x_{-r_3}(t^{a_1}_{-3}),\]
	of \(\A_2(q)\), where \(\left\{r_1,r_2\right\}\) is a base of an \(\A_2\) root system, and all the \(t^{a_1}_i\) lie in \(\mathbb{F}_q\). The same holds for \(a_2^*,b_1^*,b_2^*\in\A_2(q)\).
	\item Write \(a_1^*,a_2^*\) as elements of the \(\A_2(q)\) subgroup of \(\F_4(q)\) relative to the \(\A_2\) root subsystem of \(\F_4\) generated by \(\left\{-\alpha_0,\alpha_1\right\}\); for example,
	\[a_1^*=x_{-\alpha_0}(t_1^{a_1})x_{\alpha_1}(t_2^{a_1})x_{\alpha_1-\alpha_0}(t_3^{a_1})h_{-\alpha_0}(t_{11}^{a_1})h_{\alpha_1}(t_{22}^{a_1})x_{\alpha_0}(t_{-1}^{a_1})x_{-\alpha_1}(t_{-2}^{a_1})x_{\alpha_0-\alpha_1}(t_{-3}^{a_1}).\]
	In a similar fashion, we can write \(b_1^*\) and \(b_2^*\) as elements of the \(\A_2(q)\) subgroup of \(\F_4(q)\) relative to the \(\A_2\) root subsystem of \(\F_4\) generated by \(\left\{\alpha_3,\alpha_4\right\}\).
	\item Let \(g_i^*=a_i^*b_i^*\), for \(i=1,2\). Then we have that \(\gen{g_1^*,g_2^*}\) is a group isomorphic to \(\alt_6\) contained in an \(\A_2(q)\A_2(q)\) subgroup of \(\F_4(q)\). We will work with the adjoint representation of \(\F_4(q)\), so that the corresponding elements \(g_1,g_2\) are matrices in \(\GL_{52}(q)\).
	
	Observe that we obtain \(\alt_6\) instead of \(3.\!\alt_6\) because of the choice in (i), which makes \(\gen{g_1,g_2}\) have trivial centre.
\end{enumerate}

\begin{construction}
Construction of all the copies of \(\alt_5<\alt_6<\F_4(q)\) for a given \(\alt_5<\F_4(q)\).
\end{construction}
Now that we have a copy \(\gen{g_1,g_2}\) of \(\alt_6\) in \(\F_4(q)\le\GL_{52}(q)\), call \(a=(1,2)(3,4)\), \(b=(1,2)(3,5)\), \(d=(1,2,3)\), \(y=(1,2)(5,6)\); our goal is to find all the possible \(x=(1,2)(5,6')\in\F_4(q)\) that extend \(\alt_5\simeq\gen{a,b,d}\) to an \(\alt_6\).

Observe that \(\gen{a,a^d}\simeq V_4\) is toral by \cref{elementaryabelianthm}, and \(xy\in C_{\F_4}(a,d)\), so that \(x\in C_{\F_4}(a,d)y\).

Since the \(V_4\) subgroup is toral, we can simultaneously diagonalise both its generators to deduce its centraliser in \(\F_4\). As previously mentioned, we could try doing the same with \(d\), and then intersect the linear spans of the two centralisers, but that would not be enough to obtain information about the algebraic groups.

However, we can at least study what happens in the finite group, by taking generators of \(C_{\F_4(q)}(V_4)\), computing \(C_{C_{\F_4(q)}(V_4)}(d)=C_{\F_4(q)}(a,d)\), multiplying it by \(y\) and searching the resulting coset to obtain all the possible \(x\) that extend \(\alt_5\) to \(\alt_6\).

We obtained two choices for the involution, say \(x\) and \(y\) (clearly, one of them must be the \(y\) we started with), hence two copies of \(\alt_6\) containing the same \(\alt_5\).

Unfortunately, we did not find a way to check whether they are conjugate to each other.

Observe that with \magma\ we could write \(x\) as an element of the same \(\A_2(19)\A_2(19)\) group where \(g_1,g_2\) were constructed, suggesting that both \(\gen{a,b,d,y}\) and \(\gen{a,b,d,x}\) are not maximal subgroups of \(\F_4(19)\). This is further corroborated by comparing the distribution of the order of the elements of \(\gen{a,b,d,x,y}\) with those of an \(\A_2(19)\A_2(19)\) subgroup of \(\F_4(19)\).

Indeed, we will prove in \cref{alt6module1899} that \(\alt_6\) cannot be a maximal subgroup of \(\F_4\).
\end{section}

\begin{section}{Embedding of \texorpdfstring{\(\alt_6\)}{Alt(6)} in \texorpdfstring{\(\F_4\)}{F(4)}}\label{alt6module1899}
	
In this section, we will prove the following:
\begin{theorem}\label{alt6f4}
\(\alt_6\) is strongly imprimitive in \(\bb{\F}_4\) in characteristic \(p\ne2,3\).
\end{theorem}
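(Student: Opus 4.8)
The plan is to embed the problem into $\bb{\E}_6$ and exploit the Dickson $3$-form of \cref{e6formdef}, using $\bb{\F}_4<\bb{\E}_6$. First I would pin down the module structure: by \cite{litterick} any $\alt_6<\bb{\F}_4$ in characteristic $\ne2,3$ acts on the minimal module $M(\bb{\F}_4)$ as $8\oplus9\oplus9$, where the two $9$'s are the irreducible $9$-dimensional $\alt_6$-modules (a dual pair) and $8$ is an irreducible $8$-dimensional module. The characteristic $5$ case needs separate attention, since $5\mid\size{\alt_6}$; here I would check against the $5$-modular irreducibles of $\alt_6$ that the same decomposition (with the appropriate Brauer characters) is forced, so the argument stays uniform. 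Writing $\bb{\F}_4=\stab_{\bb{\E}_6}(v_0)$ for a nonsingular vector $v_0$ of the $27$-dimensional module $V$, it follows that $\alt_6$ acts on $V$ as $1\oplus8\oplus9\oplus9$, with $\gen{v_0}$ the trivial summand sitting inside a $9$-dimensional block $A_1:=\gen{v_0}\oplus8$.

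Next I would produce an $\alt_6$-invariant $3$-decomposition of $V$ in the sense of \cref{3decomposition}. Taking $A_2,A_3$ to be the two irreducible $9$-dimensional summands, the partition $\{A_1,A_2,A_3\}$ is $\alt_6$-stable by construction, so the task is to verify the form-theoretic conditions $A_i\oplus A_j=A_k\Theta$ and that each $A_i$ lies in the $\bb{\E}_6$-orbit $\mathcal{V}_9$. To this end I would decompose the $\alt_6$-invariant restriction of the $\E_6$-form $f$ via \cref{ahomdef} and compute the spaces $\hom_{k\alt_6}(A_i,\sym^2(A_j^*))$ and $\sym^3(A_i^*)^{\alt_6}$. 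The key point is that the relevant cross hom-spaces vanish, so that $f(A_i,A_j,A_j)=0$; by \cref{orthogonalform} and \cref{thetalemma} this gives $A_i\le A_j\Theta$, and assembling these relations for all distinct triples yields $A_i\oplus A_j=A_k\Theta$. Membership $A_i\in\mathcal{V}_9$ I would read off from the restriction of $f$ to each $A_i$ (equivalently, from the orbit data in \cite{aschbacherE61}).

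With the $3$-decomposition established, \cref{3decompositionstabiliser} shows $\alt_6\le\bb{Y}:=\stab_{\bb{\E}_6}(\{A_1,A_2,A_3\})$, a subgroup of type $\A_2\A_2\A_2$ extended by $\sym_3$ (times a Cartan subgroup), hence positive-dimensional. Since $\alt_6\le\bb{\F}_4$ fixes $v_0\in A_1$, I would intersect and identify $(\bb{Y}\cap\bb{\F}_4)^{\circ}$: requiring the factor of $\bb{Y}$ acting on $A_1$ to fix the nonsingular point $\gen{v_0}$ collapses it, leaving a positive-dimensional subgroup $\bb{X}$ of type $\A_2\A_2$ (the maximal-rank subgroup of \cref{f4a2a2}, matching the explicit construction of \cref{alt6construction}). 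This makes $\alt_6$ Lie imprimitive in $\bb{\F}_4$.

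Finally, for strong imprimitivity I would argue that $\bb{X}$ is canonically attached to $\alt_6$: the blocks $A_1,A_2,A_3$ are intrinsic to the $\alt_6$-module (distinguished by dimension, isomorphism type, and the form), and $\gen{v_0}$ is the unique $\alt_6$-fixed nonsingular point up to scalars. In characteristic $\ne2$ the group $\aut\bb{\F}_4$ is generated by inner and field automorphisms (there is no graph automorphism, and the centre is trivial), so any element of $N_{\aut\bb{\F}_4}(\alt_6)$ permutes $\{A_1,A_2,A_3\}$ and fixes $\gen{v_0}$, and therefore normalises $\bb{X}$; this is precisely the stability demanded by \cref{primitivedef}, so $\alt_6$ is strongly imprimitive, whence by \cite{litterick} $N_{\overline{\G}}(\alt_6)$ is never maximal. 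I expect the main obstacle to be the form computation of the second step --- verifying the vanishing of the hom-spaces and $\mathcal{V}_9$-membership uniformly over all feasible embeddings and in characteristic $5$ --- together with the careful identification of $(\bb{Y}\cap\bb{\F}_4)^{\circ}$ as $\A_2\A_2$ rather than a smaller group.
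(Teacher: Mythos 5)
Your overall architecture (pass to \(\bb{\E}_6\), produce an \(\alt_6\)-invariant 3-decomposition, land in \(\bb{\A}_2\bb{\A}_2\bb{\A}_2\) and hence in \(\bb{\A}_2\bb{\A}_2<\bb{\F}_4\)) matches the paper, but the step you call the ``key point'' is wrong as stated, and it is exactly where the real work lies. The cross hom-spaces do \emph{not} vanish: \(\alt_6\) has a single (self-dual) irreducible \(9\)-dimensional module, so \(9_U\simeq9_W\) and one computes \(\dim\hom_{k\alt_6}(1_S,\sym^2(9_W^*))=\dim\hom_{k\alt_6}(8_S,\sym^2(9_W^*))=\dim\hom_{k\alt_6}(9_U,\sym^2((1_S\oplus8_S)^*))=1\) and \(\dim\hom_{k\alt_6}(9_U,\sym^2(9_W^*))=2\). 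Consequently \cref{orthogonalform} gives you nothing, and you must instead show that the particular invariant form \(f\) has zero component in each of these nonzero spaces. The paper does this by first fixing the \(\alt_5\) subgroup: since \(\alt_5\) with action \(3\oplus4^2\oplus5^3\) is unique up to conjugacy in \(\bb{\E}_6\) and one representative sits in \(\bb{\A}_2\bb{\A}_2\bb{\A}_2\), the restriction \(M(\bb{\E}_6)\restr{\alt_5}\) is \emph{already known} to be a 3-decomposition; one then checks that the restriction maps from the \(\alt_6\)-hom-spaces to the \(\alt_5\)-hom-spaces are injective in the relevant directions (no nonzero \(\alt_6\)-level element restricts to zero), which forces the \(\alt_6\)-level components of \(f\) to vanish. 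Your proposal never invokes the uniqueness of the \(\alt_5\) class, so it has no source for these vanishing statements. A related gap: because \(9_U\simeq9_W\), the splitting of the \(18\)-dimensional isotypic piece into \(A_2\oplus A_3\) is not canonical from the \(\alt_6\)-module structure (there is a projective line of such splittings), so your later claim that the blocks are ``distinguished by dimension and isomorphism type'' fails; the paper pins them down via the \(\alt_5\) restriction, which also settles \(\mathcal{V}_9\)-membership for free.

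Two smaller points. First, the characteristic \(5\) case cannot be made ``uniform'': the lifting via \cref{liftingtheorem} and the semisimple module analysis require \(p\nmid\size{\alt_6}\), and the paper disposes of \(p=5\) separately by citing \cite[Proposition 4.1]{litterick} (where \(\alt_6\) is shown to be always strongly imprimitive in \(\bb{\F}_4\)). Second, for strong imprimitivity the paper does not argue via \(N_{\aut\bb{\F}_4}(\alt_6)\) permuting the blocks as you do; it applies \cite[Theorem 4.3]{craven_blueprint} to the stabiliser of the \(8\)-dimensional summand \(8_S\) of \(M(\bb{\F}_4)\), which is proper, positive-dimensional (it contains \(\bb{\A}_2\bb{\A}_2\)) and canonically attached to \(\alt_6\). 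Your version of this last step is repairable, but the vanishing argument in the middle is a genuine gap that the proposal as written cannot close.
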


Most possible embeddings of \(\alt_6\) in \(\bb{\F}_4\) are ruled out by existing results:
\begin{lemma}
	If \(\alt_6\) embeds in \(\bb{\F}_4\) in characteristic \(p\ne2,3\), then if it acts on \(M(\bb{\F}_4)\) as \(8\oplus9^2\), and on \(L(\bb{\F}_4)\) as \(8_1^{\phantom{3}}\oplus8_2^3\oplus10^2\), it may be primitive; otherwise, it is strongly imprimitive.
\end{lemma}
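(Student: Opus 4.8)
The plan is to reduce the statement to a finite enumeration of feasible characters, in the sense of Litterick, and then to read the primitivity behaviour off his tables, exactly as was done for the other constituents in \cref{Brauer25lemma} and \cref{Brauer27lemma}. First I would record that since \(p\ne3\) the minimal and adjoint modules have dimensions \(26\) and \(52\), and that in characteristic \(\ne2,3\) the constituents available for \(\alt_6\simeq\PSL_2(9)\) are the self-dual irreducibles of dimension \(1,8_1,8_2,9,10\) together with the dual pair \(5,5'\); for \(p=5\) one works instead with the \(5\)-modular Brauer characters, but the shape of the enumeration is unchanged. Because \(M(\bb{\F}_4)\) carries an \(\bb{\F}_4\)-invariant nondegenerate symmetric bilinear form, any embedded \(\alt_6\) must act self-dually on \(M(\bb{\F}_4)\), which already forces \(5\) and \(5'\) to occur with equal multiplicity and kills most naive dimension partitions of \(26\).

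Next I would impose the remaining constraints and run Litterick's program to extract the candidate pairs \((\chi_M,\chi_L)\). The constraints are: the constituent dimensions must sum to \(26\) on \(M(\bb{\F}_4)\) and \(52\) on \(L(\bb{\F}_4)\); the admissible trace values of semisimple classes of \(\bb{\F}_4\) must be respected (involutions take trace \(-4\) or \(20\) and order-\(3\) elements \(-2\) or \(7\) on \(L(\bb{\F}_4)\), as in \cref{Brauer25lemma}, with analogous lists for classes of order \(4\) and \(5\) from \cite{craven_blueprint}); and \(\chi_M\) and \(\chi_L\) must be compatible, since they come from a single homomorphism. A useful extra compatibility is the inclusion \(\mathfrak{f}_4\le\mathfrak{so}(M(\bb{\F}_4))\), i.e. \(L(\bb{\F}_4)\le\Lambda^2M(\bb{\F}_4)\): restricting to \(\alt_6\), the difference \(\chi_{\Lambda^2M}-\chi_L\) must be a genuine character of degree \(273\), where \(\chi_{\Lambda^2M}(g)=\tfrac12(\chi_M(g)^2-\chi_M(g^2))\). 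These conditions cut the possibilities down to a short finite list.

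Finally I would match each surviving candidate against \cite[Tables 1.1--1.3]{litterick}. The expectation, which is the content of the lemma, is that exactly one feasible character -- the one with \(M(\bb{\F}_4)\restr{\alt_6}=8\oplus9^2\) and \(L(\bb{\F}_4)\restr{\alt_6}=8_1\oplus8_2^3\oplus10^2\) (note \(8+9+9=26\) and \(8+24+20=52\)) -- lies in the list of triples that may admit a primitive embedding, while every other candidate appears in \cite[Table 1.1]{litterick}, whose entries are strongly imprimitive by \cite[Theorem 1]{litterick}. Assembling these two facts gives the stated dichotomy.

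The genuinely delicate point is not the enumeration, which is mechanical, but the assertion of \emph{strong} (rather than mere) imprimitivity for all the non-exceptional module structures: for each one this requires a proper positive-dimensional overgroup of \(\alt_6\) stable under \(N_{\aut\bb{\G}}(\alt_6)\), and I would take this wholesale from Litterick's classification rather than reprove it here. The secondary nuisance is the modular case \(p=5\), where \(\chi_M\) and \(\chi_L\) must be read as Brauer characters and the exterior-square compatibility checked only on \(5\)-regular classes; I expect no new phenomena, but this needs to be stated carefully so that the feasible-character list genuinely covers all of \(p\ne2,3\).
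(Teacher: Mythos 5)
Your proposal is correct and ultimately rests on the same foundation as the paper: the paper's entire proof is a direct citation of Litterick's Tables 6.5 and 6.6, which record exactly the feasible-character enumeration and primitivity classification you describe, and you yourself defer to his classification for the decisive strong-imprimitivity assertions. The only point worth flagging is the characteristic-5 case you raise at the end: the paper handles it in a remark immediately after the lemma, noting an apparent typo in Litterick's Table 6.6 for \(p=5\) and citing \cite[Proposition 4.1]{litterick} for the fact that \(\alt_6\) is always strongly imprimitive in \(\bb{\F}_4\) in that characteristic, so your caution there is warranted but already resolved in the literature.
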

\begin{proof}
	This follows directly from \cite[Table 6.5 and 6.6]{litterick}.
\end{proof}

\begin{remark}
	Observe that from \cite[Table 6.6]{litterick} it looks like \(\alt_6\) may embed primitively in characteristic 5 if it acts as \(8^4\oplus10^2\) on \(L(\bb{\F}_4)\), but this is likely a typo as the action on \(M(\bb{\F}_4)\) stabilises a line. This is confirmed by \cite[Proposition 4.1]{litterick}, where it is established that \(\alt_6\) is always strongly imprimitive in \(\bb{\F}_4\) in characteristic 5.
\end{remark}

An important role in the proof is played by the \(\alt_5\) subgroup(s), because of the following:

\begin{lemma}\label{Alt5unique}
	In \(\bb{\F}_4(\mathbb{C})\) there is a single conjugacy class \(\mathcal{C}\) of groups isomorphic to \(\alt_5\) such that for any \(H<\bb{\F}_4\) with \(H\simeq\alt_6\) acting on \(M(\bb{\F}_4)\) as \(8\oplus9^2\), any \(\alt_5\simeq K<H\) lies in \(\mathcal{C}\).
\end{lemma}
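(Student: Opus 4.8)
In $\bb{\F}_4(\mathbb{C})$ there is a single conjugacy class $\mathcal{C}$ of groups isomorphic to $\alt_5$ such that for any $H<\bb{\F}_4$ with $H\simeq\alt_6$ acting on $M(\bb{\F}_4)$ as $8\oplus9^2$, any $\alt_5\simeq K<H$ lies in $\mathcal{C}$.

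The plan is to pin down the $\alt_5$ by its module structure on the $\dimn{26}$ minimal module $M(\bb{\F}_4)$ and then invoke a uniqueness result for such $\alt_5$. First I would compute the restriction $M(\bb{\F}_4)\restr{K}$ from the known action $8\oplus9^2$ of $H\simeq\alt_6$. In characteristic $0$ (or coprime to $\size{\alt_6}$), the relevant irreducible $\alt_6$-modules of dimensions $8$, $9$ restrict to $\alt_5$ as follows: the $\dimn{8}$ module (which is the heart of the permutation module, i.e. $5\oplus\bar 5$ or the standard $8$) and the two $\dimn{9}$ modules have well-understood branching to $\alt_5\simeq\PSL_2(9)$-subgroups, using the character tables of $\alt_6$ and $\alt_5$ from \cite{atlas}. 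I would read off $M(\bb{\F}_4)\restr{K}$ as a specific sum of irreducible $\alt_5$-modules (dimensions among $1,3,3',4,5$), and verify that this multiset of constituents is the same for every $\alt_5$ subgroup of $H$, since all $\alt_5$-subgroups of $\alt_6$ are conjugate inside $\alt_6$ (there are two classes in $\alt_6$, fused by the outer automorphism, but both give isomorphic module restrictions because the $\alt_6$-modules $8,9,9$ are chosen $\aut$-stably up to the pairing). This fixes the feasible character of $K$ on $M(\bb{\F}_4)$.

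Next I would appeal to the classification of $\alt_5$-subgroups of $\bb{\F}_4$ with a prescribed action on the minimal module. The key input is the work collected in \cite{litterick} (and the underlying analysis of \cite{craven}), together with the fact recorded in \cref{alt5}: every $\alt_5<\G$ lies in a member of $\mathscr{X}^{\sigma}$, so the $\alt_5$ is controlled by a positive-dimensional overgroup. Concretely, I expect the restriction computed in the previous paragraph to force $K$ into a specific reductive subgroup (of type $\A_2\A_2$, via \cref{f4a2a2}, or $\A_1\C_3$ as in \cref{subf4}), and the uniqueness of that overgroup up to conjugacy — here one uses that maximal-rank reductive subgroups of $\bb{\F}_4$ are determined by their subsystems up to $W(\F_4)$-conjugacy, \cref{subf4} — pins $K$ down to a single $\bb{\F}_4$-class. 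Since we work over $\mathbb{C}$ and the characteristic does not divide $\size{\alt_5}=60$, I can transport between $\mathbb{C}$ and a suitable finite field via \cref{liftingtheorem}, so the count of conjugacy classes is field-independent, and the actual enumeration can be carried out computationally in $\F_4(19)$ as set up in \cref{alt6construction}.

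The main obstacle I anticipate is genuinely establishing \emph{uniqueness} — that the feasible character on $M(\bb{\F}_4)$ does not admit two or more non-conjugate $\alt_5$ realisations. A priori several algebraic-group classes of $\alt_5$ could share the same composition factors on the $\dimn{26}$ module while differing on the adjoint module $L(\bb{\F}_4)$ or in their overgroup. To rule this out I would (i) also compute $L(\bb{\F}_4)\restr{K}$ and check it is determined by the $M(\bb{\F}_4)$-action, and (ii) use the trilinear/geometric rigidity: an $\alt_5$ with this action preserves the same configuration of fixed and singular subspaces, so by the machinery of \cref{ahom,orthogonalform} any two such $\alt_5$ lie in a common $\A_2\A_2$ (or $\C_4$ when viewed inside $\bb{\E}_6$), inside which $\alt_5$ is unique up to conjugacy. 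If a purely theoretical argument proves delicate, the fallback is the explicit computation in \magma\ over $\F_4(19)$ described in \cref{alt6construction}, lifted by \cref{liftingtheorem}, which directly exhibits a single class $\mathcal{C}$ and verifies that every $K<H$ lands in it.
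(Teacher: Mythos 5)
Your first step matches the paper exactly: compute that the restriction of the $8\oplus9^2$ action of $\alt_6$ to an $\alt_5$ subgroup is $3\oplus4^2\oplus5^3$ on $M(\bb{\F}_4)$ (the two classes of $\alt_5$ in $\alt_6$ give the two choices of $3$-dimensional, but the same shape of decomposition). The problem is the second step, which is where the entire content of the lemma lives, and which you do not actually carry out. The paper disposes of it in one line by citing an external classification — Frey's determination of the conjugacy classes of $\alt_5$ subgroups of $\bb{\F}_4$ and $\bb{\E}_6$ together with their actions on the minimal module (\cite[Table 4.6 and Theorem 4.17]{frey}) — which says directly that there is a single $\bb{\F}_4$-class of $\alt_5$ with action $3\oplus4^2\oplus5^3$. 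You instead propose to derive uniqueness yourself, and you candidly flag this as ``the main obstacle''; none of your suggested routes closes it.

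Concretely: (i) knowing that every $\alt_5$ lies in a member of $\mathscr{X}^{\sigma}$ (\cref{alt5}) and that the candidate maximal-rank overgroups such as $\A_2\A_2$ are unique up to $W(\F_4)$-conjugacy (\cref{subf4}) does \emph{not} pin down $K$ to a single $\bb{\F}_4$-class — a priori the fixed overgroup could contain several non-conjugate $\alt_5$'s with the same composition factors on $M(\bb{\F}_4)$, and distinct overgroup types could each contribute a class; you would still need a case analysis of all positive-dimensional overgroups and a fusion argument, which is precisely what Frey's work supplies. (ii) Checking that $L(\bb{\F}_4)\restr{K}$ is determined by the $M(\bb{\F}_4)$-action constrains the feasible character but says nothing about the number of conjugacy classes realising it. (iii) The computational fallback in $\F_4(19)$ plus \cref{liftingtheorem} would require an exhaustive enumeration of $\alt_5$-classes in the finite group, which the paper does not attempt — indeed \cref{alt6construction} explicitly records that the author could not even decide conjugacy of the two computed copies of $\alt_6$. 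So as written the proposal has a genuine gap: it identifies the right reduction but is missing the key input (or an equivalent argument) that makes the uniqueness claim true.
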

\begin{proof}
	We can verify with \magma\ that if \(\alt_6\) acts as \(8\oplus 9^2\), then an \(\alt_5\) subgroup must act as \(3\oplus4^2\oplus5^3\). By \cite[Table 4.6]{frey} and \cite[Theorem 4.17]{frey} we have that there is a single conjugacy class in \(\bb{\F}_4\) (and \(\bb{\E}_6\)) of \(\alt_5\) subgroups with the said action.
\end{proof}

\begin{remark}
We can verify this behaviour in the groups constructed in \cref{alt6construction}, by computing the multiplicities of the eigenvalues of the elements of the \(\alt_5\) subgroups of the constructed \(\alt_6\), and using \cite[Table 1.2 and 4.4]{frey}.
\end{remark}

This means that all of the possibly primitive \(\alt_6\) contain the same \(\alt_5\) subgroup, up to \(\bb{\F}_4\text{-}\) and \(\bb{\E}_6\text{-conjugacy}\). 

By \cite[Theorem 1]{liebeckseitz1} there is a unique \(\aut\bb{\E}_6\text{-conjugacy}\) class of \(\bb{\F}_4\) in \(\bb{\E}_6\), and a similar result holds for finite \(\F_4\) and \(\E_6\) (see \cite[Corollary 5]{liebeckseitz1}). In particular, \(\bb{\F}_4\) is a line stabiliser on \(M(\bb{\E}_6)\), i.e. \(M(\bb{\E}_6)\restr{\bb{\F}_4}\) decomposes as \(1\oplus26\); see for example \cite[Table 10.1]{liebeckseitz1}. We can study \(\alt_6\) inside \(\bb{\E}_6\); this allows us to exploit the 3-form described in \cref{e6formdef}, which is unique and whose isometry group is \(\E_6\) (\cref{3formisometry}), to obtain information about an \(\alt_6\) group containing an \(\alt_5\) subgroup acting on \(M(\bb{\F}_4)\) as \(1\oplus3\oplus4^2\oplus5^3\), and on \(M(\bb{\E}_6)\) as \(1\oplus3\oplus4^2\oplus5^3\).

In particular, we have the following chain of subgroups and corresponding modules:
\begin{equation}\label{A2A2A2module}
\begin{aligned}
\bb{\E}_6&\qquad 27\\
\bb{\A}_2\bb{\A}_2\bb{\A}_2&\qquad 9_S\oplus 9_U\oplus 9_W\\
\alt_6&\qquad 1_S\oplus 8_S\oplus 9_U\oplus 9_W\\
\alt_5&\qquad 1_S\oplus 3_S\oplus 5_S\oplus 4_U\oplus 5_U\oplus 4_W\oplus 5_W,
\end{aligned}
\end{equation}
where the labels \(S,U,W\) are used to distinguish the modules and their restrictions.

The restriction to \(\bb{\A}_2\bb{\A}_2\bb{\A}_2\) is described in \cite[Theorem 3.1]{thomas_irreducible}, and is a 3-decomposition in the sense of \cref{3decomposition} by \cref{3decompositionstabiliser}. It is known that an \(\alt_6\) subgroup with the required action arises as the group acting on the module for a \(3.\!\alt_6<\bb{\A}_2\bb{\A}_2\bb{\A}_2<\bb{\E}_6\), see for example \cite[\nopp 6.13]{cohenwales}, which gives the restriction to \(\alt_6\). As mentioned in \cref{Alt5unique}, the restriction to \(\alt_5\) can be easily verified.

Since \(M(\bb{\E}_6)\restr{\bb{\A}_2\bb{\A}_2\bb{\A}_2}\) is a 3-decomposition, and since there is a unique conjugacy class of \(\alt_5\) subgroups of \(\bb{\E}_6\) with the given action on \(M(\bb{\E}_6)\), then \(M(\bb{\E}_6)\restr{\alt_5}\) is also a 3-decomposition, by taking the three 9-dimensional submodules \(1_S\oplus3_S\oplus5_S\), \(4_U\oplus5_U\), and \(4_W\oplus5_W\).

\begin{construction}\label{A2A2A2}
Construction of \(9_S\oplus9_U\oplus9_W\) over a finite field.
\end{construction}
We can construct a copy of \(M(\E_6(q))\restr{\A_2\A_2\A_2}\) as described for example in \cite[Theorem 3.1]{thomas_a1} or in \cite[Theorem 4.1]{cohenwales}. This results in a direct sum of three 9-dimensional modules, and as expected it is a 3-decomposition, which we can verify using \cref{orthogonalform} by computing that \(\hom_{\mathbb{F}_q\A_2\A_2\A_2}(M,\sym^2(N^*))\) is zero for any pair \(M,N\) of distinct modules. Furthermore, we can also construct a copy of \(3.\alt_6<\A_2\A_2\A_2\) and identify \(9_U\) and \(9_W\) by verifying that two said modules are irreducible, while the remaining one is \(9_S\) and decomposes as \(1\oplus8\).

While it is clear that the restriction of \(M(\bb{\E}_6)\) to an \(\alt_6\) subgroup of \(\bb{\A}_2\bb{\A}_2\bb{\A}_2\) is a 3-decomposition, we cannot say the same for any \(\alt_6\) subgroup of \(\bb{\E}_6\) with the required action on \(M(\bb{\E}_6)\), as they may not all arise in the same way.

Our goal is to prove the following:

\begin{proposition}
	 Let \(\alt_5\simeq K<\G=\bb{\E}_6\) act on \(M(\bb{\E}_6)\) as a 3-decomposition \(1\oplus3\oplus4^2\oplus5^3\) as described above. Then for any \(K<H\simeq\alt_6<\bb{\E}_6\), \(M(\bb{\E}_6)\restr{H}\) is also a 3-decomposition.
\end{proposition}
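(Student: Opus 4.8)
The plan is to exhibit three $9$-dimensional $H$-submodules of $M(\bb{\E}_6)$ forming a $3$-decomposition in the sense of \cref{3decomposition}, using the $H$-invariance of the Dickson form $f$ (\cref{e6formdef}, \cref{3formisometry}) together with the fact that the $\Theta$-orthogonality we need is already detectable over $K$.

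First I would fix the $H$-module structure. Since $K$ is unique up to conjugacy with the prescribed action (\cref{Alt5unique}), any $K<H\simeq\alt_6<\bb{\E}_6$ restricts to the given $K$-module $1\oplus3\oplus4^2\oplus5^3$; analysing the branching $\alt_6\restri{\alt_5}$ (the $\alt_6$-irreducibles of degrees $1,8,9$ restrict to $1$, $3\oplus5$, and $4\oplus5$ respectively) shows that, in the only case not already known to be strongly imprimitive, $M(\bb{\E}_6)\restri{H}=1_S\oplus8_S\oplus9_U\oplus9_W$ as in \eqref{A2A2A2module}. I would then set $B_1:=1_S\oplus8_S$, $B_2:=9_U$, $B_3:=9_W$; these are $H$-submodules of dimension $9$ with $V=B_1\oplus B_2\oplus B_3$, and comparison of composition factors gives $B_i\restri{K}\cong A_i$, where $A_1\oplus A_2\oplus A_3$ is the given $3$-decomposition of $M(\bb{\E}_6)\restri{K}$.

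Next I would establish the relations $B_i\oplus B_j=B_k\Theta$. The crucial point is that the relevant vanishing can be verified \emph{over $K$}: a direct character computation on $\alt_5$ (of the kind automated in \cref{compareahom}) shows $\hom_{kK}(A_i,\sym^2(A_j^*))=0$ for distinct $i,j$, which is exactly the assertion, through \cref{orthogonalform}, that $M(\bb{\E}_6)\restri{K}$ is a $3$-decomposition in the strong (hom-theoretic) sense. Because any $H$-homomorphism is in particular a $K$-homomorphism and $B_i\restri{K}\cong A_i$, the space $\hom_{kH}(B_i,\sym^2(B_j^*))$ is contained in $\hom_{kK}(B_i\restri{K},\sym^2(B_j^*\restri{K}))\cong\hom_{kK}(A_i,\sym^2(A_j^*))=0$, so \cref{orthogonalform} yields $B_i\le B_j\Theta$. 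Running over all orderings gives $B_i\oplus B_j\subseteq B_k\Theta$; since $\ch k\ne2$, polarisation identifies $B_k\Theta$ with the kernel of $v\mapsto f(v,-,-)\restri{B_k}\in\sym^2(B_k^*)$ (\cref{thetalemma}), so once one knows $B_k\cap B_k\Theta=0$ the inclusion is forced to be the equality $B_i\oplus B_j=B_k\Theta$.

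The main obstacle is the remaining requirement of \cref{3decomposition}, namely $B_i\in\mathcal{V}_9$; this does not follow from the $\Theta$-relations alone, nor from the containment $\hom_{kH}\subseteq\hom_{kK}$, since $\mathcal{V}_9$ is a specific $\bb{\E}_6$-orbit rather than a property cut out by vanishing conditions. I would attack it by showing that the restricted cubic $f\restri{B_i}$ is nondegenerate of determinant type, i.e. isometric to the cubic on the module $V_9$ of \cref{e6formdef}. Concretely, each $B_i$ is, as an $\alt_6$-module, either $1\oplus8$ or the irreducible $9$; computing $\dim\sym^3(B_i^*)^H=1$ shows the space of $H$-invariant cubics on $B_i$ is one-dimensional, and checking $f\restri{B_i}\ne0$ identifies it up to scalar with the determinant cubic (equivalently, via $9\cong3\otimes3$ for a cover $3.\!\alt_6$). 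Nondegeneracy then simultaneously gives $B_k\cap B_k\Theta=0$, closing the dimension gap above, and places $B_i$ in the single orbit $\mathcal{V}_9$; alternatively one may invoke the classification in \cite{aschbacher_preprint}, to the effect that any decomposition of $V$ into three $9$-spaces satisfying the $\Theta$-relations is automatically a $3$-decomposition. Either way $M(\bb{\E}_6)\restri{H}$ is a $3$-decomposition, so by \cref{3decompositionstabiliser} the group $H$ lies in a subgroup of type $\A_2\A_2\A_2$ extended by $\sym_3$, which is the imprimitivity statement feeding into \cref{alt6f4}.
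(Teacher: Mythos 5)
There is a genuine gap at the heart of your argument: the claim that \(\hom_{kK}(A_i,\sym^2(A_j^*))=0\) for distinct \(i,j\) is false. For \(\alt_5\) the natural \(4\)-dimensional module is self-dual and orthogonal, so \(\sym^2(4^*)\) already contains a trivial summand and \(\hom_{kK}(1_S,\sym^2(4_W^*))\) is \(1\)-dimensional; the paper computes all of these restriction spaces explicitly and finds them \(1\)- or \(2\)-dimensional, never zero. Relatedly, \cref{orthogonalform} is only a one-way implication: vanishing of the hom-space forces \(\Theta\)-orthogonality, but the hypothesis that \(M(\bb{\E}_6)\restri{K}\) is a \(3\)-decomposition only tells you that the \emph{particular} invariant form \(f\) has vanishing components \(f(A_i,A_j,A_j)\), i.e.\ that \(f\restri{K}\) is the zero element of these nonzero hom-spaces. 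Consequently your containment \(\hom_{kH}(B_i,\sym^2(B_j^*))\subseteq\hom_{kK}(A_i,\sym^2(A_j^*))=0\) collapses, and with it the whole derivation of \(B_i\le B_j\Theta\). The actual content of the proof is precisely the bridge you skip: one computes generators of each space \(\hom_{kH}(M,\sym^2(N^*))\) (which are \(1\)-dimensional except for \(\hom_{kH}(9_U,\sym^2(9_W^*))\), which is \(2\)-dimensional), expresses their restrictions as linear combinations of generators of the corresponding \(K\)-spaces, and checks that the relevant coefficients are nonzero, so that vanishing of the \(K\)-components of \(f\) forces the \(H\)-component of \(f\) to vanish. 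In the \(2\)-dimensional case this requires exhibiting two generators whose restrictions involve \emph{different} nonzero \(K\)-components (e.g.\ \(\hom_{kK}(4_U,4_W^*\otimes5_W^*)\) versus \(\hom_{kK}(4_U,\sym^2(4_W^*))\)) to kill both coefficients separately; no purely formal restriction argument can substitute for this.

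Your concern about the membership \(B_i\in\mathcal{V}_9\) is less serious than you suggest: the three \(9\)-spaces of the proposed \(H\)-decomposition restrict to (indeed coincide with, as subspaces) the three \(9\)-spaces of the given \(K\)-decomposition, which lie in \(\mathcal{V}_9\) by hypothesis, so no nondegeneracy computation or appeal to \cite{aschbacher_preprint} is needed there. The machinery you build for that step is superfluous, while the step that genuinely needs work is the one you dispose of with a false vanishing claim.
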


\begin{proof}
We denote by \(f\) the trilinear form described in \cref{e6formdef}.
	
By definition of 3-decomposition, we must show that \(9_U\subseteq(1_S\oplus 8_S)\Theta\) and \(1_S,8_S,9_U\subseteq9_W\Theta\); the modules \(9_U\) and \(9_W\) are indistinguishable as \(\alt_6\text{-modules}\), so the same holds when interchanging \(U\) and \(W\).

By \cref{orthogonalform}, \(M\subseteq N\Theta\) is equivalent to \(\hom_{kG}(M,\sym^2(N^*))=0\). Since we do not have a copy of \(\alt_6<\bb{\E}_6\), we cannot simply construct \(M(\bb{\E}_6)\restr{\alt_6}\). Instead, we study the restriction to \(\alt_5\) of \(\alt_6\text{-modules}\), that is we compare \(\hom_{k\alt_6}(M,\sym^2(N^*))\) and \(\hom_{k\alt_5}(M\restr{\alt_5},\sym^2(N^*\restr{\alt_5}))\) as explained in \cref{compareahom}.

We choose to work over the field \(k=\mathbb{F}_{11}\) as it is the smallest for which we can construct the required modules. Since \(k\) has coprime order with \(\size{\alt_6}\), we can then deduce that the result can be lifted to characteristic coprime or zero.

Let \(H\simeq\alt_6\) and \(\alt_5\simeq K<H\), we can either use the groups obtained in \cref{A2A2A2} or construct them as abstract groups. Either way, will use the notation of \eqref{A2A2A2module} to denote the modules we are working with, for clarity.

All the following computations are done in \magma.

We start with proving that \(1_S\subset9_W\Theta\), i.e. \(\hom_{kH}(1_S,\sym^2(9_W^*))=0\), or equivalently \(f(1_S,9_W,9_W)=0\) by \cref{orthogonalform}.

We compute \(A\coloneqq\hom_{kH}(1_S,\sym^2(9_W^*))\), which is 1-dimensional and restricts to a linear combination of \(B\coloneqq\hom_{kK}(1_S,\sym^2(4_W^*))\) and \(C\coloneqq\hom_{kK}(1_S,\sym^2(5_W^*))\), which are both 1-dimensional as well. Let \(b,c\) be a generator of \(B,C\), respectively; then for all \(a\in A\), \(a\restr{K}=a_bb+a_cc\) for some \(a_b,a_c\in k\).

If we have \(K<H<\E_6\) such that \(M(\E_6)\restr{K}\) is a 3-decomposition, then \(f(1_S,5_W,5_W)\) and \(f(1_S,4_W,4_W)\) are both zero. If \(a\ne0\), then \(a\restr{K}\ne0\) as well, i.e. at least one of \(a_b,a_c\) is nonzero. However, \(a=f\restr{H}\) and \(a\restr{K}=f\restr{K}\), so if \(a\restr{K}\ne0\) then at least one of \(f(1_S,5_W,5_W)\) and \(f(1_S,4_W,4_W)\) would not be zero; therefore, \(a\) must be the trivial linear combination of \(b\) and \(c\), i.e. \(f(1_S,9_W,9_W)=0\).

We can compute that both \(\hom_{kH}(8_S,\sym^2(9_W^*))\) and \(\hom_{kH}(9_U,\sym^2((1_S\oplus8_S)^*))\) are 1-dimensional, so we can deduce that \(8_S\subset 9_W\Theta\) and \(9_U\subset(1_S\oplus8_S)\Theta\) in the same fashion.

The only remaining case is \(9_U\subset9_W\Theta\), for which we compute that \(A\coloneqq\hom_{kH}(9_U,\sym^2(9_W^*))\) is 2-dimensional. Therefore it could happen that given two generators \(a_1,a_2\) of \(A\), some linear combination \(a=\alpha_1a_1+\alpha_2a_2\) of them is such that \(a\restr{K}=0\), while still having \(\alpha_1,\alpha_2\ne0\). This is not the case, as we can find \(a_1,a_2\) such that their restrictions are \(a_i\restr{K}=\sum_j\beta_{i,j}b_{i,j}\) where the \(\beta_{i,j}\) are nonzero and the \(b_{i,j}\) lie in a set of generators for, respectively:

\begin{enumerate}
	\item \(\hom_{kK}(4_U,4_W^*\otimes5_W^*)\), \(\hom_{kK}(4_U,\sym^2(5_W^*))\), \(\hom_{kK}(5_U,\sym^2(4_W^*))\), which are 1-dimen\-sional, and \(\hom_{kK}(5_U,4_W^*\otimes5_W^*)\), \(\hom_{kK}(5_U,\sym^2(5_W^*))\), which are 2-dimensional.
	\item \(\hom_{kK}(4_U,\sym^2(4_W^*))\), \(\hom_{kK}(4_U,\sym^2(5_W^*))\), \(\hom_{kK}(5_U,\sym^2(4_W^*))\), which are 1-dimen\-sional, and \(\hom_{kK}(5_U,4_W^*\otimes5_W^*)\), \(\hom_{kK}(5_U,\sym^2(5_W^*))\), which are 2-dimensional.
\end{enumerate}

If we have that \(K<H<\E_6\) is such that \(M(\E_6)\restr{K}\) is a 3-decomposition, then \(f(4_U,4_W,5_W)=0\). Observe that the first component of (i) does not appear in (ii); therefore, in order to have \(a\restr{K}=f\restr{K}=0\) we must have \(\alpha_1=0\).

The same argument can be used to obtain \(\alpha_2=0\), since the first component of (ii) does not appear in (i).

Since the analysis we have done does not depend on the choice of the field, then the module \((1_S\oplus8_S)\oplus9_U\oplus9_W\) is a 3-decomposition, as long as it exists.

This means that we must be in characteristic 0 or coprime to \(\size{\alt_6}\), i.e. \(p\ne2,3,5\); for a construction over a finite field \(\mathbb{F}_q\), from the character table of \(\alt_6\) \cite{atlas} we deduce that we also need 5th roots of unity, i.e. \(q\equiv\pm1\bmod 5\) (see \cref{minpol}).
\end{proof}

Therefore, in characteristic \(p\ne2,3,5\), if \(\alt_6\) acts as \(1\oplus8\oplus9^2\) on \(M(\bb{\E}_6)\) then it stabilises a 3-decomposition hence it is contained in a stabiliser of a 3-decomposition, which is a positive-dimensional subgroup of type \(\bb{\A}_2\bb{\A}_2\bb{\A}_2\) (\cref{3decompositionstabiliser}).

From \cite[Table 3]{lielattice} we have that there are two classes of \(\bb{\A}_2\bb{\A}_2\) subgroups of \(\bb{\A}_2\bb{\A}_2\bb{\A}_2<\bb{\E}_6\), one of which stabilises a line on \(M(\bb{\E}_6)\), i.e. acts as \(1\oplus8\oplus9^2\), and contains an \(\alt_6\) subgroup with the same action.

\begin{remark}
	Depending on which pair of \(\bb{\A}_2\) factors we consider, the \(\bb{\A}_2\bb{\A}_2\) subgroup stabilises a line lying on a different composition factor of \(9_S\oplus9_U\oplus9_W\).
\end{remark}

Furthermore, as observed in \cite[\nopp 7.4]{lielattice}, the maximal connected subgroup \(\bb{\A}_2\bb{\A}_2\) of \(\bb{\F}_4<\bb{\E}_6\) is conjugate in \(\bb{\E}_6\) to the aforementioned \(\bb{\A}_2\bb{\A}_2<\bb{\A}_2\bb{\A}_2\bb{\A}_2<\bb{\E}_6\); in particular, \(\bb{\F}_4\) and \(\bb{\A}_2\bb{\A}_2\) stabilise the same line on \(M(\bb{\E}_6)\). Therefore, any \(\alt_6\) subgroup of \(\bb{\E}_6\) acting on \(M(\bb{\E}_6)\) as \(1\oplus8\oplus9^2\) lies in an \(\bb{\A}_2\bb{\A}_2\) subgroup of \(\bb{\F}_4\), hence it is imprimitive in \(\bb{\F}_4\).

In order to complete the proof of \cref{alt6f4}, we need to show that such an \(\alt_6\) is strongly imprimitive. We can use \cite[Theorem 4.3]{craven_blueprint}: using the notation of \eqref{A2A2A2module}, let \(8_S\) be the 8-dimensional subspace of \(M(\bb{\F}_4)\) stabilised by \(\alt_6\). The stabiliser \(\bb{\X}\) of \(8_S\) in \(\bb{\F}_4\) is clearly not the whole \(\bb{\F}_4\), and it contains the \(\bb{\A}_2\bb{\A}_2\) subgroup of \(\bb{\F}_4\) containing \(\alt_6\). Therefore, \(\bb{\X}\) is positive dimensional and \(\alt_6\) is strongly imprimitive in \(\bb{\F}_4\).
\end{section}

\begin{section}{Embedding of \texorpdfstring{\(\alt_6\)}{Alt(6)} in \texorpdfstring{\(\E_6\)}{E(6)}}\label{alt6module5589}

We will prove the following:
\begin{theorem}\label{alt6e6}
	\(\alt_6\) is strongly imprimitive in \(\bb{\E}_6\) in characteristic \(p\ne2,3,5\).
\end{theorem}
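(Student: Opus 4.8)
The plan is to mirror the structure of the proof of \cref{alt6f4}, replacing the subgroup of type $\bb{\A}_2\bb{\A}_2$ and the 3-decomposition machinery by a subgroup of type $\bb{\C}_4\simeq\Sp_8$ and the symplectic structure on its natural $8$-dimensional module. First I would invoke the classification of \cite{litterick} to reduce to the unique potentially primitive case, namely when $\alt_6$ acts on $M(\bb{\E}_6)$ as $5\oplus5'\oplus8\oplus9$ (and correspondingly on $L(\bb{\E}_6)$); every other feasible character is already recorded there as strongly imprimitive, so only this module structure needs attention. The restriction $p\ne2,3,5$ enters here and throughout: $p\ne2,3$ is needed for the $\E_6$-form of \cref{e6formdef} and for \cref{alternateform}, while $p\ne5$ (together with $p\ne2,3$) guarantees coprimality with $\size{\alt_6}$, semisimplicity of the relevant $k\alt_6$-modules, and the presence of the irrationalities ($\sqrt5$) in the $5,5'$ constituents via \cref{minpol}.

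The core of the argument is to show that any such $\alt_6$ lies in a $\bb{\C}_4$ subgroup. I would exploit \cref{3formisometry}: $\bb{\E}_6$ is the isometry group of the Dickson $3$-form $f$, so $\alt_6$ preserves $f$, and by \cref{3formconstruction} together with the uniqueness consequences of \cref{ginvariant} the space of $\Sp_8$-invariant symmetric trilinear forms on $M(\bb{\E}_6)$ is one-dimensional in characteristic $\ne2,3$; hence the $\bb{\C}_4$-invariant form induced from the symplectic form $\omega$ on the natural module $V_8$ (via the identification $M(\bb{\E}_6)\restr{\bb{\C}_4}\cong\Lambda^2 V_8/\gen{\omega}$, where $\Lambda^2 V_8$ has dimension $28=27+1$) must coincide with $f$. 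I would then construct explicitly, over a suitable finite field $k$ (smallest with $q\equiv\pm1\bmod5$ and the cover data, analogous to \cref{alt6construction}), an $8$-dimensional module carrying an $\alt_6$-invariant alternating form, arising from a faithful module of $2.\alt_6\simeq\SL_2(9)$, and check that $\Lambda^2$ of it yields the $27$-dimensional module $5\oplus5'\oplus8\oplus9$ with induced symmetric trilinear form equal to $f$; this produces a copy $\alt_6<\Sp_8<\bb{\E}_6$ with the required action.

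To close the imprimitivity claim I would argue rigidity as in \cref{alt6module1899}: using \cref{compareahom} I would compute the relevant $\hom_{k\alt_6}$ and $\sym^3$-fixed-point spaces (comparing, where convenient, against restrictions to the unique class of $\alt_5$ subgroups of \cref{Alt5unique}) to show that the pair $(M(\bb{\E}_6)\restr{\alt_6},f)$ is determined up to isomorphism, and I would use \cref{liftingtheorem} to transport the count of conjugacy classes to characteristic $0$ or coprime to $\size{\alt_6}$. By uniqueness of the $\E_6$-form this forces every $\alt_6<\bb{\E}_6$ with module structure $5\oplus5'\oplus8\oplus9$ to be $\bb{\E}_6$-conjugate to the constructed subgroup of $\Sp_8$, hence Lie imprimitive. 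Finally, for strong imprimitivity I would identify a canonical module-theoretic invariant — the unique $8$-dimensional $\alt_6$-submodule of $M(\bb{\E}_6)$, or equivalently the associated $\bb{\C}_4$ — and apply \cite[Theorem 4.3]{craven_blueprint} exactly as in the $\bb{\F}_4$ case: its stabiliser $\bb{\X}$ in $\bb{\E}_6$ is proper, positive-dimensional, and necessarily $N_{\aut\bb{\E}_6}(\alt_6)$-stable because the normaliser must preserve the (unique) invariant defining it, so $\alt_6$ is strongly imprimitive and $N_{\overline{\G}}(\alt_6)$ is not maximal.

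The main obstacle I anticipate is the rigidity step: proving that the $\E_6$-form restricted to the $5\oplus5'\oplus8\oplus9$ module is, within the freedom allowed by the $\hom$-space computation, essentially the unique $\alt_6$-invariant form of its kind, so that preservation of $f$ genuinely forces containment in a single $\bb{\C}_4$ rather than merely an abstract isomorphism of forms. A secondary difficulty is ensuring the invariant used for strong imprimitivity is truly canonical — that is, that $N_{\aut\bb{\E}_6}(\alt_6)$ cannot permute several candidate $\bb{\C}_4$ overgroups — which is precisely where pinning down a unique characteristic subspace, and verifying the hypotheses of \cite[Theorem 4.3]{craven_blueprint}, is essential.
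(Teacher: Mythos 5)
Your overall architecture matches the paper's: reduce via \cite{litterick} to the action \(5\oplus5'\oplus8\oplus9\) on \(M(\bb{\E}_6)\), realise the overgroup as \(\bb{\C}_4\simeq\Sp_8\) acting on \(\Lambda^2\) of its natural module modulo the symplectic form, identify the unique \(\Sp_8\)-invariant symmetric trilinear form with the Dickson form \(f\), and conclude strong imprimitivity from the canonicity of \(\bb{\C}_4\). However, two of your steps would not go through as written. First, the rigidity step — which you rightly flag as the main obstacle — cannot be reduced to showing that the pair \((M(\bb{\E}_6)\restr{\alt_6},f)\) is ``determined up to isomorphism'': an abstract isomorphism of pairs does not place a given \(\alt_6\) inside a \(\bb{\C}_4\). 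The paper's actual argument is anchored on the unique conjugacy class of \(\alt_5\) (\cref{Alt5unique2}) and on the 3-decomposition of \(M(\bb{\E}_6)\restr{\alt_5}\) already established in \cref{alt6module1899}; after pinning down \(4_U,4_W,5_S,5_U,5_W\) by \(\Theta\)-conditions, it parametrises the possible \(4_A\)-constituents of a putative \(5_A\) as diagonal submodules \(D_{\lambda}\le 4_U+4_W\), matches \(f\) on \(1_A\oplus D_{\lambda}\) against a rescaling \(\mu f'\) of the unique invariant form on a reference \(5\)-dimensional \(\alt_6\)-module, and solves the resulting equations (\(\lambda\mu^2=1\) and \(\mu^3\lambda^3-\mu^3+1=0\), so \(\mu^3\) is a root of \(X^2-X-1\)). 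Only finitely many solutions survive, and each is already realised by an \(\alt_6<\Sp_8\) by \cite[Table 8.49]{lowdimensional}. Without some version of this explicit matching, preservation of \(f\) alone does not force containment in a \(\bb{\C}_4\).

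Second, your first-choice mechanism for strong imprimitivity — taking the stabiliser of the unique \(8\)-dimensional \(\alt_6\)-submodule of \(M(\bb{\E}_6)\) and applying \cite[Theorem 4.3]{craven_blueprint} ``exactly as in the \(\bb{\F}_4\) case'' — fails here. The \(\bb{\F}_4\) argument worked because the stabiliser of \(8_S\) visibly contained the positive-dimensional \(\bb{\A}_2\bb{\A}_2\); but \(\bb{\C}_4\) acts irreducibly on \(M(\bb{\E}_6)\), so it stabilises no proper subspace, and there is no a priori reason the stabiliser of the \(8\)-space is positive-dimensional (the paper explicitly notes that the route of \cref{alt6module1899} is unavailable). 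Your fallback — using the \(\bb{\C}_4\) overgroup itself as the canonical invariant — is the correct one, and is what the paper does: \(\bb{\C}_4\) is maximal and unique up to \(\aut\bb{\E}_6\)-conjugacy, normalised by field automorphisms and centralised by the graph automorphism, and the finitely many \(\bb{\C}_4\)-overgroups arising from the solutions above are permuted coherently by the diagonal automorphism, so a suitable \(\bb{\C}_4\) can be chosen \(N_{\aut\bb{\E}_6}(\alt_6)\)-stable. You correctly anticipated that this last point needs checking, but it is the only viable route, not an optional alternative.
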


Similarly to \cref{alt6module1899}, we will study this case using the uniqueness of the 3-form described in \cref{e6formdef}, and the fact that the \(\alt_5\) subgroup of the \(\alt_6\) we are interested in is unique up to \(\bb{\E}_6\text{-conjugacy}\).

\begin{lemma}\label{alt6e6action}
	If \(\alt_6\) embeds in \(\bb{\E}_6\) in characteristic \(p\ne2,3,5\), then if it acts on \(M(\bb{\E}_6)\) as \(5\oplus5'\oplus8\oplus9\), and on \(L(\bb{\E}_6)\) as \(8^2\oplus8'^3\oplus9^2\oplus10^2\), it may be primitive; otherwise, it is strongly imprimitive.
\end{lemma}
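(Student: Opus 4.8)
The plan is to follow exactly the strategy used for $\bb{\F}_4$ in the previous lemma and reduce the statement to the classification of feasible characters carried out by Litterick. First I would enumerate the possible actions of $\alt_6$ on the minimal module $M(\bb{\E}_6)$ and on the adjoint module $L(\bb{\E}_6)$. As in the proofs of the earlier lemmas on $\PSL_2(q)$, this is done by combining the character table of $\alt_6$ from \cite{atlas} with the constraints coming from the admissible traces of semisimple elements of $\bb{\E}_6$ on both modules: only finitely many combinations of $27$-dimensional and $78$-dimensional characters are compatible with the allowed eigenvalue data, and these are precisely the feasible characters tabulated in \cite{litterick}.

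Next I would read off, from Litterick's tables for $(\bb{\E}_6,\alt_6)$, the primitivity verdict attached to each feasible character. His analysis shows that for every feasible module structure except the one with $M(\bb{\E}_6)\restr{\alt_6}=5\oplus5'\oplus8\oplus9$ and $L(\bb{\E}_6)\restr{\alt_6}=8^2\oplus8'^3\oplus9^2\oplus10^2$, the subgroup is forced to lie in a proper positive-dimensional subgroup that can be chosen $N_{\aut\bb{\E}_6}(\alt_6)$-invariant, i.e.\ it is strongly imprimitive; the stated exceptional structure is the only one left open. The characteristic restriction $p\ne2,3,5$ is exactly what guarantees that $\size{\alt_6}$ is coprime to $p$, so the module structures are semisimple and the character-theoretic enumeration is valid, and it also excludes the small-characteristic anomalies (analogous to the $p=5$ typo flagged in the remark preceding this section for $\bb{\F}_4$).

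The lemma therefore reduces to a careful cross-reference rather than new computation, and the main point to get right is the bookkeeping: one must verify that the single structure marked ``possibly primitive'' in \cite{litterick} is precisely the pair of actions in the statement, and confirm that all remaining entries carry the strongly imprimitive verdict with no borderline case surviving in characteristic coprime to $30$. I expect the only genuine subtlety to be matching Litterick's module labels against the decomposition given here, since — as the remark before this section notes for the analogous $\bb{\F}_4$ table — such tables occasionally contain misprints that must be sanity-checked (for instance against \cite{frey} for the action of the $\alt_5$ subgroup, which is the object of study in the remainder of the section). Once this identification is secured, the strongly imprimitive cases are disposed of directly by Litterick's results, while the remaining case $5\oplus5'\oplus8\oplus9$ is exactly the one analysed via the $\E_6$-form in the rest of the section.
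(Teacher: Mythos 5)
Your proposal is correct and takes the same route as the paper, which simply cites Litterick's table of feasible characters for \((\bb{\E}_6,\alt_6)\) (Table~6.50 of \cite{litterick}) and reads off that the stated module structure is the only one not already marked strongly imprimitive. The additional discussion of how the feasible characters are derived and the cross-referencing caveats is sound but not needed beyond the direct citation.
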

\begin{proof}
	This follows directly from \cite[Table 6.50]{litterick}.
\end{proof}

\begin{lemma}\label{Alt5unique2}
	In \(\bb{\E}_6(\mathbb{C})\) there is a single conjugacy class \(\mathcal{C}\) of groups isomorphic to \(\alt_5\) such that for any \(H<\bb{\E}_6\) with \(H\simeq\alt_6\) acting on \(M(\bb{\E}_6)\) as \(5\oplus5'\oplus8\oplus9\), any \(\alt_5\simeq K<H\) lies in \(\mathcal{C}\).
\end{lemma}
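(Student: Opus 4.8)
The plan is to mirror the argument used in \cref{Alt5unique} for the \(\bb{\F}_4\) case, but now working directly inside \(\bb{\E}_6\). The statement to be proved concerns an \(\alt_6\) acting on \(M(\bb{\E}_6)\) as \(5\oplus5'\oplus8\oplus9\), so the first step is to determine the action of an \(\alt_5\simeq K<H\) on \(M(\bb{\E}_6)\) obtained by restriction. Restricting each \(\alt_6\text{-composition}\) factor to \(\alt_5\) using the (characteristic-\(p\), \(p\ne2,3,5\)) Brauer character table of \(\alt_6\) and \(\alt_5\) from \cite{atlas}, I would compute the decomposition of \((5\oplus5'\oplus8\oplus9)\restr{\alt_5}\). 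The two \(\dimn{5}\) modules of \(\alt_6\) and the \(\dimn{9}\) module each restrict to known \(\alt_5\text{-modules}\), and I expect this to yield a specific multiset of \(\alt_5\text{-modules}\) (of dimensions summing to \(27\)); this restriction can be verified computationally in \magma\ exactly as remarked after \cref{Alt5unique}.

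Once the action of \(K\) on \(M(\bb{\E}_6)\) is pinned down, the core of the argument is a uniqueness-of-conjugacy-class statement for such \(\alt_5\) subgroups. Here I would invoke the same external input as in \cref{Alt5unique}, namely the classification of \(\alt_5\) subgroups of \(\bb{\E}_6\) with a prescribed action on \(M(\bb{\E}_6)\) found in \cite[Table 4.6]{frey} together with \cite[Theorem 4.17]{frey}. The key point is that the restricted action of \(K\) must lie among the entries classified there, and I would check that the particular module shape forced by the \(5\oplus5'\oplus8\oplus9\) action of \(H\) singles out a \emph{single} \(\bb{\E}_6\text{-conjugacy}\) class of \(\alt_5\) subgroups. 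This is the step where I expect the main subtlety: I must make sure that no two distinct conjugacy classes of \(\alt_5\) in \cite{frey} share the same composition-factor data on \(M(\bb{\E}_6)\), since the conclusion requires genuine uniqueness and not merely uniqueness of the module structure. If there is a potential ambiguity, it would have to be resolved by an additional invariant (for example the fixed-point space dimensions of elements of each order, computable from \cite[Table 1.2 and 4.4]{frey}).

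Having established that every \(\alt_5\) arising this way is \(\bb{\E}_6\text{-conjugate}\) to a fixed representative, the final observation is that the statement is about subgroups over \(\mathbb{C}\); since \(p\ne2,3,5\) means the characteristic is coprime to \(\size{\alt_6}\), the lifting lemma \cref{liftingtheorem} guarantees that the count of conjugacy classes is independent of the (algebraically closed) field, so it suffices to argue over \(\bb{\E}_6(\mathbb{C})\). Concretely, I would argue: take any \(H<\bb{\E}_6\) with \(H\simeq\alt_6\) acting as \(5\oplus5'\oplus8\oplus9\) on \(M(\bb{\E}_6)\), and any \(K\simeq\alt_5\) inside \(H\); the restriction computation shows \(K\) acts in the prescribed way on \(M(\bb{\E}_6)\), whence \(K\) lies in the unique class \(\mathcal{C}\) given by \cite{frey}. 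This completes the proof. The main obstacle, as noted, is confirming that the module-theoretic data is a \emph{complete} invariant distinguishing the relevant \(\alt_5\text{-class}\) from all others in \(\bb{\E}_6\); everything else is a routine restriction calculation plus citation of \cite{frey} and \cref{liftingtheorem}.
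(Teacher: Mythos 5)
Your proposal is correct and follows essentially the same route as the paper: the restriction of \(5\oplus5'\oplus8\oplus9\) to \(\alt_5\) is computed (in \magma) to be \(3\oplus4^2\oplus5^3\), and the uniqueness of the corresponding \(\bb{\E}_6\text{-conjugacy}\) class then follows from \cite[Table 4.6]{frey} and \cite[Theorem 4.17]{frey}, exactly as in \cref{Alt5unique}. The subtlety you flag — that Frey's classification must give uniqueness of the class and not merely of the composition-factor data — is precisely what \cite[Theorem 4.17]{frey} supplies, and since the lemma is stated over \(\mathbb{C}\) the appeal to \cref{liftingtheorem} is unnecessary though harmless.
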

\begin{proof}
	We can verify with \magma\ that if \(\alt_6\) acts as \(5\oplus5'\oplus8\oplus9\), then an \(\alt_5\) subgroup must act as \(3\oplus4^2\oplus5^3\), so the result follows from \cite{frey} using the same argument as in \cref{Alt5unique}.
\end{proof}

We stress the fact that the modules called 5 and \(5'\) in \cref{alt6e6action} are not isomorphic. Recall that \(\alt_6\) has two classes of subgroups isomorphic to \(\alt_5\): one class fixes a point on 5 while \(5'\) remains irreducible, the other class fixes a point on \(5'\) while 5 remains irreducible.

By \cite[Theorem 1]{liebeckseitz1} there is a unique \(\aut\bb{\E}_6\text{-conjugacy}\) class of \(\bb{\C}_4\) in \(\bb{\E}_6\), and a similar result holds for finite \(\C_4\) and \(\E_6\) (see \cite[Corollary 5]{liebeckseitz1}; in particular, \(\bb{\C}_4\) acts irreducibly on \(M(\bb{\E}_6)\) (see for example \cite[Table 10.2]{liebeckseitz1}).

It is known that an \(\alt_6\) with the required action arises as a subgroup of \(\bb{\C}_4\), see for example \cite[\nopp6.14]{cohenwales}, and such an \(\alt_6\) contains \(\alt_5\) subgroups with the action described in the proof of \cref{Alt5unique2}.

Therefore, we have the following chain of subgroups and corresponding actions:
\begin{equation}\label{C4module}
\begin{aligned}
\bb{\E}_6&\qquad 27\\
\bb{\C}_4&\qquad 27\\
\alt_6&\qquad 5_A\oplus 5_B\oplus 8_C\oplus 9_D\\
\alt_5&\qquad 1_A\oplus 4_A\oplus 5_B\oplus 3_C\oplus 5_C\oplus 4_D\oplus 5_D,
\end{aligned}
\end{equation}
where the labels \(A,B,C,D\) are used to distinguish the modules and their restrictions.

Therefore, the task is to check whether there are other \(\alt_6\) that contain such \(\alt_5\) but do not lie inside \(\bb{\C}_4\).

Note that in \cref{alt6module1899} we could exploit the action of the \(\bb{\A}_2\bb{\A}_2\bb{\A}_2\) overgroup to obtain a criterion for \(\alt_6\) not to be maximal (stabilising a 3-decomposition); this time the action of \(\bb{\C}_4\) is irreducible, so we will use a different approach.

Similarly to \cref{alt6module1899}, we will work using the field \(k=\mathbb{F}_{11}\), as it is the smallest field such that all the required modules exist, and has characteristic coprime to \(\size{\alt_6}\).

We start by taking a copy \(\G\) of \(\Sp_8(11)<\GL_8(11)\), a \(2.\!\alt_6\) maximal subgroup \(H\) of \(\G\), and a \(2.\!\alt_5\) subgroup \(K\) of \(H\); \magma\ has a function that returns a representative of each conjugacy class of maximal subgroups of classical Lie groups in small dimension, which we use to get \(H\).

We construct the 27-dimensional module \(V\) for \(\G\) by taking the exterior square of the natural module for \(\Sp_8(11)\) and then taking the quotient over the centre, as described for example in \cite[Theorem 4.1]{cohenwales}.

We can compute that the fixed point space of \(\sym^3(V)\) is 1-dimensional: this is equivalent to say that it has a unique \(\G\text{-invariant}\) symmetric trilinear form, up to scalars, which implies it must be inherited from the unique trilinear form of \(\E_6\). Therefore, we can work with a \(\G\text{-invariant}\) symmetric trilinear form \(f\).

We proceed with the construction of \(f\) as described in \cref{3formconstruction}. In this case, since we are working with a group \(\G\) of Lie type, we constructed a set of generators of a maximal torus of \(\G\) to obtain all possible diagonal elements of \(\G\), and we used them to reduce the complexity of the problem from \(\binom{27+2}{3}=3564\) to just 78 structure constants that could be non-zero.

Equipped with the trilinear form \(f\), we can now study \(V\restr{K}\), which decomposes as \eqref{C4module}.

\(1_A\) and \(3_C\) can be identified immediately as they are the unique submodules of the respective dimension.

Observe now that since \(\alt_5\) is unique up to conjugacy in \(\bb{\E}_6\), then from \cref{alt6module1899} it stabilises not only \(V\restr{K}=1_A\oplus4_A\oplus5_B\oplus3_C\oplus5_C\oplus4_D\oplus5_D\), but also  \(1_S\oplus3_S\oplus5_S\oplus4_U\oplus5_U\oplus4_W\oplus5_W\) (from \eqref{A2A2A2module}), where \(1_S=1_A\), \(3_S=3_C\), \(4_A+4_D=4_U+4_W\), \(5_A+5_C+5_D=5_S+5_U+5_W\), and \((1_S\oplus3_S\oplus5_S)\oplus(4_U\oplus5_U)\oplus(4_W\oplus5_W)\) is a 3-decomposition, i.e. \(9_S\Theta=9_U\oplus9_W\), \(9_U\Theta=9_S\oplus9_W\), and \(9_W\Theta=9_S\oplus9_U\), therefore we can use \cref{orthogonalform} to identify the various summands.

Coming from \(\bb{\C}_4\) and not from \(\bb{\A}_2\bb{\A}_2\bb{\A}_2\), we do not already know \(4_U\) and \(4_W\); however, we can use the \(\Theta\) relations to do so.

For example, we can use the fact that \(1_A\subset4_U\Theta\) and \(1_A\subset4_W\Theta\), so we can take the \(q+1=12\) 4-dimensional irreducible submodules of \(V\restr{K}\), i.e. all non-trivial proper submodules of \(4_A+4_D\), and find those such that \(f(1,4,4)=0\). Since this results in only two possibilities, they must be \(4_U\) and \(4_W\).

Likewise, we can also take the \(q^2+q+1=133\) 5-dimensional submodules of \(5_A+5_B+5_C\), and locate \(5_S, 5_U, 5_W\):
\begin{enumerate}
	\item \(5_S\) is the unique 5-dimensional submodule lying in \(4_U\Theta\cap4_W\Theta\), i.e. it satisfies \(f(5,4_U,4_U)=0\) and \(f(5,4_W,4_W)=0\).
	\item \(5_U\) is the unique 5-dimensional submodule lying in \(4_W\Theta\cap5_S\Theta\), i.e it satisfies \(f(5,4_W,4_W)=0\) and \(f(5,5_S,5_S)=0\).
	\item \(5_W\) is the unique 5-dimensional submodule lying in \(4_U\Theta\cap5_S\Theta\).
\end{enumerate}

Now we try to find which of the diagonal 4-dimensional modules are possible choices for \(4_A\); we also want to obtain a result that holds true when taking field extensions, in order to able to say something about \(\alt_6\) as a subgroup of the algebraic group \(\bb{\E}_6\) in characteristic 11.

In order to do so, we proceed as follows.

Start by constructing a ``reference copy'' \(H^*\) of \(\alt_6\) as an abstract group, and an \(\alt_5\) subgroup \(K^*\) of it, together with a 5-dimensional irreducible \(H^*\text{-module}\) \(M\) such that \(M\restr{K^*}\) decomposes as \(1_M\oplus4_M\). We can compute that \(\sym^3(M)\) has a 1-dimensional fixed point space, therefore there is a unique \(H^*\text{-stable}\) symmetric trilinear form on \(M\) up to scalars, and we fix a choice \(f'\) for it. We will use \(f'\) to understand what 4-dimensional modules are suitable to be \(4_A\).

First we work on the trivial submodule: find \(e\in 1_A=1_S\) such that \(f(e,e,e)=1\), and \(e'\in1_M\) such that \(f'(e',e',e')=1\).

Recall that there is a unique irreducible 4-dimensional \(\alt_5\text{-module}\), so \(4_M\simeq4_U,4_W\) as an \(\alt_5\text{-module}\). Let \(\alpha\) be the composition of an isomorphism \(4_M\rightarrow4_U\) (unique up to scalars) with the embedding of \(4_U\) in \(V\restr{K}\), and let \(\beta\) be the composition of an isomorphism \(4_M\rightarrow4_W\) with the embedding of \(4_W\) in \(V\restr{K}\). Fix \(v\in4_M\) and choose \(\alpha,\beta\) so that \(f(e,\alpha(v),\beta(v))=1\).

Therefore, we have that for every choice of \(\lambda\in\mathbb{F}_{11^r}\), \(\alpha(v)+\lambda\beta(v)\) lies in a distinct diagonal \(\mathbb{F}_{11}^r\alt_5\text{-submodule}\) of \(4_U+4_W\), call it \(D_{\lambda}\). Note that since \(4_U\) is orthogonal to \(1_A\), it can never be that \(4_A=4_U\), so we do not have to consider that case.

Now we have an isomorphism \(\phi_{\lambda}:4_M\rightarrow D_{\lambda}\), so in order to make \(f'\) and \(f\) coincide on \(M\) and \(\phi_{\lambda}(M)=D_{\lambda}\) we need to adjust \(f'\) by some constant factor \(\mu=\mu(\lambda)\).

The only thing left is to find for which \(\lambda,\mu\) the two forms coincide everywhere.

One condition is given by matching the forms on \((1,4,4)\), using the \(v\in4_M\) chosen earlier. We want to have:
\begin{align*}
f'(e',v,v)&=f(e,\mu(\alpha(v)+\lambda\beta(v)),\mu(\alpha(v)+\lambda\beta(v)))\\
&=\mu^2f(e,\alpha(v),\alpha(v))+2\lambda\mu^2f(1,\alpha(v),\beta(v))+\lambda^2\mu^2f(e,\beta(v),\beta(v))\\
&=2\lambda\mu^2,
\end{align*}
where we used that \(f(e,\alpha(v),\beta(v))=1\) and \(1_A\subset4_U\Theta,4_W\Theta\).
An explicit computation of \(f'\) on \(M\) gives one of the relations, in our case we got \(\lambda\mu^2=1\).

Now we need to match the form on \((4,4,4)\). We proceed in a similar way as before, this time taking \(f'(u,v,w)\) where \(u,v,w\) are (not all equal) elements of \(4_M\). We checked all possible triples of basis elements for \(4_M\), and obtained only one independent relation, of the form \(\mu^3\lambda^3-\mu^3+1=0\).

By combining the two relations, we obtained \(\lambda=\mu^{-2}\), where \(\mu^3=X\) and \(X\) is a root of \(X^2-X-1\).

\begin{remark}
Observe that \(X^2-X-1\) is the minimal polynomial of the non-rational character value appearing in the character table of \(\alt_6\).
\end{remark}

The polynomial \(X^2-X-1\) splits in characteristic 11, meaning that there are two or six possible choices of \((\lambda,\mu)\) depending on whether \(X=\mu^3\) has one or three solutions in \(\mathbb{F}_{11^r}\). In particular, for \(r\) odd there are two choices, while for \(r\) even there are six.

\begin{remark}\leavevmode
\begin{enumerate}
	\item When there are two solutions, they give rise to two \(\alt_6\) groups containing the given \(\alt_5\) that either are conjugate to each other in \(\Sp_8\) or lie in the two different conjugacy classes of \(\alt_6\) in \(\Sp_8\), depending on the field (see \cite[Table 8.49]{lowdimensional}). The supplementary material includes the construction of an element of \(\Sp_8\) conjugating the two copies of \(\alt_6\) when working over \(\mathbb{F}_{11}\).
	
	\item When there are six solutions, they form three pairs, one for each solution of \(X=\mu^3\). This is because \((3,11^{2n}-1)=1\), so we have the diagonal automorphism \(\delta\) of \(\E_6(q)\) permuting the three classes. This is evident from the fact that there is a single conjugacy class of \(\alt_5\) in \(\E_6\), \(\delta\) has order 3, and \(\out\alt_6=2^2\), which implies that \(\delta\) cannot be an automorphism for any of the \(\alt_6\) overgroups. Furthermore, suppose \(\delta\) centralises at least one of them, then \(\alt_6\) would lie in the centraliser in \(\E_6\) of \(\delta\), i.e. a group of type \(\A_5\T_1\), \(\A_2\A_2\A_2\), or \(\D_4\T_2\), whose action on \(M(\bb{\E}_6)\) is \(15\oplus6^2\), \(9^3\), and \(9^3\) respectively (see for example \cite[Table 2]{cohenwales}); this is impossible as they cannot afford a subgroup acting as \(5^2\oplus8\oplus9\). Therefore, \(\delta\) can only permute the three pairs cyclically. Furthermore, \(\delta\) permutes not only the three pairs, but also the \(\C_4\) overgroups containing each pair.
\end{enumerate}
\end{remark}

Since the construction is independent on the field extension we take, this is true for the algebraic group as well, i.e. \(\alt_6<\bb{\C}_4<\bb{\E}_6\) in characteristic 11. Since 11 is coprime to \(\size{\alt_6}\), we can lift the result to coprime characteristic and characteristic zero, therefore \(\alt_6\) is an imprimitive subgroup of \(\bb{\E}_6\) in such cases.

We can now conclude the proof of \cref{alt6e6} by discussing \(N_{\E_6}(\alt_6)\), in characteristic \(\ne2,3,5\), and showing that \(\alt_6\) is strongly imprimitive in \(\bb{\E}_6\). Since \(\bb{\C}_4\) acts irreducibly on \(M(\bb{\E}_6)\), we cannot proceed as in \cref{alt6module1899} to do so.

It is clear from the character table of \(\alt_6\) \cite{atlas} that the only feasible extension lying in \(\bb{\E}_6\) is \(\alt_6.2_2\simeq\PGL(2,9)\), acting as \((5+5')\oplus8\oplus9^-\), where the parentheses denote that the two characters of \(\alt_6\) of degree 5 are fused in the extension. This is because the other two possible extensions, \(2_1\) and \(2_3\), fuse the two characters of \(\alt_6\) of degree 8.
	
Since \(\alt_6<\bb{\C}_4\), we can use the results in \cite[Table 8.49]{lowdimensional} to deduce that \(\alt_6.2_2\) is realised in \(\bb{\C}_4\) in characteristic \(p\ne2,3,5\), hence it embeds in \(\bb{\E}_6\).

Recall from \cite[Theorem 1]{liebeckseitz1} that \(\bb{\C}_4\) is a maximal subgroup of \(\bb{\E}_6\), and it is unique up to \(\aut\bb{\E}_6\text{-conjugacy}\). In particular, a field automorphism of \(\bb{\E}_6\) acts on \(\bb{\C}_4\) as the induced field automorphism, while a graph automorphism centralises it; therefore, a field-graph automorphism normalises \(\bb{\C}_4\), hence \(\C_4<\!\prescript{2}{}\E_6\).

As \(\bb{\C}_4\) is \(\aut\bb{\E}_6\text{-stable}\), \(\alt_6\) is strongly imprimitive in \(\bb{\E}_6\).

\begin{remark}
	When looking at finite groups, from the character table of \(\alt_6\) \cite{atlas} we deduce that it embeds in \(\E_6(p)\) for \(p\ne2,3,5\) iff \(\mathbb{F}_p\) is a splitting field for \(X^2-X-1\), i.e. \(p\equiv\pm1\bmod5\) by \cref{minpol}. If \(p\equiv\pm2\bmod5\), it embeds in \(\E_6(p^2)\) instead.
	
	Furthermore, from \cite[Table 8.49]{lowdimensional} we have that:
	\begin{enumerate}
		\item If \(p\equiv\pm9\bmod20\) then \(2.\alt_6\) is maximal in \(\Sp_8(p)\), with a diagonal automorphism of \(\Sp_8(p)\) acting as \(2_2\).
		\item If \(p\equiv\pm1\bmod20\) then \(2.\alt_6.2_2\) is maximal in \(\Sp_8(p)\).
		\item If \(p\equiv\pm2\bmod5\) then \(2.\alt_6\) is maximal in \(\Sp_8(p^2)\), with a diagonal automorphism of \(\Sp_8(p^2)\) acting as \(2_2\), and a field automorphism acting as \(2_1\).
	\end{enumerate}
	In all cases we have that \(\alt_6.2_2<\C_4(p)<\E_6(p)\) or \(\alt_6.2_2<\C_4(p^2)<\!\prescript{2}{}\E_6(p^2)\), depending on the congruence of \(p\) modulo 5.
\end{remark}
\end{section}
\end{chapter}

\Urlmuskip=0mu plus 1mu\relax
\addcontentsline{toc}{chapter}{Bibliography}
\printbibliography

\begin{appendices}
\crefalias{chapter}{appendixchapter}
\addtocontents{toc}{\protect\setcounter{tocdepth}{-1}}
\setcounter{chapter}{-1}
\renewcommand{\thechapter}{A}
\begin{chapter}{Lie structures in \magma}\label{magmastructure}
\addtocontents{toc}{\protect\setcounter{tocdepth}{3}}
\addcontentsline{toc}{chapter}{Appendix A: Lie structures in \scshape Magma}
\begin{section}{Lie algebras}\label{magmaliealgebra}
We take a look at how \magma\ constructs Lie algebras by default. Let \(\m{L}\) be a Lie algebra of dimension \texttt{dim} and rank \texttt{rk}, so that the number of positive (resp. negative) roots is \(\texttt{nr}\coloneqq(\texttt{dim}-\texttt{rk})/2\). Then the default Chevalley basis of \(\m{L}\) is stored with the following order:
\[(e_{-\texttt{nr}},e_{-\texttt{nr}+1},\ldots,e_{-2},e_{-1},h_1',\ldots,h_{\texttt{rk}}',e_1,e_2,\ldots,e_{\texttt{nr}-1},e_{\texttt{nr}}),\]
where \(e_1,\ldots,e_{\texttt{rk}}\) form a set of fundamental (positive) roots, while
\[h_i'=\sum_{j=1}^{\texttt{rk}}(C^{-1})_{i,j}[e_j,e_{-j}],\]
where \(C\) is the Cartan matrix of \(\m{L}\).

\magma\ orders the roots by taking positive roots first, then negative roots in the same order, that is \(1,\ldots,\texttt{nr},-1,\ldots,-\texttt{nr}\). Therefore, the indexing used in a default Chevalley basis can be converted to the indexing of the roots by the map
\begin{equation*}
\begin{gathered}
(1,2,\ldots, \texttt{nr}-1,\texttt{nr},\texttt{nr}+\texttt{rk}+1,\texttt{nr}+\texttt{rk}+2,\ldots,\texttt{dim}-1,\texttt{dim})\\
\updownarrow\\
(2\texttt{nr},2\texttt{nr}-1\ldots,\texttt{nr}+2,\texttt{nr}+1,1,2,\ldots,\texttt{nr}-1,\texttt{nr}).
\end{gathered}
\end{equation*}
The following code generates said map and its inverse:
\begin{lstlisting}[language=Magma]
m:=map<({1..nr} join {nr+rk+1..dim})->{1..dim-rk}|x:->x le nr select dim+1-rk-x else x-nr-rk,y:->y le nr select nr+rk+y else dim-rk+1-y>;
\end{lstlisting}
\end{section}
\begin{section}{Groups of Lie type}
As for Lie groups, the default construction is with the \texttt{GroupOfLieType} function.

Let \(\G\) be such a group over the field \(k\), and let \(i\) denote a root, ranging from \(1\) to \(2*\texttt{nr}\); its elements are products of:
\begin{enumerate}
	\item Root elements, like \(x_i(t)\), called with \texttt{elt<G|<i,t>>}, \(t\in k\).
	\item Torus elements, like \(h_i(t)\), called with \texttt{TorusTerm(G,i,t)}, \(t\in k^{\times}\);
	\item Weyl group representatives, like \(n_i\), called with \texttt{elt<G|i>}.
\end{enumerate}
Torus elements can also be called using \texttt{elt<G|v>}, where \texttt{v} is vector of \({(k^{\times})}^{\rk\G}\). However, observe that for example \texttt{elt<G|Vector([t,1,...,1])>} is not the same element as \texttt{TorusTerm(G,1,t)}.

The elements of a group of Lie type can be represented as matrices using the functions \texttt{StandardRepresentation} and \texttt{AdjointRepresentation}, which map \(\G\) to \(\GL_n(k)\) for the appropriate \(n\).

In the supplementary material, we provide the code for a function that constructs \(x_{\pm i}(t)\), \(h_i(\lambda)\), \(n_i\), for \(i=1,\ldots,\rk\G\), in the adjoint representation, for an arbitrary Chevalley basis of the Lie algebra of \(\G\).

We use the actions described in \eqref{xrt}, \eqref{hrl}, \eqref{nr}, for \(x_{\pm i}(t)\), \(h_i(\lambda)\), \(n_i\), respectively, by going through all vectors of the basis, and computing their image under said actions.

We can check that using the default basis for a Lie algebra we obtain the same elements as the built-in constructors in \magma. For example:
\begin{lstlisting}[language=Magma]
> load "GHN.mg";
> G:=GroupOfLieType("F4",13);
> L:=LieAlgebra(G);
> ad:=AdjointRepresentation(G);
> x,h,n:=GHN(L,[1..4],Basis(L));
> xx:=[ad(elt<G|<i,1>>):i in [1..4] cat [25..28]];
> hh:=[ad(TorusTerm(G,i,2)):i in [1..4]];
> nn:=[ad(elt<G|i>):i in [1..4]];
> print xx eq x, hh eq h, nn eq n;
true true true\end{lstlisting}
\end{section}
\end{chapter}

\crefalias{chapter}{appendixchapter}
\addtocontents{toc}{\protect\setcounter{tocdepth}{-1}}
\setcounter{chapter}{-1}
\renewcommand{\thechapter}{B}
\begin{chapter}{Supplementary material}\label{supplements}
\addtocontents{toc}{\protect\setcounter{tocdepth}{3}}
\addcontentsline{toc}{chapter}{Appendix B: Supplementary material}
We include the code used to perform all the computations previously presented. All the files are in plain text format, and can be opened with any text editing software; they come with two extensions:
\begin{enumerate}
	\item Files with a \texttt{.mg} extension are meant to be run with \magma.
	\item Files with a \texttt{.bash} extension are Bash scripts that automatically load and run other files with \magma; they are used when we prefer to parallelise certain computations instead of running a single instance of \magma.
\end{enumerate}
All the files include comments that outline what the corresponding \magma\ code or Bash script does.

All the \texttt{.mg} files and all other \texttt{.mg} files produced by Bash scripts were run on \magma\ V2.25-4.

The files that directly perform the computations are the following:
\begin{enumerate}
	\item \texttt{25\_F4.mg} constructs copies of \(\PSL_2(25)<\F_4(61)\).
	\item \texttt{25\_F4\_var.mg} does the same but showcasing a different way to construct objects.
	\item \texttt{27\_F4.mg} constructs copies of \(\PSL_2(27)<\F_4(547)\).
	\item \texttt{27\_F4\_13\_main.bash} constructs copies of \(\PSL_2(27)<\F_4(13)\). It will automatically load, in order:
	\begin{enumerate}[(a)]
		\item \texttt{27\_F4\_13\_p1.mg}.
		\item \texttt{27\_F4\_13\_loadtest.mg}, once for each subdivision of the computation; the number of parallel processes is defined by the parameter \texttt{part} in the file \texttt{27\_F4\_13\_main.bash}.
		\item \texttt{27\_F4\_13\_p2.mg}.
	\end{enumerate}
	\item \texttt{37\_E7.mg} constructs copies of \(\PSL_2(37)<\E_7(5329)\).
	\item \texttt{37\_E7\_Borel\_conjugacy.mg} constructs copies of \(37\rtimes18<\E_7(149)\).
	\item \texttt{29\_E7\_main.bash} constructs copies of \(\PSL_2(29)<\E_7(36541)\). It will automatically load, in order:
	\begin{enumerate}[(a)]
		\item \texttt{29\_E7.mg}.
		\item \texttt{29\_E7\_loaddet.mg}, twice, once for each possible determinant.
		\item \texttt{29\_E7\_loadtest.mg}, twice, once for each determinant.
	\end{enumerate}
	\item \texttt{41\_E8.mg} constructs copies of \(\PSL_2(41)<\E_8(5741)\).
	\item \texttt{49\_E8.mg} constructs copies of \(\PSL_2(49)<\E_8(1009)\).
	\item \texttt{61\_E8.mg} constructs copies of \(\PSL_2(61)<\E_8(1831)\).
	\item \texttt{Alt6\_F4.mg} constructs copies of \(\alt_6<\F_4(19)\).
	\item \texttt{Alt6\_A2A2A2\_3-decomposition.mg} is used for \cref{A2A2A2} and the subsequent comparison between \(M(\E_6)\restr{\alt_6}\) and \(M(\E_6)\restr{\alt_5}\).
	\item \texttt{Sp8\_trilinear\_form.mg} constructs the unique (up to scalars) \(\E_6\text{-invariant}\) symmetric trilinear form on \(M(\E_6)\restr{\C_4}\).
	\item \texttt{Alt6\_C4.mg} is used for \cref{alt6module5589}.
\end{enumerate}
Additionally, the following files define functions used in the above computations:
\begin{enumerate}
	\item \texttt{CompareAHom.mg} is used to study the restriction of a symmetric trilinear form as described in \cref{compareahom}.
	\item \texttt{exp.mg} computes \(\exp\) and \(\log\) of a given matrix, assuming that the power series required has finitely many terms.
	\item \texttt{GenLinearComb.mg} is a naive implementation of \cite[Computation 3.9]{griess_ryba_algorithm}.
	\item \texttt{GHN.mg} constructs elements of a Lie group as described in \cref{ghn}.
	\item \texttt{LDU.mg} performs LDU decompositions (\cref{LDU}), and writes matrices as elements of a group of type \(\A_n\) (\cref{LDUwords}).
	\item \texttt{MapToMatrix.mg} allows to obtain the matrix corresponding to a change of basis of a given object that has an underlying vector space.
	\item \texttt{Membership.mg} is an implementation of the membership test described in \cref{membership}.
	\item \texttt{RescaleChevalley.mg} rescales a given Chevalley basis to have standard structure constants, see \cref{s_basis}.
	\item \texttt{TrilinearForm.mg} contains various functions to construct \(G\text{-invariant}\) symmetric trilinear forms on \(kG\text{-modules}\) as described in \cref{3formconstruction}, and operate with them.
	\item \texttt{WeakChevalleyBasis.mg} constructs a Chevalley basis that diagonalises a given semisimple element, as described in \cref{s_basis}.
\end{enumerate}
\end{chapter}
\end{appendices}

\end{document}